\documentclass[twocolumn]{svjour3}          % twocolumn
\smartqed  % flush right qed marks, e.g. at end of proof
%
% insert here the call for the packages your document requires
%\usepackage{latexsym}
%%%%%%%%%%%%%%%%%%%%%%%%%%%%%%%%%%%%%%%%%%%%%%%%%%%%%%%%%%%%%%
\usepackage{amsmath,amssymb,ifthen}
\usepackage{graphicx,psfrag,subfigure}
\usepackage{color}
\usepackage{moreverb}
\usepackage{mathrsfs}
\usepackage{hhline}
\usepackage[english]{babel}
\usepackage{tikz}
\usepackage{tikz-3dplot}
\usepackage[utf8]{inputenc}
\usepackage{amsfonts}
\usepackage{enumerate}
\usepackage[unicode,colorlinks=false,linkcolor=blue,pagebackref=false]{hyperref}
\usepackage{algorithm}
\usepackage[noend]{algpseudocode}
\usepackage{pst-node}
\usepackage{longtable}

% etc.
%
% please place your own definitions here and don't use \def but
% \newcommand{}{}
% Variables
\newcommand{\R}{{\mathbb R}}

\newcommand\N{{\mathbb N}}

% Mathematical symbols
\newcommand\diam{{\rm diam}}
\newcommand\const[1]{C_{\rm #1}}
\newcommand\norm[2]{\|#1\|_{#2}}
\newcommand\seminorm[2]{\vert #1\vert_{#2}}
\newcommand\set[2]{\big\{#1\,:\,#2\big\}}
\newcommand\dual[2]{\langle#1\,;\,#2\rangle}

\newcommand\supp{{\rm supp}}
\newcommand\dist{{\rm dist}}

% Others
% environments

\renewcommand\paragraph[1]{\noindent\textbf{\textit{#1.}}}

% variables

\newcommand\F{{\bf F}}
\newcommand\dpa{{\hat d}}
\newcommand\dph{d}
\renewcommand\AA{{\bf A}}
\newcommand\bb{{\bf b}}
\newcommand\xx{{\bf x}}
\newcommand\yy{{\bf y}}
\renewcommand\d{\textrm{d}}

\newcommand\MM{\mathcal M}

\newcommand\Q{\mathbb{Q}}
\newcommand\BB{{\mathcal B}}

\newcommand\II{{\mathcal I}}
\newcommand\JJ{J}

\newcommand\NN{{\mathcal N}}
\newcommand\HH{{\mathcal H}}
\newcommand\OO{{\mathcal O}}

\newcommand\QQ{{\mathcal{Q}}}
\newcommand\RR{\mathcal{R}} 
\renewcommand\SS{\mathcal{S}} 
\newcommand\TT{{\mathcal T}}

\newcommand\VV{{\mathcal V}}

% mathematical symbols
\newcommand\q[1]{q_{\rm #1}}
\newcommand\ro[1]{\rho_{\rm #1}}
\renewcommand\k{k}

\newcommand\trunc{{\rm trunc}}
\newcommand\Trunc{{\rm Trunc}}
\newcommand\refine{{\tt refine}}
\newcommand\osc{{\rm osc}}
\renewcommand\div{{\rm div}}
\newcommand\normal{{\boldsymbol{\nu}}}
\renewcommand\dist{{\rm d\!l}}

% others

\newcommand\coarse{{}}
\newcommand\coarsecomma{{}}
\newcommand\fine{+}
\newcommand\meshidx{\times}
\algnewcommand{\LineComment}[1]{\State \(\triangleright\) #1}

% bibitem (short names)
%--------------------------------------------------------------------------------
% no it, no blanks in the end

%\def\ACMCS{ ACM Computing surveys\ }
%\def\ACMTMS{ACM Trans.\ Math.\ Software\ }

 %APPLIED MATHEMATICS AND COMPUTATION,
%\def\AMM{Amer.\ Math.\ Monthly\ }

%\def\CAEJ{Comput.\ Aided Eng. J.\ }

%\def\CAVW{Comput.\ Anim. Virtual\ World\ }

%\def\CVGIP{{\it Comput.\ Vision, Graphics, Image\ Proc.\ }}
%Discrete and Continuous Dynamical Systems

%\def\IBMJRD{{\it IBM J.\ Res.\ Develop.\ }}
%\def\ICSE{Impact Comput. Sci. Engrg.\ }

%\def\IJNME{Internat. J. Numer. Methods Engrg.\ }
%\def\JMAA{{\it J.\ Math.\ Anal.\ Appl.\ }}

%\def\JGG{{\it J.\ Geom.\ Graphics\ }}
%\def\JNAIAM{ JNAIAM. J. Numer. Anal. Ind. Appl. Math.\ }
%\def\JSC{{\it J.\ Symb.\ Comput.\ }}

\def\M3AS{Math.\ Models\ Methods\ Appl.\ Sci.\ }

\newcommand{\spmh}[1]{\hat{\mathbb{S}}_{\bf p}({\bf \kv}^{#1})}
\newcommand{\hbbasis}{\mathcal{H}}
\newcommand{\thbbasis}{\mathcal{T}}
\newcommand{\hmesh}{\mathcal{Q}}
\newcommand{\myspan}{\mbox{span}}
%\newcommand{\trunc}{\mbox{trunc}}

 % Projector for level ell
\newcommand{\projH}[1]{\hat J_{#1,\hat{\hmesh}}^{\,\rm H}} 
\newcommand{\projHfine}[1]{\hat J_{#1,{\hat\hmesh_\fine}}^{\,\rm H}} %
% Projector for level ell

\newcommand{\sextTHB}[1]{{S}_{\rm ext}^*(#1)}
\newcommand{\extsuppTHB}{\hat\omega_{\bf i, p}^\ell}
\newcommand{\mot}{\mathop{\mathrm{mot}}}

\let\hat\widehat
\let\tilde\widetilde

\DeclareFontFamily{U}{mathx}{\hyphenchar\font45}
\DeclareFontShape{U}{mathx}{m}{n}{
      <5> <6> <7> <8> <9> <10>
      <10.95> <12> <14.4> <17.28> <20.74> <24.88>
      mathx10
      }{}
\DeclareSymbolFont{mathx}{U}{mathx}{m}{n}
\DeclareMathAccent{\widecheck}{0}{mathx}{"71}
\let\check\widecheck

\spnewtheorem{assumption}{Assumption}{\bf}{\it}

\newcommand{\knot}{t}
\newcommand{\kv}{T}
\newcommand{\param}{t}
\newcommand{\bparam}{\mathbf{\param}}

\newcommand{\spbasis}{\hat{\mathcal{B}}}
\newcommand{\spu}[2]{\hat{\mathbb{S}}_{#1}(#2)}
\newcommand{\spm}{\hat{\mathbb{S}}_{\bf p}(\mathbf{\kv})}
\newcommand{\proju}[2]{\hat{J}_{#1,#2}}
\newcommand{\projm}{\hat{J}_{\mathbf{p},\mathbf{\kv}}}
\newcommand{\brkset}{Z}
\newcommand{\brkpnt}{z}
\newcommand{\mesh}{\mathcal{Q}}
\newcommand{\elem}{Q}

\newcommand{\Omegap}{\hat \Omega} % Parametric domain
\newcommand{\sext}[1]{S_{\rm ext}(#1)}
\newcommand{\weight}{w} % Weight function for NURBS
\newcommand{\basisfun}{B} % Basis functions in multi-patch space
\newcommand{\appl}{g} % Application for the multi-patch domains
\newcommand{\mypatch}{\pi} %Neighboring patches in multi-patch BEM, Gregor called it 'patch'
\newcommand{\myparam}{{\bf F}} % Auxiliary bi-Lipschitz mapping in assumptions for BEM

% T-splines
\newcommand{\indices}{\mathcal{I}}
\newcommand{\Tmesh}{\check{\mathcal{Q}}}  % T-mesh in the index domain
\newcommand{\Tmeshp}{\hat{\mathcal{Q}}} % T-mesh in the parametric domain
\newcommand{\spT}{\hat{\mathbb{S}}^{\rm T}_{\mathbf{p}}(\Tmesh,\mathbf{\kv}^0)} % Space of T-splines
\newcommand{\spTargs}[2]{\hat{\mathbb{S}}^{\rm T}_{\mathbf{p}}(#1,#2)}
\newcommand{\Omindex}{\check{\Omega}_{\mathrm{ind}}} % Index domain for T-splines
\newcommand{\Omip}{\check{\Omega}_{\mathrm{ip}}} % Index/parametric domain for T-splines
\newcommand{\elemi}{{\check{Q}}} % Element in the index domain for T-splines
\newcommand{\elemp}{\hat{Q}} % Element in the parametric domain for T-splines
\newcommand{\AR}{\check{\Omega}_{\mathrm{act}}} % Active region
\newcommand{\bisect}{{\tt bisect}}
\newcommand{\dirb}[1]{\mathrm{dir}(#1)} % Direction of bisection
\newcommand{\levelT}[1]{\mathrm{lev}(#1)}
\newcommand{\sk}{\mathtt{skel}} % Skeleton
\newcommand{\nodes}{\mathcal{A}} % The set of nodes, or anchors
\newcommand{\anchor}{\mathbf{z}} % A single anchor
\newcommand{\GIV}{\mathcal{I}^{\mathrm{gl}}} % Global index vector
\newcommand{\LIV}{\mathcal{I}^{\mathrm{loc}}} % Local index vector
\newcommand{\projT}[1]{\hat J^{\,\rm T}_{#1,\Tmesh}} % Projector for T-splines
 % Minimal hyperrectagnel that contains the support extension of #1
\newcommand{\neig}{\mathcal{N}} % Neighbors for admissible T-splines, index domain
\newcommand{\markedT}{\check{\mathcal{M}}} % Marked elements, index T-mesh
\newcommand{\markedTp}{\hat{\mathcal{M}}} % Marked elements, parametric T-mesh
\newcommand{\indtpar}[1]{\mathtt{param}(#1)} %param (index2param)
\newcommand{\partind}[1]{\mathtt{index}(#1)} %index (param2index)
 % Admissible index T-meshes
\newcommand{\admp}{\hat{\mathbb{Q}}} % Admissible parametric T-meshes
\newcommand{\bext}[2]{\mathtt{prol}(#1)} % Boundary extension
 % Patch. I don't like this notation.

% Constants to mimic Gregor's notation

\newcommand\refineind{{\tt refine\_index}}

% Revision
%\newcommand\buffa[1]{{\color{red}{#1}}}
%\newcommand\gantner[1]{{\color{red}{#1}}}
%\newcommand\gregor[1]{{\color{cyan}{#1}}}
%\newcommand\cg[1]{{\color{teal}{#1}}}
%\newcommand\praetorius[1]{{\color{purple}{#1}}}
%\newcommand\vazquez[1]{{\color{olive}{#1}}}
%\newcommand\rv[1]{\vazquez{#1}}

%% Date on each page
%\usepackage{fancyhdr}
%\lfoot{\small\today}
%\cfoot{\small\thepage}
%\lhead{}
%\rhead{}
%\renewcommand{\headrulewidth}{0pt}
%\advance\footskip0.4cm
%\textheight=54pc    %a4paper
%\advance\textheight-0.4cm
%\calclayout
%%\pagestyle{fancy}

% Line numbers
\newcommand*\patchAmsMathEnvironmentForLineno[1]{%
  \expandafter\let\csname old#1\expandafter\endcsname\csname #1\endcsname
  \expandafter\let\csname oldend#1\expandafter\endcsname\csname end#1\endcsname
  \renewenvironment{#1}%
     {\linenomath\csname old#1\endcsname}%
     {\csname oldend#1\endcsname\endlinenomath}}% 
\newcommand*\patchBothAmsMathEnvironmentsForLineno[1]{%
  \patchAmsMathEnvironmentForLineno{#1}%
  \patchAmsMathEnvironmentForLineno{#1*}}%
\AtBeginDocument{%
\patchBothAmsMathEnvironmentsForLineno{equation}%
\patchBothAmsMathEnvironmentsForLineno{align}%
\patchBothAmsMathEnvironmentsForLineno{flalign}%
\patchBothAmsMathEnvironmentsForLineno{alignat}%
\patchBothAmsMathEnvironmentsForLineno{gather}%
\patchBothAmsMathEnvironmentsForLineno{multline}%
}
\usepackage[mathlines]{lineno}

%\linenumbers
%%%%%%%%%%%%%%%%%%%%%%%%%%%%%%%%%%%%%%%%%%%%%%%%%%%%%%%%%%%%%%
% Insert the name of "your journal" with
 \journalname{Noname}%Archives of Computational Methods in Engineering}
\begin{document}

\title{Mathematical Foundations of\\ Adaptive Isogeometric Analysis \thanks{
%Grants or other notes about the article that should go on the front page should be placed here. General acknowledgments should be placed at the end of the article.
The authors AB and RV were partially supported by the European Research Council (ERC) through the Advanced Grant CHANGE n.~694515. RV additionally acknowledges support of the Swiss National Science Foundation (SNF) through grant 200021\_188589. 
The authors GG and DP acknowledge support through the Austrian Science Fund (FWF) under grant P29096, grant W1245-N25 and grant SFB F65. 
GG additionally acknowledges support through the Austrian Science Fund (FWF) under grant J4379-N. 
CG  was partially supported by Istituto Nazionale di Alta Matematica (INdAM) through Gruppo Nazionale per il Calcolo Scientifico (GNCS) and Finanziamenti Premiali SUNRISE. CG and RV are members of the INdAM Research group GNCS. 
}
}

%\titlerunning{Short form of title}        % if too long for running head

\author{Annalisa Buffa      \and
        Gregor Gantner		\and
        Carlotta Giannelli	\and
        Dirk Praetorius		\and
        Rafael V\'azquez	
}

%\authorrunning{Short form of author list} % if too long for running head

\institute{
Annalisa Buffa \at
\'Ecole polytechnique f\'ed\'erale de Lausanne, Institute of Mathematics, 1015 Lausanne, Switzerland\\
Istituto di Matematica Applicata e Tecnologie Informatiche ``E. Magenes'' del CNR, Pavia, Italy \\
	\email{annalisa.buffa@epfl.ch}
	\and
Gregor Gantner (corresponding author) \at
University of Amsterdam, Korteweg--de Vries Institute for Mathematics, 
%P.O. Box 94248, 
1090 GE Amsterdam, the Netherlands\\
%              Tel.: +123-45-678910\\
%              Fax: +123-45-678910\\
              \email{g.gantner@uva.nl}           %  \\
%             \emph{Present address:} of F. Author  %  if needed
           \and
Carlotta Giannelli \at
Universit\`a degli Studi di Firenze, Dipartimento di Matematica e Informatica ''U. Dini``, 
%Viale Morgagni 67a,
50134 Florence, Italy\\
	\email{carlotta.giannelli@unifi.it}
	\and
Dirk Praetorius \at
TU Wien, Institute of Analysis and Scientific Computing, 
%Wiedner Hauptstra\ss e 8-10, 
1040 Vienna, Austria \\
	\email{dirk.praetorius@asc.tuwien.ac.at} 
	\and
Rafael V\'azquez \at
\'Ecole polytechnique f\'ed\'erale de Lausanne, Institute of Mathematics, 1015 Lausanne, Switzerland\\
Istituto di Matematica Applicata e Tecnologie Informatiche ``E. Magenes'' del CNR, Pavia, Italy \\
	\email{rafael.vazquez@epfl.ch}
	\and
}

\date{Received: date / Accepted: date}
% The correct dates will be entered by the editor

\maketitle

\begin{abstract}
This paper reviews the state of the art and discusses recent developments in the field of adaptive isogeometric analysis, with special focus on the mathematical theory.
This includes an overview of available spline technologies for the local resolution of possible singularities as well as the state-of-the-art formulation of convergence and quasi-optimality of adaptive algorithms for both the finite element method (FEM) and the boundary element method (BEM) in the frame of isogeometric analysis (IGA).
\keywords{Isogeometric analysis \and hierarchical splines \and T-splines \and adaptivity \and finite element method \and boundary element method}
% \PACS{PACS code1 \and PACS code2 \and more}

\subclass{41A15 \and 65D07\and 65N12 \and 65N30 \and 65N38 \and 65N50 \and 65Y20}
\end{abstract}

\setcounter{tocdepth}{3}
\tableofcontents

\newpage

% !TEX encoding = MacOSRoman
% !TEX root = adaptive_iga.tex

\section{Introduction}

%%%%%%%%%%%%%%%%%%%%%%%%%%%%%%%%%%%%%%%%%%%%%%%%%%%%%%%%%%%%%%%%%%%%%%%%%%%%%%%%%%%%%%%%%%%%%%%%%%%%%%%%%%%%%%
\subsection{Isogeometric analysis}
%%%%%%%%%%%%%%%%%%%%%%%%%%%%%%%%%%%%%%%%%%%%%%%%%%%%%%%%%%%%%%%%%%%%%%%%%%%%%%%%%%%%%%%%%%%%%%%%%%%%%%%%%%%%%%

Isogeometric analysis (IGA) was introduced in 2005 in the seminal work \cite{hughes2005} and since then has been a very successful area of research including mathematical discoveries, computational mechanics challenges as well as a rather unique joint effort to tackle problems that fall outside one single research community.

By using the same building blocks employed in standard Computer-Aided Design (CAD), namely B-splines, Non-Uniform Rational B-splines  (NURBS) and variants thereof, the final goal of IGA is to provide an end-to-end methodology that unifies geometrical design with the analysis of partial differential equations (PDEs) for computational engineering. While this is still a widely open issue, in the last decade an extensive amount of research has been dedicated to IGA in various different fields.
We refer, e.g.,  to the special issue \cite{SpecialIssueCMAME}  for a review of the most prominent works published in recent years. B-spline based formulations are now built on solid mathematical foundations (see, e.g. \cite{bbsv14,hughes2005}) and have demonstrated their capabilities in many different areas of engineering. Moreover, since B-splines are nothing but (possibly smooth) piecewise polynomials of a given degree, methods based on them (including IGA) are potentially high-order. 

%Splines are nothing else than, possibly regular, piecewise polynomials of a given order and, as degree raising is always possible and lead to a better accuracy, spline-based methods (including IGA)  are potentially high-order.  As a matter of fact,}

The starting point of IGA is a description of the computational geometry as a collection of (possibly trim\-med) patches. A patch is a geometric entity characterized by a spline (or more generally by a non uniform rational spline) parametrization. IGA stands for the class of methods which use spline discretization techniques over such  geometric descriptions. Thus, it includes, and it is not restricted to, second or higher order  PDEs defined in $d$-dimensional  domains \cite{IGA-book}, PDEs defined on manifolds such as the ones describing shells \cite{MR3345251} or membranes \cite{MR3896166} and also boundary integral equations \cite{pgkbf09}. IGA methods and their applications are now a rather large research area in computational mechanics and numerical analysis so that we refrain from trying to list all relevant contributions to the field. 

Indeed, this paradigm has raised significant mathematical challenges. Some of them have only been partially addressed by the community until now, e.g., the construction of $C^1$ basis functions with optimal approximation properties \cite{NgPe16,KaSaTa19,WZTSLMEH18},  optimal reparametri\-zation for trimmed surfaces \cite{Marussig-review,HINZ201848,HINZ2020112740,LIU2015108}   and the construction and manipulation of spline volumes \cite{MASSARWI201636,AnBuMa20,ZHANG2012185,PCT2020}.  Instead, other research topics have reached a more advanced maturity, e.g., the approximation estimates of splines of arbitrary degrees \cite{MR4080513} or the construction of locally refined splines and their use within an adaptive paradigm, which is the topic of this review paper. The literature on the subject is today very wide and covers several different (integro-) differential problems. This review aims at describing, with a careful mathematical perspective, some of the very many approaches existing in the literature, with a different level of details.

\subsection{Adaptivity}
%%%%%%%%%%%%%%%%%%%%%%%%%%%%%%%%%%%%%%%%%%%%%%%%%%%%%%%%%%%%%%%%%%%%%%%%%%%%%%%%%%%%%%%%%%%%%%%%%%%%%%%%%%%%%%
%\subsubsection{Necessity of adaptivity}
%%%%%%%%%%%%%%%%%%%%%%%%%%%%%%%%%%%%%%%%%%%%%%%%%%%%%%%%%%%%%%%%%%%%%%%%%%%%%%%%%%%%%%%%%%%%%%%%%%%%%%%%%%%%%%

As soon as the (given) data or the (unknown) solution $u$ of a PDE have singularities, the possible high-order 
convergence rate of isogeometric methods is significantly reduced down to rates which could also be achieved by low-order methods. However, at least for standard finite element methods (FEM), it is known that better rates --and usually even optimal algebraic convergence rates-- can 
be regained by an appropriate local mesh grading of the underlying mesh %(in the parameter domain) 
towards these singularities. 

If the singularities and the required local mesh grading are \textsl{a~priori} unknown, the local mesh adaptation
can be automated by so-called adaptive algorithms. Usually, these adaptive algorithms rely on \textsl{a~posteriori}
error estimators which provide computable (lower and upper) bounds on the error of an already computed approximation
$U \approx u$. Localizing these bounds to related elements of the underlying mesh (resp.\ specific isogeometric basis functions),
one can extract the necessary information of where to locally refine the mesh (resp.\ where to add additional basis
functions).

%%%%%%%%%%%%%%%%%%%%%%%%%%%%%%%%%%%%%%%%%%%%%%%%%%%%%%%%%%%%%%%%%%%%%%%%%%%%%%%%%%%%%%%%%%%%%%%%%%%%%%%%%%%%%%
\subsubsection{Modules of adaptive loop}
%%%%%%%%%%%%%%%%%%%%%%%%%%%%%%%%%%%%%%%%%%%%%%%%%%%%%%%%%%%%%%%%%%%%%%%%%%%%%%%%%%%%%%%%%%%%%%%%%%%%%%%%%%%%%%

Starting from a given initial mesh $\QQ_0$, 
adaptive algorithms aim to improve the accuracy of a discrete solution by iterating the so-called adaptive loop
\begin{align}\label{eq:intro:semr}
 \boxed{\texttt{solve}}
 \longrightarrow
 \boxed{\texttt{estimate}}
 \longrightarrow
 \boxed{\texttt{mark}}
\longrightarrow
 \boxed{\texttt{refine}}
\end{align}
The module $\texttt{solve}$ computes a discrete solution $U_k \approx u$ (indexed by some step counter $k \in \N_0$) related to the current mesh $\QQ_k$. 

The module $\texttt{estimate}$ computes for all elements $Q \in \QQ_k$ the local contributions $\eta_k(Q)$ of some \textsl{a~posteriori} error estimator $\eta_k := \big( \sum_{Q \in \QQ_k} \eta_k(Q)^2 \big)^{1/2}$ which, at least heuristically, provides a measure of the discretization error $\| u - U_k \|$. The so-called \emph{refinement indicators} $\eta_k(Q)$ depend usually on the computed discrete solution $U_k$ and the known problem or mesh data, but are independent of the unknown solution $u$. 

Having computed all refinement indicators, the module $\texttt{mark}$ selects elements $Q \in \QQ_k$ for refinement. 

Finally, the module $\texttt{refine}$ adapts the underlying mesh and generates a new mesh $\QQ_{k+1}$ by refinement of, at least, all marked elements. We stress that usually, besides the marked elements, also non-marked elements are refined to preserve structural properties of the mesh (e.g., avoidance of certain hanging nodes, preservation of local mesh grading, etc.).

%%%%%%%%%%%%%%%%%%%%%%%%%%%%%%%%%%%%%%%%%%%%%%%%%%%%%%%%%%%%%%%%%%%%%%%%%%%%%%%%%%%%%%%%%%%%%%%%%%%%%%%%%%%%%%
\subsubsection{Analysis of adaptive algorithms}\label{sec:adaptive analysis}
%%%%%%%%%%%%%%%%%%%%%%%%%%%%%%%%%%%%%%%%%%%%%%%%%%%%%%%%%%%%%%%%%%%%%%%%%%%%%%%%%%%%%%%%%%%%%%%%%%%%%%%%%%%%%%

Empirically, it has already been observed in the seminal papers on \textsl{a~posteriori} error estimation~\cite{MR483395,MR529976,MR615532,MR745088,MR880421} that adaptive algorithms regain the optimal convergence rate, understood as the decay of the error with respect to the number of degrees of freedom.
However, since adaptive algorithms usually do not guarantee that all elements are refined (so that the local mesh size becomes infinitesimally fine everywhere), one cannot rely on \textsl{a~priori} error estimates to ensure that the error tends to zero $\| u - U_k \| \to 0$ as the adaptive step counter $k \to \infty$ increases.

A first convergence result for adaptive finite elements for a 1D boundary value problem already dates back to~\cite{MR745088}. However, it took more than a decade until~\cite{MR1393904,MR1770058} proved plain convergence for the lowest-order FEM for the Poisson model problem in 2D. Generalizing those arguments, the works \cite{MR2413035,MR2832786} proved plain convergence $\| u - U_k \| \to 0$ for a large class of PDE model problems. 

Moreover, it took almost two decades to mathematically understand optimal convergence in the sense that $\| u - U_k \| = \OO((\#\QQ_k)^{-s})$, where $\#\QQ_k$ is proportional to the numbers of the degrees of freedom and the algebraic convergence rate $s > 0$ is as large as possible. The seminal work~\cite{bdd04} proves convergence with optimal algebraic rates for the 2D Poisson problem, discretized by lowest-order elements. While the analysis of~\cite{bdd04} requires an additional mesh coarsening step to prove optimal rates, this has been proved unnecessary in the work~\cite{stevenson07}, which was the first work that proved optimal convergence rates for the standard adaptive loop~\eqref{eq:intro:semr}. 
We note, however, that these developments originated from groundbreaking results on adaptive wavelet discretizations~\cite{MR1803124,MR1907380,MR2035007,MR2299773}, which analyzed optimality for a variety of problems in terms of the best $N$-term approximation.

The seminal ideas of~\cite{stevenson07} have then been extended to finite element methods for symmetric second-order linear elliptic PDEs in~\cite{ckns08}, general second-order linear elliptic PDEs in the setting of the Lax--Milgram lemma~\cite{cn12,ffp14}, and even for well-posed indefinite PDEs like the Helmholtz problem~\cite{bhp17}, see also~\cite{nv11} for an easy introduction to the topic focussing on the Poisson model problem. 
For standard boundary element methods (BEMs) based on piecewise polynomials, \cite{fkmp13,gantumur13,ffkmp14,ffkmp15,affkmp17} obtained similar results. 

All these developments led to the identification of a unified framework of optimal adaptivity~\cite{cfpp14}, which consists of four \emph{axioms of adaptivity} that guarantee convergence of the adaptive loop~\eqref{eq:intro:semr} with optimal algebraic rates.

While all mentioned works consider optimal adaptivity with respect to the number of the degrees of freedom, in practice, optimal adaptivity with respect to the computational time is of more importance.  This question is mathematically well-understood for wavelet discretizations (see, e.g.,~\cite{MR1803124,MR1907380,MR2035007,MR2299773}), but the numerical analysis for non-wavelet FEM (or BEM) discretizations still has to be developed. First results, where the adaptive algorithm does not only steer the mesh-refinement but also the iterative and inexact solution, include~\cite{fhps19} for standard BEM as well as~\cite{ghps20} for an abstract framework based on contractive iterative solvers (like optimally preconditioned CG solvers).

%%%%%%%%%%%%%%%%%%%%%%%%%%%%%%%%%%%%%%%%%%%%%%%%%%%%%%%%%%%%%%%%%%%%%%%%%%%%%%%%%%%%%%%%%%%%%%%%%%%%%%%%%%%%%%
\subsection{Adaptive isogeometric analysis}
%%%%%%%%%%%%%%%%%%%%%%%%%%%%%%%%%%%%%%%%%%%%%%%%%%%%%%%%%%%%%%%%%%%%%%%%%%%%%%%%%%%%%%%%%%%%%%%%%%%%%%%%%%%%%%
%\begin{itemize}
%\item write section on splines suited for adaptivity (feel free to change the name): most important methodologies and references on first appearance in general and in the frame of adaptivity (numerical or analytical); details on available convergence results are given in following section
%\end{itemize}

%%%%%%%%%%%%%%%%%%%%%%%%%%%%%%%%%%%%%%%%%%%%%%%%%%%%%%%%%%%%%%%%%%%%%%%%%%%%%%%%%%%%%%%%%%%%%%%%%%%%%%%%%%%%%%
%\subsubsection{Necessity of adaptivity for IGA}
%%%%%%%%%%%%%%%%%%%%%%%%%%%%%%%%%%%%%%%%%%%%%%%%%%%%%%%%%%%%%%%%%%%%%%%%%%%%%%%%%%%%%%%%%%%%%%%%%%%%%%%%%%%%%%

Although adaptive algorithms of type~\eqref{eq:intro:semr} have a long history in the finite element theory, their application in 3D is often very complex and some developments do not provide real computational tools. Reasons are of practical type, e.g., splitting a tetrahedral mesh is not an easy task and adaptive approaches may generate several unwanted elements %when the solution stiff gradients fail to be aligned with the mesh 
when the refinement of the mesh fails to be aligned with the steep gradient of the solution. Sometimes, in the case of three dimensional finite elements, the generation of a tetrahedral mesh following a certain metric is preferred over the adaptive loop~\eqref{eq:intro:semr}.

%%Although adaptive algorithms have a long history in the finite element theory, in many cases, certain theoretical endeavors remain, especially in 3D. Reasons are of practical type, e.g., splitting a tetrahedral mesh is not an easy task and may generate several unwanted elements, while the generation of a tetrahedral mesh following a certain metric is well-understood. 
%at the state of the art \cite{gmsh}
%% Practitioners have thus often preferred the second, simpler road of \emph{a priori} designing ad-hoc meshes for the problem at hand, instead of full adaptive algorithms.

The situation is different in IGA. The mesh is not as flexible as a tetrahedral mesh, but it is a locally structured and globally unstructured hexahedral mesh. Local refinement and the use of locally refined splines is a viable option to keep the structure of splines (including the isoparametric paradigm) while adapting the mesh to the structure of the solution. %Without locally refined splines, the tensor product structure would 

Once locally refined splines are used, the development of adaptive algorithms is not a tremendous overhead on a computational code, and can immensely improve the accuracy of the solution. 
%Indeed, as soon as the (given) data or the (unknown) solution of a PDE have singularities, 
Indeed, singularities of the PDE solution might significantly spoil the possible high-order convergence rate of isogeometric methods. 
Thus, we believe that the use of adaptive algorithms in IGA  holds the promise of becoming ubiquitous in isogeometric codes.

%%%%%%%%%%%%%%%%%%%%%%%%%%%%%%%%%%%%%%%%%%%%%%%%%%%%%%%%%%%%%%%%%%%%%%%%%%%%%%%%%%%%%%%%%%%%%%%%%%%%%%%%%%%%%%
\subsubsection{Splines suited for adaptivity}%%%%%%%%%%%%%%%%%%%%%%%%%%%%%%%%%%%%%%%%%%%%%%%%%%%%%%%%%%%%%%%%%%%%%%%%%%%%%%%%%%%%%%%%%%%%%%%%%%%%%%%%%%%%%%
The tensor-product structure of B-splines and NURBS is essentially non-local, because the bisection of one single element extends the refinement through the whole domain. Adaptive IGA methods must be based on suitable extensions of B-splines that break their tensor-product structure and allow local refinement. Such extensions were already available in CAD for the design of small details in large objects, and they were applied in IGA in the last years.

Among this kind of splines with local refinement properties, we mention the following: hierarchical B-splines (HB-splines), introduced in \cite{forsey88} and first used in IGA in \cite{vgjs11}, which realize local refinement by using splines of different levels, from coarsest to finest; truncated hierarchical B-splines (THB-splines) \cite{gjs12}, which span the same space as hierarchical splines in \cite{vgjs11} with a more local basis; T-splines, for which basis functions are directly defined on a mesh with T-junctions (or hanging nodes), introduced for CAD in \cite{sederberg2003,sederberg2004}, and applied first to IGA in \cite{bazilevs2010,doerfel2010}; locally refined-splines (LR-splines), first defined in \cite{dlp13} and almost immediately applied to IGA \cite{jkd14}, which are similar to T-splines with the difference that the functions are defined on a different mesh that contains information about the continuity of the splines across edges or faces; finally, polynomial splines over hierarchical T-meshes (PHT-splines), introduced in \cite{DeChFe06} and first used in IGA in \cite{WaXuDeCh11,NgNgBoRa11}, which are also defined on a mesh with hanging nodes, but which have lower continuity on the interfaces between elements than the previous variants. 

%%%%%%%%%%%%%%%%%%%%%%%%%%%%%%%%%%%%%%%%%%%%%%%%%%%%%%%%%%%%%%%%%%%%%%%%%%%%%%%%%%%%%%%%%%%%%%%%%%%%%%%%%%%%%%
\subsubsection{Available convergence results}
%%%%%%%%%%%%%%%%%%%%%%%%%%%%%%%%%%%%%%%%%%%%%%%%%%%%%%%%%%%%%%%%%%%%%%%%%%%%%%%%%%%%%%%%%%%%%%%%%%%%%%%%%%%%%%

As far as convergence of adaptive IGA methods is concerned, the first result goes back to~\cite{bg16} which considers IGAFEM with (truncated) hierarchical B-splines for the Poisson model problem. Optimal algebraic convergence rates have been proved independently in~\cite{bg17,ghp17}. In particular, the work~\cite{ghp17} provides a general framework for finite element discretizations guaranteeing that the residual error estimator for general second-order linear elliptic PDEs satisfies the axioms of adaptivity from~\cite{cfpp14}. Based on this framework, the recent work~\cite{gp18} also proves convergence of adaptive IGAFEM with T-splines using the refinement strategy from~\cite{mp15,morgenstern16}. 

Optimal adaptive IGABEM in 2D has been analyzed in~\cite{fghp17} for weakly-singular integral equations and in~\cite{gps19} for hyper-singular integral equations, where these works additionally consider adaptive smoothness control to locally reduce the differentiability of the discrete spline space. 
First results on optimal adaptive IGABEM in 3D are found in~\cite{gantner17}. In the spirit of~\cite{ghp17}, the recent work~\cite{gp20} provides an abstract framework for boundary element discretizations guaranteeing that the residual error estimator for weakly-singular integral equations satisfies the axioms of adaptivity from~\cite{cfpp14}. The application to IGABEM with (truncated) hierarchical B-splines is proved in~\cite{gp20+}, and the application to T-splines will be addressed in the present manuscript. 
%COMMENT: UPDATE THE REFERENCE as soon as the preprint is available.
%COMMENT: We still need to publish this...

The main goal of this work is to provide a summary of all these convergence results and the underlying adaptive spline methodologies, i.e., hierarchical splines and T-splines. 
%While the mentioned works focus on single-patch geometries, one particular focus of the present work is on multi-patch geometries.
We will also provide some further information and references on other adaptive spline methodologies in Section~\ref{subsec:others}.

%For the ease of presentation, we consider the standard adaptive loop~\eqref{eq:intro:semr} assuming that all Galerkin solutions are computed exactly. 
%We only briefly comment on very recently developed optimal preconditioners {\color{red}XXX} for the Poisson model problem and their use within an adaptive loop. COMMENT: I would not add this paragraph. Preconditioners are just a couple of remarks in the whole paper, and we would be missing a lot of important references here.

%COMMENT: Should we mention the Argentinian paper \cite{actis2018}? Otherwise, I suggest to change the title of the section (not available results, but the results that we present in detail).
%COMMENT: But this work only considers mesh-refinement and doesn't say anything about convergence.
%COMMENT: true, but now that you mention refinement, shouldn't we also cite the works by Philipp \cite{bgmp16,morgenstern17,mp15,morgenstern16}? It seems unfair that he is not cited at all in the introduction.
%COMMENT: Yes, but I have only cited the papers on T-splines. As \cite{bgmp16} does not introduce a new refinement strategy and mostly uses arguments from \cite{bgmp16}, this seems okay to me. 
%If you disagree, feel free to add it in the text.

\color{black}
%%%%%%%%%%%%%%%%%%%%%%%%%%%%%%%%%%%%%%%%%%%%%%%%%%%%%%%%%%%%%%%%%%%%%%%%%%%%%%%%%%%%%%%%%%%%%%%%%%%%%%%%%%%%%% 
\subsection{Outline and contributions}
%%%%%%%%%%%%%%%%%%%%%%%%%%%%%%%%%%%%%%%%%%%%%%%%%%%%%%%%%%%%%%%%%%%%%%%%%%%%%%%%%%%%%%%%%%%%%%%%%%%%%%%%%%%%%%
As a brief outline, Section~\ref{sec:splines} and~\ref{sec:model} present the basics on tensor-product B-splines and their application in IGA, respectively. 
In Section~\ref{sec:adaptive-splines}, we present hierarchical splines and T-splines along with corresponding refinement algorithms and with special focus on their mathematical properties. Section~\ref{sec:abstract} gives the abstract framework and the properties that guarantee optimal convergence of adaptive algorithms. This framework is applied in Section~\ref{sec:adaptive igafem} to IGAFEM and in Section~\ref{sec:igabem} to IGABEM, considering both hierarchical splines and T-splines for either method.

More in detail, Section~\ref{sec:splines} recalls the definition of non-uniform (rational) multivariate splines along with well-known properties and quasi-interpolation operators. 
It starts with univariate splines and their basis of B-splines in Section~\ref{sec:splines-univariate}.
Via tensor-products, multivariate (B-)splines are introduced in Section~\ref{sec:splines-tensor}. 
In Section~\ref{sec:NURBS}, we briefly mention that the quasi-interpolation results immediately extend to NURBS. 
Then, in Section~\ref{sec:parametrization}, we explain how geometries of arbitrary dimension can be parametrized using these NURBS functions. 

In the following Section~\ref{sec:model}, we introduce the considered model problems along with the required setting and  present standard isogeometric discretizations with multivariate splines on uniform tensor meshes as in Section~\ref{sec:splines}. 
Section~\ref{sec:parametrization_assumptions} considers NURBS parametrizations of the physical domain, which can be either a single-patch or multi-patch geometry. 
In the case of FEM, the physical domain is a Lipschitz domain, while for BEM, it is the boundary thereof.
Section~\ref{sec:IGAFEM-intro} introduces the considered PDEs in case of IGAFEM and introduces standard isogeometric ansatz functions.  
Although adaptivity will only be considered in a later section, the used {\sl a posteriori} error estimator is already formulated %\eqref{sec:estimator fem}
on uniform tensor meshes. 
Section~\ref{sec:IGABEM-intro} is structured analogously for IGABEM for weakly-singular integral equations arising from Dirichlet boundary value problems: We first introduce the boundary integral equation of the model problem and its discretization with standard IGA methods, and then we formulate the used error estimator.

Splines on adaptive meshes are discussed in Section~\ref{sec:adaptive-splines}. 
We mainly focus on hierarchical splines (Section~\ref{subsec:hb}) and T-splines (Section~\ref{subsec:tsplines}), and we also provide, without entering into details, several comments and references on other constructions such as LR-splines in Section~\ref{subsec:others}. 
For hierarchical splines, we define in Section~\ref{subsec:hb} two well-known bases of the same space, namely hierarchical B-splines %(Section~\ref{subsec:hb}) 
and truncated hierarchical B-splines. %(Section~\ref{subsec:thb}). 
%In Section~\ref{sec:hierarchical refine}, 
We further recall refinement strategies and resulting admissible hierarchical meshes, 
and we present results on the hierarchical quasi-interpolation operator from~\cite{sm16}. 
We also mention the construction of simplified hierarchical splines from~\cite{bgarau16_1}, following a refinement strategy that marks basis functions instead of elements. %in Section~\ref{subsec:simple hb}. 
In Section~\ref{subsec:tsplines}, we recall T-splines on T-meshes which are defined as span of T-spline blending functions.
The latter are in general not linearly independent, and therefore we also consider two- and three-dimensional dual-compatible T-splines, %(Section~\ref{sec:dual-compatible}), 
which indeed provide a basis.  
%In Section~\ref{sec:T refine}, 
We consider a refinement strategy generating admissible meshes that yield dual-compatible T-splines,
%There, 
and we also present a new result %(Lemma~\ref{lemma:2beziers}) 
stating that elements in an admissible T-mesh consist of at most two B\'ezier elements. 
%The role of the latter elements is discussed in Section~\ref{subsec:bezier}.
Finally, %in Section~\ref{subsec:extend tsplines}, 
we mention several extensions of T-splines. 

Section~\ref{sec:abstract} gives an abstract formulation of an adaptive mesh-refining algorithm and states and discusses the axioms of adaptivity (Section~\ref{sec:axioms}) which guarantee convergence of adaptive mesh refinement strategies at optimal algebraic convergence rates. 
Restricted to weighted-residual error estimators these axioms are simplified and adapted in the frame of IGA to FEM (Section~\ref{sec:afem}) and BEM (Section~\ref{sec:abem}), which translates into a collection of required mesh, refinement, and space properties.

In Section~\ref{sec:adaptive igafem}, we finally consider adaptive IGAFEM using the adaptive splines and refinement strategies of Section~\ref{sec:adaptive-splines}. 
Section~\ref{sec:H-igafem} deals with hierarchical splines, and Section~\ref{sec:T-igafem} deals with T-splines. 
In each case, we provide a basis of the corresponding ansatz space for homogeneous Dirichlet problems. 
Moreover, we state that both approaches fit into the abstract framework of Section~\ref{sec:afem}, where the employed weighted-residual estimator is reliable and efficient, i.e., equivalent to the total error (consisting of energy error + data oscillations). 
These results are mostly cited, but especially for hierarchical splines on THB-admissible meshes, some minor new arguments are required. % (Section~\ref{sec:H-fem space verification}). 
Further, we make the new observation that the optimal convergence rate of the total error for hierarchical splines does not depend on the considered admissibility class of the meshes. 
Indeed, it coincides with the optimal rate for arbitrary hierarchical meshes without any grading assumption. 
For hierarchical splines, all results can be relatively easily transferred to the multi-patch case, %(Section~\ref{sec:H-multipatches}), 
which in particular requires an adaptation of the single-patch refinement algorithms given in Section~\ref{sec:adaptive-splines}. 
We conclude  Section~\ref{sec:H-igafem} with three typical numerical examples %in Section~\ref{sec:numerical igafem} 
for adaptive IGAFEM with hierarchical splines. %There, 
Especially, we discuss the choice of either HB-splines or THB-splines and give some explanation on the expected optimal convergence rate. 

Section~\ref{sec:igabem} considers adaptive IGABEM and is similarly structured as Section~\ref{sec:adaptive igafem}.
Again, we state that hierarchical splines (Section~\ref{sec:H-igabem}) and T-splines (Section~\ref{sec:T-igabem}) fit into the abstract framework of Section~\ref{sec:abem}. 
While the implied optimal convergence of the corresponding adaptive IGABEM is known for hierarchical splines on HB-admissible meshes of class 2 in the literature, 
it is completely new for hierarchical splines on other HB-admissible meshes of different class or THB-admissible meshes as well as for T-splines on admissible T-meshes.  
%(see Section~\ref{sec:adaptive-splines} for the detailed definition of these concepts). 
The proof builds on the already known case and uses some arguments of Section~\ref{sec:adaptive igafem}. 
Again, we present two numerical experiments in the case of hierarchical splines. 
Finally, Section~\ref{sec:elementary_bem} presents recent results on an adaptive IGABEM in 2D which uses both $h$-refinement and multiplicity increase to steer the local smoothness of the employed standard splines. 
Although this approach does not fit exactly into the framework of Section~\ref{sec:abem}, similar techniques can be used to prove again optimal convergence rates for the weighted-residual error estimator. 
We conclude the section with a  numerical example. % in Section~\ref{sec:1D numerics}. 

Finally, Section~\ref{sec:conclusion} provides our conclusion. 
There, we also discuss several open questions in the context of adaptive IGAFEM as well as IGABEM.

%%%%%%%%%%%%%%%%%%%%%%%%%%%%%%%%%%%%%%%%%%%%%%%%%%%%%%%%%%%%%%%%%%%%%%%%%%%%%%%%%%%%%%%%%%%%%%%%%%%%%%%%%%%%%% 
\subsection{General notation}
%%%%%%%%%%%%%%%%%%%%%%%%%%%%%%%%%%%%%%%%%%%%%%%%%%%%%%%%%%%%%%%%%%%%%%%%%%%%%%%%%%%%%%%%%%%%%%%%%%%%%%%%%%%%%% 

Throughout the paper and without any ambiguity, $|\cdot|$ denotes the absolute value of scalars, the Euclidean norm of vectors, or the measure of a set.
We write $A\lesssim B$ to abbreviate $A\le cB$ with some generic constant $c>0$, which is clear from the context.
Moreover, $A\simeq B$ abbreviates $A\lesssim B\lesssim A$. 
Throughout, we use indices for non-generic meshes, e.g., $\QQ_+$ typically denotes a refinement of some given mesh $\QQ$ and $\QQ_k$ denotes the $k$-th mesh generated by the adaptive algorithm. 
Corresponding quantities have the same index, e.g., $\eta_+$ and $\eta_k$ denote the error estimators corresponding to the meshes $\QQ_+$ and $\QQ_k$, respectively. 
%Throughout, mesh-related quantities have the same index, e.g., $\NN_{\bullet}$ is the set of nodes of the partition $\TT_{\bullet}$, and $h_{\bullet}$ is the corresponding local mesh-width etc. 
%The analogous notation is used for partitions $\TT_{\circ}$ resp.\ $\TT_k$ etc. 
We often use  \,$\widehat{\cdot}$\, for notation on the parametric domain.
We employ standard notation for Sobolev spaces, e.g., $H^1(\Omega)$ denotes the space of square-integrable functions on some domain $\Omega$ whose weak derivative is square-integrable as well. 
In Section~\ref{sec:sobolev}, we briefly recall Sobolev spaces on the boundary. 
A list of acronyms is given in the following Section~\ref{sec:symbols}. 
The most important symbols are listed in the following Section~\ref{sec:symbols}.

%\subsubsection{List of acronyms}\label{sec:acronyms}
%
%\begin{tabular}{ll}
%%\hline
%%Name & Description \\
%\hline
%BEM & boundary element method\\
%CAD & computer-aided design\\
%%CG & conjugate gradient \textcolor{magenta}{CHECK! Not used?}\\
%FEM & finite element method\\
%HB-spline & hierarchical B-spline\\
%IGA & isogeometric analysis\\
%IGABEM & isogeometric analysis for BEM\\
%IGAFEM & isogeometric analysis for FEM\\
%LR-spline & locally refined spline\\
%NURBS & non-uniform rational B-splines\\
%PCG & preconditioned conjugate gradient\\
%PDE & partial differential equation\\
%PHT-spline & polynomial spline over hierarchical T-mesh\\
%THB-spline & truncated hierarchical B-spline\\
%\hline
%\end{tabular}

\onecolumn

\subsubsection{List of symbols}\label{sec:symbols}
%\begin{table}
% \caption{Important symbols (A-Z).}
% \label{table:a-z}
\begin{longtable}{lll}
\hline
Name & Description &First appearance \\
\hline
${\bf A}$ & diffusion matrix & Section~\ref{sec:FEM problem}\\
$\mathcal{A}_{\bf p}(\check\QQ,{\bf T}^0)$ & anchors in T-mesh & Section~\ref{sec:T-splines defined} \\
${\bf b}$ & drift vector & Section~\ref{sec:FEM problem}\\
%$\mathcal{B}_{\bf p}({\bf T})$ & multivariate B-splines in physical  & Section~\ref{sec:IGA-basics}\\
$\hat B_{i,p}$ & univariate B-spline & Section~\ref{sec:univariate-properties}\\
$\hat B[T_{i,p}]$ & (local) univariate B-spline & Section~\ref{sec:univariate-properties}\\
$\hat B_{{\bf i},{\bf p}}$ & multivariate B-spline & Section~\ref{sec:univariate-properties}\\
$\hat B_{{\bf i},{\bf p}}^\ell$  & hierarchical B-spline & Section~\ref{subsec:def hb}\\
$\hat B_{{\bf z},{\bf p}}$ & T-spline blending function & Section~\ref{sec:T-splines defined}\\
$\hat{\mathcal{B}}^\ell$ & uniformly refined multivariate B-splines & Section~\ref{subsec:def hb}\\
$\hat {\mathcal{B}}_p(T)$ & univariate B-splines & Section~\ref{sec:univariate-properties}\\
$\hat {{\mathcal{B}}}_{\bf p}({\bf T})$ & multivariate B-splines & Section~\ref{sec:univariate-properties}\\
$c$ & reaction coefficient & Section~\ref{sec:FEM problem}\\
$\const{apx}(s)$ & approximation constant for estimator & Section~\ref{sec:abstract main}\\
%$\tilde C_{\rm apx}(s)$ & & 7.3.4\\
$\const{apx}^{\rm tot}(s)$  & approximation constant for total error & Section~\ref{sec:oscillations}\\
$\widehat d$ & dimension of parametric domain & Section~\ref{sec:splines-tensor definition}\\
$d$ & dimension of physical domain  & Section~\ref{sec:parametrization}\\
$\dist$ & perturbation term of meshes & Section~\ref{sec:the axioms}\\
$\mathscr{D}_\nu$ & conormal derivative & Section~\ref{sec:estimator fem}\\
${\bf F}$ & NURBS parametrization  & Section~\ref{sec:parametrization}\\
$\F_m$ & NURBS parametrization of patch & Section~\ref{sec:multi-patch}\\
$G$ & fundamental solution of PDE & Section~\ref{sec:model problem bem}\\
$h$ & volume/boundary mesh-size function & Section~\ref{sec:abstract setting fem}/\ref{sec:abstract setting bem}\\
%$\hat h_{\widehat Q}$ & element size & Section~\ref{sec:splines-tensor definition}\\
$\widehat h$ & element size in parametric domain & Section~\ref{sec:splines-tensor definition}\\ 
$h_Q$ & element size & Section~\ref{sec:parametrization_assumptions}\\
$\hat{\mathcal{H}}_{\bf p}(\hat{\mathcal{Q}},{\bf T}^0)$ & hierarchical B-splines & Section~\ref{subsec:def hb}\\
%$h_Q$ & mesh-size & 5.Section~\ref{sec:parametrization_assumptions}\\
%$h_Q$ & mesh-size & Section~\ref{sec:abstract setting fem}\\
%$J$ & quasi-interpolant & Section~\ref{sec:abstract setting fem}\\
$\hat J_{p,T}$ & quasi-interpolant for univariate splines & Section~\ref{sec:qi-1d}\\
$\hat J_{{\bf p},{\bf T}}$ & quasi-interpolant for multivariate splines & Section~\ref{sec:qi-2d}\\
$\hat J_{\bf p,\hat \QQ}^{\,\rm H}$ & quasi-interpolant for hierarchical splines & Section~\ref{sec:hierarchical interpolation}\\
$\hat J_{{\bf p},\check\QQ}^{\,\rm T}$ & quasi-interpolant for T-splines & Section~\ref{sec:dual-compatible}\\
%$J$ & quasi-interpolant & Section~\ref{sec:H-multipatches}\\
$\mathscr{K}$ & double-layer operator & Section~\ref{sec:model problem bem}\\
${\rm lev}$ & level of elements in hierarchical/T-mesh & Section~\ref{subsec:def hb}/\ref{sec:T-meshes defined}\\
${\rm mot}$ & mother B-spline of truncated hierarchical B-spline & Section~\ref{subsec:thb}\\
$\NN(\check Q)$ & neighbors for T-splines in index domain & Section~\ref{sec:dual-compatible}\\
$\NN(\hat Q)$ & neighbors for T-splines in parametric domain & Section~\ref{sec:dual-compatible}\\
$\mathcal{N}(Q)$ & neighbors for volume/boundary multi-patches & Section~\ref{sec:H-multipatches}/\ref{sec:H-igabem}\\
$\mathcal{N}_{\mathcal{H}}(\hat Q,\mu)$ & neighbors for HB-splines & Section~\ref{sec:hierarchical refine}\\
$\mathcal{N}_{\mathcal{T}}(\hat Q,\mu)$ & neighbors for THB-splines & Section~\ref{sec:hierarchical refine}\\
%$\mathcal{N}(Q)$ & & Section~\ref{sec:H-igabem}\\
%$\mathcal{N}(Q)$ & & 7.2\\
$\osc$ & oscillations & Section~\ref{sec:oscillations}\\
$p$ & polynomial degree & Section~\ref{sec:univariate-properties}\\
${\bf p}$ & polynomial degree vector & Section~\ref{sec:splines-tensor definition}\\
${\bf p}_{\bf F}$ & polynomial degree vector for parametrization & Section~\ref{sec:parametrization_assumptions}\\
$\mathscr{P}$ & PDE operator & Section~\ref{sec:FEM problem}\\
%$\check{\mathcal{Q}}_{0,m}$ & & 7.2\\
%$\check\QQ$ & mesh of index domain & Section~\ref{sec:T-meshes defined}\\
%$\hat \QQ$ & mesh of parametric domain & Section~\ref{sec:splines-tensor definition}\\
%$\QQ$ & volume/boundary mesh & Section~\ref{sec:abstract setting fem}/5.Section~\ref{sec:parametrization_assumptions}\\
%$\QQ$ & mesh & Section~\ref{sec:IGA-basics}\\
$\check\QQ_0$ & initial T-mesh of index domain & Section~\ref{sec:T-meshes defined}\\
$\hat\QQ_0$ & initial hierarchical/T-mesh in parametric domain & Section~\ref{sec:hierarchical refine}/\ref{sec:T refine}\\
$\QQ_0$ & initial mesh & Section~\ref{sec:meshes_axioms}\\
$\hat{\mathcal{Q}}_\F$ & mesh of parametric domain induced by knots of parametrization & Section~\ref{sec:parametrization_assumptions}\\
$\hat \QQ^\ell$ & uniformly refined mesh of  parametric domain & Section~\ref{subsec:def hb}\\
$\QQ_{\bf F}$ & mesh induced by parametrization & Section~\ref{sec:parametrization_assumptions}\\
%$\QQ_m$ & mesh of volume/boundary patch & Section~\ref{sec:H-multipatches}\\
%$\QQ_m$ & & 7.2\\
%$\QQ$ & mesh & 6.2\\
%$\QQ_m$ & & Section~\ref{sec:H-igabem}\\
%$\QQ$ & & Section~\ref{sec:H-igabem}\\
%$\Q(N)$ & & Section~\ref{sec:abstract main}\\
%$\QQ$ & mesh & 6.1\\
$\hat \Q$ & admissible hierarchical/T-meshes & Section~\ref{sec:hierarchical refine}/\ref{sec:T refine}\\
$\Q$ & admissible meshes & Section~\ref{sec:meshes_axioms}\\
$\hat\Q_m$ & admissible meshes of volume/boundary patch in parametric domain\ & Section~\ref{sec:H-multipatches}/\ref{sec:H-igabem}\\
$\Q_m$ & admissible meshes of volume/boundary patch & Section~\ref{sec:H-multipatches}/\ref{sec:H-igabem}\\
%$\hat\Q_m$ & & 7.2\\
%$\Q_m$ & Section~\ref{sec:H-igabem}\\
%$\Q_m$ & & 7.2\\
%$\Q$ & admissible meshes & 6.1\\
%$\Q$ & admissible meshes of physical domain & Section~\ref{sec:H-multipatches}\\
%$\Q$ & admissible meshes & 6.2\\
%$S_{\rm ext}(I_j)$ & univariate support extension & Section~\ref{sec:univariate-properties}\\
$S_{\rm ext}(\hat Q)$ & support extension (for B-splines and T-splines) & Section~\ref{sec:splines-tensor definition}/\ref{sec:dual-compatible}\\
$S_{\rm ext}(\widehat Q,k)$ & multilevel support extension & Section~\ref{sec:hierarchical refine}\\
$S_{\rm ext}^*(\hat Q)$ & modified support extension & Section~\ref{sec:hierarchical interpolation}\\
$\mathbb{S}$ & FEM/BEM ansatz space & Section~\ref{sec:FEM problem}/\ref{sec:model problem bem}\\
$\hat {\mathbb{S}}_p(T)$ & space of univariate splines & Section~\ref{sec:univariate-properties}\\
$\hat {\mathbb{S}}_{\bf p}({\bf T})$   & space of multivariate splines & Section~\ref{sec:splines-tensor definition}\\
$\hat{\mathbb{S}}_{\bf p}^{\rm H}(\hat Q,{\bf T}^0)$ & space of hierarchical splines & Section~\ref{subsec:def hb}\\
$\hat{\mathbb{S}}_{\bf p}^{\rm T}(\check\QQ,{\bf T}^0)$ & space of T-splines & Section~\ref{sec:T-splines defined}\\
%$\mathbb{S}_{\bf p}({\bf T})$ & multivariate splines in physical domain  & Section~\ref{sec:IGA-basics}\\
%$\hat S$ & ansatz functions & 6.2\\
%$\mathbb{S}$ & ansatz functions & 6.2\\
%$\mathbb{S}$ & & Section~\ref{sec:H-igabem}\\
%$\mathbb{S}$ & & 7.2\\
%$\mathbb{S}$ & ansatz space & 6.1\\
%$\mathbb{S}$ & ansatz space & Section~\ref{sec:abstract setting fem}\\
%$\mathbb{S}$ & ansatz space & 5.Section~\ref{sec:parametrization_assumptions}\\
$T$ & knot vector & Section~\ref{sec:univariate-properties}\\
$T_0$ & initial knot vector & Section~\ref{subsec:concrete refinement bem2}\\
$\hat T_{{\bf i},{\bf p}}^\ell$  & truncated hierarchical B-spline & Section~\ref{subsec:thb}\\
${\rm Trunc}^{\ell+1}$ & truncation operator & Section~\ref{subsec:thb}\\
${\bf T}$ & vector of knot vectors & Section~\ref{sec:splines-tensor definition}\\
${\bf T}^0$ & initial vector of knot vectors & Section~\ref{sec:T-meshes defined}\\
${\bf T}_\F$ & vector of knot vectors for parametrization & Section~\ref{sec:parametrization_assumptions}\\
${\bf T}^\ell$ & uniformly refined vector of knot vectors & Section~\ref{subsec:def hb}\\
$\hat{\mathcal{T}}_{\bf p}(\hat{\mathcal{Q}},{\bf T}^0)$ & truncated hierarchical B-splines & Section~\ref{subsec:thb}\\
$\mathbb{T}$ & admissible knot vectors for univariate refinement & Section~\ref{subsec:concrete refinement bem2}\\
$u$ & PDE solution & Section~\ref{sec:FEM problem}\\
$U$ & Galerkin FEM approximation & Section~\ref{sec:FEM problem}\\
$\mathcal{V}$ & vertices of mesh  & Section~\ref{sec:igabem1d setting}\\
$\mathcal{V}_\F$ & vertices of geometry & Section~\ref{sec:assumption-BEM}\\
$\mathscr{V}$ & single-layer operator & Section~\ref{sec:model problem bem}\\
$Z$ & breakpoints & Section~\ref{sec:univariate-properties}\\
$\hat\gamma_0$ & shape-regularity constant & Section~\ref{subsec:concrete refinement bem2}\\
$\hat\Gamma$ & parametric domain for BEM & Section~\ref{sec:parametrization_assumptions}\\
$\Gamma$ & physical domain for BEM  & Section~\ref{sec:parametrization_assumptions}\\
$\Gamma_{m,m'}$ & interface between NURBS patches & Section~\ref{sec:multi-patch}\\
%$\eta$ & estimator & 7.3.3\\
$\eta$ & error estimator for FEM/BEM & Section~\ref{sec:estimator fem}/\ref{sec:estimator bem}\\
$\hat\lambda_{i,p}$ & univariate dual functional & Section~\ref{sec:qi-1d}\\
$\hat\lambda_{{\bf i},{\bf p}}$ & multivariate dual functional & Section~\ref{sec:qi-2d}\\
$\hat\lambda_{{\bf i}, {\bf p}}^\ell$ & dual functional for hierarchical splines  & Section~\ref{sec:hierarchical interpolation}\\
$\hat\lambda_{{\bf z},{\bf p}}$ & dual functional for T-splines & Section~\ref{sec:dual-compatible}\\
$\mu$ & admissibility parameter for hierarchical meshes & Section~\ref{sec:hierarchical refine}\\
$\nu$ & outer normal vector & Section~\ref{sec:estimator fem}\\
%$\pi_\F({\bf z})$\\
$\pi^q$ & volume/boundary element-patch & Section~\ref{sec:abstract setting fem}/\ref{sec:abstract setting bem}\\
$\Pi^q$ & volume/boundary element-patch (elements) & Section~\ref{sec:abstract setting fem}/\ref{sec:abstract setting bem}\\
$\phi$ & solution of boundary integral equation  & Section~\ref{sec:model problem bem}\\
$\Phi$ & Galerkin BEM approximation & Section~\ref{sec:IGA-basics}\\
$\hat\Omega$ & parametric domain & Section~\ref{sec:parametrization_assumptions}\\
$\Omega$ & physical domain & Section~\ref{sec:parametrization_assumptions}\\
$\check\Omega_{\rm act}$ & active region for definition of T-splines& Section~\ref{sec:T-splines defined}\\
$\check\Omega_{\rm ind}$ & index domain for definition of T-splines & Section~\ref{sec:T-meshes defined}\\
$\check\Omega_{\rm ip}$ & index/parametric domain for definition of T-splines & Section~\ref{sec:T-meshes defined}\\
$\widehat \Omega^\ell$ & nested subsets of parametric domain  & Section~\ref{subsec:def hb}\\
$\Omega_m$ & NURBS patch & Section~\ref{sec:multi-patch}\\
$\preceq$ & refinement relation & Section~\ref{subsec:def hb}\\
$\#$ & multiplicity of a breakpoint & Section~\ref{sec:univariate-properties}\\
$\nabla_\Gamma$ & surface gradient & Section~\ref{sec:sobolev}\\
$[\cdot]$ & jump & Section~\ref{sec:estimator fem}\\
$\dual{\cdot}{\cdot}_{\mathscr{P}}$ & bilinear form induced by PDE & Section~\ref{sec:FEM problem}\\
\hline
\end{longtable}

\twocolumn

%Mention that the identity operator is denoted by $1$.
%I would introduce general notation for $H^s$ spaces. For the details in the case of non-integer index, refer to Section~\ref{sec:sobolev}.
%$H^s$ spaces
%$A \lesssim B$, $A \gtrsim B$, $A \simeq B$ 

\newpage

%%% Local Variables:
%%% mode: latex
%%% TeX-master: "adaptive_iga"
%%% End:

% !TEX encoding = MacOSRoman
% !TEX root = adaptive_iga.tex

\section{Splines on tensor meshes}
\label{sec:splines}

The main purpose of this section is to introduce some basic concepts and notation that will be used throughout the paper. 
In Section~\ref{sec:splines-univariate} and \ref{sec:splines-tensor}, we recall the definition as well as elementary properties of univariate and multivariate splines and their B-spline basis.
In Section~\ref{sec:NURBS}, we introduce non-rational splines along with the NURBS basis, which are then used in Section~\ref{sec:parametrization} to define NURBS parametrizations.
For a more detailed introduction and proofs, we refer, e.g., to  \cite{boor86,boor01,schumaker07}. 
%\vazquez{Our notation mainly follows the one introduced in the survey \cite{bbsv14}. Probably, this is not true anymore.}

\subsection{Univariate B-splines}\label{sec:splines-univariate}

\subsubsection{Definition and properties} \label{sec:univariate-properties}
Given two integers $p \ge 0$ and $n > 0$, we define a \emph{knot vector} as an ordered vector of the form
\[
\kv = (\knot_1, \ldots, \knot_{n+p+1}),
\]
with $\knot_j \le \knot_{j+1}$ for all $1 \le j \le n+p$. We say that $T$ is an \emph{open (or $p$-open) knot vector}, if the first and last knots are repeated exactly $p+1$ times, i.e., $\knot_1 = \ldots =\knot_{p+1} < \knot_{p+2}$ and $\knot_n < \knot_{n+1} = \ldots = \knot_{n+p+1}$. For simplicity, we will assume that $\knot_{1} = 0$ and $\knot_{n+p+1} = 1$ in the following.

We also introduce the ordered set of \emph{breakpoints} $\brkset = \{ \brkpnt_1, \ldots, \brkpnt_{n'} \}$, which accounts for knots without repetitions. We denote by $\# z_j$ the \emph{multiplicity} of the breakpoint $\brkpnt_j$, such that $\sum_{j=1}^{n'} \# z_j = n+p+1$ and 
\begin{equation*}%\label{eq:knots:no:reps}
\kv = (\underbrace{\brkpnt_1, \dots, \brkpnt_{1}
}_{\#z_1 \text{ times}},\underbrace{\brkpnt_2, \dots, \brkpnt_{2} }_{\# z_2
  \text{ times}},\ldots, \underbrace{\brkpnt_{n'}, \dots, \brkpnt_{{n'}} }_{\# z_{n'}
  \text{ times}}).
\end{equation*}
For $2 \le j \le {n'}-1$, i.e., for all internal knots, the multiplicity satisfies $\# z_j \le p+1$. Later on, and in particular for FEM, we will require lower multiplicity.

From the knot vector $\kv$, a set of $n$ \emph{B-splines} is defined using the Cox--de Boor recursion formula. We start defining the piecewise constant functions
\begin{equation*}%\label{eq:Cox--deboor-1}
\hat B_{i,0}(\param) := \left \{
\begin{array}{ll}
1 & \text{ if } \knot_i \leq \param < \knot_{i+1}, \\
0 & \text{ otherwise}.
\end{array}
\right.
\end{equation*}
For $1 \le k \leq p$, the \emph{B-spline functions} are defined by the recursion
\begin{equation*}%\label{eq:Cox--deboor-2}
\hat B_{i,k}(\param) = \frac{\param - \knot_i}{\knot_{i+k} - \knot_i} \hat B_{i,k-1}(\param) + \frac{\knot_{i+k+1} - \param}{\knot_{i+k+1} - \knot_{i+1}} \hat B_{i+1,k-1}(\param),
\end{equation*}
where we use the convention that fractions with zero denominator are equal to zero.

Among many other properties, the B-splines are non-negative and satisfy the partition of unity (see \cite[Theorem~4.20]{schumaker07})
\[
\sum_{i=1}^n \hat B_{i,p}(\param) = 1, \quad \text{for all } \param \in (0,1),
\]
they have local support (see \cite[Theorem~4.17]{schumaker07}), in particular
\begin{equation} \label{eq:support_univariate}
\supp( \hat B_{i,p}) = [\knot_i,\knot_{i+p+1}], \; \text{ for }i = 1, \ldots, n,
\end{equation}
they are locally linearly independent in the sense that for any open set  $O \subseteq (0,1)$ the functions $\{B_{i,p}|_{O} : B_{i,p}|_{O} \neq 0 \}$ are linearly independent (see \cite[Chapter~IX, (47)]{boor01} and
%non-vanishing B-splines on that interval are linearly independent in $[t_j, t_k)$
\cite[Theorem~4.18]{schumaker07}), and they form a basis of the space of piecewise polynomials of degree $p$ with $p - \# z_j$ continuous derivatives at the breakpoints $\brkpnt_j$, for each $j = 2, \ldots,{n'}-1$ (see \cite[Chapter~IX, (44)]{boor01}). Notice that the maximum and minimum allowed continuity at the breakpoints are $C^{p-1}$ and $C^{-1}$, which correspond to multiplicity $\# z_j = 1$ and $\# z_j = p+1$, respectively. We denote the \emph{basis of B-splines} as
\[
\spbasis_p(T) := \{ \hat B_{i,p} : i=1,\ldots, n \},
\]
and the \emph{spline space} spanned by them as
\begin{equation*} %\label{eq:spline-univariate-space}
\spu{p}{\kv} := \mathrm{span} (\spbasis_p(T) ).
\end{equation*}

It is easy to see, from the recursion formula in the definition, that the definition of the B-spline $\hat B_{i,p}$, for $i = 1, \ldots, n$, depends only on the \emph{local knot vector} $\kv_{i,p} = (\knot_i, \ldots, \knot_{i+p+1})$, which is closely related to the support of the function \eqref{eq:support_univariate}. When necessary, and in particular when dealing with T-splines, we will stress this fact by using the equivalent notation
\begin{equation} \label{eq:spline-1d}
\hat B[\kv_{i,p}] := \hat B_{i,p}.
%\hat B[\kv_{i,p}] (\param) \equiv \hat B_{i,p}(\param).
\end{equation}

Finally, we note that the breakpoints in $\brkset$ generate a partition of the interval $(0, 1)$, and we denote by $I_j := (\brkpnt_j, \brkpnt_{j+1})$ the local elements for $j = 1, \ldots, {n'}-1$, and by $\widehat h_j := \brkpnt_{j+1} - \brkpnt_j$ their corresponding element sizes. 
For each element $I_j$, which can be uniquely written as $(\knot_i, \knot_{i+1})$ for a certain index $p+1 \le i \le n$, we introduce its \emph{support extension}
\begin{equation} \label{eq:supp_ext}
 \sext{{I}_j} := [\knot_{i-p}, \knot_{i+p+1}] ,
\end{equation}
being the union of the supports of B-splines that do not vanish on $I_j$.

Assuming that the maximum multiplicity of the internal knots is less than
or equal to the degree $p$,  i.e., the B-spline functions are at least continuous, the (right-hand) derivative of each B-spline $\hat B_{i,p}$ is given by the expression \cite[Sect.~4.2]{schumaker07}
\begin{equation*}%\label{eq:spline_deriv}
\hat B'_{i,p}= \frac{p}{\knot_{i+p}-\knot_i} \hat B_{i,p-1} - \frac{p}{\knot_{i+p+1}-\knot_{i+1}} \hat B_{i+1,p-1}.
%\hat B'_{i,p} (\param)= \frac{p}{\knot_{i+p}-\knot_i} \hat B_{i,p-1}(\param) - \frac{p}{\knot_{i+p+1}-\knot_{i+1}} \hat B_{i+1,p-1}(\param),
\end{equation*}
%where it is assumed that $\hat B_{1,p-1}= \hat B_{n+1,p-1}=0$.
%where it is assumed that $\hat B_{1,p-1}(\param)= \hat B_{n+1,p-1}(\param) =0$.

\subsubsection{Quasi-interpolation operators}\label{sec:qi-1d}
Let $C_{\rm locuni}\ge1$ be such that the following local quasi-uniformity is satisfied 
\begin{align}\label{eq:lqi}
C_{\rm locuni}^{-1} \le \frac{\widehat h_j}{\widehat h_{j+1}} \le C_{\rm locuni}
\end{align}
for all $j=1,\dots,n'-2$.
Clearly, for a given a knot vector $\kv$, such a constant always exists and we will use it to stress certain dependencies on the ratios $\widehat h_j/\widehat h_{j+1}$.
%Before defining the quasi-interpolation operators, we introduce two assumptions regarding the uniformity of the mesh.
%\begin{assumption} \label{ass:lqi}
%The partition defined by the breakpoints in $\brkset$ is locally quasi-uniform, that is, there exists a constant $C \ge 1$ such that $C^{-1} \le \widehat h_j / \widehat h_{j+1} \le C$ for $j = 1, \ldots, {n'}-2$.
%\end{assumption}
%\begin{assumption} \label{ass:qi-supp}
%There exists a positive constant $C > 0$ such that, for each element $I_j$ of the partition, and for any B-spline $\hat B_{i,p}$ such that $I_j \subset \supp \, \hat B_{i,p}$, it holds that
%\[
%1 \le \frac{|\sext{{I}_j}|}{|\supp \, \hat B_{i,p} |} \le C.
%\]
%\end{assumption}
%Notice that Assumption~\ref{ass:lqi} implies Assumption~\ref{ass:qi-supp}. Moreover, the latter allows the shrinking of at most $p+1$ consecutive knots, and it is strictly weaker than the former. We refer to \cite[Sect.~3]{bggs16} for the details.

There are several ways to define quasi-interpolation and projection operators onto the space of splines $\spu{p}{\kv}$. In this work, we are interested in the theoretical properties of these operators, and not in their actual computation. For this reason, we will focus on two particular operators, and refer the reader to \cite{Sabl05} for further discussion on quasi-interpolation operators.

To define the quasi-interpolation operators, we first need to define a set of linear functionals $\hat{\lambda}_{i,p}$ associated to the B-splines.
Then, the quasi-interpolant takes the form
\begin{equation} \label{eq:qi-1D}
\proju{p}{\kv}: L^2(0,1) \rightarrow \spu{p}{\kv}, \quad \hat v\mapsto \sum_{i=1}^n \hat{\lambda}_{i,p}(\hat v) \hat B_{i,p}.
\end{equation}
Notice that, when $\hat{\lambda}_{i,p} (\hat B_{j,p}) = \delta_{ij}$, with $\delta_{ij}$ the Kronecker symbol, the linear functionals form a \emph{dual basis}, and the quasi-interpolant becomes a projector, i.e.,
\begin{equation*}
\proju{p}{\kv} \hat v = \hat v \quad \text{ for all } \hat v \in \spu{p}{\kv}.
\end{equation*}

The first operator that we use was introduced in \cite{deboor75} (see also \cite[Sect.~4.6]{schumaker07}) and is the one traditionally used in IGA \cite{bbchs06,bbsv14}.
We will denote it by $\proju{p}{\kv}^{\,\rm dB}$ (where dB stands for de Boor). In this case, the functionals are defined as
\begin{equation}
   \label{eq:lambda-hat-shumacker-1D}
\hat{\lambda}_{i,p}(\hat v) \equiv \hat{\lambda}^{\,\rm dB}_{i,p}(\hat v) :=  \int_{\knot_i}^{\knot_{i+p+1}} \hat v(s) D^{p+1}\psi_{i}(s)~{\rm d}s,  
\end{equation}
  where $D^{k}$ stands for the $k$-th derivative, and $\psi_{i}(\param)=G_{i}(\param)\phi_{i}(\param)$, with
\[
\phi_i(\param) := \frac{(\param - \knot_{i+1}) \dots (\param - \knot_{i+p})}{p!}, 
\] 
and 
\[
G_{i}(\param) := g\left(\frac{2\param-\knot_i-\knot_{i+p+1}}{\knot_{i+p+1}-\knot_{i}}\right),
\]
where $g$ is the transition function defined in \cite[Theorem 4.37]{schumaker07}. Note that it is trivial to see from \eqref{eq:lambda-hat-shumacker-1D} that 
\begin{equation*}
\hat v|_{\supp (\hat B_{i,p})} = 0 \implies \hat{\lambda}^{\,\rm dB}_{i,p}(\hat v) = 0.
\end{equation*}
Moreover, we notice that the definition of each dual functional is based on the local knot vector, and we will stress this fact with the alternative notation
\begin{equation} \label{eq:dual-basis-kv}
\hat{\lambda}^{\,\rm dB}[\kv_{i,p}] := \hat{\lambda}^{\,\rm dB}_{i,p}.
\end{equation}

The second operator is defined in \cite{bggs16,tsete15}, to which we refer for the details.  We will denote it by $\proju{p}{\kv}^{\,\rm Bp}$, since it is sometimes called B\'ezier projection. We start defining, for each element $I_j$, the local $L^2$-projection $P_{I_j}$ into the space of polynomials of degree $p$ on $I_j$. Since B-splines span piecewise polynomials, the local $L^2$-projection $P_{I_j}$ can be written as in \eqref{eq:qi-1D} considering the restriction of the functions to $I_j$, namely
\begin{equation} \label{eq:local-proj}
P_{I_j} (\hat v|_{I_j}) = \sum_{\substack{i=1 \\ \supp( \hat B_{i,p}) \cap I_j \not = \emptyset}}^n \hat{\lambda}^{I_j}_{i,p} (\hat v) \hat B_{i,p}|_{I_j}.
\end{equation}
Then, the functionals $\hat{\lambda}_{i,p} \equiv \hat{\lambda}^{\,\rm Bp}_{i,p}$ are defined as convex combinations of the corresponding functionals of the local projection
\[
\hat{\lambda}_{i,p} := \sum_{\substack{j=1 \\I_j \cap \supp( \hat B_{i,p}) \not = \emptyset}}^{n'-1} c_{i,I_j} \hat{\lambda}^{I_j}_{i,p},
\]
with coefficients $c_{i,I_j} \ge 0$ and $\sum_{j=1}^{{n'}-1} c_{i,I_j} = 1$ for $1 \le i \le n$.
The functionals form a dual basis. 
For the following results, the concrete choice of the coefficients $c_{i,I_j}$ is not relevant. Among the three suggested choices given in \cite[Section~6]{bggs16}, we consider the following one: for each basis function $\hat B_{i,p}$,  we choose a local element $I_{k(i)} \subseteq \supp( \hat B_{i,p})$ such that
\[
| I_{k(i)}| \simeq |\supp( \hat B_{i,p})|.
\]
In our case, this is valid for any element thanks to \eqref{eq:lqi}, with  hidden constants that depend only on the degree $p$ and the constant $C_{\rm locuni}$. 
Then, the coefficients are taken as
\[
c_{i,I_j} := \left \{
\begin{array}{ll}
1 & \text{ if } I_j = I_{k(i)}, \\
0 & \text{ otherwise},
\end{array}
\right.
\]
and the dual functionals simply become $\hat{\lambda}^{\,\rm Bp}_{i,p} = \hat{\lambda}^{I_{k(i)}}_{i,p}$.

%% For the following results, the concrete choice of the coefficients $c_{i,I_j}$ is not relevant, but \cite[Section~6]{bggs16} suggests to choose
%% \[
%% \textstyle c_{i,I_j} = \frac{\int_{I_j} \hat B_{i,p}\,{\rm d}t}{ \int_{\supp \hat B_{i,p}} \hat B_{i,p}\,{\rm d}t} .
%% \]
%The choice of these coefficients is also important to determine the stability of the functionals, and in particular depending on the uniformity of the mesh as in Assumptions~\ref{ass:lqi} and~\ref{ass:qi-supp}. As explained in \cite[Section~6]{bggs16}, if the mesh satisfies Assumption~\ref{ass:lqi}, any choice satisfying the above conditions is valid to obtain a stable dual basis, and it is suggested to choose
%\[
%\textstyle c_{i,I_j} = \left(\int_{I_j} \hat B_{i,p} \right) / \left( \int_{\supp \hat B_{i,p}} \hat B_{i,p} \right).
%\]
%Instead, if the mesh only satisfies Assumption~\ref{ass:qi-supp}, following \cite{bggs16}, we first choose for each basis function $\hat B_{i,p}$ an interval $I_{k(i)} \subset \supp \hat B_{i,p}$ such that
%\[
%| I_{k(i)}| \simeq |\supp \hat B_{i,p}|,
%\]
%that is, the chosen interval cannot be a shrinked one. Then the coefficients are taken as
%\[
%c_{i,I_j} = \left \{
%\begin{array}{l}
%1 \text{ if } I_j = I_{k(i)}\\
%0 \text{ otherwise}.
%\end{array}
%\right.
%\]

The importance of these two quasi-interpolants comes from the following stability result.
%and approximation results. 
The proofs can be found in \cite[Propositions~2.2]%and~4.2]
{bbsv14} and \cite[Theorem~2]{bggs16}, respectively.
%COMMENT: For the adaptive splines, I would like to only state the most important stability property. For consistency, I would remove the approximation property also here. Indeed, it is trivial anyway as it follows immediately from the projection property and standard polynomial approximation estimates.
%If you wish, we could write some text afterwards and give references.
\begin{proposition} \label{prop:schumaker bezier}
Let either $\proju{p}{\kv}= \proju{p}{\kv}^{\,\rm dB}$ or $\proju{p}{\kv}= \proju{p}{\kv}^{\,\rm Bp}$.
%Let $0 \le s \le p+1$ be an integer and $\hat v \in H^s(0,1)$. 
Then, for any interval $I_j$, it holds that
\[
\| \proju{p}{\kv} \hat v \|_{L^2(I_j)} \le C \| \hat v \|_{L^2(\sext{I_j})}  \; \text{ for all } \hat v\in L^2(0,1),
\]
%Moreover, 
%it holds that
%\[
%\| (1-\proju{p}{\kv})  \hat v \|_{L^2(I_j)} \le C_2 \widehat h_j^s \| \hat v \|_{H^s(\sext{I_j})},
%\]
where the constant $C
%,C_2
>0$ depends only on the degree $p$ and the constant $C_{\rm locuni}$.
\end{proposition}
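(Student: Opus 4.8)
The plan is to exploit the locality of the quasi-interpolant together with a uniform bound on the dual functionals. First I would note that, since each B-spline vanishes outside $[\knot_i,\knot_{i+p+1}]$ by \eqref{eq:support_univariate}, the restriction of $\proju{p}{\kv}\hat v$ to the element $I_j$ involves only the (at most $p+1$) indices $i$ with $\supp(\hat B_{i,p})\cap I_j\neq\emptyset$. Using non-negativity and the partition of unity, which give $0\le\hat B_{i,p}\le1$ and hence $\|\hat B_{i,p}\|_{L^2(I_j)}\le|I_j|^{1/2}$, the triangle inequality applied to \eqref{eq:qi-1D} yields
\[
\|\proju{p}{\kv}\hat v\|_{L^2(I_j)}\le|I_j|^{1/2}\!\!\sum_{i:\,\supp(\hat B_{i,p})\cap I_j\neq\emptyset}\!\!|\hat\lambda_{i,p}(\hat v)|.
\]
Because there are at most $p+1$ such terms and $\supp(\hat B_{i,p})\subseteq\sext{I_j}$ by \eqref{eq:supp_ext}, the whole statement reduces to the single local functional estimate $|\hat\lambda_{i,p}(\hat v)|\le C\,|I_j|^{-1/2}\,\|\hat v\|_{L^2(\sext{I_j})}$ for each active index.

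For the de Boor functional \eqref{eq:lambda-hat-shumacker-1D}, I would apply Cauchy--Schwarz to get $|\hat\lambda^{\,\rm dB}_{i,p}(\hat v)|\le\|\hat v\|_{L^2(\knot_i,\knot_{i+p+1})}\,\|D^{p+1}\psi_i\|_{L^2(\knot_i,\knot_{i+p+1})}$, so everything hinges on controlling $\|D^{p+1}\psi_i\|_{L^2}$. The idea is a scaling argument: mapping $[\knot_i,\knot_{i+p+1}]$ affinely onto a reference interval, the transition factor $G_i$ becomes the fixed function $g$ from \cite[Theorem~4.37]{schumaker07}, while $\phi_i$ rescales to a polynomial whose coefficients depend only on the (at most $p$) interior knot-gap ratios within the support. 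By the local quasi-uniformity \eqref{eq:lqi}, these ratios are bounded in terms of $C_{\rm locuni}$ and $p$, so the rescaled $\psi_i$ and its derivatives are bounded by a constant $C(p,C_{\rm locuni})$; undoing the $(p+1)$-fold differentiation and the change of variables gives $\|D^{p+1}\psi_i\|_{L^2(\knot_i,\knot_{i+p+1})}\le C\,(\knot_{i+p+1}-\knot_i)^{-1/2}$. Finally, \eqref{eq:lqi} also yields $\knot_{i+p+1}-\knot_i\simeq|I_j|$ whenever $\supp(\hat B_{i,p})$ meets $I_j$, which turns this into the desired functional bound.

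For the B\'ezier projection the reduction is identical once the functional bound is established. Here $\hat\lambda^{\,\rm Bp}_{i,p}=\hat\lambda^{I_{k(i)}}_{i,p}$ is the dual functional of the local $L^2$-projection $P_{I_{k(i)}}$ onto polynomials of degree $p$ on the chosen element $I_{k(i)}\subseteq\supp(\hat B_{i,p})$ with $|I_{k(i)}|\simeq|\supp(\hat B_{i,p})|$; see \eqref{eq:local-proj}. Since the restrictions of the active B-splines to $I_{k(i)}$ form a basis of the polynomials of degree $p$, their Gram matrix on the reference element is invertible with a norm bound depending only on $p$; a scaling argument then gives $|\hat\lambda^{I_{k(i)}}_{i,p}(\hat v)|\le C\,|I_{k(i)}|^{-1/2}\,\|\hat v\|_{L^2(I_{k(i)})}$ with $C=C(p)$. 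As $I_{k(i)}\subseteq\supp(\hat B_{i,p})\subseteq\sext{I_j}$ and $|I_{k(i)}|\simeq|I_j|$ by \eqref{eq:lqi}, the functional bound follows, and assembling the pieces in both cases produces the claimed constant $C=C(p,C_{\rm locuni})$.

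The main obstacle is precisely these two scaling estimates for the dual functionals: the bound on $\|D^{p+1}\psi_i\|_{L^2}$ for the de Boor operator, and the Gram-matrix (condition-number) bound for the B\'ezier operator. In both I must verify that, after affine rescaling to a reference configuration, the relevant quantities depend on the knot vector only through the bounded gap ratios controlled by $C_{\rm locuni}$. Everything else---counting the at most $p+1$ active B-splines, the uniform bound $\hat B_{i,p}\le1$, and the inclusion $\supp(\hat B_{i,p})\subseteq\sext{I_j}$---is routine.
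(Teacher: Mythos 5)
Your route is sound in outline, but note first how it relates to the paper: the paper does not prove this proposition at all --- it only cites \cite[Propositions~2.2]{bbsv14} for $\proju{p}{\kv}^{\,\rm dB}$ and \cite[Theorem~2]{bggs16} for $\proju{p}{\kv}^{\,\rm Bp}$. What you wrote is essentially a reconstruction of the arguments behind those references: reduce, via $0\le \hat B_{i,p}\le 1$ and the at most $p+1$ active indices, to the functional bound $|\hat\lambda_{i,p}(\hat v)|\le C\,|I_j|^{-1/2}\|\hat v\|_{L^2(\sext{I_j})}$, and then prove that bound by an affine scaling argument. For the de Boor branch your argument is correct and in fact sharper than you claim: after rescaling $[\knot_i,\knot_{i+p+1}]$ to a reference interval, the interior knots land in $[0,1]$, so the coefficients of the rescaled $\phi_i$ are bounded by constants depending only on $p$, with no appeal to \eqref{eq:lqi}; moreover only the trivial direction $|I_j|\le \knot_{i+p+1}-\knot_i$ of your final equivalence is needed. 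Hence for $\proju{p}{\kv}^{\,\rm dB}$ the constant depends on $p$ alone, and $C_{\rm locuni}$ enters the proposition only through the B\'ezier operator.

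In the B\'ezier branch, however, one intermediate claim is false as stated: that the Gram matrix of the restricted B-splines on the reference element is invertible ``with a norm bound depending only on $p$''. The restrictions of the active B-splines to a single element depend on the neighboring knots, and without a mesh-ratio assumption this basis degenerates. Concretely, for $p=2$ and knots $(\dots,-M,0,1,M+1,\dots)$, the left active B-spline restricted to $[0,1]$ equals $(1-t)^2/(1+M)$, so extracting its coefficient from the polynomial $(1-t)^2$ (which has unit sup-norm on $[0,1]$) returns $1+M$; the inverse Gram norm thus blows up as $M\to\infty$. The correct statement --- and the one you actually need --- is that the dual functional norm is bounded by a constant depending on $p$, on the ratio $|\supp(\hat B_{i,p})|/|I_{k(i)}|$, and on the neighboring gap ratios, all of which are controlled by $C_{\rm locuni}$ through \eqref{eq:lqi} and the choice $|I_{k(i)}|\simeq|\supp(\hat B_{i,p})|$. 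Your closing paragraph states precisely this requirement, so the slip is internal rather than fatal, but the middle paragraph's ``$C=C(p)$'' must be replaced by $C=C(p,C_{\rm locuni})$, and the compactness (or de Boor-type) argument has to be carried out with that ratio dependence in place --- which is exactly what \cite[Theorem~2]{bggs16} does.
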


\subsection{Multivariate B-splines} \label{sec:splines-tensor}
The generalization of univariate B-splines to the multivariate setting is done by tensorization. In this section, we introduce the notation for the tensor-product basis functions and spaces.

\subsubsection{Definition and properties}\label{sec:splines-tensor definition}
Let $\dpa$ be the space dimension, which will be $\dpa = 2, 3$  in practical cases. Let the integers $p_j\ge 0$ and $n_j > 0$, and  let $\kv_j = (\knot_{j,1} , \ldots, \knot_{j,n_j+p_j+1} )$ be a $p_j$-open knot vector for each $j = 1, \ldots, \dpa$. We set the degree vector $\mathbf{p} := (p_1, \ldots, p_\dpa)$ and $\mathbf{\kv} := (\kv_1, \ldots , \kv_\dpa)$. Then, \emph{multivariate B-splines} are defined as products of the form
\[
  \hat B_{{\bf i},{\bf p}}(\bparam) := \hat
    B_{i_1,p_1}(\param_1) \cdots \hat B_{i_\dpa,p_\dpa}(\param_\dpa),
\]
for ${\bf i} = (i_1, \ldots, i_\dpa)$ and $1 \le i_j \le n_j$ for each $j = 1,\ldots, \dpa$, where it is understood that $\hat B_{i_j,p_j}$ is defined from the knot vector $T_j$. Analogously to the univariate case, we will denote the \emph{B-spline basis} as
\[
\spbasis_{\bf p}(\mathbf{\kv}) := \{\hat B_{{\bf i},{\bf p}} : \, {\bf i} = (i_1, \ldots, i_\dpa), \, 1 \le i_j \le n_j \},
\]
while the \emph{spline space} is the spanned space, which is denoted by
\[
\spm := \mathrm{span} (\spbasis_{\bf p}(\mathbf{\kv})). %\{\hat B_{{\bf i},{\bf p}}, \, {\bf i} = (i_1, \ldots, i_\dpa), \, 1 \le i_j \le n_j \}. 
\]
It is worth noting that $ \spm = \otimes_{j=1}^\dpa \spu{p_j}{\kv_j}$, i.e., it can defined as tensor-product of the univariate spaces. Multivariate B-splines have basically the same properties as univariate B-splines: they are non-negative and form a partition of unity, they have local support, and they are locally linearly independent.

Analogously to the univariate case, from the knot vector in each direction we define the set of breakpoints, or knots without repetitions, $\brkset_j := \{ \brkpnt_{j,1}, \ldots, \brkpnt_{j,n'_j}\}$, for $j = 1, \ldots, \dpa$. Analogously to the partition of the interval in the univariate case, the breakpoints form a rectilinear grid of the form 
\begin{align*}
%  \label{eq:parametric-Bezier-mesh}
\hat \mesh := & \{ \hat \elem_{\bf k} = I_{1,k_1} \times \ldots \times I_{\dpa, k_\dpa} : \\
& I_{j, k_j} = (\brkpnt_{j,k_j},
  \brkpnt_{j,k_j+1}) \text{  for  } 1\leq k_j \leq n'_j-1 \}.
\end{align*}
For a generic element $\hat \elem_{\bf k} \in \hat \mesh$, we define the element size as 
\[
\widehat h_{\hat \elem_{\bf k}} := |\hat \elem_{\bf k}|^{1/\dpa}.
\] 
We also define its support extension as the union of the (open) supports of basis functions that do not vanish in $\hat \elem_{\bf k}$, and due to the tensor-product structure, this is defined from the univariate support extensions as
\begin{equation}\label{eq:multivariatese}
\sext{\hat \elem_{\bf k}} := \sext{I_{1,k_1}} \times \ldots \times \sext{I_{\dpa,k_\dpa}} 
\end{equation}
for ${\bf k} = (k_1, \ldots, k_\dpa)$.
Here, $\sext{I_{j,k_j}}$ is the univariate support extension in the $j$-th direction given by \eqref{eq:supp_ext}.

%Finally, we say that a mesh $\hat \QQ'$ is a \emph{refinement} of $\hat \QQ$, and denote it by $\hat \QQ \preceq \hat \QQ'$, if the corresponding sets of breakpoints satisfy $Z_j \subseteq Z'_j$ for $j=1,\ldots, \dpa$. Note that, with obvious notation, $\spm \subseteq\hat{\mathbb{S}}_{\bf p'}(\mathbf{\kv}')$ implies $\hat \QQ \preceq \hat \QQ'$, while the opposite is not necessarily true, due to possible changes in degree and continuity.

\subsubsection{Quasi-interpolation operators} \label{sec:qi-2d}
The quasi-interpolation operators and dual bases from Section~\ref{sec:qi-1d} can be generalized to the multivariate setting. The first quasi-interpolant $\projm^{\,\rm dB} : L^2((0,1)^\dpa) \rightarrow \spm$ is defined as tensor-product
\[
\projm^{\,\rm dB} := \proju{p_1}{\kv_1}^{\,\rm dB} \otimes \ldots \otimes \proju{p_\dpa}{\kv_\dpa}^{\,\rm dB},
\]
where the tensorization is interpreted in the sense of \cite[Chapter~XVII]{boor01}, see also \cite[Section~2.2]{bbsv14}. This kind of quasi-interpolant will be used for T-splines in Section~\ref{subsec:tsplines}.

For the second quasi-interpolant, instead of applying tensorization, we define it in a similar way as in the univariate case. 
As in  \eqref{eq:qi-1D}, it is defined by constructing a dual basis. To define the dual basis, for each basis function $\hat{B}_{\mathbf{i},\mathbf{p}}$, we choose an element $\hat Q_{{\bf k}({\bf i})}\subseteq \supp(\hat{B}_{\mathbf{i},\mathbf{p}})$ with size equivalent to the size of the support. 
%In principle, any element is valid, where the hidden constants depend only on the degrees $p_1,\dots,p_\dpa$ and the local quasi-uniformity  \eqref{eq:lqi} in each direction. 
Then, introducing a local projector in $\hat Q_{{\bf k}({\bf i})}$, as in \eqref{eq:local-proj}, and with an analogous notation for the local dual basis, the dual functional associated to this basis function is given by 
$\hat{\lambda}^{\,\rm Bp}_{\mathbf{i},\mathbf{p}} := \hat{\lambda}_{\mathbf{i},\mathbf{p}}^{\hat Q_{{\bf k}({\bf i})}}$.
This type of quasi-interpolant will be used for hierarchical B-splines in Section~\ref{subsec:hb}.

Since the multivariate quasi-interpolation operators are defined from a dual basis they are also projectors, i.e.,
\begin{equation*}
\projm \hat v = \hat v \quad \text{ for all } \hat v \in \spm,
\end{equation*}
where we can choose $\projm$ either equal to $\projm^{\,\rm dB}$ or to $\projm^{\,\rm Bp}$. Moreover, for the two operators we have a stability 
%and approximation 
result
analogous to the one already presented in the univariate setting in Proposition~\ref{prop:schumaker bezier}. % 
The %first
 result for $\projm=\projm^{\,\rm dB}$ of the following proposition is proved in \cite[Lemma~3.2]{bbchs06}. %, while the second is a particular case of \cite[Corol.~4.21]{bbsv14}.
For the second quasi-interpolant $\projm=\projm^{\,\rm dB}$, the result % were 
is proved in \cite[Theorem~2]{bggs16}, see also \cite[Section~3.1]{bg17}.
\begin{proposition}\label{prop:schumaker bezier in nd}
Let either $\projm=\projm^{\,\rm dB}$ or $\projm=\projm^{\,\rm Bp}$.
%Let $\hat v \in H^s((0,1)^\dpa)$, with $0 \le s \le p+1$ and $p = \min\{p_1, \ldots, p_\dpa\}$. 
Then, for any element $\hat \elem \in \hat \mesh$, it holds that
\[
\| \projm \hat v \|_{L^2(\hat \elem)} \le C  \| \hat v \|_{L^2(\sext{\hat \elem})} \; \text{ for all } v\in L^2((0,1)^\dpa).
\]
%Moreover, 
% it  holds that
%\[
%\| (1-\projm) \hat v \|_{L^2(\hat \elem)} \le C_2 \widehat h_{\hat \elem}^s \| \hat v \|_{H^s(\sext{\hat \elem})},
%\]
The constant $C>0$ depends only on the polynomial degrees $p_1,\dots,p_\dpa$ and local quasi-uniformity \eqref{eq:lqi} in each direction.
\end{proposition}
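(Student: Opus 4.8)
The plan is to treat the two operators separately, exploiting their different definitions. For $\projm^{\,\rm dB}$ the operator is by construction the tensor product $\proju{p_1}{\kv_1}^{\,\rm dB}\otimes\cdots\otimes\proju{p_\dpa}{\kv_\dpa}^{\,\rm dB}$ of the univariate operators, so I would reduce to the univariate stability bound of Proposition~\ref{prop:schumaker bezier} by applying it one coordinate direction at a time. For $\projm^{\,\rm Bp}$, which is defined directly through a multivariate dual basis rather than by tensorization, I would instead imitate the univariate argument locally on the fixed element $\hat\elem$.

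For the de Boor operator, fix $\hat\elem=I_{1,k_1}\times\cdots\times I_{\dpa,k_\dpa}$ and write the squared $L^2$-norm over $\hat\elem$ as an iterated integral by Fubini. Since the univariate factors act on distinct variables and commute, I would freeze the variables $\param_2,\dots,\param_\dpa$, view the integrand as a univariate function of $\param_1$, and apply Proposition~\ref{prop:schumaker bezier} in the first direction; this replaces $\int_{I_{1,k_1}}$ by $C_1^2\int_{\sext{I_{1,k_1}}}$ while leaving the remaining factors $\proju{p_2}{\kv_2}^{\,\rm dB}\otimes\cdots$ acting on the other variables. Iterating this estimate successively in directions $2,\dots,\dpa$ accumulates the univariate support extensions into exactly the product set $\sext{\hat\elem}=\sext{I_{1,k_1}}\times\cdots\times\sext{I_{\dpa,k_\dpa}}$ of \eqref{eq:multivariatese}, and the constant becomes the product $C_1\cdots C_\dpa$ of the univariate constants, each depending only on $p_j$ and the local quasi-uniformity \eqref{eq:lqi} in the $j$-th direction.

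For the Bézier projection I would argue directly. On $\hat\elem$ only the finitely many B-splines $\hat B_{{\bf i},{\bf p}}$ whose support meets $\hat\elem$ contribute, and their number is bounded solely in terms of the degrees $p_1,\dots,p_\dpa$. Each coefficient $\hat\lambda^{\,\rm Bp}_{{\bf i},{\bf p}}(\hat v)=\hat\lambda^{\hat Q_{{\bf k}({\bf i})}}_{{\bf i},{\bf p}}(\hat v)$ is a functional of the local $L^2$-projection $P_{\hat Q_{{\bf k}({\bf i})}}$ onto polynomials of degree ${\bf p}$ on the chosen element $\hat Q_{{\bf k}({\bf i})}\subseteq\supp(\hat B_{{\bf i},{\bf p}})$. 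I would bound each such coefficient by first using that the $L^2$-projection is a contraction, $\|P_{\hat Q}\hat v\|_{L^2(\hat Q)}\le\|\hat v\|_{L^2(\hat Q)}$, and then invoking a scaling (inverse-type) estimate on the reference element to extract the coefficient, which produces a factor $|\hat Q_{{\bf k}({\bf i})}|^{-1/2}$. Combining this with $\|\hat B_{{\bf i},{\bf p}}\|_{L^2(\hat\elem)}\lesssim|\hat\elem|^{1/2}$ (from $0\le\hat B_{{\bf i},{\bf p}}\le1$), with the size equivalence $|\hat Q_{{\bf k}({\bf i})}|\simeq|\supp(\hat B_{{\bf i},{\bf p}})|$, and with the local quasi-uniformity \eqref{eq:lqi} to compare $|\hat\elem|$ with $|\hat Q_{{\bf k}({\bf i})}|$, each summand is controlled by $\|\hat v\|_{L^2(\hat Q_{{\bf k}({\bf i})})}\le\|\hat v\|_{L^2(\sext{\hat\elem})}$. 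Summing over the bounded number of contributing indices yields the claim.

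I expect the genuine technical work to lie in the Bézier case, specifically in the scaling argument that converts the value of the dual functional $\hat\lambda^{\hat Q_{{\bf k}({\bf i})}}_{{\bf i},{\bf p}}(\hat v)$ into a local $L^2$-norm with the correct power of the element size. This rests on the equivalence of norms for polynomials of fixed degree on a reference cell together with a careful use of \eqref{eq:lqi} to relate the sizes of $\hat Q_{{\bf k}({\bf i})}$, $\hat\elem$, and the intervening elements of the support extension, since the chosen element $\hat Q_{{\bf k}({\bf i})}$ need not coincide with $\hat\elem$. By contrast, the de Boor case is essentially bookkeeping once the iterated Fubini application is set up correctly; the only point requiring attention there is that the univariate constants depend on the one-dimensional quasi-uniformity in each direction, so that the final constant depends on \eqref{eq:lqi} direction by direction as stated.
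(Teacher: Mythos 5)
Your proposal is correct, but note that the paper itself contains no proof of this proposition: it delegates both cases to the literature, citing \cite[Lemma~3.2]{bbchs06} for $\projm^{\,\rm dB}$ and \cite[Theorem~2]{bggs16} (see also \cite[Section~3.1]{bg17}) for $\projm^{\,\rm Bp}$. What you wrote is in substance a reconstruction of those cited arguments, so there is no genuine divergence of route. The de Boor case is, as you say, bookkeeping: the slice-wise Fubini iteration of Proposition~\ref{prop:schumaker bezier} is exactly how the tensor-product stability is obtained, the only point worth making explicit being that the tensorization is understood as the composition of the one-directional operators in the sense of \cite[Chapter~XVII]{boor01}, so that the univariate bound may legitimately be applied one direction at a time. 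For the B\'ezier case your outline also matches the standard proof, and you correctly locate the crux: the coefficient extraction $|\hat\lambda^{\hat Q_{{\bf k}({\bf i})}}_{{\bf i},{\bf p}}(\hat v)|\lesssim|\hat Q_{{\bf k}({\bf i})}|^{-1/2}\|\hat v\|_{L^2(\hat Q_{{\bf k}({\bf i})})}$ rests on the contraction property of the local $L^2$-projection combined with de Boor's stability of the B-spline basis on a single element (dual-basis bound with constant depending only on the degrees) and an $L^\infty$--$L^2$ scaling on the (possibly anisotropic) box. Two details to spell out if you write this up: first, since the support of a B-spline is a union of closed mesh elements, $\hat B_{{\bf i},{\bf p}}$ not vanishing on $\hat\elem$ gives $\hat\elem\subseteq\supp(\hat B_{{\bf i},{\bf p}})\subseteq\sext{\hat\elem}$, which simultaneously justifies $|\hat\elem|\lesssim|\supp(\hat B_{{\bf i},{\bf p}})|\simeq|\hat Q_{{\bf k}({\bf i})}|$ and the passage to the norm over $\sext{\hat\elem}$; second, the equivalence $|\hat Q_{{\bf k}({\bf i})}|\simeq|\supp(\hat B_{{\bf i},{\bf p}})|$ is precisely where \eqref{eq:lqi} enters, with hidden constants depending only on the degrees and $C_{\rm locuni}$, which yields the stated dependence of $C$.
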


\subsection{Non-uniform rational B-splines}
\label{sec:NURBS}
Non-uniform rational B-splines (NURBS) are a generalization of B-splines. When used to build geometry parametrizations, as we will do in Section~\ref{sec:parametrization}, they have the advantage of giving exact representations of conic sections, which cannot be achieved with piecewise polynomials, see \cite[Section 1.4]{Piegl} for more details.

We start in the univariate setting. Given the B-spline basis, we define the \emph{weight function} as a linear combination of B-splines 
\[
\hat W := \sum_{i=1}^n \weight_i \hat B_{i,p},
\]
with positive coefficients $\weight_i > 0$ for $i=1, \ldots, n$. Then, the set of NURBS basis functions is formed by the rational functions
\[
\hat R_{i,p} := \frac{\weight_i \hat B_{i,p}}{\sum_{j=1}^n \weight_j \hat B_{j,p}} = \frac{\weight_i \hat B_{i,p}}{\hat W}.
\]
Analogously, in the multivariate case, if we introduce the set of multi-indices ${\cal I} := \{ {\bf i} = (i_1, \ldots, i_\dpa) :  1 \le i_j \le n_j \}$, the weight function is defined as
\begin{equation}\label{eq:weight} 
\hat W := \sum_{{\bf i} \in {\cal I}} w_{\bf i} \hat B_{{\bf i},{\bf p}}
\end{equation}
and provides the multivariate NURBS basis functions
\begin{equation}\label{eq:NURBS_basis}
\hat R_{{\bf i},{\bf p}} := \frac{\weight_{\bf i} \hat B_{{\bf i},{\bf p}}}{\sum_{{\bf j} \in {\cal I}} \weight_{\bf j} \hat B_{{\bf j},{\bf p}}} = \frac{\weight_{\bf i} \hat B_{{\bf i},{\bf p}}}{\hat W}.
\end{equation}
Note that, although the NURBS basis functions are defined starting from B-splines, they are not constructed by tensor-product due to the presence of the weights.

Finally, we can also define a quasi-interpolant for NURBS. Using the generic notation $\projm$ for a B-spline quasi-interpolant, we define the corresponding NURBS quasi-interpolant by
\[
\projm^{\,\hat W} (\hat v) := \frac{\projm(\hat W \, \hat v)}{\,\hat W}.
\]
It can be readily seen that this operator is a projector onto the NURBS space provided that $\projm$ is a projector onto the spline space. 
Moreover, if $\projm$ is as in Proposition~\ref{prop:schumaker bezier} or Proposition~\ref{prop:schumaker bezier in nd}, $\projm^{\,\hat W}$ satisfies the same stability and approximation properties as $\projm$, where the constants depend additionally on $\widehat W$, see \cite{bbchs06} and \cite[Section 4]{bbsv14} for details.

\subsection{B-splines and NURBS geometries}
%DISCUSS: notation $\dph$ and $\dpa$, throughout the paper. We are using $\dpa$ for the dimension of the parametric domain in the definition of splines (sections 2 and 4), and also for the dimension of the physical domain (sections 3 and 5). I have tried to change things locally, but it is not nice.
\label{sec:parametrization}
A spline or NURBS geometry is built as a linear combination of B-splines or NURBS basis functions, by associating a control point to each basis function. More precisely, let the set of $\dpa$-variate NURBS be defined as in \eqref{eq:NURBS_basis} and let ${\bf C}_{\bf i} \in \mathbb{R}^\dph$ with $\dph \ge \dpa$ be the associated control points. The parametrization of the NURBS geometry is then given by
\begin{equation} \label{eq:parametrization}
{\bf F}({\bf t}) := \sum_{{\bf i} \in {\cal I}} {\bf C}_{\bf i} \hat{R}_{\bf i, p} ({\bf t}).
\end{equation}
The parametrization of a spline geometry is built completely analogously, replacing the rational basis functions by B-splines. Note that, as mentioned above, particular choices of the weight function $\hat W$ will allow the exact representation of conic geometries by NURBS. Moreover, it is also worth noting that, for a NURBS geometry, each component $({\bf F})_i$ belongs to a space of rational splines, namely
\begin{align*}
(\F)_i\in \set{\widehat S/\widehat W}{\widehat S\in\widehat{\mathbb{S}}_{{\bf p}}(\mathbf{\kv})}, \text{ for } i = 1,\ldots, \dph.
\end{align*}

Examples of a spline curve with $\dpa=1$ and $\dph=2$, and a spline surface with $\dpa=2$ and $\dph=3$, are respectively given in Figures~\ref{fig:spline_curve} and~\ref{fig:spline_surface}. For more details on the properties of NURBS geometries and different methods to construct them, we refer to \cite{Piegl,Farin,Hoschek-Lasser,cohen2001geometric}.
\begin{figure}
\centering
\includegraphics[width=0.4\textwidth,trim=1cm 0cm 1cm 0cm, clip]{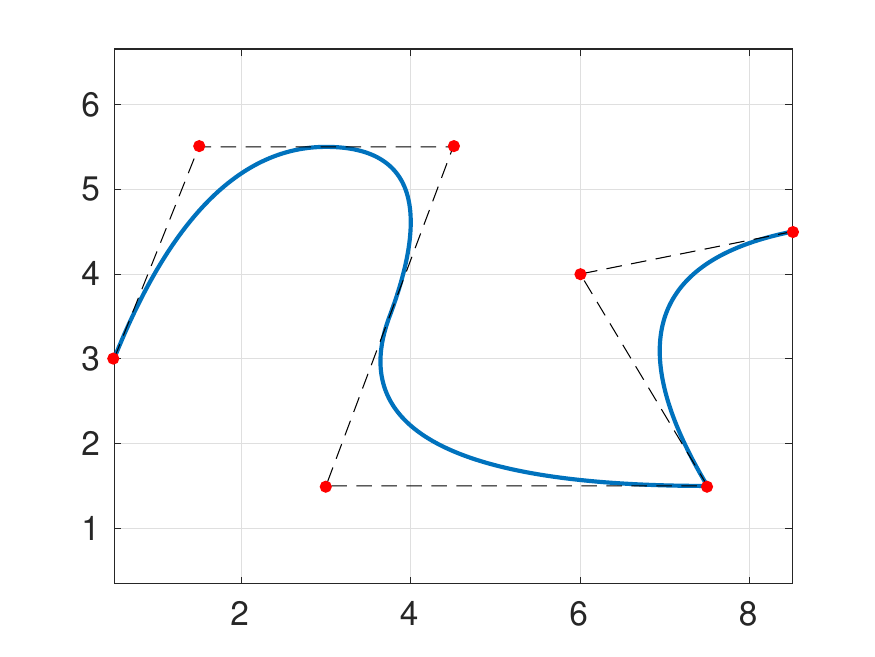}
\caption{Quadratic spline curve, constructed from the knot vector $T = (0, 0, 0, 1/4, 2/4, 3/4, 3/4, 1, 1, 1)$, along with its control points in $\mathbb{R}^2$.}
\label{fig:spline_curve}
\end{figure}
\begin{figure}
\centering
\includegraphics[width=0.4\textwidth,trim=1cm 0cm 1cm 0cm, clip]{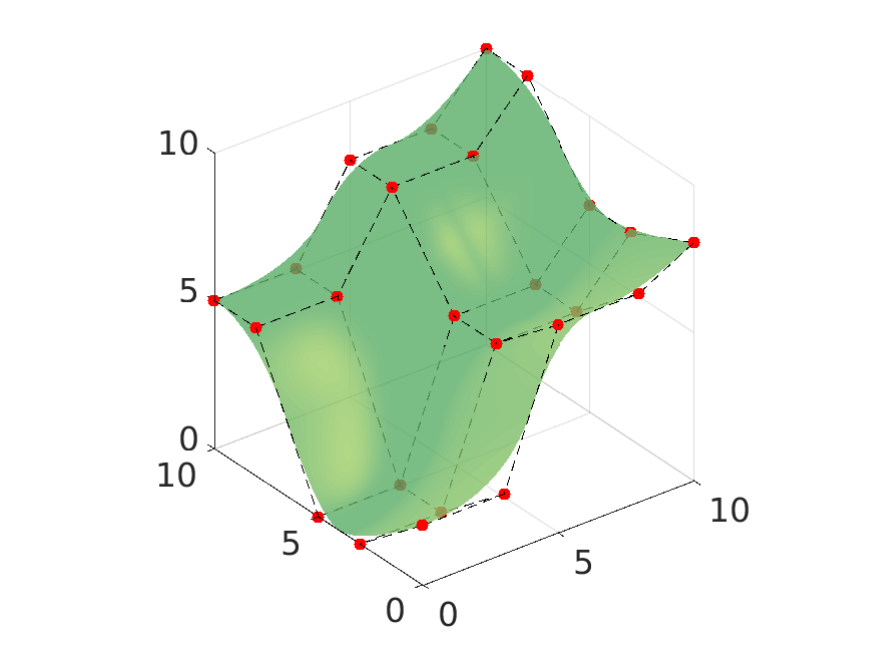}
\caption{Quadratic spline surface, constructed from the knot vectors $T_1 = T_2 = (0, 0, 0, 1/3, 2/3, 1, 1, 1)$, along with its control points in $\mathbb{R}^3$.}
\label{fig:spline_surface}
\end{figure}

\newpage

% !TEX encoding = MacOSRoman
% !TEX root = adaptive_iga.tex

%%%%%%%%%%%%%%%%%%%%%%%%%%%%%%%%%%%%%%%%%%%%%%%%%%%%%%%%%%%%%%%%%%%%%%%%%%%%%%%%%%%%%%%%%%%%%%%%%%%%%%%%%%%%%% 
%%%%%%%%%%%%%%%%%%%%%%%%%%%%%%%%%%%%%%%%%%%%%%%%%%%%%%%%%%%%%%%%%%%%%%%%%%%%%%%%%%%%%%%%%%%%%%%%%%%%%%%%%%%%%% 
\section{Model problems and isogeometric analysis}
\label{sec:model}
In this section we introduce the basic concepts of IGA plus some important assumptions required for the numerical analysis of the method. 
In Section~\ref{sec:parametrization_assumptions}, we start with an explanation on the description of the considered geometry, i.e., a Lipschitz domain in the case of FEM and its boundary in the case of BEM, along with some important assumptions on the NURBS parametrizations that define it. Then, in Section~\ref{sec:IGAFEM-intro} we present the concept of IGA in the setting of FEM: we give a model problem written in terms of a PDE, we show how it is discretized with isogeometric methods, and present a residual-based error estimator. Finally, we present in Section~\ref{sec:IGABEM-intro} analogous ideas in the setting of isogeometric BEM for the discretization of a model problem written as a boundary integral equation.

%%%%%%%%%%%%%%%%%%%%%%%%%%%%%%%%%%%%%%%%%%%%%%%%%%%%%%%%%%%%%%%%%%%%%%%%%%%%%%%%%%%%%%%%%%%%%%%%%%%%%%%%%%%%%%
\subsection{Parametrization of the physical domain} \label{sec:parametrization_assumptions}
We introduce here the assumptions of the physical domain. We start introducing the assumptions for the single-patch case, which will be valid throughout the paper. Then, we describe the assumptions required for multi-patch domains, and finally introduce a further assumption which is needed for BEM.
\subsubsection{General setting and single-patch domains}
In the following, we will always assume that our geometry is described through a spline or NURBS parametrization as defined in Section~\ref{sec:parametrization}. Let ${\bf p}_{\F}$ be the vector of polynomial degrees, ${\bf \kv}_{\F} = (\kv_{\F,1}, \ldots, \kv_{\F,\dpa})$ the multivariate open knot vector, with multiplicity smaller or equal to $p_{\F,j}$ for the internal knots in the $j$-th direction, and let $\widehat\QQ_{\F}$ be the corresponding tensor-mesh of the \emph{parametric domain} 
\[
\hat \Omega := (0,1)^\dpa.
\]
Introducing a weight function $\hat W_{\F}$ as in \eqref{eq:weight}, let ${\bf F}$ be a NURBS parametrization as in \eqref{eq:parametrization}, with control points in $\mathbb{R}^\dph$. We define the \emph{physical domain} as
\[
\Omega := {\bf F}(\hat \Omega) \subset \mathbb{R}^\dph.
\]
In the case of FEM, it holds that $d=\hat d$ and $|\cdot|$ will denote the $d$-dimensional volume.
In the case of BEM, where we will only work on the boundary $\Gamma$ of some Lipschitz domain, we will write $\hat\Gamma$ and $\Gamma$ instead of $\hat\Omega$ and $\Omega$.
Then, $\hat d=d-1$ and $|\cdot|$ will denote the $(d-1)$-dimensional surface measure.

The image through ${\bf F}$ of the mesh in the parametric domain automatically defines a mesh in the physical domain
\begin{equation} \label{eq:IGA-mesh}
\QQ_{\F} := \{Q = {\bf F}(\hat Q) : \hat Q \in \widehat\QQ_{\F} \}; 
\end{equation}
see an example in Figure~\ref{fig:parametrization}. Moreover, for any element $\elem \in \QQ_{\bf F}$ we define the \emph{element size} as 
\begin{equation*} %\label{eq:element_size}
h_Q := |Q|^{1/\dpa}.
\end{equation*}
These definitions are trivially extended to any mesh in the parametric domain. 
\begin{figure}
\centering
\includegraphics[width=0.22\textwidth]{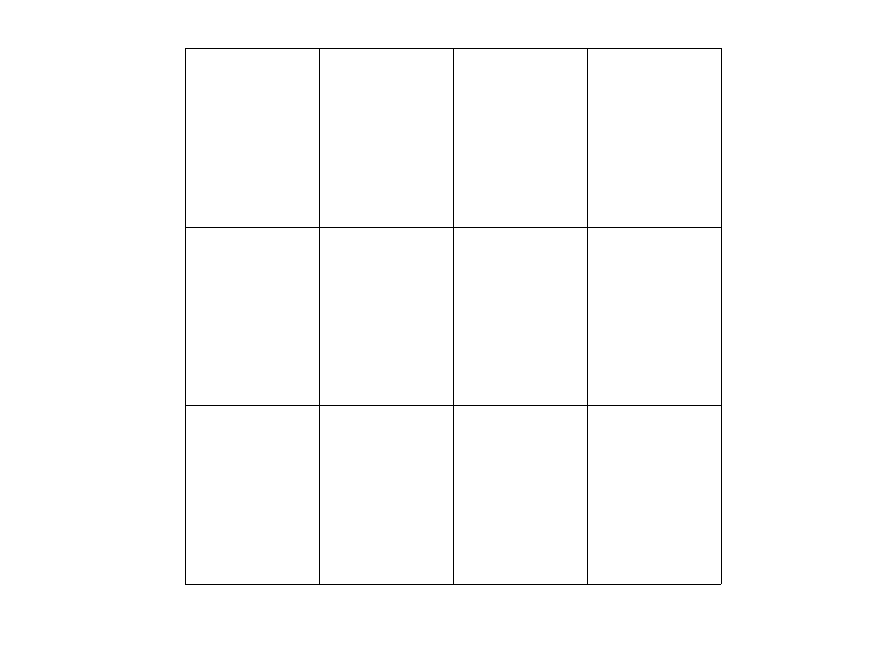}
\includegraphics[width=0.22\textwidth]{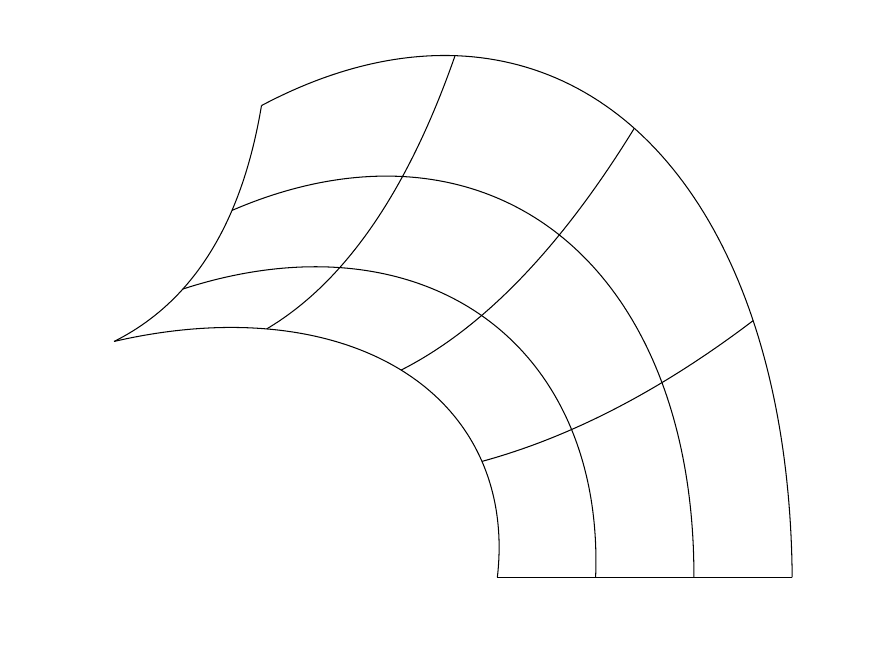}
\caption{Mesh in the parametric domain (left) and its image through ${\bf F}$ in the physical domain (right).} \label{fig:parametrization}
\end{figure}

By definition of NURBS (and B-splines), it is obvious that
\[
{\bf F}|_{\overline{\widehat Q}}\in \left(C^\infty(\overline{\widehat Q}) \right)^\dph \quad \text{for all }\widehat Q\in\widehat \QQ_{\F}, 
\]
where $\overline{\hat Q}$ denotes the closure of $\hat Q$.
However, in order to have a valid mesh, it is necessary to avoid the presence of singularities in the (inverse of the) parametrization, for which further assumptions are required.

In the following, we assume that ${\bf F}$ is a bi-Lipschitz homeomorphism\footnote{For $\widehat\omega\subseteq \R^\dpa$ and $\omega\subseteq \R^\dph$, a mapping $\gamma:\widehat\omega\to\omega$ is bi-Lipschitz if it is bijective and $\gamma$ as well as its inverse $\gamma^{-1}$ are Lipschitz continuous.}, which in particular implies that the inverse ${\bf F}^{-1}$ exists. Moreover, it implies that the Gram determinant is bounded from above and from below, namely there exists a constant $\const{\bf F} > 0$ such that
\begin{subequations}\label{eq:Cgamma}
\begin{align}\label{eq:Cgamma1}
\const{\bf F}^{-\dpa} \le \sqrt{\det(D{\bf F}^\top({\bf t}) D{\bf F}({\bf t}))} \le \const{\bf F}^{\dpa} 
\end{align}
for almost all  ${\bf t} \in \hat \Omega$,
where $D{\bf F}$ is the Jacobian matrix of the parametrization. Note that when $\dpa=\dph$ the Gram determinant reduces to $| \det(D{\bf F}({\bf t})) |$.
When $\dpa=\dph$, we additionally assume that %the inverse of a bi-Lipschitz NURBS parametrization satisfies that
\[
{\bf F}^{-1}|_{\overline{Q}} \in \left(C^2(\overline{Q})\right)^\dpa \quad\text{for all } Q\in \QQ_{\F},
\]
so that $\QQ_\F$-elementwise second derivatives of spline functions in the physical domain are well-defined.
%COMMENT: I highly doubt that the previous statement that this is automatically satisfied is true, in particular not to the boundary $\partial Q$.
%Indeed, the second derivatives of $\F^{-1}$ will somehow require $D^2\F$ from below, which we are not doing yet.
Moreover, being bi-Lipschitz guarantees the boundedness of the first derivatives of ${\bf F}$ and its inverse. 
%We also need to assume that second order derivatives are bounded, that is, there exists 
In particular, these assumptions imply the existence of $\const{\bf F}>0$ with \eqref{eq:Cgamma1} and for all $i,j,k\in\{1,\dots,\dpa\}$, 
\begin{align}
\begin{split}
&\Big\|\frac{\partial}{\partial t_j}(\F)_i\Big\|_{L^\infty(\widehat\Omega)}\le C_{\boldsymbol\F}, \quad \Big\|\frac{\partial}{\partial x_j}(\F^{-1})_i\Big\|_{L^\infty(\Omega)}\le C_{\boldsymbol\F},\\
&\Big\|\frac{\partial^2}{\partial t_j\partial t_k }(\F)_i\Big\|_{L^\infty(\widehat\Omega)}\le C_{\boldsymbol\F}, \Big\|\frac{\partial^2}{\partial x_j\partial x_k }(\F^{-1})_i\Big\|_{L^\infty(\Omega)}\le C_{\boldsymbol\F},
\end{split}
\end{align}
\end{subequations}
where $(\F)_i$ and $(\F^{-1})_i$ respectively denote the $i$-th component of $\F$ and $\F^{-1}$, and the second derivatives are defined elementwise.

Finally, we remark that under the assumptions on the parametrization ${\bf F}$ the size of the elements in the parametric and the physical domain is comparable, i.e., for any element $\elem = {\bf F}(\elemp)$ it holds that 
\[
h_\elem \simeq \hat h_{\elemp},
\]
and the hidden constants depend only on ${\bf F}$.

\subsubsection{Extension to multi-patch domains} \label{sec:multi-patch}
A single mapping ${\bf F}$ can only be used to parametrize simple domains that are images of the unit square or cube. To deal with more complex geometries, we introduce the concept of multi-patch domains, where each patch is constructed with a NURBS parametrization. 

In detail, we assume that the domain $\Omega$ is constructed with a partition into $M \in \mathbb{N}$ patches in the sense that
\[
\overline \Omega = \bigcup_{m=1}^M \overline {\Omega_m},
\]
where each patch $\Omega_m$ is defined with a NURBS parame-trization of the form
\[
{\bf F}_m : \hat \Omega \longrightarrow \Omega_m,
\]
and the assumptions made in Section~\ref{sec:parametrization_assumptions} are valid for each ${\bf F}_m$. 
Again, in the case of BEM, we will write $\Gamma_m$ instead of $\Omega_m$.
We denote by ${\bf p}_{\F_{m}}$ and ${\bf \kv}_{\F_m}$ the degree and the knot vector associated to the parametrization of each patch, and by $\hat \QQ_{\F_m}$ and $\spbasis_{{\bf p}_{\F_m}}({\bf \kv}_{\F_m})$ 
%and $\hat{\mathbb{S}}_{{\bf p}_{\F_m}}({\bf T}_{\F_m})$ 
the corresponding mesh and the B-spline basis, respectively. Then, defining $\QQ_{\F_{m}}$ as in \eqref{eq:IGA-mesh}, we can define the \emph{multi-patch mesh} 
\begin{equation} \label{eq:multipatch-mesh}
\QQ_\F:= \bigcup_{m=1}^M \QQ_{\F_{m}}.
\end{equation}
As before, this definition can be trivially extended to refined meshes.

In order to construct suitable discrete spaces in the multi-patch domain, we must require that the meshes are conforming at the interfaces, and the patches glue together with $C^0$ continuity. Let us denote the interfaces by $\Gamma_{m,m'} := \overline{\Omega_m} \cap \overline{\Omega_{m'}}$ for $m \not = m'$. We assume that the two following conditions hold true for all $m,m'$ with $m\neq m'$:
\begin{enumerate}[(1)]
\renewcommand{\theenumi}{P\arabic{enumi}}
\bf\item\rm \label{P:conforming-mesh}
$\Gamma_{m,m'}$ is either empty, or a vertex, or the image of a full edge, or the image of a full face of $\hat \Omega$ for both parametrizations.
\bf\item\rm \label{P:conforming-basis-param} 
For each B-spline $\hat \beta_m \in \spbasis_{{\bf p}_{\F_{m}}}({\bf \kv}_{\F_{m}})$ such that
%For each $\beta_m \in {\cal B}_{{\bf p}_m}({\bf \kv}_m)$ such that 
\[
(\hat \beta_m \circ {\bf F}_m^{-1})|_{\Gamma_{m,m'}} \not = 0,
\]
there exists a unique function $\hat \beta_{m'} \in \spbasis_{{\bf p}_{\F_{m'}}}({\bf \kv}_{\F_{m'}})$ such that $(\hat \beta_m \circ {\bf F}_m^{-1}) |_{\Gamma_{m,m'}} = (\hat \beta_{m'} \circ {\bf F}_{m'}^{-1}) |_{\Gamma_{m,m'}}$.
%$\beta_{m'} \in {\cal B}_{{\bf p}_{m'}}({\bf \kv}_{m'})$ such that $\beta_m |_{\Gamma_{m,m'}} = \beta_{m'} |_{\Gamma_{m,m'}}$.
\end{enumerate}
The assumptions imply that the meshes are conforming at the interfaces and the coincident knot vectors are related by an affine transformation, including also knot repetitions. Moreover, the control points and the weights associated to the interface functions of adjacent patches must also coincide. As a consequence, the mesh $\QQ_\F$ is globally unstructured, but locally structured on each patch, see Figure~\ref{fig:multi-patch}.
\begin{figure}
\centering
\includegraphics[width=0.23\textwidth,trim=3cm 1cm 2.5cm 0cm, clip]{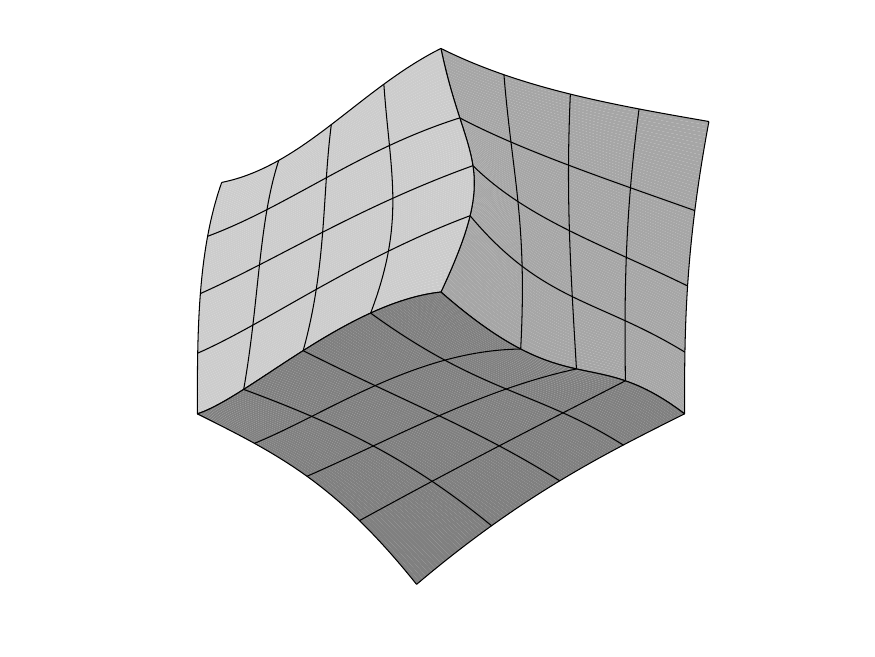}
\includegraphics[width=0.23\textwidth,trim=3cm 1cm 2.5cm 0cm, clip]{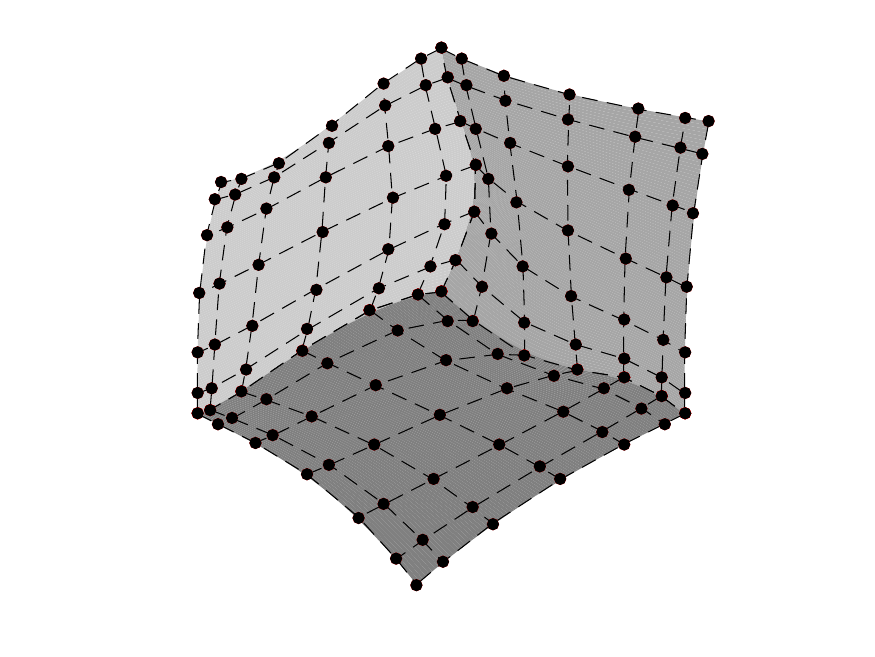}
\caption{An example of a multi-patch domain formed by three patches (left), and their corresponding control points (right). The control points associated to interface functions of adjacent patches coincide.}
\label{fig:multi-patch}
\end{figure}

\subsubsection{A further assumption for BEM} \label{sec:assumption-BEM}
%CHECK: in fact, this assumption is only needed at element level, and not at patch level. It would be great if we could prove that the previous assumptions, plus one level of refinement, guarantee the following one at the level of the elements. Any volunteers to prove it? ;-)
%COMMENT: I checked again and it is indeed fine to assume this for the elements instead of the patches. But I still don't know whether this follows automatically after refinement...
%
%COMMENT: Should we add a figure as in \cite{ss11}?
%COMMENT: Probably yes. Could you make one, Rafa?
In the case of BEM, we require a further assumption.  
%The particular case of boundary element methods, that we introduce in Section~\ref{sec:IGABEM-intro}, requires a further assumption on the parametrization. In this case 
Here, the boundary $\Gamma$ of some $\dph$-dimensional Lipschitz domain is defined as a multi-patch geometry through NURBS parametrizations. More precisely, we have that $\Gamma = \bigcup_{m=1}^M \overline{\Gamma_m}$, where each
\[
{\bf F}_m :(0,1)^{\dph-1} \rightarrow \Gamma_m \subset \mathbb{R}^\dph
\]
is a NURBS parametrization. 
%Note that in this case the parametrized domain is denoted by $\Gamma$. 
Let us denote by
\begin{align*}%\label{eq:vertices of F}
{\cal V}_{\bf F} := \bigcup_{m=1}^M \{ {\bf F}_m(\hat{{\bf z}}): \hat{{\bf z}} \in \{0,1\}^{\dph-1}\},
\end{align*}
the set of vertices of the geometry.
For each vertex ${\bf z} \in {\cal V}_{\bf F}$, we define  the subdomain covered by its neighboring elements as
\[
\mypatch_{\bf F}({\bf z}) := \bigcup \set{\overline Q}{Q\in\QQ_\F\wedge {\bf z}\in\overline Q} %\cup\{ \overline{\Gamma_m}: m \in \{1, \ldots, M\} \wedge {\bf z} \in \overline{\Gamma_m}  \}.
\]
Following \cite[Section 5.4.1]{gantner17}, we assume that the following condition holds true:
\begin{enumerate}[(1)]
\renewcommand{\theenumi}{P\arabic{enumi}}
\setcounter{enumi}{2}
\bf\item\rm \label{P:flattened} 
For every vertex ${\bf z} \in {\cal V}_{\bf F}$, there exists a set $\widehat{\mypatch}_{\bf F}({\bf z}) \subset \mathbb{R}^{\dph-1}$ that is an interval for $d=2$ and a polygon for $d=3$ and a bi-Lipschitz mapping
\[
\myparam_{\bf z} : \widehat{\mypatch}_{\bf F}({\bf z}) \longrightarrow \mypatch_{\bf F}({\bf z})
\]
such that $\myparam_z^{-1} \circ {\bf F}_m|_{\widehat Q}$ is an affine mapping for all $m \in \{1, \ldots, M \}$ and all $\widehat Q\in\widehat \QQ_{{\bf F}_m}$ with $Q:={\bf F}_m(\widehat Q)$ $\subset \mypatch_{\bf F}({\bf z})$. 
\end{enumerate}
The assumption means that each subdomain $\mypatch_{\bf F}({\bf z})$ can be flattened and that the inverse of the bi-Lipschitz mapping $\myparam_{\bf z}$ restricted to $Q$ essentially coincides with the inverse of ${\bf F}_m^{-1}$, see Figure~\ref{fig:Sauter-Schwab}. In particular, this prevents the case $\mypatch_{\bf F}({\bf z}) = \Gamma$. We stress that the same assumption is also made in \cite[Assumption 4.3.25]{ss11} for curvilinear triangulations.

\begin{figure}
\includegraphics[width =0.49\textwidth]{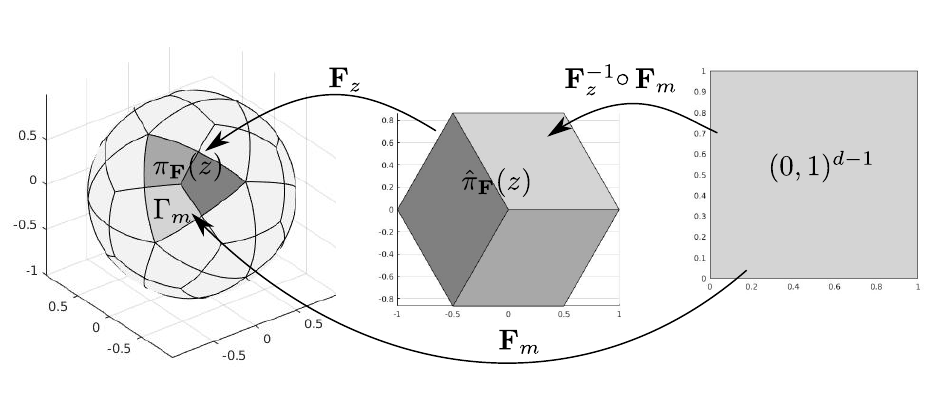}
\caption{Graphical representation of assumption \eqref{P:flattened}, in a parametrization of the sphere with 60 patches and one single element per patch. The three elements forming $\mypatch_{\bf F}({\bf z})$ on the left are colored in different tones of gray, and the corresponding polygon $\widehat{\mypatch}_{\bf F}({\bf z})$ is the hexagon shown in the middle. The mapping $\myparam_{\bf z}^{-1} \circ \myparam_m$ is an affine transformation.}
\label{fig:Sauter-Schwab}
\end{figure}

%%%%%%%%%%%%%%%%%%%%%%%%%%%%%%%%%%%%%%%%%%%%%%%%%%%%%%%%%%%%%%%%%%%%%%%%%%%%%%%%%%%%%%%%%%%%%%%%%%%%%%%%%%%%%%
\subsection{Isogeometric analysis for FEM (IGAFEM)}\label{sec:IGAFEM-intro}
We now describe IGA based on tensor-product B-splines, i.e., without adaptive refinement. For more details about IGA we refer to \cite{hughes2005,IGA-book,bbsv14}.

\subsubsection{Model problem and Galerkin approximation}\label{sec:FEM problem}

Let $\Omega\subset\R^\dph$ with $\dph\ge 2$ be a bounded Lipschitz domain as in \cite[Definition~3.28]{mclean00}. In practice, $\Omega$ is a multi-patch domain defined as in Section~\ref{sec:multi-patch} with $\dpa=\dph$.
We consider a general second-order linear elliptic PDE  with homogenous Dirichlet boundary condition
\begin{align}\label{eq:problem}
\begin{split}
\mathscr{P}u&=f\quad \text{in }\Omega,\\
u&=0\quad\text{on }\Gamma:=\partial\Omega,
\end{split}
\end{align}
where
\begin{align}\label{eq:defP}
\mathscr{P}u:=-\div (\AA\nabla u)+\bb\cdot\nabla u +cu,
%-\sum_{i=1}^d \sum_{i'=1}^d\partial_i (A_{ii'}\partial_{i'} u)+\sum_{i=1}^d b_i\partial_i u +c u.
\end{align}
with $\AA\in W^{1,\infty}(\Omega)^{\dph\times \dph}$ and symmetric, $\bb\in L^\infty(\Omega)^{\dph}$, and $c\in L^\infty(\Omega)$.

We interpret  $\mathscr{P}$ in its weak form and define the corresponding bilinear form 
\begin{align*}%\label{eq:P bilinearform}
\begin{split}
\dual{w}{v}_{\mathscr{P}}&:=\int_\Omega (\AA \nabla w)\cdot \nabla v + (\bb\cdot\nabla w) v+c w v\,\d\xx.
%\sum_{i=1}^d \sum_{i'=1}^d (A_{ii'}\partial_{i'} v)
%\cdot\partial_i w\\ 
%&\quad+\sum_{i=1}^d (b_i\partial_i v)\cdot w +(c v)\cdot w\,\d\xx. 
\end{split}
\end{align*}
The bilinear form is clearly continuous, i.e., there exists a positive constant $\const{cont} > 0$ such that
\begin{align*}%\label{eq:cont bilin}
\begin{split}
\dual{w}{v}_{\mathscr{P}}\le \const{cont}\norm{w}{H^1(\Omega)}\norm{v}{H^1(\Omega)}\\
\text{for all $v,w\in H^1(\Omega)$.}
\end{split}
\end{align*}
Additionally, we suppose ellipticity of $\dual{\cdot}{\cdot}_{\mathscr{P}}$ on $H_0^1(\Omega)$, i.e., there exists $\const{ell} > 0$ such that
\begin{align*}%\label{eq:ellipticity}
\dual{v}{v}_{\mathscr{P}}\ge \const{ell}\norm{v}{H^1(\Omega)}^2\quad\text{for all }v\in H_0^1(\Omega).
\end{align*}
Note that ellipticity is for instance satisfied if the matrix $\AA$ is uniformly positive definite and  the vector $\bb \in  {\bf H}({\rm div},\Omega)$ satisfies that $-\frac12\,{\rm div}\,\bb+c\ge0$ almost everywhere in $\Omega$.

According to the Lax--Milgram theorem, for arbitrary  $f\in L^2(\Omega)$ problem~\eqref{eq:problem} admits a unique solution $u\in H_0^1(\Omega)$ to the weak formulation
\begin{align}\label{eq:weak fem}
 \dual{u}{v}_{\mathscr{P}} = \int_\Omega f v\,\d\xx
 \quad\text{for all }v\in H_0^1(\Omega).
\end{align}
Finally, we note that the additional regularity $\AA\in W^{1,\infty}(\Omega)^{\dph\times \dph}$ (instead of  only the natural assumption $\AA\in L^{\infty}(\Omega)^{\dph\times \dph}$) is only required for the well-posedness of the residual {\sl a~posteriori} error estimator, see Section~\ref{sec:estimator fem} below.

Let $\mathbb{S}\subset H_0^1(\Omega)$ be an arbitrary discrete subspace and let $U\in\mathbb{S}$ be the corresponding Galerkin approximation to the solution $u\in H_0^1(\Omega)$, i.e.,
\begin{align}\label{eq:Galerkin fem}
 \dual{U}{V}_{\mathscr{P}} = \int_\Omega f V\,\d\xx
 \quad\text{for all }V\in\mathbb{S}.
\end{align}
We note the Galerkin orthogonality
\begin{align*}%\label{eq:galerkin}
 \dual{u-U}{V}_{\mathscr{P}} = 0
 \quad\text{for all }V\in\mathbb{S},
\end{align*}
as well as the resulting C\'ea type quasi-optimality
\begin{align*}%\label{eq:cea}
 \norm{u-U}{H^1(\Omega)}
 \le C_{\text{C\'ea}}\min_{V\in\mathbb{S}}\norm{u-V}{H^1(\Omega)}
\end{align*}
with $C_{\text{C\'ea}} := {\const{cont}}/{\const{ell}}$.

\subsubsection{Isogeometric discretization} \label{sec:IGA-basics}
For the discretization of the model problem with the IGA method, we start with the case of a single-patch domain, and then generalize the method to the multi-patch case. 

\smallskip\paragraph{The single-patch case} Let us assume that $\Omega = {\bf F}(\hat \Omega)$, with a NURBS parametrization $\F$ of degree ${\bf p}_{\F}$ constructed from the knot vector $\mathbf{\kv}_{\F}$ as in Section~\ref{sec:parametrization_assumptions}. 
We consider a discrete space of splines $\spm \supseteq \hat{\mathbb{S}}_{{\bf p}_{\F}} (\mathbf{\kv}_{\F})$, which is obtained by refinement of the space used to build the parametrization. We note that both $h$-refinement and $p$-refinement can be applied, see \cite{hughes2005} for  details.

We will however use a milder assumption for the discrete space $\spm$, and allow to use a lower degree than for the parametrization, while the mesh and the continuity given by ${\bf F}$ must be respected.
In particular, we assume that $\hat \QQ_{\bf F}$ and $\hat \QQ$, the meshes respectively associated to the discrete spaces $\hat{\mathbb{S}}_{{\bf p}_{\F}} (\mathbf{\kv}_{\F})$ and $\spm$, are nested, in the sense that the corresponding sets of breakpoints satisfy $Z_{{\bf F},j} \subseteq Z_j$ for $j=1,\ldots, \dpa$. We also assume that the continuity of $\spm$ along the knot lines of $\hat \QQ_{\bf F}$ is always less or equal than the one of $\hat{\mathbb{S}}_{{\bf p}_{\F}} (\mathbf{\kv}_{\F})$. Note that this is always satisfied if $\spm \supseteq \hat{\mathbb{S}}_{{\bf p}_{\F}} (\mathbf{\kv}_{\F})$. Moreover, to obtain conforming spaces in $H^1(\Omega)$ we assume that the continuity across elements is not lower than $C^0$.

The discrete space in the physical domain is defined by push-forward using the NURBS parametrization, namely 
\begin{align}\label{eq:push S}
\mathbb{S}_{\bf p}({\bf \kv}) :=\big\{ V = \hat V \circ {\bf F}^{-1} : \hat V \in \spm \big\}.
\end{align}
We can easily define a basis for this space by push-forward of the B-spline basis functions, that is
\begin{align}\label{eq:push B}
\begin{split}
{\cal B}_{\bf p}({\bf \kv}) := %\{ \beta = \hat \beta \circ {\bf F}^{-1} : \hat \beta \in \spbasis_{\bf p}({\bf \kv}) \} \\
  \big\{ B_{{\bf i},{\bf p}} = \hat B_{{\bf i},{\bf p}} \circ {\bf F}^{-1} : \hat B_{{\bf i},{\bf p}} \in \spbasis_{\bf p}({\bf \kv})  \big\}.
 \end{split}
\end{align}
For the solution of the discrete problem \eqref{eq:Galerkin fem}, we define the discrete space with vanishing boundary conditions
\[
\mathbb{S} := \mathbb{S}_{\bf p}({\bf \kv}) \cap H_0^1(\Omega).
\]
In practice, and thanks to the use of the open knot vectors, vanishing boundary conditions are enforced by removing the first and last basis functions from the univariate B-spline spaces. 
%ANY GOOD REFERENCE FOR THIS?
%COMMENT: Not really. Maybe a short argumentation would be nice. This follows indeed immediately from the fact that $B_{i,p}(0/1)=1$ if and only if $B_{i,p}$ has at least multiplicity $p$ at $0/1$, since otherwise it is continuous from both sides there and must be zero.

It is worth noting that the space $\spm$ is associated to a mesh in the parametric domain, which we denote by $\hat \QQ$ and which is a refinement of $\hat \QQ_{\F}$. As in \eqref{eq:IGA-mesh}, this mesh is mapped through ${\bf F}$ to define the mesh $\QQ$ of $\Omega$ associated to the space $\mathbb{S}$.

%% The \emph{discrete space in the parametric domain} is then defined as
%% \[
%% \hat \mathbb{S}_\coarse := \spm \cap H_0^1(\hat \Omega).
%% \]
%% In practice, and thanks to the use of the open knot vector, vanishing boundary conditions are enforced by removing the first and last basis functions from the univariate B-spline spaces.

%% The \emph{discrete space in the physical domain} is constructed by push-forward using the NURBS parametrization, namely
%% \[
%% \mathbb{S}_\coarse := \{ V_\coarse = \hat V_\coarse \circ {\bf F}^{-1} : \hat V_\coarse \in \hat \mathbb{S}_\coarse \},
%% \]
%% and a basis of the space is obtained by push-forward of the B-spline basis functions defined in the parametric domain. Then, the solution of the discrete problem is the one obtained by using this discrete space in \eqref{eq:Galerkin fem}.

%% It is worth to note that the space $\hat \mathbb{S}_\coarse$ is associated to a mesh $\hat \QQ_\coarse$ in the parametric domain, which is a refinement of $\hat \QQ_{\F}$, and that as in \eqref{eq:IGA-mesh} it can be mapped through ${\bf F}$ to define a mesh $\QQ_\coarse$ in $\Omega$.

\begin{remark}
The assumption on the continuity along the knot lines of $\hat \QQ_{\bf F}$ is in fact a condition on the knots. Let us assume for simplicity the same degrees $p$ and $p_{\bf F}$ in every direction, and the same multiplicities of the internal knots, $m$ and $m_{\bf F}$, referring respectively to spaces $\spm$ and $\hat{\mathbb{S}}_{{\bf p}_{\F}} (\mathbf{\kv}_{\F})$. Then the condition reads
\[
p - m \le p_{\bf F} - m_{\bf F}.
\]
It is important to note that, if the condition is not respected, the optimal convergence rate may not be achieved, even for smooth solutions, see the numerical tests in \cite{Buffa_Sangalli_Vazquez}.
\end{remark}

\begin{remark}
In IGA, it is common to follow the isoparametric paradigm, and to define the discrete space as the push-forward of a NURBS space \cite{hughes2005}. Although our parametrization is constructed via NURBS, we have preferred to limit ourselves to (non-rational) spline spaces for the sake of clarity and to avoid the cumbersome presence of the weight during the mathematical analysis. The analysis of IGA with uniform NURBS discretizations has already been carried out in \cite{bbchs06}, see also \cite[Section~4]{bbsv14}. The results of this work can be extended to adaptive methods with rational splines without major (but notational) difficulties.
\end{remark}

\smallskip\paragraph{The multi-patch case}
%NOTATION: discuss the notation of this subsection (and previous ones), in particular $\mathbb{S}, \beta/B_{\bf i,p}, \appl, \basisfun_j, {\cal B}$.
%COMMENT: I think it is fine as it currently is. 
%I would only suggest to replace all $\beta$-s by some generic $B_{i,p}$, also in the previous sections! Indeed, we only use the $\beta$-notation }
For the definition of the multi-patch space we follow the same approach as in \cite[Section~3]{bbsv14}, see also \cite{kleissieti}. 
For each patch, let $\hat{\mathbb{S}}_{{\bf p}_m}({\bf T}_m)$ satisfy the same assumptions with respect to $\hat{\mathbb{S}}_{{\bf p}_{\F_{m}}}({\bf T}_{\F_{m}})$ as in the single-patch case.
%\supseteq \hat{\mathbb{S}}_{{\bf p}_{\F_{m}}}({\bf T}_{\F_{m}})$.
By push-forward, we define the corresponding space ${\mathbb{S}}_{{\bf p}_m}({\bf \kv}_m)$ and its local basis ${\cal B}_{{\bf p}_m}({\bf \kv}_m)$ as in the single-patch case.
Then, the multi-patch discrete space is given as
\begin{align*}
\tilde{\mathbb{S}} := \big\{ V \in C^0(\Omega) : V|_{\Omega_m} \in {\mathbb{S}}_{{\bf p}_m}({\bf \kv}_m), \quad \\
\text{ for } m = 1, \ldots, M\big\},
\end{align*}
and finally the discrete space with vanishing boundary conditions is simply
\[
\mathbb{S} := \tilde{\mathbb{S}} \cap H_0^1(\Omega).
\]

Since each local space is associated to a mesh, which we denote by $\QQ_m$, we can define the multi-patch mesh analogously to \eqref{eq:multipatch-mesh}, i.e., $\QQ:= \bigcup_{m=1}^M \QQ_m$.

In order to construct a global basis for the multi-patch space, besides the assumptions on the parametrization given in Section~\ref{sec:multi-patch}, we need an analogous assumption to guarantee that the refined meshes are conforming. In particular, we assume that the following condition holds true:
\begin{enumerate}[(1)]
\renewcommand{\theenumi}{P\arabic{enumi}'}
\setcounter{enumi}{1}
\bf\item\rm \label{P:conforming-basis-space} 
For each $\beta_m \in {\cal B}_{{\bf p}_m}({\bf \kv}_m)$ such that $\beta_m|_{\Gamma_{m,m'}} \not = 0$,
there exists a unique function $\beta_{m'} \in {\cal B}_{{\bf p}_{m'}}({\bf \kv}_{m'})$ such that $\beta_m|_{\Gamma_{m,m'}} = \beta_{m'}|_{\Gamma_{m,m'}}$.
\end{enumerate}

With this assumption, we can build a basis of the multi-patch space $\mathbb{S}$ by gluing together functions of adjacent patches in a procedure which is analogous to the construction of the connectivity array in standard finite elements. To define a basis of $\tilde{\mathbb{S}}$, let us denote by $n$ the dimension of $\tilde{\mathbb{S}}$. We define for each patch a mapping
%COMMENT: find a notation to replace $N_\tilde{\mathbb{S}}$. Gregor suggested $N$, but that's already the number of levels. Find also a notation for $B$ below.
%COMMENT: I would just use $n$ for the dimension. We already used it for 1D B-splines, but it is very local anyway.
%In principle, these are related quantities, so I think it is fine to use the same notation.
\[
\appl_m: {\cal B}_{{\bf p}_m}({\bf \kv}_m) \rightarrow \{1, \ldots, n\}\quad \text{for } m = 1, \ldots, M,
\]
in such a way that, for any $\beta_m \in {\cal B}_{{\bf p}_m}({\bf \kv}_m)$ and $\beta_{m'} \in {\cal B}_{{\bf p}_{m'}}({\bf \kv}_{m'})$ with $m\neq m'$,
\begin{align*}
\appl_m(\beta_m) = \appl_{m'}(\beta_{m'}) \iff \Gamma_{m,m'} \not = \emptyset \text{ and } \\
\beta_m |_{\Gamma_{m,m'}} = \beta_{m'} |_{\Gamma_{m,m'}}.
\end{align*}
Then, we define the basis of the multi-patch basis
\[
{\cal B} := \{ \basisfun_j: \, j = 1, \ldots, n \}, 
\]
where each basis function is given by
\[
\basisfun_j |_{\overline {\Omega_m}} := \left\{
\begin{array}{ll}
\beta_m  & \text{ if } \appl_m(\beta_m) = j,\\
0 & \text{otherwise}.
\end{array}
\right.
\]
The conditions described above guarantee that the basis functions are continuous at the interfaces, see an example in Figure~\ref{fig:multi-patch-function}.
\begin{figure}
\centering
\includegraphics[width=0.3\textwidth,trim=3cm 2cm 2.5cm 3cm, clip]{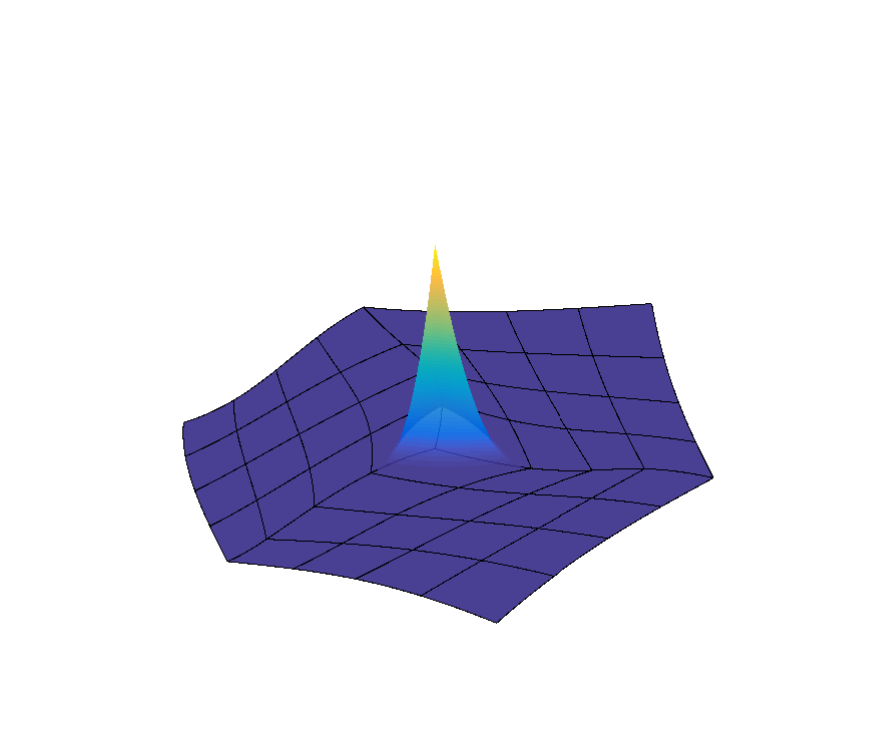}
\caption{An example of a $C^0$ basis function of the multi-patch space, defined in the same domain as in Figure~\ref{fig:multi-patch}.}
\label{fig:multi-patch-function}
\end{figure}
Once we have the basis for $\tilde{\mathbb{S}}$, a basis for $\mathbb{S}$ is easily constructed by removing the basis functions that do not vanish on the boundary similarly to the single-patch case.

\begin{remark}
The construction of splines with $C^1$ continuity (or higher) in multi-patch domains is an important subject of research not only in IGA but in general in computer aided geometric design. Different kinds of constructions have recently been proposed in the literature.
For the interested reader, we mention \cite{NgKaPe16,NgPe16,KaSaTa18,KaSaTa19,WZTSLMEH18,ToSpHu17,MaCi17}. The analysis of adaptive methods in multi-patch domains with high continuity is beyond the current state of the art, with preliminary steps in \cite{BrGiKaVa19}, and in particular beyond the scope of this paper.
\end{remark}

%%%%%%%%%%%%%%%%%%%%%%%%%%%%%%%%%%%%%%%%%%%%%%%%%%%%%%%%%%%%%%%%%%%%%%%%%%%%%%%%%%%%%%%%%%%%%
\subsubsection{{\sl A posteriori} error estimator}\label{sec:estimator fem}
Despite not having introduced the spline spaces with local refinement, we can already introduce the error estimator that will drive the adaptive refinement. Let the mesh $\QQ$ be defined as above, and let $Q\in\QQ$.
For almost every $\xx\in\partial Q\cap\Omega$ on the interior skeleton of the mesh, there exists a unique element $Q'\in\QQ$ with $\xx\in \partial Q'$ and $Q' \not = Q$.
We denote the corresponding outer normal vectors by $\normal$ and $\normal'$. % $\normal=(\normal_{1,1},\dots,\normal_{1,d})$ resp. $\normal'=(\normal_{2,1},\dots,\normal_{2,d})$.
With the notation 
\begin{align*}%\label{eq:conormal derivatives on element}
\begin{split}
\mathscr{D}_{\normal} (\cdot):=(\AA\nabla(\cdot))\cdot\normal,\quad%\sum_{i=1}^d\sum_{i'=1}^d \normal_{1,i}\,A_{ii'}\, \partial_{i'} (\cdot), \\
\mathscr{D}_{\normal'} (\cdot):=(\AA\nabla(\cdot))\cdot\normal',%\sum_{i=1}^d\sum_{i'=1}^d \normal_{2,i}\,A_{ii'}\, \partial_{i'} (\cdot),
\end{split}
\end{align*}
we define 
the \textit{normal jump} as 
\begin{align*}
[\mathscr{D}_{\normal} U](\xx)
: = (\mathscr{D}_{\normal} U|_{Q})(\xx)+(\mathscr{D}_{\normal'}   U|_{Q'})(\xx).
\end{align*}
With this definition, we employ the \textit{weighted-{residual {\sl a~posteriori} error estimator}}
\begin{subequations}\label{eq:eta}
\begin{align}
\begin{split}
 &\eta := \eta(\QQ)\\
&\quad\text{with}\quad 
 \eta(\SS)^2:=\sum_{Q\in\SS} \eta(Q)^2
 \text{ for all }\SS\subseteq\QQ,
\end{split}
\end{align}
where, for all $Q\in\QQ$ with element size $h_\elem$, the local refinement indicators read %with $h_Q:=|Q|^{1/d}$
\begin{align}
\eta(Q)^2:=h_Q^{2} \norm{f-\mathscr{P}U}{L^2(Q)}^2+h_Q \norm{[\mathscr{D}_{\normal} U]}{L^2(\partial Q\cap \Omega)}^2.
\end{align}
\end{subequations}
We refer, e.g., to the monographs~\cite{ao00,verfuerth13} for the analysis of the residual {\sl a~posteriori} error estimator~\eqref{eq:eta} in the frame of standard FEM with piecewise polynomials of fixed order.

\begin{remark}
The additional regularity $\AA$ $\in$ $W^{1,\infty}(\Omega)^{\dph\times \dph}$ (instead of only $\AA\in L^\infty(\Omega)^{\dph\times \dph}$) is needed to ensure that  $\mathscr{D}_{\normal}(\cdot)$ is well-defined. 
\end{remark}

\begin{remark}\label{rem:C11}
If $\mathbb{S}\subset C^1(\Omega)$, then the jump contributions in~\eqref{eq:eta} vanish and $\eta(Q)$ consists only of the volume residual, i.e., $\eta(Q)^2 = h_Q^2 \norm{f-\mathscr{P}U}{L^2(Q)}^2$.
\end{remark}%

\subsection{Isogeometric analysis for BEM (IGABEM)} \label{sec:IGABEM-intro}
The potential benefits of using IGA for the solution of boundary integral equations were already mentioned in the conclusions of \cite{hughes2005}, but it has only been considered first in \cite{pgkbf09}.
The research on IGABEM has steadily grown since then, although not as fast as for IGAFEM, with applications in acoustics \cite{SiScTaThLi14,DoHaKuScWo18,chen20,vk20}, elasticity~\cite{bdl15,nguyen17}, electromagnetics \cite{Vazquez20124757,SiLiVaEv18,DoKuScWo19}, lifting flow \cite{chouliaras21}, potential flow \cite{HeArDe14,KoGiPoKa15,KoGiPoKa17}, and solid mechanics \cite{ScSiEvLiBoHuSe13,marussig15}, see also the recent book \cite{Beer2020} for a comprehensive survey of the topic and a complete review of the existing literature. 
An implementation of (non-adaptive) IGABEM is available in the open-source library Bembel \cite{BEMBEL}. 
Although some of the previously mentioned works consider locally refined T-splines, the mathematical research on adaptive IGABEM methods is rather limited.
Results for the two-dimensional case are found in \cite{fgp15,fghp16,fghp17,fgps19,gps19}, where \cite{fgp15} is also the first work that considers Galerkin instead of collocation IGABEM.
The three-dimensional case has only recently been considered in \cite{gantner17,gp20,gp20+}.

%%%%%%%%%%%%%%%%%%%%%%%%%%%%%%%%%%%%%%%%%%%%%%%%%%%%%%%%%%%%%%%%%%%%%%%%%%%%%%%%%%%%%%%%%%%%%%%%%%%%%%%%%%%%%% 
\subsubsection{Sobolev spaces for BEM}\label{sec:sobolev}
%TO BE CHECKED!!! Copied from elsewhere, check that everything makes sense. Merge with section 1.4
%COMMENT: I would leave this section here where it is needed first.
%If we write an own section on spaces in the introduction, this is very technical and we should also do it for FEM, which I would rather avoid...
For arbitrary $\dph\ge2$, let $\Omega\subset\R^\dph$ be a bounded Lipschitz domain as in \cite[Definition~3.28]{mclean00} and $\Gamma:= \partial\Omega$ its boundary.
In practice, $\Gamma$ is a multi-patch domain defined as in Section~\ref{sec:multi-patch} with $\dpa=\dph-1$.
Before we give the model problem and discuss its discretization, we have to introduce the involved Sobolev spaces on $\Gamma$.
For $\sigma\in[0,1]$, we define the Hilbert spaces $H^{\pm\sigma}(\Gamma)$ with corresponding norms as in \cite[page~99]{mclean00} by use of Bessel potentials on $\R^{\dph-1}$ and liftings via bi-Lipschitz mappings %\footnote{For $\widehat\omega\subseteq \R^{\dph-1}$ and $\omega\subseteq \R^\dph$, a mapping $\gamma:\widehat\omega\to\omega$ is bi-Lipschitz if it is bijective and $\gamma$ as well as its inverse $\gamma^{-1}$ are Lipschitz continuous.} 
that describe $\Gamma$.
For $\sigma=0$, this procedure yields that $H^0(\Gamma)=L^2(\Gamma)$ with equivalent norms.
Therefore, we set $\norm{\cdot}{H^0(\Gamma)}:=\norm{\cdot}{L^2(\Gamma)}$.

For $\sigma\in(0,1]$,  any measurable subset $\omega\subseteq\Gamma$, and all $v\in H^\sigma(\Gamma)$, we define
the associated   Sobolev--Slobodeckij norm
\begin{align*}%\label{eq:SS-norm}
 \norm{v}{H^{\sigma}(\omega)}^2
 := \norm{v}{L^2(\omega)}^2
 + |v|_{H^{\sigma}(\omega)}^2
 \end{align*}
 with 
 \begin{align*}
 |v|_{H^{\sigma}(\omega)}^2 :=\begin{cases} \int_\omega\int_\omega\frac{|v(\xx)-v(\yy)|^2}{|\xx-\yy|^{\dph-1+2\sigma}}\,\d\xx \,\d\yy&\text{ if }\sigma\in(0,1),\\ \norm{\nabla_\Gamma v}{L^2(\omega)}^2&\text{ if }\sigma=1.\end{cases}
\end{align*}
Here, $\nabla_\Gamma(\cdot)$ denotes the usual (weak) surface gradient which is  well-defined for almost all $\xx\in\Gamma$.
It is well known that $\norm{\cdot}{H^\sigma(\Gamma)}$ provides an equivalent norm on $H^\sigma(\Gamma)$, see, e.g., \cite[Lemma~2.19]{steinbach08} and \cite[Theorem~3.30 and page 99]{mclean00} for $\sigma\in(0,1)$ and \cite[Theorem~2.28]{mitscha14} for $\sigma=1$.

For $\sigma\in(0,1]$, $H^{-\sigma}(\Gamma)$ is a realization of the dual space of $H^{\sigma}(\Gamma)$ according to \cite[Theorem~3.30 and page~99]{mclean00}.
With the duality bracket $\dual{\cdot}{\cdot}$, we define the following equivalent norm on $H^{-\sigma}(\Gamma)$
\begin{align*}
\norm{\psi}{H^{-\sigma}(\Gamma)}&:=\sup \set{\dual{v}{\psi}}{v\in H^\sigma(\Gamma), \norm{v}{H^\sigma(\Gamma)}=1} \notag \\
&\hspace{28mm}\text{for all } \psi\in H^{-\sigma}(\Gamma).
\end{align*}

In \cite[page~76]{mclean00}, it is stated that $H^{\sigma_1}(\Gamma)\subset H^{\sigma_2}(\Gamma)$ for $-1\le \sigma_1<\sigma_2\le 1$,  where the inclusion is continuous, dense, and compact.
In particular, $H^{\sigma}(\Gamma)\subset L^2(\Gamma)\subset H^{-\sigma}(\Gamma)$ forms a Gelfand triple in the sense of \cite[Section~2.1.2.4]{ss11} for all $\sigma\in(0,1]$, where $\psi\in L^2(\Gamma)$ is interpreted as a function in $H^{-\sigma}(\Gamma)$ via 
\begin{align*}
\begin{split}
\dual{v}{\psi}:=\dual{v}{\psi}_{L^2(\Gamma)}=\int_\Gamma v\,\psi \,\d\xx \\
\text{for all }v\in H^\sigma(\Gamma),\psi\in L^2(\Gamma).
\end{split}
\end{align*}

{The spaces $H^\sigma(\Gamma)$ can also be defined as trace spaces or via interpolation, where the resulting norms are always equivalent with constants which depend only on the dimension $\dph$ and the boundary $\Gamma$.}
For a more detailed introduction to Sobolev spaces on the boundary, the reader is referred to \cite{mclean00,ss11,steinbach08,hw08}.

\subsubsection{Model problem and Galerkin approximation}\label{sec:model problem bem}
Again, we consider a general second-order linear  PDE on the $\dph$-dimensional bounded Lipschitz domain $\Omega$ with partial differential operator  
\begin{align*}%\label{eq:PDE bem}
\begin{split}
\mathscr{P}u:=-\div (\AA\nabla u)+\bb\cdot\nabla u +cu,
%-\sum_{i=1}^d \sum_{i'=1}^d\partial_i (A_{ii'}\partial_{i'} u)+\sum_{i=1}^d b_i\partial_i u +c u,
\end{split}
\end{align*}
where the coefficients $\AA\in\R^{\dph\times \dph}, \bb\in\R^\dph$, and $c\in\R$
%$A_{ii'},b_i,c\in\R^{D\times D}$ 
now additionally supposed to be constant. % for some fixed dimension $D\ge 1$.
%We suppose that $\AA^\top=\AA$.
Moreover, we assume that $\AA$ is symmetric and positive definite.
%Moreover, we  assume that $\mathscr{P}$ is coercive on $H_0^1(\Omega)$, i.e., the bilinear form $\dual{\cdot}{\cdot}_{\mathscr{P}}$ of \eqref{eq:P bilinearform} is  elliptic up to some compact perturbation.
%This is equivalent to \textit{strong ellipticity}\footnote{Unfortunately, this name might be misleading. Indeed, strong ellipticity in the sense of \cite{mclean00} does not necessarily  imply ellipticity as in  \eqref{eq:ellipticity}.} of the matrix $\AA$ in the sense of \cite[page~119]{mclean00}.

Let $G:\R^\dph\setminus\{0\}\to \R$ be a corresponding fundamental solution in the sense of \cite[page~198]{mclean00}, i.e., a distributional solution of $\mathscr{P}G=\delta$, where $\delta$ denotes the Dirac delta function.
For $\psi\in L^\infty(\Gamma)$, we define the \textit{single-layer operator}
as 
\begin{align*}%\label{eq:single layer operator integral}
({\mathscr{V}}\psi)(\xx):=\int_{\Gamma} G(\xx-\yy) \psi(\yy) \,\,\d\yy\quad\text{for all }\xx\in\Gamma.
\end{align*}
According to \cite[pages 209 and 219--220]{mclean00} and \cite[%Theorem~3.32, 
Corollary~3.38]{hmt10}, 
 this operator can be extended for arbitrary $\sigma\in(-1/2,1/2$] to a bounded linear operator 
\begin{align}\label{eq:single layer operator}
\mathscr{V}:
H^{-1/2+\sigma}(\Gamma)\to H^{1/2+\sigma}(\Gamma).
\end{align}
In \cite[Theorem~7.6]{mclean00}, it is stated that $\mathscr{V}$ is always elliptic up to some compact perturbation.
%CHECK if this is valid for our model problem.
 We assume that it is elliptic even without perturbation, i.e., 
\begin{align}\label{eq:ellipticity bem}
\hspace{-1mm}\dual{\mathscr{V}\psi}{\psi}
\ge \const{ell}\norm{\psi}{H^{-1/2}(\Gamma)}^2\text{ for all }\psi\in H^{-1/2}(\Gamma).
\end{align}
This is particularly satisfied for the Laplace problem or for the linear elasticity problem, where the case $\dph=2$ requires an additional scaling of the geometry $\Omega$, see, e.g., \cite[Chapter~6]{steinbach08}.
Moreover, the bilinear form $\dual{\mathscr{V}\,\cdot}{\cdot}$
is continuous due to \eqref{eq:single layer operator}, i.e.,  it holds with $\const{cont}:=\norm{\mathscr{V}}{H^{-1/2}(\Gamma)\to H^{1/2}(\Gamma)}$ that 
\begin{align}
\begin{split}\label{eq:continuity bem}
\dual{\mathscr{V}\psi}{\xi}
\le \const{cont}\norm{\psi}{H^{-1/2}(\Gamma)}\norm{\xi}{H^{-1/2}(\Gamma)} \\
\text{for all }\psi,\xi\in H^{-1/2}(\Gamma).
\end{split}
\end{align}

Given a right-hand side $f\in H^{1}(\Gamma)$, 
we consider the weakly-singular boundary integral equation
\begin{align}\label{eq:strong}
 \mathscr{V}\phi = f.
\end{align}
Such equations arise from the solution of Dirichlet problems of the form $\mathscr{P} u=0$ in $\Omega$ with $u=g$ on $\Gamma$ for some $g\in H^{1}(\Gamma)$, see, e.g., \cite[pages 226--229]{mclean00}. 
The normal derivative $\phi:= (\AA\nabla u)\cdot\normal$ of the  weak solution $u$ then satisfies the integral equation \eqref{eq:strong} with $f:=(\mathscr{K}+1/2)g$, i.e.,
\begin{equation}\label{eq:Symmy interior}
\mathscr{V}\phi =(\mathscr{K}+1/2) g, 
\end{equation}
where 
\begin{align}\label{eq:double layer mapping}
\mathscr{K}: H^{1/2}(\Gamma)\to H^{1/2}(\Gamma)
\end{align}
denotes the \textit{double-layer operator} \cite[pages~218--223]{mclean00}.
If $\Gamma$ is piecewise smooth and if $g\in L^\infty(\Gamma)$, for all ${\bf x}\in\Gamma$ where $\Gamma$ is locally smooth and $g$ is continuous there holds the representation 
\begin{align*}%\label{eq:double-layer}
\begin{split}
\mathscr{K}g({\bf x})= \int_{\Gamma_{}} g({\bf y})
\big(\AA\nabla_{\bf y}G({\bf x},{\bf y}) + \bb \,G({\bf x},{\bf y}) \big)\cdot\nu({\bf y})
%\partial_{{\boldsymbol{\nu}}({\bf y})} G({\bf x},{\bf y})
 \,{\rm d}{\bf y};
\end{split}
\end{align*}
see \cite[Section~3.3.3]{ss11}.
Due to \eqref{eq:ellipticity bem}--\eqref{eq:continuity bem} the Lax--Milgram lemma guarantees existence
and uniqueness of the solution $\phi\in H^{-1/2}(\Gamma_{})$ of the equivalent variational formulation of~\eqref{eq:strong}
\begin{align*}%\label{eq:weak}
\dual{\mathscr{V}\phi}{\psi}
=\dual{f}{\psi}
 \quad\text{for all }\psi\in H^{-1/2}(\Gamma_{}).
\end{align*}
In particular, we see that $\mathscr{V}:H^{-1/2}(\Gamma)\to H^{1/2}(\Gamma)$ is an isomorphism.

In the Galerkin BEM, the test
space $H^{-1/2}(\Gamma_{})$ is replaced by some discrete subspace  $\mathbb{S}\subset {L^{2}(\Gamma_{})}\subset H^{-1/2}(\Gamma_{})$.
Again, the Lax--Milgram lemma applies and guarantees the existence and uniqueness of the solution
$\Phi\in\mathbb{S}$ of the discrete variational formulation
\begin{align}\label{eq:discrete}
\dual{\mathscr{V}\Phi}{\Psi}
 = \dual{f}{\Psi}
 \quad\text{for all }\Psi\in\mathbb{S}. 
\end{align}
In fact, $\Phi$ can be computed by solving a linear system of equations.
Note that \eqref{eq:single layer operator} even implies that $\mathscr{V} \Psi\in H^1(\Gamma)$ for arbitrary $\Psi\in \mathbb{S}$.
The additional regularity $f\in H^1(\Gamma)$ instead of $f\in H^{1/2}(\Gamma)$ is only needed to define the residual error estimator~\eqref{eq:eta bem} below, which requires that $f-\mathscr{V}\in H^1(\Gamma)$. As for the FEM problem, we also note the Galerkin orthogonality
\begin{align}\label{eq:galerkin bem}
 \dual{f-\mathscr{V}\Phi}{\Psi} = 0
 \quad\text{for all }\Psi\in\mathbb{S},
\end{align}
as well as the resulting C\'ea-type quasi-optimality
\begin{align*}%\label{eq:cea bem}
\hspace{-0.1mm} \norm{\phi-\Phi}{H^{-1/2}(\Gamma)}
 \le C_{\text{C\'ea}}\min_{\Psi\in\mathbb{S}}\norm{\phi-\Psi}{H^{-1/2}(\Gamma)},
\end{align*}
where $ C_{\text{C\'ea}} := {\const{cont}}/{\const{ell}}$.
For a more detailed introduction to boundary integral equations and BEM, the reader is referred to the monographs \cite{mclean00,ss11,steinbach08,hw08}.

%REMARK: add comments about how to deal with singular integrals in Section 7, in the part of numerical tests.
%COMMENT: Yes, we will give proper references there.

%% Let $\mathbb{S}_\coarse\subset L^2(\Gamma)$ be an arbitrary discrete subspace and let $\Phi_\coarse\in\mathbb{S}_\coarse$ be the corresponding Galerkin approximation to the solution $\phi\in H^{-1/2}(\Gamma)$ of \eqref{eq:strong}, i.e.,
%% \begin{align}\label{eq:pregalerkin bem}
%%  \dual{\mathscr{V}\Phi_\coarse}{\Psi_\coarse} = \dual{f}{\Psi_\coarse}
%%  \quad\text{for all }\Psi_\coarse\in\mathbb{S}_\coarse.
%% \end{align}
%% We note the Galerkin orthogonality
%% \begin{align}\label{eq:galerkin bem}
%%  \dual{f-\mathscr{V}\Phi_\coarse}{\Psi_\coarse} = 0
%%  \quad\text{for all }\Psi_\coarse\in\mathbb{S}_\coarse,
%% \end{align}
%% as well as the resulting C\'ea type quasi-optimality
%% \begin{align}\label{eq:cea bem}
%% \hspace{-0.1mm} \norm{\phi-\Phi_\coarse}{H^{-1/2}(\Gamma)}
%%  \le C_{\text{C\'ea}}\min_{\Psi_\coarse\in\mathbb{S}_\coarse}\norm{\phi-\Psi_\coarse}{H^{-1/2}(\Gamma)},
%% \end{align}
%% where $ C_{\text{C\'ea}} := {\const{cont}}/{\const{ell}}$.

%%%%%%%%%%%%%%%%%%%%%%%%%%%%%%%%%%%%%%%%%%%%%%%%%%%%%%%%%%%%%%%%%%%%%%%%%%%%%%%%%%%%%%%%%%%%%
\subsubsection{Isogeometric discretization} \label{sec:isogeometric-discretization-BEM}
For the solution of the discrete problem with isogeometric methods, we  assume that the boundary of the domain $\Gamma = \partial \Omega \subset \mathbb{R}^\dph$ (and not necessarily $\Omega$) is defined as a multi-patch geometry through NURBS parametrizations.
More precisely, we suppose that $\Gamma = \bigcup_{m=1}^M \overline {\Gamma_m}$, where 
\[
{\bf F}_m :(0,1)^{\dph-1} \rightarrow \Gamma_m \subset \mathbb{R}^\dph
\]
is a NURBS parametrization and the assumptions of Section~\ref{sec:parametrization_assumptions} are valid.
In particular, each ${\bf F}_m$ is a bi-Lipschitz homeomorphism. 
Moreover,  we suppose the properties \eqref{P:conforming-mesh}--\eqref{P:flattened} regarding the conformity of the meshes in multi-patch domains given in Section~\ref{sec:multi-patch} hold true. 

%Under these assumptions, we can define the discrete isogeometric space $\mathbb{S}$ in a similar way as in Section~\ref{sec:IGA-basics}, noting that, since our discrete space only needs to be contained in $L^2(\Gamma)$, it is not necessary to enforce continuity of the basis functions across the interfaces. 
On each patch, we first define the local space of mapped splines ${\mathbb{S}}_{{\bf p}_m}({\bf \kv}_m)$ with the local basis ${\cal B}_{{\bf p}_m}({\bf \kv}_m)$ via push-forward as in the IGAFEM case \eqref{eq:push S}--\eqref{eq:push B}.
Then, we define the discrete isogeometric space as
\begin{align*}
\mathbb{S} := \{V \in L^2(\Gamma) : V|_{\Gamma_m} \in {\mathbb{S}}_{{\bf p}_m}({\bf \kv}_m), \quad \\
\text{ for } m = 1, \ldots, M \}.
\end{align*}
Note that, in contrast to IGAFEM, continuity of the discrete functions at the interfaces is not required for the weakly-singular boundary integral equation~\eqref{eq:strong} as $\mathbb{S}$ only needs to be contained in $L^2(\Gamma)$.
A basis for this space is clearly given by 
\[
{\cal B} := \bigcup_{m=1}^M {\cal B}_{{\bf p}_m}({\bf \kv}_m).
\]

\begin{remark}
In contrast to weakly-singular integral equations, hypersingular integral equations, which result from Neumann problems (see, e.g., \cite[Chapter~7]{mclean00}), require continuous trial functions. Assuming also the conformity property \eqref{P:conforming-basis-space}, corresponding basis functions can be constructed as for IGAFEM in Section~\ref{sec:IGA-basics}.
\end{remark}

%Under these assumptions, we can define the discrete isogeometric space $\mathbb{S}_\coarse$ in a completely analogous way to the one presented in Section~\ref{sec:IGAFEM-multipatch}: we first define on each patch the set of mapped B-splines ${\cal B}_{{\bf p}_m}({\bf \kv}_m)$, and then glue them together with $C^0$ continuity by identifying adjacent basis functions on the interface, through the construction of the applications $\appl_m$ that give a global numbering to the basis functions.

%Since our discrete space only needs to be contained in $L^2(\Omega)$, it can also be defined without enforcing continuity at the interfaces. In this case, the discrete space $\mathbb{S}_\coarse = {\rm span}\, {\cal B}$, where the basis is given by
%\[
%{\cal B} := \bigcup_{m=1}^M {\cal B}_{{\bf p}_m}({\bf \kv}_m).
%\]

\subsubsection{{\sl A posteriori} error estimator}\label{sec:estimator bem}
Let $\QQ$ be the mesh on $\Gamma$, defined as above.
Due to the regularity assumption $f\in H^1(\Gamma)$, the mapping property \eqref{eq:single layer operator}, and $\mathbb{S}\subset L^2(\Gamma)$, the residual satisfies that $f-\mathscr{V}\Psi\in H^1(\Gamma)$ for all $\Psi\in\mathbb{S}$.
This allows to employ the  \textit{weighted-residual {\sl a~posteriori} error estimator}
\begin{subequations}\label{eq:eta bem}
\begin{align}
\begin{split}
& \eta := \eta(\QQ)\\
 &\quad\text{with}\quad 
 \eta(\SS)^2:=\sum_{Q\in\SS} \eta(Q)^2
 \text{ for all }\SS\subseteq\QQ,
 \end{split}
\end{align}
where, for all $Q\in\QQ$ with element size $h_\elem$, the local refinement indicators read %with $h_Q:=|Q|^{1/(d-1)}$
\begin{align}
\eta(Q)^2:=h_Q \seminorm{f-\mathscr{V}\Phi}{H^1(Q)}^2.
\end{align}
\end{subequations}
This estimator goes back to the works \cite{cs96,c97}, where reliability %\eqref{eq:reliable bem} 
is proved for standard 2D BEM with piecewise polynomials on polygonal geometries, while the corresponding result for standard 3D BEM is found in \cite{cms01}.
The recent work \cite{gp20} generalizes these results to PDEs beyond the Laplace equation and beyond standard discretizations based on piecewise polynomials. 

\newpage

% !TEX encoding = MacOSRoman
% !TEX root = adaptive_iga.tex

\section{Splines on adaptive meshes} \label{sec:adaptive-splines}

The design of adaptive isogeometric methods requires suitable adaptive spline spaces that enable local mesh refinement. Here, we focus on two of the main solutions that break the structure of standard multivariate tensor-product splines: \emph{hierarchical splines} in Section~\ref{subsec:hb} and \emph{T-splines} in Section~\ref{subsec:tsplines}. 
We stress that, at the moment and up to our knowledge, a thorough analysis on optimal convergence of resulting adaptive algorithms is only available for these two.
Section~\ref{subsec:others} finally collects alternative adaptive spline models and briefly comments on them.
%, also defined as extension of standard tensor-product B-spline spaces. 

\subsection{Hierarchical splines}\label{subsec:hb}

Hierarchical spline surfaces were introduced in \cite{forsey88} by considering a sequence of overlays to enable an efficient local editing of the geometric model. A simple selection algorithm to properly identify the
B-splines at different refinement levels needed to define a suitable basis for hierarchical spline spaces was proposed 
in \cite{kraft1997,kraft1998}. More recently, a slightly different hierarchical B-spline basis was proposed in \cite{vgjs11} and since then the hierarchical approach was widely used by different authors in IGA, see, e.g., \cite{vgjs11,schillinger2012,scott2014a,hcsh17}. In order to overcome some limitations of hierarchical B-splines, the truncated basis for the same hierarchical spline space was introduced in \cite{gjs12} leading to the definition of {\it truncated hierarchical B-splines} (THB-splines). Their application in IGA has been investigated by several authors for second order \cite{giannelli2016,hennig2016,dkrr18,bracco2019} and fourth order PDEs \cite{hennig2016,coradello20c}, and also for trimmed domains \cite{deprenter20,qarariyah19,coradello20,coradello20b}. Implementation aspects related to (T)HB-splines were addressed in \cite{bc13,kiss2014a,bm17,bgv18,garau2018}.

%GEO PDEs hierarchical \cite{garau2018}\\
%boremann-cirak subdivision implementation \cite{bc13}\\
%bressan-mokirs implementation of adaptive splines \cite{bm17}\\

\subsubsection{Definition and properties}
\label{subsec:def hb}

Let 
\begin{equation}
\spmh{0} \subset \spmh{1} \subset \ldots \subset \spmh{N-1}
\label{eq:nestedspaces}
\end{equation}
be a nested sequence of $N$ tensor-product spline spaces $\spmh{\ell}$, for $\ell = 0, \ldots, N-1$, defined without loss of generality on the open hyper-cube $\hat{\Omega} := (0,1)^\dpa$. 

At any level $\ell$, we consider the B-spline basis $\spbasis^\ell:=\spbasis_{{\bf p}}({\bf \kv}^\ell)$ of degree $\mathbf{p}$ defined on the rectilinear grid $\hat{\cal Q}^\ell$, analogously to the one-level case described in Section~\ref{sec:splines-tensor}. Any (non-empty) element $\hat{Q}$ of the grid $\hat{\cal Q}^\ell$ is the Cartesian product of $\dpa$ open intervals defined by consecutive breakpoints. 
We abbreviate its level $\levelT{\widehat Q}:=\ell$.
The knot vector $\kv^\ell_i$ in the coordinate direction $i$, for $i=1,\ldots,\dpa$, is associated to the
grid at level $\ell$ and contains non-decreasing real numbers so that each breakpoint $\brkpnt_j^\ell$ appears in the knot vector as many times as specified by a certain multiplicity.  For $\dpa = 1$, an example of grids and B-spline bases of three different levels is shown in Figure~\ref{fig:exm1D}.

\begin{figure}[!t]\centering
\subfigure[$\hat{\cal Q}^0$, $\hat{\cal Q}^1$, and $\hat{\cal Q}^2$ (from top to bottom)]{
\includegraphics[width=0.49\textwidth, trim=8cm 0cm 4cm 0cm, clip]{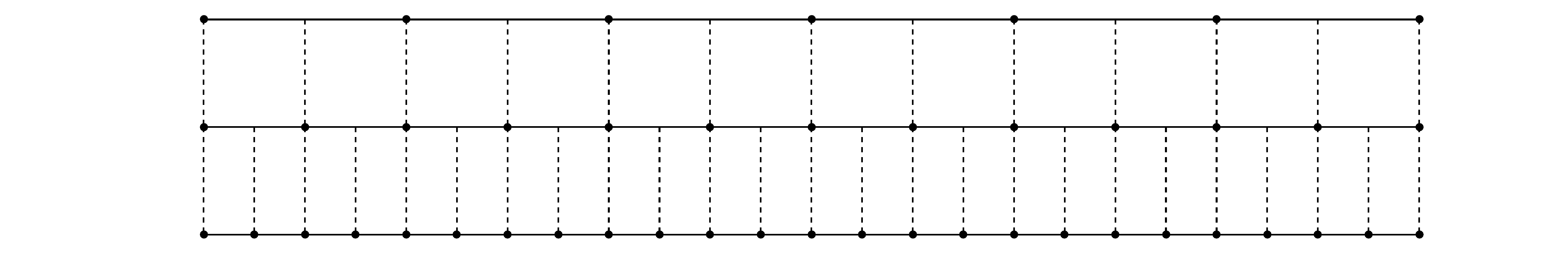}}
\subfigure[$\hat{\cal B}^0$]{
\includegraphics[width=0.49\textwidth, trim=8cm 0cm 4cm 0cm, clip]{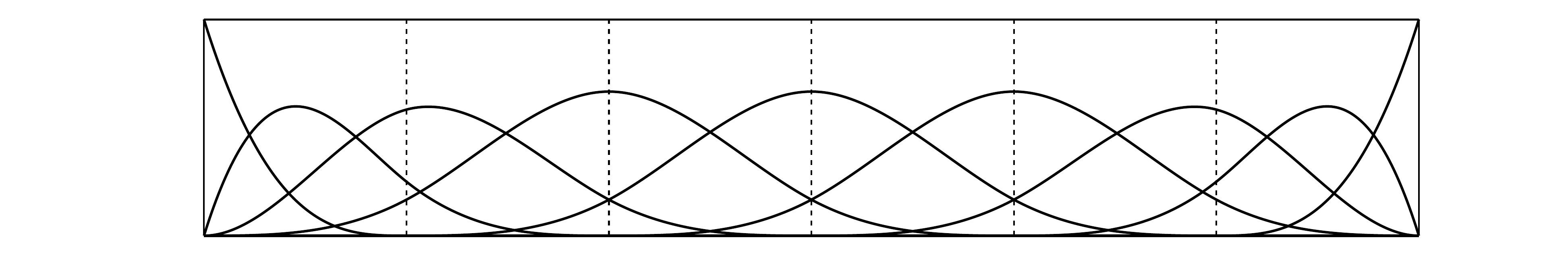}}
\subfigure[$\hat{\cal B}^1$]{
\includegraphics[width=0.49\textwidth, trim=8cm 0cm 4cm 0cm, clip]{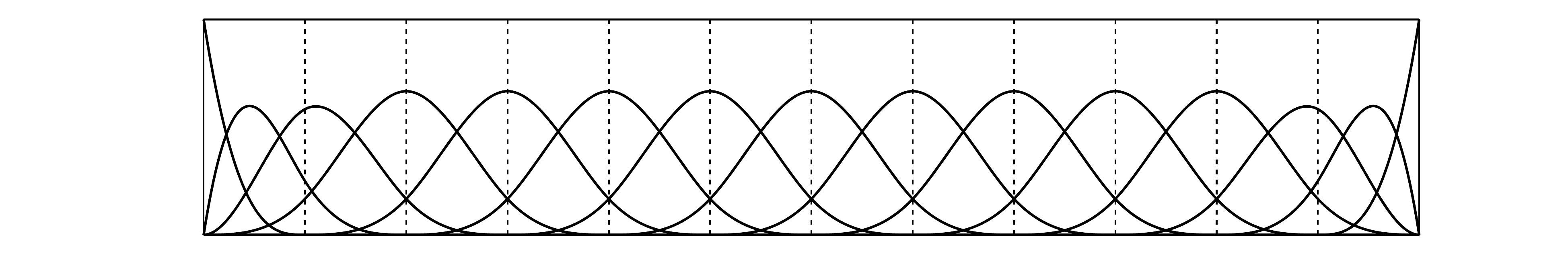}}
\subfigure[$\hat{\cal B}^2$]{
\includegraphics[width=0.49\textwidth, trim=8cm 0cm 4cm 0cm, clip]{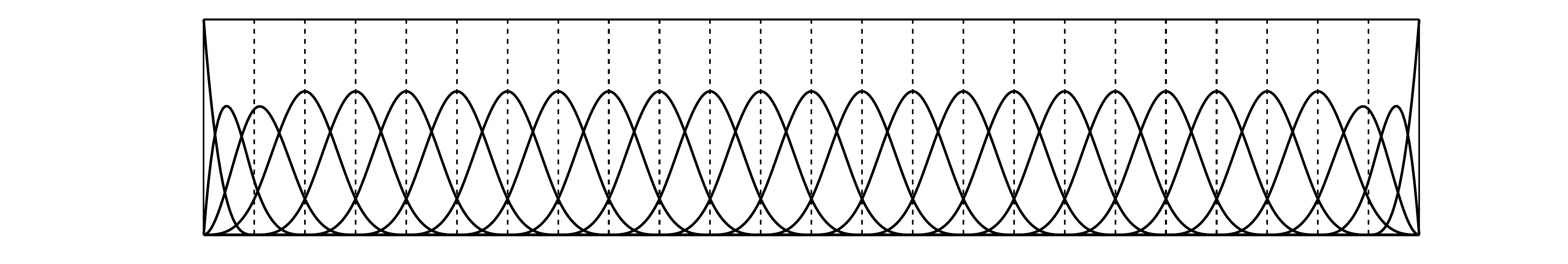}}
\caption{An example of grids (a) of three hierarchical levels for $\dpa=1$. The univariate B-splines of degree 3 defined on level 0, 1 and 2 are shown in (b), (c) and (d), respectively.
All internal knots have multiplicity one.}
\label{fig:exm1D}
\end{figure}

We assume open knot vectors in any direction at level $0$ and multiplicities of internal knots between one and $p_i$. To guarantee the nested nature of the spline spaces given by \eqref{eq:nestedspaces}, we also assume 
%that the multiplicity of a breakpoint present both in $\kv^{\ell}_i$ and $\kv^{\ell+1}_i$ is non-decreasing. 
dyadic mesh refinement between consecutive hierarchical levels so that an element of level $\ell$ is uniformly refined in $2^\dpa$ elements of level $\ell+1$, see Figure~\ref{fig:exm1D} and \ref{fig:hmesh} for $\dpa=1$ and $\dpa=2$, respectively. In addition, any newly inserted knot appears with multiplicity one.

\begin{remark} Note that more general refinement possibilities can also be covered within the hierarchical spline model \cite{giannelli2014}.
\end{remark}

\color{black}
\begin{figure}[!t]\centering
\subfigure[$\hat{\cal Q}^0$ and $\hat{\Omega}^0$]{
\begin{tikzpicture}[scale=0.725]
\draw [fill=lightgray] (0,0) rectangle (5,5);
%\draw [fill=gray] (4,4) rectangle (6,6);
\draw (0,0) grid (5,5);
% level 1
%\foreach \a in {7,9,11,13}
%	\draw (3,\a/2) -- (7,\a/2);
%\foreach \a in {7,9,11,13}
%	\draw (\a/2,3) -- (\a/2,7);
\end{tikzpicture}}
\hspace*{.5cm}
\subfigure[$\hat{\cal Q}^1$ and $\hat{\Omega}^1$]{
\begin{tikzpicture}[scale=0.725]
\draw [fill=lightgray] (0,0) rectangle (2,2);
\draw [fill=lightgray] (1,1) rectangle (3,3);
\draw [fill=lightgray] (2,2) rectangle (4,4);
\draw [fill=lightgray] (3,3) rectangle (5,5);
%\draw [fill=lightgray] (3.5,3.5) rectangle (6.5,6.5);
%\draw [fill=gray] (4,4) rectangle (6,6);
\draw (0,0) grid (5,5);
\foreach \a in {1,3,5,7,9}
	\draw (0,\a/2) -- (5,\a/2);
\foreach \a in {1,3,5,7,9}
	\draw (\a/2,0) -- (\a/2,5);
\end{tikzpicture}}\\
\subfigure[$\hat{\cal Q}^2$ and $\hat{\Omega}^2$]{
\begin{tikzpicture}[scale=0.725]
\draw [fill=lightgray] (0,0) rectangle (1,1);
\draw [fill=lightgray] (1/2,1/2) rectangle (3/2,3/2);
\draw [fill=lightgray] (2/2,2/2) rectangle (4/2,4/2);
\draw [fill=lightgray] (3/2,3/2) rectangle (5/2,5/2);
\draw [fill=lightgray] (4/2,4/2) rectangle (6/2,6/2);
\draw [fill=lightgray] (5/2,5/2) rectangle (7/2,7/2);
\draw [fill=lightgray] (6/2,6/2) rectangle (8/2,8/2);
\draw [fill=lightgray] (7/2,7/2) rectangle (9/2,9/2);
\draw [fill=lightgray] (8/2,8/2) rectangle (10/2,10/2);
\draw (0,0) grid (5,5);
% level 1 and 2
\foreach \a in {1,3,5,7,9} \draw (0,\a/2) -- (5,\a/2);
\foreach \a in {1,3,5,7,9,11,13,15,17,19} \draw (0,\a/4) -- (5,\a/4);
\foreach \a in {1,3,5,7,9} \draw (\a/2,0) -- (\a/2,5);
 \foreach \a in {1,3,5,7,9,11,13,15,17,19} \draw (\a/4,0) -- (\a/4,5);
\end{tikzpicture}}
\hspace*{.5cm}
\subfigure[$\hat\hmesh$]{
\begin{tikzpicture}[scale=0.725]
%\draw [fill=lightgray] (3.5,3.5) rectangle (6.5,6.5);
%\draw [fill=gray] (4,4) rectangle (6,6);
\draw (0,0) grid (5,5);
% level 1 % horizontal lines
\foreach \a in {1} \draw (0,\a/2) -- (2,\a/2); 
\foreach \a in {3} \draw (0,\a/2) -- (3,\a/2);
\foreach \a in {5} \draw (1,\a/2) -- (4,\a/2);
\foreach \a in {7} \draw (2,\a/2) -- (5,\a/2);
\foreach \a in {9} \draw (3,\a/2) -- (5,\a/2);
% level 1 % vertical lines
\foreach \a in {1}\draw (\a/2,0) -- (\a/2,2);
\foreach \a in {3}\draw (\a/2,0) -- (\a/2,3);
\foreach \a in {5}\draw (\a/2,1) -- (\a/2,4);
\foreach \a in {7}\draw (\a/2,2) -- (\a/2,5);
\foreach \a in {9}\draw (\a/2,3) -- (\a/2,5);
% level 2 % horizontal lines
\foreach \a in {1} \draw (0,\a/4) -- (1,\a/4); 
\foreach \a in {3} \draw (0,\a/4) -- (1+1/2,\a/4);
\foreach \a in {5} \draw (1/2,\a/4) -- (2,\a/4);
\foreach \a in {7} \draw (1,\a/4) -- (5/2,\a/4);
\foreach \a in {9} \draw (1.5,\a/4) -- (3,\a/4);
\foreach \a in {11} \draw (2,\a/4) -- (3.5,\a/4);
\foreach \a in {13} \draw (2.5,\a/4) -- (4,\a/4);
\foreach \a in {15} \draw (3,\a/4) -- (4.5,\a/4);
\foreach \a in {17} \draw (3.5,\a/4) -- (5,\a/4);
\foreach \a in {19} \draw (4,\a/4) -- (5,\a/4);
% level 2 % vertical lines
\foreach \a in {1}\draw (\a/4,0) -- (\a/4,1);
\foreach \a in {3}\draw (\a/4,0) -- (\a/4,1.5);
\foreach \a in {5}\draw (\a/4,0.5) -- (\a/4,2);
\foreach \a in {7}\draw (\a/4,1) -- (\a/4,2.5);
\foreach \a in {9}\draw (\a/4,1.5) -- (\a/4,3);
\foreach \a in {11}\draw (\a/4,2) -- (\a/4,3.5);
\foreach \a in {13}\draw (\a/4,2.5) -- (\a/4,4);
\foreach \a in {15}\draw (\a/4,3) -- (\a/4,4.5);
\foreach \a in {17}\draw (\a/4,3.5) -- (\a/4,5);
\foreach \a in {19}\draw (\a/4,4) -- (\a/4,5);
\end{tikzpicture}}
\caption{An example of grids and domains (gray regions) of levels 0 (a), 1 (b), 2 (c) for $\dpa=2$. The hierarchical mesh is also shown (d).}
\label{fig:hmesh}
\end{figure}
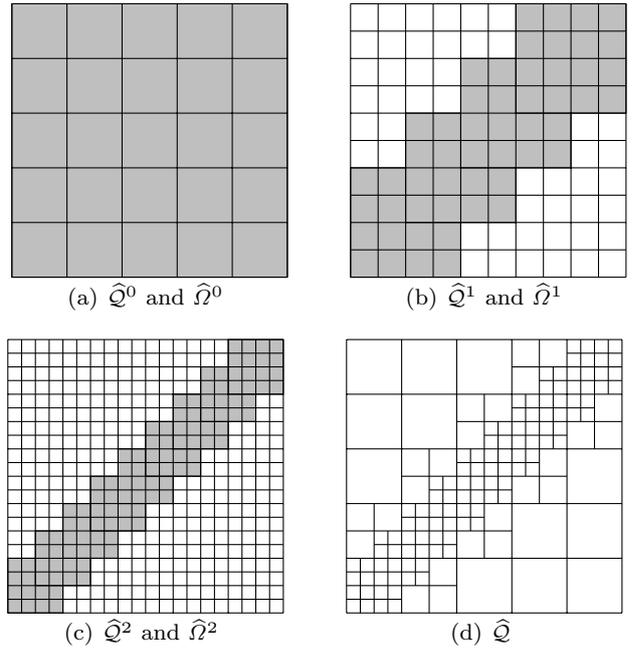 

In order to define the spline hierarchy, we consider a nested sequence of closed subsets of ${\hat\Omega}^0 := \overline{\hat\Omega}$, given by
\[
{\hat\Omega}^0\supseteq{\hat\Omega}^1\supseteq\ldots\supseteq\hat{\Omega}^{N-1}
\supseteq\hat{\Omega}^{N} = \emptyset,
\]
where we assume that $\hat{\Omega}^\ell$ is the union of the closure of elements of level $\ell-1$.
%\[
%\hat{\Omega}^\ell = \bigcup_{\hat{Q} \in {\cal \hat{R}}^{\ell - 1}} \overline{\hat{Q}},
%\]
%and ${\cal \hat{R}}^{\ell - 1} \subset {\cal \hat{Q}}^{\ell-1}$ are the refined elements of level $\ell-1$. 
By considering 
%\[
%{\hat{\cal G}}^\ell :=
%\left\{\hat{Q}\in \hat{\cal Q}^\ell : 
%\hat{Q} \,\subset\,\hat{\Omega}^\ell \wedge
%\hat{Q} \,\not\subset\,\hat{\Omega}^{\ell+1}
%\right\},
%\]
the set of active elements at level $\ell$, for $\ell=0,\ldots,N-1$, 
we can define the \emph{hierarchical mesh} as follows:
\begin{align}\label{eq:hmesh}
\begin{split}
{\hat{\cal Q}} := \big\{
\hat{Q}\in \hat{\cal Q}^\ell :\,& 
\hat{Q} \,\subseteq\,\hat{\Omega}^\ell \wedge
\hat{Q} \,\not\subseteq\,\hat{\Omega}^{\ell+1},\\ 
&\qquad\quad\ell  =0,\ldots,N-1\big\}.
\end{split}
\end{align}
%\cg{Should we call the hierarchical and T-spline mesh $\hmesh^H$ and $\hmesh^T$ instead of $\hmesh$ as for the tensor-product mesh?}
For $\dpa=2$, an example of domain hierarchy on three refinement levels is shown in Figure~\ref{fig:hmesh}.

We say that a mesh $\hat\QQ_\fine$ is a \emph{refinement} of $\hat\QQ$, and we denote it by $\hat\QQ \preceq \hat\QQ_\fine$ (or $\hat \QQ_\fine \succeq \hat \QQ$), if it is obtained from $\hat \QQ$ by successive splitting via dyadic refinement of some of its elements. Note that, under our assumptions, the fine mesh is associated to an enlargement of the subdomains $(\Omegap^\ell_\fine)_{\ell=0, \ldots,N_\fine-1}$, such that $N \le N_\fine$, $\Omegap^0 = \Omegap^0_\fine$, and $\Omegap^\ell \subseteq \Omegap^\ell_\fine$ for $\ell=1, \ldots, N$.

Given a hierarchical mesh $\hat\hmesh$, the set of \emph{hierarchical B-splines (HB-splines)} $\hat\hbbasis_{\bf p}(\hat\hmesh,{\bf \kv}^{0}):=\hat\hbbasis^{N-1} $ can be constructed according to the following steps:
\begin{enumerate}
\item $\hat\hbbasis^0 := \spbasis^0$;
%%\left\{\hat{B}_{{\bf i},{\bf p}}\in\spbasis^0: \supp (\hat{B}_{{\bf i},{\bf p}}^0\subseteq \hat\Omega^0 \right\}$;
\item for $\ell=0,\ldots, N-2$ 
\[
\hat\hbbasis^{\ell+1} := \hat\hbbasis_A^{\ell+1}  \cup \hat\hbbasis_B^{\ell+1},
\]
\end{enumerate}
where
\begin{align*}
\hat\hbbasis_A^{\ell+1}&:= \left\{
\hat{B}_{{\bf i},{\bf p}}^\ell \in \hat\hbbasis^{\ell} : \supp (\hat{B}_{{\bf i},{\bf p}}^\ell) \not\subseteq\hat\Omega^{\ell+1}\right\},\\
\hat\hbbasis_B^{\ell+1}&:= \left\{
\hat{B}_{{\bf i},{\bf p}}^{\ell+1} \in \spbasis^{\ell+1} : \supp (\hat{B}_{{\bf i},{\bf p}}^{\ell+1}) \subseteq\hat\Omega^{\ell+1}\right\}.
\end{align*}
Steps 1--2 define a selection mechanism which activates and deactivates B-splines at different levels of resolution by taking into account the hierarchical domain configuration.
After initializing the set of hierarchical B-splines with the B-splines of level $0$, for any subsequent level $\ell$, the set $\hat\hbbasis^{\ell+1}$ of HB-splines of level $\ell+1$ includes
\begin{itemize}
\item B-splines of coarser levels whose support is not contained in $\hat\Omega^{\ell+1}$ ($\hat\hbbasis_A^{\ell+1}$);
\item B-splines of level $\ell+1$ whose support is contained in $\hat\Omega^{\ell+1}$ ($\hat\hbbasis_B^{\ell+1}$).
\end{itemize}
Note that the HB-spline basis $\hat\hbbasis_{\bf p}(\hat\hmesh,{\bf \kv}^{0})$ with respect to the mesh $\hat\hmesh$ can also be defined as 
\begin{align*}
\hat\hbbasis_{\bf p}(\hat\hmesh,{\bf \kv}^{0}) &= %\bigcup_{\ell=0,\ldots,N-1}
\left\{
 \hat{B}_{{\bf i},{\bf p}}^\ell \in {\hat{\cal B}}^\ell : 
\supp  (\hat{B}_{{\bf i},{\bf p}}^\ell)\subseteq{\hat\Omega}^\ell\right. \\
&\quad\;\wedge 
\left.\supp  (\hat{B}_{{\bf i},{\bf p}}^\ell)\not\subseteq {\hat\Omega}^{\ell+1},
\, \ell =0,\ldots,N-1
\right\}.
\end{align*}
Figure~\ref{fig:hbspline1D} shows an example of cubic hierarchical B-splines for $\dpa=1$.

\begin{figure}[!t]\centering
\subfigure[$\hat{\Omega}^0$, $\hat{\Omega}^1$, and $\hat{\Omega}^2$ (from top to bottom)]{
\includegraphics[width=0.49\textwidth, trim=8cm 0cm 4cm 0cm, clip]{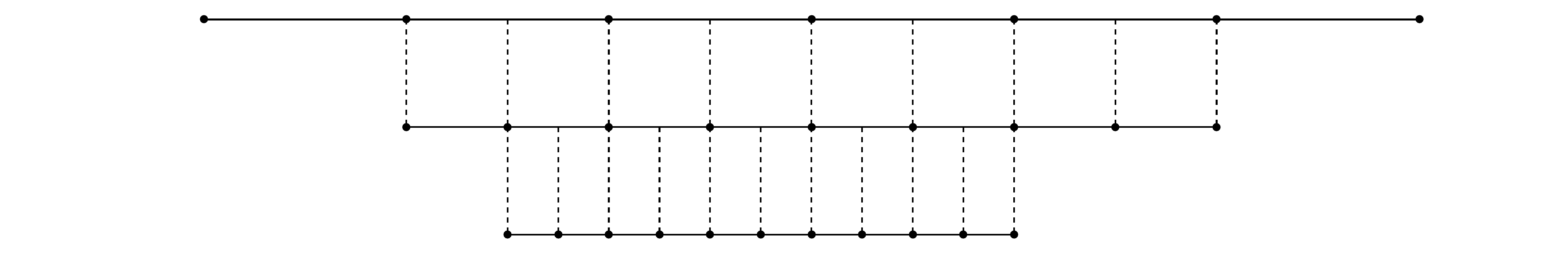}}
\subfigure[HB-splines]{
\includegraphics[width=0.49\textwidth, trim=8cm 0cm 4cm 0cm, clip]{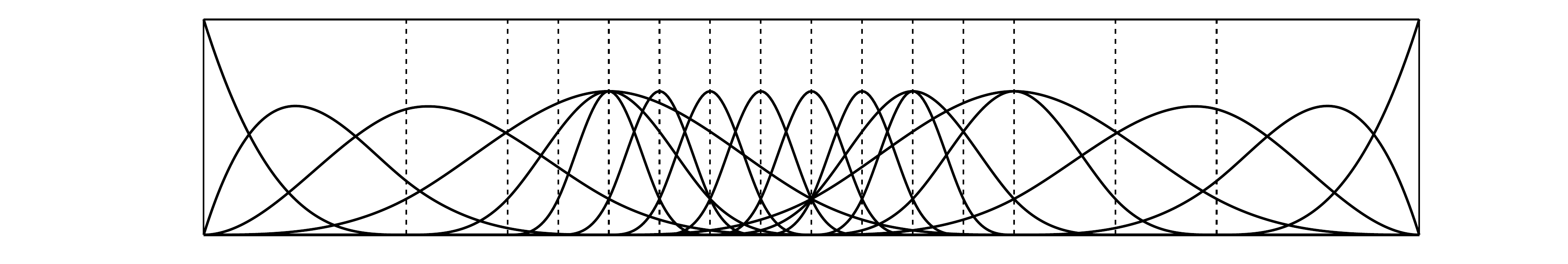}}
\subfigure[THB-splines]{
\includegraphics[width=0.49\textwidth, trim=8cm 0cm 4cm 0cm, clip]{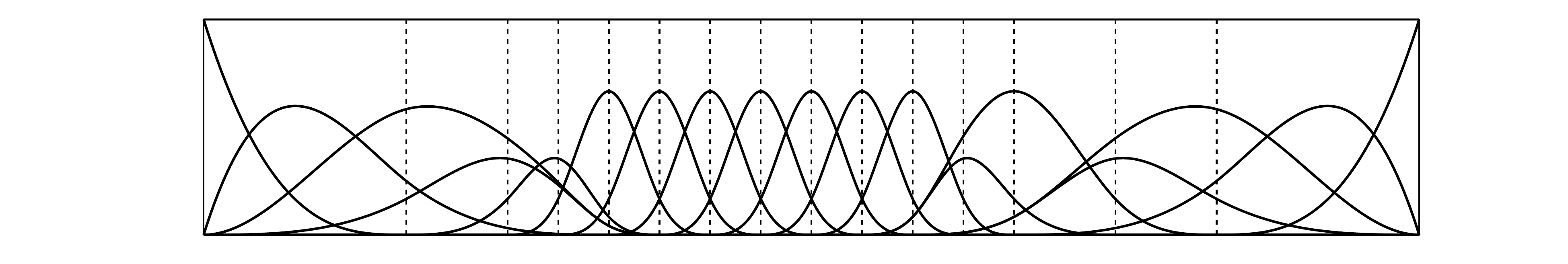}}
\caption{An example of cubic HB-splines (b) and THB-splines (c) defined on a domain hierarchy consisting of three levels (a).
All internal knots have multiplicity one.}
\label{fig:hbspline1D}
\end{figure}

The following proposition shows that $\hat\hbbasis_{\bf p}(\hat\hmesh,{\bf \kv}^{0})$ is indeed a basis for the \emph{hierarchical spline space}
\begin{align*}
\widehat{\mathbb{S}}_{\bf p}^{\rm H}(\hat\hmesh,{\bf \kv}^{0})
:=\myspan\,\hat\hbbasis_{\bf p}(\hat\hmesh,{\bf \kv}^{0}).
\end{align*}
Properties (i)--(iii) in Proposition~\ref{prop:hb properties} are proved in \cite{vgjs11,giannelli2014,sm16}. 
The characterization (iv) is taken from \cite[Section~3]{sm16}.
 
\begin{proposition}\label{prop:hb properties}
The hierarchical basis $\hat\hbbasis_{\bf p}(\hat\hmesh,{\bf \kv}^{0})$ satisfies the following properties:
\begin{itemize}
\item[\rm (i)] The HB-splines in $\hat\hbbasis_{\bf p}(\hat\hmesh,{\bf \kv}^{0})$ are nonnegative and linearly independent.
\item[\rm (ii)] The intermediate spline spaces are nested, i.e., ${\rm span} \,\hat\hbbasis^{\ell}\subseteq {\rm span}\,\hat\hbbasis^{\ell+1}$, for $\ell=0,\ldots,N-2$.
%\item[\rm (iii)] Given an enlargement of the domains $(\hat{\Omega}^\ell_{\fine})_{\ell=0,\ldots,N_{\fine}-1}$ with $N\le {N}_\fine$, such that $\hat{\Omega}^0=\hat{\Omega}^0_\fine$ and $\hat{\Omega}^\ell	\subseteq \hat{\Omega}^\ell_\fine$ for $\ell=1,\ldots,N-1$, which generates the hierarchical mesh $\hat\hmesh_\fine$, it holds that
%$\widehat{\mathbb{S}}_{\bf p}^{\rm H}(\hat\hmesh,{\bf \kv}^{0})
%\subseteq \widehat{\mathbb{S}}_{\bf p}^{\rm H}(\hat\hmesh_\fine,{\bf \kv}^{0})$.
%$\myspan\,\hat\hbbasis_{\bf p}(\hat\hmesh,{\bf \kv}^{0})\subseteq\myspan\,\hat{\hbbasis}(\hat\hmesh_\fine)$.
\item[\rm (iii)] Given a mesh $\hat \QQ_\fine \succeq \hat \QQ$, it holds that
$\widehat{\mathbb{S}}_{\bf p}^{\rm H}(\hat\hmesh,{\bf \kv}^{0})
\subseteq \widehat{\mathbb{S}}_{\bf p}^{\rm H}(\hat\hmesh_\fine,{\bf \kv}^{0})$.
\item[\rm (iv)]It holds the explicit characterization $\widehat{\mathbb{S}}_{\bf p}^{\rm H}(\hat\hmesh,{\bf \kv}^{0})=\set{\widehat S}{\widehat S|_{\widehat\Omega\setminus\widehat\Omega^{\ell+1}}\in \spmh{\ell}|_{\widehat\Omega\setminus\widehat\Omega^{\ell+1}}, \,\ell=0,\dots,N-1}$.
In particular, hierarchical splines are polynomials of degree ${\bf p}$ on each element $\widehat Q\in\widehat\QQ$.
\end{itemize}
\end{proposition}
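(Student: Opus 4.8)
The plan is to establish the four claims in the order (i), (ii), (iv), (iii), because the explicit characterization~(iv) yields the quickest route both to the nestedness~(iii) and to the closing remark on piecewise polynomiality. Nonnegativity in~(i) is inherited verbatim from the nonnegativity of tensor-product B-splines recalled in Section~\ref{sec:splines-tensor}. For linear independence I would argue level by level on the rings $\hat\Omega^\ell\setminus\hat\Omega^{\ell+1}$: assume a vanishing combination $\sum_\beta c_\beta\beta=0$. On the open set $\hat\Omega^0\setminus\hat\Omega^1$ every HB-spline of level $k\ge1$ vanishes (its support lies in $\hat\Omega^k\subseteq\hat\Omega^1$), so only level-$0$ HB-splines survive there; since each of them meets $\hat\Omega^0\setminus\hat\Omega^1$ (this is precisely the activation condition $\supp\not\subseteq\hat\Omega^1$), local linear independence of B-splines forces their coefficients to vanish. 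Deleting them and repeating the argument on $\hat\Omega^1\setminus\hat\Omega^2,\hat\Omega^2\setminus\hat\Omega^3,\dots$ annihilates all coefficients by induction on the level.

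For~(ii) I would take $\beta\in\hat\hbbasis^\ell$ and split according to the activation condition. If $\supp(\beta)\not\subseteq\hat\Omega^{\ell+1}$, then $\beta\in\hat\hbbasis_A^{\ell+1}\subseteq\hat\hbbasis^{\ell+1}$ by definition. If instead $\supp(\beta)\subseteq\hat\Omega^{\ell+1}$, then the nestedness~\eqref{eq:nestedspaces} together with the two-scale subdivision relation expresses $\beta$ as a combination of level-$(\ell+1)$ B-splines whose supports are contained in $\supp(\beta)\subseteq\hat\Omega^{\ell+1}$; each such refined B-spline then lies in $\hat\hbbasis_B^{\ell+1}$, so $\beta\in\myspan\,\hat\hbbasis^{\ell+1}$. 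This gives $\myspan\,\hat\hbbasis^\ell\subseteq\myspan\,\hat\hbbasis^{\ell+1}$.

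For the inclusion ``$\subseteq$'' in~(iv), write $\hat S=\sum_\beta c_\beta\beta$. On $\hat\Omega\setminus\hat\Omega^{\ell+1}$ all HB-splines of level $k\ge\ell+1$ vanish, while the remaining ones have level $k\le\ell$ and hence lie in $\spmh{\ell}$ by nestedness; thus $\hat S|_{\hat\Omega\setminus\hat\Omega^{\ell+1}}\in\spmh{\ell}|_{\hat\Omega\setminus\hat\Omega^{\ell+1}}$. Granting the full~(iv), claim~(iii) is immediate: for a refinement with enlarged subdomains $\hat\Omega^{\ell+1}\subseteq\hat\Omega^{\ell+1}_\fine$ one has $\hat\Omega\setminus\hat\Omega^{\ell+1}_\fine\subseteq\hat\Omega\setminus\hat\Omega^{\ell+1}$, so each local membership for $\hat\hmesh$ restricts to the one for $\hat\hmesh_\fine$ (the extra fine levels being covered by $\spmh{N-1}\subseteq\spmh{\ell}$). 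The closing statement follows too: an active element $\hat Q$ of level $\ell$ has $\hat Q\subseteq\hat\Omega\setminus\hat\Omega^{\ell+1}$, since $\hat\Omega^{\ell+1}$ is a union of level-$\ell$ elements and $\hat Q\not\subseteq\hat\Omega^{\ell+1}$; hence $\hat S|_{\hat Q}$ is a level-$\ell$ spline and therefore a single polynomial of degree~${\bf p}$.

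The main obstacle is the reverse inclusion ``$\supseteq$'' in~(iv), namely that every $\hat S$ meeting the elementwise conditions is an HB-spline combination. I would prove it by induction on the number of levels $N$ through a coarse-to-fine peeling. Since $\hat S|_{\hat\Omega\setminus\hat\Omega^1}\in\spmh{0}|_{\hat\Omega\setminus\hat\Omega^1}$, I choose $g_0\in\spmh{0}$ agreeing with $\hat S$ on $\hat\Omega\setminus\hat\Omega^1$ and expand $g_0=\sum_{\bf i} c_{\bf i}\hat B_{{\bf i},{\bf p}}^0$. Split the sum into the part $\sigma_0$ over B-splines with $\supp\not\subseteq\hat\Omega^1$ (exactly the active level-$0$ HB-splines) and the remainder $r_0$ over B-splines with $\supp\subseteq\hat\Omega^1$. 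Then $\hat S-\sigma_0$ vanishes on the open set $\hat\Omega\setminus\hat\Omega^1$, still satisfies the elementwise conditions for levels $1,\dots,N-1$, and is supported in $\hat\Omega^1$; hence it is a function of the shorter hierarchy on $\hat\Omega^1\supseteq\dots\supseteq\hat\Omega^N=\emptyset$ with spaces $\spmh{1}\subseteq\dots\subseteq\spmh{N-1}$, to which the induction hypothesis applies and whose HB-splines are precisely the level-$\ge1$ HB-splines of $\hat\hmesh$. The delicate points I expect to spend time on are (a) that the coarse remainder $r_0$, although living in $\spmh{0}$, is correctly absorbed into the finer levels without disturbing the activation bookkeeping, and (b) that the truncated hierarchy on the possibly non-box region $\hat\Omega^1$ carries exactly the expected active B-splines, so that the inductive span statement reassembles into $\hat\hbbasis_{\bf p}(\hat\hmesh,{\bf \kv}^{0})$.
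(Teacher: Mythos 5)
The paper itself does not give a proof of this proposition: it states that (i)--(iii) are proved in \cite{vgjs11,giannelli2014,sm16} and that the characterization (iv) is taken from \cite[Section~3]{sm16}. So your attempt can only be measured against those standard arguments, and indeed most of it matches them. Your nonnegativity and Kraft-type level-by-level linear-independence argument for (i) is correct (the key facts you use -- that every HB-spline of level $k\ge 1$ has support inside $\hat\Omega^1$, that an active level-$0$ function has nonzero restriction to the open set $\hat\Omega\setminus\hat\Omega^1$, and local linear independence of $\spbasis^0$ -- are all valid). Part (ii) via the two-scale relation, the inclusion ``$\subseteq$'' of (iv), the deduction of (iii) from (iv) using $\hat\Omega^{\ell+1}\subseteq\hat\Omega^{\ell+1}_\fine$, and the elementwise polynomial statement are likewise correct.

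The genuine gap is in the reverse inclusion ``$\supseteq$'' of (iv), and it is exactly your point (b); point (a) is harmless, since $r_0$ is simply discarded and never needs to be ``absorbed''. The problem is that the statement you induct on is formulated only for hierarchies rooted at $\hat\Omega^0=\overline{\hat\Omega}$, whereas after subtracting $\sigma_0$ you must apply it to a hierarchy rooted at the union of cells $\hat\Omega^1$; as written, the induction does not close. The fix is to strengthen the inductive claim to: \emph{if $f$ vanishes on $\hat\Omega\setminus\hat\Omega^k$ and $f|_{\hat\Omega\setminus\hat\Omega^{\ell+1}}\in \spmh{\ell}|_{\hat\Omega\setminus\hat\Omega^{\ell+1}}$ for $\ell=k,\dots,N-1$, then $f\in\myspan\set{\hat B\in\spbasis^\ell}{\supp(\hat B)\subseteq\hat\Omega^\ell \wedge \supp(\hat B)\not\subseteq\hat\Omega^{\ell+1},\ \ell=k,\dots,N-1}$}, proved by induction on $N-k$. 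The added vanishing hypothesis is what makes the step work: the spline $g_k\in\spmh{k}$ agreeing with $f$ on $\hat\Omega\setminus\hat\Omega^{k+1}$ in particular vanishes on the open set $\hat\Omega\setminus\hat\Omega^{k}$, and local linear independence of $\spbasis^k$ (the same tool you used in (i)) then forces every B-spline in the expansion of $g_k$ whose support is not contained in $\hat\Omega^{k}$ to have coefficient zero. Only after this step is your $\sigma_k$ (the part of $g_k$ with supports not inside $\hat\Omega^{k+1}$) genuinely a combination of level-$k$ HB-splines; without it, $\sigma_k$ could contain B-splines protruding from $\hat\Omega^k$, which do not belong to $\hat\hbbasis_{\bf p}(\hat\hmesh,{\bf \kv}^{0})$. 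With this strengthening your peeling argument closes, and note that since your proof of (iii) uses both inclusions of (iv), this repair is also needed for (iii) as you have set it up.
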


The dimensions of bivariate and trivariate hierarchical B-spline spaces were investigated in  \cite{giannelli2013} and \cite{berdinsky2014b}, respectively, for the case of maximal smoothness. In \cite{mokris2014a}, a more comprehensive analysis covering also reduced regularity was presented.

\subsubsection{Truncated hierarchical B-splines}\label{subsec:thb}
%algorithms and data structure for THB-splines: \cite{kiss2014a}

The HB-spline basis is composed by B-splines defined on grids of different resolution which interact with each other on refined elements. Thanks to the refinable nature of the B-spline model, it is possible to reduce the overlapping of B-splines introduced at successive levels with the coarser ones by exploiting a \emph{truncation} mechanism \cite{gjs12}.

By recalling the nested nature of the sequence of spline spaces in \eqref{eq:nestedspaces}, let $\widehat S\in \spmh{\ell}\subset\spmh{\ell+1}$ be a spline of level $\ell$ expressed in terms of B-splines of level $\ell+1$ as
\begin{equation}\label{eq:s}
\widehat S = \sum_{ \hat{B}_{{\bf i},{\bf p}}^{\ell+1} \in {\hat{\cal B}}^{\ell+1}} 
c_{{\bf i},{\bf p}}^{\ell+1}(\widehat S) \hat{B}_{{\bf i},{\bf p}}^{\ell+1}.
\end{equation}
The truncation of $\widehat S$ with respect to level $\ell+1$ is defined as
\[
%\trunc^{\ell+1} \widehat S = \sum_{ \hat{B}_{{\bf i},{\bf p}}^{\ell+1} \in {\hat{\cal B}}^{\ell+1}, \supp(\hat{B}_{{\bf i},{\bf p}}^{\ell+1})\not\subseteq \hat\Omega^{\ell+1}} 
\trunc^{\ell+1} \widehat S = \sum_{{\hat{B}_{{\bf i},{\bf p}}^{\ell+1} \in {\hat{\cal B}}^{\ell+1} \setminus  \hat\hbbasis_B^{\ell+1}}}
c_{{\bf i},{\bf p}}^{\ell+1}(\widehat S) \hat{B}_{{\bf i},{\bf p}}^{\ell+1},
\]
and leads to a truncated function whose support is either equal or reduced when compared to the one of function $\widehat S$, i.e., $\supp(\trunc^{\ell+1}\widehat S) \subseteq \supp(\widehat S)$, for all $\widehat S \in \spmh{\ell}$. In particular, the contribution of B-splines of level $\ell+1$ which will be included in the hierarchical basis is removed from the expression of $\widehat S$ given by \eqref{eq:s}. For $\dpa = 1$, an example of truncation applied to a quadratic univariate B-spline is shown in Figure~\ref{fig:trunc}. 

\begin{figure}
%\vspace*{-.5cm}
%trim=left bottom right top, clip
%\centerline{\includegraphics[trim=0 1cm 0 3cm, clip, scale=.375]{}}%\vspace*{-1.3cm}
%\centerline{\includegraphics[trim=0 2cm 0 3.25cm, clip, scale=.375]{}}
%% \centerline{\includegraphics[trim=0 1cm 0 3cm, clip, scale=.345]{exmAIMS01}}%\vspace*{-1.3cm}
%% \centerline{\includegraphics[trim=0 2cm 0 3.25cm, clip, scale=.345]{exmAIMS02}}
\centerline{\includegraphics[width=0.49\textwidth]{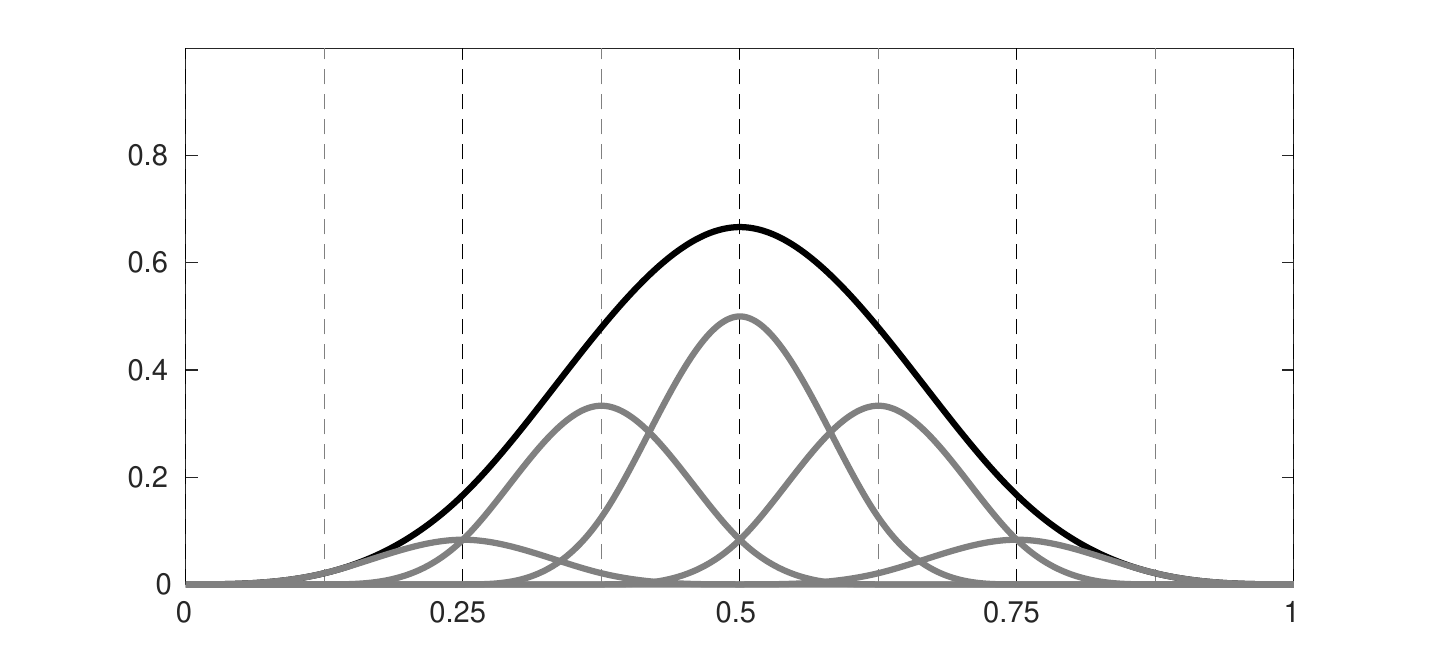}}%\vspace*{-1.3cm}
\centerline{\includegraphics[width=0.49\textwidth]{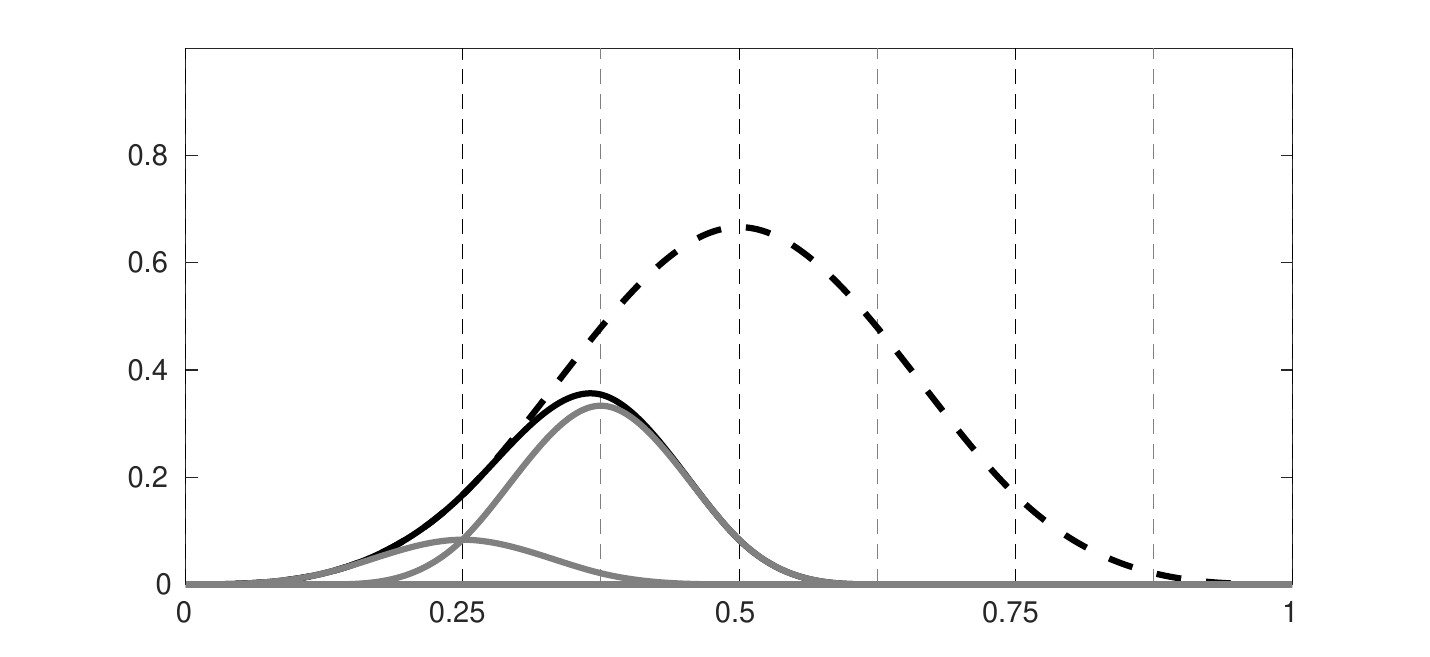}}
\caption{Top: a univariate cubic B-spline of level $\ell$ (in black) represented as linear combination of functions of level $\ell+1$ (in gray). Bottom: the original B-spline (solid dashed) and its truncated version (black solid line) by considering $\Omega^{\ell+1}=[0.25,1]$.}\label{fig:trunc}
\end{figure}

Analogously to the HB-spline case, given a hierarchical mesh $\hat\hmesh$, the set of \emph{THB-splines} $\hat\thbbasis_{\bf p}(\hat\hmesh,{\bf \kv^0}):=\hat\thbbasis^{N-1} $ can be constructed according to the following steps:
\begin{enumerate}
\item $\hat\thbbasis^0 := \hat\hbbasis^0$;
\item for $\ell=0,\ldots, N-2$ 
\[
\hat\thbbasis^{\ell+1} := \hat\thbbasis_A^0 \cup \hat\thbbasis_B^{\ell+1},
\]
\end{enumerate}
where
\begin{align*}
\hat\thbbasis_A^{\ell+1} &:= \left\{\trunc^{\ell+1}(\hat{T}_{{\bf i},{\bf p}}^\ell): 
\right.\\ & 
 \left. \qquad \qquad 
\hat{T}_{{\bf i},{\bf p}}^\ell\in \hat\thbbasis^{\ell}  
\wedge\;  \supp (\hat{T}_{{\bf i},{\bf p}}^\ell) \not\subseteq\hat\Omega^{\ell+1}\right\},\\
\hat\thbbasis_B^{\ell+1} &:= \hat\hbbasis_B^{\ell+1}.
\end{align*}

In this case, the two steps of the constructions define a selection mechanism which does not only activate and deactivate but also truncates B-splines of different levels by taking into account the hierarchical domain configuration.
After initializing the set of THB-splines with the (H)B-splines of level $0$, for any subsequent level $\ell$, the set of THB-splines of level $\ell+1$ ($\hat\thbbasis^{\ell+1}$) includes
\begin{itemize}
\item truncated B-splines of coarser levels whose support is not contained in $\hat\Omega^{\ell+1}$ ($\hat\thbbasis_A^{\ell+1}$);
\item B-splines of level $\ell+1$ whose support is contained in $\hat\Omega^{\ell+1}$ ($\hat\thbbasis_B^{\ell+1}$).
\end{itemize}

By defining the successive truncation of a B-spline of level $\ell$ as
\[
\Trunc^{\ell+1}(\hat{B}_{{\bf i},{\bf p}}^\ell) : = \trunc^{N-1}
\left(%\trunc^{N-2}\left(
	\ldots\left(
		\trunc^{\ell+1}(\hat{B}_{{\bf i},{\bf p}}^\ell)
	\right)\ldots\right),%\right),
\]
and $\Trunc^{N}(\hat{B}_{{\bf i},{\bf p}}^{N-1}):= \hat{B}_{{\bf i},{\bf p}}^{N-1}$, we can also define the THB-spline basis as follows: 
\begin{align*}
\hat\thbbasis_{\bf p}(\hat\hmesh,{\bf \kv^0})  = %\bigcup_{\ell=0,\ldots,N-1}
\big\{ \Trunc^{\ell+1}(\hat{B}_{{\bf i},{\bf p}}^\ell) :&  
 \hat{B}_{{\bf i},{\bf p}}^\ell \in{\hat{\cal B}}^\ell \cap \hat\hbbasis_{\bf p}(\hat\hmesh,{\bf \kv}^{0}),\\
&\quad\qquad \ell =0,\ldots,N-1
\big\}.
\end{align*}
Any HB-spline $\hat{B}_{{\bf i},{\bf p}}^\ell {\in \hat\hbbasis_{\bf p}(\hat\hmesh,{\bf \kv}^{0})}$ generates a corresponding THB-spline $\hat{T}_{{\bf i},{\bf p}}^\ell:=\Trunc^{\ell+1}(\hat{B}_{{\bf i},{\bf p}}^\ell) \in \hat\thbbasis_{\bf p}(\hat\hmesh,{\bf \kv^0}) $, for $\ell =0, ..., N-1$ and it is denoted as the \emph{mother} B-spline of $\hat{T}_{{\bf i},{\bf p}}^\ell$, namely
\begin{align}\label{eq:mother}
\mot\hat{T}_{{\bf i},{\bf p}}^\ell := \hat{B}_{{\bf i},{\bf p}}^\ell.
\end{align}
Note that, being defined in terms of the successive application of the truncation mechanism, each THB-spline is characterized by a support that is either equal or smaller than the one of its mother B-spline. 
However, as for finite elements defined in meshes with hanging nodes, the support of THB-splines is more complicated and in general not even convex or connected.
Figure~\ref{fig:hbspline1D}(c) and \ref{fig:thb2D} show examples of THB-splines for $\dpa=1$ and $\dpa=2$, respectively.

\begin{figure}[!t]
\centerline{\includegraphics[trim=0 0cm 0 0cm, clip, scale=.25]{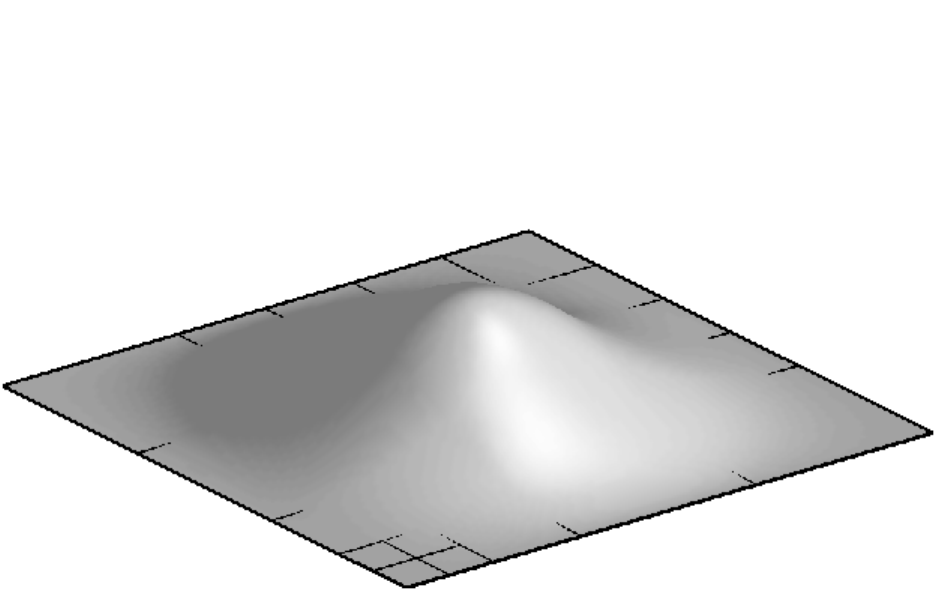}
\includegraphics[trim=0 0cm 0 0cm, clip, scale=.25]{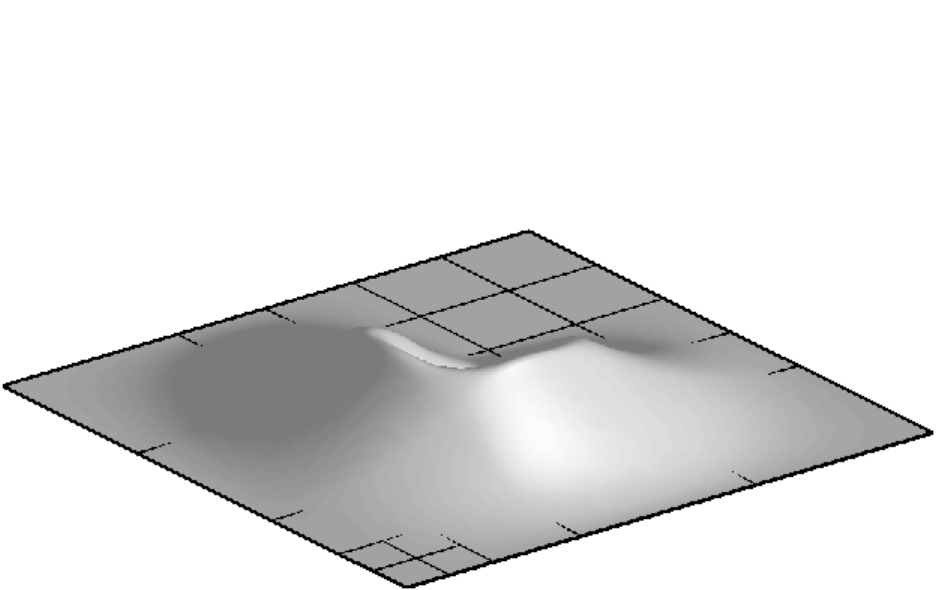}}
\centerline{\includegraphics[trim=0 0cm 0 0cm, clip, scale=.25]{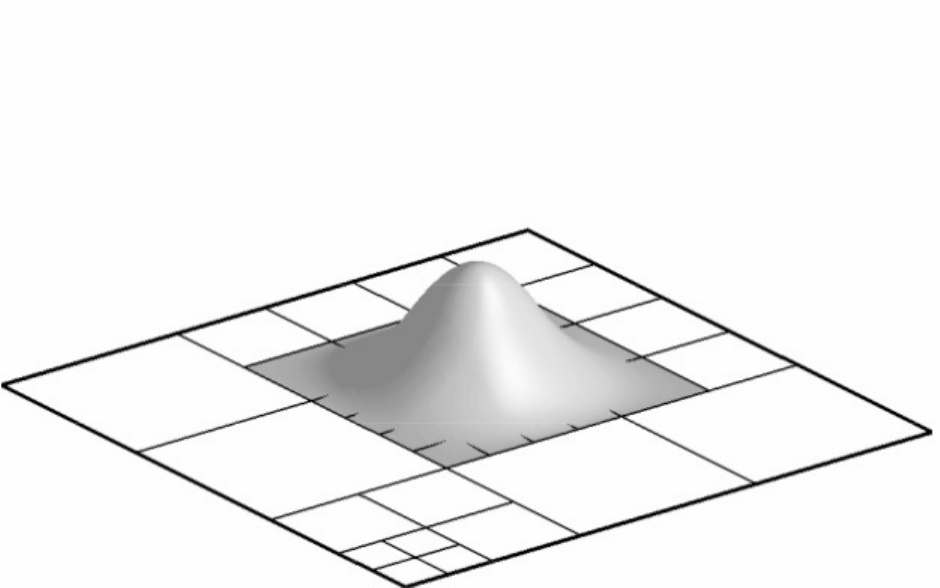}
\includegraphics[trim=0 0cm 0 0cm, clip, scale=.25]{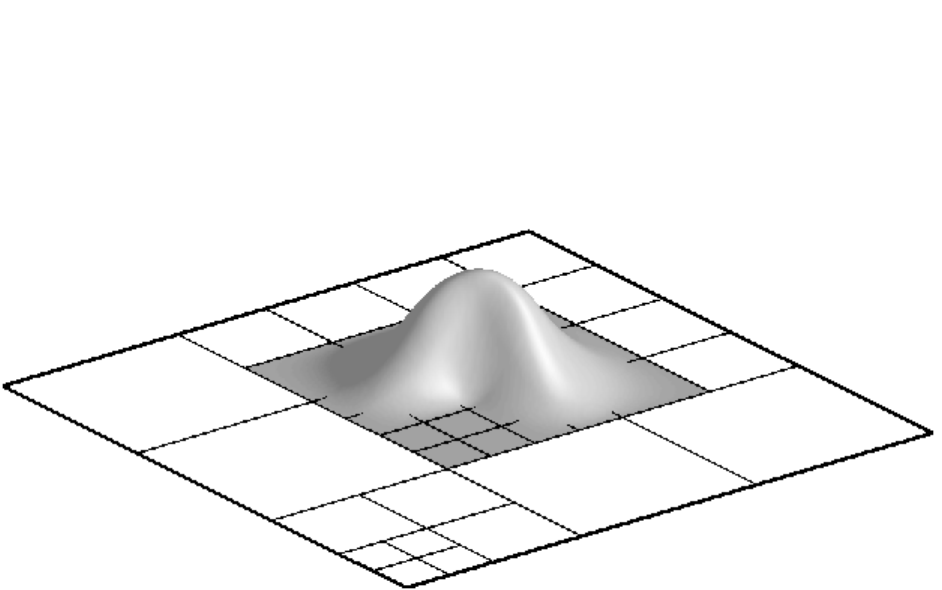}}
\vspace*{.45cm}
\centerline{\begin{tikzpicture}[scale=1]
%\draw [fill=lightgray] (3.5,3.5) rectangle (6.5,6.5);
%x\draw [fill=gray] (4,4) rectangle (6,6);
\draw (0,0) grid (3,3);
% level 1
\foreach \a in {0,1}
	\draw (0,\a/2) -- (1,\a/2);
\foreach \a in {3,5}
	\draw (1,\a/2) -- (3,\a/2);
\foreach \a in {1}
	\draw (\a/2,0) -- (\a/2,1);
\foreach \a in {3,5}
	\draw (\a/2,1) -- (\a/2,3);
% level 2
\foreach \a in {0,0.5}
	\draw (0,\a/2) -- (0.5,\a/2);
\foreach \a in {1,2}
	\draw (1,1.25) -- (2,1.25);
	\draw (1,1.75) -- (2,1.75);
%\foreach \a in {1}
\draw (0.25,0) -- (0.25,0.5);
\draw (1.25,1) -- (1.25,2);
\draw (1.75,1) -- (1.75,2);
\end{tikzpicture}}
\caption{Two bi-quadratic mother B-splines (left) and corresponding THB splines (right) defined on a hierarchical mesh with three levels (bottom).
All internal knots have multiplicity one.}
\label{fig:thb2D}
\end{figure}

The following properties hold according to \cite{gjs12,giannelli2014}.
\begin{proposition}\label{prop:thb properties}
The truncated hierarchical basis $\hat\thbbasis_{\bf p}(\hat\hmesh,{\bf \kv^0})$ satisfies the following properties: 
\begin{itemize}
\item[\rm (i)] The THB-splines in $\hat\thbbasis_{\bf p}(\hat\hmesh,{\bf \kv^0})$ are nonnegative, linearly independent, and form a partition of unity.
\item[\rm (ii)] The intermediate spline spaces are nested, namely ${\rm span} \,\hat\thbbasis^{\ell}\subseteq {\rm span}\,\hat\thbbasis^{\ell+1}$.
\item[\rm (iii)] It holds that ${\rm span} \,\hat\thbbasis^{\ell} = {\rm span} \,\hat\hbbasis^{\ell}$, for $\ell=0,\dots,N-1$, and ${\rm span}\,\hat\thbbasis_{\bf p}(\hat\hmesh,{\bf \kv^0}) =\widehat{\mathbb{S}}_{\bf p}^{\rm H}(\hat\hmesh,{\bf \kv}^{0})$.
\end{itemize}
\end{proposition}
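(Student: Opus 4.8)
The plan is to prove the two span identities in (iii) first, since property (ii) is then immediate and the linear independence claimed in (i) drops out of a dimension count. The cornerstone of everything is a \emph{support lemma}: if $\widehat S\in\spmh{\ell}$ has $\supp(\widehat S)\subseteq\hat\Omega^{\ell+1}$, then in its unique level-$(\ell+1)$ representation $\widehat S=\sum_{\hat{B}_{{\bf i},{\bf p}}^{\ell+1}\in\spbasis^{\ell+1}}c^{\ell+1}_{{\bf i},{\bf p}}(\widehat S)\,\hat{B}_{{\bf i},{\bf p}}^{\ell+1}$ only the B-splines of $\hat\hbbasis_B^{\ell+1}$ carry nonzero coefficients. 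I would derive this from the local linear independence of B-splines recalled in Section~\ref{sec:univariate-properties}: on the open set $\hat\Omega\setminus\hat\Omega^{\ell+1}$ we have $\widehat S=0$, while every $\hat{B}_{{\bf i},{\bf p}}^{\ell+1}$ with $\supp(\hat{B}_{{\bf i},{\bf p}}^{\ell+1})\not\subseteq\hat\Omega^{\ell+1}$ is nonzero there; local linear independence on that open set then forces the corresponding coefficient to vanish.

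With the support lemma available, I would prove $\myspan\hat\hbbasis^\ell=\myspan\hat\thbbasis^\ell$ by induction on $\ell$, the case $\ell=0$ being trivial since $\hat\thbbasis^0=\hat\hbbasis^0$. Write $\hat\hbbasis^\ell_{\rm in}$, $\hat\hbbasis^\ell_{\rm out}$ (and analogously $\hat\thbbasis^\ell_{\rm in}$, $\hat\thbbasis^\ell_{\rm out}$) for the generators whose support is, respectively is not, contained in $\hat\Omega^{\ell+1}$. Because $\hat\thbbasis^\ell\subseteq\spmh{\ell}$ and $\hat\hbbasis^\ell\subseteq\spmh{\ell}$ by construction, the support lemma yields $\myspan\hat\thbbasis^\ell_{\rm in}\subseteq\myspan\hat\hbbasis_B^{\ell+1}$ and $\myspan\hat\hbbasis^\ell_{\rm in}\subseteq\myspan\hat\hbbasis_B^{\ell+1}$. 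Using $\trunc^{\ell+1}(\hat T)-\hat T\in\myspan\hat\hbbasis_B^{\ell+1}$ and $\hat\thbbasis_B^{\ell+1}=\hat\hbbasis_B^{\ell+1}$, this gives
\[
\myspan\hat\thbbasis^{\ell+1}=\myspan\hat\thbbasis^\ell_{\rm out}+\myspan\hat\hbbasis_B^{\ell+1}=\myspan\hat\thbbasis^\ell+\myspan\hat\hbbasis_B^{\ell+1},
\]
and the same computation without truncation gives $\myspan\hat\hbbasis^{\ell+1}=\myspan\hat\hbbasis^\ell+\myspan\hat\hbbasis_B^{\ell+1}$; the inductive hypothesis $\myspan\hat\thbbasis^\ell=\myspan\hat\hbbasis^\ell$ then closes the step. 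The second identity in (iii) is the case $\ell=N-1$, and property (ii) is immediate from (iii) together with Proposition~\ref{prop:hb properties}(ii).

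For (i), nonnegativity is obtained by observing that dyadic B-spline subdivision has nonnegative coefficients, so that each $\trunc^{\ell+1}$ retains a nonnegative subcombination of nonnegative functions; iterating through $\Trunc^{\ell+1}$ keeps the THB-splines nonnegative. Linear independence follows from (iii) by a cardinality argument: the mother map $\hat{B}_{{\bf i},{\bf p}}^\ell\mapsto\Trunc^{\ell+1}(\hat{B}_{{\bf i},{\bf p}}^\ell)$ is a bijection of index sets, so $\#\hat\thbbasis_{\bf p}(\hat\hmesh,{\bf \kv}^0)\le\#\hat\hbbasis_{\bf p}(\hat\hmesh,{\bf \kv}^0)=\dim\widehat{\mathbb{S}}_{\bf p}^{\rm H}(\hat\hmesh,{\bf \kv}^0)$, the HB-splines forming a basis by Proposition~\ref{prop:hb properties}(i); since $\hat\thbbasis_{\bf p}(\hat\hmesh,{\bf \kv}^0)$ already spans this space by (iii), it must itself be a basis, hence its functions are distinct and linearly independent. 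Finally, for the partition of unity I would again induct on $\ell$, starting from $\sum_{\hat B\in\spbasis^0}\hat B=1$. Splitting $\hat\thbbasis^\ell$ as above and using $\trunc^{\ell+1}(\hat T)=\hat T-\sum_{\hat B\in\hat\hbbasis_B^{\ell+1}}c^{\ell+1}_{\hat B}(\hat T)\,\hat B$ together with the support lemma for the functions in $\hat\thbbasis^\ell_{\rm in}$, the bookkeeping reduces to the identity $\sum_{\hat T\in\hat\thbbasis^\ell}c^{\ell+1}_{\hat B}(\hat T)=c^{\ell+1}_{\hat B}\big(\sum_{\hat T\in\hat\thbbasis^\ell}\hat T\big)=c^{\ell+1}_{\hat B}(1)=1$, the last equality being the level-$(\ell+1)$ partition of unity; this yields $\sum_{\hat T\in\hat\thbbasis^{\ell+1}}\hat T=\sum_{\hat T\in\hat\thbbasis^\ell}\hat T=1$.

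The main obstacle is the support lemma and the index bookkeeping it enables: it is exactly the point where one must exploit that each subdomain $\hat\Omega^{\ell+1}$ is a union of closed level-$\ell$ elements, so that local linear independence can legitimately be invoked on $\hat\Omega\setminus\hat\Omega^{\ell+1}$. Once this is in place, the span, nonnegativity, independence, and partition-of-unity statements all follow by the same ``in''/``out'' decomposition across consecutive levels, and I would refer to \cite{gjs12,giannelli2014} for the complete technical details.
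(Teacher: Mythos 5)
Your proposal is correct, but it takes a genuinely different route from the paper for the simple reason that the paper gives no proof at all: Proposition~\ref{prop:thb properties} is stated with a bare citation to \cite{gjs12,giannelli2014}. Your argument is essentially a self-contained reconstruction of what those references prove. Your support lemma — a spline in $\spmh{\ell}$ supported in $\hat\Omega^{\ell+1}$ has a level-$(\ell+1)$ representation involving only functions of $\hat\hbbasis_B^{\ell+1}$, obtained from local linear independence on the open complement of the closed set $\hat\Omega^{\ell+1}$ — is the same mechanism that drives the span identities and the preservation-of-coefficients property in the cited works, and your level-by-level ``in/out'' induction for (iii) and for the partition of unity mirrors their structure. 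Where you genuinely deviate is linear independence: the literature proves it directly by a Kraft-type peeling argument (restrict a vanishing combination to $\hat\Omega^{\ell}\setminus\hat\Omega^{\ell+1}$ and eliminate one level at a time, again via local linear independence), which presupposes nothing about HB-splines, whereas you deduce it from Proposition~\ref{prop:hb properties}(i) by a cardinality count: the mother map gives $\#\hat\thbbasis_{\bf p}(\hat\hmesh,{\bf \kv}^0)\le\#\hat\hbbasis_{\bf p}(\hat\hmesh,{\bf \kv}^0)=\dim\widehat{\mathbb{S}}_{\bf p}^{\rm H}(\hat\hmesh,{\bf \kv}^{0})$, and a spanning set of at most $\dim$ elements must be a basis. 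This shortcut is legitimate here and buys brevity, at the cost of making (i) depend on (iii) and on the HB basis result; the direct route keeps the two propositions independent. Two points you should make explicit in a full write-up: nonnegativity rests on the (standard, but citable) fact that the two-scale knot-insertion coefficients of B-splines under dyadic refinement are nonnegative; and the coefficient functionals $c^{\ell+1}_{{\bf i},{\bf p}}(\cdot)$ you apply to $\sum_{\hat T}\hat T=1$ are well defined and linear only because $\spbasis^{\ell+1}$ is a globally linearly independent basis of $\spmh{\ell+1}$, with $c^{\ell+1}_{{\bf i},{\bf p}}(1)=1$ following from the level-$(\ell+1)$ partition of unity and uniqueness of coefficients.
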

We also note that, in contrast to tensor-product B-splines, THB-splines and HB-splines are not locally linearly independent. In particular, their restriction to a single element can be linearly dependent.

Applications of THB-splines for adaptive CAD model reconstruction were presented in \cite{kiss2014b,bracco2018}. The truncation approach was also considered to define truncated decoupled hierarchical B-splines \cite{mokris2014b}, hierarchies of spaces spanned by generating systems \cite{zore2014}, (extended) truncated hierarchical Catmull-Clark subdivision \cite{wei2015,wei2016}, truncated hierarchical box splines \cite{kanduc2017,giannelli2019}, and truncated T-splines \cite{wzlh17}. 

\subsubsection{Refinement strategies}
\label{sec:hierarchical refine}

We have introduced above the concept of mesh refinement in the sense that $\QQ \preceq \QQ_\fine$. However, the theoretical analysis of adaptive isogeometric methods requires to impose some grading conditions on how the local refinement should be performed. To obtain hierarchical mesh configurations suitable for the theoretical analysis, %of adaptive isogeometric methods, 
we follow the approach originally introduced in \cite{bg16} for THB-splines and in \cite{morgenstern17} for HB-splines, and further elaborated in \cite{bgv18} by introducing a general framework for the design and implementation of refinement algorithms with (T)HB-splines.
The refinement rule for HB-splines limited to two-level interaction was already presented in \cite{ghp17}.
 These refinement rules control the interaction of hierarchical basis functions of different levels and generate suitably graded meshes for the considered hierarchical basis. Note that the effect of the truncation can be suitably exploited to generate less refined meshes for THB-splines than the ones obtained for HB-splines, while simultaneously guaranteeing limited interaction between hierarchical basis functions of different levels. 
However, THB-splines additionally require the truncation procedure and have a more complicated, although smaller, support than HB-splines. 

{\color{black}
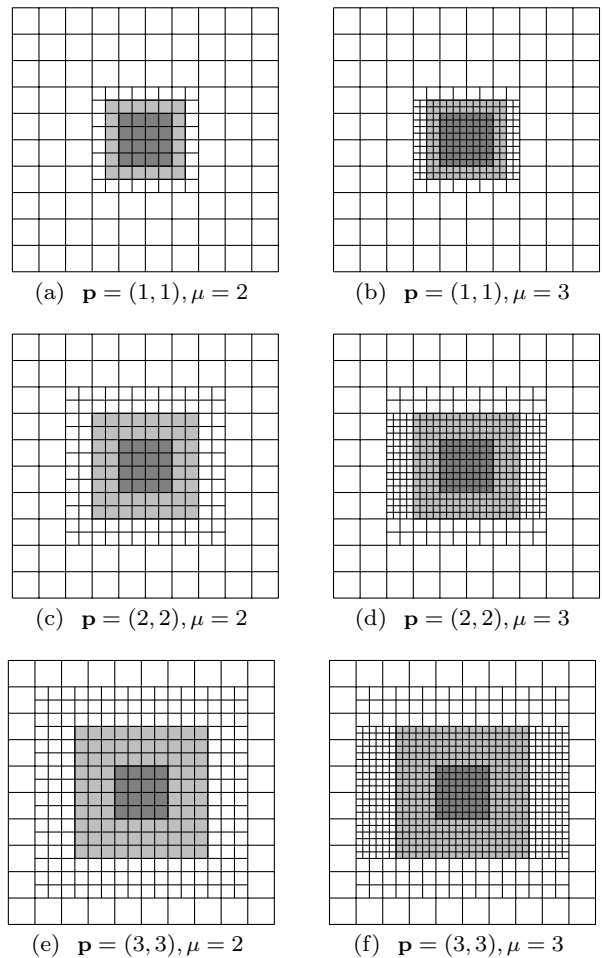
\begin{figure}[!t]\centering
\subfigure[~$\mathbf{p}=(1,1), \mu=2$ ]{\begin{tikzpicture}[scale=0.35]
\draw [fill=lightgray] (3.5,3.5) rectangle (6.5,6.5);
\draw [fill=gray] (4,4) rectangle (6,6);
\draw (0,0) grid (10,10);
% level 1
\foreach \a in {7,9,11,13}
	\draw (3,\a/2) -- (7,\a/2);
\foreach \a in {7,9,11,13}
	\draw (\a/2,3) -- (\a/2,7);
\end{tikzpicture}}
\hspace*{.5cm}
% \mu = 3
\subfigure[~$\mathbf{p}=(1,1), \mu=3$ ]{\begin{tikzpicture}[scale=0.35]
\draw [fill=lightgray] (3.5,3.5) rectangle (6.5,6.5);
\draw [fill=gray] (4,4) rectangle (6,6);
\draw (0,0) grid (10,10);
% level 1
\foreach \a in {7,9,11,13}
	\draw (3,\a/2) -- (7,\a/2);
\foreach \a in {7,9,11,13}
	\draw (\a/2,3) -- (\a/2,7);
% level 2
\foreach \a in {15,17,19,21,23,25}
	\draw (3,\a/4) -- (7,\a/4);
\foreach \a in {13,15,17,19,21,23,25,27}
	\draw (\a/4,3.5) -- (\a/4,6.5);
\end{tikzpicture}}\\
\subfigure[~$\mathbf{p}=(2,2), \mu=2$ ]{\begin{tikzpicture}[scale=0.35]
\draw [fill=lightgray] (3,3) rectangle (7,7);
\draw [fill=gray] (4,4) rectangle (6,6);
\draw (0,0) grid (10,10);
% level 1
\foreach \a in {5,7,9,11,13,15}
	\draw (2,\a/2) -- (8,\a/2);
\foreach \a in {5,7,9,11,13,15}
	\draw (\a/2,2) -- (\a/2,8);
\end{tikzpicture}}
\hspace*{.5cm}
\subfigure[~$\mathbf{p}=(2,2), \mu=3$ ]{\begin{tikzpicture}[scale=0.35]
\draw [fill=lightgray] (3,3) rectangle (7,7);
\draw [fill=gray] (4,4) rectangle (6,6);
\draw (0,0) grid (10,10);
% level 1
\foreach \a in {5,7,9,11,13,15}
	\draw (2,\a/2) -- (8,\a/2);
\foreach \a in {5,7,9,11,13,15}
	\draw (\a/2,2) -- (\a/2,8);
% level 2
\foreach \a in {13,15,17,19,21,23,25,27}
	\draw (2,\a/4) -- (8,\a/4);
\foreach \a in {9,11,13,15,17,19,21,23,25,27,29,31}
	\draw (\a/4,3) -- (\a/4,7);
\end{tikzpicture}}\\
\subfigure[~$\mathbf{p}=(3,3), \mu=2$ ]{\begin{tikzpicture}[scale=0.35]
\draw [fill=lightgray] (2.5,2.5) rectangle (7.5,7.5);
\draw [fill=gray] (4,4) rectangle (6,6);
\draw (0,0) grid (10,10);
% level 1
\foreach \a in {3,5,7,9,11,13,15,17}
	\draw (1,\a/2) -- (9,\a/2);
\foreach \a in {3,5,7,9,11,13,15,17}
	\draw (\a/2,1) -- (\a/2,9);
\end{tikzpicture}}
\hspace*{.5cm}
\subfigure[~$\mathbf{p}=(3,3), \mu=3$ ]{\begin{tikzpicture}[scale=0.35]
\draw [fill=lightgray] (2.5,2.5) rectangle (7.5,7.5);
\draw [fill=gray] (4,4) rectangle (6,6);
\draw (0,0) grid (10,10);
% level 1
\foreach \a in {3,5,7,9,11,13,15,17}
	\draw (1,\a/2) -- (9,\a/2);
\foreach \a in {3,5,7,9,11,13,15,17}
	\draw (\a/2,1) -- (\a/2,9);
% level 2
\foreach \a in {11,13,15,17,19,21,23,25,27,29}
	\draw (1,\a/4) -- (9,\a/4);
\foreach \a in {5,7,9,11,13,15,17,19,21,23,25,27,29,31,33,35}
	\draw (\a/4,2.5) -- (\a/4,7.5);
\end{tikzpicture}}
\caption{Examples of the domains $\widehat\omega_{\cal H}^1$ (dark gray) and $\widehat\omega_{\cal T}^1$ (light gray) for~different degrees and mesh configurations.
All internal knots have multiplicity one.} \label{fig:omegas}
\end{figure}
}

The first notion we need to introduce extends the concept of support extension introduced in \eqref{eq:multivariatese} for the multivariate tensor-product case to the hierarchical setting. The \emph{multilevel support extension} of an element $\hat{Q}\in\cal{\hat{\hmesh}^\ell}$ with respect to level $k$, with $0\le k\le \ell$,
is defined as 
\begin{align*}
\sext{\hat{Q},k} &:= \sext{\hat{Q}'}, \; \text{ with } \hat{Q}' \in \hat{\hmesh}^k \text{ and }  \hat{Q} \subseteq \hat{Q}',
%\sext{\hat{Q},k} &:= \bigcup\left\{
%\overline{\hat{Q}'} : {\hat{Q}'} \in \hat{\cal Q}^k \,\wedge\, \exists\, \hat{B}_{{\bf i},{\bf p}}^k\in \hat{\cal B}^k,\,\right.\\
%&\left.
%\supp(\hat{B}_{{\bf i},{\bf p}}^k) \cap \hat{Q}'\ne\emptyset\; \wedge\;
%\supp(\hat{B}_{{\bf i},{\bf p}}^k) \cap \hat{Q}\ne\emptyset
%\right\}.
\end{align*}
where $\sext{\hat{Q}'}$ is the support extension of \eqref{eq:multivariatese} corresponding to the mesh $\hat\hmesh^k$.

The concept of \emph{admissible} hierarchical meshes is based on the auxiliary domains
\begin{subequations}\label{eq:HB-THB-omega}
\begin{align}
{\hat{\omega}}^{\ell}_{\cal H} &:=\bigcup\left\{ \overline{\hat{Q}} \,:\, \hat{Q} \in \hat{\cal Q}^{\ell} \,\wedge\, \sext{\hat{Q},{\ell}-1}\subseteq \hat{\Omega}^{\ell} \right\}, \\
{\hat{\omega}}^{\ell}_{\cal T} &:=\bigcup\left\{ \overline{\hat{Q}} \,:\, \hat{Q} \in \hat{\cal Q}^{\ell} \,\wedge\, \sext{\hat{Q},{\ell}}\subseteq \hat{\Omega}^{\ell} \right\},
\end{align}
\end{subequations}
for $\ell = 0, \ldots, N-1$, with ${\hat{\omega}}^{0}_{\cal H} := \hat\Omega^0$. The domain ${\hat{\omega}}^{\ell}_{\cal H}$ represents the region of $\hat\Omega^\ell$ where all the active basis functions of level $\ell-1$, namely functions in $\hat{\cal H}_{\bf p}(\hat\hmesh,{\bf \kv}^0) \cap \hat{\cal B}^{\ell-1}$, are zero. A similar property is valid for the domain ${\hat{\omega}}^{\ell}_{\cal T}$: all the basis functions of level $\ell-1$ truncated with respect to level $\ell$, i.e., functions in $\hat{\cal T}^\ell$ such that their mother is in $\hat{\cal B}^{\ell-1}$, vanish in $\hat\omega^\ell_{\cal T}$. By definition, it holds that $\hat\omega^\ell_{\cal H} \subseteq \hat\omega^\ell_{\cal T}$ (see also Figure~\ref{fig:omegas}).

%At any level $\ell$, the domains ${\hat{\omega}}^{\ell}_{\cal H}$ and ${\hat{\omega}}^{\ell}_{\cal T}$ represent the regions of $\widehat\Omega^\ell$ where the basis functions introduced at level $\ell-1$ in $\hat{\cal H}(\hat\hmesh)$ and $\hat{\cal T}(\hat\hmesh)$, respectively, are zero. Thanks to the truncation mechanism, THB-splines of level $\ell-1$ can be truncated with respect to level $\ell$ and vanish in enlarged area with respect to their mother HB-spline functions. 

A mesh $\hat{\cal Q}$ is \emph{${\cal H}$-admissible} (respectively, \emph{${\cal T}$-admissible}) \emph{of class} $\mu$ if it holds that 
%COMMENT:  I replaced $m$ by $\mu$ because we also use $m$ for the $m$th geometry patch in Section~\ref{sec:model}. 
%In principle, I also think it is a good idea not to use $m$ in regard of Remark~\ref{rem:strictly admissible}.
%If you still don't agree with the notation, we can of course try to find other options.
\begin{equation}\label{eq:sameshes}
\hat{\Omega}^\ell\subseteq \hat{\omega}^{\ell-\mu+1}_{\cal H}, \quad (\text{resp. } \hat{\Omega}^\ell\subseteq \hat{\omega}^{\ell-\mu+1}_{\cal T}),
\end{equation}
for all $\ell=\mu,\mu+1,\ldots,N-1$. 
By definition, it holds that $\hat\omega^\ell_{\cal H} \subseteq \hat\omega^\ell_{\cal T}$, which immediately yields that any ${\cal H}$-admissible mesh of class $\mu$ is also ${\cal T}$-admissible of class $\mu$.
%The admissibility of the mesh guarantees that on active elements of levels $\ell$ only HB-splines (for a $\hat{\cal H}$-admissible mesh) or THB-splines (for a $\hat{\cal T}$-admissible mesh) of levels $\ell-\mu+1, \ell-\mu+2, \ldots, \ell$ may not vanish. 
Admissibility of a mesh guarantees the following proposition, see  \cite[Definition~3]{bgv18}. %for the first assertion and \cite[Corollary~3]{buffa2019} for the second one.

\begin{proposition}\label{eq:former admissible} 
If $\widehat\hmesh$ is an ${\cal H}$-admissible (respectively ${\cal T}$-admissible) mesh of class $\mu$, with $\mu\ge 2$, then, the basis functions in $\hat{\cal H}_{\bf p}(\hat\hmesh,{\bf T}^0)$ (resp. $\hat{\cal T}_{\bf p}(\hat\hmesh,{\bf T}^0)$) that take non-zero values over any element $\widehat Q\in\hat\hmesh$ can only be of $\mu$ successive levels.
\end{proposition}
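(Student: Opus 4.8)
\emph{The plan is to bound separately the maximal and the minimal level of the (truncated) hierarchical B-splines that do not vanish on a fixed active element $\widehat Q\in\widehat\hmesh$, and to show these two levels differ by at most $\mu-1$.} Set $\ell:=\levelT{\widehat Q}$, so $\widehat Q\subseteq\widehat\Omega^\ell$ and $\widehat Q\not\subseteq\widehat\Omega^{\ell+1}$. Since $\widehat\Omega^{\ell+1}$ is a union of closures of level-$\ell$ elements and distinct level-$\ell$ cells have disjoint interiors, the activity of $\widehat Q$ forces the interior of $\widehat Q$ to be disjoint from $\widehat\Omega^{\ell+1}$; this elementary observation will be used repeatedly. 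I will work throughout from the equivalent description of $\hat\hbbasis_{\bf p}(\widehat\hmesh,{\bf T}^0)$ recalled in Section~\ref{subsec:def hb}: an HB-spline $\hat B$ of level $k$ belongs to the basis if and only if $\supp(\hat B)\subseteq\widehat\Omega^k$ and $\supp(\hat B)\not\subseteq\widehat\Omega^{k+1}$. Note that elements with $\ell<\mu$ are trivially covered, since then the levels $0,\dots,\ell$ already form fewer than $\mu$ successive values.

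First I would settle the \emph{upper bound}: no active basis function of level $k>\ell$ can be non-zero on $\widehat Q$. Indeed, such a function satisfies $\supp(\hat B)\subseteq\widehat\Omega^k\subseteq\widehat\Omega^{\ell+1}$ by the nesting of the domains, and since the interior of $\widehat Q$ avoids $\widehat\Omega^{\ell+1}$, the function vanishes on $\widehat Q$. For THB-splines the same conclusion is immediate from $\supp(\Trunc^{k+1}\mot\hat T)\subseteq\supp(\mot\hat T)$. Hence only levels $k\le\ell$ occur.

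The core of the proof is the \emph{lower bound}: no active function of level $k\le\ell-\mu$ is non-zero on $\widehat Q$, which together with the upper bound confines the levels to the $\mu$ successive values $\ell-\mu+1,\dots,\ell$. The key idea is to apply admissibility \emph{not at level $\ell$ but at level $k+\mu$}. Fix $k\le\ell-\mu$; then $\mu\le k+\mu\le\ell\le N-1$, so $\mathcal H$-admissibility of class $\mu$ at level $k+\mu$ is available and reads $\widehat\Omega^{k+\mu}\subseteq\widehat\omega^{k+1}_{\mathcal H}$, whence $\widehat Q\subseteq\widehat\Omega^\ell\subseteq\widehat\Omega^{k+\mu}\subseteq\widehat\omega^{k+1}_{\mathcal H}$. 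Since $\mu\ge2$ gives $k+1\le\ell-1<\ell$, the element $\widehat Q$ lies strictly inside a single level-$(k+1)$ cell $\widehat R^{(k+1)}$; as $\widehat\omega^{k+1}_{\mathcal H}$ is a union of whole level-$(k+1)$ cells with disjoint interiors, the containment forces $\widehat R^{(k+1)}$ to be one of its defining cells, i.e.\ $\sext{\widehat R^{(k+1)},k}\subseteq\widehat\Omega^{k+1}$. By the definition of the multilevel support extension, $\sext{\widehat R^{(k+1)},k}=\sext{\widehat R'}$ with $\widehat R'\in\widehat{\cal Q}^{k}$ the level-$k$ ancestor of $\widehat Q$, and this is precisely the union of the supports of the level-$k$ B-splines that do not vanish on $\widehat Q$. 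Consequently every level-$k$ B-spline non-zero on $\widehat Q$ has support contained in $\widehat\Omega^{k+1}$ and thus fails the activity condition. Specializing $k=\ell-\mu$ recovers the direct case, and letting $k$ range over all $k\le\ell-\mu$ completes the HB statement.

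Finally, for the \emph{THB case} I would rerun the same scheme with $\mathcal T$-admissibility and the domains $\widehat\omega^{\ell}_{\mathcal T}$. The upper bound is unchanged. For the lower bound, applying $\mathcal T$-admissibility at level $k+\mu$ yields $\widehat Q\subseteq\widehat\Omega^{k+\mu}\subseteq\widehat\omega^{k+1}_{\mathcal T}$, and here I would invoke the defining property of $\widehat\omega^{k+1}_{\mathcal T}$ recalled in the text following \eqref{eq:HB-THB-omega} and established in \cite{gjs12,giannelli2014,sm16}: a THB-spline whose mother is a B-spline of level $k$ vanishes identically on $\widehat\omega^{k+1}_{\mathcal T}$, hence on $\widehat Q$. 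Thus functions of level $k\le\ell-\mu$ again do not contribute. \textbf{The main obstacle} I anticipate is exactly this vanishing property: whereas for HB-splines the argument rests only on the transparent activity/support characterization, for THB-splines one must control the effect of the \emph{successive} truncations $\Trunc^{k+1}=\trunc^{N-1}\circ\cdots\circ\trunc^{k+1}$ and verify that the zero produced by the first truncation on $\widehat\omega^{k+1}_{\mathcal T}$ survives the later ones; this is the single step where I would rely on the cited structural results rather than reprove it from scratch.
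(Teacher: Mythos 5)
Your proof is correct, and in fact it establishes a sharper statement than the one asked for. Note first that the paper itself contains no argument for this proposition: it is stated with a bare citation to \cite[Definition~3]{bgv18}, and Remark~\ref{rem:strictly admissible} attributes to \cite{bg16,bg17,bgv18} the stronger assertion that the active functions on $\widehat Q$ have levels exactly $\levelT{\widehat Q}-\mu+1,\dots,\levelT{\widehat Q}$ --- which is precisely what your two bounds deliver. Both halves of your argument are sound: the upper bound needs only the nestedness $\widehat\Omega^{k}\subseteq\widehat\Omega^{\ell+1}$ for $k>\ell$ together with the observation that the open active element $\widehat Q$ is disjoint from $\widehat\Omega^{\ell+1}$; for the lower bound, applying the admissibility condition \eqref{eq:sameshes} at level $k+\mu$ (rather than only at level $\ell$) treats all levels $k\le\ell-\mu$ uniformly, and your identification of the level-$(k+1)$ ancestor $\widehat R^{(k+1)}$ of $\widehat Q$ as a defining cell of $\widehat\omega^{k+1}_{\cal H}$, combined with the fact that $\sext{\widehat R^{(k+1)},k}$ contains the support of every level-$k$ B-spline that is nonzero on $\widehat Q$, contradicts the activity condition $\supp(\hat B^k_{{\bf i},{\bf p}})\not\subseteq\widehat\Omega^{k+1}$ exactly as needed.

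The one step you delegate to the literature in the THB case is a genuine issue --- the paper's text after \eqref{eq:HB-THB-omega} states the vanishing property only for the singly truncated functions in $\hat{\cal T}^{k+1}$, not for the fully truncated basis functions $\Trunc^{k+1}(\hat B^k_{{\bf i},{\bf p}})$ --- but it can be closed with tools already in the paper, so no external citation is required. The bridge is local linear independence of B-splines: if a spline $\widehat S\in\spmh{j}$ with $j\ge k+1$ vanishes on an open cell $\hat q$ of the level-$(j+1)$ grid, then in its expansion $\widehat S=\sum_{\bf i} c_{\bf i}\hat B^{j+1}_{{\bf i},{\bf p}}$ every B-spline carrying a nonzero coefficient must vanish on $\hat q$; consequently the part discarded by $\trunc^{j+1}$ also vanishes on $\hat q$, and so does $\trunc^{j+1}\widehat S$. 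Since $\widehat\omega^{k+1}_{\cal T}$ is a union of closed level-$(k+1)$ cells, and hence a union of closures of cells of every finer grid, iterating this observation from $j=k+1$ up to $j=N-2$ shows that vanishing on $\widehat\omega^{k+1}_{\cal T}$ (established for $\trunc^{k+1}(\hat B^k_{{\bf i},{\bf p}})$ exactly as in your base step) is inherited by all subsequent truncations, hence by $\Trunc^{k+1}(\hat B^k_{{\bf i},{\bf p}})$. With this two-line addition your THB argument is self-contained.
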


\begin{remark}
Note that Proposition~\ref{eq:former admissible} is not true for HB-splines $\cal{\hat{H}}_{\bf p}(\hat\hmesh,{\bf T}^0)$ on ${\cal T}$-admissible meshes (instead of ${\cal H}$-admissible meshes), see Figure~\ref{fig:rmk} for a simple example of this kind.
\end{remark}

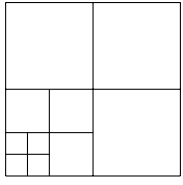
\begin{figure}[!t]\centering
\begin{tikzpicture}[scale=1.15]
%\draw [fill=lightgray] (0,1/2) rectangle (1,1);
%\draw [fill=lightgray] (1/2,0) rectangle (1,1);
%\draw [fill= (3.5,3.5) rectangle (6.5,6.5);
\draw (0,0) grid (2,2);
%\draw [fill=black] (0,0) rectangle (1/2,1/2);
%% level 1
\foreach \a in {1}
	\draw (0,\a/2) -- (1,\a/2);
\foreach \a in {1}
	\draw (\a/2,0) -- (\a/2,1);
		%% level 2
\foreach \a in {1}
	\draw (0,\a/4) -- (1/2,\a/4);
\foreach \a in {1}
	\draw (\a/4,0) -- (\a/4,1/2);
\end{tikzpicture}
\caption{A {${\cal T}$-admissible} mesh for $\mathbf{p}=(1,1)$ and $\mu=2$ with three levels: HB-splines of level 0, 1, 2 are non zero on the element of the finest level in the bottom left corner.  THB-splines of only levels 1, 2 are non zero on the same element.
%{${\cal H}$-admissible} (b) and {${\cal T}$-admissible} (c) meshes generated by Algorithm~\ref{alg:hrefine} and \ref{alg:trefine}, respectively, by refining three times the finest element in the bottom left corner of the mesh with $\mathbf{p}=(1,1)$ and $\mu=2$. The initial mesh and a marked element at step 0 are  shown as well (a). At each step, the dark gray elements appear by refinement of the neighborhood of the previous marked element.
%All internal knots have multiplicity one.
}
\label{fig:rmk}
\end{figure}

\begin{remark}\label{rem:strictly admissible}
Note that in \cite{bg16,bg17,bracco2019} a ${\cal T}$-admissible mesh was denoted \emph{strictly admissible}.
Reference \cite{bgv18} introduced ${\cal H}$-admissible meshes, which were there called strictly ${\cal H}$-admissible.
Similarly, ${\cal T}$-admissible meshes were called strictly ${\cal T}$-admissible there.
Instead, the property of  Proposition~\ref{eq:former admissible}  was referred to as admissible in these references.
We also mention that these references even prove that 
the basis functions that take non-zero values over $\hat Q$ can indeed only be of levels $\levelT{\widehat Q}-\mu+1, \ldots, \levelT{\widehat Q}$.
\end{remark}

The structure of admissible hierarchical configurations guarantees, first, a suitable grading of the mesh, and, second, that differences between the levels of neighboring elements %adjacent elements 
are always bounded, as stated in the following proposition. 

\begin{proposition}
\label{prop:lqiHB-adjacent-support}
Let $\Tmeshp$ be an ${\cal H}$-admissible (resp. ${\cal T}$-admissible) hierarchical mesh of class $\mu$. For any $\elemp, \elemp' \in \Tmeshp$, let $\ell := \min\{ \levelT{\elemp},\levelT{\elemp'}\}$. If there exists $\hat \beta\in \hat{\cal B}^{\ell}$, (resp. $\hat \beta \in \hat{\cal B}^{\ell+1}$), such that $\supp (\hat \beta) \cap \elemp \not = \emptyset$ and $\supp (\hat \beta) \cap \elemp' \not = \emptyset$, then it holds that
\[
| \levelT{\elemp} - \levelT{\elemp'} | < \mu.
\]
\end{proposition}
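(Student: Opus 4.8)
The plan is to argue by contradiction. After relabelling so that $\levelT{\elemp}=\ell\le\ell':=\levelT{\elemp'}$ (the case $\ell=\ell'$ being immediate since $0<\mu$), I would assume $\ell'\ge\ell+\mu$ and derive that $\elemp\subseteq\hat\Omega^{\ell+1}$. This contradicts the fact that $\elemp$ is \emph{active} at level $\ell$, i.e. $\elemp\not\subseteq\hat\Omega^{\ell+1}$ by the definition \eqref{eq:hmesh} of the hierarchical mesh. The entire difficulty is to transport the admissibility information \eqref{eq:sameshes}, which naturally concerns the fine level $\ell'$, down to level $\ell$ where $\elemp$ and $\hat\beta$ actually interact.

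The key reduction is to \emph{not} apply \eqref{eq:sameshes} at level $\ell'$, but to first use the nestedness of the decreasing sequence $\hat\Omega^0\supseteq\hat\Omega^1\supseteq\cdots$. Since $\elemp'$ is active at level $\ell'$ we have $\elemp'\subseteq\hat\Omega^{\ell'}\subseteq\hat\Omega^{\ell+\mu}$, and now \eqref{eq:sameshes} applied at level $\ell+\mu\ (\ge\mu)$ gives $\elemp'\subseteq\hat\Omega^{\ell+\mu}\subseteq\hat\omega^{\ell+1}_{\cal H}$ in the ${\cal H}$-admissible case (resp. $\subseteq\hat\omega^{\ell+1}_{\cal T}$). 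The shift is chosen precisely so that the auxiliary domain appears at level $\ell+1$: by \eqref{eq:HB-THB-omega} the defining support extension of $\hat\omega^{\ell+1}_{\cal H}$ is $\sext{\cdot,\ell}$, matching a level-$\ell$ function $\hat\beta\in\hat{\cal B}^\ell$, whereas that of $\hat\omega^{\ell+1}_{\cal T}$ is $\sext{\cdot,\ell+1}$, matching a level-$(\ell+1)$ function $\hat\beta\in\hat{\cal B}^{\ell+1}$. This is exactly why the hypothesis on $\hat\beta$ differs between the two cases.

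From $\elemp'\subseteq\hat\omega^{\ell+1}_{\cal H}$ (resp. $_{\cal T}$) I would then extract that the level-$(\ell+1)$ element $\hat R^*\in\hat{\cal Q}^{\ell+1}$ containing $\elemp'$ is one of the cells forming this union: indeed $\elemp'$ has positive measure, lies inside the single cell $\hat R^*$, and $\hat\omega^{\ell+1}$ is a union of closed level-$(\ell+1)$ cells with pairwise disjoint interiors. Hence \eqref{eq:HB-THB-omega} yields $\sext{\hat R^*,\ell}\subseteq\hat\Omega^{\ell+1}$ (resp. $\sext{\hat R^*,\ell+1}\subseteq\hat\Omega^{\ell+1}$). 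Since the level-$\ell$ (resp. level-$(\ell+1)$) ancestor of $\elemp'$ coincides with that of $\hat R^*$, these read $\sext{\elemp',\ell}\subseteq\hat\Omega^{\ell+1}$ (resp. $\sext{\elemp',\ell+1}\subseteq\hat\Omega^{\ell+1}$). As $\hat\beta$ does not vanish on $\elemp'$, it does not vanish on the corresponding ancestor, so by the very definition of the (multilevel) support extension $\supp(\hat\beta)\subseteq\sext{\elemp',\ell}\subseteq\hat\Omega^{\ell+1}$ (resp. $\supp(\hat\beta)\subseteq\sext{\elemp',\ell+1}\subseteq\hat\Omega^{\ell+1}$). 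Finally I would connect this to $\elemp$: in the ${\cal H}$-case $\elemp$ and $\hat\beta$ both sit at level $\ell$ and meet, so $\elemp\subseteq\supp(\hat\beta)\subseteq\hat\Omega^{\ell+1}$; in the ${\cal T}$-case $\hat\beta$ is one level finer, so only some level-$(\ell+1)$ subcell $\hat S\subseteq\elemp$ satisfies $\hat S\subseteq\supp(\hat\beta)\subseteq\hat\Omega^{\ell+1}$, but because $\hat\Omega^{\ell+1}$ is a union of closures of level-$\ell$ elements, the level-$\ell$ cell $\elemp$ with a positive-measure part in $\hat\Omega^{\ell+1}$ is entirely contained in it, giving again $\elemp\subseteq\hat\Omega^{\ell+1}$. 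Either way this contradicts $\elemp\not\subseteq\hat\Omega^{\ell+1}$, which proves $|\levelT{\elemp}-\levelT{\elemp'}|<\mu$.

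I expect the main obstacle to be the reduction step: realising that applying \eqref{eq:sameshes} at the fine level $\ell'$ leads nowhere (support extensions at different levels are not nested in the useful direction) and that one must instead descend to level $\ell+\mu$ via the nestedness of the $\hat\Omega^j$. The secondary subtlety is the grid-alignment remark that $\hat\Omega^{\ell+1}$ consists of \emph{whole} level-$\ell$ cells, which is precisely what allows the ${\cal T}$-case to upgrade the conclusion from a single subcell $\hat S$ to the full element $\elemp$.
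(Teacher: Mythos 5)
Your proof is correct, but it is not the paper's argument: the two differ in \emph{where} the admissibility condition \eqref{eq:sameshes} is invoked, and the difference is substantive. The paper also argues by contradiction, but it stays at the fine level: with $\ell'=\levelT{\elemp'}$ it takes the ancestor $\elemp''$ of $\elemp'$ of level $\ell'-\mu+1$, asserts that ``by the assumptions on $\hat\beta$'' one has $\elemp\cap\sext{\elemp'',k}\neq\emptyset$ for $k=\ell'-\mu$ (resp.\ $k=\ell'-\mu+1$), deduces from \eqref{eq:hmesh} that $\sext{\elemp'',k}\not\subseteq\hat\Omega^{\ell'-\mu+1}$, hence by \eqref{eq:HB-THB-omega} that $\elemp'\not\subseteq\hat\omega^{\ell'-\mu+1}_{\cal H}$ (resp.\ $\hat\omega^{\ell'-\mu+1}_{\cal T}$), contradicting \eqref{eq:sameshes} applied at level $\ell'$. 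You instead pull the constraint down to the coarse level: nestedness gives $\elemp'\subseteq\hat\Omega^{\ell'}\subseteq\hat\Omega^{\ell+\mu}$, and \eqref{eq:sameshes} applied at level $\ell+\mu$ places the auxiliary domain at level $\ell+1$, exactly matching the level of $\hat\beta$; your contradiction then reads $\elemp\subseteq\hat\Omega^{\ell+1}$, against the activeness of $\elemp$. When $\ell'=\ell+\mu$ the two chains are the same argument up to contraposition.

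For $\ell'>\ell+\mu$, however, your route is the one that actually closes the case. The paper's intermediate claim does not follow from the assumptions on $\hat\beta$ alone, because the coarse function $\hat\beta\in\hat{\cal B}^{\ell}$ reaches much farther than a level-$(\ell'-\mu)$ support extension. Concretely, in 1D with $p=2$, simple knots, $\mu=2$, $\ell=0$, $\ell'=3$: take $\elemp=(0,1)$, $\elemp'=(2.875,3)$, and $\hat\beta$ the level-$0$ B-spline with $\supp(\hat\beta)=[0,3]$; all hypotheses on $\hat\beta$ hold, yet $\elemp''=(2.75,3)$ and $\sext{\elemp'',1}=[1.5,4]$, which misses $\elemp$. (Of course no admissible mesh contains such a pair of \emph{active} elements --- that is what the proposition asserts --- but this cannot be fed back into the derivation of the intermediate claim without a further, e.g.\ inductive, argument that the paper does not give.) So what your approach buys is an argument uniform in the level gap, obtained by shifting \eqref{eq:sameshes} to level $\ell+\mu$ --- the step you yourself flag as the key reduction; what it requires in exchange are the two grid-alignment facts you state explicitly, namely that a positive-measure intersection with a union of closed cells of a fixed level forces containment of a whole cell, and that $\hat\Omega^{\ell+1}$ is a union of closures of level-$\ell$ elements. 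Both hold in the paper's setting, and your handling of the ${\cal H}$/${\cal T}$ distinction (matching $\hat\beta\in\hat{\cal B}^{\ell}$ with $\sext{\cdot\,,\ell}$ and $\hat\beta\in\hat{\cal B}^{\ell+1}$ with $\sext{\cdot\,,\ell+1}$ in \eqref{eq:HB-THB-omega}) is exactly right.
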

\begin{proof}
We proceed by contradiction. Let us assume there exist $\elemp, \elemp' \in \Tmeshp$ as in the hypothesis such that $| \levelT{\elemp} - \levelT{\elemp'} | \ge \mu$. We assume without loss of generality that $\ell= \levelT{\elemp} < \levelT{\elemp'} =: \ell'$ and thus 
\begin{equation}\label{eq:aux-levels2}
\ell \le \ell' - \mu.
\end{equation}
%Since $\elemp \subseteq \hat\Omega^\ell$, we will arrive at a contradiction with the definition of $\hat{\cal H}$-admissibility \eqref{eq:sameshes} (respectively $\hat{\cal T}$-admissibility) if we prove that $\elemp \not \subseteq {\hat{\omega}}^{\ell-\mu+1}_{\cal H}$ (respectively ${\hat{\omega}}^{\ell-\mu+1}_{\cal T}$).
Let $\elemp''$ be the ancestor of $\elemp'$ of level $\ell'-\mu+1$. By the assumptions on $\hat \beta$, it clearly satisfies that $\elemp \cap \sext{\elemp'',k} \not = \emptyset$ for $k=\ell'-\mu$ (respectively $k=\ell'-\mu+1$). 
%any $k\le \ell$ (respectively $k\le\ell+1$). 
As a consequence, we get with \eqref{eq:aux-levels2} and \eqref{eq:hmesh} that $\sext{\elemp'',k} \not \subseteq \widehat\Omega^{\ell'-\mu+1}$. 
We conclude from the definition in \eqref{eq:HB-THB-omega} that $\elemp' \not \subseteq {\hat{\omega}}^{\ell'-\mu+1}_{\cal H}$ (respectively ${\hat{\omega}}^{\ell'-\mu+1}_{\cal T}$), which contradicts the definition of ${\cal H}$-admissibility, and of ${\cal T}$-admissibility, see \eqref{eq:sameshes}. \qed
\end{proof}

As an immediate consequence, we have an analogous result for adjacent elements if the interior multiplicities in all knot vectors $\kv_i^\ell$ are less or equal than $p_i$ so that all B-splines are at least continuous. 
\begin{corollary}
\label{prop:lqiHB-adjacent}
Suppose that the interior multiplicities in all knot vectors $\kv_i^\ell$, $i=1,\dots\dpa$, $\ell=0,\dots,N-1$, are less or equal than $p_i$. 
Let $\Tmeshp$ be  a hierarchical mesh which is either ${\cal H}$-admissible or  ${\cal T}$-admissible  of class $\mu$. For any $\elemp, \elemp' \in \Tmeshp$ with $\overline{\elemp} \cap \overline{\elemp'} \not = \emptyset$, it holds that
\[
| \levelT{\elemp} - \levelT{\elemp'} | < \mu.
\]
\end{corollary}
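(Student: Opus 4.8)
The plan is to deduce the statement directly from Proposition~\ref{prop:lqiHB-adjacent-support}: that proposition already bounds the level difference whenever a single B-spline of the appropriate level fails to vanish on both $\elemp$ and $\elemp'$, so the only task is to manufacture such a basis function from the mere fact that the two closed elements touch. First I would set $\ell := \min\{\levelT{\elemp},\levelT{\elemp'}\}$ and pick a common point $\xx \in \overline{\elemp}\cap\overline{\elemp'}$; the case $\levelT{\elemp}=\levelT{\elemp'}$ is anyway trivial since $\mu\ge 1$.

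For the $\mathcal{H}$-admissible case I would work with the level-$\ell$ B-spline basis $\hat{\mathcal B}^\ell$. These B-splines form a partition of unity on all of $\overline{\hat\Omega}$ (true for any $p$-open knot vector), so $\sum_{\hat\beta\in\hat{\mathcal B}^\ell}\hat\beta(\xx)=1$ and some $\hat\beta\in\hat{\mathcal B}^\ell$ satisfies $\hat\beta(\xx)>0$. Here the hypothesis that all interior multiplicities are at most $p_i$ enters: it guarantees that $\hat\beta$ is at least continuous, so $\hat\beta$ stays positive on an open neighborhood $U$ of $\xx$. Because $\xx$ lies in the closure of each of the two open elements, $U$ meets both $\elemp$ and $\elemp'$, giving $\supp(\hat\beta)\cap\elemp\neq\emptyset$ and $\supp(\hat\beta)\cap\elemp'\neq\emptyset$. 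Proposition~\ref{prop:lqiHB-adjacent-support} then yields $|\levelT{\elemp}-\levelT{\elemp'}|<\mu$.

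For the $\mathcal{T}$-admissible case the argument is identical, except that Proposition~\ref{prop:lqiHB-adjacent-support} now requires a B-spline of level $\ell+1$; so I would instead invoke the partition of unity of $\hat{\mathcal B}^{\ell+1}$ to produce $\hat\beta\in\hat{\mathcal B}^{\ell+1}$ with $\hat\beta(\xx)>0$ and conclude via the $\mathcal{T}$-admissible part of the proposition. This uses $\ell+1\le N-1$; but if $\ell=N-1$ then $\levelT{\elemp}=\levelT{\elemp'}=N-1$ and the claim holds trivially, so there is no loss.

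The step I expect to be the most delicate — though still short — is the passage from ``the closures touch'' to ``a single basis function is nonzero on both elements.'' Its correctness rests squarely on the continuity assumption: without it, a B-spline could be positive at the shared point $\xx$ yet vanish identically on one of the adjacent open elements, and the neighborhood argument above would break down. I would also double-check the partition-of-unity identity at points $\xx\in\partial\hat\Omega$, where it survives precisely because of the open knot vectors and continuity up to the boundary.
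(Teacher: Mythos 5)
Your proposal is correct and follows essentially the same route as the paper: the paper states this corollary as an ``immediate consequence'' of Proposition~\ref{prop:lqiHB-adjacent-support}, with the continuity assumption serving exactly the role you identify, and your partition-of-unity argument (some $\hat\beta\in\hat{\cal B}^{\ell}$, resp.\ $\hat{\cal B}^{\ell+1}$, is positive at a shared point $\xx$, hence by continuity on a neighborhood meeting both open elements) is precisely the detail being left implicit. Your handling of the edge cases (equal levels, $\ell=N-1$ in the ${\cal T}$-admissible case, and boundary points $\xx\in\partial\hat\Omega$, where one may in fact always choose $\xx$ in the open domain since coordinate intersections of two element closures never degenerate to $\{0\}$ or $\{1\}$) is sound.
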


The refinement algorithms to generate suitable admissible meshes recursively refine all the elements in a certain \emph{neighborhood} of any marked element to produce the refined mesh for the next step of the adaptive loop, while simultaneously preserving a fixed class of admissibility.

Given an element $\hat{Q}\in \hat{\cal Q}$ with $\levelT{\widehat Q}=:\ell$, its \emph{${\cal H}$-neighborhood} and its \emph{${\cal T}$-neighborhood} with respect to $\mu$ are defined as 
\begin{align*}
{\cal N}_{\cal H}(\hat{Q},\mu) \,&:=\, \left\{\hat{Q}'\in
%\hat{\cal G}^{\ell-\mu+1}: 
\hat{\cal Q} \,\cap\,{\cal\hat{Q}}^{\ell-\mu+1}:\right.\\
&\left.\qquad\;\hat{Q}' \cap \sext{\hat{Q},\ell-\mu+1}\neq\emptyset \right\},\\
{\cal N}_{\cal T}(\hat{Q},\mu) \,&:=\, \left\{\hat{Q}'\in
%\hat{\cal G}^{\ell-\mu+1}: 
\hat{\cal Q} \,\cap\,{\cal\hat{Q}}^{\ell-\mu+1}: \right.\\
&\left.\qquad\;
\hat{Q}' \cap\sext{\hat{Q},\ell-\mu+2}\neq\emptyset \right\},
\end{align*}
respectively, when $\ell-\mu+1 \ge 0$, and ${\cal N}_{\cal H}(\hat{Q},\mu) := {\cal N}_{\cal T}(\hat{Q},\mu) := \emptyset\,$ for $\ell-\mu+1 < 0$. 
Recall that we consider open elements, whereas the support extension is a closed set.
The conditions in the two sets are thus equivalent to $\hat{Q}' \subseteq \sext{\hat{Q},\ell-\mu+1}$ and $\exists\, \hat{Q}'' \in\widehat\hmesh^{\ell-\mu+2}$ with $\hat Q''\subseteq \sext{\hat{Q},\ell-\mu+2}, \hat{Q}''\subseteq \hat{Q}' $, respectively.
An example of {${\cal H}$-neighborhood} and the {${\cal T}$-neighborhood} for $\mathbf{p}=(2,2)$ and $\mu=2$ is shown in Figure~\ref{fig:hneig}.

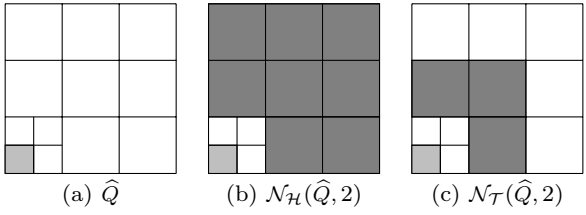
\begin{figure}[!t]\centering
\subfigure[$\hat{Q}$]{\begin{tikzpicture}[scale=.75]
%\draw [fill=lightgray] (3.5,3.5) rectangle (6.5,6.5);
%\draw [fill=gray] (4,4) rectangle (6,6);
\draw (0,0) grid (3,3);
\draw [fill=lightgray] (0,0) rectangle (1/2,1/2);
%% level 1
\foreach \a in {1}
	\draw (0,\a/2) -- (1,\a/2);
\foreach \a in {1}
	\draw (\a/2,0) -- (\a/2,1);
\end{tikzpicture}}
\hspace*{.2cm}
\subfigure[${\cal N}_{\cal H}(\hat{Q},2)$]{\begin{tikzpicture}[scale=.75]
\draw [fill=gray] (0,1) rectangle (3,3);
\draw [fill=gray] (1,0) rectangle (3,1);
%\draw [fill=lightgray] (3.5,3.5) rectangle (6.5,6.5);
\draw (0,0) grid (3,3);
\draw [fill=lightgray] (0,0) rectangle (1/2,1/2);
%% level 1
\foreach \a in {1}
	\draw (0,\a/2) -- (1,\a/2);
\foreach \a in {1}
	\draw (\a/2,0) -- (\a/2,1);
\end{tikzpicture}}
\hspace*{.2cm}
\subfigure[${\cal N}_{\cal T}(\hat{Q},2)$]{\begin{tikzpicture}[scale=.75]
\draw [fill=gray] (0,1) rectangle (2,2);
\draw [fill=gray] (1,0) rectangle (2,1);
%\draw [fill= (3.5,3.5) rectangle (6.5,6.5);
\draw (0,0) grid (3,3);
\draw [fill=lightgray] (0,0) rectangle (1/2,1/2);
%% level 1
\foreach \a in {1}
	\draw (0,\a/2) -- (1,\a/2);
\foreach \a in {1}
	\draw (\a/2,0) -- (\a/2,1);
\end{tikzpicture}}
\caption{For the light gray element $\hat{Q}$ (a), we plot in dark gray its {${\cal H}$-neighborhood} (b) and {${\cal T}$-neighborhood} (c), for $\mathbf{p}=(2,2)$ and $\mu=2$.
All internal knots have multiplicity one.}
\label{fig:hneig}
\end{figure}

By exploiting the neighborhoods to define the refinement patch associated to each set of a marked element, we can generate admissible meshes and encapsulate a certain structure naturally connected with the support of hierarchical basis functions. 
Algorithm~\ref{alg:hrefine} and~\ref{alg:trefine} present the admissible refinement procedure for HB-splines and THB-splines, respectively. In both algorithms, given a set of marked (active) elements, we iteratively also mark the elements in the ${\cal H}$-neighborhood (Algorithm~\ref{alg:hrefine}) or ${\cal T}$-neighborhood (Algorithm~\ref{alg:trefine}) of the marked ones until these neighborhood sets are empty (and no additional elements are marked). Then, we refine the hierarchical mesh  by replacing the set of marked elements with its children. Note that the difference between the two algorithms only affects the computation of the neighborhood.  The output of the two algorithms coincides with the output of the recursive refinement modules introduced in \cite{bg16} and \cite{bgv18} for ${\cal T}$-admissible and ${\cal H}$-admissible meshes, respectively. ${\cal H}$-admissible refinements were also considered in \cite[Algorithm~3.1]{ghp17} and \cite{morgenstern17} for $\mu=2$ and $\mu\ge2$, respectively. Details for the implementation of the two refinement algorithms can be found in \cite{bgv18}.

%\begin{algorithm}[!ht]
%\caption{\texttt{refine} (Admissible refinement)} 
%\label{alg:hrefine}
%\begin{algorithmic}
%\Require admissible mesh $\hat\hmesh$, marked element $\hat{\cal M} \subset \hat\hmesh$, and admissibility class $\mu$ 
%\For {$\hat{Q}\in \cal{\hat M}$} \\ %\hat{\hmesh}\,\cap\,\cal{\hat M}$}\\
%\texttt{refine-recursive}($\hat\hmesh, \hat{Q}, \mu$)
%\Comment{Algorithm~\ref{alg:refinerecursive}}
%\EndFor 
%\Ensure refined admissible $\hat\hmesh$ 
%\end{algorithmic}
%\end{algorithm}
%
%\begin{algorithm}[!ht]
%\caption{\texttt{refine-recursive}}
%\label{alg:refinerecursive}
%\begin{algorithmic}
%\Require $\hat\hmesh, \hat{Q}, \mu$
%\For {$\hat{Q}'\in{\cal N}_{\cal H}(\hat{Q},\mu)$ (resp. 
%                            ${\cal N}_{\cal T}(\hat{Q},\mu)$)} $\cap \hat \hmesh$? Depends on Defn.\\
%\quad \texttt{refine-recursive}($\hat\hmesh, \hat{Q}', \mu$)
%\EndFor
%Subdivide $\hat{Q}$\\
%Update $\hat{\hmesh}$ by replacing $\hat{Q}$ with its children\\
%\Ensure $\hat{\hmesh}$
%\end{algorithmic}
%\end{algorithm}

\begin{algorithm}[!ht]
\caption{\texttt{refine} (${\cal{H}}$-admissible refinement)}
\label{alg:hrefine}
\begin{algorithmic}
\Require ${\cal{H}}$-admissible mesh $\hat\hmesh$, marked elements $\hat{\cal M} \subseteq \hat\hmesh$, and admissibility integer $\mu$ 
\Repeat%While {$\mathcal{U} \not = \emptyset$}
\State set $\displaystyle \hat{\mathcal{U}} = \bigcup_{\hat{Q} \in \hat{\cal M}} {\cal N}_{\cal H}(\hat{Q},\mu)\setminus \hat{\cal M}$
\State set $\hat{\cal M} = \hat{\cal M}\cup \hat{\mathcal{U}}$
\Until {$\hat{\mathcal{U}} = \emptyset$}
%\EndWhile \textbf{end while}
%\State set $\Tmesh = (\Tmesh \setminus \markedT)  \bigcup \left(\bigcup_{\elemi \in \markedT} \left(\bisect_{\dirb{\levelT{\elemi}}}(\elemi) \right) \right) $
%\cg{\For {$\hat{Q}'\in{\cal N}_{\cal H}(\hat{Q},\mu)$ (resp. 
%                            ${\cal N}_{\cal T}(\hat{Q},\mu)$)} $\cap \hat \hmesh$? Depends on Defn.\\
%\quad \texttt{refine-recursive}($\hat\hmesh, \hat{Q}', \mu$)
%\EndFor
%\State subdivide any element $\hat{Q}\in{\cal M}$
\State update $\hat{\hmesh}$ by replacing the elements in $\hat{\cal M}$ by their children\\
\Ensure refined ${\cal{H}}$-admissible mesh $\hat\hmesh$
\end{algorithmic}
\end{algorithm}

\begin{algorithm}[!h]
\caption{\texttt{refine} (${\cal{T}}$-admissible refinement)}
\label{alg:trefine}
\begin{algorithmic}
\Require ${\cal{T}}$-admissible mesh $\hat\hmesh$, marked elements $\hat{\cal M} \subseteq \hat\hmesh$, and admissibility integer $\mu$ 
\Repeat%While {$\mathcal{U} \not = \emptyset$}
\State set $\displaystyle \hat{\mathcal{U}} = \bigcup_{\hat{Q} \in \hat{\cal M}} {\cal N}_{\cal T}(\hat{Q},\mu)\setminus\hat{\cal M}$
\State set $\hat{\cal M} = \hat{\cal M}\cup \hat{\mathcal{U}}$
\Until {$\hat{\mathcal{U}} = \emptyset$}
%\EndWhile \textbf{end while}
%\State set $\Tmesh = (\Tmesh \setminus \markedT)  \bigcup \left(\bigcup_{\elemi \in \markedT} \left(\bisect_{\dirb{\levelT{\elemi}}}(\elemi) \right) \right) $
%\cg{\For {$\hat{Q}'\in{\cal N}_{\cal H}(\hat{Q},\mu)$ (resp. 
%                            ${\cal N}_{\cal T}(\hat{Q},\mu)$)} $\cap \hat \hmesh$? Depends on Defn.\\
%\quad \texttt{refine-recursive}($\hat\hmesh, \hat{Q}', \mu$)
%\EndFor
%\State subdivide any element $\hat{Q}\in{\cal M}$
\State update $\hat{\hmesh}$ by replacing the elements in $\hat{\cal{M}}$ by their children\\
\Ensure refined ${\cal{T}}$-admissible mesh $\hat\hmesh$
\end{algorithmic}
\end{algorithm}

A selection of meshes generated by the two algorithms when the finest element in the bottom left corner of the current mesh is marked for refinement is shown in Figure~\ref{fig:adm-ref} for $\mathbf{p}=(1,1)$ and $\mu=2$. Note that at each refinement step, the ${\cal T}$-neighborhood is always empty and, consequently, only the marked element is refined. A more significative comparison is shown in Figure~\ref{fig:adm-ref2}, where a diagonal refinement of the unit square is considered for $\mu=3$ and $\mathbf{p}=(2,2)$, $\mathbf{p}=(3,3)$, $\mathbf{p}=(4,4)$ after six refinement levels, see also \cite[Section~5.1]{bgv18} for different values of $\mu$. 
%In particular, by starting from a $4\times 4$ mesh, at each refinement step, we mark a strip of $2\lceil\frac{p+1}{2}\rceil$ cells centered at the diagonal. This naturally guarantees to always add functions of the finest level.

\begin{figure}[!t]\centering
\subfigure[initial mesh and marked element at step 0]{
\begin{tikzpicture}[scale=1.15]
\fill[white!40!white] (0,0) rectangle (2,2);
\end{tikzpicture}
\hspace*{.2cm}
\begin{tikzpicture}[scale=1.15]
%\draw [fill=lightgray] (3.5,3.5) rectangle (6.5,6.5);
%\draw [fill=gray] (4,4) rectangle (6,6);
\draw (0,0) grid (2,2);
\draw [fill=lightgray] (0,0) rectangle (1/2,1/2);
%% level 1
\foreach \a in {1}
	\draw (0,\a/2) -- (1,\a/2);
\foreach \a in {1}
	\draw (\a/2,0) -- (\a/2,1);
\end{tikzpicture}
\hspace*{.2cm}
\begin{tikzpicture}[scale=1.15]
\fill[white!40!white] (0,0) rectangle (2,2);
\end{tikzpicture}}\\
\subfigure[${\cal H}-$admissible meshes at step 1, 2, 3]{\begin{tikzpicture}[scale=1.15]
\draw [fill=gray] (0,1) rectangle (2,2);
\draw [fill=gray] (1,0) rectangle (2,1);
%\draw [fill=lightgray] (3.5,3.5) rectangle (6.5,6.5);
\draw (0,0) grid (2,2);
%\draw [fill=black] (0,0) rectangle (1/2,1/2);
%% level 1
\foreach \a in {1,3}
	\draw (0,\a/2) -- (2,\a/2);
\foreach \a in {1,3}
	\draw (\a/2,0) -- (\a/2,2);
	%% level 2
\foreach \a in {1}
	\draw (0,\a/4) -- (\a/2,\a/4);
\foreach \a in {1}
	\draw (\a/4,0) -- (\a/4,\a/2);
\end{tikzpicture}
\hspace*{.2cm}
\begin{tikzpicture}[scale=1.15]
\draw [fill=gray] (0,1/2) rectangle (1,1);
\draw [fill=gray] (1/2,0) rectangle (1,1/2);
%\draw [fill=lightgray] (3.5,3.5) rectangle (6.5,6.5);
\draw (0,0) grid (2,2);
%\draw [fill=black] (0,0) rectangle (1/2,1/2);
%% level 1
\foreach \a in {1,3}
	\draw (0,\a/2) -- (2,\a/2);
\foreach \a in {1,3}
	\draw (\a/2,0) -- (\a/2,2);
		%% level 2
\foreach \a in {1,3}
	\draw (0,\a/4) -- (1,\a/4);
\foreach \a in {1,3}
	\draw (\a/4,0) -- (\a/4,1);
			%% level 3
\foreach \a in {1}
	\draw (0,\a/8) -- (1/4,\a/8);
\foreach \a in {1}
	\draw (\a/8,0) -- (\a/8,1/4);
\end{tikzpicture}
\hspace*{.2cm}
\begin{tikzpicture}[scale=1.15]
\draw [fill=gray] (0,1/4) rectangle (1/2,1/2);
\draw [fill=gray] (1/4,0) rectangle (1/2,1/4);
%\draw [fill=lightgray] (3.5,3.5) rectangle (6.5,6.5);
\draw (0,0) grid (2,2);
%\draw [fill=black] (0,0) rectangle (1/2,1/2);
%% level 1
\foreach \a in {1,3}
	\draw (0,\a/2) -- (2,\a/2);
\foreach \a in {1,3}
	\draw (\a/2,0) -- (\a/2,2);
		%% level 2
\foreach \a in {1,3}
	\draw (0,\a/4) -- (1,\a/4);
\foreach \a in {1,3}
	\draw (\a/4,0) -- (\a/4,1);
			%% level 3
\foreach \a in {1,3}
	\draw (0,\a/8) -- (1/2,\a/8);
\foreach \a in {1,3}
	\draw (\a/8,0) -- (\a/8,1/2);
					%% level 4
\foreach \a in {1}
	\draw (0,\a/16) -- (1/8,\a/16);
\foreach \a in {1}
	\draw (\a/16,0) -- (\a/16,1/8);
\end{tikzpicture}
}\\
%T-adm
\subfigure[${\cal T}-$admissible meshes at step 1, 2, 3]{\label{fig:T-admissible,notH}
\begin{tikzpicture}[scale=1.15]
%\draw [fill=lightgray] (0,1/2) rectangle (1,1);
%\draw [fill=lightgray] (1/2,0) rectangle (1,1);
%\draw [fill= (3.5,3.5) rectangle (6.5,6.5);
\draw (0,0) grid (2,2);
%\draw [fill=black] (0,0) rectangle (1/2,1/2);
%% level 1
\foreach \a in {1}
	\draw (0,\a/2) -- (1,\a/2);
\foreach \a in {1}
	\draw (\a/2,0) -- (\a/2,1);
		%% level 2
\foreach \a in {1}
	\draw (0,\a/4) -- (1/2,\a/4);
\foreach \a in {1}
	\draw (\a/4,0) -- (\a/4,1/2);
\end{tikzpicture}
\hspace*{.2cm}
%
% step 2
%
%\subfigure[${\cal N}_{\cal T}(\hat{Q},2)$]{
\begin{tikzpicture}[scale=1.15]
%\draw [fill=lightgray] (0,1/2) rectangle (1,1);
%\draw [fill=lightgray] (1/2,0) rectangle (1,1);
%\draw [fill= (3.5,3.5) rectangle (6.5,6.5);
\draw (0,0) grid (2,2);
%\draw [fill=black] (0,0) rectangle (1/2,1/2);
%% level 1
\foreach \a in {1}
	\draw (0,\a/2) -- (1,\a/2);
\foreach \a in {1}
	\draw (\a/2,0) -- (\a/2,1);
		%% level 2
\foreach \a in {1}
	\draw (0,\a/4) -- (1/2,\a/4);
\foreach \a in {1}
	\draw (\a/4,0) -- (\a/4,1/2);
				%% level 3
\foreach \a in {1}
	\draw (0,\a/8) -- (1/4,\a/8);
\foreach \a in {1}
	\draw (\a/8,0) -- (\a/8,1/4);
\end{tikzpicture}
%
% step 3
%
\hspace*{.2cm}
%\subfigure[${\cal N}_{\cal T}(\hat{Q},2)$]{
\begin{tikzpicture}[scale=1.15]
%\draw [fill=lightgray] (0,1/2) rectangle (1,1);
%\draw [fill=lightgray] (1/2,0) rectangle (1,1);
%\draw [fill= (3.5,3.5) rectangle (6.5,6.5);
\draw (0,0) grid (2,2);
%\draw [fill=black] (0,0) rectangle (1/2,1/2);
%% level 1
\foreach \a in {1}
	\draw (0,\a/2) -- (1,\a/2);
\foreach \a in {1}
	\draw (\a/2,0) -- (\a/2,1);
		%% level 2
\foreach \a in {1}
	\draw (0,\a/4) -- (1/2,\a/4);
\foreach \a in {1}
	\draw (\a/4,0) -- (\a/4,1/2);
				%% level 3
\foreach \a in {1}
	\draw (0,\a/8) -- (1/4,\a/8);
\foreach \a in {1}
	\draw (\a/8,0) -- (\a/8,1/4);
					%% level 4
\foreach \a in {1}
	\draw (0,\a/16) -- (1/8,\a/16);
\foreach \a in {1}
	\draw (\a/16,0) -- (\a/16,1/8);
\end{tikzpicture}}
\caption{{${\cal H}$-admissible} (b) and {${\cal T}$-admissible} (c) meshes generated by Algorithm~\ref{alg:hrefine} and \ref{alg:trefine}, respectively, by refining three times the finest element in the bottom left corner of the mesh with $\mathbf{p}=(1,1)$ and $\mu=2$. The initial mesh and a marked element at step 0 are  shown as well (a). At each step, the dark gray elements appear by refinement of the neighborhood of the previous marked element.
All internal knots have multiplicity one.}
\label{fig:adm-ref}
\end{figure}
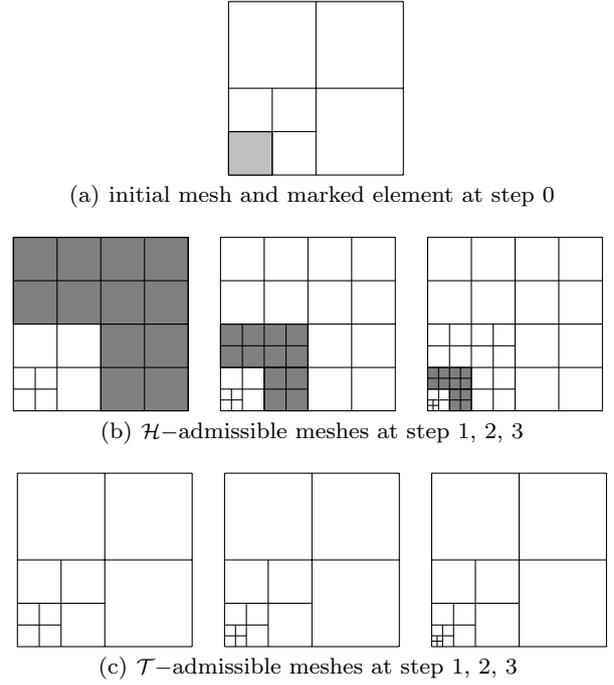

\begin{figure}[!t]\centering
\subfigure[$\mathbf{p}=(2,2)$]{%, ${\cal H}$-adm. and ${\cal T}$-adm.]{
%trim=left bottom right top, clip
\hspace*{-0.7cm}
\includegraphics[scale=.175]{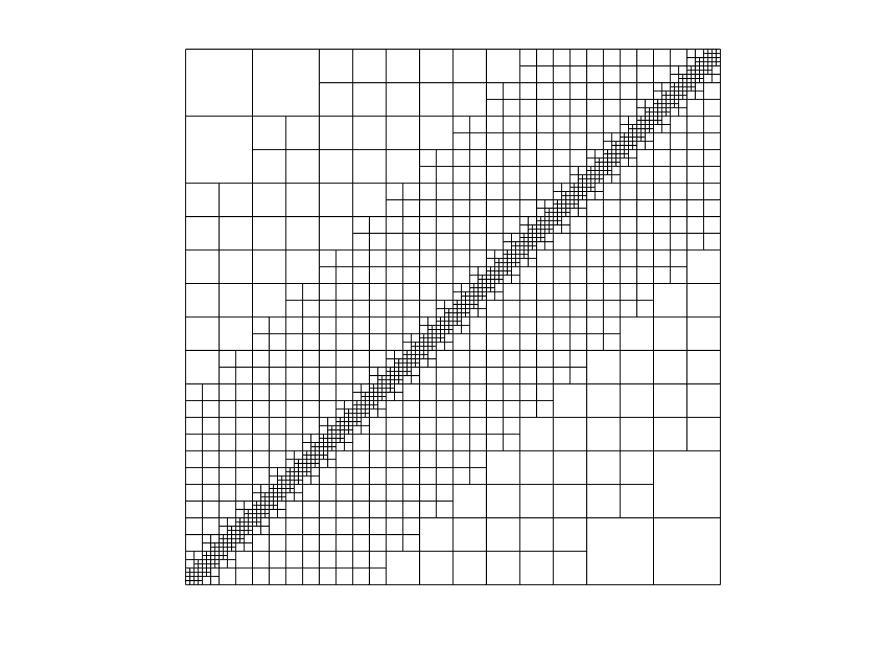}
\hspace*{-1.1cm}
\includegraphics[scale=.175]{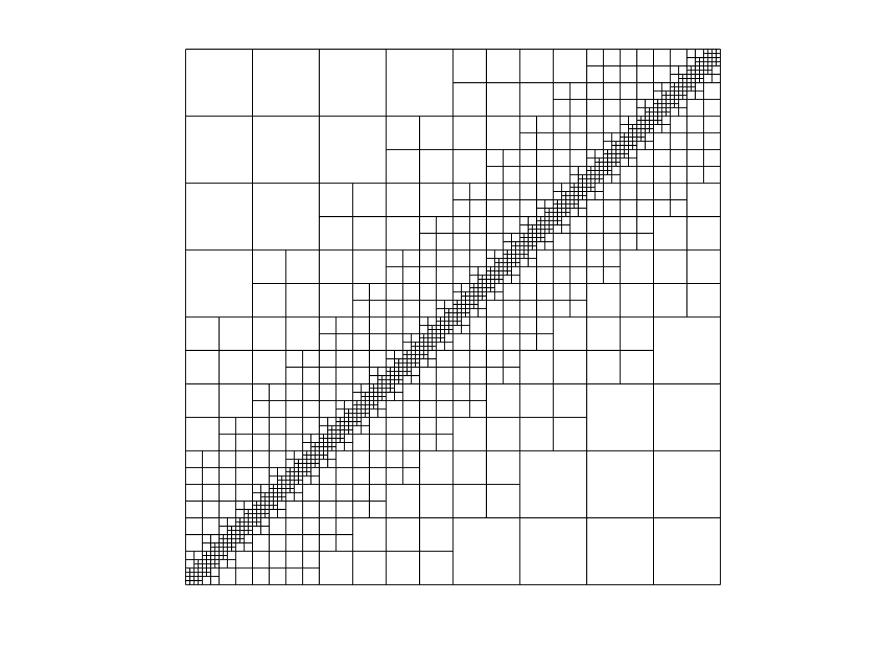}
}
\subfigure[$\mathbf{p}=(3,3)$]{%, ${\cal H}$-adm. and ${\cal T}$-adm.]{
%trim=left bottom right top, clip
\hspace*{-0.7cm}
\includegraphics[scale=.175]{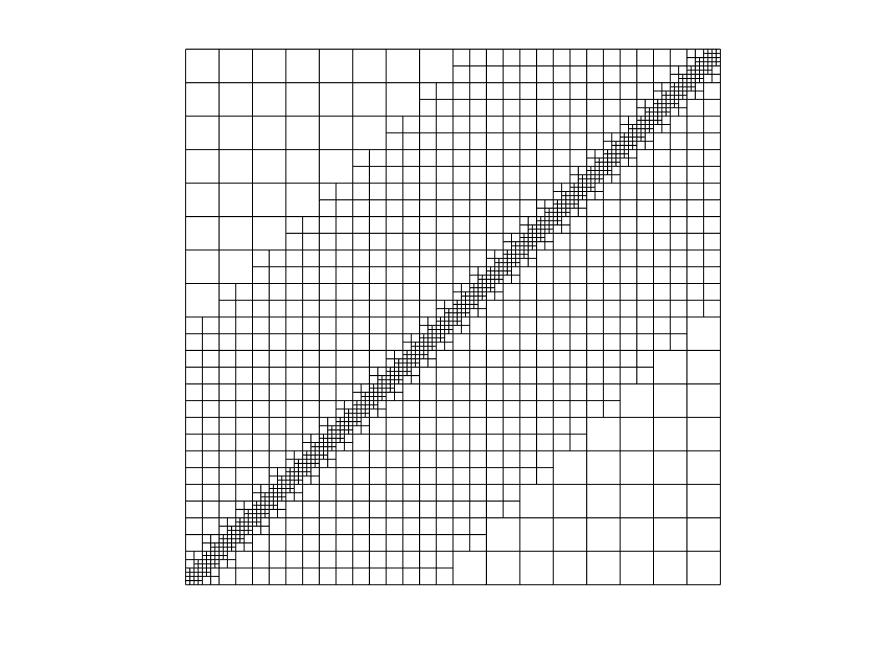}
\hspace*{-1.1cm}
\includegraphics[scale=.175]{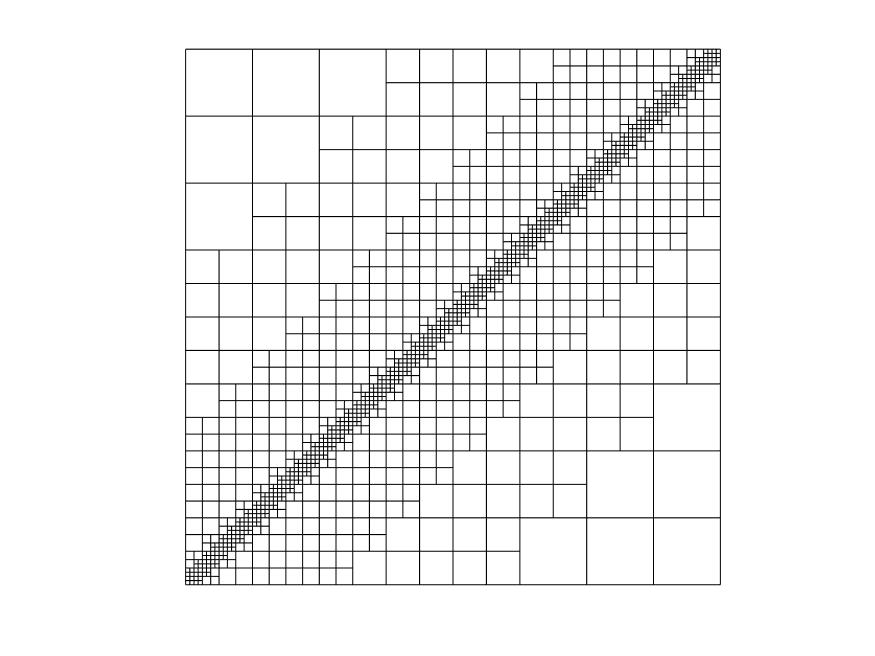}
}
\subfigure[$\mathbf{p}=(4,4)$]{%, ${\cal H}$-adm. and ${\cal T}$-adm.]{
%trim=left bottom right top, clip
\hspace*{-0.7cm}
\includegraphics[scale=.175]{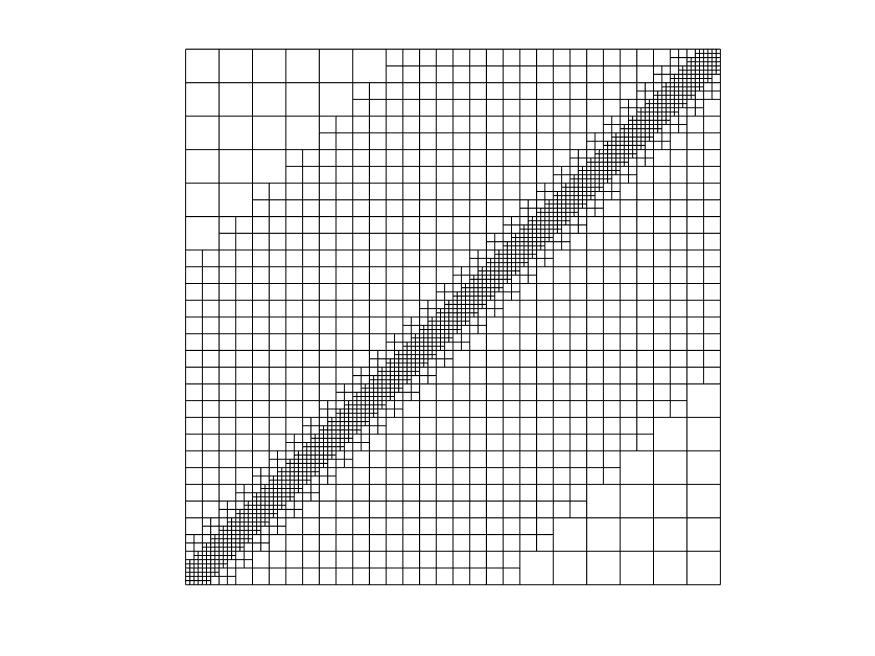}
\hspace*{-1.1cm}
\includegraphics[scale=.175]{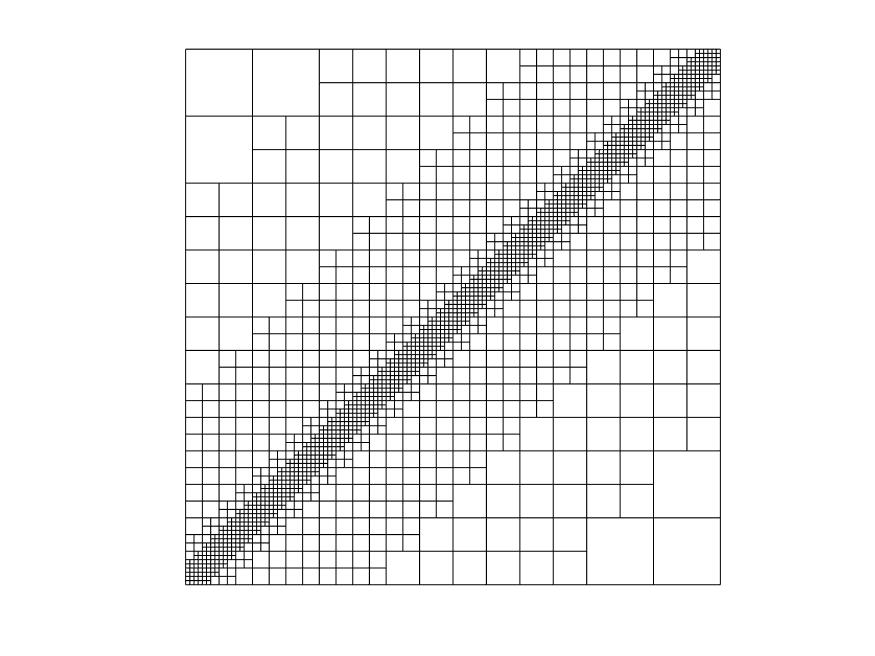}
}
\caption{Diagonal refinement of the unit square, starting from a uniform $4\times4$ mesh, after six refinement steps: ${\cal H}$-admissible (left) and ${\cal T}$-admissible (right) meshes generated by Algorithm~\ref{alg:hrefine} and \ref{alg:trefine}, respectively. Results for $\mu=3$ and $\mathbf{p}=(2,2)$, $\mathbf{p}=(3,3)$, $\mathbf{p}=(4,4)$. At each refinement step, we mark a strip of $2\lceil\frac{p+1}{2}\rceil$ cells centered at the diagonal. This naturally guarantees that in each step functions of the finest level are activated.
All internal knots have multiplicity one.}
\label{fig:adm-ref2}
\end{figure}

%{The properties of these algorithms were analyzed in \cite{buffa2016c} and \cite{gantner2017,morgenstern2017}, for (strictly) ${\cal T}$- and ${\cal H}$-admissibility, respectively. We report in Lemma~\ref{lemma:ends} and Proposition~\ref{prn:9} the proofs of the key results for Algorithm~\ref{alg:refine}, which include both cases in a unified setting.}

%\begin{lemma}\label{lemma:ends}
%Let ${\cal Q}$ be a strictly ${\cal H}$-admissible (respectively, strictly ${\cal T}$-admissible) hierarchical mesh of class $\mu$, associated with a hierarchical space $V$, let ${\cal M} \subseteq {\cal Q}$ a set of marked active elements, and let the integer $\mu \ge 2$. The~call to Algorithm~\ref{alg:refine} in the form
%\[
%{\cal Q}_* = \text{\bf admissible\_refinement}({\cal Q}, V, {\cal M}, \mu)
%\]
%terminates and returns a refined mesh ${\cal Q}_*$, such that all elements in ${\cal Q}_*$ were already active, or are obtained by a single refinement of an element of ${\cal Q}$.
%\end{lemma}

The properties of the refine modules were detailed in \cite{bg16,bgmp16} and \cite{ghp17,morgenstern17} for ${\cal T}$- and ${\cal H}$-admissible meshes, respectively, and subsequently revisited in \cite{bgv18} in a unified framework. Proposition~\ref{prn:htadmissible} guarantees that Algorithm~\ref{alg:hrefine} and Algorithm~\ref{alg:trefine} generate a refined hierarchical mesh characterized by the same admissibility properties of the input mesh, see \cite[Proposition~2]{bgv18}.

\begin{proposition}\label{prn:htadmissible}
Let $\hat{\cal Q}$, $\hat{\cal M}$, and $\mu$ be the input arguments of Algorithm~\ref{alg:hrefine} (respectively Algorithm~\ref{alg:trefine}), where $\hat{\cal Q}$ is ${\cal H}$-admissible (respectively ${\cal T}$-admissible) of class $\mu$. Then, the considered algorithm returns a refined hierarchical mesh, $\hat{\cal Q}_\fine \succeq \hat{\QQ}$, which is ${\cal H}$-admissible (respectively ${\cal T}$-admissible) of class $\mu$.
\end{proposition}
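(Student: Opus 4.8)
The plan is to treat the ${\cal H}$- and ${\cal T}$-admissible cases simultaneously, since the only difference is an index shift in the neighborhood and in the auxiliary domain; I describe the ${\cal H}$-case and indicate at the end the change needed for the ${\cal T}$-case. First I would dispatch two preliminaries. Since each neighborhood ${\cal N}_{\cal H}(\hat Q,\mu)$ consists of elements of level $\levelT{\hat Q}-\mu+1\le\levelT{\hat Q}-1$ and $\hat{\cal M}$ grows monotonically inside the finite set $\hat{\cal Q}$, the Repeat--Until loop terminates with a \emph{neighborhood-closed} set $\hat{\cal M}^{*}$, meaning $\bigcup_{\hat Q\in\hat{\cal M}^{*}}{\cal N}_{\cal H}(\hat Q,\mu)\subseteq\hat{\cal M}^{*}$. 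Moreover, the update step replaces marked elements only by their dyadic children, so $\hat{\cal Q}_\fine\succeq\hat{\cal Q}$ holds by definition, and the refined subdomains decompose as
\[
\hat\Omega^\ell_\fine=\hat\Omega^\ell\cup\bigcup\{\overline{\hat Q}:\hat Q\in\hat{\cal M}^{*},\ \levelT{\hat Q}=\ell-1\}.
\]

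The goal is to establish $\hat\Omega^\ell_\fine\subseteq\hat\omega^{\ell-\mu+1}_{{\cal H},\fine}$ for $\ell=\mu,\dots,N_\fine-1$, where the subscript $\fine$ marks quantities formed on $\hat{\cal Q}_\fine$, and I would verify this on the two pieces of the decomposition separately. For the old piece, observe that the defining condition of $\hat\omega^{k}_{\cal H}$ ranges over the fixed background cells $\hat Q\in\hat{\cal Q}^{k}$ and is monotone in $\hat\Omega^{k}$; since $\hat\Omega^{k}\subseteq\hat\Omega^{k}_\fine$, this gives $\hat\omega^{k}_{\cal H}\subseteq\hat\omega^{k}_{{\cal H},\fine}$, so the input admissibility $\hat\Omega^\ell\subseteq\hat\omega^{\ell-\mu+1}_{\cal H}$ immediately yields $\hat\Omega^\ell\subseteq\hat\omega^{\ell-\mu+1}_{{\cal H},\fine}$.

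The heart of the argument is the new piece: for each $\hat Q\in\hat{\cal M}^{*}$ with $\levelT{\hat Q}=\ell-1$ I must show $\overline{\hat Q}\subseteq\hat\omega^{\ell-\mu+1}_{{\cal H},\fine}$. Letting $\hat Q^{(0)}$ denote the level-$(\ell-\mu+1)$ ancestor of $\hat Q$, and using that the multilevel support extension depends only on the level-$(\ell-\mu)$ ancestor, it suffices by the definition of $\hat\omega_{\cal H}$ to prove $\sext{\hat Q,\ell-\mu}\subseteq\hat\Omega^{\ell-\mu+1}_\fine$. Here $\sext{\hat Q,\ell-\mu}$ is a union of background cells of level $\ell-\mu$, each covered by a unique active element of $\hat{\cal Q}$. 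If that element is the cell $c$ itself, then $c\in{\cal N}_{\cal H}(\hat Q,\mu)\subseteq\hat{\cal M}^{*}$ by closure, so $c$ is refined and $\overline c\subseteq\hat\Omega^{\ell-\mu+1}_\fine$; if $c$ is covered by strictly finer elements, then already $\overline c\subseteq\hat\Omega^{\ell-\mu+1}\subseteq\hat\Omega^{\ell-\mu+1}_\fine$.

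The only remaining possibility — that $c$ lies inside an active element of level $<\ell-\mu$ — is where the input admissibility is indispensable, and I expect it to be the main obstacle. To exclude it I would use that $\hat Q$ is active of level $\ell-1$, so $\hat Q\subseteq\hat\Omega^{\ell-1}\subseteq\hat\omega^{\ell-\mu}_{\cal H}$ by the class-$\mu$ admissibility of the input mesh; the definition of $\hat\omega_{\cal H}$ then forces $\sext{\hat Q,\ell-\mu-1}\subseteq\hat\Omega^{\ell-\mu}$, and the standard nesting of multilevel support extensions under dyadic refinement gives $\sext{\hat Q,\ell-\mu}\subseteq\sext{\hat Q,\ell-\mu-1}\subseteq\hat\Omega^{\ell-\mu}$. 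Hence every cell $c$ is covered by elements of level $\ge\ell-\mu$, ruling out the dangerous case; this step is morally the level-gap bound of Proposition~\ref{prop:lqiHB-adjacent-support}. A short index check settles the bookkeeping: the dangerous case can only occur when $\ell-\mu\ge1$, in which case $\ell-1\ge\mu$ and the invoked admissibility at level $\ell-1$ is indeed available, while for $\ell-\mu=0$ there is no coarser level and the case is vacuous. Combining the two pieces gives ${\cal H}$-admissibility of class $\mu$ for $\hat{\cal Q}_\fine$. For the ${\cal T}$-case one replaces $\sext{\cdot,\ell-\mu}$ by $\sext{\cdot,\ell-\mu+1}$ throughout, matching ${\cal N}_{\cal T}$ and $\hat\omega_{\cal T}$, and the identical classification — now resting on the ${\cal T}$-form of Proposition~\ref{prop:lqiHB-adjacent-support} — carries over verbatim.
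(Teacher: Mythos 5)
Your proof is correct, but note that the paper itself contains no proof of this proposition: it is quoted from the literature, with the argument deferred to \cite[Proposition~2]{bgv18} (the cases of ${\cal T}$- and ${\cal H}$-admissible meshes having appeared earlier in \cite{bg16,bgmp16} and \cite{ghp17,morgenstern17}, respectively). So the comparison here is between your self-contained verification and a citation. Your route is the natural direct one, and every step checks out: the decomposition $\hat\Omega^\ell_\fine=\hat\Omega^\ell\cup\bigcup\{\overline{\hat Q}:\hat Q\in\hat{\cal M}^{*},\ \levelT{\hat Q}=\ell-1\}$ of the refined subdomains is valid; monotonicity of $\hat\omega^{k}_{\cal H}$ in $\hat\Omega^{k}$ disposes of the old piece; and for the new piece, the trichotomy (cell itself active / covered by strictly finer active elements / inside a coarser active element), combined with neighborhood-closedness of $\hat{\cal M}^{*}$ and the exclusion of the coarse case via input admissibility, yields exactly the required inclusion $\sext{\hat Q,\ell-\mu}\subseteq\hat\Omega^{\ell-\mu+1}_\fine$, with the index bookkeeping ($\ell-\mu\ge1$ versus $\ell-\mu=0$) handled correctly.

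Three points deserve tightening. First, ``each covered by a unique active element'' is imprecise: a cell covered by strictly finer elements is covered by several of them; your case split is nevertheless the right one, since a cell of level $\ell-\mu$ inside $\hat\Omega^{\ell-\mu}$ cannot meet the open part of any active element of coarser level. Second, the nesting $\sext{\hat Q,k+1}\subseteq\sext{\hat Q,k}$ is asserted as standard but is a lemma, not a tautology; it does hold because the knot vectors are nested under dyadic refinement, so crossing $p$ knots of the finer vector starting inside the coarser ancestor cannot exit the coarser extension. Third, the ${\cal T}$-case does not carry over \emph{verbatim}: there the cells to classify have level $\ell-\mu+1$, while ${\cal N}_{\cal T}(\hat Q,\mu)$ consists of active elements of level $\ell-\mu$, so case (a) must read ``the cell lies inside an active element of level $\ell-\mu$, which then belongs to ${\cal N}_{\cal T}(\hat Q,\mu)\subseteq\hat{\cal M}^{*}$'' rather than ``the cell is itself a neighborhood element''; moreover, the exclusion step uses $\sext{\hat Q,\ell-\mu}\subseteq\hat\Omega^{\ell-\mu}$ directly from the definition of $\hat\omega^{\ell-\mu}_{\cal T}$, with the nesting lemma then applied once to pass to level $\ell-\mu+1$. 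With these adjustments the argument goes through and gives a complete proof of the proposition in both cases.
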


%\begin{remark}\label{rem:HB-THB}
%It is important to note that, since the two bases span the same hierarchical space, one could apply the strictly ${\cal T}$-admissible refinement with the standard HB-splines (or the strictly ${\cal H}$-admissible refinement with THB-splines). In general, the admissibility property is not valid for HB-splines on ${\cal T}$-admissible meshes, while it is always valid for THB-splines on ${\cal H}$-admissible meshes, as already stated in Proposition~\ref{prop:adm}. {The lack of the admissibility property has a negative impact on the sparsity of the matrix, as we will see in the numerical tests of the following section.}
%\end{remark}

%Analogously to the notation and definitions used in Section~\ref{sec:meshes_axioms}, 
For fixed $\mu$ and fixed ${\cal{H}}$-admissible or ${\cal{T}}$-admissible refinement, we abbreviate $\hat\Q:= \refine(\hat\QQ_0)$ as the set of meshes that can be obtained by iterative application of admissible refinement to the initial mesh $\hat\QQ_0:=\hat\QQ^0$. In fact, $\refine(\hat\QQ_0)$ coincides with the whole set of admissible meshes that are obtained by refinement of $\hat\QQ_0$, see \cite[Propositions 3.1.8 and 4.2.3]{morgenstern17} for the ${\cal H}$-admissible and ${\cal T}$-admissible meshes, respectively. See also \cite[Prop.~5.1]{ghp17} for the proof in the case of ${\cal H}$-admissible meshes with $\mu=2$.

The following %estimate \eqref{R:closure} in the spirit of \cite{bdd04} and \cite{stevenson07} 
proposition provides a bound on the possible overrefinement of the algorithm to preserve admissibility. 
It was proved in \cite[Theorem~13]{bgmp16} and \cite[Theorem~3.1.12]{morgenstern17} for ${\cal T}$- and $\HH$-admissible refinement algorithms, respectively. 
The case $\mu=2$ for HB-splines was also addressed in \cite[Section 5.4]{ghp17}.
The original versions for triangular meshes go back to \cite{bdd04} and \cite{stevenson07} .

\begin{proposition}\label{prop:hierarchical lincomp}
There exists a uniform constant $C>0$ such that for arbitrary sequences $(\QQ_\k)_{\k\in\N_0}$ in $\Q$ with  $\QQ_{\k+1}=\refine(\QQ_\k,\MM_\k)$ for some $\MM_\k\subseteq \QQ_\k$ and all $\k\in\N_0$, it holds that
\begin{align*}%\label{eq:hierarchical lincomp}
\# \QQ_\k-\#\QQ_0\le C \sum_{j=0}^{\k-1}\#\MM_j \quad \text{for all }k\in\N_0.
\end{align*}
The constant $C$ depends only on the dimension $\dpa$, the degrees $p_i$, and the initial mesh $\hat{\cal Q}_0$.
\end{proposition}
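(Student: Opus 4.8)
The plan is to follow the classical mesh-closure argument of Binev--Dahmen--DeVore, adapted to dyadic hierarchical meshes (as carried out in \cite{bgmp16,morgenstern17}). First I would reduce the assertion to a counting statement about refinement operations. Since $\refine$ only replaces an element by its $2^\dpa$ dyadic children, each single refinement increases the cardinality by exactly $2^\dpa-1$; hence $\#\QQ_\k-\#\QQ_0=(2^\dpa-1)\,R_\k$, where $R_\k$ denotes the total number of elements refined, counted with multiplicity over all outer steps $j<\k$ and all passes of the inner \textbf{repeat}-loop. It therefore suffices to prove $R_\k\le C\sum_{j=0}^{\k-1}\#\MM_j$. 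Every refined element is either an originally marked element of some $\MM_j$, or an element added to the marked set during the closure loop, i.e.\ one lying in a neighborhood $\NN_\HH(\hat Q,\mu)$ (resp.\ $\NN_\TT(\hat Q,\mu)$) of an already marked element $\hat Q$.

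Next I would record the structural behavior of the closure loop. By definition of the neighborhoods, whenever $\hat Q'$ is forced because $\hat Q'\cap\sext{\hat Q,\levelT{\hat Q}-\mu+1}\neq\emptyset$ for a marked $\hat Q$, it has level $\levelT{\hat Q'}=\levelT{\hat Q}-\mu+1$ and meets the support extension of $\hat Q$, hence lies within a uniformly bounded number of coarse elements of $\hat Q$. Iterating, any element forced within a single call to $\refine$ is linked to an originally marked element of $\MM_j$ through a chain along which the level strictly decreases by $\mu-1\ge1$ at each link, so that the physical distance to the triggering marked element is controlled by a geometric sum dominated by the coarsest mesh-size and is thus $\lesssim 2^{-\levelT{\hat Q'}}$. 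This localization rests on Proposition~\ref{prop:lqiHB-adjacent-support} and on the admissibility preserved by Proposition~\ref{prn:htadmissible}. I would then construct a \emph{global} labeling $\lambda$ assigning to every refined element, across all steps, a single marked element, equipping elements with a distance combining the level difference and the scaled spatial distance.

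The heart of the proof is the summability estimate: for each marked element $m$ one shows $\sum_{\hat Q\in\lambda^{-1}(m)}2^{-c\,(\levelT{\hat Q}-\levelT{m})}\le C$, where the exponential weight reflects that the elements charged to $m$ at level $\levelT{m}-r(\mu-1)$ are confined to a neighborhood of $m$ whose size, measured in elements of that level, stays bounded, while the number of levels contributing to each distance shell is geometrically controlled. Summing the geometric series gives $\#\lambda^{-1}(m)\le C$ with $C$ depending only on $\dpa$, the degrees $p_i$, and $\hat\QQ_0$, whence $R_\k\le C\sum_{j}\#\MM_j$.

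I expect the main obstacle to be precisely this global, across-step charging. A single marked element can, over many steps, trigger forced refinements at many different levels and positions, and a naive per-step closure bound degrades linearly in the number of levels $N$. The genuine difficulty --- and where the Binev--Dahmen--DeVore idea is indispensable --- is to design the distance function and the labeling $\lambda$ so that the decay is truly geometric and independent of $N$; this forces one to charge each refinement to a well-chosen marked element rather than to the one that happened to trigger it in its own step.
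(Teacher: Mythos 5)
Your overall strategy (a Binev--Dahmen--DeVore-type charging argument) is the right family of proofs --- indeed it is the approach of \cite{bgmp16,morgenstern17}, which the paper itself only cites rather than reproves --- and your reduction to counting refined elements, as well as your description of the closure chains (forced elements have level exactly $\levelT{\hat Q}-\mu+1$ and lie within distance $\lesssim 2^{-\levelT{\hat Q'}}$ of the trigger), are correct. However, the heart of your argument, the single-valued labeling $\lambda$ together with the claimed summability estimate, contains a genuine gap, and the estimate is false as stated. Elements charged to a marked element $m$ through closure chains are \emph{coarser} than $m$: their levels are $\levelT{m}-r(\mu-1)$ with $r\ge 1$, so the weight you propose, $2^{-c(\levelT{\hat Q}-\levelT{m})}=2^{+cr(\mu-1)}$, is $\ge 1$ and grows geometrically in $r$. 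Since, as you yourself observe, the number of charged elements per level is only \emph{bounded} (not decaying), the weighted sum $\sum_{\hat Q\in\lambda^{-1}(m)}2^{-c(\levelT{\hat Q}-\levelT{m})}$ diverges whenever the chains are long; it cannot admit a uniform bound $C$. If instead you flip the sign of the exponent so that the weights decay, the sum does converge, but then its boundedness no longer implies the cardinality bound $\#\lambda^{-1}(m)\le C$ that you need. Either way, the step ``summing the geometric series gives $\#\lambda^{-1}(m)\le C$'' does not go through.

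The deeper obstruction is that \emph{no} labeling which charges each forced element to a marked element of the step that triggered it can have uniformly bounded fibers: in a minimally graded admissible mesh (concentric rings of levels $L,\,L-(\mu-1),\,L-2(\mu-1),\dots$, which is reachable by some marking history and hence lies in $\Q$), marking one single element of the finest level cascades, within a single call of \texttt{refine}, through essentially every level and forces $\Theta(L)$ refinements from that one marked element; this is fully consistent with Propositions~\ref{prop:lqiHB-adjacent-support} and~\ref{prn:htadmissible}. You acknowledge this in your final paragraph, but the proposal then merely asserts that a well-chosen across-step assignment exists, without supplying the mechanism. The actual resolution in \cite{bdd04,stevenson07}, adapted to hierarchical meshes in \cite[Theorem~13]{bgmp16} and \cite[Theorem~3.1.12]{morgenstern17}, is a \emph{two-sided fractional} charging $\lambda(\hat Q,m)\ge 0$ satisfying: (a) every refined element collects total charge $\ge 1$ from marked elements, possibly of many \emph{earlier} steps --- a long cascade at step $j$ is only possible because its intermediate rings were created earlier by marked elements at intermediate levels nearby, and this must be exploited recursively; and (b) every marked element emits total charge $\le C$. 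Then $\#\{\text{refined}\}\le\sum_{\hat Q}\sum_m\lambda(\hat Q,m)\le C\sum_j\#\MM_j$. Your single-valued labeling collapses these two dual properties into one, and in doing so loses exactly the mechanism that makes the argument independent of the number of levels; as sketched, the proof does not close.
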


\subsubsection{Hierarchical quasi-interpolation}\label{sec:hierarchical interpolation}

The THB-spline property of \emph{preservation of coefficients} \cite{giannelli2014} enables the definition of hierarchical quasi-in\-ter\-po\-lation operators that do not require additional computations with respect to the tensor-product case \cite{sm16}. 

For each level $\ell=0,\ldots,N-1$, we consider the quasi-interpolant into the B-spline space of level $\ell$
\begin{equation} \label{eq:qi-local-projection}
\hat J_{{\bf p},{\bf T}^\ell}:L^2(\widehat\Omega)\to \widehat{\mathbb{S}}^{\ell}_{\bf p}({\bf \kv}^{\ell}),\quad \hat v\mapsto \sum_{{\bf i}\in{\cal I}^\ell} \hat\lambda_{{\bf i},{\bf p}}^\ell(\hat v) \hat{B}_{{\bf i},{\bf p}}^\ell,
\end{equation}
with ${\cal I}^\ell := \{ {\bf i}: \hat{B}_{{\bf i},{\bf p}}^\ell \in \spbasis^\ell \}$, and
each functional $\hat\lambda_{{\bf i},{\bf p}}^\ell(\hat v) $ is defined via a local projection onto one element that belongs to the support of the corresponding B-spline 
as described in Section~\ref{sec:qi-2d}, see also \cite{bggs16}. 

By construction of (T)HB-splines, it is easy to see that for each (T)HB-spline of level $\ell$ there exists within its support an element in $\hat\hmesh$ of the same level, which is contained in $\hat\Omega^{\ell} \setminus \hat\Omega^{\ell+1}$ (i.e., it is in $\hat \hmesh \cap \hat \hmesh^\ell$). Its size is obviously  equivalent to the size of the support, in the sense that their ratio is uniformly bounded.
% By considering admissible hierarchical meshes, an element with size equivalent to the size of any involved B-spline $\hat{B}_{{\bf i},{\bf p}}^\ell$ may be chosen between the active elements of level $\ell$ that belongs
% to the support of $\hat{B}_{{\bf i},{\bf p}}^\ell$. Since any THB-splines in $\hat{\cal T}(\hat\hmesh)$ is generated from an active B-spline in $\hat{\cal H}(\hat\hmesh)$ and $\hat{T}_{{\bf i},{\bf p}}^\ell\vert_{\hat{\Omega}^\ell\setminus\hat{\Omega}^{\ell+1}} = \hat{B}_{{\bf i},{\bf p}}^\ell\vert_{\hat{\Omega}^\ell\setminus\hat{\Omega}^{\ell+1}} $, at least one active element of level $\ell$ belongs to their support.  
With this choice of the element, the hierarchical quasi-interpolant $\projH{\mathbf{p}}$ can then be defined as 
\[
\projH{\mathbf{p}}:L^2(\widehat\Omega)\to \widehat{\mathbb{S}}_{\bf p}^{\rm H}(\hat\hmesh,{\bf \kv}^{0}),\,\, \hat v\mapsto \sum_{\ell=0}^{N-1} \sum_{{\bf i}\in{\cal I}_{\hat{\cal Q}}^\ell}^{}
\hat\lambda_{{\bf i},{\bf p}}^\ell(\hat v) \hat{T}_{{\bf i},{\bf p}}^\ell,
\]
where ${\cal I}_{\hat{\cal Q}}^\ell$ is the set of indices corresponding to active basis functions of level $\ell$, namely
\begin{equation}\label{eq:qiindices}
{\cal I}_{\hat{\cal Q}}^\ell := \left\{ {\bf i}: 
%\hat{B}_{{\bf i},{\bf p}}^\ell\in H_0^1(\hat \Omega), 
\hat{B}_{{\bf i},{\bf p}}^\ell\in \hat {\cal B}^\ell \cap\hat{\cal H}_{\bf p}(\hat\hmesh,{\bf T}^0)\right\},
\end{equation}
with $\hat{B}_{{\bf i},{\bf p}}^\ell= \mot\hat{T}_{{\bf i},{\bf p}}^\ell$. 
According to \cite[Theorem~4]{sm16}, 
the quasi-interpolant is in fact a projector as stated in the following proposition. 

\begin{proposition}\label{prop:hb projection}
For an arbitrary hierarchical (not necessarily admissible) mesh $\hat\hmesh$, it holds that
\begin{equation*} %\label{eq:projH-is-a-projector}
\projH{\mathbf{p}} \hat S = \hat S \quad \text{ for all } \hat S \in \widehat{\mathbb{S}}_{\bf p}^{\rm H}(\hat\hmesh,{\bf \kv}^{0}).
\end{equation*}
\end{proposition}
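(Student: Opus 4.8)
The plan is to use that the truncated hierarchical basis $\hat\thbbasis_{\bf p}(\hat\hmesh,{\bf \kv}^{0})$ is a basis of $\widehat{\mathbb{S}}_{\bf p}^{\rm H}(\hat\hmesh,{\bf \kv}^{0})$ (Proposition~\ref{prop:thb properties}), so any $\hat S$ in the space has a unique expansion $\hat S=\sum_{\ell=0}^{N-1}\sum_{{\bf i}\in{\cal I}_{\hat{\cal Q}}^\ell} c_{\bf i}^\ell\,\hat T_{{\bf i},{\bf p}}^\ell$. Since $\projH{\mathbf{p}}\hat S=\sum_{\ell}\sum_{{\bf i}}\hat\lambda_{{\bf i},{\bf p}}^\ell(\hat S)\,\hat T_{{\bf i},{\bf p}}^\ell$ lies in the same space and the THB-splines are linearly independent, it suffices to prove $\hat\lambda_{{\bf i},{\bf p}}^\ell(\hat S)=c_{\bf i}^\ell$ for every active index, which by linearity reduces to showing the biorthogonality $\hat\lambda_{{\bf i},{\bf p}}^\ell(\hat T_{{\bf j},{\bf p}}^{\ell'})=\delta_{\ell\ell'}\delta_{{\bf i}{\bf j}}$ over all active pairs.

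The first step I would record is the localization of the dual functional. By the construction in Section~\ref{sec:hierarchical interpolation}, $\hat\lambda_{{\bf i},{\bf p}}^\ell$ is the B\'ezier-type functional attached to a fixed element $\hat Q\in\hat\hmesh\cap\hat\hmesh^\ell$ with $\hat Q\subseteq\supp(\hat B_{{\bf i},{\bf p}}^\ell)$; being active of level $\ell$ it satisfies $\hat Q\subseteq\hat\Omega^\ell$ while $\hat Q\cap\hat\Omega^{\ell+1}$ is a null set, so effectively $\hat Q\subseteq\hat\Omega^\ell\setminus\hat\Omega^{\ell+1}$. The two facts I need are that $\hat\lambda_{{\bf i},{\bf p}}^\ell(\hat v)$ depends only on $\hat v|_{\hat Q}$, and that, by local linear independence of the level-$\ell$ B-splines on the open element $\hat Q$, it reproduces the coefficient of $\hat B_{{\bf i},{\bf p}}^\ell$ in any level-$\ell$ spline restricted to $\hat Q$ (the dual-basis property underlying Section~\ref{sec:qi-2d}).

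Next I would evaluate $\hat\lambda_{{\bf i},{\bf p}}^\ell(\hat T_{{\bf j},{\bf p}}^{\ell'})$ by cases on $\hat Q$. For $\ell'>\ell$ the mother $\mot\hat T_{{\bf j},{\bf p}}^{\ell'}=\hat B_{{\bf j},{\bf p}}^{\ell'}$ has support in $\hat\Omega^{\ell'}\subseteq\hat\Omega^{\ell+1}$, and truncation only shrinks supports, so $\hat T_{{\bf j},{\bf p}}^{\ell'}$ vanishes on $\hat Q$ and the functional returns $0$. For $\ell'=\ell$ all truncations defining $\hat T_{{\bf j},{\bf p}}^{\ell}=\Trunc^{\ell+1}(\hat B_{{\bf j},{\bf p}}^\ell)$ act at levels $m\ge\ell+1$ and only discard level-$m$ B-splines with support in $\hat\Omega^m\subseteq\hat\Omega^{\ell+1}$, which vanish on $\hat Q$; hence $\hat T_{{\bf j},{\bf p}}^\ell|_{\hat Q}=\hat B_{{\bf j},{\bf p}}^\ell|_{\hat Q}$ and the functional returns $\delta_{{\bf i}{\bf j}}$.

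The main obstacle is the case $\ell'<\ell$, where I must show $\hat\lambda_{{\bf i},{\bf p}}^\ell(\hat T_{{\bf j},{\bf p}}^{\ell'})=0$; this is precisely the preservation-of-coefficients effect of truncation and is the reason the statement holds for THB- but not HB-splines. In forming $\Trunc^{\ell'+1}(\hat B_{{\bf j},{\bf p}}^{\ell'})$ the truncation step $\trunc^{\ell}$ discards exactly those level-$\ell$ B-splines whose support lies in $\hat\Omega^{\ell}$, and the active function $\hat B_{{\bf i},{\bf p}}^\ell$, having $\supp(\hat B_{{\bf i},{\bf p}}^\ell)\subseteq\hat\Omega^\ell$, is among them; the higher truncations $m>\ell$ leave the restriction to $\hat Q$ unchanged since they only remove functions vanishing there. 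Consequently, on $\hat Q$ the spline $\hat T_{{\bf j},{\bf p}}^{\ell'}$ agrees with a level-$\ell$ spline whose $\hat B_{{\bf i},{\bf p}}^\ell$-coefficient has been set to zero, and the reproduction property of the functional yields $0$. Combining the three cases gives $\hat\lambda_{{\bf i},{\bf p}}^\ell(\hat S)=c_{\bf i}^\ell$, hence $\projH{\mathbf{p}}\hat S=\hat S$. The delicate points to check are that the chosen $\hat Q$ really lies in $\hat\Omega^\ell\setminus\hat\Omega^{\ell+1}$ and that ``support contained in $\hat\Omega^{\ell+1}$'' forces vanishing on the open element $\hat Q$ up to a null set, both of which rely on the assumption that each $\hat\Omega^{\ell+1}$ is a union of closures of level-$\ell$ elements.
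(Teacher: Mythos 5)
Your proof is correct. The paper does not actually prove this proposition itself---it cites \cite[Theorem~4]{sm16}---and your argument, reducing the claim via linear independence of THB-splines to the biorthogonality $\hat\lambda_{{\bf i},{\bf p}}^{\ell}(\hat T_{{\bf j},{\bf p}}^{\ell'})=\delta_{\ell\ell'}\delta_{{\bf i}{\bf j}}$ and verifying it by the three level cases (support inclusion in $\hat\Omega^{\ell+1}$ for $\ell'>\ell$, invariance of the restriction to the chosen active element for $\ell'=\ell$, and the coefficient killed by $\trunc^{\ell}$ for $\ell'<\ell$), is precisely the preservation-of-coefficients mechanism underlying the cited result, so it is essentially the same approach, here made self-contained.
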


As a simple corollary and from the definition of the dual functionals, the quasi-interpolant is also a local projector. Let us define for $\hat{Q}\in\hat\hmesh^\ell$ a modified support extension, given by
%% To this end, we  define for $\hat{Q}\in\hat\hmesh^\ell$ a modified support extension   
%% \begin{align}\label{eq:modified sext}
%% & \sextTHB{\hat{Q},{\ell}-\mu+1} := \\
%% &   \quad\bigcup \left\{
%% \overline{\hat Q'}: \hat Q'\in \hat{\QQ},  \levelT{\hat Q'} \in\{ {\ell}-\mu+1 ,\ldots, {\ell} +1\}, 
%% \right. \notag\\
%% &\quad\left.
%% \exists\, \hat{B}_{{\bf i},{\bf p}}^\ell \in\hat{\cal H}_{\bf p}(\hat\hmesh,{\bf T}^0):\ 
%% \extsuppTHB \cap \hat Q\ne\emptyset \ne
%% \extsuppTHB \cap \hat Q' \right\}, \notag
%%   \end{align} 
%% We could use instead:
%% \begin{align}\label{eq:modified sext}
%% & \sextTHB{\hat{Q}} := \\
%% &   \quad\bigcup \left\{
%% \extsuppTHB : \hat{B}_{{\bf i},{\bf p}}^\ell \in\hat{\cal H}_{\bf p}(\hat\hmesh,{\bf T}^0):\ 
%% \extsuppTHB \cap \hat Q\ne\emptyset  \right\}, \notag
%%   \end{align}
%%  or
\begin{align*}%\label{eq:modified sext}
& \sextTHB{\hat{Q}} := \\
&   \quad\bigcup \left\{
\extsuppTHB : \hat{T}_{{\bf i},{\bf p}}^\ell \in\hat{\cal T}_{\bf p}(\hat\hmesh,{\bf T}^0):\ 
\extsuppTHB \cap \hat Q\ne\emptyset  \right\}, \notag
  \end{align*}
where
\[
\extsuppTHB := \supp({\trunc}^{\ell+1} \hat{B}_{{\bf i},{\bf p}}^\ell )
\]
identifies the extended support of the THB-spline $\hat{T}_{{\bf i},{\bf p}}^\ell$, i.e., the support when only the first level of truncation has been applied.
%Note that if $\hat{Q}$ is an element of a ${\cal T}$-admissible mesh, the coarsest level of THB-splines whose extended (or modified) support intersect $\hat{Q}$ is $\max(0,\ell-\mu+1)$. 
The THB-splines on a $\TT$-admissible mesh considered in the definition of $\sextTHB{\hat{Q}}$ vary from level $\max(0,\ell-\mu+1)$ to $\ell$, see Remark~\ref{rem:strictly admissible}.
%where $\hat{B}_{{\bf i},{\bf p}}^\ell= \mot\hat{T}_{{\bf i},{\bf p}}^\ell$ and $(\cdot)^\circ$ denotes the interior of a set.
%Note that it always holds that $X(\elemp) \subseteq \sextTHB{\hat Q, \ell-\mu -1}$.

\begin{corollary}\label{cor:local projection}
For any $\elemp \in \hat\hmesh$ it holds that 
\[
(\projH{\mathbf{p}} \hat S)|_{\elemp} = \hat S|_{\elemp}\quad \text {if } \hat S|_{\sextTHB{\hat{Q}}} \in \widehat{\mathbb{S}}_{\bf p}^{\rm H}(\hat\hmesh,{\bf \kv}^{0})|_{\sextTHB{\hat{Q}}}.
\]
\end{corollary}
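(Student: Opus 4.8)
The plan is to combine the locality of the coefficient functionals defining $\projH{\mathbf{p}}$ with the global projection property of Proposition~\ref{prop:hb projection}. Fix $\elemp\in\hat\hmesh$. The starting point is that each functional $\hat\lambda_{{\bf i},{\bf p}}^\ell$ appearing in $\projH{\mathbf{p}}$ is a single-element local $L^2$-projection coefficient (Section~\ref{sec:qi-2d}), so $\hat\lambda_{{\bf i},{\bf p}}^\ell(\hat v)$ depends only on $\hat v$ restricted to the element $\hat Q^\ast_{{\bf i},\ell}\in\hat\hmesh\cap\hat{\hmesh}^\ell$ that is chosen inside $\supp(\hat B_{{\bf i},{\bf p}}^\ell)$ and inside $\hat\Omega^\ell\setminus\hat\Omega^{\ell+1}$. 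I want to upgrade this pointwise locality into the statement that $(\projH{\mathbf{p}}\hat S)|_{\elemp}$ depends on $\hat S$ only through $\hat S|_{\sextTHB{\elemp}}$.

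First I would establish two inclusions. The inclusion $\elemp\subseteq\sextTHB{\elemp}$ follows from the partition of unity (Proposition~\ref{prop:thb properties}(i)): at every point $x\in\elemp$ some $\hat T_{{\bf i},{\bf p}}^\ell$ is nonzero, and since successive truncation only shrinks supports one has $\supp(\hat T_{{\bf i},{\bf p}}^\ell)\subseteq\extsuppTHB$, so $x\in\extsuppTHB$ with $\extsuppTHB\cap\elemp\neq\emptyset$, whence $x\in\extsuppTHB\subseteq\sextTHB{\elemp}$ by the definition of $\sextTHB{\elemp}$. Next, for the terms of $\projH{\mathbf{p}}\hat S$ that do not vanish on $\elemp$, i.e. those with $\hat T_{{\bf i},{\bf p}}^\ell|_{\elemp}\neq0$, the same support inclusion gives $\extsuppTHB\cap\elemp\neq\emptyset$, hence $\extsuppTHB\subseteq\sextTHB{\elemp}$. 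The crucial point is then that the driving element $\hat Q^\ast_{{\bf i},\ell}$ itself lies in $\extsuppTHB$: on $\hat\Omega\setminus\hat\Omega^{\ell+1}$ the operator $\trunc^{\ell+1}$ deletes only contributions from $\hat\hbbasis_B^{\ell+1}$, which are supported in $\hat\Omega^{\ell+1}$, so $\trunc^{\ell+1}\hat B_{{\bf i},{\bf p}}^\ell=\hat B_{{\bf i},{\bf p}}^\ell$ there; since $\hat Q^\ast_{{\bf i},\ell}\subseteq\hat\Omega^\ell\setminus\hat\Omega^{\ell+1}$ and $\hat B_{{\bf i},{\bf p}}^\ell$ is positive in its interior, I get $\hat Q^\ast_{{\bf i},\ell}\subseteq\supp(\trunc^{\ell+1}\hat B_{{\bf i},{\bf p}}^\ell)=\extsuppTHB\subseteq\sextTHB{\elemp}$. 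Thus every surviving functional reads $\hat S$ only on $\sextTHB{\elemp}$.

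With these facts the conclusion is a short chain. By hypothesis there is $\hat W\in\widehat{\mathbb{S}}_{\bf p}^{\rm H}(\hat\hmesh,{\bf \kv}^{0})$ with $\hat W|_{\sextTHB{\elemp}}=\hat S|_{\sextTHB{\elemp}}$. The locality of the previous paragraph gives $(\projH{\mathbf{p}}\hat S)|_{\elemp}=(\projH{\mathbf{p}}\hat W)|_{\elemp}$; Proposition~\ref{prop:hb projection} gives $\projH{\mathbf{p}}\hat W=\hat W$; and $\elemp\subseteq\sextTHB{\elemp}$ gives $\hat W|_{\elemp}=\hat S|_{\elemp}$. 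Concatenating these identities yields $(\projH{\mathbf{p}}\hat S)|_{\elemp}=\hat S|_{\elemp}$, as claimed.

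I expect the only genuine obstacle to be the bookkeeping in the middle step, namely verifying that the single element $\hat Q^\ast_{{\bf i},\ell}$ feeding each functional lies inside the \emph{reduced} set $\extsuppTHB$ rather than merely inside the (possibly larger) support of the mother B-spline $\hat B_{{\bf i},{\bf p}}^\ell$. This is precisely where the defining choice of $\hat Q^\ast_{{\bf i},\ell}$ in $\hat\Omega^\ell\setminus\hat\Omega^{\ell+1}$ and the fact that $\trunc^{\ell+1}$ removes only functions supported in $\hat\Omega^{\ell+1}$ enter; everything else is formal.
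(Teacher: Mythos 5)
Your proof is correct and is essentially the argument the paper has in mind: the corollary is stated there as an immediate consequence of the projection property of Proposition~\ref{prop:hb projection} combined with the single-element locality of the dual functionals, which is exactly the chain you carry out. In particular, your "crucial point" — that the element driving each surviving functional lies in $\extsuppTHB \subseteq \sextTHB{\elemp}$ because $\trunc^{\ell+1}$ only removes contributions supported in $\hat\Omega^{\ell+1}$ — is precisely why the paper defines $\sextTHB{\elemp}$ through the extended supports rather than the (smaller) THB-spline supports.
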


\begin{remark}
Actually, the locality result is also valid for a set of elements smaller than $\sextTHB{\hat{Q}}$, where the extended support $\hat{\omega}_{{\bf i},{\bf p}}^\ell$ is replaced by  $\supp({\hat{T}_{{\bf i},{\bf p}}^\ell})$. 
%The reason to use the modified support extension is that it will play an important role on the forthcoming theoretical analysis.
\end{remark}

The next property can be found in \cite[Theorem~4]{buffa2019}, and it implies that the number of active elements contained in $\sextTHB{\hat{Q}}$ is uniformly bounded.
\begin{proposition} \label{prop:Sext-star}
Let $\hat \QQ$ be a ${\cal T}$-admissible mesh of class $\mu$, and let $\hat Q \in \hat \QQ \cap \hat \QQ^\ell$. Then, the set $\sextTHB{\hat{Q}}$ is connected. Moreover, for any $\hat Q' \in \hat \QQ$ with $\hat Q' \subseteq \sextTHB{\hat{Q}}$ it holds that 
\[
|\hat Q'| \simeq | \sextTHB{\hat{Q}}|,
\]
where the hidden constants depend on the degrees $p_i$, the admissibility class $\mu$, the dimension $\dpa$, and the initial mesh $\hat\QQ_0$.
%COMMENT: What about $\dpa$ and $\hat\QQ_0$? Doesn't it also depend on them?
\end{proposition}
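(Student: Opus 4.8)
The plan is to treat the statement as two claims and to base both on one structural fact recorded in Remark~\ref{rem:strictly admissible}: on a $\mathcal{T}$-admissible mesh of class $\mu$, every active THB-spline $\hat T_{{\bf i},{\bf p}}^{\ell'}$ whose extended support $\extsuppTHB$ enters the definition of $\sextTHB{\hat Q}$ has level $\ell'$ with $\max(0,\ell-\mu+1)\le\ell'\le\ell$, where $\ell:=\levelT{\hat Q}$. First I would note $\hat Q\subseteq\sextTHB{\hat Q}$: since the THB-splines form a partition of unity (Proposition~\ref{prop:thb properties}(i)), at least one active $\hat T_{{\bf i},{\bf p}}^{\ell'}$ is nonzero on the open element $\hat Q$, and a single truncation only shrinks supports, so $\hat Q\subseteq\supp(\hat T_{{\bf i},{\bf p}}^{\ell'})\subseteq\extsuppTHB$. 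For connectivity I would write $\sextTHB{\hat Q}=\hat Q\cup\bigcup_{\hat T}\extsuppTHB$, where the union is over the qualifying THB-splines. Each $\extsuppTHB=\supp(\trunc^{\ell'+1}\hat B_{{\bf i},{\bf p}}^{\ell'})$ meets $\hat Q$ by definition and is connected, being the support of a single-truncated B-spline covered by level-$(\ell'+1)$ B-spline boxes inside the connected box $\supp(\hat B_{{\bf i},{\bf p}}^{\ell'})$; since $\hat Q$ is connected and meets every piece, the union is connected.

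For the volume equivalence, the bound $|\hat Q'|\le|\sextTHB{\hat Q}|$ is trivial from $\hat Q'\subseteq\sextTHB{\hat Q}$, and the upper bound $|\sextTHB{\hat Q}|\lesssim|\hat Q|$ comes from the level window: each piece lies in the box $\supp(\hat B_{{\bf i},{\bf p}}^{\ell'})$ of side $\lesssim (p+1)2^{-\ell'}\le (p+1)2^{\mu-1}2^{-\ell}$, and all pieces meet $\hat Q$ (side $\simeq 2^{-\ell}$), whence $\sextTHB{\hat Q}$ fits into a box of side $\lesssim 2^{\mu-1}(p+1)2^{-\ell}$ and $|\sextTHB{\hat Q}|\lesssim 2^{\hat d(\mu-1)}(p+1)^{\hat d}\,|\hat Q|$. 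The entire content is therefore the lower bound $|\hat Q'|\gtrsim|\sextTHB{\hat Q}|$, equivalently $\levelT{\hat Q'}\le\ell+\mu-1$: active elements inside $\sextTHB{\hat Q}$ cannot be arbitrarily fine.

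The key step I would carry out is the following. Fix an active $\hat Q'\subseteq\sextTHB{\hat Q}$ and assume $\ell'':=\levelT{\hat Q'}>\ell\ge\ell'$ (otherwise $|\hat Q'|\ge c\,2^{-\hat d\ell}\gtrsim|\sextTHB{\hat Q}|$ already). Then $\hat Q'$ lies in one of the level-$(\ell'+1)$ B-spline supports $B_{\bf j}:=\supp(\hat B_{{\bf j},{\bf p}}^{\ell'+1})$ that cover $\extsuppTHB$ and survive the truncation, i.e.\ $B_{\bf j}\not\subseteq\hat\Omega^{\ell'+1}$, while $B_{\bf j}\subseteq\supp(\hat B_{{\bf i},{\bf p}}^{\ell'})\subseteq\hat\Omega^{\ell'}$ because $\hat B_{{\bf i},{\bf p}}^{\ell'}$ is an active HB-spline. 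Hence $B_{\bf j}$ meets an active cell $\hat Q_c\subseteq\hat\Omega^{\ell'}\setminus\hat\Omega^{\ell'+1}$ of level exactly $\ell'$. Now $\hat B_{{\bf j},{\bf p}}^{\ell'+1}\in\hat{\cal B}^{\ell'+1}$ is nonzero on both $\hat Q_c$ and $\hat Q'$, and $\min\{\levelT{\hat Q_c},\levelT{\hat Q'}\}=\ell'$, so Proposition~\ref{prop:lqiHB-adjacent-support} in its $\mathcal{T}$-admissible form—which requires precisely a shared basis function of level $\ell'+1$—gives $\ell''-\ell'<\mu$. Thus $\levelT{\hat Q'}<\ell'+\mu\le\ell+\mu$, so $|\hat Q'|\gtrsim 2^{-\hat d(\ell+\mu-1)}\gtrsim 2^{-\hat d(\mu-1)}|\hat Q|\gtrsim|\sextTHB{\hat Q}|$, with constants depending only on $p_i$, $\mu$, $\hat d$, and $\hat\QQ_0$.

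The main obstacle is exactly this reduction. A naive attempt that bounds $\levelT{\hat Q'}$ through the coarse box $\supp(\hat B_{{\bf i},{\bf p}}^{\ell'})$ fails, because admissibility only controls elements sharing a basis function, whereas $\hat Q$ and $\hat Q'$ may be several coarse cells apart and a purely metric margin estimate saturates without bounding $\ell''$. The decisive point—and the very reason the modified, single-truncation support extension is used instead of the plain one—is that $\extsuppTHB$ is covered by level-$(\ell'+1)$ B-splines that stick out of $\hat\Omega^{\ell'+1}$; each such box simultaneously contains the fine element $\hat Q'$ and an active coarse cell of level $\ell'$, which is exactly the configuration governed by Proposition~\ref{prop:lqiHB-adjacent-support}. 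This is the mechanism behind \cite[Theorem~4]{buffa2019}, and the connectivity of $\sextTHB{\hat Q}$ falls out of the same covering.
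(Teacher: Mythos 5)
Your argument cannot be checked against an in-paper proof, because the paper does not prove this proposition at all: it is imported wholesale from \cite[Theorem~4]{buffa2019}. Judged on its own merits, your proof of the volume equivalence --- which you correctly identify as the entire content --- is sound, and the mechanism is the right one. Writing the extended support of a qualifying level-$\ell'$ THB-spline as the union of the supports $B_{\bf j}=\supp(\hat B_{{\bf j},{\bf p}}^{\ell'+1})$ of the children that survive truncation (legitimate, since the two-scale coefficients and the B-splines are non-negative, so no cancellation can shrink the support), every such box satisfies $B_{\bf j}\not\subseteq\hat\Omega^{\ell'+1}$ while $B_{\bf j}\subseteq\supp(\hat B_{{\bf i},{\bf p}}^{\ell'})\subseteq\hat\Omega^{\ell'}$, hence meets an active element of level exactly $\ell'$; if it also meets $\hat Q'$, Proposition~\ref{prop:lqiHB-adjacent-support} in its $\TT$-admissible form, with $\hat B_{{\bf j},{\bf p}}^{\ell'+1}\in\hat{\mathcal{B}}^{\ell'+1}$ as the shared function, gives $\levelT{\hat Q'}-\ell'<\mu$. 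Combined with the level window $\ell'\ge\max(0,\ell-\mu+1)$ recorded in the paper after the definition of $\sextTHB{\hat{Q}}$, this yields both $|\sextTHB{\hat{Q}}|\lesssim|\hat Q|$ and $|\hat Q'|\gtrsim|\sextTHB{\hat{Q}}|$. Two overstatements should be weakened: $\hat Q'$ need not \emph{lie in} a single $B_{\bf j}$, and $\hat B_{{\bf j},{\bf p}}^{\ell'+1}$ need not be claimed \emph{nonzero} on both elements; nonempty intersection of $\supp(\hat B_{{\bf j},{\bf p}}^{\ell'+1})$ with each open element is what you actually have, and it is exactly the hypothesis of Proposition~\ref{prop:lqiHB-adjacent-support}.

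The one genuine gap is your justification that each piece $\supp(\trunc^{\ell'+1}\hat B_{{\bf i},{\bf p}}^{\ell'})$ is connected. ``Covered by level-$(\ell'+1)$ B-spline boxes inside the connected box $\supp(\hat B_{{\bf i},{\bf p}}^{\ell'})$'' proves nothing: truncation deletes an arbitrary subfamily of children (those with support inside $\hat\Omega^{\ell'+1}$), and a union of some sub-boxes of a connected box can perfectly well be disconnected. The claim is nevertheless true, for a structural reason you did not invoke: all children of a single B-spline share a common point. In each coordinate direction, the mother's support consists of at most $2(p_i+1)$ fine knot spans and each child's support consists of exactly $p_i+1$ consecutive ones, so every child's support contains the fine knot lying $p_i+1$ fine spans from the left end of the mother's support (in the uniform dyadic case, the midpoint $(p_i+1)/2$ of the mother's support, measured in coarse cells); the same count works when coarse interior knots are repeated, since then there are even fewer fine spans. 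Hence all retained children boxes contain a common point, and the union of \emph{any} subfamily of them is connected. With this repaired, your remaining steps --- every piece meets the connected set $\hat Q$, and $\hat Q$ is contained in at least one piece by partition of unity together with element-wise polynomiality --- do give connectivity of $\sextTHB{\hat{Q}}$ as claimed.
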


The next result is a stability property analogous to Proposition~\ref{prop:schumaker bezier in nd} in the tensor-product case. The proof can be found in \cite{buffa2019} for a slightly modified operator onto the space $\widehat{\mathbb{S}}_{\bf p}^{\rm H}(\hat\hmesh,{\bf \kv}^{0})\cap H_0^1(\widehat\Omega)$, but works the same in our case.
On ${\cal H}$-admissible meshes of class $\mu=2$, a similar result is also given in~\cite{ghp17}.
\begin{proposition}\label{prop:hqi}
Let $\hat\hmesh$ be either an ${\cal H}$-admissible or ${\cal T}$-admissible mesh of class $\mu$. 
There exists a constant $C$
%,C_2>0 
such that for any element $\hat{Q}\in\hat\hmesh$ it holds that
\begin{align*}
|| \projH{\mathbf{p}}\hat v ||_{L^2(\hat{Q})} &\le C
||\hat v||_{L^2(\sextTHB{\hat{Q}})}, 
\end{align*}
for all $\hat v\in L^2(\hat \Omega)$.  %and
%\begin{align*}
%||(1 -  \projH{\mathbf{p}})\hat v ||_{L^2(\hat{Q})} &\le C_2
%h_{\hat{Q}} ||\hat v||_{H^1(\sextTHB{\hat{Q},{\ell}-\mu+1})},
%\end{align*}
%for all $\hat v\in H^1(\hat \Omega)$, where $h_{\hat Q}=|\hat Q|^{1/\dpa}$.
The constant $C
%,C_2
$ depends only on the dimension $\dpa$, the degrees $p_i$, and the initial mesh $\hat{\cal Q}_0$.
%COMMENT: Shouldn't it also depend on $\mu$?{\color{magenta} CHECK with Carlotta.}
\end{proposition}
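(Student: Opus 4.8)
The plan is to reduce the estimate to the tensor-product stability of Proposition~\ref{prop:schumaker bezier in nd} applied level by level, exploiting that on any single element only boundedly many THB-splines from boundedly many levels are active. Fix $\hat Q\in\hat\hmesh$ and set $\ell_0:=\levelT{\hat Q}$. By Proposition~\ref{eq:former admissible} (applicable since any $\HH$-admissible mesh is also $\TT$-admissible), the THB-splines $\hat T_{{\bf i},{\bf p}}^\ell$ that are nonzero on $\hat Q$ have levels $\ell\in\{\max(0,\ell_0-\mu+1),\dots,\ell_0\}$, and for each such level the number of active indices ${\bf i}$ with $\hat T_{{\bf i},{\bf p}}^\ell|_{\hat Q}\ne0$ is bounded by $\prod_i(p_i+1)$. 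Writing $\projH{\mathbf{p}}\hat v$ on $\hat Q$ and applying the triangle inequality, it therefore suffices to bound each single term $|\hat\lambda_{{\bf i},{\bf p}}^\ell(\hat v)|\,\|\hat T_{{\bf i},{\bf p}}^\ell\|_{L^2(\hat Q)}$ and to sum the boundedly many contributions.

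For the THB-spline factor I would use that, by Proposition~\ref{prop:thb properties}(i), the functions $\hat T_{{\bf i},{\bf p}}^\ell$ are nonnegative and form a partition of unity, so $\|\hat T_{{\bf i},{\bf p}}^\ell\|_{L^2(\hat Q)}\le|\hat Q|^{1/2}$ without any appeal to the support comparison of $\hat T_{{\bf i},{\bf p}}^\ell$ with its mother. For the functional, recall from Section~\ref{sec:hierarchical interpolation} that $\hat\lambda_{{\bf i},{\bf p}}^\ell$ is the coefficient functional of the local $L^2$-projection onto the polynomials of degree ${\bf p}$ on a single active element $\hat Q_{{\bf k}({\bf i})}^\ell\subseteq\supp(\hat T_{{\bf i},{\bf p}}^\ell)$ of level $\ell$. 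A standard scaling argument on the reference element — using that the $L^2$-projection is a contraction and that all norms are equivalent on the finite-dimensional polynomial space — yields $|\hat\lambda_{{\bf i},{\bf p}}^\ell(\hat v)|\le C\,|\hat Q_{{\bf k}({\bf i})}^\ell|^{-1/2}\,\|\hat v\|_{L^2(\hat Q_{{\bf k}({\bf i})}^\ell)}$, which is precisely the local ingredient underlying Proposition~\ref{prop:schumaker bezier in nd}; the constant $C$ depends only on $\dpa$ and the degrees $p_i$.

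It then remains to compare the two elements. Since ${\bf i}$ is active on $\hat Q$, the support of $\hat T_{{\bf i},{\bf p}}^\ell$, and hence the chosen element $\hat Q_{{\bf k}({\bf i})}^\ell$, meets $\hat Q$ and is contained in the extended support $\extsuppTHB$; by the very definition of $\sextTHB{\hat Q}$ this gives $\hat Q_{{\bf k}({\bf i})}^\ell\subseteq\sextTHB{\hat Q}$, while the partition-of-unity property shows that the supports of the active THB-splines cover $\hat Q$, so that $\hat Q\subseteq\sextTHB{\hat Q}$ as well. Both being active elements inside $\sextTHB{\hat Q}$, Proposition~\ref{prop:Sext-star} yields $|\hat Q|\simeq|\hat Q_{{\bf k}({\bf i})}^\ell|\simeq|\sextTHB{\hat Q}|$, whence $|\hat Q|^{1/2}|\hat Q_{{\bf k}({\bf i})}^\ell|^{-1/2}\simeq1$. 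Combining the three bounds and using $\|\hat v\|_{L^2(\hat Q_{{\bf k}({\bf i})}^\ell)}\le\|\hat v\|_{L^2(\sextTHB{\hat Q})}$, each term is controlled by $C\,\|\hat v\|_{L^2(\sextTHB{\hat Q})}$; summing over the boundedly many levels and indices proves the claim.

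The main obstacle is the bookkeeping around $\sextTHB{\hat Q}$: one must verify that every element feeding a nonzero coefficient lies inside $\sextTHB{\hat Q}$ and is comparable in size to $\hat Q$, and this is exactly where the connectedness and size-equivalence of Proposition~\ref{prop:Sext-star} are indispensable — without them the naive support extension could mix elements of very different sizes and the constant would degenerate. I also note that, strictly speaking, the final constant depends on the admissibility class $\mu$ (through the number of contributing levels and through Proposition~\ref{prop:Sext-star}) in addition to $\dpa$, the degrees $p_i$, and $\hat\QQ_0$. This is essentially the argument of \cite{buffa2019}; the only adaptation is that $\projH{\mathbf{p}}$ here maps onto the full space $\widehat{\mathbb{S}}_{\bf p}^{\rm H}(\hat\hmesh,{\bf \kv}^{0})$ rather than its subspace with vanishing boundary values, which affects none of the local estimates above.
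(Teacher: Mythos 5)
Your proof is correct and takes essentially the same route as the argument the paper defers to (\cite{buffa2019}): boundedly many contributing THB-splines of at most $\mu$ successive levels, the partition-of-unity bound $\|\hat T_{{\bf i},{\bf p}}^\ell\|_{L^2(\hat Q)}\le|\hat Q|^{1/2}$, scaled stability of the dual functionals on their chosen active elements, and size comparability inside $\sextTHB{\hat{Q}}$ via Proposition~\ref{prop:Sext-star}. Two minor points: the chosen element $\hat Q_{{\bf k}({\bf i})}^\ell$ need not meet $\hat Q$ (only the containment $\hat Q_{{\bf k}({\bf i})}^\ell\subseteq\extsuppTHB\subseteq\sextTHB{\hat{Q}}$ matters, which you also establish), and your observation that the constant additionally depends on the admissibility class $\mu$ is accurate, since both the number of contributing levels and the hidden constants of Proposition~\ref{prop:Sext-star} depend on $\mu$.
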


%DISCUSS
%\cg{Should we move this result to the subsection ``Verification of Axioms" in Section 6?}
%COMMENT: Partially yes, I would keep at least the most important property stability so that it becomes clearer why the dual functions $\hat\lambda$ have to be chosen appropriately. 
%In particular, I think that an interpolation operator might be practically relevant on its own and should thus be mentioned here.

Finally, with the help of the local projector, we prove a result regarding a local characterization for refined spaces. We note that this result does not require admissibility. We start introducing some notation. Let $\hat \QQ$ be a hierarchical mesh, and $\hat \QQ_\fine$ another hierarchical mesh obtained by refinement, i.e., $\hat\QQ \preceq \hat \QQ_\fine$. 
%Let $\hat{\cal T} := \hat\TT_{\bf p}(\hat\QQ,\mathbf{\kv}^{0})$ and $\hat{\cal T}_\fine := \hat\TT_{\bf p}(\hat\QQ_\fine,\mathbf{\kv}^{0})$ denote their associated bases of THB-splines. 
We recall the definition of the sets of indices ${\cal I}_{\hat{\cal Q}}^\ell$ given by \eqref{eq:qiindices}, and define analogously the sets ${\cal I}_{\hat{\cal Q}_\fine}^\ell$ for levels $\ell = 0, \ldots, N_\fine-1$. This allows us to introduce their splitting in disjoint index sets as follows
\[
{\cal I}_{\hat{\QQ}}^\ell := 
{\cal I}_{\hat{\QQ}\rightarrow {\hat\QQ_\fine}}^{\ell,\rm fix} \cup {\cal I}_{\hat{\QQ}}^{\ell,\rm old},\qquad 
{\cal I}_{\hat{\QQ}_\fine}^\ell := 
{\cal I}_{\hat{\QQ} \rightarrow {\hat\QQ_\fine}}^{\ell,\rm fix} \cup {\cal I}_{\hat{\QQ}_\fine}^{\ell,\rm new},
\]
for $\ell=0,\ldots,N-1$ and $\ell=0,\ldots,N_\fine-1$, respectively, with
\begin{equation}\label{eq:qi-indices1}
{\cal I}_{\hat{\QQ}\rightarrow {\hat\QQ_\fine}}^{\ell,\rm fix} := {\cal I}_{\hat{\QQ}}^\ell \cap {\cal I}_{\hat{\QQ}_\fine}^\ell,
\end{equation}
i.e., indices related to functions which are active in both meshes, although they may differ by truncation, and
\begin{equation}\label{eq:qi-indices2}
{\cal I}_{\hat{\QQ}}^{\ell,\rm old} := {\cal I}_{\hat{\QQ}}^\ell \setminus
{\cal I}_{\hat{\QQ}\rightarrow {\hat\QQ_\fine}}^{\ell,\rm fix},\quad
{\cal I}_{\hat{\QQ}_\fine}^{\ell,\rm new} := {\cal I}_{\hat{\QQ}_\fine}^\ell
\setminus
{\cal I}_{\hat{\QQ}\rightarrow {\hat\QQ_\fine}}^{\ell,\rm fix},
\end{equation} 
indices of basis functions that are respectively removed or added after refinement.
%% We split the fine basis with respect to the coarse one in the form $\hat{\cal T}_\fine = \hat{\cal T}_\fine^{\rm old} \cup \hat{\cal T}_\fine^{\rm new}$, with the two subsets defined by
%% \begin{align}
%% & \hat{\cal T}_\fine^{\rm old} := \{ \hat\tau \in \hat{\cal T}_\fine : \exists \hat\tau' \in \hat{\cal T} \text{ with }\mot(\hat\tau) = \mot(\hat\tau') \}, \notag \\
%% & \hat{\cal T}_\fine^{\rm new} := \hat{\cal T}_\fine \setminus \hat{\cal T}_\fine^{\rm old}. \label{eq:Tfine_new}
%% \end{align}
%% That is, functions in $\hat{\cal T}_\fine^{\rm old}$ share a mother with a function in $\hat{\cal T}$, although they may be different due to truncation, while functions in $\hat{\cal T}_\fine^{\rm new}$ have been newly added to the basis. 
We also introduce the set of elements in the support of the new functions, the domain they cover, and its (closed) complementary, respectively denoted by
\begin{align} 
%& {\cal \hat R_\fine} : = \{\elemp \in\hat\QQ_\fine : \exists \hat\tau \in \hat{\cal T}_\fine^{\rm new}  \text{ with }\elemp \subset \supp(\hat\tau)\} \label{eq:Rfine} \\
& {\cal \hat R_\fine} : = \{\elemp \in\hat\QQ_\fine : \exists \ell \in \{0, \ldots, N_\fine-1\} \, \exists {\bf i} \in {\cal I}_{\hat{\QQ}_\fine}^{\ell,\rm new} \nonumber \\
& \qquad \text{ such that }\elemp \subset \supp(\hat{T}_{\fine,{\bf i},{\bf p}}^\ell)\},  \label{eq:Rfine} \\
& \hat \Omega_{\cal \hat R_\fine} := \bigcup \big\{ \overline \elemp : \elemp \in {\cal \hat R}_\fine\big\}, \quad \hat \Omega_{\cal \hat Q} := \overline{\Omegap \setminus \hat \Omega_{\cal \hat R_\fine}}.\nonumber%\label{eq:omegar-omegaq}
\end{align}
\begin{proposition} \label{prop:hb-refined-projection}
Let $\hat\QQ, \hat \QQ_\fine$ be two hierarchical meshes with $\hat \QQ \preceq \hat \QQ_\fine$. 
Moreover, for every ${\bf i} \in {\cal I}_{\hat{\QQ}\rightarrow {\hat\QQ_\fine}}^{\ell,\rm fix}$, $\ell = 0, \ldots, N-1$, we choose the  element for the coefficients in \eqref{eq:qi-local-projection} to be in $\hat \hmesh \cap \hat \hmesh_\fine \cap \hat \hmesh^\ell$. %contained in $\hat\Omega_{\hat \QQ}$.}
%COMMENT: Is it clear that such an element exists for all ${\bf i} \in {\cal I}_{\hat{\QQ}\rightarrow {\hat\QQ_\fine}}^{\ell,\rm fix}$?
 Then it holds that
\[
(\hat J_{\mathbf{p},\hat{\hmesh}}^{\,\rm H} \hat S)|_{\hat \Omega_{\cal \hat Q}} = \hat S|_{\hat \Omega_{\cal \hat Q}}
\quad \text{for all }
\hat S \in \widehat{\mathbb{S}}_{\bf p}^{\rm H}(\hat\hmesh_\fine,{\bf \kv}^{0}).
\]
\end{proposition}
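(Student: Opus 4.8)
The plan is to compute the coarse quasi-interpolant $\projH{\mathbf{p}}\hat S$ through its explicit coefficient representation and to compare it with $\hat S$ on $\hat\Omega_{\cal \hat Q}$ term by term. A direct appeal to Corollary~\ref{cor:local projection} does not work: for an element $\hat Q\subseteq\hat\Omega_{\cal \hat Q}$ the modified support extension $\sextTHB{\hat Q}$ may reach into the refined region, so $\hat S|_{\sextTHB{\hat Q}}$ need not be a coarse hierarchical spline. This is exactly why the hypothesis fixes the elements for the coefficients of the \emph{fix} functions in $\hat \hmesh \cap \hat \hmesh_\fine \cap \hat \hmesh^\ell$, and the argument will rely on the preservation-of-coefficients mechanism underlying Proposition~\ref{prop:hb projection}.

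First I would expand $\hat S$ in the THB-basis of the fine mesh, $\hat S=\sum_{\ell=0}^{N_\fine-1}\sum_{{\bf i}\in{\cal I}_{\hat{\QQ}_\fine}^\ell}c_{\bf i}^\ell\,\hat{T}_{\fine,{\bf i},{\bf p}}^\ell$, and split the index sets according to \eqref{eq:qi-indices1}--\eqref{eq:qi-indices2} into the \emph{fix} indices ${\cal I}_{\hat{\QQ}\rightarrow {\hat\QQ_\fine}}^{\ell,\rm fix}$ and the \emph{new} indices ${\cal I}_{\hat{\QQ}_\fine}^{\ell,\rm new}$. By the very definition of ${\cal \hat R_\fine}$ and $\hat\Omega_{\cal \hat R_\fine}$ in \eqref{eq:Rfine}, every new THB-spline vanishes on $\hat\Omega_{\cal \hat Q}=\overline{\Omegap\setminus\hat\Omega_{\cal \hat R_\fine}}$, so on $\hat\Omega_{\cal \hat Q}$ the function $\hat S$ reduces to its fix part $\sum_{\ell}\sum_{{\bf i}\in{\cal I}_{\hat{\QQ}\rightarrow {\hat\QQ_\fine}}^{\ell,\rm fix}}c_{\bf i}^\ell\,\hat{T}_{\fine,{\bf i},{\bf p}}^\ell$.

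Next I would evaluate $\projH{\mathbf{p}}\hat S=\sum_{\ell}\sum_{{\bf i}\in{\cal I}_{\hat{\QQ}}^\ell}\hat\lambda_{{\bf i},{\bf p}}^\ell(\hat S)\,\hat{T}_{{\bf i},{\bf p}}^\ell$, splitting ${\cal I}_{\hat{\QQ}}^\ell$ into fix and old indices. The crux is the coefficient identity $\hat\lambda_{{\bf i},{\bf p}}^\ell(\hat S)=c_{\bf i}^\ell$ for a fix index ${\bf i}$. Indeed, the chosen element $\hat Q_{{\bf k}({\bf i})}\in\hat\hmesh\cap\hat\hmesh_\fine\cap\hat\hmesh^\ell$ is active of level $\ell$ in both meshes, hence lies entirely outside $\hat\Omega^{\ell+1}_\fine$; therefore only fine THB-splines of levels $\le\ell$ are nonzero on it, and on this single element $\hat S$ is a polynomial of degree ${\bf p}$. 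Expressing this polynomial in the level-$\ell$ B-spline basis and using that $\hat\lambda_{{\bf i},{\bf p}}^\ell$ is the local dual functional associated with $\hat B_{{\bf i},{\bf p}}^\ell$ on $\hat Q_{{\bf k}({\bf i})}$, the preservation-of-coefficients property shows that the truncated coarser contributions do not affect the coefficient of $\hat B_{{\bf i},{\bf p}}^\ell$, so the functional returns precisely $c_{\bf i}^\ell$. This matching of a locally computed level-$\ell$ coefficient with the global fine THB-coefficient is the step I expect to be the main obstacle.

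It then remains to compare the surviving terms on $\hat\Omega_{\cal \hat Q}$. For a fix index the coarse and fine THB-splines share the mother $\hat B_{{\bf i},{\bf p}}^\ell$ and differ only through the components removed by the two versions of $\Trunc^{\ell+1}$; these differing components correspond to level-$k$ B-splines that are active in $\hat\hbbasis_B^{k}$ for the fine but not the coarse hierarchy, hence are supported in the newly refined region and thus in $\hat\Omega_{\cal \hat R_\fine}$, so $\hat{T}_{{\bf i},{\bf p}}^\ell$ and $\hat{T}_{\fine,{\bf i},{\bf p}}^\ell$ coincide on $\hat\Omega_{\cal \hat Q}$. For an old index the mother is coarse-active but not fine-active, which forces its support into $\hat\Omega^{\ell+1}_\fine\subseteq\hat\Omega_{\cal \hat R_\fine}$, so the corresponding coarse THB-spline vanishes on $\hat\Omega_{\cal \hat Q}$. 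Combining these observations, on $\hat\Omega_{\cal \hat Q}$ the old part of $\projH{\mathbf{p}}\hat S$ drops out while the fix part equals $\sum_{\ell}\sum_{{\bf i}\in{\cal I}_{\hat{\QQ}\rightarrow {\hat\QQ_\fine}}^{\ell,\rm fix}}c_{\bf i}^\ell\,\hat{T}_{\fine,{\bf i},{\bf p}}^\ell$, which is exactly $\hat S|_{\hat\Omega_{\cal \hat Q}}$. The residual technical work, besides the coefficient identity, consists of the two support-localization claims, which I would settle by a downward induction on the level using the nestedness $\hat\Omega^k\subseteq\hat\Omega^k_\fine$ and the defining conditions of $\hat\hbbasis_B^{k}$ in the two meshes.
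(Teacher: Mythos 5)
Your proposal follows the same route as the paper's proof: the splitting of the active indices into fix, old, and new sets via \eqref{eq:qi-indices1}--\eqref{eq:qi-indices2}, the essential use of the common choice of elements in $\hat \hmesh \cap \hat \hmesh_\fine \cap \hat \hmesh^\ell$, and the final term-by-term comparison of the coarse and fine expansions on $\hat \Omega_{\cal \hat Q}$ are structurally identical. One simplification you miss: the coefficient identity $\hat\lambda_{{\bf i},{\bf p}}^\ell(\hat S)=c_{\bf i}^\ell$, which you single out as the main obstacle, comes for free. For a fix index, the coarse and the fine dual functionals are \emph{the same} functional (same level, same index, same chosen element), and Proposition~\ref{prop:hb projection} applied to the \emph{fine} mesh gives $\projHfine{\mathbf{p}}\hat S=\hat S$, hence $\hat\lambda_{\fine,{\bf i},{\bf p}}^\ell(\hat S)=c_{\bf i}^\ell$ by linear independence of the fine THB-splines. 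Re-deriving this locally through preservation of coefficients, as you propose, amounts to reproving Proposition~\ref{prop:hb projection} instead of citing it; this is exactly the shortcut the paper exploits.

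The genuine gap is in your support-localization argument for the old indices. The inclusion $\hat\Omega^{\ell+1}_\fine\subseteq\hat\Omega_{\cal \hat R_\fine}$ is false in general: the set ${\cal \hat R}_\fine$ in \eqref{eq:Rfine} only collects elements inside supports of \emph{new} THB-splines, and the part of $\hat\Omega^{\ell+1}_\fine$ that was already refined in $\hat\QQ$ (where the nonvanishing functions are fix) need not meet any such support. Concretely, take $\dpa=1$, degree $1$, dyadic refinement, with $\hat\Omega^{1}=[i,i+2]$ (two level-$0$ elements) and $\hat\Omega^{1}_\fine=[i-2,i+2]$. The new functions are the four level-$1$ hat functions centred at $i-\tfrac32,\,i-1,\,i-\tfrac12,\,i$, which are untruncated since there is no finer level, so $\hat\Omega_{\cal \hat R_\fine}=[i-2,i+\tfrac12]\subsetneq\hat\Omega^{1}_\fine$. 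Moreover, the level-$0$ hat centred at $i$ is old, and its mother's support $[i-1,i+1]$ also sticks out of $\hat\Omega_{\cal \hat R_\fine}$; hence bounding the old coarse THB-spline by the support of its \emph{mother}, as your argument does, cannot yield the vanishing on $\hat\Omega_{\cal \hat Q}$. The claim is nevertheless true, but only because of truncation: here the coarse truncation of that hat is $\tfrac12$ times the level-$1$ hat at $i-\tfrac12$ plus the level-$1$ hat at $i$, supported in $[i-1,i+\tfrac12]$. Proving this in general requires precisely the downward induction on the level (via the two-scale relation and the conditions defining $\hat\hbbasis_B^{k}$ in the two hierarchies) that you only gesture at in your closing sentence; the same induction is also needed for your fix-index claim, since the components removed by the fine but not the coarse truncation are level-$k$ B-splines with support contained in $\hat\Omega^{k}_\fine$ but not in $\hat\Omega^{k}$, and such supports are in general \emph{not} contained in $\hat\Omega_{\cal \hat R_\fine}$ --- only their subsequent coarse truncations vanish on $\hat\Omega_{\cal \hat Q}$. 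As written, these two steps fail, even though the statements they target are correct (the paper asserts them without detailed proof).
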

\begin{proof}
Let 
\begin{align}\label{eq:qi-1}
\projH{\mathbf{p}} \hat S &= 
\sum_{\ell=0}^{N-1} \sum_{{\bf i}\in{\cal I}_{\hat{\cal Q}}^\ell}^{}
\hat\lambda_{{\bf i},{\bf p}}^\ell(\hat S) \hat{T}_{{\bf i},{\bf p}}^\ell, %\; \text{with} \;
%\hat{B}_{{\bf i},{\bf p}}^\ell= \mot\hat{T}_{{\bf i},{\bf p}}^\ell
\end{align}
and
\begin{align}
\label{eq:qi-2}
\projHfine{\mathbf{p}} \hat S &= 
\sum_{\ell=0}^{N_\fine-1} \sum_{{\bf i}\in{\cal I}_{\hat\QQ_\fine}^\ell}^{}
\hat\lambda_{\fine,{\bf i},{\bf p}}^\ell(\hat S) \hat{T}_{\fine,{\bf i},{\bf p}}^{\ell},
\end{align}
where $\hat{B}_{{\bf i},{\bf p}}^\ell= \mot\hat{T}_{{\bf i},{\bf p}}^\ell$ or $\hat{B}_{{\bf i},{\bf p}}^{\ell}= \mot\hat{T}_{\fine,{\bf i},{\bf p}}^{\ell}$,
be the two hierarchical quasi-interpolants expressed in terms of the truncated bases. 
%\[
%\hat\thbbasis_{\bf p}(\hat\hmesh,{\bf \kv^0}) 
%= \left\{\hat{T}_{{\bf i},{\bf p}}^\ell : {\bf i}\in {\cal I}_{\hat{\cal Q}}^\ell, \ell =0,\ldots,N-1\right\}
%\] 
%and 
%\[
%\hat\thbbasis_{\bf p}(\hat\hmesh_\fine,{\bf \kv^0})
%= \left\{\hat{T}_{\fine,{\bf i},{\bf p}}^{\ell} : 
%{\bf i}\in {\cal I}_{\hat{\cal Q}_\fine}^\ell, \ell =0,\ldots,N_\fine-1\right\},
%\]
%respectively. 
For the operator~\eqref{eq:qi-2}, we choose for every ${\bf i} \in {\cal I}_{\hat{\QQ}\rightarrow {\hat\QQ_\fine}}^{\ell,\rm fix}$ the same  element as for \eqref{eq:qi-1}.
By using the index sets introduced in \eqref{eq:qi-indices1} and \eqref{eq:qi-indices2}, we can rewrite the inner sums in \eqref{eq:qi-1} and \eqref{eq:qi-2} respectively as 
\begin{align*}%\label{eq:q1-1}
%\projH{\mathbf{p}} \hat S &= 
%\sum_{\ell=0}^{N-1} \left(
&\sum_{{\bf i}\in{\cal I}_{\hat{\QQ}\rightarrow {\hat\QQ_\fine}}^{\ell,\rm fix}}
\hat\lambda_{{\bf i},{\bf p}}^\ell(\hat S) \hat{T}_{{\bf i},{\bf p}}^\ell
+ \sum_{{\bf i}\in{\cal I}_{\hat{\cal Q}}^{\ell,\rm old}}^{}
\hat\lambda_{{\bf i},{\bf p}}^\ell(\hat S) \hat{T}_{{\bf i},{\bf p}}^\ell
%\right), %\; \text{with} \;
%\hat{B}_{{\bf i},{\bf p}}^\ell= \mot\hat{T}_{{\bf i},{\bf p}}^\ell
\end{align*}
and
\begin{align*}
% 	\label{eq:qi-2}
%\projHfine{\mathbf{p}} \hat S &= 
%\sum_{\ell=0}^{N_\fine-1} 
&\sum_{{\bf i}\in
{\cal I}_{\hat{\QQ}\rightarrow {\hat\QQ_\fine}}^{\ell,\rm fix}}
\hat\lambda_{\fine,{\bf i},{\bf p}}^\ell(\hat S) \hat{T}_{\fine,{\bf i},{\bf p}}^{\ell}+
\sum_{{\bf i}\in{\cal I}_{\hat\QQ_\fine}^{\ell,\rm new}}^{}
\hat\lambda_{\fine,{\bf i},{\bf p}}^\ell(\hat S) \hat{T}_{\fine,{\bf i},{\bf p}}^{\ell}.
\end{align*}
For any index ${\bf i}\in {\cal I}_{\hat{\QQ}\rightarrow {\hat\QQ_\fine}}^{\ell,\rm fix}$, the definition of $\hat \Omega_{\hat{\cal Q}}$ and our assumptions for the coefficients of the quasi\--interpolants show that 
\[
\hat{T}_{{\bf i},{\bf p}}^{\ell}|_{\hat \Omega_{\cal \hat Q}} = 
\hat{T}_{\fine, {\bf i},{\bf p}}^{\ell}|_{\hat \Omega_{\cal \hat Q}},\qquad
\lambda_{{\bf i},{\bf p}}^\ell(\hat S) 
=
\lambda_{\fine,{\bf i},{\bf p}}^\ell(\hat S). 
\]
For any index ${\bf i}\in {\cal I}_{\hat{\QQ}}^{\ell,\rm old}$ or ${\bf i}\in {\cal I}_{\hat{\QQ}}^{\ell,\rm new}$, we have
\[
\hat{T}_{{\bf i},{\bf p}}^{\ell}|_{\hat \Omega_{\cal \hat Q}} = 0,
\quad
\hat{T}_{\fine,{\bf i},{\bf p}}^{\ell}|_{\hat \Omega_{\cal \hat Q}} = 0.
\]
Consequently, since $\projHfine{\mathbf{p}}$ is a projector onto the space $\widehat{\mathbb{S}}_{\bf p}^{\rm H}(\hat\hmesh_\fine,{\bf \kv}^{0})$,  we obtain that
\begin{align*}
(\hat J_{\mathbf{p},\hat{\hmesh}}^{\,\rm H} \hat S)|_{\hat \Omega_{\cal \hat Q}} 
& = \sum_{\ell=0}^{N-1}
\sum_{{\bf i}\in{\cal I}_{\hat{\QQ}\rightarrow {\hat\QQ_\fine}}^{\ell,\rm fix}}
\hat\lambda_{{\bf i},{\bf p}}^\ell(\hat S) \hat{T}_{{\bf i},{\bf p}}^\ell|_{\hat \Omega_{\cal \hat Q}} 
\\
& = \sum_{\ell=0}^{N-1}
\sum_{{\bf i}\in{\cal I}_{\hat{\QQ}\rightarrow {\hat\QQ_\fine}}^{\ell,\rm fix}}
\hat\lambda_{\fine,{\bf i},{\bf p}}^\ell(\hat S) \hat{T}_{\fine,{\bf i},{\bf p}}^\ell|_{\hat \Omega_{\cal \hat Q}} 
\\
& = (\hat J_{\mathbf{p},\hat{\hmesh}_\fine}^{\,\rm H} \hat S)|_{\hat \Omega_{\cal \hat Q}}  = \hat S|_{\hat \Omega_{\cal \hat Q}}.
\end{align*}
This concludes the proof. \hfill$\square$
\end{proof}

\begin{corollary} \label{corol:characterization}
Let $\hat\QQ, \hat \QQ_\fine$ be two hierarchical meshes with $\hat \QQ \preceq \hat \QQ_\fine$.
%, and let $\widehat{\mathbb{S}}_{\bf p}^{\rm H}(\hat\hmesh_\coarse,{\bf \kv}^{0})$ and $\widehat{\mathbb{S}}_{\bf p}^{\rm H}(\hat\hmesh_\fine,{\bf \kv}^{0})$ the associated spaces of THB-splines. 
Then, their associated spaces of hierarchical splines coincide in $\hat \Omega_{\cal \hat Q}$, i.e., it holds that
\[
\widehat{\mathbb{S}}_{\bf p}^{\rm H}(\hat\hmesh,{\bf \kv}^{0}) |_{\hat \Omega_{\cal \hat Q}} = \widehat{\mathbb{S}}_{\bf p}^{\rm H}(\hat\hmesh_\fine,{\bf \kv}^{0}) |_{\hat \Omega_{\cal \hat Q}}.
%{\color{magenta} \hat{\mathbb{Y}}_\coarse |_{\hat \Omega_{\cal Q}} = \hat{\mathbb{Y}}_\fine |_{\hat \Omega_{\cal Q}}}.
\]
\end{corollary}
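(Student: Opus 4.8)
The plan is to prove the two set inclusions separately, noting that one direction is immediate from nestedness while the reverse is exactly what the freshly proved Proposition~\ref{prop:hb-refined-projection} delivers.

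First I would establish the inclusion $\widehat{\mathbb{S}}_{\bf p}^{\rm H}(\hat\hmesh,{\bf \kv}^{0}) |_{\hat \Omega_{\cal \hat Q}} \subseteq \widehat{\mathbb{S}}_{\bf p}^{\rm H}(\hat\hmesh_\fine,{\bf \kv}^{0}) |_{\hat \Omega_{\cal \hat Q}}$. This is a direct consequence of the nestedness of hierarchical spline spaces under refinement, namely Proposition~\ref{prop:hb properties}(iii): since $\hat\QQ \preceq \hat\QQ_\fine$, every $\hat S \in \widehat{\mathbb{S}}_{\bf p}^{\rm H}(\hat\hmesh,{\bf \kv}^{0})$ already belongs to $\widehat{\mathbb{S}}_{\bf p}^{\rm H}(\hat\hmesh_\fine,{\bf \kv}^{0})$, so in particular its restriction to $\hat \Omega_{\cal \hat Q}$ lies in the restricted fine space. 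No further argument is needed here.

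The reverse inclusion is where the content lies, and it follows from Proposition~\ref{prop:hb-refined-projection}. Given an arbitrary $\hat S \in \widehat{\mathbb{S}}_{\bf p}^{\rm H}(\hat\hmesh_\fine,{\bf \kv}^{0})$, I would apply the coarse hierarchical quasi-interpolant $\projH{\mathbf{p}}$, with the dual-functional elements chosen as required in Proposition~\ref{prop:hb-refined-projection}, i.e., for every index ${\bf i} \in {\cal I}_{\hat{\QQ}\rightarrow {\hat\QQ_\fine}}^{\ell,\rm fix}$ selecting a level-$\ell$ element in $\hat \hmesh \cap \hat \hmesh_\fine \cap \hat \hmesh^\ell$. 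By construction of (T)HB-splines such a common active element always exists, since a function with fixed index is active at level $\ell$ in both meshes and therefore retains a level-$\ell$ element contained in $\hat\Omega^\ell \setminus \hat\Omega^{\ell+1}$ belonging to both hierarchical meshes. Proposition~\ref{prop:hb-refined-projection} then yields $(\projH{\mathbf{p}} \hat S)|_{\hat \Omega_{\cal \hat Q}} = \hat S|_{\hat \Omega_{\cal \hat Q}}$. Since $\projH{\mathbf{p}} \hat S \in \widehat{\mathbb{S}}_{\bf p}^{\rm H}(\hat\hmesh,{\bf \kv}^{0})$ by the very definition of the quasi-interpolant as a map into the coarse space, the restriction $\hat S|_{\hat \Omega_{\cal \hat Q}}$ is seen to be the restriction of a coarse-space function, which establishes $\widehat{\mathbb{S}}_{\bf p}^{\rm H}(\hat\hmesh_\fine,{\bf \kv}^{0}) |_{\hat \Omega_{\cal \hat Q}} \subseteq \widehat{\mathbb{S}}_{\bf p}^{\rm H}(\hat\hmesh,{\bf \kv}^{0}) |_{\hat \Omega_{\cal \hat Q}}$.

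Combining the two inclusions gives the claimed equality. The only subtlety, and hence the mild main obstacle, is to confirm that the element choice demanded by Proposition~\ref{prop:hb-refined-projection} is feasible, i.e.\ that the required common level-$\ell$ active elements genuinely exist; once this is granted, everything else is a one-line consequence of nestedness together with the projection property.
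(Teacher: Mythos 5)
Your proof is correct and follows the same route the paper intends: the corollary is stated as an immediate consequence of Proposition~\ref{prop:hb-refined-projection} (giving the inclusion of the fine restricted space into the coarse one via the coarse quasi-interpolant) combined with nestedness from Proposition~\ref{prop:hb properties}(iii) for the trivial direction. Your extra check that a common active level-$\ell$ element in $\hat\hmesh\cap\hat\hmesh_\fine\cap\hat\hmesh^\ell$ exists for indices in ${\cal I}_{\hat{\QQ}\rightarrow{\hat\QQ_\fine}}^{\ell,\rm fix}$ is a point the paper leaves implicit, and it holds since such a function is active in the fine mesh, so its support contains a level-$\ell$ element not contained in $\hat\Omega_\fine^{\ell+1}\supseteq\hat\Omega^{\ell+1}$, which is therefore active in both meshes.
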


Finally, we remark that although our construction is based on \cite{sm16,bggs16}, hierarchical quasi-interpolation with HB-splines and THB-splines was also studied in \cite{kraft1997} and \cite{bracco2016d,speleers2017}, respectively.

% rephrase def. of admissibility as a property
% P1 (bounded number of "levels" on any element - \mu-levels property?)
% and use admissible instead of strictly admissible for the meshes generated via refine

% proof that admissible for \mu= 2 is equivalent to | lev(Q)-|ev(Q') | \le 1
% as in the definition (3.20) of M3AS
% and in general for \mu is equivalent to | lev(Q)-|ev(Q') | \le \mu-1 

% is this true also for THB?
% probably this is true for admissible i.e. s.a. iff eq. (3.20) for THB
% but s.a. => (3.20) for THB and not viceversa

% the nestedness of admissible meshes is used for proving that all the
% approximation classes (of different \mu)  are the same in BEM
% is this true for s.a. meshes? \mu-s.a. meshe \subseteq (\mu+1)-s.a. meshes
% but this is true only if also the little omegas are nested???
% in FEM this result holds anyway (different proof)

%DISCUSS: Extension to the physical domain, and multi-patch. I think we should we do it in a subsection here.
%COMMENT: I think this belongs to the IGAFEM section. For now, I have put this there.

\subsubsection{Hierarchical splines refined by functions}\label{subsec:simple hb}
%DISCUSS: I add this content here. Part of it may be moved to another section.
%Should we add any reference about refinement by functions in standard FEM?

An alternative viewpoint for the construction of hierarchical splines is to consider a refinement algorithm which does not refine the elements, but instead it refines basis functions. In the context of adaptive methods for PDEs, this idea can be traced back at least to \cite{CHARMS,Krysl-IJNME}, and it was recently improved by M. Sabin in \cite{Sabin17} to easily deal with possible linear dependencies of basis functions. The basic idea is to replace any marked basis function with their children, that are the basis functions of the next level appearing in \eqref{eq:s}. 

In terms of the analysis of adaptive methods, refinement by functions is studied in \cite{actis2018}. To avoid possible linear dependencies of basis functions, the authors suggest to use what they call \emph{absorbing generators}, which in fact are equivalent to the \emph{simplified hierarchical splines} in \cite{bgarau16_1}.
In the latter, basis functions are refined (deactivated) according to the elements in their support, but only children of refined basis functions can be activated. Their definition can be done with a recursive algorithm similar to the one of HB-splines in Section~\ref{subsec:def hb}:
\begin{enumerate}
\item $\hat\hbbasis_s^0 := \spbasis^0$;
\item for $\ell=0,\ldots, N-2$ 
\[
\hat\hbbasis_s^{\ell+1} := \hat\hbbasis_{s,A}^{\ell+1} \cup \hat\hbbasis_{s,B}^{\ell+1},
\]
\end{enumerate}
where $\hat\hbbasis_{s,A}^{\ell+1}$ is defined analogously to $\hat\hbbasis_{A}^{\ell+1}$ in the HB-splines case, while $\hat\hbbasis_{s,B}^{\ell+1}$ is given by the sets of children 
\[
\hat\hbbasis_{s,B}^{\ell+1}:= \bigcup_{\stackrel{\widehat\beta \in \hat \hbbasis_s^{\ell+1}}{\supp (\widehat\beta) \subset \Omega^{\ell+1}}} 
\left\{\hat{B}_{{\bf i},{\bf p}}^{\ell+1} \in \spbasis^{\ell+1} : c^{\ell+1}_{\bf i, \bf p}(\widehat\beta) \not = 0\right\},
\]
and the coefficients $c_{\bf i, \bf p}^{\ell+1}(\widehat\beta)$ are defined in \eqref{eq:s}.

In \cite{bgarau16_1} it is proved that, for every $\ell= 0, \ldots, N-1$, the set of simplified HB-splines is contained in the set of HB-splines, namely $\hat \hbbasis_{s}^{\ell} \subseteq \hat \hbbasis^{\ell}$.
Indeed, with the unique nonnegative coefficients from $\sum_{\widehat\beta\in\hat \hbbasis^{\ell}} c_{\widehat\beta} \widehat\beta=1$, it even holds that $\hat\hbbasis_{s}^{\ell}=\set{\widehat\beta\in\hat \hbbasis^{\ell}}{c_{\widehat\beta}>0}$.
Moreover, the first three properties of Proposition~\ref{prop:hb properties} are proved for these simplified hierarchical splines in the same paper. 
In \cite{actis2018}, the authors introduce a refinement algorithm by functions that, analogously to the admissibility property presented above, prevents the interaction of coarse and fine functions, and they prove that the algorithm has linear complexity with respect to the number of marked basis functions, see Proposition~\ref{prop:hierarchical lincomp} %also \eqref{R:closure} of Section~\ref{sec:axioms} below
 for the analogous result in an element-wise version. 

The definition of a multilevel quasi-interpolant for simplified hierarchical splines is also given in \cite{bgarau16_1}. This quasi-interpolant generalizes the one introduced by Kraft for HB-splines in \cite{kraft1997} to general knot vectors.
%, and the authors show that indeed the image is always contained in the space of simplified HB-splines. 
However, the quasi-interpolant is not a projector.

Regarding \textsl{a posteriori} error estimation, an estimator based on basis functions was introduced in \cite{bgarau16_1}, although as far as we know only the upper bound of the error has been proved. %The authors of \cite{actis2018} mention that they are currently working (reference~8 of that paper) on a different error estimator by basis functions, for which they can prove both the upper and lower bound of the error. 

%We should find a good name for these hierarchical splines. "Reduced" is just a suggestion. I have put together all the works about refinement by functions. Like that, we can avoid to give them a name.
%\newline
%Main references: \cite{bgarau16_1,bgarau16_2}
%\cg{Check: \cite{actis2018}}
%COMMENT: If one uses $\hat{\cal H}$-admissible refinement, one would get at least linear convergence. 
%Indeed, one only needs the space properties (F1)--(F3) (inverse estimates+local $L^2$-approximation and $H^1$-stability of Scott--Zhang type operator) and (S1) (nestedness).
%To get optimal convergence, it remains to show (S2) (local refinement will not effect basis functions far away), which should be easily doable, and (S3) (local projection of the Scott--Zhang type operator), where existing operators in the literature are not even projections...

\subsection{T-splines}\label{subsec:tsplines}
% !TEX encoding = MacOSRoman
% !TEX root = adaptive_iga.tex

An alternative for the development of adaptive isogeometric methods is the use of \textit{T-splines}, which were introduced for CAD and computer graphics by T.~Sederberg et al. in  \cite{sederberg2003,sederberg2004}. They were soon recognized as an interesting tool to develop adaptivity in IGA \cite{bazilevs2010,doerfel2010}. A sound mathematical theory for approximation with T-splines was missing at that time and in \cite{bcs10} the first counterexample of linearly dependent T-splines was presented along with preliminary results about linear independence. The mathematical analysis of T-splines made a big step forward with the introduction of \emph{analysis suitable T-splines} in \cite{li2012} and the equivalent concept (under the mild assumption that facing T-junctions do not exist) of \emph{dual-compatible T-splines}
%\footnote{The two definitions are equivalent under the assumption that facing T-junctions do not exist} 
in \cite{beirao2012}, for which it was possible to construct a dual basis, and consequently to prove linear independence. While these concepts were first restricted to cubic T-splines, they were generalized to arbitrary degree in \cite{bbsv13}, and equivalence was proved under the same assumption. The characterization of the space and some other important properties were analyzed in \cite{ls14,bbs15}. These works are mostly restricted to the two-dimensional case, although the definition of dual-com\-patible T-splines extends to three-dimensional one.

Algorithms for automatic refinement with T-splines were first studied in \cite{sederberg2004}  and for analysis suitable T-splines in \cite{slsh12}. Refinement algorithms that guarantee the dual compatibility property by alternating the direction of refinement were introduced in \cite{mp15} and were later generalized to the trivariate case (with odd degree) in \cite{morgenstern16,morgenstern17}. The advantage of these refinement algorithms over previous ones is that they guarantee linear complexity and also shape-regularity of the mesh avoiding the presence of undesired anisotropic elements.

In this section we present the definition of T-splines, focusing on dual-compatible T-splines  and describe the concept of admissible T-meshes and the refinement algorithms introduced in \cite{mp15,morgenstern16,morgenstern17}. For the presentation we mainly follow the survey \cite{bbsv14}, which collects results from previous papers, and \cite{morgenstern16}. We will restrict ourselves to the case of T-splines of odd degree, because the analysis of trivariate T-splines in \cite{morgenstern16} has not been extended to arbitrary degree so far.

\subsubsection{The basic idea of T-splines}
T-splines are a generalization of B-splines, where the functions are defined from a mesh of rectangular elements with T-junctions, the so-called \textit{T-mesh}, see Figure~\ref{fig:Tmesh}. The lines of the T-mesh play a similar role as the knot indices in the tensor-product case and a knot value is associated to each of these lines. A T-spline function is then associated to each vertex of the T-mesh (or to each element if the degree is even), in what we call the \emph{anchors}. Each of these functions is defined analogously to a B-spline, and the local knot vector in the $j$-th direction is obtained by tracing a line from the anchor in this direction, and considering the intersections of this line with the T-mesh. For example, in Figure~\ref{fig:Tmesh} with degree ${\bf p} = (5,3)$, the function anchored at the node with indices $(9,8)$ has local knot vectors $\displaystyle \left(\frac{1}{16},\frac{1}{8},\frac{1}{4},\frac{3}{8},\frac{1}{2},\frac{9}{16},\frac{5}{8} \right)$ and $\displaystyle \left(\frac{5}{16},\frac{3}{8},\frac{1}{2},\frac{5}{8},\frac{3}{4}\right)$.
\begin{figure}[ht] 
%\psfrag{z}[r][r]{z}
%\psfrag{T}[r][r]{T}
\begin{center}
\includegraphics[width=0.5\textwidth]{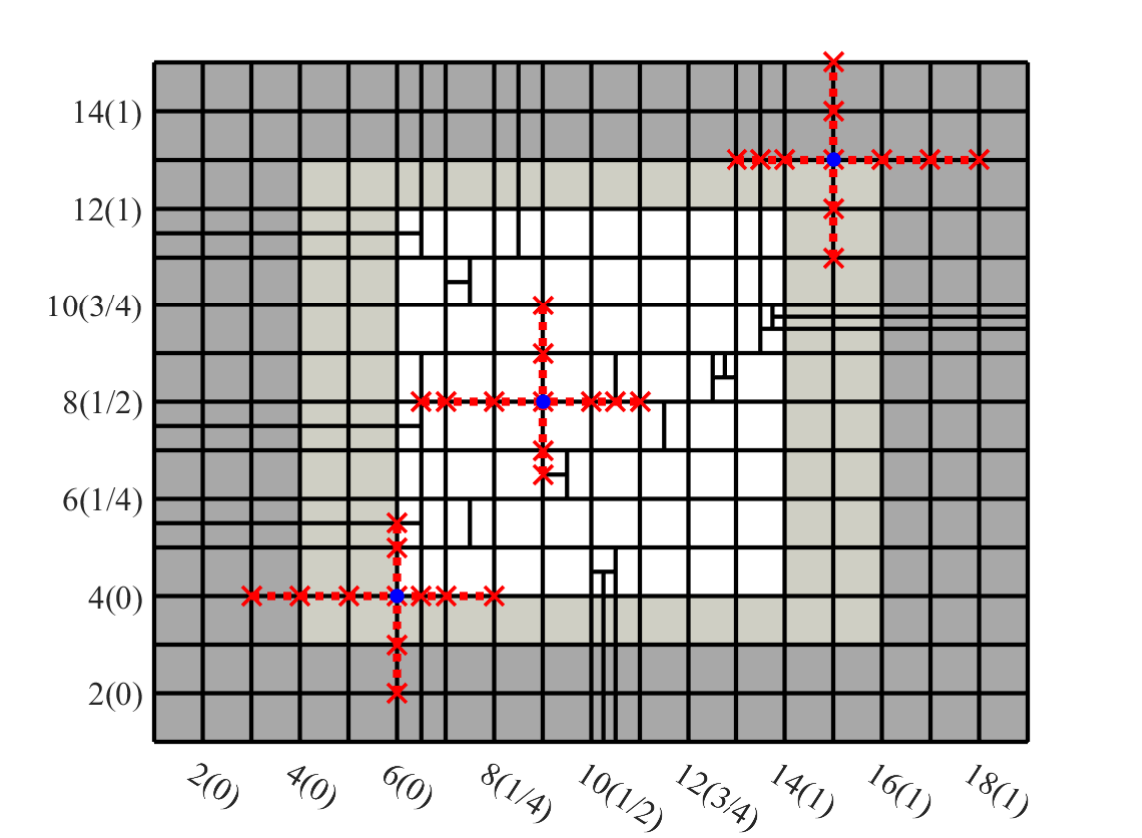}
\end{center}
\caption{
A two-dimensional T-mesh with degree $(p_1,p_2)=(5,3)$. 
For the three (blue) nodes ${\bf z}\in\{(6,4),(9,8),(15,13)\}$, their corresponding local knot vectors are indicated by red crosses. In the axes we indicate the indices in $\indices_j^0$ and, between parentheses, the value of the corresponding knots.}
\label{fig:Tmesh}
\end{figure}

We notice that, due to knot repetitions in the open knot vectors, the first $p$ knot spans in each direction have zero length. These elements with zero measure are colored in gray in Figure~\ref{fig:Tmesh}, and the white region will be called the \textit{index/parametric region}. Moreover, we recall that for B-splines, a knot vector of $n+p+1$ knots defines $n$ functions. To take this into account, the anchors are limited to what is called the \emph{region of active anchors}, given by the white and light gray elements in Figure~\ref{fig:Tmesh}.

Although the idea of T-splines is not complex, the rigorous definition and analysis of their properties require some involved notation, which we present in the following subsections.

\subsubsection{T-meshes refined by bisection}\label{sec:T-meshes defined}
For simplicity, we will limit ourselves to T-meshes where the elements are refined by bisection. We also restrict ourselves to dimension $\dpa=2,3$, as this is the state of the art, although some advances for arbitrary dimension were introduced in \cite{morgenstern17}. Moreover, as we said above we also limit the presentation to odd polynomial degrees. %because the analysis of trivariate T-splines in \cite{morgenstern16} has not been extended to arbitrary degrees so far.
% To prove local quasi-uniformity of the considered meshes, we further need to restrict to degree equal or greater than three. 
For the definition and properties of even and mixed degree T-splines, we refer to \cite{bbsv13,mp15}, see also Remark~\ref{remark:degree} below.

For the ease of reading, we repeat here some of the definitions of the tensor-product case. Let us introduce for $ 1 \le j \le \dpa$, an \emph{odd degree} $p_j \ge 3$, the number of univariate functions $n_j$, the set of indices $\indices_j^0 = \{1,2, \ldots, n_j + p_j + 1 \}$, and the \emph{open} knot vector $\kv_j^0$ $= (\knot_{j,1}, \ldots, \knot_{j,n_j+p_j+1})$ $=$ $(\knot_{j,i})_{i \in \indices_j^0}$, with $\knot_{j,p_j+1} = 0$ and $\knot_{j,n_j+1} = 1$. We also assume that internal knots are not repeated, that is
\[
\knot_{j,i}  < \knot_{j,i+1} \quad \text{ for } i = p_j+1, \ldots, n_j.
\]
Again, we abbreviate ${\bf p}:=(p_1,\dots,p_\dpa)$ as well as ${\bf \kv}^0:=(\kv^0_1,\dots,\kv^0_\dpa)$.

The starting point is the Cartesian grid
\begin{align*}
\Tmesh_0 := & \big\{(l_1, l_1+1) \times \ldots \times (l_\dpa, l_\dpa+1): \\
& \quad 1 \le l_j \le n_j + p_j \text{ for } 1\le j\le \dpa\big\},
\end{align*}
which is a uniform partition of the \emph{index domain} 
\[\Omindex := \Pi_{j=1}^\dpa (1,n_j+p_j+1).
\]
We also introduce the \emph{index/parametric domain} \[
\Omip := \Pi_{j=1}^\dpa (p_j+1,n_j+1).
\]
In Figure~\ref{fig:Tmesh}, $\Omip$ is formed by white elements, while $\Omindex$ is given by all the elements of the mesh. We also define $\Omegap := (0,1)^\dpa$.

For any integer $k >0 $, we define the set of \emph{rational indices}
\begin{align*}
\indices_j^k := \II_j^0 \cup \bigg\{ & i+r: i \in \II_j^0,\, p_j+1 \le i \le n_j,\, \\
& r \in \Big\{\frac{1}{2^k}, \ldots, \frac{2^k-1}{2^k} \Big \} \bigg \}.
\end{align*}
%\vazquez{REMOVE? or equivalently
%\begin{align*}
%&\indices_j^\ell = \{ 1, \ldots, p_j+1, p_j+1+\frac{1}{2^\ell}, \ldots, p_j+1+\frac{2^{\ell}-1}{2^\ell}, p_j+2, \ldots, \\
%&n_j, n_j+\frac{1}{2^\ell}, n_j + \frac{2^{\ell}-1}{2^\ell}, n_j+1, n_j+2, \ldots, n_j+p_j+1 \}
%\end{align*}
%}
With this, we can define for $ 1 \le j \le \dpa$ and for $k \ge 0$, the ordered \emph{knot vectors at stage $k$}
\[
\kv_j^k := (\knot_{j,r})_{r \in \indices_j^k }
\]
in a recursive way: starting from $\kv_j^0$, for $k > 0$ and for any new index $r \in \indices_j^k \setminus \indices_j^{k-1}$, we define
\[
\knot_{j,r} := \frac{1}{2} \left(\knot_{j,r-\frac{1}{2^k}} + \knot_{j,r+\frac{1}{2^k}}\right),
\]
which is well defined because $r-\frac{1}{2^k}, r+\frac{1}{2^k} \in \indices_j^{k-1}$. Note that we are not inserting new knots between the repeated knots of the open knot vector.

We also define, for an arbitrary hyperrectangular element in the index domain $\elemi = \Pi_{i=1}^\dpa (a_i,b_i)$, the \emph{bisection operator in the $j$-th direction} (compare with \cite[Definition~2.5]{morgenstern16} and \cite[Section~4.1]{cv18})
\begin{equation}\label{eq:bisection_operator}
\bisect_j(\elemi) := \left\{
\begin{array}{ll}
\{ \elemi_j^1, \elemi_j^2 \}
 & \text{ if } \knot_{j,a_j} \not = \knot_{j,b_j}, \\
\{\elemi\} & \text{ if } \knot_{j,a_j} = \knot_{j,b_j},
\end{array}
\right.
\end{equation}
where
\begin{align*}
\elemi_j^1 = \Pi_{i=1}^{j-1}(a_i,b_i) \times \Big(a_j,\frac{a_j+b_j}{2}\Big) \times \Pi_{i=j+1}^{\dpa}(a_i,b_i), \\
\elemi_j^2 = \Pi_{i=1}^{j-1}(a_i,b_i) \times \Big(\frac{a_j+b_j}{2},b_j\Big) \times \Pi_{i=j+1}^{\dpa}(a_i,b_i).
\end{align*}
That is, the element is bisected in the $j$-th direction only if the corresponding knots in this direction are different, otherwise it is left unchanged. In particular, due to the presence of the open knot vector, the first and last $p_j$ ``columns'' of elements in the $j$-th direction are never bisected in this direction.

Setting the level of the elements of the starting mesh $\elemi \in \Tmesh_0$ equal to zero, $\levelT{\elemi} := 0$, we associate to each level $\ell \in \mathbb{N}_0$ the \emph{direction of bisection}
\[
\dirb{\ell}:= \ell+1-\underline{\ell},
\]
with $\underline{\ell} := \lfloor \ell/\dpa \rfloor \dpa$, and we also set the \emph{level} of the elements obtained by bisection
\[
\levelT{\elemi'} := \levelT{\elemi} + 1, \elemi' \in \bisect_{\dirb{\levelT{\elemi}}}(\elemi).
\]
With this choice, the elements will be split into two in alternating directions determined by their level, see the examples in Figure~\ref{fig:bisect2d} and Figure~\ref{fig:bisect3d}.
Note that if an element $\elemi$ is unchanged via bisection because its indices refer to repeated knots, see \eqref{eq:bisection_operator}, we still implicitly distinguish $\elemi$ and $\elemi' := \bisect(\elemi)$ by equipping the latter with a different level.

\begin{figure}
\centering
\tdplotsetmaincoords{0}{0}
\begin{tikzpicture}[scale=2,tdplot_main_coords]
    \coordinate (O) at (0,0,0);
    \tdplotsetcoord{P}{10}{10}{10}
    \draw[line width=0.5mm, color=black] (O) -- (1,0,0);
    \draw[line width=0.5mm, color=black] (O) -- (0,0,1);
    \draw[line width=0.5mm, color=black] (1,0,0) -- (1,0,1);
    \draw[line width=0.5mm, color=black] (1,0,1) -- (1,1,1);
    \draw[line width=0.5mm, color=black] (0,0,1) -- (1,0,1);
    \draw[line width=0.5mm, color=black] (1,0,0) -- (1,1,0);
    \draw[line width=0.5mm, color=black] (1,1,0) -- (1,1,1);
    \draw[line width=0.5mm, color=black] (0,1,1) -- (1,1,1);
    \draw[line width=0.5mm, color=black] (0,0,1) -- (0,1,1);        
\end{tikzpicture}
\qquad
\begin{tikzpicture}[scale=2,tdplot_main_coords]
    \coordinate (O) at (0,0,0);
    \tdplotsetcoord{P}{10}{10}{10}
    \draw[line width=0.5mm, color=black] (O) -- (1,0,0);
    \draw[line width=0.5mm, color=black] (O) -- (0,0,1);
    \draw[line width=0.5mm, color=black] (1,0,0) -- (1,0,1);
    \draw[line width=0.5mm, color=black] (1,0,1) -- (1,1,1);
    \draw[line width=0.5mm, color=black] (0,0,1) -- (1,0,1);
    \draw[line width=0.5mm, color=black] (1,0,0) -- (1,1,0);
    \draw[line width=0.5mm, color=black] (1,1,0) -- (1,1,1);
    \draw[line width=0.5mm, color=black] (0,1,1) -- (1,1,1);
    \draw[line width=0.5mm, color=black] (0,0,1) -- (0,1,1);
    \draw[line width=0.5mm, color=black] (0.5,0,0) -- (0.5,0,1);
    \draw[line width=0.5mm, color=black] (0.5,0,1) -- (0.5,1,1);    
\end{tikzpicture}
\qquad
\begin{tikzpicture}[scale=2,tdplot_main_coords]
    \coordinate (O) at (0,0,0);
    \tdplotsetcoord{P}{10}{10}{10}
    \draw[line width=0.5mm, color=black] (O) -- (1,0,0);
    \draw[line width=0.5mm, color=black] (O) -- (0,0,1);
    \draw[line width=0.5mm, color=black] (1,0,0) -- (1,0,1);
    \draw[line width=0.5mm, color=black] (1,0,1) -- (1,1,1);
    \draw[line width=0.5mm, color=black] (0,0,1) -- (1,0,1);
    \draw[line width=0.5mm, color=black] (1,0,0) -- (1,1,0);
    \draw[line width=0.5mm, color=black] (1,1,0) -- (1,1,1);
    \draw[line width=0.5mm, color=black] (0,1,1) -- (1,1,1);
    \draw[line width=0.5mm, color=black] (0,0,1) -- (0,1,1);
    \draw[line width=0.5mm, color=black] (0.5,0,0) -- (0.5,0,1);
    \draw[line width=0.5mm, color=black] (0.5,0,1) -- (0.5,1,1);    
    \draw[line width=0.5mm, color=black] (1,0.5,0) -- (1,0.5,1);    
    \draw[line width=0.5mm, color=black] (1,0.5,1) -- (0,0.5,1);    
\end{tikzpicture}
\caption{
Example of bisection for $\dpa=2$. The initial element of level 0 is bisected in the $x$-direction ($\dirb{0}=1$) to obtain two elements of level 1. These are then bisected in the $y$-direction ($\dirb{1} = 2$) to obtain the four elements of level 2.}
\label{fig:bisect2d}
\end{figure}
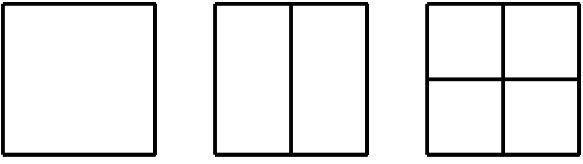

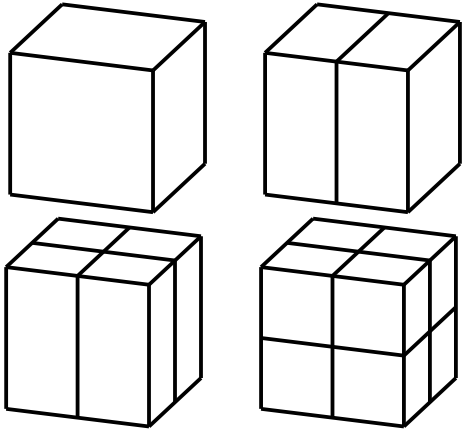
\begin{figure}
\centering
\tdplotsetmaincoords{70}{20}
\begin{tikzpicture}[scale=2,tdplot_main_coords]
    \coordinate (O) at (0,0,0);
    \tdplotsetcoord{P}{10}{10}{10}
    \draw[line width=0.5mm, color=black] (O) -- (1,0,0);
    \draw[line width=0.5mm, color=black] (O) -- (0,0,1);
    \draw[line width=0.5mm, color=black] (1,0,0) -- (1,0,1);
    \draw[line width=0.5mm, color=black] (1,0,1) -- (1,1,1);
    \draw[line width=0.5mm, color=black] (0,0,1) -- (1,0,1);
    \draw[line width=0.5mm, color=black] (1,0,0) -- (1,1,0);
    \draw[line width=0.5mm, color=black] (1,1,0) -- (1,1,1);
    \draw[line width=0.5mm, color=black] (0,1,1) -- (1,1,1);
    \draw[line width=0.5mm, color=black] (0,0,1) -- (0,1,1);        
\end{tikzpicture}
\qquad
\begin{tikzpicture}[scale=2,tdplot_main_coords]
    \coordinate (O) at (0,0,0);
    \tdplotsetcoord{P}{10}{10}{10}
    \draw[line width=0.5mm, color=black] (O) -- (1,0,0);
    \draw[line width=0.5mm, color=black] (O) -- (0,0,1);
    \draw[line width=0.5mm, color=black] (1,0,0) -- (1,0,1);
    \draw[line width=0.5mm, color=black] (1,0,1) -- (1,1,1);
    \draw[line width=0.5mm, color=black] (0,0,1) -- (1,0,1);
    \draw[line width=0.5mm, color=black] (1,0,0) -- (1,1,0);
    \draw[line width=0.5mm, color=black] (1,1,0) -- (1,1,1);
    \draw[line width=0.5mm, color=black] (0,1,1) -- (1,1,1);
    \draw[line width=0.5mm, color=black] (0,0,1) -- (0,1,1);
    \draw[line width=0.5mm, color=black] (0.5,0,0) -- (0.5,0,1);
    \draw[line width=0.5mm, color=black] (0.5,0,1) -- (0.5,1,1);    
\end{tikzpicture}
\qquad
\begin{tikzpicture}[scale=2,tdplot_main_coords]
    \coordinate (O) at (0,0,0);
    \tdplotsetcoord{P}{10}{10}{10}
    \draw[line width=0.5mm, color=black] (O) -- (1,0,0);
    \draw[line width=0.5mm, color=black] (O) -- (0,0,1);
    \draw[line width=0.5mm, color=black] (1,0,0) -- (1,0,1);
    \draw[line width=0.5mm, color=black] (1,0,1) -- (1,1,1);
    \draw[line width=0.5mm, color=black] (0,0,1) -- (1,0,1);
    \draw[line width=0.5mm, color=black] (1,0,0) -- (1,1,0);
    \draw[line width=0.5mm, color=black] (1,1,0) -- (1,1,1);
    \draw[line width=0.5mm, color=black] (0,1,1) -- (1,1,1);
    \draw[line width=0.5mm, color=black] (0,0,1) -- (0,1,1);
    \draw[line width=0.5mm, color=black] (0.5,0,0) -- (0.5,0,1);
    \draw[line width=0.5mm, color=black] (0.5,0,1) -- (0.5,1,1);    
    \draw[line width=0.5mm, color=black] (1,0.5,0) -- (1,0.5,1);    
    \draw[line width=0.5mm, color=black] (1,0.5,1) -- (0,0.5,1);    
\end{tikzpicture}
\qquad
\begin{tikzpicture}[scale=2,tdplot_main_coords]
    \coordinate (O) at (0,0,0);
    \tdplotsetcoord{P}{10}{10}{10}
    \draw[line width=0.5mm, color=black] (O) -- (1,0,0);
    \draw[line width=0.5mm, color=black] (O) -- (0,0,1);
    \draw[line width=0.5mm, color=black] (1,0,0) -- (1,0,1);
    \draw[line width=0.5mm, color=black] (1,0,1) -- (1,1,1);
    \draw[line width=0.5mm, color=black] (0,0,1) -- (1,0,1);
    \draw[line width=0.5mm, color=black] (1,0,0) -- (1,1,0);
    \draw[line width=0.5mm, color=black] (1,1,0) -- (1,1,1);
    \draw[line width=0.5mm, color=black] (0,1,1) -- (1,1,1);
    \draw[line width=0.5mm, color=black] (0,0,1) -- (0,1,1);     
    \draw[line width=0.5mm, color=black] (0.5,0,0) -- (0.5,0,1);
    \draw[line width=0.5mm, color=black] (0.5,0,1) -- (0.5,1,1);    
    \draw[line width=0.5mm, color=black] (1,0.5,0) -- (1,0.5,1);    
    \draw[line width=0.5mm, color=black] (1,0.5,1) -- (0,0.5,1);    
    \draw[line width=0.5mm, color=black] (0,0,0.5) -- (1,0,0.5);    
    \draw[line width=0.5mm, color=black] (1,0,0.5) -- (1,1,0.5);    
\end{tikzpicture}
\caption{Example of bisection for $\dpa=3$. The initial element of level 0 is bisected in the $x$-direction ($\dirb{0}=1$) to obtain two elements of level 1. These are then bisected in the $y$-direction ($\dirb{1} = 2$) to obtain four elements of level 2, which are bisected in the $z$-direction ($\dirb{2} = 3$) to get eight elements of level 3.}
\label{fig:bisect3d}
\end{figure}

A \emph{T-mesh in the index domain}, or simply \emph{index T-mesh}, is defined as $\Tmesh := \Tmesh_N$ by successively applying bisection for $k = 0, \ldots, N-1$, in the form
\begin{equation*}
\Tmesh_{k+1} := (\Tmesh_k \setminus \{\elemi_k\}) \, \cup \, \bisect_{\dirb{\levelT{\elemi_k}}}(\elemi_k), %\label{eq:Tmesh-bisection}
\end{equation*}
with $\elemi_k \in \Tmesh_k$. The index T-mesh defines a partition of the index domain $\Omindex$ into disjoint hyperrectangles. 
Noting that bisection is applied alternating the direction of refinement, it is easy to see that any element $\elemi \in \Tmesh$ can be written as $\Pi_{j=1}^\dpa (a_j,b_j)$, with $a_j, b_j \in \II_j^{k_j(\elemi)}$ and 
\[
k_j(\elemi) := \lfloor (\levelT{\elemi} + \dpa - j)/\dpa \rfloor.
\]
Thus, we can define its \emph{parametric image} as 
\[
\indtpar{\elemi} := \Pi_{j=1}^\dpa (\knot_{j,a_j}, \knot_{j,b_j}),
\]
where $\knot_{j,a_j}, \knot_{j,b_j} \in \kv_j^{k_j(\elemi)}$. 
With this definition, from the index T-mesh $\Tmesh$ we can infer a \emph{T-mesh in the parametric domain}, or \emph{parametric T-mesh}, which is given by the parametric images with non-zero measure of the elements in the index T-mesh
\begin{equation}
\Tmeshp := \{ \elemp = \indtpar{\elemi} : \elemi \in \Tmesh \text{ and } |\elemp| \not = 0\}. \label{eq:param-Tmesh}
\end{equation}
We plot in Figure~\ref{fig:param-Tmesh} the parametric T-mesh associated to the index T-mesh of Figure~\ref{fig:Tmesh}. Notice that any element in the parametric T-mesh has a corresponding element in the index T-mesh, while the opposite is not true. Therefore, we can define for any $\elemp \in \Tmeshp$ its \emph{index preimage}
\[
\partind{\elemp} := \elemi, \, \text{ with } \elemp = \indtpar{\elemi},
\]
and we can set the \emph{level} of $\elemp$ as the level of its index preimage, $\levelT{\elemp} := \levelT{\partind{\elemp}}$. Note that it always holds that $\partind{\elemp} \subset \Omip$.

\subsubsection{T-spline blending functions}
\label{sec:T-splines defined}
After defining the T-mesh, it remains to define the T-spline blending functions. We start by defining the \emph{region of active anchors}, which is usually called the \emph{active region}, as
\[
\AR := \Pi_{j=1}^\dpa (\lceil p_j/2 \rceil +1,  n_j+p_j+1-\lceil p_j/2 \rceil),
\]
and the set of \emph{anchors}, sometimes also called \emph{nodes},
\[
\nodes_{\bf p}(\Tmesh,{\bf T}^0) := \{ \anchor \in \overline\AR : \anchor \text{ vertex of some } \elemi \in \Tmesh \}.
\]
In Figure~\ref{fig:Tmesh} the region of active anchors is given by the white and light gray elements.
\begin{remark} \label{remark:degree}
We are restricting ourselves to the case of odd degree. For even degree, the anchors are associated to elements, while for mixed degree they are associated to either vertical or horizontal edges in the two-dimensional case, and to edges (two odd, one even degree) or faces (one odd, two even degrees) in the three-dimensional case, see, e.g., \cite{bbsv14} for details.
\end{remark}

For each element $\elemi = \Pi_{i=1}^\dpa (a_i,b_i) \in \Tmesh$, we define its \emph{skeleton} in the $j$-th direction, for $j= 1, \ldots, \dpa$, as
\[
\sk_{j}(\elemi) := \Pi_{i=1}^{j-1}[a_i,b_i] \times \{a_j,b_j\} \times \Pi_{i=j+1}^{\dpa}[a_i,b_i], 
\]
and the \emph{skeleton of a T-mesh} in the $j$-th direction as 
\[
\sk_{j}(\Tmesh) := \bigcup_{\elemi \in \Tmesh} \sk_{j}(\elemi).
\]
Then, to each anchor $\anchor = (z_1, \ldots, z_\dpa) \in \nodes_{\bf p}(\Tmesh,{\bf T}^0)$ and to each direction $j \in \{1,\ldots, \dpa\}$, we associate the corresponding ordered \emph{global index vector}
\[
\begin{array}{l}
\GIV_{j}(\anchor,\Tmesh) := \{ s \in [1,n_j+p_j+1] : \\
\qquad (z_1, \ldots, z_{j-1}, s, z_{j+1}, \ldots, z_\dpa) \in \sk_j(\Tmesh) \},
\end{array}
\]
and the \emph{local index vector} $\LIV_{j}(\anchor,\Tmesh) \subset \mathbb{R}^{p_j+2}$ being the vector of $p_j+2$ consecutive indices of $\GIV_{j}(\anchor,\Tmesh)$ having $z_j$ as its middle entry, in the $((p_j+3)/2)$-th position. This is equivalent to trace a line from the anchor parallel to the $j$-th axis, and considering in each direction the first $\lfloor (p_j+2)/2 \rfloor$ intersections with the skeleton of the T-mesh as in Figure~\ref{fig:Tmesh}. Once we have defined the local index vector, we define the \emph{local knot vector} in the $j$-th direction as
\[
\kv_j(\anchor,\Tmesh) := (\knot_{j,r})_{r \in \LIV_j(\anchor,\Tmesh)},
\]
and we remark that there exists an integer $k$ such that $\kv_j(\anchor,\Tmesh)$ is a subvector of $\kv_j^k$, not necessarily with consecutive indices. 
Recalling the notation from \eqref{eq:spline-1d}, we can now define the \emph{T-spline blending function} associated to each anchor as
\begin{equation} \label{eq:T-spline}
\hat B_{\anchor,\mathbf{p}}(\bparam) := \hat B[\kv_1(\anchor,\Tmesh)](\param_1) \ldots \hat B[\kv_\dpa(\anchor,\Tmesh)](\param_\dpa).
\end{equation}
Note that, in general, the restriction of a T-spline function to an element of $\Tmeshp$ is not a polynomial. 

Following \cite[Def.~7.5]{bbsv14}, we define the \emph{B\'ezier mesh} as the collection of maximal open sets $\elemp_{B} \subset \Omegap$ such that each function $\hat B_{\anchor,\mathbf{p}}$ restricted to $\elemp_{B}$ is a polynomial of degree $\mathbf{p}$. The elements of the B\'ezier mesh are called \emph{B\'ezier elements}. We note that, in the two-dimensional case, the B\'ezier mesh can be obtained by applying suitable extensions to the elements of the parametric T-mesh, see an example in Figure~\ref{fig:Bezier-mesh}, and \cite[Sect.~5.2]{sederberg2003} as well as \cite[Sect~7.3]{bbsv14} for more details. Although a similar construction is not available in the three-dimensional case, the B\'ezier mesh can be obtained from the local knot vectors of all the blending functions or, in the case of dual-compatible T-splines (see below), by using the perturbed regions defined in \cite[Sect.~5]{morgenstern16}. Note that the B\'ezier mesh is always finer than the parametric T-mesh.
%COMMENT@Rafa:Is it actually clear that the B\'ezier mesh is always finer than the T-mesh? I think this should be at least mentioned somehow here. I think it is clear. Take an element of the T-mesh. For even degree, the function anchored at the element is a polynomial on the element, with reduced continuity on the faces, so the elements of the B\'ezier mesh should be equal or smaller. For odd degree we can do something similar, considering all the basis functions anchored at the vertices of the element (they can be more than $2^d$).

\begin{figure}[ht] 
%\psfrag{z}[r][r]{z}
%\psfrag{T}[r][r]{T}
\begin{center}
\subfigure[Parametric T-mesh]{
\includegraphics[width=0.225\textwidth,trim=3cm 8mm 3cm 8mm, clip]{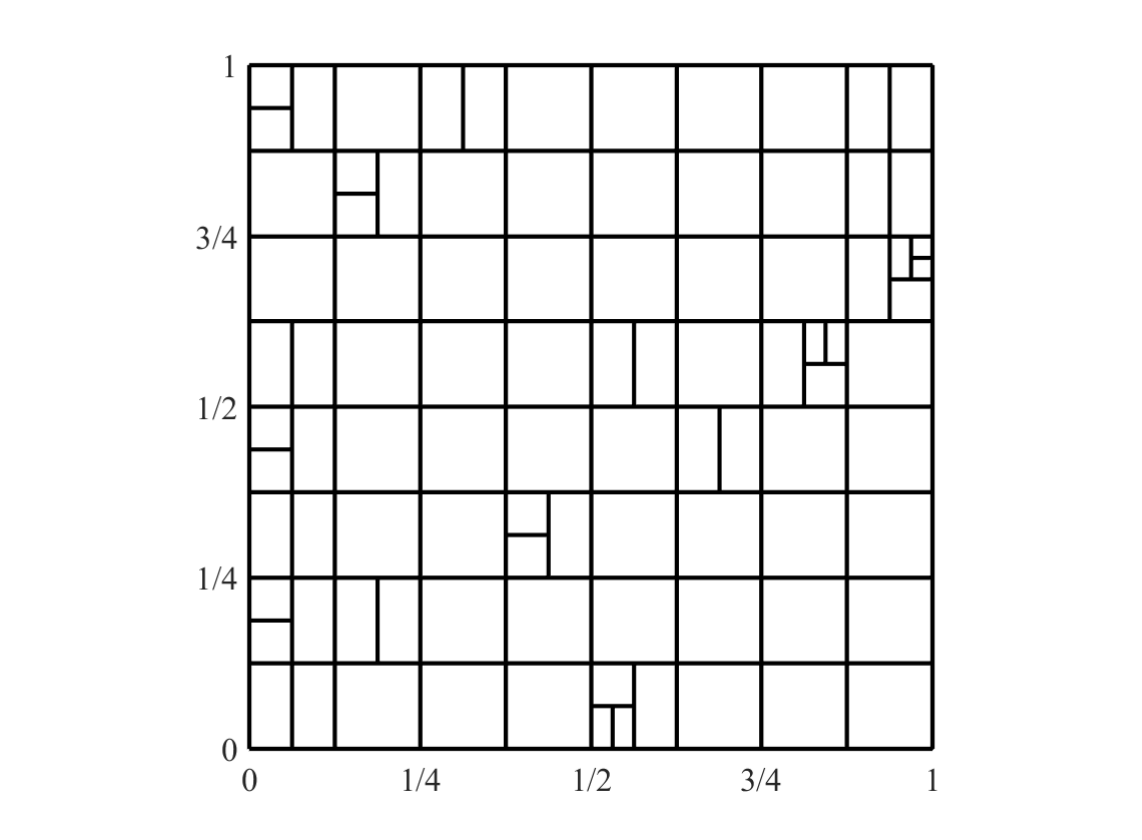} \label{fig:param-Tmesh}
}
\subfigure[B\'ezier mesh]{
\includegraphics[width=0.225\textwidth,trim=3cm 8mm 3cm 8mm, clip]{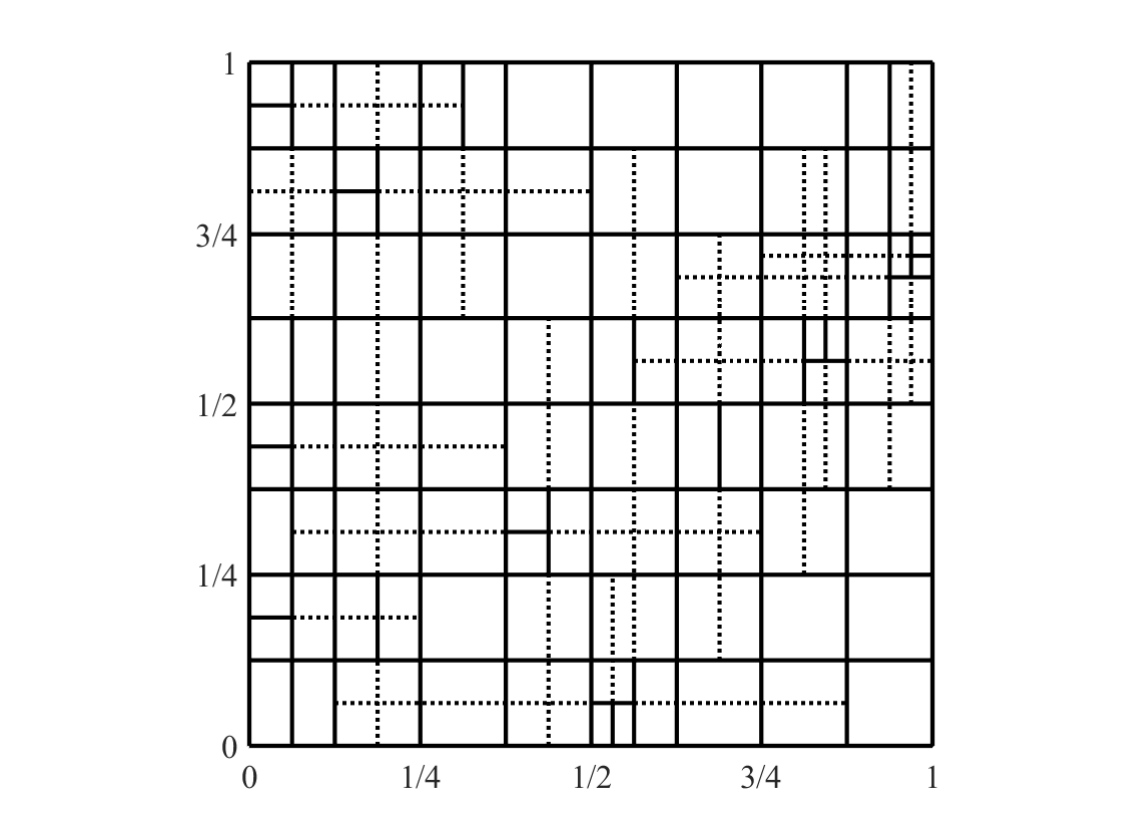} \label{fig:Bezier-mesh}
}
\end{center}
\caption{Parametric T-mesh and corresponding B\'ezier mesh, for the index T-mesh in Figure~\ref{fig:Tmesh} and degree ${\bf p} = (5,3)$.} \label{fig:T-mesh-Bezier}
\end{figure}

Finally, we define the \emph{T-spline space} as the space spanned by the T-spline blending functions, 
\[
\spT := \mathrm{span} \{ \hat B_{\anchor,\mathbf{p}} : \anchor \in \nodes_{\bf p}(\Tmesh,{\bf T}^0) \}.
\]
We notice that, in general, it is not guaranteed that the T-spline blending functions are linearly independent \cite{bcs10}. For this reason they are called \textit{blending} functions and not \textit{basis} functions. This gives the motivation to introduce dual-compatible T-splines.

\subsubsection{Dual-compatible T-splines}\label{sec:dual-compatible}
In order to obtain linearly independent T-spline blending functions, and to define a quasi-interpolant, we rely on the concept of \emph{dual-compatibility} as presented in \cite{bbsv14}. We start with the definition of overlap, see \cite[Def.~7.1]{bbsv14} and \cite[Proposition~6.1]{morgenstern16}. We remark that this definition is slightly different from the one in \cite{beirao2012,bbsv13}, which uses the local index vectors instead of the local knot vectors.

We say that two local knot vectors $\kv' = (\knot'_1, \ldots, \knot'_{p+2})$ and $\kv'' = (\knot''_1, \ldots, \knot''_{p+2})$ \emph{overlap} if they are both sub-vectors, with consecutive indices, of the same knot vector. That is, there exists a knot vector $\kv = (\knot_1, \ldots, \knot_s)$ and two integers $s', s''$ such that
\[
\knot'_i = \knot_{i+s'}, \quad 
\knot''_i = \knot_{i+s''}, \quad \text{ for all }  i = 1, \ldots, p+2.
\]
Furthermore, we say that the index T-mesh $\Tmesh$, along with the knot vectors $\kv_j^k$, is a \emph{dual-compatible T-mesh} if for every $\anchor', \anchor'' \in \nodes_{\bf p}(\Tmesh,{\bf T}^0)$ with $\anchor' \not = \anchor''$, there exists a direction $j$ such that the local knot vectors $\kv_j(\anchor',\Tmesh)$ and $\kv_j(\anchor'',\Tmesh)$ are different and overlap, cf.~\cite[Def.~7.2]{bbsv14}. We say that it is a \emph{strongly dual-compatible T-mesh} if for every $\anchor', \anchor'' \in \nodes_{\bf p}(\Tmesh,{\bf T}^0)$ with $\anchor' \not = \anchor''$, their local knot vectors overlap in $\dpa-1$ directions, cf. \cite[Def.~6.4]{morgenstern16}. For $\dpa=2$ both conditions are equivalent, while for $\dpa=3$ any strongly dual-compatible T-mesh is also dual-compatible, see the remark in \cite[Section~6]{morgenstern16}.

Dual-compatible T-meshes take their name from the fact that they allow the construction of a dual basis. Using the notation introduced in \eqref{eq:dual-basis-kv}, we define, for each anchor $\anchor \in \nodes_{\bf p}(\Tmesh,{\bf T}^0)$, the dual functional
\begin{equation} \label{eq:T-dual-basis}
\hat \lambda_{\anchor,\mathbf{p}} := 
\hat \lambda^{\rm dB}[\kv_1(\anchor,\Tmesh)] \otimes \ldots \otimes \hat \lambda^{\rm dB}[\kv_\dpa(\anchor,\Tmesh)].
\end{equation}
It can be shown that the dual functionals \eqref{eq:T-dual-basis} form a dual basis, see \cite[Propoposition~7.3]{bbsv14}. The dual basis allows to prove that dual-compatible T-splines are a partition of unity, linearly independent \cite[Proposition~7.4]{bbsv14}, and also locally linearly independent \cite[Theorem~3.2]{Li2015}.

\begin{proposition}\label{prop:Tsplines basis}
Let $\Tmesh$ be dual-compatible. 
Then, the functions $\{\hat B_{\anchor,\mathbf{p}} : \anchor \in\nodes_{\bf p}(\Tmesh,{\bf T}^0) \}$ are linearly independent, and also locally linearly independent, i.e., they are linearly independent on any open set $O \subset \Omegap$.
Moreover, if the constant functions are contained in $\spT$,  these functions form a partition of unity. 
\end{proposition}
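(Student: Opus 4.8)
The whole proposition rests on the biorthogonality between the dual functionals \eqref{eq:T-dual-basis} and the blending functions \eqref{eq:T-spline}. The plan is first to establish that
\[
\hat\lambda_{\anchor',\mathbf{p}}(\hat B_{\anchor'',\mathbf{p}}) = \delta_{\anchor'\anchor''}
\qquad\text{for all } \anchor',\anchor''\in\nodes_{\bf p}(\Tmesh,{\bf T}^0),
\]
and then to read off each of the three assertions from it. Since both $\hat\lambda_{\anchor',\mathbf{p}}$ and $\hat B_{\anchor'',\mathbf{p}}$ factorize as tensor products over the $\dpa$ coordinate directions, the pairing factorizes as
\[
\hat\lambda_{\anchor',\mathbf{p}}(\hat B_{\anchor'',\mathbf{p}}) = \prod_{j=1}^{\dpa} \hat\lambda^{\,\rm dB}[\kv_j(\anchor',\Tmesh)]\big(\hat B[\kv_j(\anchor'',\Tmesh)]\big).
\]
For $\anchor'=\anchor''$ the univariate de Boor dual-basis property makes every factor equal to $1$, so the product is $1$. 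For $\anchor'\neq\anchor''$, dual-compatibility supplies a direction $j$ in which the local knot vectors $\kv_j(\anchor',\Tmesh)$ and $\kv_j(\anchor'',\Tmesh)$ are distinct and overlap; being distinct consecutive subvectors of one common knot vector, they sit at different offsets, so the de Boor functional attached to the first annihilates the B-spline attached to the second, and this single vanishing factor forces the whole product to be $0$.

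With biorthogonality in hand, linear independence is immediate: applying $\hat\lambda_{\anchor',\mathbf{p}}$ to a vanishing combination $\sum_{\anchor}c_{\anchor}\hat B_{\anchor,\mathbf{p}}=0$ yields $c_{\anchor'}=0$ for every anchor. The partition of unity, under the hypothesis $1\in\spT$, follows along the same lines: writing $1=\sum_{\anchor}c_{\anchor}\hat B_{\anchor,\mathbf{p}}$ and applying $\hat\lambda_{\anchor',\mathbf{p}}$ gives $c_{\anchor'}=\hat\lambda_{\anchor',\mathbf{p}}(1)$. Because the univariate B-splines of each local knot vector sum to $1$ and the corresponding de Boor functionals form a dual basis, each univariate factor reproduces constants, i.e.\ $\hat\lambda^{\,\rm dB}[\kv_j(\anchor',\Tmesh)](1)=1$, hence $\hat\lambda_{\anchor',\mathbf{p}}(1)=1$ and $c_{\anchor'}=1$ for all anchors, which is exactly $\sum_{\anchor}\hat B_{\anchor,\mathbf{p}}=1$.

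The genuinely delicate claim is local linear independence, and I expect this to be the main obstacle, precisely because the global functionals $\hat\lambda_{\anchor,\mathbf{p}}$ are not localized to an arbitrary open set $O$ and therefore cannot be applied to a combination that vanishes only on $O$. The plan is to localize: on each B\'ezier element the blending functions that are nonzero there restrict to genuine tensor-product polynomials assembled from univariate B-spline segments, so that the problem collapses to the tensor-product situation, where local linear independence is inherited from the univariate local linear independence recalled in Section~\ref{sec:univariate-properties} together with the standard fact that (local) linear independence is preserved under tensorization and under restriction to a subbox. Choosing a B\'ezier cell contained in $O$ and running this reduction annihilates the coefficients one direction at a time. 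The full argument, which must carefully track how the local knot vectors of different anchors interact on a common B\'ezier cell, is carried out in \cite[Theorem~3.2]{Li2015}, to which the detailed verification is deferred.
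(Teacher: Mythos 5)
Your proposal is correct and follows essentially the same route as the paper: the paper establishes this proposition by appealing to the dual-basis biorthogonality of \cite[Propositions~7.3--7.4]{bbsv14} — exactly the tensor-product argument you reconstruct, which gives linear independence and, via $\hat\lambda^{\,\rm dB}[\kv_j(\anchor,\Tmesh)](1)=1$, the partition of unity — and it defers local linear independence to \cite[Theorem~3.2]{Li2015}, the same reference you invoke for the delicate localization step.
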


%COMMENT@Rafa: You once mentioned that they are even locally linearly independent.
%Does this mean that they are linearly independent on each open set? Should we state this here as well? Do we have a reference?
%Yes, they are, I don't remember why I decided not to add it. I have not found the result in the references I have checked (Acta Numerica, the dual-compatible papers, the two papers by Scott and Li, and Philipp's thesis). The proof can be done using Lemma~\ref{lem:lli}, and the fact that T-splines reproduce polynomials. But if we don't find the reference, I prefer not to spend time on it.

The dual basis also allows to prove the following result, closely related to local linear independence, see \cite[Proposition~7.6]{bbsv14} and \cite[Theorem~4.1]{Li2015}.
\begin{lemma} \label{lem:lli}
Let $\Tmesh$ be a dual-compatible T-mesh. Then, for any B\'ezier element $\elemp_{B}$ there are at most $(p_1+1) \ldots (p_\dpa+1)$ basis functions that do not vanish in $\elemp_{B}$.
\end{lemma}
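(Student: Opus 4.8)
The plan is to combine two ingredients: on a B\'ezier element every blending function is a polynomial of fixed tensor-product degree, and dual-compatible T-splines are \emph{locally} linearly independent by Proposition~\ref{prop:Tsplines basis}. A one-line dimension count then yields the bound, exactly mirroring the elementary tensor-product fact that at most $(p_1+1)\cdots(p_\dpa+1)$ B-splines are non-zero on a single element.

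First I would unwind the definition of the B\'ezier mesh. By construction, each B\'ezier element $\elemp_{B}\subset\Omegap$ is a maximal open set on which every $\hat B_{\anchor,\mathbf{p}}$ restricts to a polynomial of degree $\mathbf{p}$; that is, $\hat B_{\anchor,\mathbf{p}}|_{\elemp_{B}}$ lies in the tensor-product polynomial space $\mathbb{Q}_{\mathbf p}:=\myspan\{\param_1^{\alpha_1}\cdots\param_\dpa^{\alpha_\dpa}:0\le\alpha_j\le p_j\}$. This space has dimension $\dim\mathbb{Q}_{\mathbf p}=(p_1+1)\cdots(p_\dpa+1)$, which is precisely the asserted bound. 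Moreover, since $\elemp_{B}$ is open and non-empty, restriction of polynomials to $\elemp_{B}$ is injective, so $\mathbb{Q}_{\mathbf p}|_{\elemp_{B}}$ again has dimension $(p_1+1)\cdots(p_\dpa+1)$. It therefore suffices to bound the number of blending functions whose restriction to $\elemp_{B}$ is non-zero by this dimension.

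Next I would invoke local linear independence. As $\Tmesh$ is dual-compatible, Proposition~\ref{prop:Tsplines basis} guarantees that the blending functions are linearly independent on every open set $O\subset\Omegap$, in the sense that $\{\hat B_{\anchor,\mathbf{p}}|_{O}:\hat B_{\anchor,\mathbf{p}}|_{O}\not\equiv 0\}$ is linearly independent on $O$. Applying this with $O=\elemp_{B}$ shows that the family of restrictions to $\elemp_{B}$ of exactly those blending functions that do not vanish on $\elemp_{B}$ is linearly independent. These restrictions are all elements of the vector space $\mathbb{Q}_{\mathbf p}|_{\elemp_{B}}$, so, since a linearly independent subset of a space of dimension $D$ has at most $D$ members, their number is bounded by $(p_1+1)\cdots(p_\dpa+1)$, which is the claim.

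There is no real obstacle beyond correctly matching definitions: the functions counted in the lemma (``basis functions that do not vanish in $\elemp_{B}$'') must coincide with the set selected in the local-linear-independence statement of Proposition~\ref{prop:Tsplines basis}, so that the linear-independence-implies-cardinality-bound argument applies verbatim. The only point deserving a word of care is the injectivity of polynomial restriction to the open set $\elemp_{B}$, which ensures that passing from $\mathbb{Q}_{\mathbf p}$ to $\mathbb{Q}_{\mathbf p}|_{\elemp_{B}}$ does not decrease the dimension; everything else is immediate from the earlier results and does not require any computation.
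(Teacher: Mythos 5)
Your proof is correct, and it follows exactly the route the paper intends: the paper gives no self-contained proof of Lemma~\ref{lem:lli} but derives it from local linear independence (Proposition~\ref{prop:Tsplines basis}) by citing \cite{bbsv14,Li2015}, which is precisely your argument of restricting to the B\'ezier element, noting all restrictions lie in the tensor-product polynomial space of dimension $(p_1+1)\cdots(p_\dpa+1)$, and bounding the cardinality of a linearly independent family by that dimension. Your added remark on injectivity of polynomial restriction to a non-empty open set is the right point of care and closes the only gap in this dimension-counting argument.
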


Moreover, from the dual basis we can define the quasi-interpolant %$\projT{\mathbf{p}} : L^2(\Omegap) \rightarrow \spT$, that for any $ f \in L^2(\Omegap)$ gives
\begin{equation} \label{eq:proj-Tsp}
\projT{\mathbf{p}} : L^2(\Omegap) \rightarrow \spT,  \,\hat v \mapsto \sum_{\anchor \in \nodes_{\bf p}(\Tmesh,{\bf T}^0)} \hat \lambda_{\anchor,\mathbf{p}}(\hat v) \hat B_{\anchor,\mathbf{p}}.
\end{equation}
According to \cite[Proposition~7.3]{bbsv14}, this operator is a projector.
\begin{proposition}\label{prop:projector}
Let $\Tmesh$ be a dual-compatible T-mesh. Then, the functionals \eqref{eq:T-dual-basis} form a dual basis, and the operator \eqref{eq:proj-Tsp} is a projector in the sense that
\[
\projT{\mathbf{p}}\hat v = \hat v \; \text{ for all } \hat v \in \spT.
\]
\end{proposition}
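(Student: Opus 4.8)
The plan is to prove the single duality identity
\[
\hat\lambda_{\anchor,\mathbf{p}}(\hat B_{\anchor',\mathbf{p}}) = \delta_{\anchor,\anchor'}
\qquad\text{for all }\anchor,\anchor'\in\nodes_{\bf p}(\Tmesh,{\bf T}^0),
\]
from which every assertion follows at once: it says exactly that the functionals \eqref{eq:T-dual-basis} form a dual basis, it forces linear independence of the blending functions (apply $\hat\lambda_{\anchor,\mathbf{p}}$ to a vanishing linear combination), and it yields the projection property. Since both $\hat B_{\anchor,\mathbf{p}}$ in \eqref{eq:T-spline} and $\hat\lambda_{\anchor,\mathbf{p}}$ in \eqref{eq:T-dual-basis} are defined as tensor products over the $\dpa$ coordinate directions, evaluating the functional against the blending function factorizes, and I would first record
\[
\hat\lambda_{\anchor,\mathbf{p}}(\hat B_{\anchor',\mathbf{p}})
= \prod_{j=1}^{\dpa}
\hat\lambda^{\rm dB}[\kv_j(\anchor,\Tmesh)]\big(\hat B[\kv_j(\anchor',\Tmesh)]\big).
\]
This reduces the whole statement to a univariate question about a pair of local knot vectors.

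The core step, and the one requiring the most care, is the univariate local duality: if two length-$(p+2)$ local knot vectors $\kv'$ and $\kv''$ \emph{overlap} in the sense of Section~\ref{sec:dual-compatible}, then $\hat\lambda^{\rm dB}[\kv'](\hat B[\kv''])$ equals $1$ if $\kv'=\kv''$ and $0$ otherwise. To see this I would use that both $\hat B[\cdot]$ (by \eqref{eq:spline-1d}) and $\hat\lambda^{\rm dB}[\cdot]$ (by \eqref{eq:dual-basis-kv}, since $\psi_i$, $\phi_i$, $G_i$ in \eqref{eq:lambda-hat-shumacker-1D} depend only on $\knot_i,\dots,\knot_{i+p+1}$) are determined solely by their local knot vector. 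By the overlap property there is a common knot vector $\kv=(\knot_1,\dots,\knot_s)$ and shifts $s',s''$ with $\kv'=(\knot_{1+s'},\dots,\knot_{p+2+s'})$ and $\kv''=(\knot_{1+s''},\dots,\knot_{p+2+s''})$. Inside $\kv$ these are precisely the local knot vectors of the B-splines $\hat B_{1+s',p}$ and $\hat B_{1+s'',p}$, and $\hat\lambda^{\rm dB}[\kv']$ coincides with the de Boor functional $\hat\lambda^{\rm dB}_{1+s',p}$ built from $\kv$. The defining duality of the de Boor functionals for a single knot vector, namely $\hat\lambda^{\rm dB}_{i,p}(\hat B_{m,p})=\delta_{im}$ (see \cite[Sect.~4.6]{schumaker07}), then gives $\hat\lambda^{\rm dB}[\kv'](\hat B[\kv''])=\delta_{(1+s'),(1+s'')}$; here $\kv'\neq\kv''$ forces $s'\neq s''$ and hence the value $0$, while $\kv'=\kv''$ gives $1$. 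The only delicate point is to check that passing from the abstract local knot vector to its realization inside the common $\kv$ changes neither the B-spline nor the functional, which is exactly the index-independence recorded above.

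Finally I would assemble the pieces. For $\anchor\neq\anchor'$, dual-compatibility of $\Tmesh$ supplies a direction $j^{*}$ in which $\kv_{j^{*}}(\anchor,\Tmesh)$ and $\kv_{j^{*}}(\anchor',\Tmesh)$ are different and overlap; the univariate step makes that factor vanish, so the product vanishes. For $\anchor=\anchor'$ every factor has $\kv'=\kv''$ and equals $1$, so the product equals $1$; this establishes the duality identity of the first display and hence the dual basis claim. The projector property is then immediate: any $\hat v\in\spT$ can be written as $\hat v=\sum_{\anchor'}c_{\anchor'}\hat B_{\anchor',\mathbf{p}}$, whence $\hat\lambda_{\anchor,\mathbf{p}}(\hat v)=c_{\anchor}$ by linearity and duality, and therefore $\projT{\mathbf{p}}\hat v=\sum_{\anchor}c_{\anchor}\hat B_{\anchor,\mathbf{p}}=\hat v$.
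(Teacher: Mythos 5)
Your proof is correct and follows essentially the same route as the paper, which gives no in-text argument but defers to \cite[Proposition~7.3]{bbsv14}: there, as in your proposal, the dual-basis identity is reduced via the tensor-product structure to the univariate fact that two overlapping local knot vectors embed as consecutive subvectors of a common knot vector, where the classical de Boor duality $\hat\lambda^{\rm dB}_{i,p}(\hat B_{m,p})=\delta_{im}$ applies, and dual-compatibility supplies, for distinct anchors, the direction whose factor vanishes. The projector property then follows exactly as you conclude.
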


For each element $\elemp$ of the parametric T-mesh $\Tmeshp$, we define the set of anchors such that their corresponding basis functions do not vanish in $\elemp$, as
\begin{align*}
& \nodes(\elemp) := \{ \anchor \in \nodes_{\bf p}(\Tmesh,{\bf T}^0): \supp (\hat B_{\anchor,\mathbf{p}}) \cap \elemp \not = \emptyset \}.
%& \nodes(\elemp_B) := \{ \anchor \in \nodes_{\bf p}(\Tmesh,{\bf T}^0): \supp (\hat B_{\anchor,\mathbf{p}}) \cap \elemp_B \not = \emptyset \}.
\end{align*}
Analogously to the definition in \eqref{eq:supp_ext} for the B-spline case, we define the \emph{support extension} as the union of supports of basis functions that do not vanish on $\elemp$ 
%or in $\elemp_B$, 
i.e.,
\begin{align*}
& \sext{\elemp} := \bigcup_{\anchor \in \nodes(\elemp)} \supp (\hat B_{\anchor,\mathbf{p}}), 
%& \sext{\elemp_B} := \bigcup_{\anchor \in \nodes(\elemp_B)} \supp (\hat B_{\anchor,\mathbf{p}}).
\end{align*}
For a B\'ezier element $\elemp_B$, we define analogously $\nodes(\elemp_B)$ and $\sext{\elemp_B}$, by simply replacing $\elemp$ by $\elemp_B$ in the definitions.

Then, from the definition of the dual functionals, as an immediate corollary of Proposition~\ref{prop:projector}, the quasi-interpolant is a local projector. 
\begin{corollary}\label{corol:local-projector-Tsplines}
Let $\Tmesh$ be a dual-compatible T-mesh, and $\Tmeshp$ its associated parametric T-mesh. For any $\elemp \in \Tmeshp$, it holds that 
\[
(\projT{\mathbf{p}} \hat v)|_{\elemp} = \hat v|_{\elemp} \quad \text{if } \hat v|_{\sext{\elemp}} \in \spT|_{\sext{\elemp}}.
\]
\end{corollary}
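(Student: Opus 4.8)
The plan is to exploit two facts already established: that $\projT{\mathbf{p}}$ is a projector onto $\spT$ (Proposition~\ref{prop:projector}), and that the dual functionals $\hat\lambda_{\anchor,\mathbf{p}}$ are \emph{local}, meaning the value $\hat\lambda_{\anchor,\mathbf{p}}(\hat v)$ depends only on the restriction of $\hat v$ to $\supp(\hat B_{\anchor,\mathbf{p}})$. The latter is inherited from the univariate de~Boor functionals, for which the excerpt records the implication $\hat v|_{\supp(\hat B_{i,p})}=0 \implies \hat\lambda^{\,\rm dB}_{i,p}(\hat v)=0$; since $\hat\lambda_{\anchor,\mathbf{p}}$ is the tensor product~\eqref{eq:T-dual-basis} of such univariate functionals and $\supp(\hat B_{\anchor,\mathbf{p}})$ is the corresponding tensor product of univariate supports, the multivariate functional annihilates any $\hat v$ vanishing on $\supp(\hat B_{\anchor,\mathbf{p}})$, hence depends only on $\hat v$ there.

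First I would restrict the defining sum~\eqref{eq:proj-Tsp} to the element $\elemp$. Only the anchors whose basis functions are nonzero on $\elemp$ survive, so that
\[
(\projT{\mathbf{p}}\hat v)|_{\elemp}=\sum_{\anchor\in\nodes(\elemp)}\hat\lambda_{\anchor,\mathbf{p}}(\hat v)\,\hat B_{\anchor,\mathbf{p}}|_{\elemp}.
\]
By the very definition of the support extension, $\supp(\hat B_{\anchor,\mathbf{p}})\subseteq\sext{\elemp}$ for every $\anchor\in\nodes(\elemp)$, so by locality each coefficient $\hat\lambda_{\anchor,\mathbf{p}}(\hat v)$ depends only on $\hat v|_{\sext{\elemp}}$.

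Next I would invoke the hypothesis $\hat v|_{\sext{\elemp}}\in\spT|_{\sext{\elemp}}$: there is a global $\hat w\in\spT$ with $\hat w|_{\sext{\elemp}}=\hat v|_{\sext{\elemp}}$. Since the functionals appearing above only see values on $\sext{\elemp}$, we get $\hat\lambda_{\anchor,\mathbf{p}}(\hat v)=\hat\lambda_{\anchor,\mathbf{p}}(\hat w)$ for all $\anchor\in\nodes(\elemp)$, whence $(\projT{\mathbf{p}}\hat v)|_{\elemp}=(\projT{\mathbf{p}}\hat w)|_{\elemp}$. Using that $\projT{\mathbf{p}}$ reproduces $\hat w\in\spT$ exactly and that $\elemp\subseteq\sext{\elemp}$, where $\hat w=\hat v$, I would conclude
\[
(\projT{\mathbf{p}}\hat v)|_{\elemp}=(\projT{\mathbf{p}}\hat w)|_{\elemp}=\hat w|_{\elemp}=\hat v|_{\elemp},
\]
which is the claim. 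This mirrors the hierarchical argument behind Corollary~\ref{cor:local projection}.

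The only genuinely delicate point is justifying the locality of the tensor-product functional $\hat\lambda_{\anchor,\mathbf{p}}$ carefully, i.e.\ that the support of the product functional is exactly the product of the univariate supports, so that vanishing of $\hat v$ on $\supp(\hat B_{\anchor,\mathbf{p}})$ forces $\hat\lambda_{\anchor,\mathbf{p}}(\hat v)=0$; everything else is a direct bookkeeping of which anchors act on $\elemp$ together with the projector property. Since the T-spline local knot vectors $\kv_j(\anchor,\Tmesh)$ need not have consecutive indices in $\kv_j^k$, one should note that the de~Boor functional is built from the \emph{local} knot vector~\eqref{eq:dual-basis-kv} and that its support coincides with $[\knot_1,\knot_{p+2}]$ of that local vector, which is exactly the univariate factor of $\supp(\hat B_{\anchor,\mathbf{p}})$; this makes the locality and the support identification unambiguous.
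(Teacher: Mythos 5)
Your proof is correct and follows exactly the route the paper intends: the paper states this corollary as immediate ``from the definition of the dual functionals'' (their locality, which you verify via the de~Boor construction on local knot vectors) combined with the global projector property of Proposition~\ref{prop:projector}, which is precisely your comparison argument with a global $\hat w\in\spT$ agreeing with $\hat v$ on $\sext{\elemp}$. Your added care about the support of the tensor-product functional matching the support of the blending function is the right detail to pin down, and nothing in your argument deviates from or adds assumptions beyond what the paper uses.
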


Moreover, we have the following stability result, which is proved in \cite[Proposition~7.7] {bbsv14}.
%COMMENT@Rafa: I think for general dual-compatible meshes, the involved constant $C$ depends additionally on the maximal ratio of $|\hat Q|/|\supp \,\hat B_{\anchor,\mathbf{p}}|$. Clearly, for B\'ezier elements, this ratio is uniformly bounded, but not for elements $\hat Q\in\hat\QQ$.
%However, for admissible meshes, combining directly  \cite[Proposition~7.7] {bbsv14} and Lemma~\ref{lemma:2beziers} gives the assertion for uniform $C$.
%DISCUSS: I am not sure to understand the problem here.COMMENT: The proof of \cite[Proposition~7.7]{bbsv14} is given on the B\'ezier mesh only.
%I think for the T-mesh, one gets other constants.
% If one wants to copy the proof for the mesh itself, it is not clear that}  $Q\subseteq Q_A$ in the corresponding proof.}

\begin{proposition} \label{prop:tsp-proj-bezier}
Let $\Tmesh$ be a dual-compatible T-mesh, and let $\Tmeshp$ be the corresponding T-mesh in the parametric domain. Then, for all B\'ezier element $\elemp_B$ and all $\hat v \in L^2(\hat\Omega)$, we have that
%Let $\Tmesh$ be a dual-compatible T-mesh, and let $\Tmeshp$ be the corresponding T-mesh in the parametric domain. Then, for all $\elemp \in \Tmeshp$ and all $\hat v \in L^2(\hat\Omega)$, we have that
\[
%\| \projT{\mathbf{p}}\hat v \|_{L^2(\elemp)} \le C \| \hat v \|_{L^2(\sext{\elemp})},
\| \projT{\mathbf{p}}\hat v \|_{L^2(\elemp_B)} \le C \| \hat v \|_{L^2(\sext{\elemp_B})},
\]
%\[
%\| (1 - \projT{\mathbf{p}})f \|_{L^2(\elemp)} \le C_2 (h_{\mhyp{\elemp}})^s \| f \|_{H^s(\mhyp{\elemp})},
%\]
where $C > 0$ depends only on the dimension $\dpa$, the degrees $p_j$, and the coarsest knot vectors $\kv_j^0$.
\end{proposition}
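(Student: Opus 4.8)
The plan is to exploit that the dual functionals $\hat\lambda_{\anchor,\mathbf p}$ are tensor products of univariate de~Boor functionals built on the local knot vectors, so that the stability reduces, anchor by anchor, to the univariate estimate already available through Proposition~\ref{prop:schumaker bezier}. First I would localize the sum defining $\projT{\mathbf p}$: since $\hat B_{\anchor,\mathbf p}$ vanishes on $\elemp_B$ unless $\anchor\in\nodes(\elemp_B)$, only the anchors in $\nodes(\elemp_B)$ contribute on $\elemp_B$, and Lemma~\ref{lem:lli} bounds their number by $(p_1+1)\cdots(p_\dpa+1)$, a constant depending only on $\mathbf p$. A triangle inequality then gives
\[
\|\projT{\mathbf p}\hat v\|_{L^2(\elemp_B)}\le\sum_{\anchor\in\nodes(\elemp_B)}|\hat\lambda_{\anchor,\mathbf p}(\hat v)|\,\|\hat B_{\anchor,\mathbf p}\|_{L^2(\elemp_B)},
\]
so it suffices to control each summand by $C\,\|\hat v\|_{L^2(\sext{\elemp_B})}$.

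For the blending-function factor I would use that each univariate B-spline $\hat B[\kv_j(\anchor,\Tmesh)]$ is nonnegative and bounded by $1$, hence $0\le\hat B_{\anchor,\mathbf p}\le1$ and $\|\hat B_{\anchor,\mathbf p}\|_{L^2(\elemp_B)}\le|\elemp_B|^{1/2}$. For the functional factor, the key estimate I would establish is the scaling bound
\[
|\hat\lambda_{\anchor,\mathbf p}(\hat v)|\le C\,|\supp\hat B_{\anchor,\mathbf p}|^{-1/2}\,\|\hat v\|_{L^2(\supp\hat B_{\anchor,\mathbf p})},
\]
which is meaningful because $\hat\lambda_{\anchor,\mathbf p}$ sees $\hat v$ only on $\supp\hat B_{\anchor,\mathbf p}$ (recall that \eqref{eq:lambda-hat-shumacker-1D} shows the de~Boor functional annihilates functions vanishing on the support of the associated B-spline).

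To prove this scaling bound I would argue direction by direction. Each univariate factor $\hat\lambda^{\rm dB}[\kv_j(\anchor,\Tmesh)]$ has the form $\int \hat v\,D^{p_j+1}\psi$ over the support of $\hat B[\kv_j(\anchor,\Tmesh)]$; Cauchy--Schwarz together with the standard scaling $\|D^{p_j+1}\psi\|_{L^2}\simeq h^{-1/2}$, where $h$ denotes the length of that support, yields the univariate estimate with a constant depending only on $p_j$ and on the local quasi-uniformity \eqref{eq:lqi} of the local knot vector; tensorizing over $j=1,\ldots,\dpa$ gives the displayed bound. The hard part will be verifying that the local knot vectors $\kv_j(\anchor,\Tmesh)$ are uniformly locally quasi-uniform: their knots arise from $\kv_j^0$ by dyadic bisection and may skip intermediate stages, so I would show that the bisection structure, together with the alternating-direction refinement, bounds the ratios of consecutive knot spans solely in terms of $\kv_j^0$. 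This is exactly where the stated dependence of $C$ on the coarsest knot vectors enters.

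Finally I would combine the pieces through the geometric inclusions. If $\anchor\in\nodes(\elemp_B)$, then $\hat B_{\anchor,\mathbf p}$ does not vanish identically on the Bézier element $\elemp_B$, on which it is a single polynomial; hence $\overline{\elemp_B}\subseteq\supp\hat B_{\anchor,\mathbf p}\subseteq\sext{\elemp_B}$. The first inclusion gives $|\elemp_B|^{1/2}|\supp\hat B_{\anchor,\mathbf p}|^{-1/2}\le1$ and the second gives $\|\hat v\|_{L^2(\supp\hat B_{\anchor,\mathbf p})}\le\|\hat v\|_{L^2(\sext{\elemp_B})}$. Multiplying the two per-anchor bounds and summing over the at most $(p_1+1)\cdots(p_\dpa+1)$ contributing anchors then yields $\|\projT{\mathbf p}\hat v\|_{L^2(\elemp_B)}\le C\,\|\hat v\|_{L^2(\sext{\elemp_B})}$ with $C$ depending only on $\dpa$, the degrees $p_j$, and $\kv_j^0$, as claimed.
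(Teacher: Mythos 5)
Your architecture is the right one, and it is essentially the proof of the result the paper itself does not reprove but cites (\cite[Proposition~7.7]{bbsv14}): restrict on $\elemp_B$ to the at most $(p_1+1)\cdots(p_\dpa+1)$ anchors of $\nodes(\elemp_B)$ via Lemma~\ref{lem:lli}, use $0\le\hat B_{\anchor,\mathbf p}\le1$ to get $\|\hat B_{\anchor,\mathbf p}\|_{L^2(\elemp_B)}\le|\elemp_B|^{1/2}$, bound each dual functional by $C\,|\supp\hat B_{\anchor,\mathbf p}|^{-1/2}\|\hat v\|_{L^2(\supp\hat B_{\anchor,\mathbf p})}$, and conclude with the inclusions $\overline{\elemp_B}\subseteq\supp\hat B_{\anchor,\mathbf p}\subseteq\sext{\elemp_B}$ (your justification of the first inclusion, via the fact that a polynomial not vanishing identically on the open set $\elemp_B$ is nonzero a.e.\ there, is correct, and the second is the definition of $\sext{\elemp_B}$).

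The step that would fail is precisely the one you flag as ``the hard part.'' You claim the univariate de~Boor bound carries a constant depending on local quasi-uniformity of the local knot vectors $\kv_j(\anchor,\Tmesh)$, and you propose to prove that this quasi-uniformity is uniform using the alternating-direction bisection. That cannot work: the proposition concerns \emph{arbitrary} dual-compatible T-meshes, whereas alternating-direction refinement with level control (Proposition~\ref{prop:lqiT}) is a feature of the \emph{admissible} meshes of Algorithm~\ref{alg:param-Tmesh}, not of dual-compatibility. Nothing in the dual-compatibility condition bounds ratios of consecutive spans of a local knot vector: refining a subregion to level $k$ inside an otherwise coarse mesh produces, for anchors adjacent to the refined region, local knot vectors mixing spans of order $2^{-k}$ and of order one, and such configurations are not excluded by dual-compatibility; any constant obtained along your route would then depend on $k$, not only on $\dpa$, $p_j$, $\kv_j^0$. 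Fortunately, the step is also unnecessary. In \eqref{eq:lambda-hat-shumacker-1D} the transition function $G_i$ is scaled to the \emph{whole} support $[\knot_i,\knot_{i+p+1}]$, of length $h$; writing $D^{p+1}(G_i\phi_i)=\sum_{j\ge1}\binom{p+1}{j}(D^jG_i)(D^{p+1-j}\phi_i)$ (the $j=0$ term vanishes since $\phi_i$ has degree $p$) and using $|D^jG_i|\le C(p)\,h^{-j}$ together with $|D^{p+1-j}\phi_i|\le C(p)\,h^{j-1}$ on the support — the latter needs only the upper bound $|\knot-\knot_{i+k}|\le h$ on each linear factor, no lower bound on any span — gives the pointwise estimate $|D^{p+1}\psi_i|\le C(p)\,h^{-1}$, hence $|\hat\lambda^{\,\rm dB}_{i,p}(\hat v)|\le C(p)\,h^{-1/2}\|\hat v\|_{L^2(\knot_i,\knot_{i+p+1})}$ with $C(p)$ depending only on $p$ and the fixed transition function. (A quasi-uniformity-type ratio would enter only if one localized the transition to a single knot span, which is a different construction from the one used here.) Tensorizing yields your key estimate with a constant depending only on $\mathbf p$, and the rest of your argument then goes through verbatim, delivering a constant depending only on $\dpa$ and the degrees $p_j$ — even slightly better than the stated dependence.
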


Now, the main issue is to define a refinement strategy that delivers dual-compatible T-splines and such that the size of any element $\elemp$ and any B\'ezier element $\elemp_B$ is comparable to the size of its support extension.

\subsubsection{Refinement strategy: admissible T-meshes}
\label{sec:T refine}
We now introduce the refinement algorithm that derives from \cite{mp15,morgenstern16}. As we mentioned above, the reason to use this algorithm instead of \cite{slsh12} is that it guarantees linear complexity, and it also preserves the shape-regularity of the mesh. To proceed, we need to define some concepts related to the index T-mesh.

%For any point $\mathbf{x} = (x_1, \ldots, x_\dpa) \in \overline{\Omindex}$ we define its translation into the internal indices as $\tilde{\mathbf{x}} := (\tilde{x}_1, \ldots, \tilde{x}_\dpa)$, where $\tilde{x}_j = \min (\max (x_j, n_j+1), p_j+1)$.
For any element $\elemi$ in the index T-mesh with $\check Q\subseteq\Omip$, we denote its middle point as $\mathbf{x}_{\elemi}$, and define the set of its \emph{generalized neighbors}
\begin{align*}
\neig^{\rm gen}(\elemi) &:= \{ \elemi' \in \Tmesh : \elemi' \subseteq \Omip \wedge \exists \mathbf{x}=(x_1, \ldots, x_\dpa) \in \elemi' \\
\text{ with } & |x_j - (\mathbf{x}_\elemi)_j| < (\mathbf{D}_{\mathbf{p}}(\levelT{\elemi}))_j, \; \text{ for } j=1, \ldots, \dpa\},
\end{align*}
where the vector $\mathbf{D}_{\mathbf{p}}(k)$ is defined differently for the two-dimensional case \cite[Def.~2.4]{mp15}
\[
\mathbf{D}_{\mathbf{p}}(k) := \left \{
\begin{array}{ll}
\frac{1}{2^{k/2}} (\lfloor \frac{p_1}{2} \rfloor + \frac{1}{2}, \lceil \frac{p_2}{2} \rceil + \frac{1}{2}) & k=0 {\ \rm mod \ } 2,\\
\frac{1}{2^{(k+1)/2}} (\lceil \frac{p_1}{2} \rceil + \frac{1}{2}, 2\lfloor \frac{p_2}{2} \rfloor + 1) & k=1 {\ \rm mod \ } 2,
\end{array}
\right.
\]
and for the three-dimensional case \cite[Def.~2.4]{morgenstern16}
\[
\mathbf{D}_{\mathbf{p}}(k) := \left \{
\begin{array}{ll}
%2^{-k/3} (p_1 + 3/2, p_2 + 3/2, p_3 + 3/2) & k=0 \mod 3 \\
%2^{-(k-1)/3} (p_1/2 + 3/4, p_2 + 3/2, p_3 + 3/2) & k=1 \mod 3 \\
%2^{-(k-2)/3} (p_1/2 + 3/4, p_2 + 3/4, p_3 + 3/2) & k=2 \mod 3 \\
\frac{1}{2^{k/3}} (p_1 + \frac{3}{2}, p_2 + \frac{3}{2}, p_3 + \frac{3}{2}) & k=0 {\ \rm mod \ } 3, \\
%\frac{1}{2^{(k-1)/3}} (\frac{p_1}{2} + \frac{3}{4}, p_2 + \frac{3}{2}, p_3 + \frac{3}{2}) & k=1 \mod 3, \\
\frac{1}{2^{(k-1)/3}} (\frac{p_1+3/2}{2}, p_2 + \frac{3}{2}, p_3 + \frac{3}{2}) & k=1 {\ \rm mod \ } 3, \\
%\frac{1}{2^{(k-2)/3}} (\frac{p_1}{2} + \frac{3}{4}, p_2 + \frac{3}{4}, p_3 + \frac{3}{2}) & k=2 \mod 3, \\
\frac{1}{2^{(k-2)/3}} (\frac{p_1+3/2}{2}, \frac{p_2 + 3/2}{2}, p_3 + \frac{3}{2}) & k=2 {\ \rm mod \ } 3,
\end{array}
\right.
\]
see some examples for uniform meshes in Figure~\ref{fig:tsplines_neighbors_uniform} and for a non-uniform mesh in Figure~\ref{fig:tsplines_neighbors}.
\begin{figure}
\centering
\subfigure[${\bf p} = (3,3)$, level 0]{
\includegraphics[width=0.22\textwidth,trim=3cm 1cm 2.5cm 0.5cm, clip]{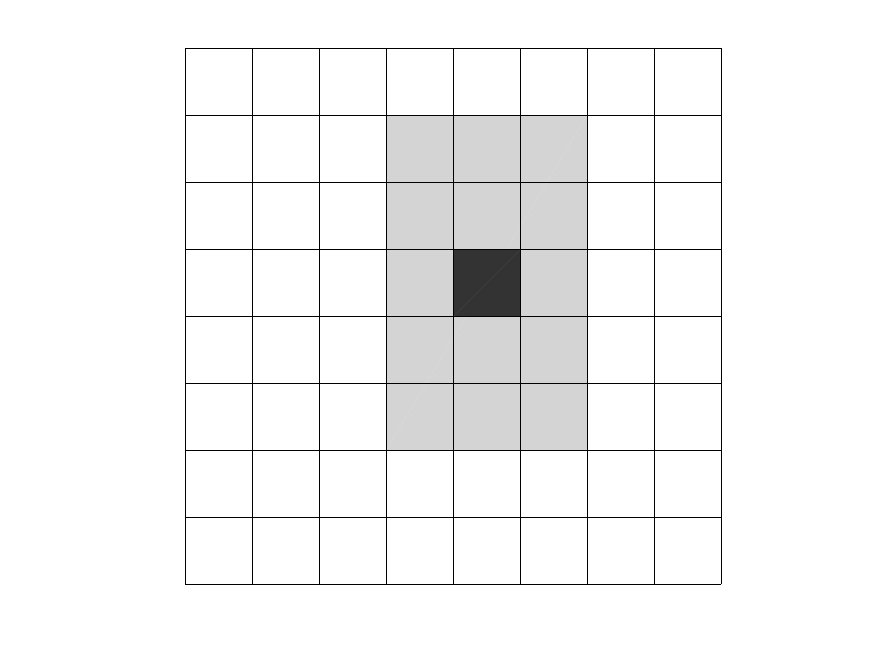}
}
\subfigure[${\bf p} = (3,3)$, level 1]{
\includegraphics[width=0.22\textwidth,trim=3cm 1cm 2.5cm 0.5cm, clip]{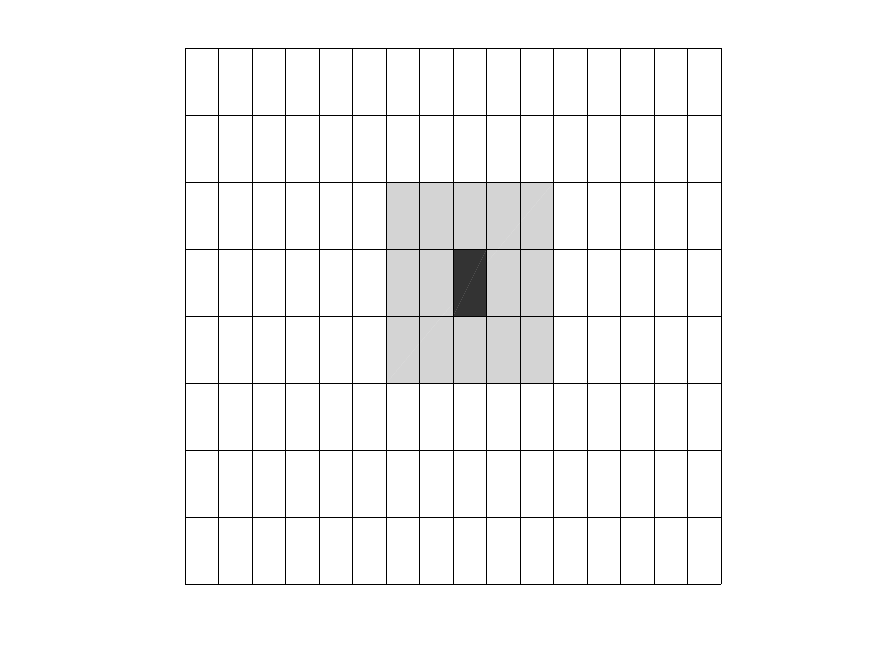}
}
\subfigure[${\bf p} = (5,5)$, level 0]{
\includegraphics[width=0.22\textwidth,trim=3cm 1cm 2.5cm 0.5cm, clip]{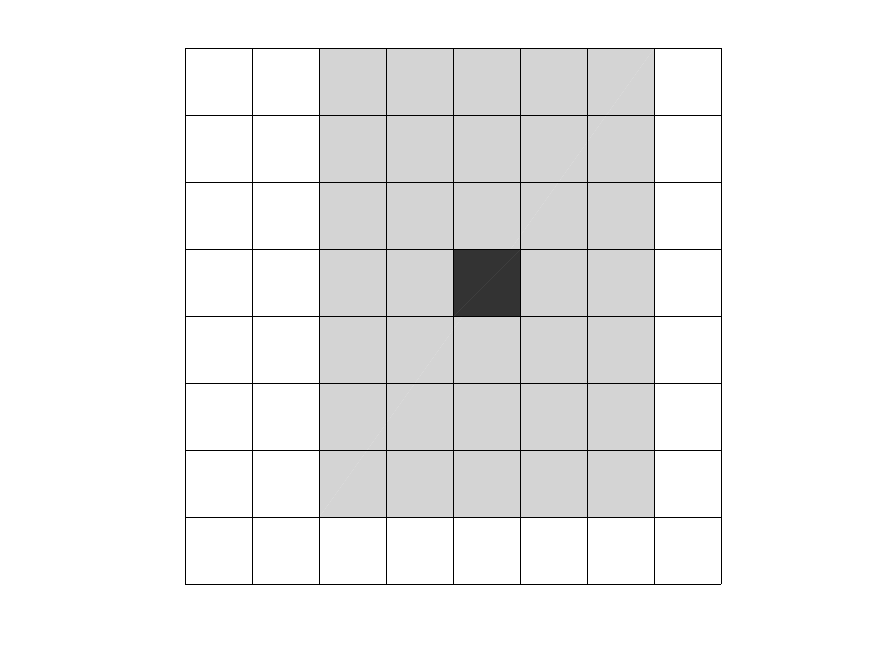}
}
\subfigure[${\bf p} = (5,5)$, level 1]{
\includegraphics[width=0.22\textwidth,trim=3cm 1cm 2.5cm 0.5cm, clip]{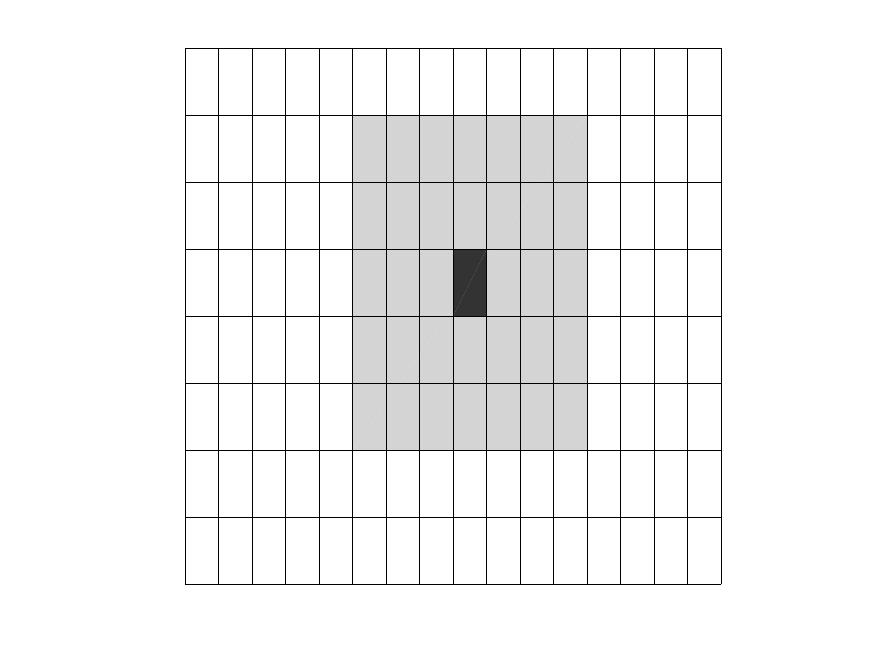}
}
\caption{Visualization of the generalized neighborhood on uniform leveled meshes, for simplicity represented in $\Omip$, and for different degrees. For the element $\elemi$ in dark gray, its generalized neighborhood $\neig^{\rm gen}(\elemi)$ is formed by all the gray elements.}
\label{fig:tsplines_neighbors_uniform}
\end{figure}
\begin{remark}
In the two-dimensional case, for a uniform even-leveled mesh, $\neig^{\rm gen}(\elemi)$ is obtained by extending $\elemi$ by $(p-1)/2$ elements to the left and right, and by $(p+1)/2$ elements above and below\footnote{This is respectively the length of edge and face extensions of T-junctions, see \cite{li2012} or \cite[Sect.~7.3]{bbsv14}.}, while for a uniform odd-leveled mesh, we have to extend by $(p+1)/2$ elements to the left and right, and by $(p-1)/2$ elements above and below, which corresponds to the gray area in Figure~\ref{fig:tsplines_neighbors_uniform}. For non-uniform meshes, $\neig^{\rm gen}(\elemi)$ is formed by elements which intersect the same area. Similar considerations apply in the three-dimensional case.
\end{remark}

We also define the set of \emph{neighbors} %similar to \cite{gp18}
\[
\neig(\elemi) := \{ \elemi' \in \neig^{\rm gen}(\elemi) : \levelT{\elemi'} < \levelT{\elemi} \}.
\]
With a slight abuse of notation, we define the set of \emph{(generalized) neighbors} for a parametric element $\elemp \in \Tmeshp$ as
\begin{align*}
\neig^{\rm gen}(\elemp) &:= \{ \elemp' \in \Tmeshp : \partind{\elemp'} \in \neig^{\rm gen}(\partind{\elemp}) \},
\\
\neig(\elemp) &:= \{ \elemp' \in \Tmeshp : \partind{\elemp'} \in \neig(\partind{\elemp}) \}.
\end{align*}
An example of these definitions is given in Figure~\ref{fig:tsplines_neighbors}.
\begin{figure}[ht]
\centering
\includegraphics[width=0.3\textwidth,trim=2cm 1cm 2cm 0.5cm, clip]{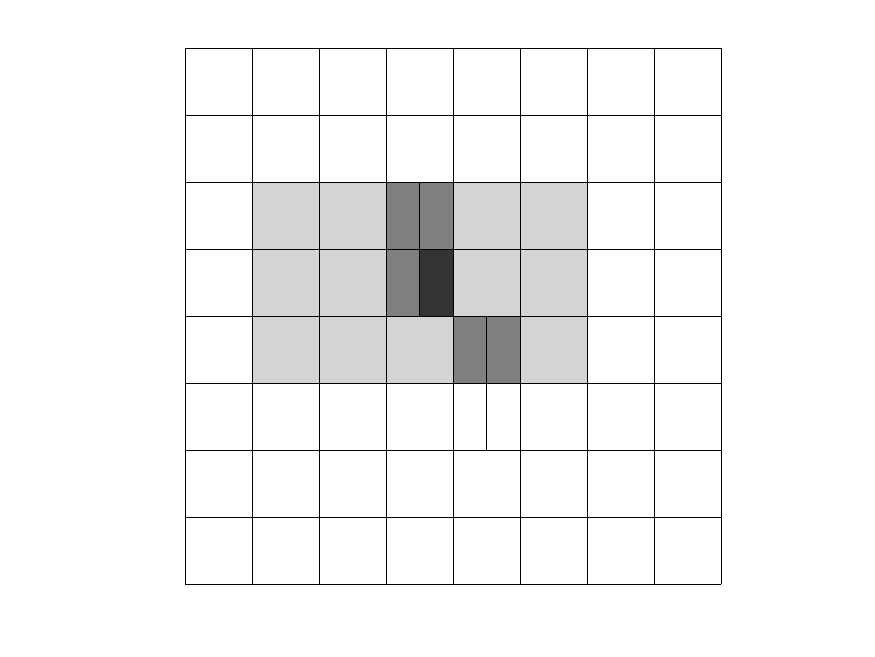}
\caption{Visualization of the generalized neighborhood for degree ${\bf p} = (5,3)$ in $\Omegap$. For the element $\elemp$ in dark gray, its generalized neighborhood $\neig^{\rm gen}(\elemp)$ is formed by all the gray elements, while the neighborhood $\neig(\elemp)$ is constituted only by the light gray elements.}
\label{fig:tsplines_neighbors}
\end{figure}

\begin{remark}\label{rem:neighbors touch}
As an immediate consequence of these definitions, and because we assume that $p_j \ge 2$ for $j = 1, \ldots, \dpa$, for any $\elemp \in \Tmeshp$ it holds that
\[
%\{\elemp' \in \Tmeshp : \partial \elemp \cap \partial \elemp' \not = \emptyset \} \subseteq \neig(\elemp).
\{\elemp' \in \Tmeshp : \overline \elemp \cap \overline{\elemp'} \not = \emptyset \} \subseteq \neig^{\rm gen}(\elemp).
\]
\end{remark}

\begin{remark}
In the refinement algorithm, the neighbors will play the same role as the ${\cal H}$-neighborhood and ${\cal T}$-neighborhood of Section~\ref{sec:hierarchical refine} for (T)HB-splines.
% \textcolor{magenta}{COMMENT: I moved this remark from below.}
\end{remark}

For any point $\mathbf{x} = (x_1, \ldots, x_\dpa) \in \overline{\Omindex}$ we define its projection into the index/parametric domain as $\tilde{\mathbf{x}} := (\tilde{x}_1, \ldots, \tilde{x}_\dpa)$, where $\tilde{x}_j = \min (\max (x_j, p_j+1), n_j+1)$. Then, for any element $\elemi \in \Tmesh, \elemi \subseteq \Omip$, we define its \emph{boundary prolongation} in the index T-mesh as the set of elements
\[
\bext{\elemi}{\Tmesh} := \{\elemi' \in \Tmesh : \tilde{\mathbf{x}}_{\elemi'} \in \partial \elemi \},
\]
and for any set of elements $\markedT \subseteq \Tmesh$ we will also denote 
\[
\bext{\markedT}{\Tmesh} : = \bigcup_{\elemi \in \markedT} \bext{\elemi}{\Tmesh}.
\]
Several examples of boundary prolongations are shown in Figure~\ref{fig:prolongation}.
\begin{figure}[ht] 
\begin{center}
\subfigure[Examples of boundary prolongation]{
\includegraphics[width=0.225\textwidth,trim=0mm 0mm 0mm 0mm, clip]{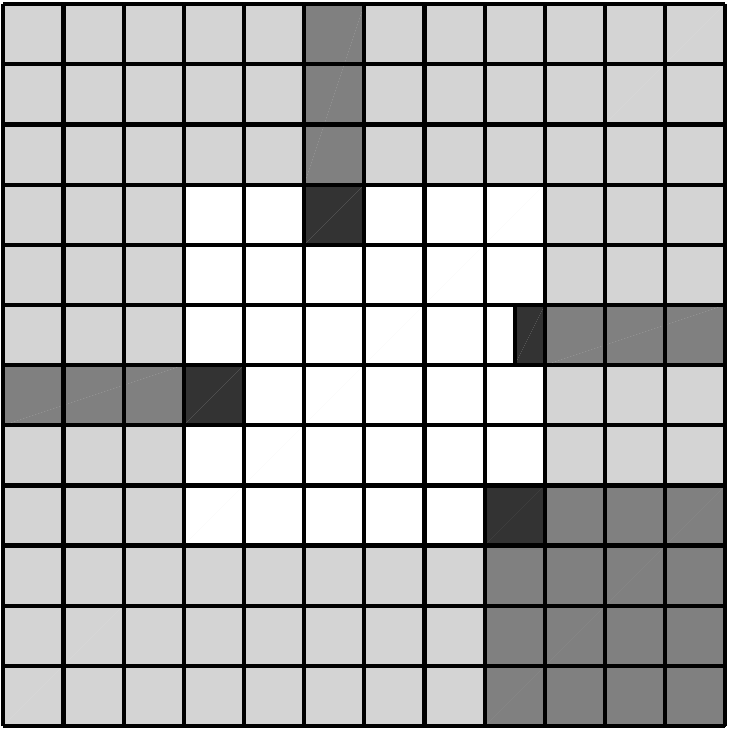} \label{fig:prolongation}
}
\subfigure[T-mesh after refinement]{
\includegraphics[width=0.225\textwidth,trim=0mm 0mm 0mm 0mm, clip]{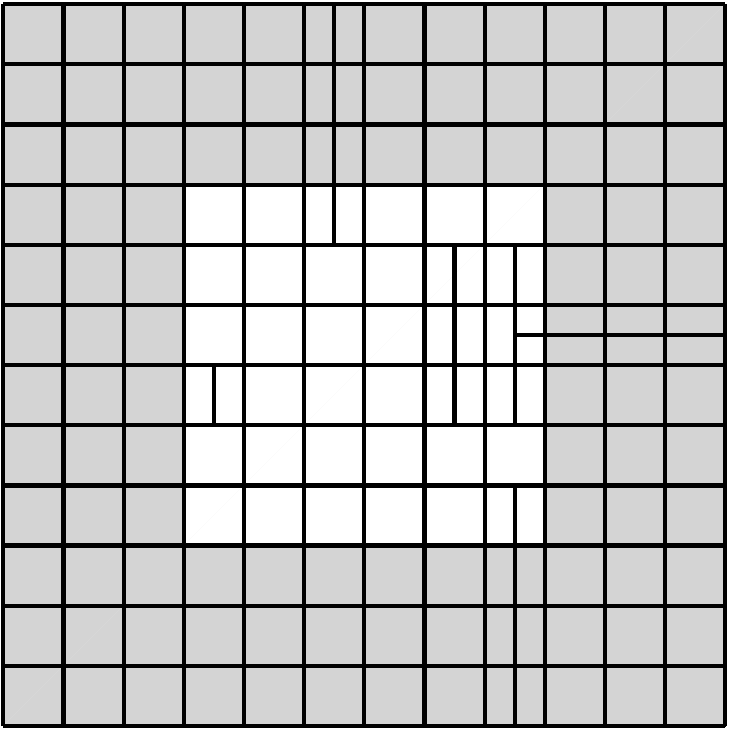} \label{fig:prolongation_ref}
}
\end{center}
\caption{The left figure shows the boundary prolongations of the dark gray elements, which are given by the gray elements. The right figure shows the result of applying Algorithm~\ref{alg:index-Tmesh}, after marking the dark gray elements on the left figure. The degree is $p_1 = p_2 = 3$. Light gray elements are outside $\Omip$.}
\end{figure}

With this notation, we are now in the position to introduce our refinement algorithm, which is based on \cite[Algorithm~2.9]{mp15} and \cite[Algorithm~2.9]{morgenstern16}, with the difference of the bisection of elements outside $\Omip$,  see Remark~\ref{rem:eq-algos} below. First, given an index T-mesh $\Tmesh$ and $\elemi \in \Tmesh$, we say that the bisection of $\elemi$ is \emph{admissible} if $\neig(\elemi) = \emptyset$, cf. \cite[Def.~2.11]{mp15} and \cite[Def.~3.1]{morgenstern16}.% Moreover, we say that an index T-mesh $\Tmesh$ is $\mathbf{p}$-admissible if it is constructed as in \eqref{eq:Tmesh-bisection} with a succession of $\mathbf{p}$-admissible bisections, cf. \cite[Def.~2.12]{mp15} and \cite[Def.~3.2]{morgenstern16}.

Algorithm~\ref{alg:index-Tmesh} provides a refinement algorithm for index T-meshes  such that the bisections in the last step can be performed in such an order that each one is admissible, see \cite[Proposition~2.13]{mp15} and \cite[Theorem~3.3]{morgenstern16}. 
Given an index T-mesh $\Tmesh$ and a set of elements $\markedT$ to be refined with $\bigcup\markedT \subseteq \Omip$, we apply Algorithm~\ref{alg:index-Tmesh} to obtain a refined index T-mesh, which we denote by $\refineind(\Tmesh, \markedT)$. The algorithm recursively marks all the neighbors of marked elements that are contained in $\Omip$. To avoid the appearance of undesired T-junctions outside $\Omip$, the boundary prolongation of marked elements is also marked, which is equivalent to extend any T-junction from the boundary of the index/parametric domain $\Omip$ to the boundary of the index domain $\Omindex$. An example is shown in Figure~\ref{fig:prolongation_ref}. In this example, the neighbors of marked elements are marked, resulting in the refinement of other elements in the white region $\Omip$. Then, also boundary prolongations of marked elements are marked, resulting in the bisection of elements in the gray region outside $\Omip$. Some gray elements are marked but not bisected. Their level has been implicitly increased by one, and they might be bisected the next time they are marked.

\begin{algorithm}[ht]
\caption{\refineind \ (Admissible refinement for the index T-mesh)} \label{alg:index-Tmesh}
\begin{algorithmic}
\Require index T-mesh $\Tmesh$, marked elements $\markedT \subseteq \Tmesh$
with $\bigcup\markedT \subseteq \Omip$
%% \State set $\markedT = \markedT \cup \left(\bext{\markedT}{\Tmesh}\right)$ \Comment{mark boundary prolongation}
\Repeat%While {$\widecheck{\mathcal{U}} \not = \emptyset$}
%%\State set $\displaystyle \widecheck{\mathcal{U}} = \bigcup_{\elemi \in \markedT, \elemi \in \Omip} \{\elemi' \in \Tmesh \setminus \markedT : \elemi' \in \neig(\elemi)\}$
\State set $\displaystyle \widecheck{\mathcal{U}} = \bigcup_{\elemi \in \markedT} \{\elemi' \in \Tmesh \setminus \markedT : \elemi' \in \neig(\elemi)\}$
%%\State set $\markedT = \markedT \cup \widecheck{\mathcal{U}} \cup \bext{\widecheck{\mathcal{U}}}{\Tmesh}$
\State set $\markedT = \markedT \cup \widecheck{\mathcal{U}}$
\Until {$\widecheck{\mathcal{U}} = \emptyset$}
%\EndWhile \textbf{end while}
\State set $\markedT = \markedT \cup \bext{\mathcal{\markedT}}{\Tmesh}$
\State set $\Tmesh = (\Tmesh \setminus \markedT)  \cup \left(\bigcup_{\elemi \in \markedT} \left(\bisect_{\dirb{\levelT{\elemi}}}(\elemi) \right) \right) $
\Ensure refined index T-mesh $\Tmesh$
\end{algorithmic}
\end{algorithm}

%% For any index T-mesh $\Tmesh$ we define $\refine(\Tmesh)$ as the set of all index T-meshes $\Tmesh_\fine$ such that there exist a sequence of index T-meshes, $\Tmesh_{(0)}, \ldots, \Tmesh_{(J)}$, and a sequence of marked elements $\markedT_{(0)}, \ldots, \markedT_{(J-1)}$, with $\Tmesh_\fine = \Tmesh_{(J)}$, $\Tmesh_{(j)} = \refine(\Tmesh_{(j-1)}, \markedT_{(j-1)})$ for $j = 1, \ldots, J$, and $\Tmesh_{(0)} = \Tmesh$. Here we formally allow $J = 0$. Finally, we denote the set of all \emph{admissible index T-meshes} as
%% \[
%% \admi := \refine(\Tmesh_0).
%% \]

Algorithm~\ref{alg:index-Tmesh} refines the index T-mesh, but in practice the marked elements will be given in the parametric T-mesh. For this reason we need to introduce a second algorithm. Given a parametric T-mesh $\Tmeshp$, its corresponding index T-mesh $\Tmesh$, and a list of marked elements $\markedTp \subseteq \Tmeshp$, we apply Algorithm~\ref{alg:param-Tmesh} to obtain a refined parametric T-mesh, which we denote by $\refine(\Tmeshp, \markedTp)$. Note that Algorithm~\ref{alg:param-Tmesh} passes the marked elements to their index preimage, %it also marks their boundary extension to be refined, in such a way that when a T-junction intersects the boundary of the index-parametric domain, it is extended until the boundary of the index domain. 
then it applies Algorithm~\ref{alg:index-Tmesh} to obtain the refined index T-mesh, and finally returns its parametric image.
\begin{algorithm}[ht]
\caption{\refine \ (Admissible refinement for the parametric T-mesh)}\label{alg:param-Tmesh}
\begin{algorithmic}
\Require parametric T-mesh $\Tmeshp$, the corresponding index T-mesh $\Tmesh$, and marked elements $\markedTp \subseteq \Tmeshp$
\State set $\markedT = \partind{\markedTp}$ 
\State set $\Tmesh = \refineind(\Tmesh, \markedT)$ \Comment{Algorithm~\ref{alg:index-Tmesh}}
\State set $\Tmeshp = \indtpar{\Tmesh}$
\Ensure refined parametric T-mesh $\Tmeshp$ (and refined index T-mesh $\Tmesh$)
\end{algorithmic}
\end{algorithm}

An example of the application of the refinement algorithm is shown in Figure~\ref{fig:algorithm-Tsplines}, starting from a uniform parametric T-mesh of $4\times 4$ elements, and marking always the element in the bottom left corner.
\begin{figure}
\centering
\includegraphics[width=0.23\textwidth]{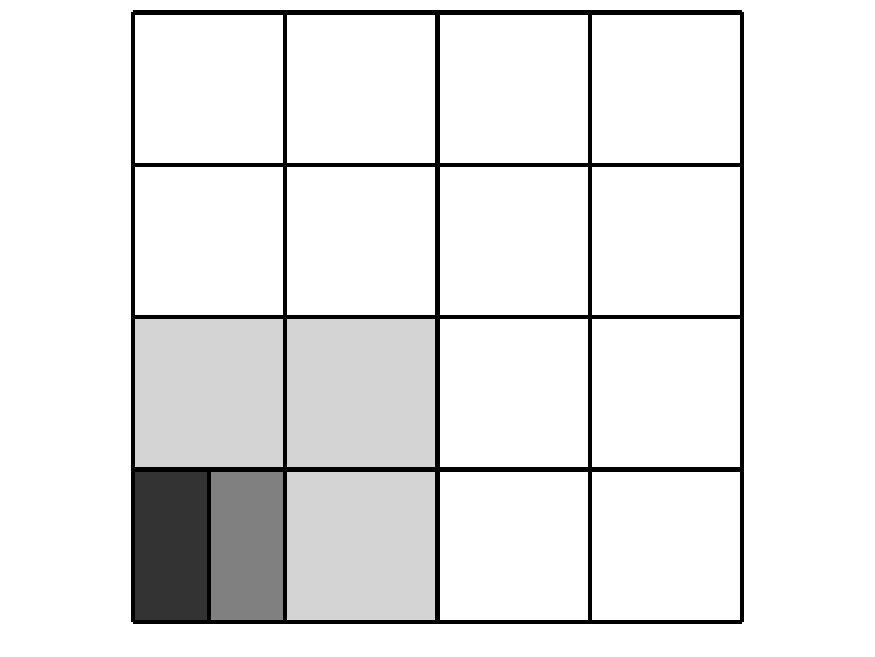}
\includegraphics[width=0.23\textwidth]{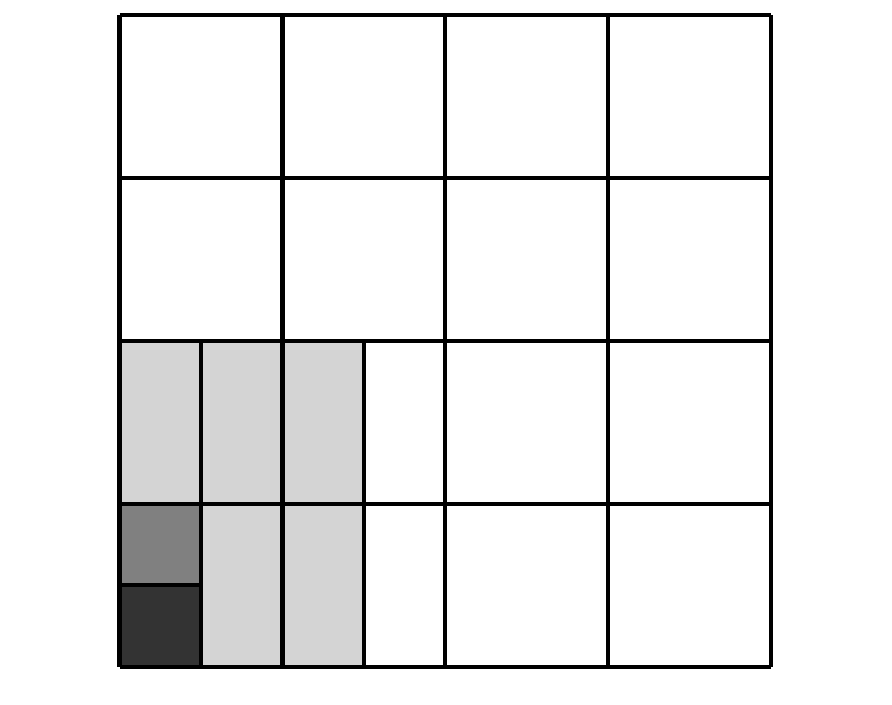}
\includegraphics[width=0.23\textwidth]{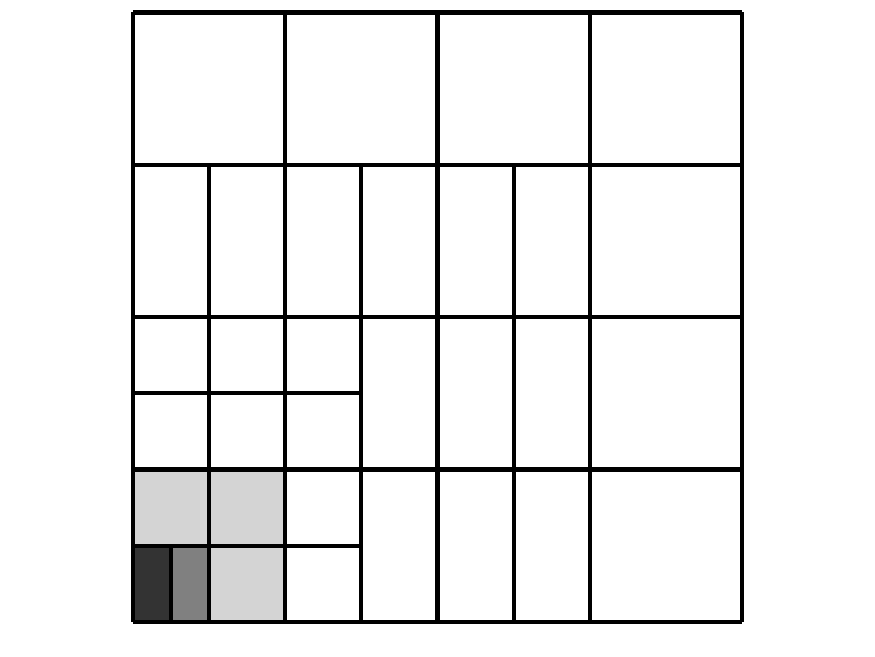}
\includegraphics[width=0.23\textwidth]{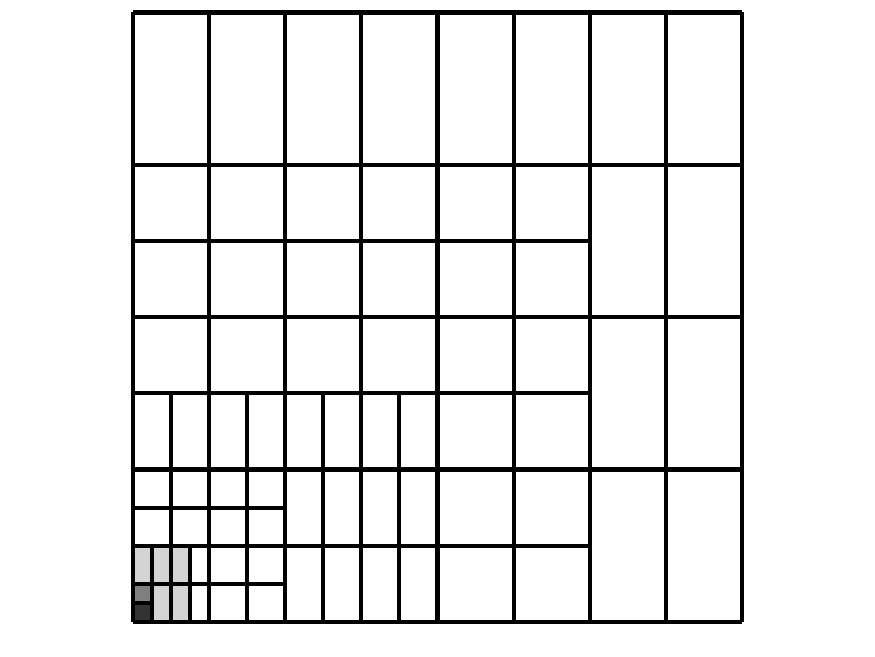}
\caption{Application of Algorithm~\ref{alg:param-Tmesh} starting from a $4 \times 4$ parametric T-mesh, with degree ${\bf p} = (5,3)$, and marking always the element in the bottom left corner. The plot shows the refined parametric T-meshes after 1, 2, 3, and 6 refinement steps. The marked element $\elemp$ is highlighted in dark gray, while all the elements in gray belong to its generalized neighborhood $\neig^{\rm gen}(\elemp)$, and the elements in light gray belong to its neighborhood $\neig(\elemp)$, and therefore are marked by the refinement algorithm. Note that also the neighbors of these elements, which we do not highlight, are marked for refinement by the algorithm.}
\label{fig:algorithm-Tsplines}
\end{figure}

%COMMENT@Rafa: Should we give some pictures of the refinement strategy? I have implemented it in {\tt revisedTmesh.m} in the folder {\tt matlabfigures} (in a very simple but naive way with quadratic effort!).
%You can already find pictures there.
%I would suggest to consider $p_1=p_2\in\{3,5\}$ and refine the element in the left corner.
%For the first two or three refinements, I would also highlight the marked element and its neighbors so that one gets a feeling for them. 
%Then, I would also provide the mesh after a large number of refinements (around $10$?!) at the left corner.

%Analogously to the definitions in Section~\ref{sec:meshes_axioms}, 
We define $\refine(\Tmeshp)$ as the set of all meshes that can be obtained via iterative application of $\refine$ to $\Tmeshp$. Moreover, denoting by $\Tmeshp_0$ the parametric image of $\Tmesh_0$, which is obtained as in \eqref{eq:param-Tmesh}, we define the set of \emph{admissible parametric T-meshes}
\[
\admp := \refine(\Tmeshp_0).
\]

\begin{remark}\label{rem:eq-algos}
Unfortunately, the admissible refinement in \cite{mp15,morgenstern16} does not take care of repeated knots that appear due to open knot vectors. To our knowledge, two different remedies have been proposed: in \cite{gp18}, the refinement is performed directly on the parametric domain, and the (index) T-mesh is then extended taking into account the repetitions due to the open knot vector; in \cite{cv18}, refinement is performed on the (index) T-mesh, but the algorithm does not bisect intervals with zero length. We have followed the same approach as in \cite{gp18} with a notation similar to \cite{cv18}, because we believe this notation may be useful for future research on adaptivity with smoothness control, which requires repeated internal knots.  Our refinement algorithm provides exactly the same output as \cite[Algorithm~2.1]{gp18}. %, because elements outside the index/parametric domain $\Omip$ are refined only if they belong to the prolongation of a refined element inside $\Omip$. 
In fact, Algorithm~\ref{alg:param-Tmesh} generates the same \emph{parametric} T-meshes as \cite[Algorithm~2.9]{mp15} for $\dpa=2$ and \cite[Algorithm~2.9]{morgenstern16} for $\dpa=3$. 
\end{remark}

\begin{remark}\label{rem:paramtric to index}
The combination of Algorithm~\ref{alg:index-Tmesh} and \ref{alg:param-Tmesh} guarantees that to each parametric T-mesh corresponds a unique index T-mesh, which is the same as in \cite{gp18}. It is worth noting that this is not true in general, and if we do not apply the refinement algorithms above, the same parametric T-mesh could be generated by two different index T-meshes due to the bisection of elements outside $\Omip$. Note that a change in the index T-meshes implies a change in the basis functions, and consequently in the discrete space.
%It is worth to note that, in general, different index T-meshes can generate the same parametric T-mesh, depending on the bisections of elements with a parametric image of zero measure. Instead, the combination of Algorithms~\ref{alg:index-Tmesh} and \ref{alg:param-Tmesh} guarantees that to each parametric T-mesh corresponds a unique index T-mesh, which is the same constructed in \cite{gp18}.
\end{remark}

In the following, we present the most important theoretical results that derive from the refinement algorithm and that were mainly proved in \cite{mp15,morgenstern16,morgenstern17}. 
The first result states dual-compatibility of admissible meshes.
It follows from \cite[Theorem~3.6]{mp15} for $\dpa=2$ and from \cite[Theorem 6.6]{morgenstern16} for $\dpa=3$,  
%noting that from the definition of the bisection, and Algorithms~\ref{alg:index-Tmesh} and~\ref{alg:param-Tmesh}, there will be no T-junctions outside $\Omip$ (
see also Remark~\ref{rem:eq-algos}.
\begin{proposition}%[Dual-compatibility] 
\label{lemma:dc}
Let $\Tmeshp \in \admp$, and let $\Tmesh$ be its corresponding admissible index T-mesh. Then, $\Tmesh$ is strongly dual-compatible, and thus it is dual-compatible.
\end{proposition}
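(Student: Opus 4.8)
The plan is to reduce the claim to the refinement analyses of \cite{mp15} for $\dpa=2$ and \cite{morgenstern16} for $\dpa=3$, so that the only substantive task is to transport their conclusions across the two notational differences of the present setting: the overlap condition is phrased here through local \emph{knot} vectors rather than local index vectors, and the open knot vectors ${\bf T}^0$ introduce repeated knots that those works do not treat but that Algorithm~\ref{alg:index-Tmesh} accounts for through the boundary prolongation step.

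First I would use Remark~\ref{rem:eq-algos} and Remark~\ref{rem:paramtric to index}: for $\Tmeshp\in\admp$, Algorithm~\ref{alg:param-Tmesh} produces exactly the same \emph{parametric} T-meshes as \cite[Algorithm~2.9]{mp15} (resp.\ \cite[Algorithm~2.9]{morgenstern16}), and the associated index T-mesh $\Tmesh$ is uniquely determined. Hence the combinatorial object analysed in those works coincides with $\Tmesh$ restricted to the region of active anchors. Their main refinement theorems, \cite[Theorem~3.6]{mp15} and \cite[Theorem~6.6]{morgenstern16}, then state that for any two distinct anchors $\anchor',\anchor''\in\nodes_{\bf p}(\Tmesh,{\bf T}^0)$ the local index vectors $\LIV_j(\anchor',\Tmesh)$ and $\LIV_j(\anchor'',\Tmesh)$ overlap, as consecutive sub-vectors of a common vector of indices, in $\dpa-1$ directions $j$.

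Next I would convert index-overlap into the knot-overlap required by the definition in Section~\ref{sec:dual-compatible}. Since the internal knots of ${\bf T}^0$ are pairwise distinct and every knot inserted by bisection has multiplicity one, the assignment $r\mapsto \knot_{j,r}$ is strictly monotone on the interior indices; thus if two local index vectors are consecutive sub-vectors of a common index vector, the corresponding local knot vectors $\kv_j(\anchor',\Tmesh)$ and $\kv_j(\anchor'',\Tmesh)$ are consecutive sub-vectors of a common knot vector, i.e.\ they overlap, and distinct index vectors give distinct knot vectors. This already yields overlap in $\dpa-1$ directions, hence strong dual-compatibility, for all pairs of anchors whose local knot vectors lie strictly inside $\Omip$. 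For anchors whose local knot vectors reach into the repeated end-knots of the open knot vector, I would argue separately: because $\partind{\elemp}\subset\Omip$ for every $\elemp\in\Tmeshp$ and because $\bext{\markedT}{\Tmesh}$ extends each T-junction from $\partial\Omip$ to $\partial\Omindex$, the pattern of repeated end-knots is identical for all anchors, so the common super-knot-vector demanded by the overlap definition can still be exhibited in the boundary columns. Once strong dual-compatibility is established, dual-compatibility follows by the implication recorded in Section~\ref{sec:dual-compatible} (for $\dpa=2$ the two notions coincide; for $\dpa=3$ see \cite[Section~6]{morgenstern16}).

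I expect the boundary bookkeeping of the last step to be the main obstacle: index-vector overlap from the cited theorems and the knot-vector overlap of the present definition genuinely differ in the zero-length columns created by the open knot vectors, and it is precisely the prolongation step of Algorithm~\ref{alg:index-Tmesh} --- absent from \cite{mp15,morgenstern16} --- that makes the two coincide there. Verifying this carefully, rather than the interior case, is where the work lies.
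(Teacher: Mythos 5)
Your proposal takes essentially the same route as the paper: the paper's entire proof of this proposition is the citation of \cite[Theorem~3.6]{mp15} for $\dpa=2$ and \cite[Theorem~6.6]{morgenstern16} for $\dpa=3$ together with Remark~\ref{rem:eq-algos}, which records that Algorithm~\ref{alg:param-Tmesh} generates exactly the same parametric T-meshes as those works and accounts for the repeated knots of the open knot vectors. Your extra bookkeeping---translating overlap of local index vectors into overlap of local knot vectors in the interior, and checking the repeated end-knot columns created by the boundary prolongation---is precisely the content the paper delegates to that remark (following \cite{gp18}), so it is a correct, somewhat more explicit rendering of the same citation-based argument.
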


The next result states nestedness of the spaces obtained by the refinement algorithm, which is highly non-trivial and not necessarily satisfied by general T-splines.
It is proved in \cite[Corollary~5.8]{mp15} for $\dpa=2$ and in \cite{morgenstern17} for $\dpa=3$. 
Note that ${\hat{\mathbb{S}}^{\rm T}_{\mathbf{p}}(\Tmesh_0,\mathbf{\kv}^0)}$ coincides with the usual spline space ${\hat{\mathbb{S}}_{\mathbf{p}}(\mathbf{\kv}^0)}$ and the next result in conjunction with Proposition~\ref{prop:Tsplines basis} thus implies that the T-spline blending functions form a partition of unity.

\begin{proposition} \label{prop:T-nestedness}
Let $\Tmeshp \in \admp$ and $\Tmeshp_\fine \in \refine(\Tmeshp)$, and let $\Tmesh$ and $\Tmesh_\fine$ be their associated index T-meshes. Then, 
\[
\spT \subseteq \spTargs{\Tmesh_\fine}{\boldsymbol{\kv}^0}.
\]
\end{proposition}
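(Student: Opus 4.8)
The plan is to reduce the statement to a single admissible bisection and then invoke the univariate knot-insertion (two-scale) relation for B-splines written through their local knot vectors. First I would observe that it suffices to treat the case $\Tmeshp_\fine = \refine(\Tmeshp,\markedTp)$ for a single call of Algorithm~\ref{alg:param-Tmesh}, since $\refine(\Tmeshp)$ is generated by iterating $\refine$ and set inclusion is transitive. By the admissible ordering of the bisections performed in the last step of Algorithm~\ref{alg:index-Tmesh} (see \cite{mp15,morgenstern16}), the passage from $\Tmesh$ to $\Tmesh_\fine$ factors through a finite chain of index T-meshes, each obtained from the previous by a single \emph{admissible} bisection, i.e.\ a bisection of some $\elemi_\star$ with $\neig(\elemi_\star)=\emptyset$. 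Composing inclusions, it is then enough to prove $\spT \subseteq \spTargs{\Tmesh_\fine}{\boldsymbol{\kv}^0}$ when $\Tmesh_\fine$ arises from $\Tmesh$ by one admissible bisection in the direction $j_\star = \dirb{\levelT{\elemi_\star}}$, which by \eqref{eq:bisection_operator} inserts exactly one new midpoint knot $\tau$ into the $j_\star$-th knot structure.

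The univariate tool I would use is the refinement relation for a single B-spline expressed via its local knot vector as in \eqref{eq:spline-1d}. If $(\sigma_0,\dots,\sigma_{p_{j_\star}+1})$ is a length-$(p_{j_\star}+2)$ local knot vector and $\tau$ is inserted, then either $\tau \notin (\sigma_0,\sigma_{p_{j_\star}+1})$, in which case the B-spline is unchanged, or $\tau$ lies inside the support, in which case the classical Boehm/de~Boor subdivision identity gives a convex combination
\begin{equation*}
\hat B[\sigma_0,\dots,\sigma_{p_{j_\star}+1}] = \alpha\,\hat B[w_1] + (1-\alpha)\,\hat B[w_2],\qquad \alpha\in[0,1],
\end{equation*}
where $w_1,w_2$ are the two consecutive length-$(p_{j_\star}+2)$ windows of the refined vector $(\sigma_0,\dots,\tau,\dots,\sigma_{p_{j_\star}+1})$. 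Since the T-spline blending function \eqref{eq:T-spline} is a tensor product and only its $j_\star$-th factor can be affected by the bisection, applying this identity to that factor expresses each coarse blending function $\hat B_{\anchor,\mathbf p}$ as a convex combination of at most two tensor-product functions whose factors are B-splines on refined local knot vectors in direction $j_\star$ and on the unchanged local knot vectors in the remaining directions.

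The key step — and the one I expect to be the main obstacle — is to verify that the windowed knot vectors $w_1,w_2$ above are genuinely the local knot vectors $\kv_{j_\star}(\anchor',\Tmesh_\fine)$ of actual anchors $\anchor'\in\nodes_{\bf p}(\Tmesh_\fine,{\bf T}^0)$, so that the resulting functions really belong to $\spTargs{\Tmesh_\fine}{\boldsymbol{\kv}^0}$. This is precisely where admissibility $\neig(\elemi_\star)=\emptyset$ enters: it guarantees that no coarser neighbour obstructs the bisection, so that the new knot line propagates — through the boundary prolongation $\bext{\cdot}{\Tmesh}$ that absorbs the repeated open knots, cf.\ Remark~\ref{rem:eq-algos} — exactly far enough for the skeleton $\sk_{j_\star}(\Tmesh_\fine)$ to contain the new index at every anchor whose $j_\star$-th local knot vector sees $\tau$ inserted. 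The genuine work is the index-domain bookkeeping: for each coarse anchor one tracks whether $\tau$ falls in the support of its $j_\star$-factor, and in the affirmative case identifies $w_1,w_2$ with the local knot vectors read off from $\sk_{j_\star}(\Tmesh_\fine)$ at the appropriate (possibly newly created) anchors. Dual-compatibility of $\Tmesh_\fine$ (Proposition~\ref{lemma:dc}), together with Proposition~\ref{prop:Tsplines basis}, ensures these fine blending functions are linearly independent, so the representation is unambiguous.

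Finally I would collect the cases: anchors whose $j_\star$-factor is untouched (because $\tau$ lies outside the corresponding support) either remain anchors of $\Tmesh_\fine$ with the identical blending function, or have all their relevant local knot vectors unchanged, so their functions lie in $\spTargs{\Tmesh_\fine}{\boldsymbol{\kv}^0}$ trivially; anchors whose $j_\star$-factor is refined are handled by the convex combination above. Hence every spanning function of $\spT$ lies in $\spTargs{\Tmesh_\fine}{\boldsymbol{\kv}^0}$, which yields the claimed inclusion after taking spans and composing over the admissible chain. The mathematical content is concentrated entirely in the index-domain combinatorics of the third paragraph; the algebra of the knot-insertion formula and the reduction to a single bisection are routine.
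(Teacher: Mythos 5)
Your plan correctly isolates where the difficulty lies, but it does not resolve it, and the unresolved step is the entire mathematical content of the proposition. Concretely: after one bisection in direction $j_\star$, take a coarse anchor $\anchor$ whose direction-$j_\star$ local knot vector receives the new knot $\tau$. The knot-insertion identity you invoke requires a companion anchor $\anchor'$ of $\Tmesh_\fine$ such that (i) $\anchor'$ lies in $\overline{\AR}$ and is a vertex of $\Tmesh_\fine$, (ii) $\kv_{j_\star}(\anchor',\Tmesh_\fine)$ is the other window $w_i$, and (iii) $\kv_j(\anchor',\Tmesh_\fine)=\kv_j(\anchor,\Tmesh)$ for all $j\neq j_\star$. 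Point (iii) is exactly what fails for general T-meshes: the natural candidate $\anchor'$ is typically a newly created vertex on the bisecting face, and the tracing lines emanating from $\anchor'$ in the directions $j\neq j_\star$ are \emph{shifted} relative to those from $\anchor$, so they can meet different parts of the skeleton $\sk_j(\Tmesh_\fine)$ and produce different knot vectors — this is precisely why the paper stresses that nestedness is ``highly non-trivial and not necessarily satisfied by general T-splines.'' Asserting that admissibility makes the new knot line ``propagate exactly far enough'' restates the claim rather than proving it. A second, compounding gap: the intermediate meshes in your chain of single admissible bisections are in general \emph{not} elements of $\admp$ (a single call of Algorithm~\ref{alg:index-Tmesh} also bisects neighborhoods and boundary prolongations, so a partially executed chain is not the output of the algorithm), hence you may not invoke Proposition~\ref{lemma:dc} or Proposition~\ref{prop:lqiT} for them; you would need to formulate a structural invariant that survives each single bisection and implies (i)--(iii), which the proposal never does.

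For comparison, the paper itself does not prove the statement; it cites \cite[Corollary~5.8]{mp15} for $\dpa=2$ and \cite{morgenstern17} for $\dpa=3$. Those proofs deliberately avoid the per-anchor, per-bisection bookkeeping you propose: the grading built into admissible meshes is used to show that each blending function agrees with an ordinary tensor-product B-spline on a locally uniform (``perturbed'') tensor mesh covering its support, after which nestedness reduces to standard tensor-product knot insertion, where the obstructions (i)--(iii) disappear. That local-uniformity statement is, in effect, the missing lemma in your argument — and once it is available, the single-bisection induction becomes unnecessary.
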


%The second result regards the complexity of the refinement algorithm. It was proved in \cite[Theorem~6.1]{mp15} for $\dpa=2$ and in \cite[Theorem~7.1]{morgenstern16} for $\dpa=3$.
%\begin{proposition}%[Closure estimate] 
%\label{prop:closure-Tsplines}
%Let $(\Tmeshp_{(k)})_{k \in \mathbb{N}_0}$ be a sequence of admissible meshes in $\admp$ with $\Tmeshp_{(k+1)} = \refine(\Tmeshp_{(k)}, \markedTp_{(k)})$, with $\markedTp_{(k)} \subseteq \Tmeshp_{(k)}$ for all $k \in \mathbb{N}_0$. Then, for all $k \in \mathbb{N}_0$ it holds that
%\[
%\#\Tmeshp_{(k)} - \#\Tmeshp_{(0)} \le C_{\rm clos} \sum_{j=0}^{k-1} \# \markedTp_{(j)},
%\]
%where the constant $C_{\rm clos} > 0$ depends only on the dimension $\dpa$, the degrees $p_j$, and the initial mesh $\Tmeshp_{(0)}$.
%\end{proposition}

The  next proposition provides local quasi-uniformity of admissible meshes. 
Making use of the equivalence of the algorithms mentioned in Remark~\ref{rem:eq-algos}, the assertion follows from \cite[Lemma~2.14]{mp15} for $\dpa=2$ and from \cite[Lemma~3.5]{morgenstern16} for $\dpa=3$, where the same result was proved for the index T-mesh.
%The second result follows from the remark in page~3 of \cite{mp15} because we assume that $p_j \ge 2$ for $j = 1, \ldots, \dpa$.
\begin{proposition}%[Local quasi-uniformity]
\label{prop:lqiT}
Let $\Tmeshp \in \admp$. For any $\elemp \in \Tmeshp$, it holds that
\[
|\levelT{\elemp} - \levelT{\elemp'}| \le 1 \quad \text{for all } \elemp' \in \neig^{\rm gen}(\elemp).
\]
%\end{lemma}
%\begin{lemma} \label{lem:neighbors}
%Let $\Tmeshp \in \admp$. 
%% Moreover, for any $\elemp \in \Tmeshp$, it holds that
%% \[
%% %\{\elemp' \in \Tmeshp : \partial \elemp \cap \partial \elemp' \not = \emptyset \} \subseteq \neig(\elemp).
%% \{\elemp' \in \Tmeshp : \overline \elemp \cap \overline{\elemp'} \not = \emptyset \} \subseteq \neig^{\rm gen}(\elemp).
%% \]
\end{proposition}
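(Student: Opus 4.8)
The plan is to reduce the claim entirely to the corresponding statement for the \emph{index} T-mesh, which is already available in the literature. Recall that for a parametric element $\elemp\in\Tmeshp$ both the level and the generalized neighborhood are defined purely through the index preimage, namely $\levelT{\elemp}=\levelT{\partind{\elemp}}$ and $\neig^{\rm gen}(\elemp)=\{\elemp'\in\Tmeshp:\partind{\elemp'}\in\neig^{\rm gen}(\partind{\elemp})\}$, and that $\partind{\elemp}\subseteq\Omip$ always holds. Hence it suffices to know that for the associated index T-mesh $\Tmesh$ one has $|\levelT{\elemi}-\levelT{\elemi'}|\le 1$ whenever $\elemi,\elemi'\subseteq\Omip$ with $\elemi'\in\neig^{\rm gen}(\elemi)$.

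First I would invoke this index-domain bound, which is exactly \cite[Lemma~2.14]{mp15} for $\dpa=2$ and \cite[Lemma~3.5]{morgenstern16} for $\dpa=3$. There the estimate is established by induction over the sequence of admissible bisections: the admissibility condition $\neig(\elemi)=\emptyset$ forbids bisecting an element of a given level while a generalized neighbor of strictly smaller level is still present, and the scaling of the boxes $\mathbf{D}_{\mathbf p}(k)$ between consecutive levels is tailored precisely so that this invariant propagates, keeping the level gap across generalized neighbors bounded by one. I would take this as a black box.

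The transfer is then immediate. Given $\elemp\in\Tmeshp$ and $\elemp'\in\neig^{\rm gen}(\elemp)$, by definition $\partind{\elemp'}\in\neig^{\rm gen}(\partind{\elemp})$, and both index preimages lie in $\Omip$. Applying the index-domain bound to $\elemi:=\partind{\elemp}$ and $\elemi':=\partind{\elemp'}$ yields $|\levelT{\partind{\elemp}}-\levelT{\partind{\elemp'}}|\le 1$, which by $\levelT{\elemp}=\levelT{\partind{\elemp}}$ and $\levelT{\elemp'}=\levelT{\partind{\elemp'}}$ is exactly the asserted inequality.

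The one point requiring genuine care — and the main obstacle — is justifying that the cited lemmas apply to \emph{our} index T-mesh. The algorithms of \cite{mp15,morgenstern16} do not account for the repeated knots coming from the open knot vectors, whereas Algorithm~\ref{alg:index-Tmesh} additionally marks boundary prolongations. I would argue, as in Remark~\ref{rem:eq-algos}, that this modification only introduces bisections of elements \emph{outside} $\Omip$ and leaves the refinement history inside $\Omip$ unchanged; since both the level and the generalized neighborhood of any element entering the claim are determined entirely by index elements contained in $\Omip$, the index-domain estimate of \cite{mp15,morgenstern16} carries over verbatim to the index T-mesh produced here, and the reduction above is legitimate.
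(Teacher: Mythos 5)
Your proposal is correct and follows essentially the same route as the paper: the paper likewise derives the claim directly from \cite[Lemma~2.14]{mp15} ($\dpa=2$) and \cite[Lemma~3.5]{morgenstern16} ($\dpa=3$), which prove the statement for the index T-mesh, and transfers it to the parametric T-mesh via the definitions of level and generalized neighborhood together with the algorithm equivalence of Remark~\ref{rem:eq-algos}. Your additional care about bisections outside $\Omip$ is precisely the content of that remark, so nothing is missing.
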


The following result is new. 
It relates B\'ezier elements to the elements of the considered admissible T-mesh. The proof is rather technical and is thus postponed to Section~\ref{sec:proof-2beziers}.
Without providing an explicit bound, the fact that the number of B\'ezier elements on an element is uniformly bounded and that the B\'ezier elements are of comparable size also follows easily from \cite[Lemma~2.5]{gp18}.
\begin{lemma} \label{lemma:2beziers}
Let $\Tmeshp \in \admp$ and $\elemp \in \Tmeshp$. Then, $\elemp$ consists either of one B\'ezier element equal to $\elemp$, or two B\'ezier elements of measure $|\elemp|/2$.
\end{lemma}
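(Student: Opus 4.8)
The plan is to reduce the statement to a one directional counting argument on the knot vectors and then invoke local quasi-uniformity. Write $\elemi := \partind{\elemp}$, set $\ell := \levelT{\elemp} = \levelT{\elemi}$, and let $j_0 := \dirb{\ell}$ be the direction in which $\elemi$ would be bisected next. Recall that each blending function $\hat B_{\anchor,\mathbf{p}}$ is a tensor product of univariate B-splines built on the local knot vectors $\kv_j(\anchor,\Tmesh)$, so its breakpoints in direction $j$ are exactly the distinct entries of $\kv_j(\anchor,\Tmesh)$. Consequently the B\'ezier mesh restricted to $\elemp = \prod_{j=1}^\dpa(\knot_{j,a_j},\knot_{j,b_j})$ is generated, direction by direction, by those knot values that lie strictly inside the slab $(\knot_{j,a_j},\knot_{j,b_j})$ and occur in $\kv_j(\anchor,\Tmesh)$ for some anchor $\anchor\in\nodes(\elemp)$. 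Thus it suffices to show that the only interior knot value arising this way is the midpoint of $\elemp$ in the single direction $j_0$.

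First I would record the index bookkeeping. Since $a_j,b_j$ are consecutive in $\indices_j^{k_j(\elemi)}$ with $k_j(\elemi)=\lfloor(\ell+\dpa-j)/\dpa\rfloor$, any knot strictly between them belongs to a finer stage $\indices_j^{k}$ with $k\ge k_j(\elemi)+1$, and the \emph{coarsest} such knot is precisely the midpoint, created at stage $k_j(\elemi)+1$. A direct computation with the floor function shows that the stage $k_j(\cdot)$ increases by one from level $\ell$ to level $\ell+1$ exactly in the direction $j\equiv\ell+1\pmod{\dpa}$, that is in $j=j_0=\dirb{\ell}$, and is unchanged in all other directions; coarser elements of level $\ell-1$ carry equal or smaller stages in every direction. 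Hence, in any family of elements whose levels differ from $\ell$ by at most one, the finest stage present in direction $j$ is $k_j(\elemi)+1$ if $j=j_0$ and $k_j(\elemi)$ otherwise.

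Next I would localize and apply quasi-uniformity. The anchors in $\nodes(\elemp)$ have supports meeting $\elemp$, so the skeleton intersections that define their local knot vectors all come from T-mesh faces inside the support extension $\sext{\elemp}$; in particular, an interior knot in direction $j$ can occur only if some element has a face passing through the interior of $\elemp$, and such an element borders or overlaps $\elemp$. By Remark~\ref{rem:neighbors touch} and the structure of $\mathbf{D}_{\mathbf{p}}$, every such element is a generalized neighbor of $\elemp$, and Proposition~\ref{prop:lqiT} bounds its level by $\ell\pm1$. The preceding paragraph then applies: no interior knot can occur in any direction $j\neq j_0$, while in direction $j_0$ the only candidate is the stage-$(k_{j_0}(\elemi)+1)$ midpoint. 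Therefore the B\'ezier mesh cuts $\elemp$ by at most the single hyperplane through its center orthogonal to $j_0$. If this cut is absent, $\elemp$ is itself one B\'ezier element; if present, it splits $\elemp$ into exactly two B\'ezier elements, each of measure $|\elemp|/2$, which is the claim.

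The hard part will be the localization step, namely proving rigorously that the line-tracing construction of $\kv_j(\anchor,\Tmesh)$ for $\anchor\in\nodes(\elemp)$ cannot pick up a knot value finer than those carried by elements that actually border $\elemp$ within $\sext{\elemp}$. This requires unwinding the definitions of $\GIV_j$ and $\LIV_j$ together with the boundary-prolongation step of Algorithm~\ref{alg:index-Tmesh}, and using the odd degree $p_j\ge 3$ to control how far each traced line reaches before it leaves the generalized neighborhood. A secondary technical point is to check that a level-$(\ell+1)$ neighbor indeed aligns its additional subdivision with the midplane of $\elemp$ in direction $j_0$, rather than with some translate of it; this again follows from the dyadic, direction-alternating bisection rule defining the T-mesh.
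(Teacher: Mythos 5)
Your reduction in the first two paragraphs is fine: interior B\'ezier cuts of $\elemp$ can only come from knot values lying strictly inside $\elemp$ that appear in some $\kv_j(\anchor,\Tmesh)$ with $\anchor\in\nodes(\elemp)$, and your stage-counting correctly shows that if all contributing elements had level in $\{\ell-1,\ell,\ell+1\}$, the only possible cut would be the midplane in direction $\dirb{\ell}$. The genuine gap is exactly the localization step you flag as ``the hard part'', and the justification you propose for it fails. You claim that every element whose face is picked up by the line tracing is a generalized neighbor of $\elemp$, invoking Remark~\ref{rem:neighbors touch} and Proposition~\ref{prop:lqiT}. But Remark~\ref{rem:neighbors touch} only covers elements whose closure touches $\overline{\elemp}$, and the elements contributing entries to $\kv_j(\anchor,\Tmesh)$ are aligned with the \emph{anchor}, not with $\elemp$: the anchor $\anchor$ only needs $\supp(\hat B_{\anchor,\mathbf{p}})\cap\elemp\neq\emptyset$, so its transverse coordinate $z_i$ can sit up to $(p_i+1)/2=\lceil p_i/2\rceil$ knot spans away from $\elemp$, while the generalized-neighborhood box of $\elemp$ extends only $\lfloor p_i/2\rfloor$ element widths beyond $\elemp$ in the unfavorable direction (compare the two cases of $\mathbf{D}_{\mathbf{p}}$). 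Hence the face owner can lie strictly outside $\neig^{\rm gen}(\elemp)$, Proposition~\ref{prop:lqiT} gives no bound on its level, and your restriction to levels $\ell\pm1$ --- on which the whole counting rests --- is unsupported. This is not a presentational issue: for a non-admissible mesh precisely such far-but-aligned fine elements do create several cuts inside a coarse element, so any proof must use admissibility in a way that reaches those elements.

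The paper closes this gap by inverting the point of view: it argues by contradiction around the \emph{finer} element $\elemi'$ whose bisection introduces the offending knot, not around $\elemp$. Since the mesh is admissible, one may assume that this bisection was itself admissible, i.e.\ $\Tmesh=\refineind(\Tmesh_\coarse,\elemi')$ with empty neighborhood of $\elemi'$ in $\Tmesh_\coarse$, so by Proposition~\ref{prop:lqiT} every element of $\neig_\coarse^{\rm gen}(\elemi')$ has level at least $\levelT{\elemi'}$, while $\elemi\notin\neig_\coarse^{\rm gen}(\elemi')$. Explicit computations with $\mathbf{D}_{\mathbf{p}}(k)$, the element lengths $(1/2)^{\lfloor(k-j+\dpa)/\dpa\rfloor}$, and the $(p_s+3)/2$ relevant entries of $\LIV_s(\anchor,\Tmesh)$ then pin the anchor to the boundary of an element of $\neig_\coarse^{\rm gen}(\elemi')$ and force the whole transverse local index vector to be built from indices of elements in $\neig_\coarse^{\rm gen}(\elemi')$; this confines $\supp(\hat B_{\anchor,\mathbf{p}})$ near $\elemi'$ and contradicts $\supp(\hat B_{\anchor,\mathbf{p}})\cap\elemp\neq\emptyset$. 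If you want to salvage your write-up, this re-centering on $\elemi'$ and on the admissibility of its own bisection is the missing ingredient; the quasi-uniformity of $\Tmeshp$ around $\elemp$ alone is too weak to control what the line tracing can see.
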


As an immediate consequence of the previous lemma, we obtain a result analogous to Proposition~\ref{prop:tsp-proj-bezier} for elements in the parametric T-mesh.
\begin{proposition} \label{prop:tsp-proj}
Let $\Tmeshp \in \admp$ and $\elemp \in \Tmeshp$. Then, for all $\hat v \in L^2(\hat\Omega)$, we have that
\[
\| \projT{\mathbf{p}}\hat v \|_{L^2(\elemp)} \le C \| \hat v \|_{L^2(\sext{\elemp})},
\]
%\[
%\| (1 - \projT{\mathbf{p}})f \|_{L^2(\elemp)} \le C_2 (h_{\mhyp{\elemp}})^s \| f \|_{H^s(\mhyp{\elemp})},
%\]
where $C > 0$ depends only on the dimension $\dpa$, the degrees $p_j$, and the coarsest knot vectors $\kv_j^0$.
\end{proposition}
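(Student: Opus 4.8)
The plan is to deduce the estimate directly from the B\'ezier-element version in Proposition~\ref{prop:tsp-proj-bezier} by means of the structural result in Lemma~\ref{lemma:2beziers}. Since $\Tmeshp \in \admp$, Proposition~\ref{lemma:dc} guarantees that the associated index T-mesh $\Tmesh$ is dual-compatible, so that both Proposition~\ref{prop:projector} (the operator $\projT{\mathbf{p}}$ is a well-defined projector) and Proposition~\ref{prop:tsp-proj-bezier} are at our disposal.

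First I would use Lemma~\ref{lemma:2beziers} to write the fixed element $\elemp \in \Tmeshp$ as the essentially disjoint union of its B\'ezier elements $\elemp_B$, of which there are at most two. Since these cover $\elemp$ up to a set of measure zero, the $L^2$-norm splits as
\[
\| \projT{\mathbf{p}}\hat v \|_{L^2(\elemp)}^2 = \sum_{\elemp_B \subseteq \elemp} \| \projT{\mathbf{p}}\hat v \|_{L^2(\elemp_B)}^2,
\]
and Proposition~\ref{prop:tsp-proj-bezier} bounds each summand by $C^2 \| \hat v \|_{L^2(\sext{\elemp_B})}^2$ with a constant $C$ depending only on $\dpa$, the degrees $p_j$, and the coarsest knot vectors $\kv_j^0$.

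The one point that needs a short justification is the monotonicity of the support extension, namely $\sext{\elemp_B} \subseteq \sext{\elemp}$ whenever $\elemp_B \subseteq \elemp$. This follows from the definitions: if $\supp(\hat B_{\anchor,\mathbf{p}}) \cap \elemp_B \neq \emptyset$ then a fortiori $\supp(\hat B_{\anchor,\mathbf{p}}) \cap \elemp \neq \emptyset$, whence $\nodes(\elemp_B) \subseteq \nodes(\elemp)$ and thus $\sext{\elemp_B} \subseteq \sext{\elemp}$. Bounding $\| \hat v \|_{L^2(\sext{\elemp_B})} \le \| \hat v \|_{L^2(\sext{\elemp})}$ and summing over the at most two B\'ezier elements yields
\[
\| \projT{\mathbf{p}}\hat v \|_{L^2(\elemp)}^2 \le 2\, C^2 \| \hat v \|_{L^2(\sext{\elemp})}^2,
\]
so the claim holds with the constant $\sqrt{2}\,C$, which retains the same dependencies. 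I do not expect any genuine obstacle here: the whole content is packaged in Lemma~\ref{lemma:2beziers} (the bound on the number of B\'ezier elements per parametric element) and in the dual-compatibility provided by the admissible refinement; the uniform bound of two on the number of B\'ezier elements is precisely what keeps the constant independent of the particular mesh in $\admp$.
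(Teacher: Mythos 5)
Your proposal is correct and follows exactly the paper's route: the paper derives Proposition~\ref{prop:tsp-proj} as an ``immediate consequence'' of Lemma~\ref{lemma:2beziers} combined with Proposition~\ref{prop:tsp-proj-bezier}, which is precisely your decomposition into at most two B\'ezier elements plus the stability bound on each. Your explicit verification of the monotonicity $\sext{\elemp_B} \subseteq \sext{\elemp}$ and of the applicability of Proposition~\ref{prop:tsp-proj-bezier} via dual-compatibility (Proposition~\ref{lemma:dc}) simply fills in the details the paper leaves implicit.
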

%\begin{proof}
%The result follows from Lemma~\ref{lemma:2beziers} and Proposition~\ref{prop:tsp-proj-bezier}.
%\end{proof}

The next proposition bounds the number as well as the support of T-spline basis functions that live on a given element.
The first assertion is an immediate consequence of Lemma~\ref{lem:lli} and Lemma~\ref{lemma:2beziers}. %, although a more careful proof can probably reduce it to $(p_1+2) \ldots 
The second assertion is already proved in \cite[Lemma~2.5]{gp18}. A similar result is also given in \cite[Proposition~4.9]{cv18} for $\dpa=2$. 

\begin{proposition}\label{prop:tsp-sext}
Let $\Tmeshp \in \admp$ and $\Tmesh$ its corresponding index T-mesh. For any $\elemp \in \Tmeshp$, there exist at most $2(p_1+1) \ldots (p_\dpa+1)$ anchors $\anchor \in \nodes_{\bf p}(\Tmesh,{\bf T}^0)$ such that $\elemp \cap \supp (\hat B_{\anchor,\mathbf{p}}) \ne \emptyset$. Moreover, there exists $q \in \mathbb{N}$ depending only on the dimension $\dpa$ and the degrees $p_j$ such that there exists $\hat{\mathcal{S}}\subseteq\hat\QQ$ with $\sext{\elemp}\subseteq \bigcup \set{\overline{\hat Q'}}{\hat Q'\in\hat{\mathcal{S}}}$, $\#\hat{\mathcal{S}}\le q$, and $\bigcup \set{\overline{\hat Q'}}{\hat Q'\in\hat{\mathcal{S}}}$ is connected.  
\end{proposition}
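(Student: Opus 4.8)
The plan is to treat the two assertions separately, deriving the first from the Bézier-element structure supplied by Lemma~\ref{lemma:2beziers} and Lemma~\ref{lem:lli}, and the second from the local quasi-uniformity of admissible meshes in Proposition~\ref{prop:lqiT}.

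For the bound on the number of non-vanishing anchors, I would first note that each blending function $\hat B_{\anchor,\mathbf{p}}$ is a tensor product of univariate B-splines by \eqref{eq:T-spline}, so its support is a closed box and the open element $\elemp$ meets $\supp(\hat B_{\anchor,\mathbf{p}})$ if and only if $\hat B_{\anchor,\mathbf{p}}$ is not identically zero on $\elemp$; thus $\nodes(\elemp)$ consists exactly of those anchors whose function is non-zero on some Bézier element contained in $\elemp$. By Lemma~\ref{lemma:2beziers}, $\elemp$ is the union (up to a null set) of at most two Bézier elements $\elemp_{B,1},\elemp_{B,2}$, and a function non-zero on $\elemp$ must be non-zero on at least one of them, whence $\nodes(\elemp)\subseteq\nodes(\elemp_{B,1})\cup\nodes(\elemp_{B,2})$. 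Applying Lemma~\ref{lem:lli} to each Bézier element bounds each of these sets by $(p_1+1)\cdots(p_\dpa+1)$, and summing yields the claimed bound $2(p_1+1)\cdots(p_\dpa+1)$.

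For the second assertion I would start from $\sext{\elemp}=\bigcup_{\anchor\in\nodes(\elemp)}\supp(\hat B_{\anchor,\mathbf{p}})$, a union of at most $2(p_1+1)\cdots(p_\dpa+1)$ closed boxes. Connectedness is the easy part: since the blending functions form a partition of unity (Proposition~\ref{prop:Tsplines basis} together with the nestedness of Proposition~\ref{prop:T-nestedness}), at every point of $\elemp$ some function is positive, so $\overline{\elemp}\subseteq\sext{\elemp}$; as each support box meets the connected set $\overline{\elemp}$, the finite union $\sext{\elemp}$ is connected. To cover $\sext{\elemp}$ by a bounded number $q$ of mesh elements, I would localize: each local knot vector $\kv_j(\anchor,\Tmesh)$ of a function in $\nodes(\elemp)$ consists of $p_j+2$ knots traced from the anchor through the skeleton, so its support spans at most $p_j+1$ knot spans per direction and lies in an index neighborhood of $\partind{\elemp}$ of fixed combinatorial size. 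By Proposition~\ref{prop:lqiT}, admissible meshes satisfy $|\levelT{\elemp}-\levelT{\elemp'}|\le 1$ for all $\elemp'\in\neig^{\rm gen}(\elemp)$, which confines the level variation, and hence the size ratio and the local subdivision, of every mesh element meeting $\sext{\elemp}$ to a range depending only on $\dpa$ and the $p_j$. Collecting all parametric elements $\hat Q'$ with $\hat Q'\subseteq\sext{\elemp}$ into $\hat{\mathcal{S}}$ then gives $\#\hat{\mathcal{S}}\le q$ and $\sext{\elemp}\subseteq\bigcup_{\hat Q'\in\hat{\mathcal{S}}}\overline{\hat Q'}$.

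I expect the main obstacle to be the uniform bound on $\#\hat{\mathcal{S}}$: although the number of non-vanishing functions and the reach of each support (measured in knot spans) are bounded immediately, controlling how finely the admissible mesh subdivides the interior of the support extension is delicate, and converting the local statement $|\levelT{\elemp}-\levelT{\elemp'}|\le 1$ into a genuine cardinality bound on the covering requires the full admissibility structure. This counting is precisely the technical content of \cite[Lemma~2.5]{gp18}, on which I would rely for the explicit constant $q$.
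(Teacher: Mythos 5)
Your proposal is correct and takes essentially the same route as the paper: the first assertion is derived, exactly as in the paper, as an immediate consequence of Lemma~\ref{lemma:2beziers} and Lemma~\ref{lem:lli}, and for the second assertion the paper does precisely what you do in the end, namely defer to \cite[Lemma~2.5]{gp18}. One small caveat in your sketch: the set $\hat{\mathcal{S}}=\set{\hat Q'\in\hat\QQ}{\hat Q'\subseteq\sext{\elemp}}$ need not cover $\sext{\elemp}$, since T-mesh elements can straddle the boundary of the support extension (which is exactly why the proposition asserts only an inclusion rather than an equality); taking instead the elements whose closures meet $\sext{\elemp}$ repairs this, and in any case the cited lemma carries that part of the claim.
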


We conclude this section with a %estimate \eqref{R:closure} in the spirit of \cite{bdd04} and \cite{stevenson07} 
proposition from \cite[Section~6]{mp15}  for $\dph=2$ and from \cite[Section~7]{morgenstern16} for $\dph=3$, respectively, which states that the possible overrefinement of Algorithm~\ref{alg:param-Tmesh} to preserve admissibility is bounded up to some uniform constant by the number of marked elements. 
Indeed, these references even provide explicit upper bounds for the constant along with numerical experiments on the quality of these bounds. 

\begin{proposition}\label{prop:T-spline lincomp}
There exists a uniform constant $C>0$ such that for arbitrary sequences $(\QQ_\k)_{\k\in\N_0}$ in $\Q$ with  $\QQ_{\k+1}=\refine(\QQ_\k,\MM_\k)$ for some $\MM_\k\subseteq \QQ_\k$ and all $\k\in\N_0$, it holds that
\begin{align*}%\label{eq:T-spline lincomp}
\# \QQ_\k-\#\QQ_0\le C \sum_{j=0}^{\k-1}\#\MM_j \quad \text{for all }k\in\N_0.
\end{align*}
The constant $C$ depends only on the dimension $\dpa$, the degrees $p_j$, and the coarsest knot vectors $\kv_j^0$.
\end{proposition}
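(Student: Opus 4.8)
The plan is to prove this as the standard \emph{linear complexity} (or \emph{closure}) estimate for adaptive refinement, whose paradigm goes back to Binev--Dahmen--DeVore~\cite{bdd04} and Stevenson~\cite{stevenson07} for newest-vertex bisection and which the cited works~\cite{mp15,morgenstern16} adapt to the bisection T-meshes at hand. First I would reduce the left-hand side to a count of bisections: in the parametric T-mesh each genuine application of $\bisect_{\dirb{\ell}}$ replaces one element by two, so $\#\QQ_k-\#\QQ_0$ equals the total number of elements bisected by Algorithm~\ref{alg:param-Tmesh} during steps $0,\dots,k-1$. Hence it suffices to bound this total by $C\sum_{j=0}^{k-1}\#\MM_j$. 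The essential point, and the reason the estimate is subtle, is that a \emph{single} step may bisect far more than $C\#\MM_j$ elements (the admissibility closure can be large on an already strongly graded mesh); the bound only holds \emph{cumulatively}, so the argument must treat the whole sequence at once rather than step by step.

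The core is a charging argument exploiting two structural facts. By definition of $\neig(\elemi)$, every element added to the marked set by the \texttt{Repeat}-loop of Algorithm~\ref{alg:index-Tmesh} has \emph{strictly smaller level} than the element forcing it, and by Proposition~\ref{prop:lqiT} together with the explicit form of $\mathbf{D}_{\mathbf{p}}$ the generalized neighborhood of an element of level $\ell$ contains only a bounded number of elements (bounded in terms of $\dpa$ and the $p_j$) located in a region whose diameter scales like $2^{-\ell/\dpa}$. I would record the whole refinement history as a forest: each bisected element $\elemp'$ is linked to the marked element whose closure propagation reached it, so that along each branch the level strictly decreases. One then distributes a unit charge from every bisected element over the marked elements at the roots of its branches, weighting by a factor that decays geometrically in the level difference $|\levelT{\elemp}-\levelT{\elemp'}|$.

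The summation step then balances two competing effects: as the level difference grows, the decay weight shrinks geometrically, while the number of elements reachable from a fixed marked element at that level difference grows only polynomially (through the bounded per-level neighborhood count and the fixed diameter of the influence region). I would verify that the geometric decay dominates, so that each marked element receives total charge bounded by a constant depending only on $\dpa$, the $p_j$, and $\kv_j^0$; summing the charges recovers the total number of bisections and yields the claim. The boundary prolongations $\bext{\markedT}{\Tmesh}$ must be folded in separately: each marked element contributes only a bounded number of prolongation elements, clustered near $\partial\Omip$, so they inflate the count by at most a constant factor and do not affect the final bound.

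The hardest part is exactly the cumulative bookkeeping: constructing a \emph{global} labeling of the refinement forest that is consistent across all steps $j$ (so that repeated marking near the same location is charged correctly) and proving the decay-versus-growth balance in the specific geometry induced by the alternating-direction bisection and the vector $\mathbf{D}_{\mathbf{p}}$. Since~\cite{mp15,morgenstern16} carry this out in detail, and even supply explicit values for $C$, I would follow their construction and refer to it for the technical verification, after checking that the equivalence of the refinement routines noted in Remark~\ref{rem:eq-algos} lets their index-mesh estimates be transferred verbatim to the parametric meshes in $\admp$.
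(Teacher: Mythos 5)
Your proposal is correct and takes essentially the same route as the paper: the paper gives no proof of this proposition at all, but simply cites \cite[Section~6]{mp15} for $\dpa=2$ and \cite[Section~7]{morgenstern16} for $\dpa=3$, and your charging-argument sketch (strictly decreasing levels along forced refinements, bounded neighborhoods with geometrically shrinking diameter, geometric decay beating the per-level element count, plus a bounded number of boundary-prolongation elements per marked element) is exactly the structure of the proofs in those references, to which you also ultimately defer after invoking the equivalence of the refinement routines from Remark~\ref{rem:eq-algos}.
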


\subsubsection{The role of the B\'ezier mesh}\label{subsec:bezier}
The results of the previous section were presented considering the elements of the parametric T-mesh. However, the implementation of isogeometric methods with T-splines is usually based on the B\'ezier mesh. Indeed, numerical integration is usually performed on B\'ezier elements, since they are the maximal sets where the restriction of the T-spline functions are polynomials. Moreover, the evaluation of T-spline functions on the local B\'ezier element can be made through B\'ezier extraction \cite{SBVSH11}, a local change of basis to represent T-splines as linear combinations of Bernstein polynomials. The B\'ezier mesh and the matrix of the B\'ezier extraction operators have been also used in \cite{mvdB15} to analyze the linear dependence of T-splines and also to develop refinement algorithms for T-splines \cite{cdB2018b}.

We remark that, thanks to Lemma~\ref{lemma:2beziers}, for admissible meshes it is easy to pass from the B\'ezier mesh to the parametric T-mesh and vice versa. Therefore, the refinement algorithm could be easily adapted to take as input marked elements on the B\'ezier mesh. In practice, numerical quadrature must be computed on B\'ezier elements, so it may be natural to compute the estimator directly on B\'ezier elements.

\subsubsection{Recent developments on T-splines}\label{subsec:extend tsplines}
Here we give some details about other sets of T-meshes that have appeared in recent years, which relax the constraints of dual-compatible T-meshes. We stress that all these works are restricted to the two-dimensional case.

Bracco and Cho introduced in \cite{bracco2014} a generalization of dual-compatible T-meshes. They replace the concept of overlap by a certain \emph{shifting} of the anchors, which is then used to introduce the class of \emph{weakly dual-compatible} T-meshes. They prove that any dual-compatible T-mesh in the sense of \cite{beirao2012,bbsv13} is also weakly dual-compatible. However, it is important to remark that this does not hold true with the definition of dual-compatibility in \cite{bbsv14}, which we are considering, and there are examples of weakly dual-compatible T-meshes that are not dual-compatible and vice versa.

Wei et al. introduced in \cite{wzlh17} a refinement strategy with similar ideas to the one in \cite{mp15}, limited to bicubic degree. Marked elements are split into four subelements, with their level increased by one, and to obtain linear independence the refinement is propagated to other elements in just one direction, with their level increased by one half. Making use of the concept of truncated T-splines (which resembles the one for THB-splines), they prove that for T-meshes constructed with their refinement strategy, linear independence holds if the face extensions of the T-mesh, which determine the B\'ezier mesh, do not intersect.

Different and more involved constraints are introduced by Li and Zhang in \cite{lz18} to define AS++ T-splines, for which it is possible to prove linear independence and to construct a dual basis. A refinement algorithm for the set of AS++ T-splines is presented in \cite{zl18}. Although the presentation is limited to bicubic T-splines, the authors plan to generalize their approach to arbitrary degree.

It is important to note that, although the aforementioned works introduce interesting refinement algorithms for T-splines, none of them presents a rigorous analysis of the algorithm's complexity as in %Section~\ref{sec:T-fem refinement verification} below, 
Proposition~\ref{prop:T-spline lincomp}, 
which is necessary to develop the mathematical theory of adaptivity. For this reason, we have decided to focus on the dual-compatible T-splines studied in \cite{mp15,morgenstern16}.

%\vazquez{AS++ T-splines \cite{lz18}: relax the constraints for AS T-splines, prove linear independence and define a dual basis. \cite{zl18}: apply the refinement in \cite{sederberg2004} to AS++ T-splines, and a new one (greedy algorithm), that reduces the number of new basis functions. No analysis of complexity.}

%\vazquez{truncated T-splines \cite{wzlh17}: relax the analysis-suitable constraints, and only face-to-face intersections are forbidden (bicubic case). The basis functions are then truncated, but this is not necessary. They prove linear independence and nestedness, but the linear complexity of refinement is not proved.}

%\vazquez{Chen and de Borst, \cite{mvdB15}: use B\'ezier extraction operators to check (local) linear dependency (rank of the matrix) and introduce refinement algorithms. \cite{dBc18}: I don't find anything really new compared to the other three.}

%\vazquez{Cesare and Durkbin, generalized T-splines and VMCR T-meshes \cite{bracco2014}.}

%\textcolor{magenta}{Add some applications of T-splines. NOT IN THIS SECTION}

%%%%%%%%%%%%%%%%%%%%%%%%%%%%%%%%%%%%%%%%%%%%%%%%%%%%%%%%%%%%%%%%%%%%%%%%%%%%%%%%%%%%%%%%%%%%%%%%%%%%%%%%%%%%%% 
%\subsubsection{Proof of Lemma~\ref{lemma:2beziers}} \label{sec:proof-2beziers}
\subsubsection{Relation between an admissible T-mesh and its B\'ezier mesh}
\label{sec:proof-2beziers}
%%%%%%%%%%%%%%%%%%%%%%%%%%%%%%%%%%%%%%%%%%%%%%%%%%%%%%%%%%%%%%%%%%%%%%%%%%%%%%%%%%%%%%%%%%%%%%%%%%%%%%%%%%%%%% 
We now give the detailed proof of Lemma~\ref{lemma:2beziers}, which states that any element of an admissible T-mesh contains at most two B\'ezier elements of equal size.
\begin{proof}
%COMMENT: As the proof is quite long, I would suggest to split it into steps as in Section~\ref{sec:H-fem space verification}, i.e., just write Step~1, Step~2, etc.\ at the beginning of some paragraphs. I disagree, unless we explain what we do at each step. Honestly, I don't want to spend my time on that.
The proof is rather technical, although the main idea is not complex. Let $\elemi = \partind{\elemp}$. Suppose by contradiction that $\elemp$ contains more than two B\'ezier elements.
These must appear after the bisection of an element $\elemi'$ which is finer, in terms of the level, than $\elemi$. On the one hand, since the bisection of $\elemi'$ affects the B\'ezier elements in $\elemp$, it must be sufficiently close to $\elemi$. On the other hand, since the mesh is admissible, $\elemi$ cannot be in the neighborhood of $\elemi'$. That is, the two elements must be at the same time sufficiently close and far from each other, and we arrive at a contradiction. Let us now begin with the technical part.

Let $\Tmesh$ be the associated index T-mesh, let $\elemi = \partind{\elemp} = \Pi_{j=1}^\dpa(a_j,b_j)$, and let us suppose that $\elemp$ contains more than two B\'ezier elements to arrive at a contradiction. Since we refine by bisection and alternate the refinement directions, it is clear that the bisection of elements of the same level of $\elemp$ will not split it in more than two B\'ezier elements. Therefore, there must exist an element $\elemi' = \Pi_{j=1}^\dpa(a'_j,b'_j)$, with $k = \levelT{\elemi'} > \levelT{\elemi}$, that has been bisected in direction $s = \dirb{k}$, such that $\bisect(\elemi') \subset \Tmesh$. This element is translated with respect to $\elemi$ only in a direction different from $s$, in the sense that%\footnote{For instance, in the 2D case with $s=1$, the element $\elemi'$ is bisected in the $x$-direction by adding a vertical edge, and the extension of this edge intersects $\elemi$. So, $\elemi'$ must be above or below $\elemi$, i.e., translated in the $y$-direction.}
\[
a_j \le a_j'< b_j' \le b_j, \, \text{ for all } j \not = \tilde s, \text{ for one } \tilde s \not = s,
\]
see a two-dimensional example in Figure~\ref{fig:twobeziers}.
\begin{figure}[t]
\centering
\includegraphics[width=0.3\textwidth,trim=3cm 1cm 2cm 0.5cm, clip]{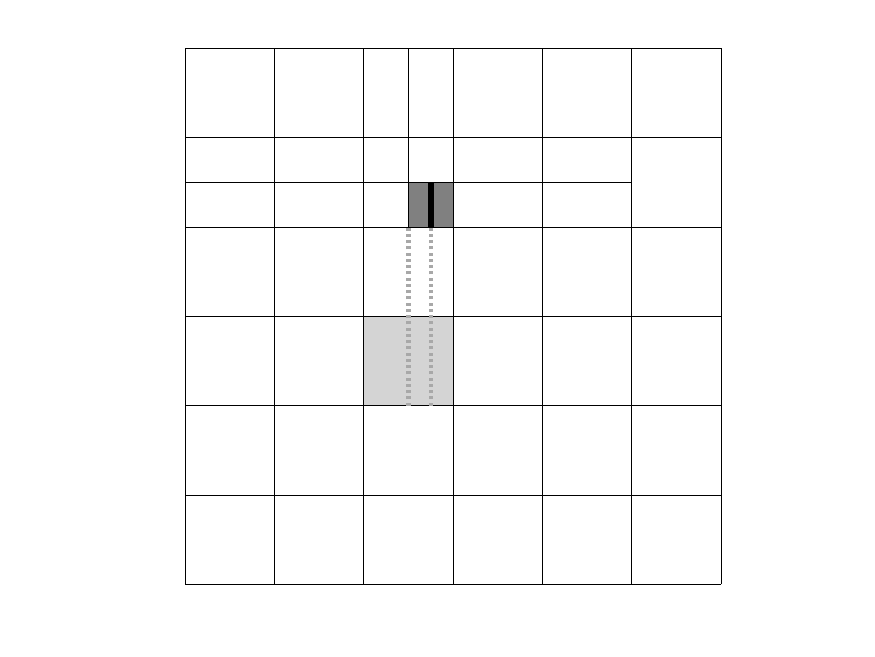}
\caption{For degree ${\bf p} = (3,3)$, the element $\elemi$ (light gray) is bisected in more than two B\'ezier elements after the bisection of $\elemi'$ (dark gray). The element $\elemi'$ is bisected by the thick black line in direction $s=1$, and it is translated with respect to $\elemi$ in direction $\tilde s = 2$.}
\label{fig:twobeziers}
\end{figure}

Moreover, there exists an anchor such that its local index vector depends on the bisection of $\elemi'$, and the support of the associated function intersects $\elemp$. Putting it rigorously, there exists $\anchor \in \nodes(\Tmesh)$ with $\supp (\hat B_{\anchor,\mathbf{p}}) \cap \elemp \not = \emptyset$, such that $\frac{a'_s+b'_s}{2} \in \LIV_{s}(\anchor,\Tmesh)$ and $z_j \in (a_j',b_j')$ for all $j \not = s$. 

Without loss of generality, we can assume that $\elemi' \subset \Omip$. Moreover, since $\Tmesh$ is admissible we can also assume that the bisection of $\elemi'$ is admissible, i.e., there exists an admissible T-mesh $\Tmesh_\coarse$ such that 
\[
\Tmesh = \refineind(\Tmesh_\coarse, \elemi'),
\]
and only $\elemi'$, and eventually $\bext{\elemi'}{\Tmesh_\coarse}$, have been bisected\footnote{If this was not the case, we would consider $\Tmesh_\coarse$ as the mesh obtained after refining all the elements in the neighborhood of $\elemi'$, but not $\elemi'$ itself.}.
%
%Without loss of generality, we can assume that $k+1$ is the finest level of the mesh, and that $\elemi' \subset \Omip$. Since the bisection of $\elemi'$ was admissible, there exists an admissible T-mesh $\Tmesh_\coarse$ such that 
%\[
%\Tmesh = \refineind(\Tmesh_\coarse, \elemi'),
%\]
%and only $\elemi'$, and eventually $\bext{\elemi'}{\Tmesh_\coarse}$, have been bisected (see \cite[Def.~3.1, Def.~3.2]{morgenstern16}). 
Denoting by $\neig^{\rm gen}_\coarse(\elemi')$ the generalized neighborhood in $\Tmesh_\coarse$, since the bisection was admissible and $\levelT{\elemi'} > \levelT{\elemi}$, we know from Proposition~\ref{prop:lqiT} that $\elemi \not \in \neig^{\rm gen}_\coarse(\elemi')$, otherwise it would also be a neighbor. As a consequence, $\elemi'$ and $\elemi$ must be far from each other, and in particular
\begin{equation} \label{eq:not-neig}
|x_{\tilde s} - ({\bf x}_{\elemi'})_{\tilde s}| > ({\bf D_p}(k))_{\tilde s} \, \text{ for any } {\bf x} \in \elemi.
\end{equation}
Taking into account the relation between $k$ and $s = \dirb{k}$, and that $\tilde s \not = s$, a careful (and tedious) computation gives that
\[
({\bf D_p}(k))_{s} = 
\left\{
\begin{array}{ll}
\left(\frac{1}{2}\right)^{(k-s+1)/2} (p_s/2) & \text{ for } \dpa=2, \\
\left(\frac{1}{2}\right)^{(k-s+1)/3} (p_s + 3/2) & \text{ for } \dpa=3.
\end{array}
\right.
\]

\[
({\bf D_p}(k))_{\tilde s} = 
\left\{
\begin{array}{ll}
\left(\frac{1}{2}\right)^{(k-\tilde s+2)/2} (p_{\tilde s}/2 + 1) & \text{ for } \dpa=2, \\
\left(\frac{1}{2}\right)^{\lfloor (k-\tilde s+3)/3 \rfloor} (p_{\tilde s} + 3/2) & \text{ for } \dpa=3,
\end{array}
\right.
\]

It is readily seen that the length of an element of level $k$ in the $j$-th direction is exactly $(1/2)^{\lfloor (k-j+\dpa)/\dpa \rfloor}$. 
Note that $\LIV_{s}(\anchor,\Tmesh)$ consists of $p_s + 2$ indices. 
Since we consider odd degrees, $z_s$ is in the middle position, which means that we only need to check $(p_s+3)/2$ indices.
%since we only consider odd degree, $\LIV_{s}(\anchor,\Tmesh)$ consists of $(p_s + 3)/2$ indices, with $z_s$ in the middle position. 
Using the relation between $k$ and $s = \dirb{k}$, the value of $({\bf D_p}(k))_{s}$, the length of the elements of level $k$ in direction $s$, and the number of indices, a careful check shows that $\anchor \in \partial \elemi''$ for some element $\elemi'' \in \neig^{\rm gen}_\coarse(\elemi')$.

%\textcolor{magenta}{(REMOVE THIS LATER) Here I have used that, in the worst case scenario, the first (or last) index is $(a'_s + b'_s)/2$, and the second would be $b'_s$. After choosing the first two, I need $(p_s - 1)/2$ more, given by at most $(p_s-1)/2$ elements. Using the value of the element length, the distance in direction $s$ is below $({\bf D_p}(k))_{s}$, while the distance in the other directions is zero. So, $z \in \partial \elemi''$ for some element $\elemi'' \in \neig_\coarse(\elemi')$.}

As the bisection was admissible,  Proposition~\ref{prop:lqiT} shows that all elements in $\neig^{\rm gen}_\coarse(\elemi')$ have level at least equal to $k$. A similar check in the $\tilde s$ direction and the fact that $\anchor$ is in one generalized neighbor show that $\LIV_{\tilde s}(\anchor,\Tmesh)$ is built using only indices corresponding to elements in $\neig^{\rm gen}_\coarse(\elemi')$. As a consequence, the fact that $\supp (\hat B_{\anchor,\mathbf{p}}) \cap \elemp \not = \emptyset$ is in contradiction with \eqref{eq:not-neig}. 

Finally, the result of the measure of the B\'ezier elements holds because the elements are refined by bisection. \qed

%{\color{magenta}I HAVE USED (REMOVE THIS LATER)
%For $d=2$, 
%
%$k=0 \mod 2 \implies s=1, \tilde s = 2$, 
%
%$k=1 \mod 2 \implies s=2, \tilde s = 1$.
%
%For $d=3$, 
%
%$k=0 \mod 3 \implies s=1, \tilde s \in\{2,3\}$, 
%
%$k=1 \mod 3 \implies s=2, \tilde s \in \{1,3\}$, 
%
%$k=2 \mod 3 \implies s=3, \tilde s \in\{1,2\}$.
%}
\end{proof}

\subsection{Other splines for adaptive methods}\label{subsec:others}
%\cg{paper Sabin MMCS}\\
%\cg{Patchwork B-splines}\\
%iterative refinement of hierarchical T-meshes\cite{berdinsky2014a},\\

Our focus in this section has been on (T)HB-splines and analysis-suitable (or dual-compatible) T-splines because the mathematical theory of adaptive isogeometric methods based on these functions is the most advanced one. However, there are other kinds of spline spaces with local refinement capabilities which are successfully used in IGA, especially in the engineering literature, but for which the mathematical theory, especially the convergence theory, has not been studied yet.
%is still in its infancy.
For completeness, we mention here the most popular ones and address the reader to the cited references for the details.

Probably, the most popular alternative is the one given by locally refined-splines, or \emph{LR-splines} \cite{dlp13,bressan13}. They are similar to T-splines, but instead of being defined from the T-mesh, they are directly defined from the B\'ezier mesh by associating a certain continuity to each edge (or face in 3D). They have been used for IGA for the first time in \cite{jkd14}, and after that they have appeared in several papers, see for instance \cite{kumar2015,kumar2017}. A refinement algorithm that alternates the refinement direction, similar to the one we detailed for T-splines, has been introduced in \cite{BrJu15}. 
Recently, \cite{patrizi2020} introduced another refinement algorithm which preserves (local) linear independence of LR-splines. %the so-called LRB-splines which span the space of LR-splines.}
A comparison of LR-splines and THB-splines can be found in \cite{jrk15}.

% to add bressan: a versatile strategy...+hierarchical LR-meshes

A simpler construction is the one of polynomial splines over hierarchical T-meshes (\emph{PHT-splines}) \cite{DeChFe06}. The starting point is a T-mesh, as for LR-splines, but in this case the continuity is the same for all edges. Assuming that the continuity is lower than one half of the degree, it is possible to determine the dimension of the space of piecewise polynomials over the T-mesh \cite{DeChFe06}, and to construct a basis for computations \cite{DeChLi08,LiDeCh10}. PHT-splines have been used in isogeometric methods for the first time in \cite{WaXuDeCh11,NgNgBoRa11}. The main drawback of the reduced smoothness of PHT-splines is that it increases the number of degrees of freedom, while its advantage is that basis functions are more localized, and their implementation and analysis is more similar to standard FEM. We refer to the survey \cite{LiChKaDe16} for more details about PHT-splines, including a complete list of references.

Another interesting approach for refinement is given by \emph{hierarchical T-splines} \cite{evans2015,cdB2018a}, where the initial mesh is defined by analysis-suitable T-meshes, and the refinement is done by applying the algorithm of hierarchical splines, replacing in the construction of $\hat{\cal H}^{N-1}$ in Section~\ref{subsec:hb} the B-splines of each level by analysis-suitable T-splines of different levels. The main difficulty of this approach is that, to build the T-splines of different levels, the initial T-mesh must be refined globally, but maintaining T-junctions away from each other in such a way that the T-mesh of the next level remains analysis-suitable, see \cite{evans2015} for details.

Finally, we remark that one of the drawbacks of (T)HB-splines compared to T-splines or LR-splines is that it is not allowed to perform anisotropic refinement, since the refinement direction at each level is determined by the (global) refinement between levels\footnote{This is also true for the T-splines refinement in \cite{morgenstern17} presented in Section~\ref{sec:T refine}}. This constraint is alleviated in the construction of \emph{patchwork B-splines} in \cite{EnJu17}, which combines different refinement directions for different regions of the domain, and even different degrees and smoothness.

\newpage

% !TEX encoding = MacOSRoman
% !TEX root = adaptive_iga.tex

%%%%%%%%%%%%%%%%%%%%%%%%%%%%%%%%%%%%%%%%%%%%%%%%%%%%%%%%%%%%%%%%%%%%%%%%%%%%%%%%%%%%%%%%%%%%%%%%%%%%%%%%%%%%%% 
%%%%%%%%%%%%%%%%%%%%%%%%%%%%%%%%%%%%%%%%%%%%%%%%%%%%%%%%%%%%%%%%%%%%%%%%%%%%%%%%%%%%%%%%%%%%%%%%%%%%%%%%%%%%%% 
\section{Adaptivity: abstract framework}
\label{sec:abstract}
%%%%%%%%%%%%%%%%%%%%%%%%%%%%%%%%%%%%%%%%%%%%%%%%%%%%%%%%%%%%%%%%%%%%%%%%%%%%%%%%%%%%%%%%%%%%%%%%%%%%%%%%%%%%%% 
%%%%%%%%%%%%%%%%%%%%%%%%%%%%%%%%%%%%%%%%%%%%%%%%%%%%%%%%%%%%%%%%%%%%%%%%%%%%%%%%%%%%%%%%%%%%%%%%%%%%%%%%%%%%%% 

%\textcolor{magenta}{COMMENT: We need to explain somewhere the notation convention that you always use, about mesh $\QQ_\coarse, \QQ_\fine, \QQ_k$, and related estimators (And other things) $\eta_\coarse, \eta_\fine, \eta_k$.}\textcolor{magenta}{Add it in Section 1.5 (general notation).}

In this section, we consider an abstract adaptive algorithm of the form
\begin{align} \label{eq:box-algorithm}
 \boxed{\texttt{solve}}
 \longrightarrow
 \boxed{\texttt{estimate}}
 \longrightarrow
 \boxed{\texttt{mark}}
\longrightarrow
 \boxed{\texttt{refine}}
\end{align}
See Algorithm~\ref{alg:abstract algorithm} below for the formal statement.
First, in Section~\ref{sec:axioms}, we give general properties, the so-called \emph{axioms of adaptivity} from \cite{cfpp14}, that guarantee  convergence of the involved error estimator at optimal algebraic rate.
In Section~\ref{sec:afem} and \ref{sec:abem}, we consider Algorithm~\ref{alg:abstract algorithm} in the frame of FEM and BEM, respectively. 
These sections provide more concrete properties for the meshes, the refinement, and the ansatz spaces which ensure the axioms of adaptivity and thus guarantee optimal convergence. 
In Section~\ref{sec:adaptive igafem} below, we will show that adaptive IGAFEM and IGABEM fit into the framework of  Section~\ref{sec:afem} and \ref{sec:abem}, respectively.

%%%%%%%%%%%%%%%%%%%%%%%%%%%%%%%%%%%%%%%%%%%%%%%%%%%%%%%%%%%%%%%%%%%%%%%%%%%%%%%%%%%%%%%%%%%%%%%%%%%%%%%%%%%%%% 
\subsection{Axioms of adaptivity}
\label{sec:axioms}
%%%%%%%%%%%%%%%%%%%%%%%%%%%%%%%%%%%%%%%%%%%%%%%%%%%%%%%%%%%%%%%%%%%%%%%%%%%%%%%%%%%%%%%%%%%%%%%%%%%%%%%%%%%%%% 

%We consider the following adaptive Algorithm~\ref{alg:abstract algorithm} of the form~\eqref{eq:box-algorithm}, where the notation used is introduced subsequently.
We provide a set of sufficient properties for the error estimator as well as for the mesh refinement so that  Algorithm~\ref{alg:abstract algorithm} below  guarantees convergence of the estimator at optimal algebraic rate.
These properties are known as \textit{axioms of adaptivity} and have been introduced in \cite{cfpp14}.
In one way or another, the axioms arose  over the years in various works throughout the literature.
In \cite[Section~3.2]{cfpp14}, a historical overview on  their development can be found.
We especially highlight the milestones on rate-optimality \cite{bdd04,stevenson07,ckns08,cn12,ffp14}.

This section is essentially a summary of the results from \cite{cfpp14}. 
As in \cite{cfpp14}, we mainly focus  on the error estimator.
This is motivated by the fact that the adaptive algorithm has no other information than the error estimator to steer the mesh refinement.
However, at least for FEM, we will show that the corresponding error estimator is equivalent to the so-called \textit{total error} (which is the sum of  error plus data oscillations).

%%%%%%%%%%%%%%%%%%%%%%%%%%%%%%%%%%%%%%%%%%%%%%%%%%%%%%%%%%%%%%%%%%%%%%%%%%%%%%%%%%%%%%%%%%%%%%%%%%%%%%%%%%%%%%
\subsubsection{Admissible meshes}
\label{sec:meshes_axioms}

Let $\Q$ be a set of finite sets $\QQ\in\Q$, which we refer to as \textit{admissible meshes}. 
Concretely, we will later consider quadrilateral  meshes of some Lipschitz-domain $\Omega$ or its boundary $\Gamma:=\partial\Omega$, where admissibility will describe a certain grading property, see also Section~\ref{sec:hierarchical refine} and Section~\ref{sec:T refine} for details.
Let $\refine(\cdot,\cdot)$ be a fixed refinement strategy such that, for $\QQ_\coarse\in\Q$ and marked $\MM_\coarse\subseteq\QQ_\coarse$, it holds that $\QQ_{\fine}=\refine(\QQ_\coarse,\MM_\coarse)\in\Q$ with $\MM_\coarse\subseteq\QQ_\coarse \setminus\QQ_{\fine}$, i.e., at least the marked elements $\MM_\coarse$ are refined, and $\refine(\QQ_\coarse,\emptyset)=\QQ_\coarse$.
Note that in practice, one cannot expect that only the marked elements are refined. Indeed, to preserve admissibility of our considered quadrilateral meshes, additional elements have to be refined.
For arbitrary $\QQ_\coarse,\QQ_{\fine}\in\Q$, we write $\QQ_{\fine}\in\refine(\QQ_\coarse)$, if $\QQ_{\fine}$ is obtained by iterative application of $\refine$ and we note that $\QQ_\coarse\in\refine(\QQ_\coarse)$.
Moreover, we assume that each admissible mesh $\QQ\in\Q$ can be reached via refinement starting from a fixed initial mesh $\QQ_0\in\Q$, i.e.,    $\refine(\QQ_0)=\Q$.
We suppose that 
$\#\QQ_\coarse<\# \QQ_\fine$ for all  $\QQ_\coarse\in\Q$ and all $\QQ_\fine\in\refine(\QQ_\coarse)$  with $\QQ_\coarse\neq \QQ_\fine$. In practice, the latter property is trivially satisfied, but it has to be explicitly assumed within the abstract framework.

%%%%%%%%%%%%%%%%%%%%%%%%%%%%%%%%%%%%%%%%%%%%%%%%%%%%%%%%%%%%%%%%%%%%%%%%%%%%%%%%%%%%%%%%%%%%%%%%%%%%%%%%%%%%%% 
\subsubsection{Adaptive algorithm}
\label{sec:abstract adaptive algorithm}

On each mesh $\QQ_\coarse\in\Q$, we want to compute an associated quantity $U_\coarse$, think of, e.g., a Galerkin approximation of some PDE solution $u$.
We suppose that we are given an  \textit{error estimator} associated to each mesh $\QQ_\coarse\in\Q$, i.e.,  a function $\eta_\coarse:\QQ_\coarse\to[0,\infty)$.
At least heuristically, this estimator shall estimate the difference $\norm{u-U_\coarse}{}$.
By abuse of notation, we also write  $\eta_\coarse:=\eta_\coarse(\QQ_\coarse)$, where $\eta_\coarse(\SS):=(\sum_{Q\in\SS}\eta_\coarse(Q)^2)^{1/2}$ for all $\SS\subseteq\QQ_\coarse$.
Based on this error estimator, we consider the adaptive Algorithm~\ref{alg:abstract algorithm} of the form~\eqref{eq:box-algorithm}. 

\begin{algorithm}
\caption{(Abstract adaptive algorithm)} \label{alg:abstract algorithm}
\begin{algorithmic}
\Require  initial mesh ${\cal Q}_0$, marking parameter $\theta\in(0,1]$, marking constant $\const{min}\in [1,\infty]$
\For{$k=0,1,2,\dots$}
 \LineComment{compute quantity $U_k$}
\State set $U_k=$ \texttt{solve}(${\cal Q}_k$) 
 \LineComment{compute refinement indicators $\eta_\k({Q})$ for all $Q \in {\cal Q}_k$}
\State set $\eta_k =$ \texttt{estimate}(${\cal Q}_k$, $U_k$)
\LineComment{determine $\const{min}$-minimal set of elements with \eqref{eq:Doerfler}}  
\State set $\MM_k =$ \texttt{mark}($\eta_k,{\cal Q}_k$)
\LineComment{generate refined mesh}
\State set $\QQ_{\k+1} =$ \texttt{refine}$({\cal Q}_k,{\cal M}_k)$ 
\EndFor
\Ensure refined meshes $\QQ_\k$,  
quantities $U_k$, 
 estimators $\eta_\k$ for all $\k \in\N_0$
\end{algorithmic}
\end{algorithm}

In the module \texttt{solve} and \texttt{estimate}, we compute the quantity $U_k$ and the refinement indicators $\eta_k(Q)$ of all elements $Q$ in the current mesh $\QQ_k$, respectively.
In the module  \texttt{mark}, we determine up to a multiplicative constant $\const{min}$ a minimal set of elements $\MM_k\subseteq\QQ_k$ that satisfies the D\"orfler marking \cite{MR1393904}
\begin{align}\label{eq:Doerfler}
\theta\,\eta_\k^2 \le \eta_\k(\MM_\k)^2.
\end{align}
This means that $\#\MM_\k\le\const{min} \#\SS$ for all sets $\SS\subseteq\QQ_\k$ with $\theta\eta_\k^2\le \eta_\k(\SS)^2$.
If $\const{min}=\infty$, this is always satisfied and allows for uniform refinement, where $\MM_\k=\QQ_\k$. 
We note that a naive implementation of the D\"orfler marking~\eqref{eq:Doerfler} with $\const{min}=1$, which gives the truly minimal set $\MM_k$, especially requires sorting of the error indicators, which leads to a log-linear effort.
To overcome this disadvantage, \cite[Section~5]{stevenson07} proposed an algorithm to realize it with $\const{min}=2$ in linear complexity. 
Only recently, \cite{pp19} showed that linear complexity can also be attained for $\const{min}=1$.  
Based on the marked elements $\MM_k$, the mesh $\QQ_k$ is refined in the module \texttt{refine}.

We stress that an actual implementation of Algorithm~\ref{alg:abstract algorithm} will also have some kind of stopping criterion, e.g., $k$ is greater than some given bound $K\in\N$ or if $\eta_k$ is smaller than some given tolerance $\tau>0$. 
Moreover, in practice $\eta_k$ and $\QQ_k$ are not saved but overwritten by $\eta_{k+1}$ and $\QQ_{k+1}$, respectively. 
However, the given form of the algorithm allows to present convergence results in a simple way.

%%%%%%%%%%%%%%%%%%%%%%%%%%%%%%%%%%%%%%%%%%%%%%%%%%%%%%%%%%%%%%%%%%%%%%%%%%%%%%%%%%%%%%%%%%%%%%%%%%%%%%%%%%%%%% 
\subsubsection{The axioms}
\label{sec:the axioms}

We suppose that we are given some fixed \textit{perturbations}
%\footnote{Later, in a more concrete setting, $\dist(\QQ_\coarse,\QQ_\fine)$ will always be the error $\norm{U_\fine-U_\coarse}{}$ between the two Galerkin solutions $U_\coarse$ and $U_\fine$ corresponding to the meshes $\QQ_\coarse$ and $\QQ_\fine$, respectively.}  
$\dist(\QQ_\coarse,\QQ_\fine)\ge 0$ for all $\QQ_\coarse\in\Q$, $\QQ_{\fine}\in\refine(\QQ_\coarse)$,  and constants $\const{stab}$, $\const{red}$, $\const{ref}$, $\const{drel} >0$, 
 and $0\le\ro{red}<1$ such that there hold the following  estimator properties \eqref{item:stability}--\eqref{item:discrete reliability} for all  $\QQ_\coarse\in\Q$ and all $\QQ_{\fine}\in\refine(\QQ_\coarse)$:
\begin{enumerate}[(1)]
\renewcommand{\theenumi}{E\arabic{enumi}}
\bf\item\rm\label{item:stability}\textbf{Stability on non-refined elements:}  
It holds that
\begin{align*}
|\eta_{\fine}(\QQ_\coarse\cap\QQ_{\fine})-\eta_\coarse(\QQ_\coarse\cap\QQ_{\fine})|\le\const{stab}\dist(\QQ_\coarse,\QQ_\fine).
\end{align*}
\bf\item\rm\label{item:reduction}\textbf{Reduction on refined elements:} 
It holds that
\begin{align*}
\eta_{\fine}(\QQ_{\fine}\setminus\QQ_\coarse)^2&\le\ro{red}\eta_\coarse( \QQ_\coarse\setminus\QQ_{\fine})^2\\
&\quad+\const{red}\dist(\QQ_\coarse,\QQ_\fine)^2.
\end{align*}
\bf\item\rm \label{item:discrete reliability}\textbf{Discrete reliability:} 
There exists a set $\QQ_\coarse\setminus\QQ_{\fine}$ $\subseteq$ $\RR(\QQ_\coarse, \QQ_\fine)$
$\subseteq$ $\QQ_\coarse$, with
 $\#\RR(\QQ_\coarse, \QQ_\fine)\le\const{ref}\big(\#\QQ_{\fine}-\#\QQ_\coarse\big)$ such that
\begin{align*}
\dist(\QQ_\coarse,\QQ_\fine)^2\le
\const{drel}^2\eta_\coarse(\RR\big(\QQ_\coarse, \QQ_\fine)\big)^2,
\end{align*}
i.e., the perturbations are essentially controlled by the estimator on the refined elements.
\end{enumerate}
Moreover,  with the D\"orfler parameter $0<\theta\le1$  of Algorithm~\ref{alg:abstract algorithm}, let $\const{qo}>0$ and $0\le \varepsilon_{\rm qo}<1$ satisfy the following property \eqref{item:orthogonality} for the sequence $(\QQ_\k)_{\k\in\N_0}$ from Algorithm~\ref{alg:abstract algorithm}:
\begin{enumerate}[(1)]
\renewcommand{\theenumi}{E\arabic{enumi}}
\setcounter{enumi}{3}
\bf\item\rm\label{item:orthogonality} \textbf{General quasi-orthogonality:} It holds that
\begin{align*}
0\le\varepsilon_{\rm qo}<\sup_{\delta>0}\frac{1-(1+\delta)(1-(1-\ro{red})\theta)}{\const{red}+(1+\delta^{-1})\const{stab}^2},
\end{align*}
and  for all $\k,N\in\N_0$ that
\begin{align*}
\sum_{j=\k}^{\k+N}(\dist(\QQ_j,\QQ_{j+1})^2-\varepsilon_{\rm qo}\eta_j^2)\le\const{qo} \eta_\k^2,
\end{align*}
i.e., the sum of perturbations (minus some minor estimator terms) is controlled by the estimator. 
%This property can be seen as a generalization of Pythagoras identity in the energy norm, see Remark~\ref{rem:symmetric orthogonality}.
\end{enumerate}

\begin{remark}
Later, in a more concrete setting, $\dist(\QQ_\coarse,\QQ_\fine)$ will always be the error $\norm{U_\fine-U_\coarse}{}$ between the two Galerkin solutions $U_\coarse$ and $U_\fine$ corresponding to the meshes $\QQ_\coarse$ and $\QQ_\fine$, respectively.
If the involved bilinear form is symmetric, \eqref{item:orthogonality} even with $\varepsilon_{\rm qo}=0$ follows directly from the Pythagoras identity $\norm{U_{j+1}-U_j}{}^2 =\norm{u-U_j}{}^2- \norm{u-U_{j+1}}{}^2$ in the energy norm and reliability $\norm{u-U_k}{}\lesssim \eta_k$ of the estimator, see also Remark~\ref{rem:symmetric orthogonality}.
\end{remark}

Moreover, we suppose that we are given constants $\const{child}$, $\const{clos}\ge1$ such that there hold the following   refinement properties \eqref{R:childs}--\eqref{R:overlay}:
\begin{enumerate}[(1)]
\renewcommand{\theenumi}{R\arabic{enumi}}
\bf\item\rm \label{R:childs} 
\textbf{Child estimate:}
For all $\QQ_\coarse\in\Q$, $\MM_\coarse\subseteq\QQ_\coarse$ and $\QQ_\fine:=\refine(\QQ_\coarse,\MM_\coarse)$, it holds that 
\begin{align*}\#\QQ_\fine \le \const{child}\,\#\QQ_\coarse,
\end{align*} i.e., one step of refinement leads to a bounded increase of elements. 
\bf\item\rm\label{R:closure} \textbf{Closure estimate:}
Let $(\QQ_\k)_{\k\in\N_0}$ be an arbitrary sequence in $\Q$ such that  $\QQ_{\k+1}=\refine(\QQ_\k,\MM_\k)$ with some $\MM_\k\subseteq \QQ_\k$ for all $\k\in\N_0$.
Then, for all $\k\in\N_0$, it holds  that 
\begin{align*}
\# \QQ_\k-\#\QQ_0\le \const{clos}\sum_{j=0}^{\k-1}\#\MM_j.
\end{align*}
This inequality is trivially satisfied if only marked elements are refined. However, in practice, to preserve admissibility of the meshes, additional elements have to be refined. Then, \eqref{R:closure} states that the overall number of elements $\#\QQ_k$ can be controlled by $\#\QQ_0$ plus the number of marked elements.
\bf\item\rm\label{R:overlay}
\textbf{Overlay property:}
For all meshes $\QQ_\coarse,\QQ_\meshidx\in\Q$, there exists a common refinement $\QQ_\fine\in\refine(\QQ_\coarse)$ $\cap$ $\refine(\QQ_\meshidx)$ which satisfies the overlay estimate
\begin{align*}
\#\QQ_\fine \le \#\QQ_\coarse + \#\QQ_\meshidx - \#\QQ_0.
\end{align*}
\end{enumerate}

%%%%%%%%%%%%%%%%%%%%%%%%%%%%%%%%%%%%%%%%%%%%%%%%%%%%%%%%%%%%%%%%%%%%%%%%%%%%%%%%%%%%%%%%%%%%%%%%%%%%%%%%%%%%%% 
\subsubsection{Optimal convergence for the error estimator}
\label{sec:abstract main}
The following theorem is the main result of Section~\ref{sec:axioms}.
It was already proved 
in \cite[Theorem~4.1 and Corollary~4.8]{cfpp14}.
For arbitrary $s>0$, we set
\begin{align}\label{eq:const apx}
\const{apx}(s):=\sup_{N\ge\#\QQ_0} \min_{\QQ_\coarse\in\Q(N)}(N^s\eta_\coarse)\in[0,\infty]
\end{align}
with $\Q(N):=\set{\QQ_{\coarse}\in\Q}{\#\QQ_{\coarse}\le N}$.
By definition, it holds that $\const{apx}(s)<\infty$ if and only if the error estimator converges as $\eta_\coarse=\OO((\#\QQ_\coarse)^{-s})$ if the optimal admissible meshes are chosen.
Consequently, an adaptive algorithm is called \textit{optimal} if the sequence of adaptively generated meshes leads to $\eta_\ell=\OO((\#\QQ_\ell)^{-s})$ for all $s>0$ with $\const{apx}(s)<\infty$. 

\begin{theorem}\label{thm:abstract main}
Let $\QQ_0, \theta \in (0,1]$, and $\const{min} \in [1, \infty]$ be the input arguments of Algorithm~\ref{alg:abstract algorithm}, and let $(\QQ_\k)_{\k\in\N_0}$ and $(\eta_\k)_{\k\in\N_0}$ be the meshes and estimators generated by Algorithm~\ref{alg:abstract algorithm}.
Then, there hold: 
\begin{enumerate}[\rm (i)]
\item \label{item:abstract main 1} Suppose that the axioms {\rm\eqref{item:stability}}--\eqref{item:reduction} hold true
at least for $\QQ_{k+1}\in\refine(\QQ_k)$ and all $k\in\N_0$,
and assume that $\lim_{\k\to\infty}\dist(\QQ_\k,\QQ_{\k+1})=0$.
Then, for all $0<\theta\le 1$ and all $\const{min}\in[1,\infty]$, the estimator converges, i.e.,
\begin{align}\label{eq:estimator convergence}
\lim_{\k\to \infty}\eta_\k=0.
\end{align}
\item \label{item:abstract main 2}
Suppose that the axioms \eqref{item:stability}--\eqref{item:reduction} hold true
at least for $\QQ_{k+1}\in\refine(\QQ_k)$ and all $k\in\N_0$ and \eqref{item:orthogonality} holds true as well.
 Then, for all $0<\theta\le 1$ and all $\const{min}\in[1,\infty]$, the estimator converges linearly, i.e., there exist constants $0<\ro{lin}<1$ and $\const{lin}\ge1$ such that
\begin{align}\label{eq:linear convergence}
\eta_{\k+j}^2\le \const{lin} \ro{lin}^j\eta_\k^2\quad\text{for all }j,\k\in\N_0.
\end{align}
\item \label{item:abstract main 3}
Suppose that the axioms {\rm\eqref{item:stability}--\eqref{item:orthogonality}} as well as \eqref{R:childs}--\eqref{R:overlay} hold true. Then, for all $0<\theta<\theta_{\rm opt}:=(1+\const{stab}^2 \const{drel}^2)^{-1}$ and all $\const{min}\in[1,\infty)$, the estimator converges at  optimal rate, i.e.,  for all $s>0$ there exist constants $c_{\rm opt},\const{opt}>0$ such that 
\begin{align}\label{eq:optimal convergence}
\hspace{-1mm}c_{\rm opt} \const{apx}(s)&\le\sup_{\k\in\N_0}{(\#\QQ_\k)^{s}}{\eta_\k}\le C_{\rm opt} \const{apx}(s),
\end{align}
where the lower bound relies only on  \eqref{R:childs}.
\end{enumerate}
The constants $\const{lin},\ro{lin}$ depend  only on $\ro{red}, \const{qo}, \varepsilon_{\rm qo}$, and on $\theta$.
The constant $C_{\rm opt}$ depends additionally on $\const{min},\const{ref},\const{drel}, \varepsilon_{\rm drel},\const{clos},\const{over},  \#\QQ_0$, and on $s$, while $c_{\rm opt}$ depends only on $\const{child}, \#\QQ_0$,  $s$, and if there exists $\k_0$ with $\eta_{\k_0}=0$ also on $\k_0$. 
\end{theorem}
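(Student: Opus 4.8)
The plan is to follow the abstract machinery of \cite{cfpp14}, whose structure we reproduce here; since every hypothesis of Theorem~\ref{thm:abstract main} is exactly one of the axioms used there, the proof reduces to assembling three successively stronger consequences. The common starting point I would establish first is the \emph{estimator reduction}: combining stability \eqref{item:stability} on $\QQ_\k\cap\QQ_{\k+1}$ (via a Young inequality with parameter $\delta>0$) with reduction \eqref{item:reduction} on $\QQ_{\k+1}\setminus\QQ_\k$, and exploiting that the marked set $\MM_\k\subseteq\QQ_\k\setminus\QQ_{\k+1}$ fulfils the D\"orfler criterion \eqref{eq:Doerfler}, one arrives at
\begin{align*}
\eta_{\k+1}^2\le q_\delta\,\eta_\k^2+C_\delta\,\dist(\QQ_\k,\QQ_{\k+1})^2,
\end{align*}
with $q_\delta:=(1+\delta)\big(1-(1-\ro{red})\theta\big)$ and $C_\delta:=\const{red}+(1+\delta^{-1})\const{stab}^2$, where $q_\delta<1$ for $\delta$ small enough. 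Part~(i) then follows immediately: since $\dist(\QQ_\k,\QQ_{\k+1})\to0$ by assumption, the contraction-up-to-a-vanishing-perturbation recursion forces $\eta_\k\to0$.

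For part~(ii) I would feed the estimator reduction into the general quasi-orthogonality \eqref{item:orthogonality}. Summing the reduction over $j=\k,\dots,\k+N$, telescoping the estimator contributions, and replacing $\sum_j\dist(\QQ_j,\QQ_{j+1})^2$ by its bound $\const{qo}\eta_\k^2+\varepsilon_{\rm qo}\sum_j\eta_j^2$ gives
\begin{align*}
\big(1-q_\delta-C_\delta\varepsilon_{\rm qo}\big)\sum_{j=\k}^{\k+N}\eta_j^2\le(1+C_\delta\const{qo})\,\eta_\k^2.
\end{align*}
The admissible range for $\varepsilon_{\rm qo}$ imposed in \eqref{item:orthogonality} is precisely what makes $1-q_\delta-C_\delta\varepsilon_{\rm qo}>0$ for a suitable $\delta$, so the tail sums $S_\k:=\sum_{j\ge\k}\eta_j^2$ are geometrically summable in $\eta_\k^2$; the elementary remainder argument ($S_\k\le\const{sum}(S_\k-S_{\k+1})$, hence $S_{\k+1}\le(1-\const{sum}^{-1})S_\k$) together with $\eta_\k^2\le S_\k$ then yields the linear decay \eqref{eq:linear convergence}.

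The decisive and most delicate step is part~(iii), whose engine is the \emph{optimality of D\"orfler marking}. Using stability \eqref{item:stability} on the non-refined elements together with discrete reliability \eqref{item:discrete reliability} (which controls $\dist$ by $\eta_\coarse$ on the set $\RR(\QQ_\coarse,\QQ_\fine)$), I would show that whenever $\QQ_\fine\in\refine(\QQ_\k)$ reduces the estimator enough, say $\eta_\fine^2\le\kappa\,\eta_\k^2$, the set $\RR(\QQ_\k,\QQ_\fine)$ already satisfies $\theta\,\eta_\k^2\le\eta_\k(\RR(\QQ_\k,\QQ_\fine))^2$ for every $\theta<\theta_{\rm opt}=(1+\const{stab}^2\const{drel}^2)^{-1}$ (after optimizing the Young parameters as $\kappa\to0$). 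This I would combine with the approximation class: for $s$ with $\const{apx}(s)<\infty$, pick a near-optimal mesh of size $N\simeq(\const{apx}(s)/\eta_\k)^{1/s}$, form its overlay with $\QQ_\k$ via \eqref{R:overlay}, and invoke quasi-monotonicity of the estimator (itself a consequence of \eqref{item:stability}, \eqref{item:reduction}, \eqref{item:discrete reliability}) to certify that the overlay meets the reduction threshold. Minimality of the D\"orfler set then yields $\#\MM_\k\lesssim\const{ref}\,\const{min}\,\#\RR\lesssim\const{apx}(s)^{1/s}\eta_\k^{-1/s}$.

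Finally I would convert these per-step bounds into a rate. The closure estimate \eqref{R:closure} gives $\#\QQ_\k-\#\QQ_0\lesssim\sum_{j<\k}\#\MM_j$, and inserting the per-step bound together with the linear convergence of part~(ii) turns $\sum_{j<\k}\eta_j^{-1/s}$ into a convergent geometric sum dominated by $\eta_\k^{-1/s}$; this produces the upper estimate $\sup_\k(\#\QQ_\k)^s\eta_\k\le C_{\rm opt}\const{apx}(s)$. The matching lower bound $c_{\rm opt}\const{apx}(s)\le\sup_\k(\#\QQ_\k)^s\eta_\k$ is softer: for each $N$ one selects the last mesh with $\#\QQ_\k\le N$, uses the child estimate \eqref{R:childs} so that $N\le\const{child}\#\QQ_\k$, and compares with the best competitor in $\Q(N)$. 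The main obstacle is part~(iii), and within it the bookkeeping that ties the geometric estimator decay to the algebraic cardinality growth, i.e.\ the interplay of the optimality of D\"orfler marking, the overlay \eqref{R:overlay}, and the closure estimate \eqref{R:closure}; the degenerate case $\eta_{k_0}=0$ only affects the constant $c_{\rm opt}$.
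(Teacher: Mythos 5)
Your proposal is correct and follows essentially the same route as the paper, which itself only sketches the argument of \cite{cfpp14}: estimator reduction from \eqref{item:stability}--\eqref{item:reduction} plus D\"orfler marking for part~(i), tail-summability via \eqref{item:orthogonality} for part~(ii), and for part~(iii) the optimality of D\"orfler marking combined with a comparison mesh built from $\const{apx}(s)$, the overlay \eqref{R:overlay}, quasi-monotonicity, minimality of $\MM_\k$, the closure estimate \eqref{R:closure}, and the geometric-sum argument, with the lower bound from \eqref{R:childs}. Your write-up is in fact somewhat more explicit than the paper's sketch (e.g.\ the constants $q_\delta$, $C_\delta$ and the tail-sum recursion), but there is no substantive difference in approach.
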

\begin{proof}
In the following, we only give a sketch of the proof.
For details, we refer to \cite[Theorem~4.1 and Corollary~4.8]{cfpp14}.

\textbf{Sketch of \eqref{item:abstract main 1}.}
Elementary calculations show that the axioms \eqref{item:stability}--\eqref{item:reduction} and the fact that $\MM_k\subseteq\QQ_k\setminus\QQ_{k+1}$ in combination with D\"orfler marking~\eqref{eq:Doerfler} lead to estimator reduction:
There exist constants $0<\ro{est}<1$ and $\const{est}>0$ such that
\begin{align}\label{eq:estimator reduction}
0\le\eta_{k+1}^2\le \ro{est}\eta_k^2+\const{est}\dist(\QQ_\k,\QQ_{\k+1})^2 \text{ for all }k\in\N_0.
\end{align}
Due to the assumption $\lim_{k\to\infty}\dist(\QQ_\k,\QQ_{\k+1})=0$, 
  basic calculus proves \eqref{eq:estimator convergence}. 

\textbf{Sketch of \eqref{item:abstract main 2}.}
Linear convergence \eqref{eq:linear convergence} can  be equivalently  reformulated as 
\begin{align*}
\sum_{j=k+1}^\infty \eta_j^2 \lesssim \eta_k^2\quad\text{for all }k\in\N_0.
\end{align*}
The latter follows from estimator reduction \eqref{eq:estimator reduction} and general quasi-orthogonality \eqref{item:orthogonality}. 

\textbf{Sketch of \eqref{item:abstract main 3}.}
The lower estimate in \eqref{eq:optimal convergence} follows elementarily from the child estimate \eqref{R:childs}. 
The upper bound is more involved. 
Let $j\in\N_0$. 
Stability \eqref{item:stability} plus discrete reliability \eqref{item:discrete reliability} elementarily yield the existence of some constant $0<q(\theta)<1$ such that  any refinement $\QQ_{\fine(j)}\in\refine(\QQ_j)$ with $\eta_{\fine(j)}^2\le q(\theta)\eta_j^2$ satisfies the D\"orfler marking 
\begin{align}\label{eq:optimal doerfler}
\theta\eta_j^2\le \eta_j\big(\RR(\QQ_j,\QQ_{\fine(j)})\big)^2.
\end{align}

The heart of the proof is that there exists $\QQ_{\fine(j)}\in\refine(\QQ_j)$ with $\eta_{\fine(j)}^2\le q(\theta)\eta_j^2$, which additionally satisfies that 
\begin{align}\label{eq:comparison}
\#\QQ_{\fine(j)}-\#\QQ_j\lesssim \eta_j^{-1/s}. 
\end{align}
This follows from the definition of $\const{apx}(s)$, the overlay property \eqref{R:overlay}, and quasi-monotonicity $\eta_{\fine(j)}\lesssim\eta_\coarse$ for any $\QQ_\coarse\in\Q$ with $\QQ_{\fine(j)}\in\refine(\QQ_\coarse)$.
The latter is a consequence of \eqref{item:stability}, \eqref{item:reduction}, and  \eqref{item:discrete reliability}.
Since $\MM_j\subseteq\QQ_j$ is an essentially minimal set that satisfies the D\"orfler marking \eqref{eq:Doerfler}, \eqref{eq:optimal doerfler} gives that 
\begin{align*}
\#\MM_j\lesssim\# \RR(\QQ_j,\QQ_{\fine(j)}).
\end{align*}
The closure estimate \eqref{R:closure}, discrete reliability \eqref{item:discrete reliability}, and \eqref{eq:comparison} imply that 
\begin{eqnarray*}
\# \QQ_k-\#\QQ_0
&\lesssim&\sum_{j=0}^{\k-1}\#\MM_j
\lesssim\sum_{j=0}^{\k-1}\# \RR(\QQ_j,\QQ_{\fine(j)})\\
&\lesssim&\sum_{j=0}^{\k-1} (\#\QQ_{\fine(j)}-\#\QQ_j)
\lesssim
\sum_{j=0}^{\k-1} \eta_j^{-1/s}.
\end{eqnarray*}
Finally, linear convergence \eqref{eq:linear convergence} elementarily shows that $\sum_{j=0}^{\k-1} \eta_j^{-1/s}\lesssim \eta_k^{-1/s}$.
This concludes the proof. \qed
\end{proof}

\begin{remark}
The upper bound in \eqref{eq:optimal convergence} states that the estimator sequence $\eta_\k$ of Algorithm~\ref{alg:abstract algorithm}  converges   with algebraic rate $s$ if $\const{apx}(s)<\infty$.
This means that  if a decay with rate $s$ is possible for optimally chosen admissible meshes, the same decay is realized  by the adaptive algorithm. 
Together with the upper bound, the lower bound in \eqref{eq:optimal convergence} states that the convergence rate of the estimator sequence characterizes the theoretically optimal convergence rate.
\end{remark}

%%%%%%%%%%%%%%%%%%%%%%%%%%%%%%%%%%%%%%%%%%%%%%%%%%%%%%%%%%%%%%%%%%%%%%%%%%%%%%%%%%%%%%%%%%%%%%%%%%%%%%%%%%%%%%
\subsection{Abstract adaptive FEM}
\label{sec:afem}

This section summarizes the results of the recent own works \cite{ghp17,gantner17}. 
For the model problem~\eqref{eq:problem} of Section~\ref{sec:FEM problem},  i.e., 
\begin{align*}
\mathscr{P}u&=f\quad \text{in }\Omega,\\
u&=0\quad\text{on }\Gamma:=\partial\Omega,
\end{align*}
we consider Algorithm~\ref{alg:abstract algorithm} in the  context of conforming FEM discretizations on a multi-patch  geometry $\Omega$ as in Section~\ref{sec:parametrization_assumptions}, where adaptivity is driven by the  \textit{weighted-{residual {\sl a~posteriori} error estimator}}  \eqref{eq:eta}, 
which reads
\begin{align*}
 \eta(Q)^2:=h_Q^{2} \norm{f-\mathscr{P}U}{L^2(Q)}^2+h_Q \norm{[\mathscr{D}_{\normal} U]}{L^2(\partial Q\cap \Omega)}^2.
\end{align*}
We identify the crucial properties of the underlying meshes, the mesh refinement, and the finite element spaces, which ensure that the weighted-residual error estimator fits into the general framework of Section~\ref{sec:axioms} and which hence guarantee optimal convergence behavior of the adaptive algorithm in the sense of Theorem~\ref{thm:abstract main}.
The main result of this section is Theorem~\ref{thm:abstract}. 
In Section~\ref{sec:adaptive igafem}, we will see that it is applicable to hierarchical splines as well as T-splines.

%%%%%%%%%%%%%%%%%%%%%%%%%%%%%%%%%%%%%%%%%%%%%%%%%%%%%%%%%%%%%%%%%%%%%%%%%%%%%%%%%%%%%%%%%%%%%%%%%%%%%%%%%%%%%% 
\subsubsection{Axioms of adaptivity (revisited)}
\label{sec:abstract setting fem}

%%%%%%%%%%%%%%%%%%%%%%%%%%%%%%%%%%%%%%%%%%%%%%%%%%%%%%%%%%%%%%%%%%%%%%%%%%%%%%%%%%%%%%%%%%%%%
\paragraph{Meshes}
Throughout this section, $\QQ_\coarse$ is a \textit{mesh} of the bounded Lipschitz domain $\Omega\subset \R^\dph$ in the following sense:
\begin{itemize}
\item $\QQ_\coarse$ is a finite set of 
transformed open hyperrectangles, i.e., each element $Q$ has the form $Q=\F_m(\widehat Q)$ for some $\F_m$ from Section~\ref{sec:parametrization_assumptions}, where $\widehat Q=\prod_{i=1}^\dph(a_i,b_i)$ is an open $\dph$-dimensional hyperrectangle; 
\item for all $Q,Q'\in\QQ_\coarse$ with $Q\neq Q'$, the intersection  is empty, i.e., $Q\cap Q'=\emptyset$;
\item $\overline\Omega = \bigcup_{Q\in\QQ_\coarse}{\overline Q}$, i.e., $\QQ_\coarse$ is a partition of $\Omega$.
\end{itemize}
Let $\Q$ be a set of such meshes.
These are referred to as \textit{admissible}.
In order to ease notation, we introduce for $\QQ_\coarse\in\Q$ the corresponding \textit{mesh-width function} 
\begin{align*}
h_\coarse\in L^\infty(\Omega),  \,h_\coarse|_Q:=h_Q:=|Q|^{1/\dph}\text{ for all }Q\in\QQ_\coarse.
\end{align*}

For $\omega\subseteq\overline\Omega$, we define the \textit{element-patches\footnote{Do not confuse the element-patches with the patches of Section~\ref{sec:multi-patch} that are used to describe the geometry.} $\pi_\coarse^q(\omega)\subseteq\overline\Omega$ of order} $q\in\N_0$ inductively by
\begin{align}\label{eq:patch1}
\begin{split}
 \pi_\coarse^0(\omega) &:=\overline \omega,\\
 \pi_\coarse^{q+1}(\omega) &:= \bigcup\set{\overline Q}{ Q\in \QQ_\coarse,\, {\overline Q}\cap \pi_\coarse^{q}(\omega)\neq \emptyset}.
 \end{split}
\end{align}
The corresponding set of elements is defined as
\begin{align}\label{eq:patch2}
 \Pi_\coarse^q(\omega) := \set{Q\in\QQ_\coarse}{ {Q} \subseteq \pi_\coarse^q(\omega)} \quad\text{for } q>0,
\end{align}
i.e., $\pi_\coarse^q(\omega) = \overline{\bigcup\Pi_\coarse^q(\omega)}$.
To abbreviate notation, we set $\pi_\coarse(\omega) := \pi_\coarse^1(\omega)$ and $\Pi_\coarse(\omega) := \Pi_\coarse^1(\omega)$.
For $\SS\subseteq\QQ_\coarse$, we define $\pi_\coarse^q(\SS):=\pi_\coarse^q(\bigcup\SS)$
and $\Pi_\coarse^q(\SS):=\Pi_\coarse^q(\bigcup\SS)$. 

We suppose that there exist  constants $\const{shape}$, $\const{locuni}$ $>$ $0$ such that all meshes $\QQ_\coarse\in\Q$ satisfy the following  two mesh properties~\eqref{M:shape}--\eqref{M:locuni}:
\begin{enumerate}[(i)]
\renewcommand{\theenumi}{M\arabic{enumi}}
\bf\item\rm\label{M:shape}
{\bf Shape-regularity:} 
It holds that\footnote{Recall the definition ${\rm diam}(S):=\sup\set{|{\bf x}-{\bf y}|}{{\bf x},{\bf y}\in S}$ of the diameter of an arbitrary non-empty set $S\subseteq\R^{\dph}$.}
\begin{align*}
{\diam(Q)}/{h_Q}\le \const{shape} \quad \text{for all } Q\in\QQ_\coarse.
\end{align*}
Since there always holds that $h_Q\le \diam(Q)$, this implies  that 
$h_Q \simeq \diam(Q)$.
\bf\item\rm\label{M:locuni}
\textbf{Local  quasi-uniformity:}
It holds that 
\begin{align*}
 h_Q/h_{Q'} \le \const{locuni}\quad\text{for all }Q\in\QQ_\coarse, Q'\in\Pi_\coarse(Q),
\end{align*} i.e., neighboring elements have comparable size.
\end{enumerate}

%%%%%%%%%%%%%%%%%%%%%%%%%%%%%%%%%%%%%%%%%%%%%%%%%%%%%%%%%%%%%%%%%%%%%%%%%%%%%%%%%%%%%%%%%%%%%
\smallskip\paragraph{Mesh refinement}
We suppose that we are given a refinement strategy $\refine(\cdot,\cdot)$ as in Section~\ref{sec:meshes_axioms}  and Section~\ref{sec:the axioms}.
In particular, we suppose the existence of some initial mesh $\QQ_0$ with $\refine(\QQ_0)=\Q$  and the refinement axioms \eqref{R:childs}--\eqref{R:overlay} hold true.
Moreover, we assume that for all $\QQ_\coarse\in\Q$ and arbitrary marked elements $\MM_\coarse\subseteq\QQ_\coarse$ with  refinement $\QQ_\fine$ $:=$ $\refine(\QQ_\coarse,\MM_\coarse)$, it holds that
\begin{align}\label{eq:union}
\overline Q=\bigcup\set{\overline {Q'}}{Q'\in\QQ_\fine ,  Q'\subseteq Q}\text{ for all }Q\in\QQ_\coarse,
\end{align}
 i.e., each element $Q$ is the union of its successors.

%%%%%%%%%%%%%%%%%%%%%%%%%%%%%%%%%%%%%%%%%%%%%%%%%%%%%%%%%%%%%%%%%%%%%%%%%%%%%%%%%%%%%%%%%%%%%
\smallskip\paragraph{Finite element space}
With each $\QQ_\coarse\in\Q$, we associate a finite dimensional space 
\begin{align*}
\begin{split}
\mathbb{S}_\coarse \subset \big\{v\in H^1_0(\Omega): \, &v|_Q\in H^2(Q)\text{ for all }Q\in\QQ_\coarse\big\}.
\end{split}
\end{align*}
Let $U_\coarse\in\mathbb{S}_\coarse$ be the corresponding Galerkin approximation, defined via the variational formulation \eqref{eq:Galerkin fem}, to the solution $u\in H^1_0(\Omega)$ of problem~\eqref{eq:weak fem}. 

We suppose that there exist constants  $\q{loc},\q{proj} \in\N_0$ and 
for all $\QQ_\coarse\in\Q$ a Scott--Zhang-type projector $\JJ_\coarse:H^1_0(\Omega)\to\mathbb{S}_\coarse$ 
such that the following space properties~\eqref{S:nestedness}--\eqref{S:proj} hold for all $\QQ_\coarse\in\Q$ and all refinements $\QQ_\fine\in\refine(\QQ_\coarse)$:
\begin{enumerate}[(i)]
\renewcommand{\theenumi}{S\arabic{enumi}}
\bf\item\rm\label{S:nestedness}
\textbf{Nestedness:}
 It holds that 
\begin{align*}\mathbb{S}_\coarse\subseteq\mathbb{S}_\fine.
\end{align*}
\bf\item\rm\label{S:local}
\textbf{Local domain of definition:}
For all
 $Q$ $\in$ $\QQ_\coarse\setminus \Pi_\coarse^{\q{loc}}( \QQ_\coarse\setminus\QQ_\fine)\subseteq\QQ_\coarse\cap\QQ_\fine$ (i.e., $Q$ is in a certain sense far away from the refined elements $\QQ_\coarse\setminus\QQ_\fine$)  and for all $V_\fine\in\mathbb{S}_\fine$, it holds that 
\begin{align*}V_\fine|_{\pi_\coarse^{\q{proj}}(Q)} \in \set{V_\coarse|_{\pi_\coarse^{\q{proj}}(Q)}}{V_\coarse\in\mathbb{S}_\coarse}.
\end{align*}
\bf\item\rm\label{S:proj}
\textbf{Local projection property:}
For all $v\in H^1_0(\Omega)$ and $Q\in\QQ_\coarse$, it holds that 
\begin{align*}
&(\JJ_\coarse v)|_Q = v|_Q\\
&\quad\text{if } v|_{\pi_\coarse^{\q{proj}}(Q)} \in \set{V_\coarse|_{\pi_\coarse^{\q{proj}}(Q)}}{V_\coarse\in\mathbb{S}_\coarse}.
\end{align*}
\end{enumerate}

Besides \eqref{S:nestedness}--\eqref{S:proj}, which are also required in the following Section~\ref{sec:abem} on abstract adaptive BEM, we suppose the existence of   constants $\const{inv},\const{sz}>0$ and $\q{sz}\in\N_0$  such that the following FEM properties \eqref{S:inverse}--\eqref{S:grad} hold for all $\QQ_\coarse\in\Q$:
\begin{enumerate}[(i)]
\renewcommand{\theenumi}{F\arabic{enumi}}
\bf\item\rm\label{S:inverse}
\textbf{Inverse inequality:}
For  all $j,k\in\{0,1,2\}$ with $k\le j$, all $V_\coarse\in\mathbb{S}_\coarse$, and all $Q\in\QQ_\coarse$, it holds that 
\begin{align*}
h_Q^{(j-k)} \norm{V_\coarse}{H^j(Q)}\le \const{inv} \, \norm{V_\coarse}{H^{k}(Q)}.
\end{align*}
\bf\item\rm\label{S:app}
\textbf{Local $\boldsymbol{L^2}$-approximation property:}
For all $Q\in\QQ_\coarse$ and all $v\in H_0^1(\Omega)$, it holds that 
\begin{align*}
\norm{(1-\JJ_\coarse)v}{L^2(Q)}\le \const{sz} \,h_Q\,\norm{v}{H^1(\pi_\coarse^{\q{sz}}(Q))}.
\end{align*}
\bf\item\rm\label{S:grad}
\textbf{Local $\boldsymbol{H^1}$-stability:}
For all $Q\in\QQ_\coarse$ and $v\in H_0^1(\Omega)$, it holds that 
\begin{align*}\norm{\nabla \JJ_\coarse v}{L^2(Q)}\le \const{sz} \norm{v}{H^1(\pi_\coarse^{\q{sz}}(Q))}.
\end{align*}
\end{enumerate}

%%%%%%%%%%%%%%%%%%%%%%%%%%%%%%%%%%%%%%%%%%%%%%%%%%%%%%%%%%%%%%%%%%%%%%%%%%%%%%%%%%%%%%%%%%%%%%%%%%%%%%%%%%%%%% 
\subsubsection{Data oscillations}\label{sec:oscillations}
The definition of the data oscillations corresponding to the residual error estimator \eqref{eq:eta} requires some further notation. 
Let $\mathbb{P}(\Omega)$ be the set of all (transformed) tensor polynomials of some fixed degree $(p',\dots,p')$ on $\Omega$, i.e.,  with the patches $\Omega_m$ from Section~\ref{sec:multi-patch}, 
\begin{align*}
\begin{split}
\mathbb{P}(\Omega):=\big\{W:\, W|_{\Omega_m}\circ\F_m \text{ polynomial of degree } \\
(p',\dots,p') \text{ for all }m\in\{1,\dots,M\}\big\}.
\end{split}
\end{align*}
For $\QQ_\coarse\in\Q$
and $Q\in\QQ_\coarse$, let  $P_{Q}:L^2(Q)\to \set{{W}|_{Q}}{{W}\in\mathbb{P}(\Omega)}$ be the $L^2$-orthogonal projection, 
i.e.,
\begin{align*}
\norm{v-P_Q v}{L^2(Q)} = \min_{W\in \mathbb{P}(\Omega)} \norm{v-W}{L^2(Q)}
\end{align*}
for all $v\in L^2(Q)$.
For an interior edge in 2D or face in 3D,
$E\in\mathcal{E}_{Q}:=\set{\overline{Q}\cap \overline{Q}'}{Q'\in\QQ_\coarse,  {\rm dim}(\overline Q\cap \overline Q')=\dph-1}$, where $\dim(\cdot)$ denotes the  dimension, we define the $L^2$-orthogonal projection  $P_{E}:L^2(E)\to \set{{W}|_{E}}{{W}\in\mathbb{P}(\Omega)}$. Finally,
for $V_\coarse\in\mathbb{S}_\coarse$, we define  the corresponding \textit{oscillations}
\begin{subequations}\label{eq:osc}
\begin{align}
 &\osc_\coarse(V_\coarse) := \osc_\coarse(V_\coarse,\QQ_\coarse)\\
 &\quad\text{with } 
 \osc_\coarse({{V_\coarse}},\SS)^2:=\sum_{Q\in\SS} \osc_\coarse({{V_\coarse}}, Q)^2
 \text{ for all }\SS\subseteq\QQ_\coarse,\notag
\end{align}
where, for all $Q\in\QQ_\coarse$, the local oscillations read
\begin{align}
\osc_\coarse({{V_\coarse}},Q)^2&:=h_Q^2 \norm{(1-P_{Q})(f-\mathscr{P}U_\coarse)}{L^2(Q)}^2\\&\quad+\sum_{E\in\mathcal{E}_{Q}}h_Q\norm{(1-P_{E})[\mathscr{D}_\normal U_\coarse]}{L^2(E)}^2.\notag
\end{align}
\end{subequations}

\begin{remark}
For the analysis of oscillations in the frame of standard FEM with piecewise polynomials of fixed order, we refer, e.g., to~\cite{nv11}.
\end{remark}

\begin{remark}\label{rem:C12}
If $\mathbb{S}_\coarse\subset C^1(\overline\Omega)$, then the jump contributions in~\eqref{eq:osc} vanish and $\osc_\coarse(V_\coarse,Q)$ consists only of the volume oscillations. 
\end{remark}%

%%%%%%%%%%%%%%%%%%%%%%%%%%%%%%%%%%%%%%%%%%%%%%%%%%%%%%%%%%%%%%%%%%%%%%%%%%%%%%%%%%%%%%%%%%%%%
\subsubsection{Optimal convergence}
\label{sec:FEM axioms}
Recall the definition \eqref{eq:const apx} of the approximation constant $\const{apx}(s)$.
We say that the solution $u\in H_0^1(\Omega)$ belongs to the \textit{approximation class $s$ with respect to the estimator} \eqref{eq:eta}, if
\begin{align*}
\const{apx}(s)<\infty.
\end{align*}
Further, we say that it belongs to the \textit{approximation class $s$ with respect to the minimal total error} if 
\begin{align}\label{eq:total class}
\notag \const{apx}^{\rm tot}(s)
 &:=
\sup_{N\ge\#\QQ_0} \min_{\QQ_\coarse\in\Q(N)}(N^s\inf_{V_\coarse\in\mathbb{S}_\coarse}\big(\norm{u-V_\coarse}{H^1(\Omega)}\\
&\hspace{35mm}+\osc_\coarse(V_\coarse))<\infty.
\end{align}
Note that both approximation classes depend on the considered ansatz spaces, the underlying meshes, and the corresponding refinement strategy.

By definition, $\const{apx}(s)<\infty$ (resp.\ $\const{apx}^{\rm tot}(s)<\infty$) implies that the error estimator $\eta_\coarse$ (resp.\ the \textit{minimal total error}) on the optimal meshes $\QQ_\coarse$ decays at least with rate $\OO\big((\#\QQ_\coarse)^{-s}\big)$. The following main theorem states that each possible rate $s>0$ is in fact realized by Algorithm~\ref{alg:abstract algorithm}.
It stems from \cite[Theorem~2.1]{ghp17} and essentially follows from its abstract counterpart Theorem~\ref{thm:abstract main} by verifying the axioms of Section~\ref{sec:the axioms} for the perturbations 
\begin{align*}
\dist(\QQ_\coarse,\QQ_\fine):=\norm{U_\fine-U_\coarse}{H^1(\Omega)}
&\text{ for all }\QQ_\coarse\in\Q, \\
&\text{ and }\QQ_{\fine}\in\refine(\QQ_\coarse).
\end{align*}
For piecewise polynomials on shape-regular triangulations of a polyhedral domain $\Omega$, optimal convergence was already proved in \cite{ckns08} for symmetric $\mathscr{P}$ and in \cite{cn12,ffp14} for non-symmetric $\mathscr{P}$.

\begin{theorem}\label{thm:abstract}
Let $(\QQ_\k)_{\k\in\N_0}$ be the sequence of meshes generated by Algorithm~\ref{alg:abstract algorithm} with Galerkin approximations $U_k\in\mathbb{S}_k$.
Then, there hold:
\begin{enumerate}[\rm (i)]
\item\label{item:qabstract reliable fem}
Suppose that \eqref{M:shape}--\eqref{M:locuni} and \eqref{S:app}--\eqref{S:grad} hold true.
Then, 
the residual error estimator  satisfies reliability, i.e., there exists $\const{rel}>0$ such that for all $\QQ_\coarse\in\Q$, 
\begin{align}\label{eq:reliable}
 \norm{u-U_\coarse}{H^1(\Omega)}+\osc_\coarse(U_\coarse)\le \const{rel}\eta_\coarse.
\end{align}
\item
\label{item:qabstract efficient fem}
Suppose that \eqref{M:shape}--\eqref{M:locuni} and \eqref{S:inverse} hold true. 
Then, 
the residual error estimator satisfies efficiency, i.e., there  exists  $\const{eff}>0$ such  that for all $\QQ_\coarse\in\Q$, 
\begin{align}\label{eq:efficient}
\const{eff}^{-1}\eta_\coarse\le \inf_{V_\coarse\in\mathbb{S}_\coarse}\big(& \norm{u-V_\coarse}{H^1(\Omega)}+\osc_\coarse(V_\coarse)\big).
\end{align}
\item 
Suppose that \eqref{M:shape}--\eqref{M:locuni}, \eqref{S:nestedness} and \eqref{S:inverse} hold true.
Then, the axioms \eqref{item:stability}--\eqref{item:reduction}  as well as the convergence of the perturbations $\lim_{k\to\infty}\dist(\QQ_\k,\QQ_{\k+1})=0$ are satisfied. 
These are exactly the assumptions of Theorem~\ref{thm:abstract main}~\eqref{item:abstract main 1}, which implies  convergence~\eqref{eq:estimator convergence} of the estimator. 
\item\label{item:qabstract linear convergence fem}
Suppose that  \eqref{M:shape}--\eqref{M:locuni}, \eqref{S:nestedness} and  \eqref{S:inverse}--\eqref{S:grad} hold true.
Then, the axioms \eqref{item:stability}--\eqref{item:reduction} and \eqref{item:orthogonality} are satisfied.
These are exactly the assumptions of Theorem~\ref{thm:abstract main}~\eqref{item:abstract main 2}, which implies linear convergence~\eqref{eq:linear convergence} of the estimator.
\item\label{item:qabstract optimal convergence fem}
Suppose \eqref{M:shape}--\eqref{M:locuni},  \eqref{R:childs}--\eqref{R:overlay}, \eqref{S:nestedness}--\eqref{S:proj} and \eqref{S:inverse}--\eqref{S:grad} hold true.
Then, the axioms {\rm\eqref{item:stability}--\eqref{item:orthogonality}} as well as \eqref{R:childs}--\eqref{R:overlay} are satisfied.
These are exactly the assumptions of Theorem~\ref{thm:abstract main}~\eqref{item:abstract main 3}, which implies optimal convergence \eqref{eq:optimal convergence} of the estimator.
\end{enumerate}
\noindent 
All involved constants $\const{rel},\const{eff},\const{lin},\ro{lin},\theta_{\rm opt}$, and $\const{opt}$  (of Theorem~\ref{thm:abstract main}) depend only on the assumptions made, the dimension $\dph$, the coefficients of the differential operator $\mathscr{P}$, $\diam(\Omega)$, and the parametrization constant $C_\F$ from Section~\ref{sec:parametrization_assumptions},  where $\const{lin},\ro{lin}$ depend additionally on $\theta$ and the sequence $(U_\k)_{\k\in\N_0}$ (see also Remark~\ref{rem:symmetric orthogonality}), and $\const{opt}$ depends furthermore on $\const{min}$  and $s$.
The constant $c_{\rm opt}$ depends only on $\const{child}, \#\QQ_0$,  $s$, and if there exists $\k_0$ with $\eta_{\k_0}=0$ also on $\k_0$ and $\eta_0$.
\end{theorem}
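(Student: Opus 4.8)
The plan is to reduce all five assertions to the abstract Theorem~\ref{thm:abstract main} by verifying its hypotheses for the concrete weighted-residual estimator \eqref{eq:eta} and the perturbation $\dist(\QQ_\coarse,\QQ_\fine):=\norm{U_\fine-U_\coarse}{H^1(\Omega)}$. Items~(i)--(ii) are the classical \emph{a~posteriori} analysis and are needed as stand-alone ingredients, while items~(iii)--(v) then follow by checking the estimator axioms \eqref{item:stability}--\eqref{item:orthogonality} and invoking the assumed refinement axioms \eqref{R:childs}--\eqref{R:overlay}. First I would establish reliability~\eqref{eq:reliable}: starting from ellipticity $\const{ell}\norm{u-U_\coarse}{H^1(\Omega)}^2\le\dual{u-U_\coarse}{u-U_\coarse}_{\mathscr{P}}$, I would insert the Scott--Zhang-type projector $\JJ_\coarse$ via Galerkin orthogonality \eqref{eq:weak fem}--\eqref{eq:Galerkin fem}, integrate by parts elementwise to expose the volume residual $f-\mathscr{P}U_\coarse$ and the normal jumps $[\mathscr{D}_\normal U_\coarse]$, and finally bound the arising terms by Cauchy--Schwarz together with the local approximation and stability estimates \eqref{S:app}--\eqref{S:grad}. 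The oscillation term on the left of \eqref{eq:reliable} is then absorbed since $\osc_\coarse(U_\coarse)\le\eta_\coarse$ by definition of the $L^2$-projections. Efficiency~\eqref{eq:efficient} I would obtain by the standard bubble-function technique, testing the residual against interior and face bubbles and using the inverse inequality \eqref{S:inverse} to convert the resulting local $H^1$-norms into the estimator.

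Next I would verify the two ``local'' axioms. Stability \eqref{item:stability} follows from the triangle inequality for the map $V\mapsto\eta(\cdot,V)$ on the shared elements $\QQ_\coarse\cap\QQ_\fine$, where both Galerkin solutions live on the same local mesh of identical mesh-width; the difference of the $L^2$ volume and jump contributions is then controlled by $\norm{U_\fine-U_\coarse}{H^1(\Omega)}$ after applying \eqref{S:inverse}, which yields $\const{stab}$. Reduction \eqref{item:reduction} exploits that every refined element is the union of smaller children \eqref{eq:union} whose mesh-width is reduced by a fixed factor $<1$, producing the contraction constant $\ro{red}$, while splitting off $U_\coarse$ from $U_\fine$ again contributes a $\dist(\QQ_\coarse,\QQ_\fine)^2$ term via \eqref{S:inverse}. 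Both computations are routine once \eqref{M:shape}--\eqref{M:locuni} are in force.

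The genuinely delicate axioms are discrete reliability \eqref{item:discrete reliability} and general quasi-orthogonality \eqref{item:orthogonality}. For \eqref{item:discrete reliability} I would test the Galerkin difference with $w:=U_\fine-U_\coarse\in\mathbb{S}_\fine$, use ellipticity and the Galerkin orthogonalities for both meshes to write $\dual{w}{w}_{\mathscr{P}}$ in residual form against $(1-\JJ_\coarse)w$, and then localize: by nestedness \eqref{S:nestedness} and the local domain-of-definition and projection properties \eqref{S:local}--\eqref{S:proj}, the function $(1-\JJ_\coarse)w$ vanishes on all elements far from $\QQ_\coarse\setminus\QQ_\fine$, so that after elementwise integration by parts the residual contributions survive only on a patch $\RR(\QQ_\coarse,\QQ_\fine)$ around the refined elements. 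The cardinality bound $\#\RR(\QQ_\coarse,\QQ_\fine)\le\const{ref}(\#\QQ_\fine-\#\QQ_\coarse)$ is then a consequence of \eqref{M:locuni} and the patch definition \eqref{eq:patch1}--\eqref{eq:patch2}. Identifying the correct set $\RR$ and showing that the projector indeed annihilates all non-refined contributions is the main obstacle, since it is precisely here that the interplay of \eqref{S:local}, \eqref{S:proj}, and the patch orders $\q{loc},\q{proj}$ must be tracked carefully.

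Finally, for \eqref{item:orthogonality} I would treat the symmetric case first, where the Pythagoras identity in the energy norm together with reliability gives \eqref{item:orthogonality} even with $\varepsilon_{\rm qo}=0$; for the non-symmetric operator $\mathscr{P}$ the quasi-orthogonality requires the more subtle general argument, combining a Galerkin difference estimate with a duality bound to absorb the lower-order drift and reaction terms, and this is the second place where care is needed. With \eqref{item:stability}--\eqref{item:orthogonality} verified and \eqref{R:childs}--\eqref{R:overlay} assumed, assertions~(iii)--(v) are immediate from Theorem~\ref{thm:abstract main}~(i)--(iii) applied with $\theta<\theta_{\rm opt}=(1+\const{stab}^2\const{drel}^2)^{-1}$; the stated dependencies of $\const{rel},\const{eff},\const{lin},\ro{lin}$, and $\const{opt}$ are then read off from those of the individual axioms, the dimension $\dph$, the coefficients of $\mathscr{P}$, and the parametrization constant $C_\F$.
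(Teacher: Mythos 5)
Your outline is correct, and it is essentially the proof carried out in the works that the paper cites --- but it is not the proof the paper itself writes. The paper's own proof of Theorem~\ref{thm:abstract} does not re-derive reliability, efficiency, or the axioms \eqref{item:stability}--\eqref{item:orthogonality}: it delegates all of that to \cite[Section~4]{ghp17} (with details in \cite[Section~4.5]{gantner17}), where precisely the argument you sketch is performed in an abstract mesh framework, and it confines itself to verifying that the additional abstract hypotheses of those references hold automatically in the present setting. Concretely, the paper checks: the uniform patch-cardinality bound $\#\Pi_\coarse(Q)\lesssim1$ in \eqref{eq:bounded patch}, deduced from \eqref{M:shape}--\eqref{M:locuni}; a trace inequality \eqref{eq:trace inequality} and a local Poincar\'e (dual-norm) estimate \eqref{eq:dual estimate} on elements, obtained from the parametrization constant $C_\F$ and \eqref{M:shape}; and the uniform child-measure reduction \eqref{eq:reduction}, proved by a contradiction argument from \eqref{eq:union} and \eqref{M:locuni}. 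Your sketch uses all three of these facts implicitly --- the patch bound in the cardinality estimate for discrete reliability, the trace inequality in the jump-term estimates for reliability and stability, and the child reduction as the contraction factor in \eqref{item:reduction} --- but dismisses them as routine, whereas they are exactly the content the paper supplies. Two further places where your sketch compresses what the cited references must actually do: the bubble-function efficiency argument requires inverse-type estimates for the oscillation space $\mathbb{P}(\Omega)$ on mapped elements (the paper remarks these were proved in \cite{ghp17} only for single-patch domains and that the proof extends to multi-patch); and the non-symmetric general quasi-orthogonality \eqref{item:orthogonality} rests in the literature on a compactness and a~priori-convergence argument rather than only a duality bound (see Remark~\ref{rem:symmetric orthogonality}), which is precisely why $\const{lin},\ro{lin}$ depend on the sequence $(U_\k)_{\k\in\N_0}$. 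In short, your route yields a self-contained proof, while the paper's route is a reduction to cited abstract results plus verification of the geometric assumptions; the latter is far shorter and makes explicit that the multi-patch NURBS setting is genuinely covered by the abstract theory.
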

\begin{proof}
The proof is found in \cite[Section~4]{ghp17} and details are elaborated in \cite[Section~4.5]{gantner17}.
Indeed, these works consider even more general meshes.
They assume additional abstract mesh and refinement properties, given by~\eqref{eq:bounded patch}--\eqref{eq:reduction} below. 
Moreover, the analysis of \cite{ghp17,gantner17} requires certain properties of the space $\mathbb{P}(\Omega)$, which have only been proved for single-patch domains in \cite[Section~5.11 and 5.12]{ghp17}, but the proof easily extends to the considered multi-patch case.
Therefore, in the remainder of the proof, we only verify that the assumptions \eqref{eq:bounded patch}--\eqref{eq:reduction} are automatically satisfied in our setting, and the result follows from \cite[Section~4]{ghp17}.

Let $\QQ_\coarse\in\Q$.
The properties \eqref{M:shape}--\eqref{M:locuni} especially imply the uniform boundedness of the number of elements within an element-patch, i.e., 
\begin{align}\label{eq:bounded patch}
\#\Pi_\coarse(Q)\lesssim 1 \quad\text{for all }Q\in\QQ_\coarse.
\end{align} 
To see this, we note the elementary inequality $|\Pi_\coarse(Q)|\le \diam(\Pi_\coarse(Q))^\dph$. 
Then, 
\eqref{M:shape}--\eqref{M:locuni} show that $\diam(\Pi_\coarse(Q))$ $\lesssim$ $\diam(Q)\simeq |Q|^{1/\dph}$.
On the one hand, we see that $|Q|\le|\Pi_\coarse(Q)|\lesssim |Q|$, i.e., $|\Pi_\coarse(Q)|\simeq |Q|$. 
On the other hand, \eqref{M:locuni} implies that $|\Pi_\coarse(Q)|$ $\simeq$ $\#\Pi_\coarse(Q)|Q|$. This concludes the proof of  \eqref{eq:bounded patch}. 

Moreover, it is easy to see that our assumptions on $C_\F$ of \eqref{eq:Cgamma} along with \eqref{M:shape} yield that $[0,1]^\dph$ is a reference element in the sense that for all $Q\in\QQ_\coarse\in\Q$, there exists a bi-Lipschitz mapping $\widetilde\F_Q:[0,h_Q]^\dph\to Q$ with uniform Lipschitz constants that depend only on $C_\F$ and \eqref{M:shape}.
In particular, one obtains the trace inequality 
\begin{align}\label{eq:trace inequality}
\begin{split}
\norm{v}{L^2(\partial Q)}^2\lesssim \big(h_Q^{-1}\norm{v}{L^2(Q)}^2+ \norm{v}{L^2(Q)}\norm{ \nabla v}{L^2(Q)}\big)\\
\text{for all }v\in H^1(\Omega),
\end{split}
\end{align}
and the local Poincar\'e estimate
\begin{align}\label{eq:dual estimate}
 h_Q^{-1} \norm{w}{H^{-1}(Q)}\lesssim\norm{w}{L^2(Q)}\quad\text{for all }w\in L^2(\Omega)
\end{align}
for the dual norm $\norm{{w}}{H^{-1}(Q)}= \sup\{\int_Q{w}  v\,\d\xx:$ $v\in H_0^1(Q),$  $\norm{ v}{H^1(Q)}=1\}$.
The constants hidden in \eqref{eq:trace inequality}--\eqref{eq:dual estimate} depend only on $C_\F$ and \eqref{M:shape}, see, e.g.,  \cite[Proposition~4.2.2 and Proposition~4.2.3]{gantner17} for a proof.
The latter inequality \eqref{eq:dual estimate} is a simple consequence of the classical Poincar\'e inequality.

The identities \eqref{eq:union} and \eqref{M:locuni} imply the existence of a uniform constant $0<\ro{}<1$ such that
\begin{align}\label{eq:reduction}
\hspace{-1mm}|Q'| \le \ro{}|Q|\text{ for all }Q\in\QQ_\coarse, Q'\in\QQ_\fine\text{ with } Q'\subsetneqq Q,
\end{align}
 i.e., children are uniformly smaller than their parents.
To see this, one can argue by contradiction.
If the assertion is wrong,
then there exist  sequences $(Q'_n)_{n\in\N}$ and $(Q_n)_{n\in\N}$ of such elements with 
\begin{align*} 
\lim_{n\to\infty} \frac{|Q_n|-|Q_n'|}{|Q_n|}=\lim_{n\to\infty} \frac{|Q_n\setminus Q_n'|}{|Q_n|}=0.
\end{align*}
However, \eqref{eq:union} and \eqref{M:locuni} show the existence of $Q_n''\subset Q_n\setminus Q_n'$ with $|Q_n''|\simeq |Q_n'|$, which contradicts the latter equality. 
\qed
\end{proof}

\begin{remark}
If the assumptions of Theorem~\ref{thm:abstract} \eqref{item:qabstract reliable fem}--\eqref{item:qabstract efficient fem} are  satisfied, there holds in particular that 
\begin{align*}%\label{eq:equivalenceclass}
\const{eff}^{-1}\norm{u}{\mathbb{A}^{\rm est}_s}\le\norm{u}{\mathbb{A}^{\rm tot}_s}\le \const{rel}\norm{u}{\mathbb{A}^{\rm est}_s} \text{ for all }s>0.
\end{align*}
This also shows that the optimality results in \cite{stevenson07,ckns08,cn12} coincide with that of \cite{ffp14,cfpp14}.
\end{remark}

\begin{remark}\label{rem:symmetric orthogonality}
Only general quasi-orthogonality~\eqref{item:orthogonality} depends on the sequence $(U_\k)_{\k\in\N_0}$.
If the bilinear form $\dual{\cdot}{\cdot}_{\mathscr{P}}$ is symmetric, then 
 \eqref{item:orthogonality}  follows from the Pythagoras identity $\norm{u-U_{j+1}}{\mathscr{P}}^2+\norm{U_{j+1}-U_j}{\mathscr{P}}^2=\norm{u-U_j}{\mathscr{P}}^2$ in the $\mathscr{P}$-induced energy norm $\norm{v}{\mathscr{P}}^2:=\dual{v}{v}_{\mathscr{P}}$ and norm equivalence
\begin{align*}
 &\sum_{j=k}^{k+N}\norm{U_{j+1}-U_j}{H^1(\Omega)}^2
 \simeq \sum_{j=k}^{k+N}\norm{U_{j+1}-U_j}{\mathscr{P}}^2
 \\
 &\quad= \norm{u-U_k}{\mathscr{P}}^2-\norm{u-U_{k+N+1}}{\mathscr{P}}^2 
 \lesssim \norm{u-U_k}{H^1(\Omega)}^2.
\end{align*}
Together with reliability~\eqref{eq:reliable}, this proves~\eqref{item:orthogonality} even for $\varepsilon_{\rm qo}=0$, and $\const{qo}$ is independent of the sequence $(U_k)_{k\in\N_0}$.
In this case, the constants $\const{lin}, \ro{lin}$ and $\const{opt}$ in Theorem~\ref{thm:abstract} are independent of $(U_k)_{k\in\N}$.
In the general case, a compactness argument and {\sl a priori} convergence $\norm{u-u_k}{H^1(\Omega)}\to 0$ as $k\to\infty$ guarantee that $\const{lin},\const{opt}>0$ and $0<\ro{lin}<1$ exist, but their size may depend on the possibly slow convergence in the preasymptotic regime of Algorithm~\ref{alg:abstract algorithm}.
\end{remark}

\begin{remark}
Under the assumption that $\norm{h_{\k}}{L^\infty(\Omega)}\to0$ as $\k\to\infty$
(which can be easily guaranteed by marking additional elements), one can show that
$\mathbb{S}_\infty:=\overline{\bigcup_{\k\in\N_0}\mathbb{S}_\k}=H_0^1(\Omega)$, see \cite[Remark~2.7]{ghp17}.
This allows to follow the ideas of~\cite{bhp17} and to prove Theorem~\ref{thm:abstract} if the bilinear form $\dual\cdot\cdot_\mathscr{P}$ is only elliptic up to some compact perturbation, provided that the continuous problem is well-posed. This includes, e.g., adaptive FEM for the Helmholtz equation. For details, the reader is referred  to~\cite{bhp17}.
\end{remark}

%%%%%%%%%%%%%%%%%%%%%%%%%%%%%%%%%%%%%%%%%%%%%%%%%%%%%%%%%%%%%%%%%%%%%%%%%%%%%%%%%%%%%%%%%%%%%%%%%%%%%%%%%%%%%%
\subsection{Abstract adaptive BEM}
\label{sec:abem}
%%%%%%%%%%%%%%%%%%%%%%%%%%%%%%%%%%%%%%%%%%%%%%%%%%%%%%%%%%%%%%%%%%%%%%%%%%%%%%%%%%%%%%%%%%%%%%%%%%%%%%%%%%%%%% 

This section summarizes the results of the recent own works \cite{gantner17,gp20}. 
Given the setting of Section~\ref{sec:model problem bem}, we consider Algorithm~\ref{alg:abstract algorithm} (with $U_k$ replaced by $\Phi_k$)
in the  context of conforming BEM discretizations of our model problem~\eqref{eq:strong}, i.e., 
\begin{align*}
 \mathscr{V}\phi = f 
\end{align*}
on a multi-patch  geometry $\Gamma$ as in Section~\ref{sec:parametrization_assumptions}, where adaptivity is driven by the \textit{weighted-residual {\sl a~posteriori} error estimator} from \eqref{eq:eta bem}, which reads
\begin{align*}
\eta(Q)^2:= h_Q \seminorm{f-\mathscr{V}\Phi}{H^1(Q)}^2 \quad\text{for all } Q\in\QQ\in\Q.
\end{align*}
We identify the crucial properties of the underlying meshes, the mesh refinement, and the boundary element spaces which ensure that the weighted-residual error estimator fits into the general framework of Section~\ref{sec:axioms} and which hence guarantee optimal convergence behavior of the adaptive Algorithm~\ref{alg:abstract algorithm} in the sense of Theorem~\ref{thm:abstract main}.
The main result of this section is Theorem~\ref{thm:abstract bem}.

%%%%%%%%%%%%%%%%%%%%%%%%%%%%%%%%%%%%%%%%%%%%%%%%%%%%%%%%%%%%%%%%%%%%%%%%%%%%%%%%%%%%%%%%%%%%%
\subsubsection{Axioms of adaptivity (revisited)}\label{sec:axioms revisited bem}
\label{sec:abstract setting bem}

%%%%%%%%%%%%%%%%%%%%%%%%%%%%%%%%%%%%%%%%%%%%%%%%%%%%%%%%%%%%%%%%%%%%%%%%%%%%%%%%%%%%%%%%%%%%%
\paragraph{Meshes}
Throughout this section, $\QQ_\coarse$ is a \textit{mesh} of the boundary $\Gamma=\partial\Omega$ of the bounded Lipschitz domain $\Omega\subset\R^\dph$ in the following sense:
\begin{itemize}
\item $\QQ_\coarse$ is a finite set of 
transformed hyperrectangles, i.e., each element $Q$ has the form $Q=\F_m(\widehat Q)$ for some $\F_m$ from Section~\ref{sec:parametrization_assumptions}, where $\widehat Q=\prod_{i=1}^{\dph-1}(a_i,b_i)$ is an open $(\dph-1)$-dimensional hyperrectangle;
\item for all $Q,Q'\in\QQ_\coarse$ with $Q\neq Q'$, the intersection is empty, i.e., $Q\cap Q'=\emptyset$; 
\item  $\QQ_\coarse$ is a partition of $\Gamma$, i.e., $\Gamma = \bigcup_{Q\in\QQ_\coarse}\overline{Q}$.
\end{itemize}
Let $\Q$ be a set of such meshes.
These are referred to as \textit{admissible}.
In order to ease notation, we introduce for $\QQ_\coarse\in\Q$ the corresponding \textit{mesh-width function} 
\begin{align*}
h_\coarse\in L^\infty(\Gamma),  \,h_\coarse|_Q:=h_Q:=|Q|^{1/{(\dph-1)}}\text{ for all }Q\in\QQ_\coarse.
\end{align*}
For $\omega\subseteq\Gamma$, we define the \textit{element-patches $\pi_\coarse^q(\omega)$ of order} $q\in\N_0$  and the corresponding set of elements $\Pi_\coarse^q(\omega)$ as in \eqref{eq:patch1}--\eqref{eq:patch2}, and we also use the abbreviations from there. 
As in Section~\ref{sec:abstract setting fem}, we suppose that shape regularity~\eqref{M:shape} and local quasi-uniformity~\eqref{M:locuni} are satisfied.

%%%%%%%%%%%%%%%%%%%%%%%%%%%%%%%%%%%%%%%%%%%%%%%%%%%%%%%%%%%%%%%%%%%%%%%%%%%%%%%%%%%%%%%%%%%%%
\smallskip\paragraph{Mesh refinement}
\label{sec:general refinement bem}
We suppose that we are given a mesh refinement strategy $\refine(\cdot,\cdot)$ as in Section~\ref{sec:abstract setting fem}.
In particular, we suppose the existence of some initial mesh $\QQ_0$ with $\refine(\QQ_0)=\Q$ and that the refinement axioms \eqref{R:childs}--\eqref{R:overlay} hold true.
Moreover, we even suppose a stronger version of \eqref{eq:union}, which ensures that there are only finitely many reference element-patches: 
For all $\QQ_\coarse\in\Q$ and arbitrary marked elements $\MM_\coarse\subseteq\QQ_\coarse$ with  refinement $\QQ_\fine$ $:=$ $\refine(\QQ_\coarse,\MM_\coarse)$, any refined element $Q\in \QQ_\coarse\setminus\QQ_\fine$ can only be uniformly bisected in any direction, i.e., for any $Q\in\QQ_\coarse$, the corresponding element in the parametric domain $\widehat Q=\prod_{i=1}^{\dph-1}(a_i,b_i)$  is split into elements in the parametric domain of the form 
\begin{align*}
\widehat Q'=\prod_{i=1}^{\dph-1}(a_i',b_i') \text{ with } a_i'=a_i+(b_i-a_i)k/n_{Q,i}
\end{align*} 
for some $n_{Q,i}\in\N$ and $k\in\{1,\dots,n_{Q,i}\}$. 
Here, $n_{Q,i}-1$ is the number of (uniform) bisections in direction $i$.
Note that \eqref{R:childs} yields boundedness of all $n_{Q,i}$. 
This stronger version is used to prove the auxiliary results~\eqref{M:patch center}--\eqref{M:poincare} below.

%%%%%%%%%%%%%%%%%%%%%%%%%%%%%%%%%%%%%%%%%%%%%%%%%%%%%%%%%%%%%%%%%%%%%%%%%%%%%%%%%%%%%%%%%%%%%
\smallskip\paragraph{Boundary element spaces}
With each $\QQ_\coarse\in\Q$, we associate a finite dimensional space 
\begin{align*}
\mathbb{S}_\coarse \subset L^2(\Gamma)\subset H^{-1/2}(\Gamma).
\end{align*}
Let $\Phi_\coarse\in\mathbb{S}_\coarse$ be the corresponding Galerkin approximation, defined via the variational formulation \eqref{eq:discrete}, to the solution $\phi\in H^{-1/2}(\Gamma)$ of problem~\eqref{eq:strong}.

We assume that the same space properties \eqref{S:nestedness}--\eqref{S:local} as in Section~\ref{sec:abstract setting fem} hold true. 
Additionally, we assume a slightly stronger version of \eqref{S:proj}:
For all $\QQ_\coarse\in\Q$ and all $\SS\subseteq\QQ_\coarse$, there exists a linear operator  $\JJ_{\coarsecomma \SS}:L^2(\Gamma)\to\set{\Psi_\coarse\in\mathbb{S}_\coarse}{\Psi_\coarse|_{\bigcup(\QQ_\coarse\setminus\SS)}=0}$ with the following property~\eqref{S:proj bem}:
\begin{enumerate}[(i)]
\renewcommand{\theenumi}{S\arabic{enumi}'}
\setcounter{enumi}{2}
\bf\item\rm\label{S:proj bem}
\textbf{Local projection property.}
Let $\q{loc}, \q{proj}\in\N_0$ from \eqref{S:local}.
For all $\psi\in L^2(\Gamma)$ and $Q\in\QQ_\coarse$ with $\Pi_\coarse^{\q{loc}}(Q)\subseteq\SS$, it holds  that 
\begin{align*}
&(\JJ_{\coarsecomma \SS} \psi)|_Q = \psi|_Q\\
& \quad\text{if }\psi|_{\pi_\coarse^{\q{proj}}(Q)} \in \set{\Psi_\coarse|_{\pi_\coarse^{{\q{proj}}}(Q)}}{\Psi_\coarse\in\mathbb{S}_\coarse}.
\end{align*}
\end{enumerate}
Clearly, \eqref{S:proj bem} coincides with \eqref{S:proj} if $\SS=\QQ_\coarse$.
In contrast to  \eqref{S:proj}, \eqref{S:proj bem} provides a local projection operator that can be additionally used as cut-off operator, which somehow replaces \eqref{S:app} in the proof of discrete reliability~\eqref{item:discrete reliability}, see \cite[Section~4.8]{gp20} for details.

Besides \eqref{S:nestedness}--\eqref{S:local} and \eqref{S:proj bem}, we suppose the existence of   constants $\const{inv},\const{sz}>0$, $\,\q{supp},\q{sz}\in\N_0$, and $0<\ro{unit}<1$  such that the following BEM properties \eqref{S:inverse bem}--\eqref{S:stab bem} hold for all $\QQ_\coarse\in\Q$:

\begin{enumerate}[(i)]
\renewcommand{\theenumi}{B\arabic{enumi}}
\bf\item\rm\label{S:inverse bem}
\textbf{Inverse inequality:}
For  all  $\Psi_\coarse\in\mathbb{S}_\coarse$, it holds that 
\begin{align*}
\norm{h_\coarse^{1/2}\Psi_\coarse}{L^2(\Gamma)}\le \const{inv} \, \norm{\Psi_\coarse}{H^{-1/2}(\Gamma)}.
\end{align*}
\bf\item\rm\label{S:unity bem}{\bf Local approximation of unity:} For all $Q\in\QQ_\coarse$, 
there exists  $\Psi_{\coarsecomma Q}\in\mathbb{S}_\coarse$ with  
$Q\subseteq \supp (\Psi_{\coarsecomma Q})\subseteq \pi_\coarse^{\q{supp}}(Q)$,
 and
\begin{align*}
&\norm{1-\Psi_{\coarsecomma Q}}{L^2(\supp(\Psi_{\coarsecomma Q}))} \le \ro{unit}{|\supp(\Psi_{\coarsecomma Q})|}^{1/2}.
\end{align*}
\bf\item\rm\label{S:stab bem}
\textbf{Local $\boldsymbol{L^2}$-stability.}
For all   $\psi\in L^2(\Gamma)$ and all $Q\in\QQ_\coarse$, it holds that 
\begin{align*}
\norm{ \JJ_{\coarsecomma \SS} \psi}{L^2(Q)}\le \const{sz} \norm{\psi}{L^2(\pi_\coarse^{\q{sz}}(Q))}.
\end{align*}
\end{enumerate}

%%%%%%%%%%%%%%%%%%%%%%%%%%%%%%%%%%%%%%%%%%%%%%%%%%%%%%%%%%%%%%%%%%%%%%%%%%%%%%%%%%%%%%%%%%%%%
\subsubsection{Optimal convergence}
\label{sec:BEM axioms}
Recall the definition \eqref{eq:const apx} of the approximation constant $\const{apx}(s)$.
With the definitions from Section~\ref{sec:meshes_axioms}, we say that the solution $\phi \in H^{-1/2}(\Gamma)$ belongs to the \textit{approximation class $s$ with respect to the estimator} \eqref{eq:eta bem} if
\begin{align*}
\const{apx}(s)<\infty.
\end{align*}
By definition, $\const{apx}(s)<\infty$  implies that the error estimator $\eta_\coarse$
 decays at least with rate $\OO\big((\#\QQ_\coarse)^{-s}\big)$ on the optimal meshes $\QQ_\coarse$. 
The following main theorem from \cite[Theorem~3.4]{gp20} states that each possible rate $s>0$ is in fact realized by Algorithm~\ref{alg:abstract algorithm}.
We note that \cite{gp20} even allows for systems of PDEs (such as the linear elasticity problem) with possibly complex coefficients.
The theorem follows essentially from its abstract counterpart Theorem~\ref{thm:abstract main} by verifying the axioms of Section~\ref{sec:the axioms} for the perturbations 
\begin{align*}
\dist(\QQ_\coarse,\QQ_\fine):=\norm{\Phi_\fine-\Phi_\coarse}{H^{-1/2}(\Gamma)}
\quad\text{for all }  \QQ_\coarse\in\Q\\
 \text{and } \QQ_{\fine}\in\refine(\QQ_\coarse).
\end{align*}
Such an optimality result was first proved in \cite{fkmp13} for the Laplace operator $\mathscr{P}=-\Delta$ on a polyhedral domain $\Omega$.
As ansatz space, \cite{fkmp13} considered piecewise constants on shape-regular triangulations.
The work \cite{ffkmp14} in combination with \cite{affkmp17} extends the assertion to piecewise polynomials on shape-regular curvilinear triangulations  of some piecewise smooth boundary $\Gamma$.
Independently, \cite{gantumur13} proved the same result for globally smooth $\Gamma$ and a large class of  symmetric and elliptic boundary integral operators.

\begin{theorem}\label{thm:abstract bem}
Let $(\QQ_\k)_{\k\in\N_0}$ be the sequence of meshes generated by  Algorithm~\ref{alg:abstract algorithm} with the corresponding Galerkin approximations $\Phi_k\in\mathbb{S}_k$.
Then, there hold:
\begin{enumerate}[\rm (i)]
\item\label{item:qabstract reliable bem}
Suppose that \eqref{M:shape}--\eqref{M:locuni}, \eqref{R:childs}, and \eqref{S:unity bem} hold true.
Then, 
the residual error estimator satisfies reliability, i.e., there exists $\const{rel}>0$ such that for all $\QQ_\coarse\in\Q$, 
\begin{align}\label{eq:reliable bem}
 \norm{\phi-\Phi_\coarse}{H^{-1/2}(\Gamma)}\le \const{rel}\eta_\coarse.
\end{align}
\item 
Suppose that \eqref{M:shape}--\eqref{M:locuni}, \eqref{R:childs}, \eqref{S:nestedness} and \eqref{S:inverse bem} hold true.
Then, the axioms \eqref{item:stability}--\eqref{item:reduction} and convergence of the perturbations $\lim_{k\to\infty}\dist(\QQ_\k,\QQ_{\k+1})$ $=$ $0$ are satisfied. 
These are exactly the assumptions of Theorem~\ref{thm:abstract main}~\eqref{item:abstract main 1}, which implies convergence \eqref{eq:estimator convergence} of the estimator. 
\item\label{item:qabstract linear convergence bem}
Suppose that \eqref{M:shape}--\eqref{M:locuni}, \eqref{R:childs}, \eqref{S:nestedness} and \eqref{S:inverse bem} hold true.
Then, the axioms \eqref{item:stability}--\eqref{item:orthogonality} are satisfied.
These are exactly the assumptions of Theorem~\ref{thm:abstract main}~\eqref{item:abstract main 2}, which implies linear convergence \eqref{eq:linear convergence} of the estimator.
\item\label{item:qabstract optimal convergence bem}
Suppose that \eqref{M:shape}--\eqref{M:locuni},  \eqref{R:childs}--\eqref{R:overlay}, \eqref{S:nestedness}--\eqref{S:local}, \eqref{S:proj bem} and \eqref{S:inverse bem}--\eqref{S:stab bem} hold true.
Then, the axioms {\rm\eqref{item:stability}--\eqref{item:discrete reliability}} as well as \eqref{R:childs}--\eqref{R:overlay} are satisfied.
These are exactly the assumptions of Theorem~\ref{thm:abstract main}~\eqref{item:abstract main 3}, which implies optimal convergence \eqref{eq:optimal convergence} of the estimator.
\end{enumerate}
\noindent All involved constants $\const{rel},\const{lin},\ro{lin},\theta_{\rm opt}$, and $\const{opt}$  (of Theorem~\ref{thm:abstract main}) depend only on the assumptions made and the dimension $\dph$, the coefficients of the differential operator $\mathscr{P}$,  the boundary $\Gamma$,  and the parametrization constants $C_{\F_m}$ from Section~\ref{sec:parametrization_assumptions}, while $\const{lin},\ro{lin}$ depend additionally on $\theta$ and the sequence $(\Phi_\k)_{\k\in\N_0}$ (see also Remark~\ref{rem:symmetric orthogonality bem}), and $\const{opt}$ depends furthermore on $\const{min}$, and $s>0$.
The constant $c_{\rm opt}$ depends only on $\const{child}, \#\QQ_0$,  $s$, and if there exists $\k_0$ with $\eta_{\k_0}=0$, then also on $\k_0$ and $\eta_0$.
\end{theorem}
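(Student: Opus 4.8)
The plan is to reduce Theorem~\ref{thm:abstract bem} entirely to the abstract Theorem~\ref{thm:abstract main} by verifying that the four estimator axioms \eqref{item:stability}--\eqref{item:orthogonality} follow from the assumed mesh, refinement, and boundary-element-space properties, with the perturbation chosen as $\dist(\QQ_\coarse,\QQ_\fine):=\norm{\Phi_\fine-\Phi_\coarse}{H^{-1/2}(\Gamma)}$. Since this is the BEM counterpart of Theorem~\ref{thm:abstract} and the cited source is \cite[Theorem~3.4]{gp20}, the bulk of the argument is already available; the work here is to confirm that our multi-patch spline setting satisfies the hypotheses of that reference. As in the FEM proof, I would first record the auxiliary consequences of the stated axioms: \eqref{M:shape}--\eqref{M:locuni} give uniform boundedness of element-patches $\#\Pi_\coarse(Q)\lesssim1$, the bi-Lipschitz patch maps $\F_m$ together with \eqref{M:shape} furnish a finite family of reference element-patches (this is exactly where the strengthened bisection version of \eqref{eq:union} in Section~\ref{sec:general refinement bem} is used, since it bounds the $n_{Q,i}$ via \eqref{R:childs}), and one obtains the localized trace and Poincar\'e-type estimates needed to handle the $H^1$-seminorm in the estimator. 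These are precisely the properties \eqref{M:patch center}--\eqref{M:poincare} invoked in \cite{gp20}, so I would verify them in our geometry and then cite the abstract machinery.

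For the individual axioms I would proceed as follows. Reliability~\eqref{eq:reliable bem} in part~\eqref{item:qabstract reliable bem} uses \eqref{S:unity bem} (local approximation of unity) together with the mapping property $\mathscr{V}:H^{-1/2}\to H^{1/2}$ and the regularity lift $\mathscr{V}\Psi\in H^1(\Gamma)$; this is the standard weighted-residual reliability argument of \cite{cs96,c97,cms01} as generalized in \cite{gp20}. Stability~\eqref{item:stability} and reduction~\eqref{item:reduction} follow from the inverse estimate \eqref{S:inverse bem}, the triangle inequality applied to the $H^1$-seminorm of the residual difference $\mathscr{V}(\Phi_\fine-\Phi_\coarse)$, the continuity \eqref{eq:continuity bem}, and the size reduction $|Q'|\le\ro{}|Q|$ of children — mirroring the FEM derivation but with the $h_Q^{1/2}$ weight and the $H^1(Q)$-seminorm. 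General quasi-orthogonality~\eqref{item:orthogonality} is obtained from C\'ea quasi-optimality and, because $\dual{\mathscr{V}\cdot}{\cdot}$ is symmetric for the Laplace and elasticity cases, directly from the Pythagoras identity in the $\mathscr{V}$-energy norm combined with reliability, as recorded in Remark~\ref{rem:symmetric orthogonality bem}; in the general (non-symmetric) case a compactness/{\sl a priori}-convergence argument as in \cite{gp20} supplies it.

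The main obstacle, and the part deserving the most care, is discrete reliability~\eqref{item:discrete reliability}, which is needed only for the optimal-rate statement~\eqref{item:qabstract optimal convergence bem}. Here the nonlocality of the single-layer operator $\mathscr{V}$ prevents a purely element-local argument, so one cannot simply copy the FEM reasoning. The remedy is the localized projection-plus-cutoff operator $\JJ_{\coarsecomma\SS}$ furnished by \eqref{S:proj bem}, used as in \cite[Section~4.8]{gp20}: one tests the Galerkin difference against $\Phi_\fine-\JJ_{\coarsecomma\SS}\Phi_\fine$, exploits the local projection property on elements far from the refined region, and uses the $L^2$-stability \eqref{S:stab bem} together with the inverse estimate \eqref{S:inverse bem} to control the contributions supported near $\QQ_\coarse\setminus\QQ_\fine$, defining $\RR(\QQ_\coarse,\QQ_\fine)$ as a bounded-order patch of the refined elements so that $\#\RR(\QQ_\coarse,\QQ_\fine)\le\const{ref}(\#\QQ_\fine-\#\QQ_\coarse)$. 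Once \eqref{item:stability}--\eqref{item:orthogonality} are in hand, the refinement axioms \eqref{R:childs}--\eqref{R:overlay} are assumed outright, and the three convergence assertions follow verbatim from Theorem~\ref{thm:abstract main}~\eqref{item:abstract main 1}--\eqref{item:abstract main 3}; tracking the constants through these citations completes the proof.
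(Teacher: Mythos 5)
Your proposal is correct and follows essentially the same route as the paper: both reduce Theorem~\ref{thm:abstract bem} to Theorem~\ref{thm:abstract main} by citing \cite[Section~4]{gp20} for the verification of the estimator axioms \eqref{item:stability}--\eqref{item:orthogonality} (including the discrete-reliability argument via the cut-off projector from \eqref{S:proj bem}), so that the only remaining work is to check the auxiliary geometric properties --- bounded patches \eqref{eq:bounded patch}, child size reduction \eqref{eq:reduction}, and the reference-patch properties \eqref{M:patch center}--\eqref{M:poincare} --- in the concrete multi-patch setting, exactly as you outline. One ingredient you omit: you attribute the ``finitely many reference element-patches'' argument solely to the per-patch bi-Lipschitz maps $\F_m$, \eqref{M:shape}--\eqref{M:locuni}, and the uniform-bisection structure with \eqref{R:childs}, but element-patches that straddle patch interfaces or surround a geometry vertex ${\bf z}\in\mathcal{V}_{\bf F}$ are not the image of a single $\F_m$, and the paper additionally invokes the flattening assumption \eqref{P:flattened} from Section~\ref{sec:assumption-BEM} precisely to reduce these cross-patch configurations (and the point-patches entering the Poincar\'e-type inequality \eqref{M:poincare}) to finitely many planar reference configurations.
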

\begin{proof}
The proof is found  in \cite[Section~4]{gp20}.
Indeed, that work considers even more general meshes.
It assumes additional abstract mesh and refinement properties \eqref{eq:bounded patch} and \eqref{eq:reduction},  as in the FEM case, plus \eqref{M:patch center} and \eqref{M:poincare}, which are automatically satisfied in our setting.
We note that \eqref{eq:bounded patch} and \eqref{eq:reduction} follow along the lines of the proof of Theorem~\ref{thm:abstract}. 
Thus, in the remainder of the proof we only verify that \eqref{M:patch center} and \eqref{M:poincare} are satisfied.

Let $\QQ_\coarse\in\Q$. 
The flattening assumption from Section~\ref{sec:assumption-BEM}, \eqref{M:shape}--\eqref{M:locuni},  \eqref{R:childs}, and the assumption that each element can only be uniformly bisected in any direction imply that there exist only finitely many reference element-patches of elements. 
This shows that each element lies essentially in the center of its element-patch, i.e.,  that\footnote{Recall the definition of the diameter of an arbitrary non-empty set $S\subseteq\R^{\dph-1}$, ${\rm diam}(S):=\sup\set{|{\bf x}-{\bf y}|}{{\bf x},{\bf y}\in S}$
and the definition of the distance of two arbitrary non-empty sets $S_1,S_2\subseteq \R^{\dph-1}$, ${\rm dist}(S_1,S_2):=\inf\set{|{\bf x}-{\bf y}|}{{\bf x}\in S_1,{\bf y}\in S_2}$. We use the convention ${\rm dist}(Q,\emptyset):=\diam(\Gamma)$.} 
\begin{align}\label{M:patch center}
\diam(Q)\lesssim {\rm dist}(Q,\Gamma\setminus\pi_\coarse(Q))\quad\text{for all }Q\in\QQ_\coarse;
\end{align}
see \cite[Section~5.5.4]{gantner17} for details. 
Similarly, one sees that there exist only finitely many reference element-patches of points. 
This implies for all points ${\bf z}\in\Gamma$ and $v\in H^1(\Gamma)$ the following Poincar\'e-type inequality
 \begin{align}\label{M:poincare}
|v|_{H^{1/2}(\pi_\coarse(\{{\bf z}\}))}\lesssim\diam(\pi_\coarse(\{{\bf z}\}))^{1/2}| v|_{H^1(\pi_\coarse(\{\bf z\}))},
\end{align}
see \cite[Section~5.5.4]{gantner17}.\qed
\end{proof}

\begin{remark}\label{rem:weak efficiency}
In contrast to FEM, an efficiency result analogous to \eqref{eq:efficient} for the weighted-residual error estimator $\eta_\coarse$ is an open question. 
Indeed, \cite{affkp13} is the only available result in the literature.
However, \cite{affkp13} is restricted to the two dimensional case $\Omega\subset\R^2$ with piecewise constant ansatz functions.
Moreover, additional (regularity) assumptions on the right-hand side $f$ are required.
More precisely, it then holds that 
\begin{align*}
 \eta \lesssim \norm{h^{1/2} (\phi-\Phi)}{H^{-1/2}(\Gamma)} + {\rm osc}
\end{align*}
with some higher order oscillation term ${\rm osc}$. 

We also mention that \cite{affkmp17} proves a so-called weak efficiency of the weighted-residual estimator, which states that
\begin{align*}
\eta \lesssim \norm{h^{1/2} (\phi-\Phi)}{L^2(\Gamma)}
\end{align*}
provided that the sought solution has additional regularity $\phi\in L^2(\Gamma)$.

\end{remark}

\begin{remark}\label{rem:symmetric orthogonality bem}
As in Remark~\ref{rem:symmetric orthogonality}, we mention that only general quasi-orthogonality~\eqref{item:orthogonality} depends on the sequence $(\Phi_\k)_{\k\in\N_0}$.
Along the same lines, one sees that this dependence vanishes if the bilinear form $\dual{\mathscr{V}\,\cdot}{\cdot}$ is symmetric.
\end{remark}

\begin{remark}
Let $\Gamma_0\subsetneqq\Gamma$ be an open subset of $\Gamma=\partial\Omega$ and let $\mathscr{E}_0:L^2(\Gamma_0)\to L^2(\Gamma)$ denote the extension operator that extends a function defined on $\Gamma_0$ by zero to a function on $\Gamma$.
We define the space of restrictions $H^{1/2}(\Gamma_0):=\set{v|_{\Gamma_0}}{v\in H^{1/2}(\Gamma)}$ endowed with the quotient norm $v_0\mapsto\inf\set{\norm{v}{H^{1/2}(\Gamma)}}{v|_{\Gamma_0}=v_0}$ 
 and its dual space $\widetilde H^{-1/2}(\Gamma_0):= H^{1/2}(\Gamma_0)^*$.
According to \cite[Section~2.1]{affkmp17}, $\mathscr{E}_0$ can be extended to an isometric operator $\mathscr{E}_0:\widetilde H^{-1/2}(\Gamma_0)\to  H^{-1/2}(\Gamma)$.
Then, one can consider the integral equation
\begin{align}\label{eq:screen}
(\mathscr{V}\mathscr{E}_0\phi)|_{\Gamma_0}=f|_{\Gamma_0},
\end{align}
where $(\mathscr{V}\mathscr{E}_0(\cdot))|_{\Gamma_0}:\widetilde H^{-1/2}(\Gamma_0)\to H^{1/2}(\Gamma_0)$.
In the literature, such problems are known as \emph{screen problems}, see, e.g., \cite[Section~3.5.3]{ss11}.
 Theorem~\ref{thm:abstract bem}  holds analogously for the screen problem \eqref{eq:screen}.
Indeed, the works \cite{fkmp13,ffkmp14,affkmp17,gantumur13} cover this case as well.
To ease the presentation, we  only focus  on closed boundaries $\Gamma=\partial\Omega$.
\end{remark}

\begin{remark}
Let us additionally assume that 
$\mathbb{S}_\coarse$ contains all  constant functions, i.e.,  
\begin{align*}%\label{eq:alexass}
({\bf x}\mapsto c)\in\mathbb{S}_\coarse
\quad\text{for all }c\in\R.
\end{align*}
Then, under the assumption that $\norm{h_{\k}}{L^\infty(\Omega)}\to0$ as $\k\to\infty$ (which can be easily guaranteed by  marking additional elements), one can show  that
$\mathbb{S}_\infty:=\overline{\bigcup_{\k\in\N_0}\mathbb{S}_\k}$ $=H^{-1/2}(\Gamma)$, see \cite[Remark~5.2.9]{gantner17}.
This observation allows to follow the ideas of~\cite{bhp17} and to prove Theorem~\ref{thm:abstract bem} even if the bilinear form $\dual{\mathscr{V}\,\cdot}{\cdot}$ is only elliptic up to some compact perturbation, provided that the continuous problem is well-posed. This includes, e.g., adaptive BEM for the Helmhotz equation, see~\cite[Section~6.9]{steinbach08}. For details, the reader is referred  to~\cite{bhp17,bbhp19}. 
\end{remark}

\newpage

% !TEX encoding = MacOSRoman
% !TEX root = adaptive_iga.tex

%%%%%%%%%%%%%%%%%%%%%%%%%%%%%%%%%%%%%%%%%%%%%%%%%%%%%%%%%%%%%%%%%%%%%%%%%%%%%%%%%%%%%%%%%%%%%%%%%%%%%%%%%%%%%% 
%%%%%%%%%%%%%%%%%%%%%%%%%%%%%%%%%%%%%%%%%%%%%%%%%%%%%%%%%%%%%%%%%%%%%%%%%%%%%%%%%%%%%%%%%%%%%%%%%%%%%%%%%%%%%% 
\section{Adaptive IGAFEM in arbitrary dimension}\label{sec:adaptive igafem}
%%%%%%%%%%%%%%%%%%%%%%%%%%%%%%%%%%%%%%%%%%%%%%%%%%%%%%%%%%%%%%%%%%%%%%%%%%%%%%%%%%%%%%%%%%%%%%%%%%%%%%%%%%%%%% 
%%%%%%%%%%%%%%%%%%%%%%%%%%%%%%%%%%%%%%%%%%%%%%%%%%%%%%%%%%%%%%%%%%%%%%%%%%%%%%%%%%%%%%%%%%%%%%%%%%%%%%%%%%%%%% 

In this section, we consider two concrete realizations of the abstract adaptive FEM framework from Section~\ref{sec:afem}, namely hierarchical splines in Section~\ref{sec:H-igafem} and T-splines in Section~\ref{sec:T-igafem}.
To ease the presentation, we focus on single-patch Lipschitz domains $\Omega\subset \R^\dph$ as in Section~\ref{sec:parametrization_assumptions}.
For hierarchical splines, the generalization to multi-patch domains is notationally more involved but straightforward and will thus only be sketched in Section~\ref{sec:H-multipatches}, where we comment on the minor changes in the proof. 
Instead, for T-splines, since the direction of bisection on admissible T-meshes is periodically changed, one cannot avoid hanging nodes at the interfaces of multi-patch domains as in Section~\ref{sec:H-multipatches}.
This complicates the generalization for T-splines. Indeed, such a generalization is not available yet. 
For hierarchical splines, the theoretical findings are underlined by numerical experiments in Section~\ref{sec:numerical igafem}.
For numerical experiments with T-splines and the considered refinement strategy, we refer to \cite{hkmp17}. 
As in Section~\ref{sec:IGAFEM-intro}, in the following we restrict ourselves to the case $\dpa = \dph$, i.e., a $d$-dimensional domain $\Omega \subset \mathbb{R}^d$.

%%%%%%%%%%%%%%%%%%%%%%%%%%%%%%%%%%%%%%%%%%%%%%%%%%%%%%%%%%%%%%%%%%%%%%%%%%%%%%%%%%%%%%%%%%%%%%%%%%%%%%%%%%%%%% 
\subsection{Adaptive IGAFEM with hierarchical splines}\label{sec:H-igafem}
%%%%%%%%%%%%%%%%%%%%%%%%%%%%%%%%%%%%%%%%%%%%%%%%%%%%%%%%%%%%%%%%%%%%%%%%%%%%%%%%%%%%%%%%%%%%%%%%%%%%%%%%%%%%%% 
For the IGAFEM setting with hierarchical B-splines, we start with the single-patch domain.
Let $\mathbf{p}:=(p_1,\dots,p_d)$ be a vector of positive polynomial degrees and $\mathbf{\kv}^0$ be a  multivariate open knot vector on $\widehat\Omega=(0,1)^\dph$ with induced initial mesh $\widehat\QQ_0:=\widehat\QQ^0$.
We assume that $\hat{\mathbb{S}}_{{\bf p}} (\mathbf{\kv}^0)$ and $\hat{\mathbb{S}}_{{\bf p}_{\F}} (\mathbf \kv_{\F})$ with ${\bf p}_\F$ and ${\bf T}_\F$ from the parametrization  ${\bf F}:\widehat\Omega\to\Omega$ (see Section~\ref{sec:parametrization_assumptions}) are compatible to each other as in Section~\ref{sec:IGA-basics}.
Note that $\hat{\mathbb{S}}_{{\bf p}} (\mathbf{\kv}^0)=\hat{\mathbb{S}}_{{\bf p}}^{\rm H}(\hat\QQ_{0},{\bf \kv}^0)$, i.e., the starting space corresponds to standard B-splines.
We fix the admissibility parameter $\mu$ (see~\eqref{eq:sameshes}) as well as the basis and the kind of meshes that we want to consider, i.e., $\HH$-admissible or $\TT$-ad\-missible meshes, %hierarchical or truncated hierarchical B-splines, 
and abbreviate the set of all corresponding admissible meshes as $\widehat\Q$ and the corresponding refinement strategy as $\refine(\cdot,\cdot)$, see Section~\ref{sec:hierarchical refine}. 
For all $\widehat\QQ_\coarse\in\widehat\Q$, let $\widehat{\mathbb{S}}_\coarse:=\widehat{\mathbb{S}}_{\bf p}^{\rm H}(\hat\QQ_\coarse,\mathbf{\kv}^{0})\cap H_0^1(\widehat\Omega)$ be the associated ansatz space in the parametric domain, see Section~\ref{subsec:def hb}. 
As in Section~\ref{sec:model}, we define the corresponding quantities in the physical domain via the parametrization ${\bf F}:\widehat\Omega\to\Omega$, i.e., the meshes are given by
\begin{align*}
&\QQ_\coarse:=\set{{\bf F}(\widehat Q)}{\widehat Q\in\widehat\QQ_\coarse}\quad\text{for all }\widehat\QQ_\coarse\in\widehat\Q,
\\
&\Q:=\set{\QQ_\coarse}{\widehat\QQ_\coarse\in\widehat\Q},
\\
&\refine(\QQ_\coarse,\MM_\coarse):=\set{{\bf F}(\widehat Q)}{\widehat Q\in\refine(\widehat\QQ_\coarse,\widehat\MM_\coarse)}
\\
&\qquad\text{for all }\QQ_\coarse\in\Q, \MM_\coarse\subseteq\QQ_\coarse 
\\
&\qquad\text{with }\widehat\MM_\coarse:=\set{{\bf F}^{-1}(Q)}{Q\in\QQ_\coarse},
\end{align*}
and the discrete space associated to $\QQ_\coarse$ is defined as
\begin{align*}
&\mathbb{S}_\coarse:=\set{\widehat V\circ{\bf F}^{-1}}{\widehat V\in\widehat{\mathbb{S}}_\coarse}.%\quad\text{for all }\QQ_\coarse\in\Q.
\end{align*}
In the following lemma, we give two bases in terms of (T)HB-splines for $\widehat{\mathbb{S}}_\coarse$.
The proof is given in \cite[Corollary~3.1]{ghp17} and relies on the fact that HB-splines restricted to any $(\dph-1)$-dimensional hyperface of the unit cube are again HB-splines. 
Clearly, these bases can be transferred to the physical domain via the parametrization $\F$.
We stress that the chosen basis is theoretically irrelevant for the realization of Algorithm~\ref{alg:abstract algorithm} (in particular for the solving step), see also Section~\ref{sec:numerical igafem} for a detailed discussion.

\begin{lemma}\label{lem:homogeneous H-basis}
Let $\hat\QQ_\coarse$ be a hierarchical (not necessarily admissible) mesh in the parametric domain.
Then, the hierarchical B-splines $\hat\HH_{\bf p}(\hat\hmesh,\mathbf{\kv}^{0})\cap H_0^1(\widehat\Omega)$ and the truncated hierarchical B-splines $\hat\TT_{\bf p}(\hat\hmesh,\mathbf{\kv}^{0})\cap H_0^1(\widehat\Omega)$ are both bases of $\widehat{\mathbb{S}}_\coarse$.
\end{lemma}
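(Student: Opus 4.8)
The plan is to establish that the two candidate sets span the homogeneous space $\widehat{\mathbb{S}}_\coarse$ and are linearly independent. Since both $\hat\HH_{\bf p}(\hat\hmesh,\mathbf{\kv}^{0})$ and $\hat\TT_{\bf p}(\hat\hmesh,\mathbf{\kv}^{0})$ are already known to be bases of $\widehat{\mathbb{S}}_{\bf p}^{\rm H}(\hat\hmesh,{\bf \kv}^{0})$ by Propositions~\ref{prop:hb properties} and~\ref{prop:thb properties}, the essential task is to understand what happens when we intersect with $H_0^1(\widehat\Omega)$, i.e., when we impose vanishing boundary conditions. The key structural fact I would rely on is the one cited from~\cite[Corollary~3.1]{ghp17}: the restriction of an HB-spline to any $(\dph-1)$-dimensional hyperface of the unit cube $\widehat\Omega$ is again an HB-spline (of the hierarchical mesh induced on that face), and the same holds for THB-splines thanks to the tensor-product structure and the fact that the truncation mechanism commutes with restriction to a coordinate hyperface.

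First I would characterize which basis functions vanish on the boundary $\partial\widehat\Omega$. Because we start from open knot vectors in every direction, a tensor-product B-spline $\hat B_{{\bf i},{\bf p}}^\ell$ is nonzero on a given face of the cube precisely when its univariate factor in the direction normal to that face is one of the two outermost B-splines (the ones associated with the repeated boundary knots). Using the restriction property, a (T)HB-spline is nonzero on a hyperface exactly when its defining index has this boundary character in the corresponding direction. This gives a clean partition of each basis into an "interior" part (functions vanishing on all of $\partial\widehat\Omega$) and a "boundary" part. I would then argue that $\hat\HH_{\bf p}(\hat\hmesh,\mathbf{\kv}^{0})\cap H_0^1(\widehat\Omega)$ consists exactly of the interior HB-splines, and analogously for the truncated basis, so that intersecting the basis with $H_0^1(\widehat\Omega)$ simply removes the boundary functions.

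Next I would show that the retained interior functions actually span all of $\widehat{\mathbb{S}}_\coarse=\widehat{\mathbb{S}}_{\bf p}^{\rm H}(\hat\hmesh,\mathbf{\kv}^{0})\cap H_0^1(\widehat\Omega)$. Given any $\widehat S\in\widehat{\mathbb{S}}_\coarse$, expand it in the full (T)HB-spline basis; the claim is that the coefficients of all boundary functions vanish. Here the restriction property is decisive: restricting $\widehat S$ to a hyperface and using that the boundary functions restrict to a \emph{basis} of the induced lower-dimensional hierarchical spline space (by~\cite[Corollary~3.1]{ghp17}), the condition $\widehat S|_{\partial\widehat\Omega}=0$ forces each such boundary coefficient to be zero by linear independence on the face. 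Iterating over all $2\dph$ faces (and their intersections, which is where the argument needs a little care for functions living near edges and corners) eliminates every boundary coefficient. Linear independence of the remaining interior functions is inherited directly from the linear independence of the full basis (Proposition~\ref{prop:hb properties}(i), Proposition~\ref{prop:thb properties}(i)).

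The main obstacle I anticipate is the bookkeeping at lower-dimensional strata of the boundary, namely edges and corners in $\dph\ge2$: a single basis function may be nonzero on several adjacent faces, so the naive face-by-face elimination must be organized so that zeroing coefficients on one face does not interfere with the argument on another. The cleanest way around this is to exploit the tensor-product/restriction structure systematically — peel off one coordinate direction at a time, reducing the $\dph$-dimensional claim to the $(\dph-1)$-dimensional one along each boundary face, and invoke an induction on $\dph$ with the univariate open-knot case as the base. This inductive reduction is exactly what the restriction property of~\cite[Corollary~3.1]{ghp17} is designed to support, so I expect the argument to go through, with the bulk of the work being notational rather than conceptual.
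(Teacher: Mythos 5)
Your proposal is correct and follows essentially the same route as the paper, whose proof is precisely the one cited from \cite[Corollary~3.1]{ghp17}: the decisive ingredient in both cases is that (T)HB-splines restricted to a $(\dph-1)$-dimensional hyperface of $\widehat\Omega$ are again (T)HB-splines of the induced face mesh, so that vanishing of a hierarchical spline on the boundary forces the coefficients of all boundary basis functions to vanish by linear independence on each face. Your dimension-reduction/induction handling of edges and corners is exactly how that cited argument is organized, so there is nothing to add.
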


The given setting fits into the abstract framework of Section~\ref{sec:afem}, and in particular the axioms for abstract adaptive FEM are satisfied, as we will see in Theorem~\ref{thm:H-igafem} below.
For $\TT$-admissible meshes, the proof of this result is implicitly given in \cite{bg16,bg17}.
Independently, the theorem has been proved for $\HH$-admissible meshes of class $\mu=2$ in \cite{ghp17}, see also \cite[Section~4.4--4.5]{gantner17} for details.
%We only sketch the proof in Section~\ref{sec:H-fem mesh verification}--\ref{sec:equivalent classes}.
%Finally, the multi-patch case is considered in Section~\ref{sec:H-multipatches}, where we only comment on the minor changes in the proof.
We also stress that the last assertion \eqref{eq:admissible equivalent} of the theorem is new. 
It states that the approximation class with respect to the minimal total error and admissible hierarchical meshes defined in \eqref{eq:total class} is equivalent to the one with arbitrary hierarchical meshes. 
In particular, the approximation class does neither depend on whether $\HH$-admissible or $\TT$-admissible meshes are considered nor on  the admissibility parameter $\mu$. 
For its formulation, we set for $s>0$
\begin{align*}
\notag \const{apx}^{\rm tot,H}(s)
 &:=
\sup_{N\ge\#\QQ_0} \min_{\QQ_\coarse\in\Q^{\rm H}(N)}\big(N^s\inf_{V_\coarse\in\mathbb{S}_\coarse}\big(\norm{u-V_\coarse}{H^1(\Omega)}\\
&\hspace{40mm}+\osc_\coarse(V_\coarse)\big)\big).
\end{align*}
with
\begin{align*}
\Q^{\rm H}(N):=\set{\QQ_{\coarse}\text{ hier.\ mesh}}{\#\QQ_{\coarse}\le N}\supseteq\Q(N),
\end{align*}
where $\Q(N):=\set{\QQ_{\coarse}\in\Q}{\#\QQ_{\coarse}\le N}$.
Here, we say that $\QQ_{\coarse}$ is a hierarchical mesh if the corresponding set $\widehat \QQ_\coarse$ defined via $\F$ is a hierarchical mesh in the parametric domain obtained by arbitrary bisections of the initial mesh $\widehat\QQ_0$.
We also note that $\mathbb{S}_\coarse$ as well as $\osc_\coarse$ have actually only been defined for admissible meshes, but the definitions can be extended in an obvious way.

\begin{theorem}\label{thm:H-igafem}
Hierarchical splines on admissible meshes satisfy the mesh properties \eqref{M:shape}--\eqref{M:locuni},  the refinement properties \eqref{R:childs}--\eqref{R:overlay}, and the space properties \eqref{S:nestedness}--\eqref{S:proj} and \eqref{S:inverse}--\eqref{S:grad}.
The involved constants depend only on the dimension $\dph$, the parametrization constant $C_\F$ of Section~\ref{sec:parametrization_assumptions}, the degree $\mathbf{p}$, the initial knot vector ${\bf T}^0$, and the admissibility parameter $\mu$.
In particular, Theorem~\ref{thm:abstract} is applicable.
Together with Theorem~\ref{thm:abstract main}, this yields reliability~\eqref{eq:reliable}, efficiency~\eqref{eq:efficient}, and  linear convergence at optimal rate~\eqref{eq:linear convergence}--\eqref{eq:optimal convergence} of the residual error estimator~\eqref{eq:eta}, when the adaptive Algorithm~\ref{alg:abstract algorithm} is employed.
Moreover, for all $s>0$, there exists $C_{\rm opt}'>0$ depending only on $\const{clos}$ from \eqref{R:closure} (and thus in particular on $\mu$) and $s$ such that 
\begin{align}\label{eq:admissible equivalent}
\const{apx}^{\rm tot,H}(s)\le \const{apx}^{\rm tot}(s) \le \const{opt}' \const{apx}^{\rm tot,H}(s).
\end{align}
\end{theorem}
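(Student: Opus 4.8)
The plan is to prove the two inequalities in \eqref{eq:admissible equivalent} separately. The left inequality $\const{apx}^{\rm tot,H}(s)\le \const{apx}^{\rm tot}(s)$ is immediate: since $\Q(N)\subseteq\Q^{\rm H}(N)$ (admissible meshes are in particular hierarchical meshes), the minimum defining $\const{apx}^{\rm tot,H}(s)$ is taken over a larger set than the one defining $\const{apx}^{\rm tot}(s)$, hence is smaller. So the entire content lies in the right inequality, which asserts that allowing arbitrary (non-graded) hierarchical meshes cannot improve the optimal total-error rate by more than a constant factor.

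The strategy for the right inequality is the standard ``admissible closure'' argument familiar from the triangular-mesh theory. Fix $N\ge\#\QQ_0$ and pick a hierarchical mesh $\QQ_\coarse\in\Q^{\rm H}(N)$ that is near-optimal for $\const{apx}^{\rm tot,H}(s)$, i.e. realizes (up to a factor of $2$) the minimal total error among hierarchical meshes with at most $N$ elements. The issue is that $\QQ_\coarse$ need not be admissible, so it is not a competitor in $\const{apx}^{\rm tot}(s)$. The fix is to produce an admissible mesh $\QQ_\star\in\Q$ with $\QQ_\coarse\preceq\QQ_\star$ and $\#\QQ_\star\lesssim\#\QQ_\coarse$, and then use nestedness. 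Concretely, I would take the set of elements $\widehat\MM$ in the parametric domain that must be bisected to pass from $\widehat\QQ_0$ to $\widehat\QQ_\coarse$, and apply the admissible refinement routine $\refine$ to these marked elements; by the closure estimate \eqref{R:closure} (Proposition~\ref{prop:hierarchical lincomp}) the resulting admissible mesh $\QQ_\star$ satisfies $\#\QQ_\star-\#\QQ_0\le \const{clos}\,(\#\QQ_\coarse-\#\QQ_0)$, hence $\#\QQ_\star\le \const{clos}\,\#\QQ_\coarse$ with $\const{clos}$ depending on $\mu$. Since $\QQ_\coarse\preceq\QQ_\star$, nestedness \eqref{S:nestedness} (or rather property (iii) of Proposition~\ref{prop:hb properties}) gives $\mathbb{S}_\coarse\subseteq\mathbb{S}_\star$, so the best approximation over $\mathbb{S}_\star$ is no worse than over $\mathbb{S}_\coarse$. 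The oscillation term is controlled likewise because it is a localized quantity that only decreases under refinement of the underlying mesh. Putting these together, $\QQ_\star\in\Q(\const{clos}N)$ achieves a total error bounded by that of $\QQ_\coarse$, and a routine rescaling of $N$ by the constant factor $\const{clos}$ yields $\const{apx}^{\rm tot}(s)\le \const{opt}'\,\const{apx}^{\rm tot,H}(s)$ with $\const{opt}'$ depending only on $\const{clos}$ and $s$.

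The main obstacle I anticipate is verifying that the closure estimate \eqref{R:closure} really applies when the marked set $\widehat\MM$ arises from an \emph{arbitrary} hierarchical mesh rather than from the adaptive loop. Proposition~\ref{prop:hierarchical lincomp} is stated for sequences generated by $\refine$, whereas here I want to reach a \emph{given} target mesh $\widehat\QQ_\coarse$ and then take its admissible closure. The cleanest route is to invoke the characterization that $\refine(\widehat\QQ_0)$ coincides with the whole set of admissible meshes refining $\widehat\QQ_0$ (cited just before Proposition~\ref{prop:hierarchical lincomp} via \cite{morgenstern17}), and to observe that the admissible closure of $\widehat\QQ_\coarse$ can be generated by marking, level by level, exactly the elements of $\widehat\QQ_\coarse$ that were bisected relative to $\widehat\QQ_0$; the counting bound of Proposition~\ref{prop:hierarchical lincomp} then transfers because the number of marked elements over the whole history is bounded by $\#\widehat\QQ_\coarse-\#\widehat\QQ_0$. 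A secondary technical point is confirming that $\osc_\coarse(V_\coarse)$ does not increase (up to the same constant) when passing from $\QQ_\coarse$ to the finer $\QQ_\star$; this follows because refining an element can only split the local oscillation contributions and the $L^2$-projections in \eqref{eq:osc} are orthogonal, so the squared oscillation on a parent dominates the sum over its children. Once these two points are secured, the argument is purely bookkeeping.
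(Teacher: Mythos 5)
Your proposal addresses only the final assertion of the theorem, namely the equivalence~\eqref{eq:admissible equivalent} of approximation classes. The theorem, however, also claims that hierarchical splines on admissible meshes satisfy the mesh properties \eqref{M:shape}--\eqref{M:locuni}, the refinement properties \eqref{R:childs}--\eqref{R:overlay}, and the space properties \eqref{S:nestedness}--\eqref{S:proj} and \eqref{S:inverse}--\eqref{S:grad}; these are exactly what make Theorem~\ref{thm:abstract} (and hence reliability, efficiency, and linear convergence at optimal rate) applicable, and their verification constitutes the bulk of the paper's proof. The paper checks them one by one: \eqref{M:shape} and \eqref{M:locuni} from dyadic bisection, Corollary~\ref{prop:lqiHB-adjacent}, and the regularity of $\F$; \eqref{R:childs} trivially, \eqref{R:closure} from Proposition~\ref{prop:hierarchical lincomp}, \eqref{R:overlay} from the overlay construction for admissible meshes; \eqref{S:nestedness} and \eqref{S:inverse} from Proposition~\ref{prop:hb properties}; \eqref{S:local} via a dedicated argument on the supports of newly activated functions (using Lemma~\ref{lemma:patches}, Proposition~\ref{prop:lqiHB-adjacent-support}, and Corollary~\ref{corol:characterization}); and \eqref{S:proj}, \eqref{S:app}--\eqref{S:grad} via the hierarchical quasi-interpolant of Section~\ref{sec:hierarchical interpolation} adapted to homogeneous boundary conditions, together with Propositions~\ref{prop:Sext-star} and \ref{prop:hqi}. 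None of this appears in your proposal, so as a proof of the full statement it has a genuine gap: you cannot invoke the abstract machinery, nor conclude \eqref{eq:reliable}, \eqref{eq:efficient}, or \eqref{eq:linear convergence}--\eqref{eq:optimal convergence}, without these verifications.

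For the part you do treat, \eqref{eq:admissible equivalent}, your argument is correct and essentially identical to the paper's (which elaborates the triangular-mesh argument of \cite[Appendix~C]{dfgp19}): the first inequality follows from $\Q(N)\subseteq\Q^{\rm H}(N)$; for the second, one takes an optimal hierarchical mesh $\QQ_\star\in\Q^{\rm H}(N)$, forms its admissible closure level by level by marking, in each intermediate admissible mesh, the elements of the bisection history of $\QQ_\star$ that are still present, bounds the cardinality of the closure via the closure estimate \eqref{R:closure} and the fact that the total number of bisections in the history is comparable to $\#\QQ_\star-\#\QQ_0$, uses nestedness and monotonicity of the total error (best approximation plus oscillations) under refinement, and concludes by rescaling $N$ by the resulting constant factor. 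Your anticipated obstacle --- that Proposition~\ref{prop:hierarchical lincomp} is stated for sequences generated by $\refine$ rather than for a given target mesh --- is resolved exactly as you suggest, and your justification that the oscillations can only decrease under refinement (orthogonality of the $L^2$-projections and splitting of parents into children) supplies the detail that the paper leaves implicit in its remark that monotonicity is ``an immediate consequence of the nestedness property''.
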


The proof of Theorem~\ref{thm:H-igafem} is split over Section~\ref{sec:H-fem mesh verification}--\ref{sec:equivalent classes}, and it relies mostly on the properties that we have already introduced in Section~\ref{subsec:hb}.
Sections~\ref{sec:H-fem mesh verification}--\ref{sec:H-fem space verification} respectively focus on the verification of the mesh properties \eqref{M:shape}--\eqref{M:locuni},  the refinement properties~\eqref{R:childs}--\eqref{R:overlay}, and the space properties \eqref{S:nestedness}--\eqref{S:proj} and \eqref{S:inverse}--\eqref{S:grad}, while Section~\ref{sec:equivalent classes} provides the equivalence~\eqref{eq:admissible equivalent} of the approximation classes.

\begin{remark}
While \eqref{eq:admissible equivalent} states that the approximation class is independent of the admissibility class of the mesh, it clearly depends on the degree ${\bf p}$ of the space $\mathbb{S}$. An interesting question arises: does the approximation class depend on the continuity of the splines in $\mathbb{S}$? In \cite{BN10} it is proved that the approximation classes are equivalent for $C^0$ finite elements and discontinuous Galerkin methods. We will show in a numerical test in Section~\ref{sec:numerical igafem} that the same does not hold true for high continuity splines if elements are refined by bisection: for certain functions, the order of the approximation class to which they belong decreases when we increase the continuity.
\end{remark}

\begin{remark}\label{rem:PCG-HB-FEM}
We also mention that the works \cite{bracco2019bpx,HMS19} have recently proposed local multilevel preconditioners for the stiffness matrix of symmetric problems which lead to uniformly bounded condition numbers on admissible hierarchical meshes, see also Section~\ref{sec:numerical igafem} for some details. 
An important consequence is that the corresponding PCG solver is uniformly contractive.
It has recently been proved in~\cite{ghps20} that such a contraction is the key to prove that an adaptive algorithm which steers mesh refinement and an inexact PCG solver leads to optimal convergence not only with respect to the number of elements but also with respect to the overall computational cost (i.e., computational time).
\end{remark}

%%%%%%%%%%%%%%%%%%%%%%%%%%%%%%%%%%%%%%%%%%%%%%%%%%%%%%%%%%%%%%%%%%%%%%%%%%%%%%%%%%%%%%%%%%%%%%%%%%%%%%%%%%%%%% 
\subsubsection{Mesh properties}
\label{sec:H-fem mesh verification}

Shape regularity \eqref{M:shape} is trivially satisfied in the parametric domain, since each refined element is uniformly bisected in each direction. 
Due to the regularity of the parametrization ${\bf F}$ of Section~\ref{sec:parametrization_assumptions}, the property transfers to the physical domain. 

Local quasi-uniformity \eqref{M:locuni} in the parametric domain  follows from Corollary~\ref{prop:lqiHB-adjacent}. 
Again, the regularity of the parametrization guarantees the property also in the physical domain.

%%%%%%%%%%%%%%%%%%%%%%%%%%%%%%%%%%%%%%%%%%%%%%%%%%%%%%%%%%%%%%%%%%%%%%%%%%%%%%%%%%%%%%%%%%%%%%%%%%%%%%%%%%%%%% 
\subsubsection{Refinement properties}
\label{sec:H-fem refinement verification}

The child estimate \eqref{R:childs} is trivially satisfied with $\const{child}=2^\dph$, since each refined element in the parametric domain is uniformly bisected in each direction. 
The closure estimate \eqref{R:closure} is just the assertion of Proposition~\ref{prop:hierarchical lincomp}.
%in the spirit of \cite{bdd04} and \cite{stevenson07} was proved in \cite[Theorem~13]{bgmp16} and \cite[Theorem~3.1.12]{morgenstern17} for ${\cal T}$- and $\HH$-admissible refinement algorithms, respectively. Note that the case $\mu=2$ for HB-splines was also addressed in \cite[Section 5.4]{ghp17}. 

According to \cite[Section~2.2]{bgmp16} (in the case of $\TT$-admissible meshes) and \cite[Section~3.1.4]{morgenstern17} (in the case of $\HH$-admissible),  the overlay 
\begin{align*}
\hat \hmesh_\fine:=&\set{\hat Q\in\hat\hmesh_\coarse}{\exists \hat Q'\in \hat\hmesh_\meshidx\text{ with }\hat Q\subseteq \hat Q'}\\
& \cup \set{\hat Q'\in\hat\hmesh_\meshidx}{\exists \hat Q\in \hat\hmesh_\coarse\text{ with }\hat Q'\subseteq \hat Q}
\end{align*}
of two admissible meshes $\hat\QQ_\coarse,\hat\QQ_\meshidx$ in the parametric domain is again admissible (of the same class).
Obviously, this property immediately transfers to the physical domain.
For $\HH$-admissible meshes of class $\mu=2$, this result is also found in \cite[Section~5.5]{ghp17}.
Clearly, the resulting mesh $\QQ_\fine$ in the physical domain satisfies the overlay property~\eqref{R:overlay}.

%%%%%%%%%%%%%%%%%%%%%%%%%%%%%%%%%%%%%%%%%%%%%%%%%%%%%%%%%%%%%%%%%%%%%%%%%%%%%%%%%%%%%%%%%%%%%%%%%%%%%%%%%%%%%% 
\subsubsection{Space properties}
\label{sec:H-fem space verification}

%%%%%%%%%%%%%%%%%%%%%%%%%%%%%%%%%%%%%%%%%%%%%%%%%%%%%%%%%%%%%%%%%%%%%%%%%%%%%%%%%%%%%%%%%%%%%%%%%%%%%%%%%%%%%% 
\paragraph{Nestedness}
Nestedness \eqref{S:nestedness} follows immediately from Proposition~\ref{prop:hb properties}. 
The inverse inequality \eqref{S:inverse} in the parametric domain follows easily via standard scaling arguments, since each hierarchical spline is a polynomial of fixed degree $\mathbf{p}$ on each mesh element, see Proposition~\ref{prop:hb properties}~(iv). 
Due to the regularity of the parametrization ${\bf F}$ of Section~\ref{sec:parametrization_assumptions}, this property transfers to the physical domain.

%%%%%%%%%%%%%%%%%%%%%%%%%%%%%%%%%%%%%%%%%%%%%%%%%%%%%%%%%%%%%%%%%%%%%%%%%%%%%%%%%%%%%%%%%%%%%%%%%%%%%%%%%%%%%% 
\smallskip\paragraph{Local domain of definition}
We start with an auxiliary result about element-patches.

%% \begin{lemma}
%% Let $Q, Q' \in \QQ$. Then it holds that
%% \begin{enumerate}
%% \item[\rm (i)] $Q \in \Pi^k(Q') \iff Q' \in \Pi^k(Q)$ for all $k \in \N$. \\
%% \item[\rm (ii)] $\Pi^{k_1 + k_2}(Q) = \Pi^{k_1}(\Pi^{k_2}(Q))$ for all $k_1, k_2 \in \N$. \\
%% \end{enumerate}
%% \end{lemma}
\begin{lemma} \label{lemma:patches}
Let $k_1, k_2 \in \N_0$, and let $Q, Q' \in \QQ$ be such that $Q' \in \QQ \setminus \Pi^{k_1 + k_2}(Q)$. Then,  $\Pi^{k_1}(Q') \subseteq \QQ \setminus \Pi^{k_2}(Q)$.
\end{lemma}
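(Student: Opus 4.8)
The claim is a purely combinatorial statement about the element-patches $\pi^q$ and $\Pi^q$ defined in \eqref{eq:patch1}--\eqref{eq:patch2}. The plan is to prove the contrapositive, which seems the cleanest route: I would assume that $\Pi^{k_1}(Q')\not\subseteq \QQ\setminus\Pi^{k_2}(Q)$, i.e.\ that some element $\widetilde Q\in\Pi^{k_1}(Q')$ also lies in $\Pi^{k_2}(Q)$, and then derive that $Q'\in\Pi^{k_1+k_2}(Q)$, contradicting the hypothesis $Q'\in\QQ\setminus\Pi^{k_1+k_2}(Q)$. The key mechanism is that patches are nested and that the patch operator composes: concretely, I expect to show that $\pi^{a}(\pi^{b}(\omega))=\pi^{a+b}(\omega)$ (or at least the inclusion $\pi^a(\pi^b(\omega))\subseteq\pi^{a+b}(\omega)$ that I actually need), which follows directly by induction on $a$ from the inductive definition \eqref{eq:patch1}.

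First I would record the elementary monotonicity and composition facts. From \eqref{eq:patch1}, if $\omega_1\subseteq\omega_2$ then $\pi^q(\omega_1)\subseteq\pi^q(\omega_2)$, and $\pi^q(\omega)\subseteq\pi^{q+1}(\omega)$; moreover $\pi^{a+b}(\omega)=\pi^a(\pi^b(\omega))$ by a one-line induction unwinding the definition. Translating to the element sets via $\Pi^q(\omega)=\set{Q\in\QQ}{Q\subseteq\pi^q(\omega)}$ and $\pi^q(\SS)=\pi^q(\bigcup\SS)$, the statement $\widetilde Q\in\Pi^{k_1}(Q')$ means $\widetilde Q\subseteq\pi^{k_1}(Q')$, and $\widetilde Q\in\Pi^{k_2}(Q)$ means $\widetilde Q\subseteq\pi^{k_2}(Q)$.

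The heart of the argument is then the following chain. Since $\widetilde Q\in\Pi^{k_1}(Q')$, the element $Q'$ must itself be close to $\widetilde Q$; more precisely, $\overline{Q'}\cap\pi^{k_1-1}(Q')\neq\emptyset$ forces, by peeling off one layer at a time, that $Q'\subseteq\pi^{k_1}(\{\widetilde Q\})$, i.e.\ $Q'\in\Pi^{k_1}(\widetilde Q)$. Combining with $\widetilde Q\subseteq\pi^{k_2}(Q)$ and the composition/monotonicity facts yields
\[
Q'\subseteq \pi^{k_1}(\widetilde Q)\subseteq \pi^{k_1}\big(\pi^{k_2}(Q)\big)=\pi^{k_1+k_2}(Q),
\]
so that $Q'\in\Pi^{k_1+k_2}(Q)$, which is the desired contradiction. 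The one subtlety worth isolating is the symmetric ``swap'' step showing $\widetilde Q\in\Pi^{k_1}(Q')\Rightarrow Q'\in\Pi^{k_1}(\widetilde Q)$; this is where I would spend the most care, since it uses that the patch relation of order $k_1$ between two elements is symmetric (both statements are equivalent to the two elements being connectable by a chain of at most $k_1$ neighboring elements, which in turn rests on \eqref{M:shape}--\eqref{M:locuni} only through the fact that touching elements are mutual neighbors). Once that symmetry is in hand, the composition inequality closes the proof immediately. I expect this swap-symmetry of the element-patch relation to be the main obstacle, whereas the monotonicity and composition identities for $\pi^q$ are routine inductions on the layer index.
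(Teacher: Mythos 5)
Your proposal is correct and follows essentially the same route as the paper's proof: argue by contradiction, use the symmetry of the element-patch relation (the paper's ``$Q' \in \Pi^{k_1}(Q'')$'' step, your swap step), and conclude via the composition identity $\Pi^{k+k'}(Q)=\Pi^{k}(\Pi^{k'}(Q))$. You merely spell out the induction behind the composition identity and the chain characterization behind the symmetry, which the paper takes as immediate from the definition \eqref{eq:patch1}--\eqref{eq:patch2}.
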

\begin{proof}
By contradiction, let us assume that there exists $Q'' \in \Pi^{k_1}(Q')$ such that $Q'' \in \Pi^{k_2}(Q)$. By the definition of element-patches, it is clear that $Q' \in \Pi^{k_1}(Q'')$. Moreover, for any $k, k' \in \N_0$, it holds that $\Pi^{k + k'}(Q) = \Pi^{k}(\Pi^{k'}(Q))$, and therefore $Q' \in \Pi^{k_1+k_2}(Q)$, which contradicts the hypothesis. \hfill$\square$
\end{proof}

It suffices to prove \eqref{S:local} in the parametric domain. Let $\hat\QQ_\coarse \in \hat\Q$ and $\hat\QQ_\fine \in \refine(\hat\QQ_\coarse)$, and let us recall from \eqref{eq:qi-indices2}  the set ${\cal I}_{\hat{\QQ}_\fine}^{\ell,\rm new}$ of indices associated to new functions of level $\ell$ as well as the related sets ${\cal \hat R_\fine}, \hat \Omega_{\cal \hat R_\fine}$, and $\hat \Omega_{\cal \hat Q_\coarse}$ from \eqref{eq:Rfine}. 
We now introduce the subdomain formed by the support of their mother functions, namely 
\begin{align*}
\hat \Omega_{\fine}^{\rm new} := \bigcup \left\{ \supp(\mot(\hat{T}_{\fine,{\bf i},{\bf p}}^\ell)) : {\bf i} \in {\cal I}_{\hat{\QQ}_\fine}^{\ell,\rm new} , \right. \\
 \ell = 0, \ldots, N_\fine -1 \Big\},
\end{align*}
with $\hat \Omega_{\cal \hat R_\fine} \subseteq \hat \Omega^{\rm new}_{\fine}$.

We first show that there exists $q_1 \in \mathbb{N}$ depending on the degree $\mathbf{p}$ and the admissibility class $\mu$ such that $\hat \Omega_{\cal \hat R_\fine} \subseteq \hat \Omega^{\rm new}_{\fine} \subseteq \pi_\coarse^{q_1}(\widehat\QQ_\coarse\setminus\widehat\QQ_\fine)$: 
Note that any function $\hat{T}_{\fine,{\bf i},{\bf p}}^\ell$ as in the definition of $\hat \Omega_{\fine}^{\rm new}$ was activated during refinement, hence the support of its mother function must intersect an element in the refined region $\hat Q'\in\hat \QQ_\coarse \setminus \hat \QQ_\fine$ with level $\ell' < \ell$. Then, any element $\elemp'' \in \hat \QQ_\coarse \cap \hat \QQ_\fine$ with $\hat Q''\cap\supp(\mot(\hat{T}_{\fine,{\bf i},{\bf p}}^\ell))\neq\emptyset$ has obviously level $\ell'' > \ell$.
The fact that $\hat \QQ_\coarse \in \hat\Q$ and Proposition~\ref{prop:lqiHB-adjacent-support} yield that $\ell'' \le \ell' + \mu -1$. 
We conclude that all elements in $\hat \QQ_\coarse \cap \hat \QQ_\fine $ intersecting $\supp(\hat{T}_{\fine,{\bf i},{\bf p}}^\ell)$ have comparable level, which gives the desired result.

This implies that $\hat Q \subseteq \overline{\hat \Omega \setminus \hat \Omega_{\cal \hat R_\fine}} = \hat \Omega_{\cal \hat Q_\coarse}$ for any $\hat Q \in \hat\QQ_\coarse \setminus \Pi_\coarse^{q_1}(\widehat\QQ_\coarse\setminus\widehat\QQ_\fine)$, and by Corollary~\ref{corol:characterization} (which can be proved along the same lines for the current case of homogeneous Dirichlet boundary conditions, see also~\eqref{eq:J for H-igafem} below), \eqref{S:local} holds with $\q{loc} = q_1$ and $\q{proj} = 0$. In fact, we see from Lemma~\ref{lemma:patches} that property $\eqref{S:local}$ holds for any given $\q{proj} \in \N$, with $\q{loc} = \q{proj} + q_1$.

\smallskip\paragraph{Scott--Zhang-type operator}
Due to the regularity of the parametrization ${\bf F}$ of Section~\ref{sec:parametrization_assumptions}, it is sufficient to provide for all $\hat\QQ_\coarse\in\hat\Q$ an operator $\hat J_\coarse:H_0^1(\widehat\Omega)\to \hat{\mathbb{S}}_\coarse$ with  the properties \eqref{S:proj}, \eqref{S:app}--\eqref{S:grad} in the parametric domain. 
We define this operator similarly as $\projH{\mathbf{p}}$ of Section~\ref{sec:hierarchical interpolation}, but now have to take into account the homogeneous boundary conditions 
\begin{align}\label{eq:J for H-igafem}
\widehat J_\coarse:\,&H_0^1(\widehat\Omega)\to \widehat{\mathbb{S}}_\coarse,\quad
 \hat v\mapsto \sum_{\ell=0}^{N_\coarse-1} \sum_{{\bf i}\in\widetilde{\cal I}_\coarse^\ell}
\hat\lambda_{{\bf i},{\bf p}}^\ell(\hat v) \hat{T}_{\coarsecomma {\bf i},{\bf p}}^\ell,
\end{align}
where 
\begin{align*}
\widetilde{\cal I}_\coarse^\ell := \left\{ {\bf i}: 
\hat{B}_{{\bf i},{\bf p}}^\ell\in \hat {\cal B}^\ell \cap\hat{\cal H}_{\bf p}(\hat\QQ_\coarse,{\bf T}^0)\cap H_0^1(\widehat\Omega)\right\},
\end{align*}
$\hat{B}_{{\bf i},{\bf p}}^\ell$ is the mother B-spline of the THB-spline $\hat{T}_{\coarsecomma {\bf i},{\bf p}}^\ell$
(see \eqref{eq:mother}), and $\hat\lambda_{{\bf i},{\bf p}}^\ell$ is the corresponding dual functional from Section~\ref{sec:qi-2d}.

We have seen in Proposition~\ref{prop:Sext-star} that $\sextTHB{\hat{Q}}$ is connected and the number of contained elements is uniformly bounded.
In particular, this yields the existence of a uniform constant $q_2\in\N$ such that 
for any element $\hat Q\in\hat\QQ_\coarse$ of level $\ell$,
\begin{align}\label{eq:trunc in patch}
\sextTHB{\hat{Q}}\subseteq \pi_\coarse^{q_2}(\hat Q),
\end{align}
With Corollary~\ref{cor:local projection}, this immediately gives \eqref{S:proj}.

Moreover, the local $L^2$-stability of Proposition~\ref{prop:hqi} is also valid for $\hat J_\coarse$, see \cite{buffa2019}. 
Together with the local projection property  \eqref{S:proj} and  the inverse inequality \eqref{S:inverse}, the Poincar\'e (for elements away from the boundary) as well as the Friedrichs inequality (for elements close to the boundary) readily imply for all $\hat v\in H_0^1(\widehat \Omega)$ and $\hat Q\in\hat\QQ_\coarse$ that
\begin{align*}
\norm{(1-\hat J_\coarse)\hat v}{L^2(\hat Q)}&\lesssim |\hat Q|^{1/\dph}\,\norm{\hat v}{H^1(\sextTHB{\hat{Q}})}\\
\norm{\nabla \hat J_\coarse \hat v}{L^2(\hat Q)}&\lesssim \norm{\hat v}{H^1(\sextTHB{\hat{Q}})},
\end{align*}
see \cite{buffa2019} or \cite[Section~5.10]{ghp17} for details.
We conclude  \eqref{S:app}--\eqref{S:grad} with $q_{\rm sz}=q_2$.

%%%%%%%%%%%%%%%%%%%%%%%%%%%%%%%%%%%%%%%%%%%%%%%%%%%%%%%%%%%%%%%%%%%%%%%%%%%%%%%%%%%%%%%%%%%%%%%%%%%%%%%%%%%%%% 
\subsubsection{Equivalence of approximation classes}
\label{sec:equivalent classes}

The assertion~\eqref{eq:admissible equivalent} follows easily from the closure estimate \eqref{R:closure} and the fact that the minimal total error decreases when the underlying mesh is refined, which itself is an immediate consequence of the nestedness property in Proposition~\ref{prop:hb properties}(iii).
In the following, we elaborate ideas from \cite[Appendix~C]{dfgp19}, where a similar assertion on triangular meshes is proved:
Since $\Q(N)\subseteq\Q^{\rm H}(N)$ and hence $\const{apx}^{\rm tot,H}(s)\le \const{apx}^{\rm tot}(s)$, we only have to prove the second inequality in \eqref{eq:admissible equivalent}. 
For any given mesh $\QQ_\coarse$, we abbreviate the considered error quantity
\begin{align*}
\varrho := \inf_{V_\coarse\in\mathbb{S}_\coarse}\big(\norm{u-V_\coarse}{H^1(\Omega)}+\osc_\coarse(V_\coarse)\big).
\end{align*}
Clearly, it holds that
\begin{align*}
 \varrho_\fine\le \varrho \quad\text{for all }\QQ_{\fine}\in\refine(\QQ_\coarse).
\end{align*}
Let $N\in\N_0$ be arbitrary and $\QQ_\star\in\Q^{\rm H}(N)$ with $\varrho_\star = \min_{\QQ\in\Q^{\rm H}(N)} \varrho$.
The mesh $\QQ_\star$ results from the initial mesh $\QQ_0$ via bisecting a sequence of marked elements $(\MM_j)_{j=0}^{k-1}$. 
We define a sequence of associated admissible meshes via $\QQ_j:=\refine(\QQ_{j-1},\MM_{j-1}\cap\QQ_{j-1})$ for $j=1,\dots,k$, and we define $\overline\QQ_\star:=\QQ_k$, usually called the \emph{admissible closure} of $\QQ_\star$.
Indeed, $\overline\QQ_\star$ is finer than $\QQ_\star$ and hence $\#\QQ_\coarse\le \#\overline\QQ_\star$.
The closure estimate~\eqref{R:closure} shows that 
\begin{align*}
 \# \overline\QQ_\star -\#\QQ_0 \lesssim \sum_{j=0}^{k-1} \# (\MM_j\cap\QQ_j) 
 &\le\sum_{j=0}^{k-1} \# \MM_j 
 \\
 &\lesssim \# \QQ_\star - \#\QQ_0.
\end{align*}
For $\QQ_\star\neq\QQ_0$, this implies that $ \# \overline\QQ_\star\simeq \# \QQ_\star$ and thus $\#\overline\QQ_\star \le C N$ for some uniform constant $C>0$.
It holds that 
\begin{align*}
 \min_{\QQ\in\Q(CN)} \big((CN)^s \varrho\big) \le (CN)^s \overline\varrho_\star \le C^s N^s \varrho_\star \le C^s \const{apx}^{\rm tot,H}(s).
\end{align*}
Finally, elementary estimation yields for arbitrary $M\in\N_0$ and $N:=\lfloor M/C\rfloor$ that 
\begin{align*}
\min_{\QQ\in\Q(M)}(M^s\varrho) \lesssim \min_{\QQ\in\Q(CN)} \big((CN)^s \varrho\big) \lesssim \const{apx}^{\rm tot, H}(s). 
\end{align*}
Taking the supremum over all $M\in\N_0$, we conclude the proof of \eqref{eq:admissible equivalent}.

%%%%%%%%%%%%%%%%%%%%%%%%%%%%%%%%%%%%%%%%%%%%%%%%%%%%%%%%%%%%%%%%%%%%%%%%%%%%%%%%%%%%%%%%%%%%%%%%%%%%%%%%%%%%%% 
\subsubsection{Extension to multi-patch domains}\label{sec:H-multipatches}

Let now $\Omega$ be a multi-patch domain as in Section~\ref{sec:multi-patch}.
For each $m=1,\dots,M$, let $\mathbf{p}_m$ be a vector of positive polynomial degrees and $\mathbf{\kv}^0_m$ be a  multivariate open knot vector on $\widehat\Omega=(0,1)^\dph$ with induced initial mesh $\widehat\QQ_{0,m}:=\hat\QQ^0_m$. 
We assume that $\hat{\mathbb{S}}_{{\bf p}_m} (\mathbf{\kv}^0_m)$ and $\hat{\mathbb{S}}_{{\bf p}_{\F_m}} (\mathbf \kv_{\F_m})$ with ${\bf p}_{\F_m}$ and ${\bf T}_{\F_m}$ from the parametrization  ${\bf F}_m:\widehat\Omega\to\Omega_m$ (see Section~\ref{sec:multi-patch}) are compatible to each other as in Section~\ref{sec:IGA-basics}.
Note that $\hat{\mathbb{S}}_{{\bf p}_m} (\mathbf{\kv}^0_m)=\hat{\mathbb{S}}^{\rm H}_{{\bf p}_m}(\hat\QQ_{0,m},{\bf \kv}_m^0)$.
Moreover, we assume that $\mathbf{p}_m$ and $\mathbf{\kv}^0_m$ satisfy the compatibility condition \eqref{P:conforming-basis-space} of Section~\ref{sec:IGA-basics}.
We fix the admissibility parameter $\mu$ (see \eqref{eq:sameshes}) as well as the basis and the kind of meshes that we want to consider, i.e., ${\cal H}$-admissible or ${\cal T}$-admissible meshes, and abbreviate for each $m=1,\dots,M$ the set of all corresponding admissible meshes as $\widehat\Q_m$, see Section~\ref{sec:hierarchical refine}. 
Moreover, we abbreviate $\Q_m:=\set{\QQ_{\coarsecomma m}}{\hat \QQ_{\coarsecomma m}\in\hat\Q_m}$ with $\QQ_{\coarsecomma m}:=\set{\F_m(\hat Q)}{\hat Q\in\hat\QQ_{\coarsecomma m}}$.
We define the set of all admissible meshes $\Q$ as the set of all 
\begin{align*}
\QQ_\coarse=\bigcup_{m=1}^M \QQ_{\coarsecomma m} \text{ with } \QQ_{\coarsecomma m}\in\Q_m
\end{align*} 
 such that there are no hanging nodes on any interface $\Gamma_{m,m'}=\overline{ \Omega_m} \cap \overline {\Omega_{m'}}$ with $m\neq m'$, see also \eqref{P:conforming-mesh} of Section~\ref{sec:multi-patch}.

For $\QQ_\coarse\in\Q$, the associated ansatz space is defined as
\begin{align*}
\begin{split}
\mathbb{S}_\coarse:=\tilde{\mathbb{S}}_\coarse\cap H_0^1(\Omega),
\end{split}
\end{align*}
where the multi-patch space without boundary conditions is
\begin{align*}
\tilde{\mathbb{S}}_\coarse:=\big\{ V \in C^0(\Omega) : V|_{\Omega_m} \in {\mathbb{S}}^{\rm H}_{{\bf p}_m}(\hat\QQ_{\coarsecomma m},{\bf \kv}_m^0), \quad \\
\text{ for } m = 1, \ldots, M\big\},
\end{align*}
with the space of hierarchical splines on each patch
\begin{align*}
{\mathbb{S}}^{\rm H}_{{\bf p}_m}(\hat\QQ_{\coarsecomma m},{\bf \kv}_m^0):=\set{\hat V\circ\F_m^{-1}}{\hat V\in\hat{\mathbb{S}}^{\rm H}_{{\bf p}_m}(\hat\QQ_{\coarsecomma m},{\bf \kv}_m^0)}.
\end{align*}
To obtain bases of the space $\mathbb{S}_\coarse$, we first define  
\begin{align*}
\HH_{{\bf p}_m}(\hat\QQ_{\coarsecomma m},\mathbf{\kv}^{0}_m)&:=\set{\hat\beta\circ\F_m^{-1}}{\hat\beta\in \hat\HH_{{\bf p}_m}(\hat\QQ_{\coarsecomma m},\mathbf{\kv}^{0}_m)},\\
\TT_{{\bf p}_m}(\hat\QQ_{\coarsecomma m},\mathbf{\kv}^{0}_m)&:=\set{\hat\tau\circ\F_m^{-1}}{\hat\tau\in \hat\TT_{{\bf p}_m}(\hat\QQ_{\coarsecomma m},\mathbf{\kv}^{0}_m)}.
\end{align*}
The reference \cite[Proposition~3.1]{ghp17} shows that HB-splines restricted to any $(\dph-1)$-dimensional hyperface of the unit cube are again HB-splines. 
Hence, the assumption \eqref{P:conforming-basis-space} is also satisfied if the sets  ${\cal B}_{{\bf p}_m}({\bf \kv}_{m})$ and ${\cal B}_{{\bf p}_{m'}}({\bf \kv}_{m'})$ are replaced by the sets $\HH_{{\bf p}_m}(\hat\QQ_{\coarsecomma m},\mathbf{\kv}^{0}_m)$ and $\HH_{{\bf p}_{m'}}(\hat\QQ_{\coarsecomma m'},\mathbf{\kv}^{0}_{m'})$. 
With a similar proof, one can also show the assertion of \cite[Proposition~3.1]{ghp17} for THB-splines. 
Thus, \eqref{P:conforming-basis-space} is also valid for the sets $\TT_{{\bf p}_m}(\hat\QQ_{\coarsecomma m},\mathbf{\kv}^{0}_m)$ and $\TT_{{\bf p}_{m'}}(\hat\QQ_{\coarsecomma m'},\mathbf{\kv}^{0}_{m'})$. 
This allows to construct a basis of $\tilde{\mathbb{S}}_\coarse$ similarly as in Section~\ref{sec:IGA-basics} by gluing (T)HB-splines together at interfaces.
According to  \cite[Proposition~3.1]{ghp17}, discarding all resulting functions that are non-zero on the boundary $\partial\Omega$ gives a basis of $\mathbb{S}_\coarse$.

To obtain admissible meshes starting from the initial one, we adapt the single-patch refinement strategies from Section~\ref{sec:hierarchical refine}. 
For arbitrary $\QQ_\coarse\in\Q$ and $Q\in\QQ_{\coarsecomma m}$ with corresponding element $\hat Q:=\F_m^{-1}(Q)$ in the parametric domain, let $\NN_{\coarsecomma m}(\hat Q)\subseteq\hat\QQ_{\coarsecomma m}$ either denote the corresponding $\HH$-neighborhood in the case of $\HH$-admissible meshes or the $\TT$-neighborhood in the case of $\TT$-admissible meshes, see Section~\ref{sec:hierarchical refine}.
We define the \emph{neighbors} of $Q$ as 
\begin{align*}
&\NN_\coarse(Q):=\set{Q'\in\QQ_{\coarsecomma m}}{\hat Q' \in \NN_{\coarsecomma m}(\hat Q)}\\
&\qquad\cup \bigcup_{m'\neq m} \set{Q'\in\QQ_{\coarsecomma m'}}{{\rm dim}(\overline Q\cap \overline Q')=d-1},
\end{align*}
i.e., apart from the standard neighbors within the patch, we add the adjacent elements from other patches to avoid hanging nodes.
Then, it is easy to see that Algorithm~\ref{alg:multi-patch fem} returns an admissible mesh.
Indeed, one can show that the set of all possible refinements $\refine(\QQ_0)$ even coincides with $\Q$, see \cite[Proposition~5.4.3]{gantner17} in the case of $\HH$-admissible meshes  of class $\mu=2$.

\begin{algorithm}[!ht]\label{alg:multi refinement_fem}
\caption{\texttt{refine} (Multi-patch refinement)}
\label{alg:multi-patch fem}
\begin{algorithmic}
\Require admissible mesh $\QQ_\coarse$ and marked elements $\MM\subseteq\QQ_\coarse$
\Repeat
\State set $\displaystyle \mathcal{U} = \bigcup_{Q \in \MM} \NN_\coarse(Q)\setminus \MM$
\State set $\MM = \MM\cup \mathcal{U}$
\Until {$\mathcal{U} = \emptyset$}
\State update $\QQ_\coarse$ by replacing the elements in $\MM$ by their children\\
\Ensure refined admissible mesh $\QQ_\coarse$
\end{algorithmic}
\end{algorithm}

We stress that Theorem~\ref{thm:H-igafem} holds accordingly for the given setting.
Here, the mesh properties \eqref{M:shape}--\eqref{M:locuni} and the child estimate \eqref{R:childs} are trivially satisfied.
The closure estimate \eqref{R:closure} can be proved similarly as in the single-patch case, see \cite[Section~5.5.7]{gantner17} in the case of $\HH$-admissible meshes  of class $\mu=2$.
The overlay in \eqref{R:overlay} can be built patch-wise as in Section~\ref{sec:H-fem refinement verification}. 
Nestedness~\eqref{S:nestedness} follows from Proposition~\ref{prop:hb properties}. 
The local domain of definition property~\eqref{S:local} and the inverse inequality~\eqref{S:inverse} follow similarly as in the single-patch case.
It remains to check the Scott--Zhang type properties \eqref{S:proj} and \eqref{S:app}--\eqref{S:grad}.
To construct a suitable operator $J_\coarse:H_0^1(\Omega)\to \mathbb{S}_\coarse$, one can proceed similarly as in \eqref{eq:J for H-igafem} by additionally gluing together THB-splines at interfaces and considering the average of the dual functions at interfaces. 
Then, the required properties can be seen as in Section~\ref{sec:H-fem space verification}.
Details are left to the reader.

\begin{remark}\label{rem:second multi patch}
In principle, the requirement that there are no hanging nodes on the interface can be removed. In fact, starting from a level zero mesh without hanging nodes in the multi-patch domain, it is possible to define spline functions with $C^0$ continuity with a support that may intersect different patches as in Section~\ref{sec:IGA-basics}. Then, we can define the multi-patch spaces of next levels by uniform refinement of the whole multi-patch domain. These spaces satisfy the conditions given in the abstract setting of \cite{giannelli2014}, and the recursive algorithm for the definition of hierarchical splines can be applied to construct hierarchical multi-patch basis functions, replacing the sequence of B-spline spaces by a sequence of  multi-patch spaces with conforming meshes, see \cite{Buchegger2016} and \cite[Section~3.4]{garau2018}. Most of the definitions of Section~\ref{subsec:hb}, and in particular the neighborhoods, have a seamless extension to this setting. Although quasi-interpolants for the uniform multi-patch case have been introduced in \cite{BVSB15}, the complete adaptive theory in the non-conforming case has not been analyzed yet, and  is beyond the scope of this work.
\end{remark}

%%%%%%%%%%%%%%%%%%%%%%%%%%%%%%%%%%%%%%%%%%%%%%%%%%%%%%%%%%%%%%%%%%%%%%%%%%%%%%%%%%%%%%%%%%%%%%%%%%%%%%%%%%%%%% 
\subsubsection{Numerical experiments}\label{sec:numerical igafem}

We now apply the adaptive IGAFEM with hierarchical splines, analyzed in the previous sections, to the Poisson problem. 
In particular, in \eqref{eq:problem}-\eqref{eq:defP}, the matrix $\AA$ is the identity matrix, and $\bb$ and $c$ are zero. 
Although not directly covered by our analysis,  we  also consider non-homogeneous Dirichlet--Neumann boundary conditions for some cases. In all three numerical experiments, we set the degrees $p_1 = \ldots = p_\dph =: p$. The continuity within a patch is taken to be $C^{p-1}$ across elements, also for the elements of the coarsest mesh. All the numerical tests of this section are run with THB-splines but, as we mentioned above, the computed solution of the Galerkin problem is the same independently of the basis.

%%%%%%%%%%%%%%%%%%%%%%%%%%%%%%%%%%%%%%%%%%%%%%%%%%%%%%%%%%%%%%%%%%%%%%%%%%%%%%%%%%%%%%%%%%%%%%%%%%%%%%%%%%%%%% 
\smallskip
\paragraph{Comments on the use of HB- and THB-splines}  
In spite of having the same solution for HB-splines and THB-splines, the choice of the basis will affect the sparsity pattern and the condition number of the matrix appearing in the linear system, which can also affect the performance of the method. 

In particular, the reduced support of THB-splines always gives a lower number of nonzero entries in the matrix when compared to HB-splines, but to control this number it is important to control the interaction between coarse and fine functions, for which it is necessary to use suitable admissible meshes. We recall from Proposition~\ref{eq:former admissible} that the number of HB-splines (resp. THB-splines) with support on some fixed element of an $\HH$-admissible (resp. $\TT$-admissible) is uniformly bounded, while in general this is not the case for HB-splines on $\TT$-admissible meshes. 
The examples in \cite{giannelli2016,bgv18} show that the gain in the number of nonzero entries when using THB-splines instead of HB-splines ranges between 10\% and 50\%, with the biggest gains in ${\cal T}$-admissible meshes or non-admissible ones, and the smallest ones in ${\cal H}$-admissible meshes. 
We remark that these numbers depend on the degree $p$ and the admissibility class $\mu$, but also on the kind of refinement (edge refinement, corner refinement...) required for a good approximation of the solution, see the aforementioned references for more details. We also note that, for non-admissible meshes, the number of nonzero entries in the matrix can behave as bad as $\mathcal{O}(N_{\rm dof}^2)$, with $N_{\rm dof}$ the number of degrees of freedom. For instance, this is the case for HB-splines in the meshes of Figure~\ref{fig:T-admissible,notH}. The efficient assembly of the matrix for hierarchical splines is also an important issue, and an active topic of research, as the tensor-product techniques cannot be trivially extended to the hierarchical case. In this sense, the recent work \cite{PJM21} proposes a method based on interpolation and the use of look-up tables, which allows to reduce the complexity compared to Gaussian quadrature, for bivariate HB-splines on ${\cal H}$-admissible meshes of class $\mu=2$.

Regarding the condition number, all the numerical tests in \cite{giannelli2016,bgv18} show that in any hierarchical mesh the condition number of the mass matrix is always equal or smaller for THB-splines than for HB-splines. Concerning the stiffness matrix, although in most cases the condition number is also lower for THB-splines, the property is not valid in general, and some counterexamples have been shown in the same references. The numerical results from those papers do not show a clear behavior on how the basis and the admissibility class influence the condition number of the stiffness matrix, and as for the nonzero entries, the numbers seem to strongly depend on the kind of refinement (corner refinement, edge refinement...)

Related to the condition number, multigrid solvers and preconditioners for hierarchical splines have been introduced in \cite{Hofreither2016b}, where the subspace of each level of the preconditioner coincides with the one in the HB-splines construction algorithm, i.e., it is given by ${\rm span} \hat {\cal H}^\ell$. Local variants of the preconditioner with subspaces for each level formed by functions with support in $\hat\Omega^\ell$ or its vicinity have been analyzed in \cite{bracco2019bpx,HMS19}. In these works, it has been proved that the condition number is uniformly bounded with respect to the number of levels.
The numerical results of those papers show a better behavior of the preconditioners for THB-splines than for HB-splines. Moreover, the theoretical analysis also shows that it is necessary to use HB-splines (resp. THB-splines) on ${\cal H}$-admissible (resp. ${\cal T}$-admissible) meshes to obtain a bounded condition number independent of the number of levels, see \cite{bracco2019bpx} for details.

Nevertheless, we cannot give a clear answer about which basis is better to use. 
While THB-splines improve the sparsity pattern of the matrix, and in most cases behave better with respect to the condition number, the rectangular support of HB-splines may be easier to implement.
Still, from the comments above there is one important suggestion we can make: the admissibility type (${\cal H}$- or ${\cal T}$-admissible) should be in accordance with the chosen basis.
Recall that any $\HH$-admissible mesh is also $\TT$-admissible by definition, but not vice versa.

%%%%%%%%%%%%%%%%%%%%%%%%%%%%%%%%%%%%%%%%%%%%%%%%%%%%%%%%%%%%%%%%%%%%%%%%%%%%%%%%%%%%%%%%%%%%%%%%%%%%%%%%%%%%%% 
\smallskip
\paragraph{Edge singularities on square}
In the first numerical test, we choose a problem that was already considered in \cite{bgarau16_2,bracco2019}. The domain is given by the unit square $\Omega = (0,1)^2$, in such a way that the parametrization ${\bf F}$ is the identity. We impose homogeneous Dirichlet conditions on the boundary $\partial \Omega$, while the source function $f$ is chosen such that the exact solution is given by
\[
u(x,y) = x^{2.3} (1-x) y^{2.9} (1-y),
\]
which is singular at the edges $\{0\}\times (0,1)$ and $(0,1)\times\{0\}$. In fact, it can be shown that $u \in H^{\beta-\epsilon}(\Omega)$, with $\beta = 2.3 + 1/2 = 1 + 9/5$, for every $\epsilon > 0$. Hence, the expected convergence rate for uniform refinement is ${\cal O}(h^{9/5}) = {\cal O}(N_{\rm dof}^{-9/10})$ with respect to the mesh size $h$ and to the number of degrees of freedom $N_{\rm dof}$, respectively.

For the simulation, we consider spaces of hierarchical B-splines with degree $p \in \{2,3,4,5\}$. The initial mesh $\QQ_0$ consists of $4 \times 4$ elements, 
and Algorithm~\ref{alg:abstract algorithm} is run using the residual \textsl{a posteriori} estimator \eqref{eq:eta}. For marking we use D\"orfler's strategy \eqref{eq:Doerfler} with parameter $\theta = 0.25$ and the constant $\const{\min} = 1$. For refinement, we use Algorithm~\ref{alg:trefine} for ${\cal T}$-admissible meshes, with a value of the admissibility class $\mu = 2$.

Some meshes for the four different degrees at iteration $k = 15$ are displayed in Figure~\ref{fig:edge_singularity_meshes}. It is evident that the adaptive algorithm satisfactorily refines near the edges, specially for high degree.
\begin{figure}
\centering
\subfigure[Degree 2]{
\includegraphics[width=0.22\textwidth,trim=3cm 1cm 2cm 0cm, clip]{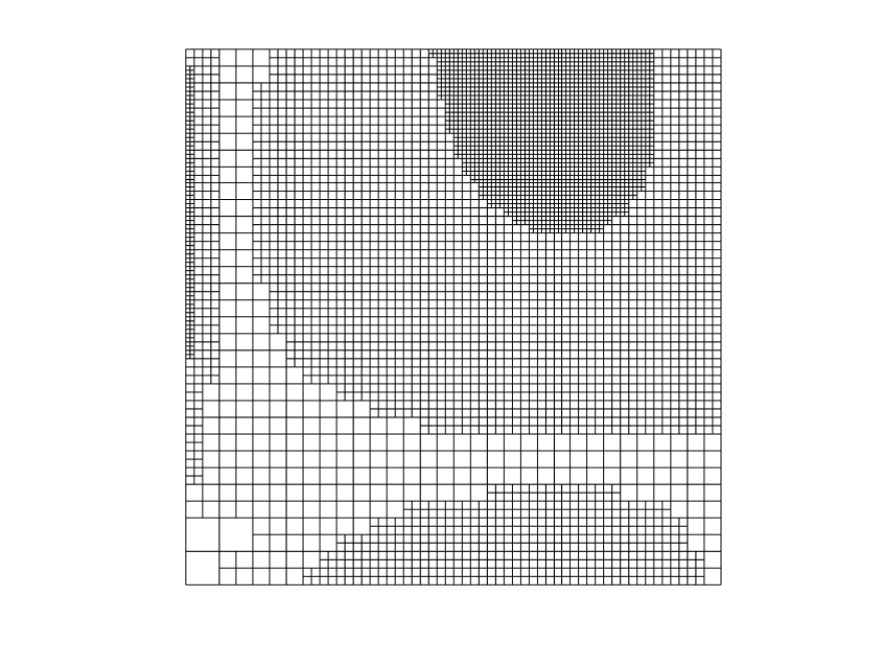}
}
\subfigure[Degree 3]{
\includegraphics[width=0.22\textwidth,trim=3cm 1cm 2cm 0cm, clip]{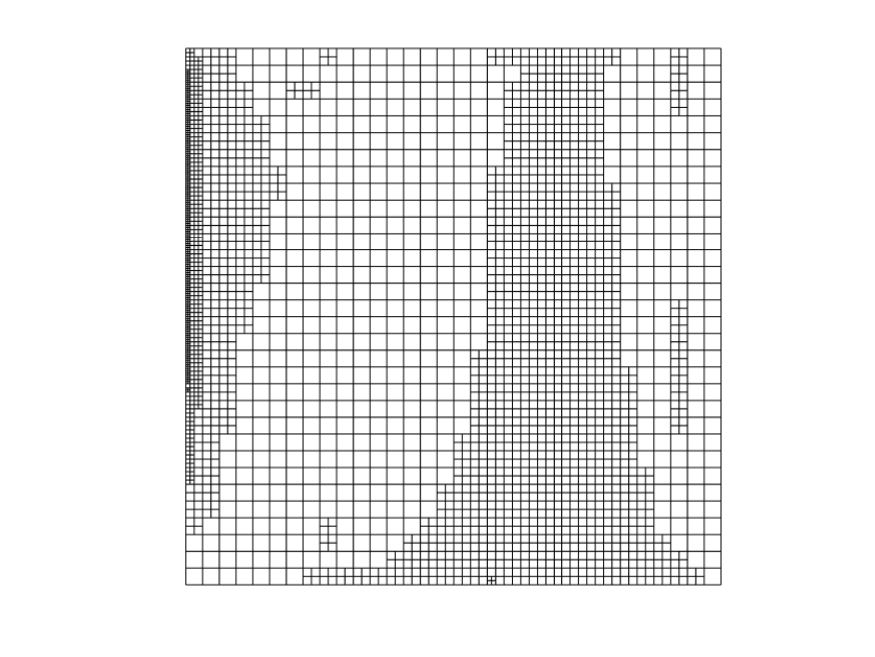}
}
\subfigure[Degree 4]{
\includegraphics[width=0.22\textwidth,trim=3cm 1cm 2cm 0cm, clip]{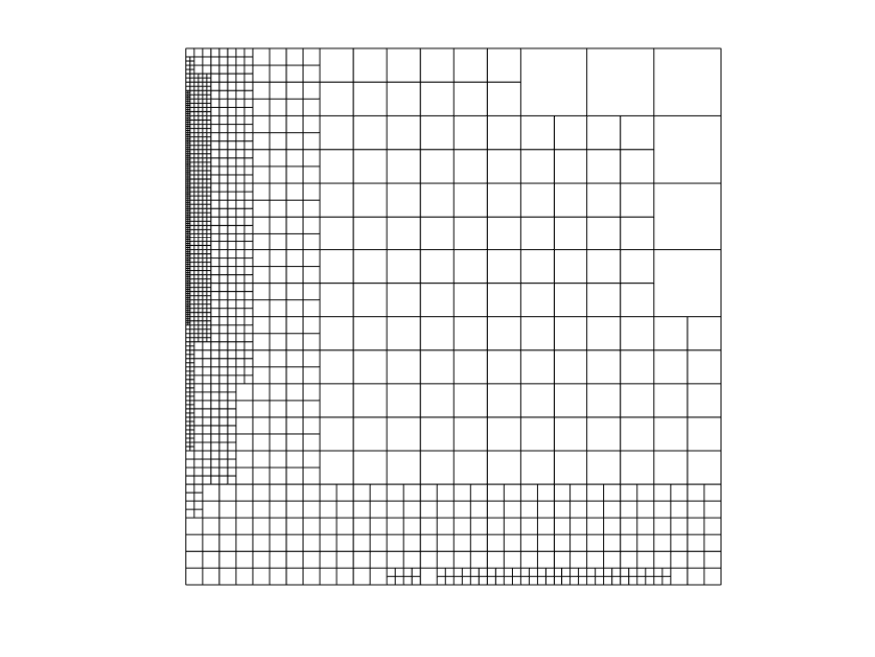}
}
\subfigure[Degree 5]{
\includegraphics[width=0.22\textwidth,trim=3cm 1cm 2cm 0cm, clip]{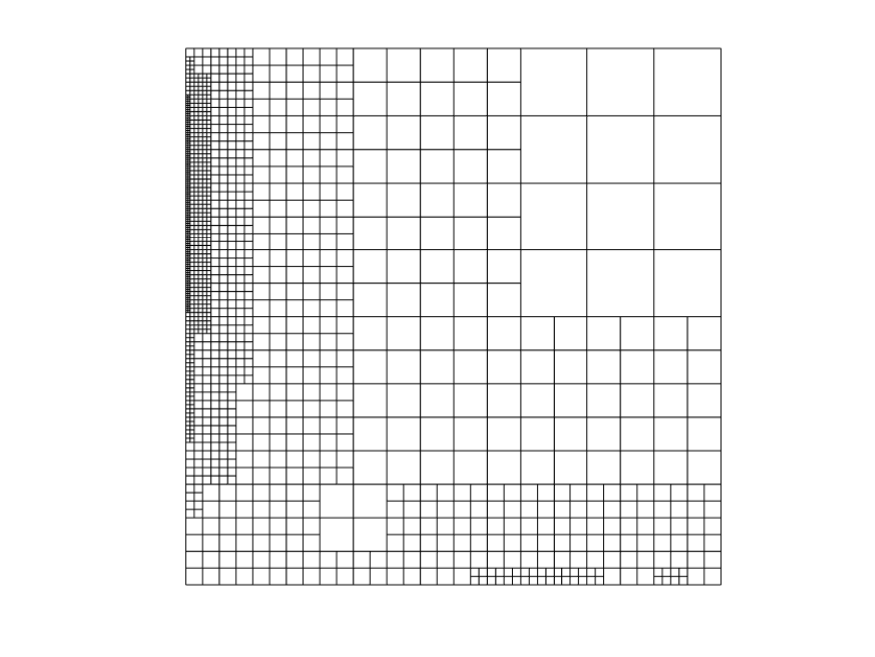}
}
\caption{Test with edge singularities: meshes obtained after 15 iterations of the adaptive algorithm for ${\cal T}$-admissible meshes and $\mu = 2$.}
\label{fig:edge_singularity_meshes}
\end{figure}

In Figure~\ref{fig:square_edge_singularity}, we show the behavior of the error in the energy norm and the estimator with respect to the number of degrees of freedom, both for the adaptive method described above and for uniform refinement. It is clearly seen that adaptivity drastically reduces the number of degrees of freedom required to achieve the same numerical error. Moreover, the error and the estimator curves always converge with the same order, as expected from the results of reliability and efficiency of the estimator.
\begin{figure}
\centering
\subfigure[Error and estimator for $p=2$]{
\includegraphics[width=0.35\textwidth]{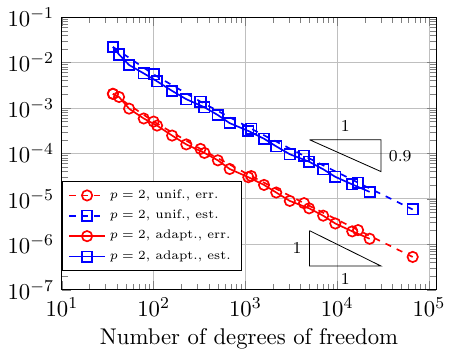}
}
\subfigure[Error and estimator for $p=3$]{
\includegraphics[width=0.35\textwidth]{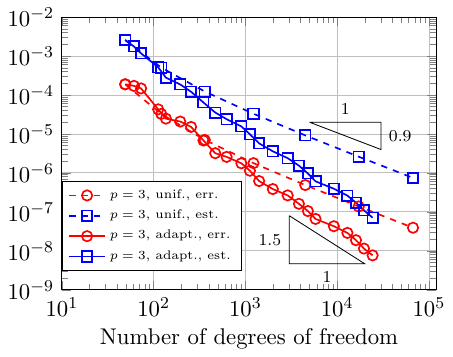}
}
\subfigure[Error and estimator for $p=4$]{
\includegraphics[width=0.35\textwidth]{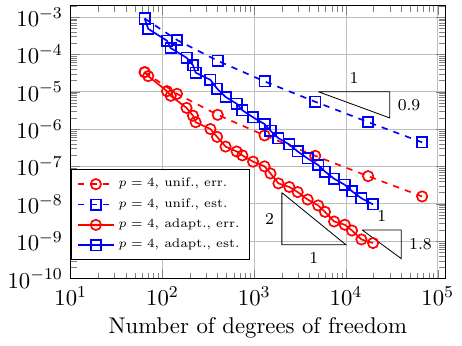}
}
\subfigure[Error and estimator for $p=5$]{
\includegraphics[width=0.35\textwidth]{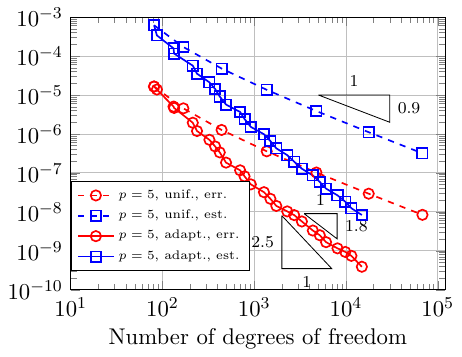}
}
\caption{Test with edge singularities: energy error $|u -U_k|_{H^1(\Omega)}$ and residual estimator for degree $p \in \{2, 3, 4, 5\}$. Comparison of uniform and adaptive refinement.}
\label{fig:square_edge_singularity}
\end{figure}

From Figure~\ref{fig:square_edge_singularity}, it can also be seen that the convergence is not equal to $p/2$ for high $p$. In fact, Figure~\ref{fig:edge_singularity_p2-5} shows the same convergence rate for degrees $p=4$ and $p=5$, which seems equal to 1.8. This behavior was analyzed with heuristic arguments in \cite[Sect.~4.6.2]{gantner17}, noting that to obtain the optimal convergence rate $s_{\rm opt} = p/2$ in the presence of edge singularities, it is necessary to consider anisotropic elements in a mesh graded towards the edges, while the bisection refinement that we consider attains at most a convergence rate equal to
\[
s = \min (2s_{\rm unif}, s_{\rm opt}),
\]
where $s_{\rm unif}$ is the convergence rate in case of uniform refinement. In this particular test, its value is $s_{\rm unif} = 0.9$, and in fact the convergence rate that we observe in Figure~\ref{fig:edge_singularity_p2-5} is twice this value.
\begin{figure}
\centering
\includegraphics[width=0.35\textwidth]{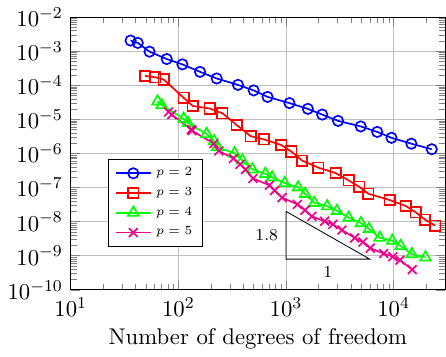}
\caption{Test with edge singularities: energy error $|u -U_k|_{H^1(\Omega)}$ for degree $p$ from 2 to 5. For high degree, the optimal convergence rate is not reached.} \label{fig:edge_singularity_p2-5}
\end{figure}

Finally, we compare the behavior of the refinement Algorithms~\ref{alg:hrefine} and~\ref{alg:trefine}, that is, for ${\cal H}$-admissible and ${\cal T}$-admissible meshes, respectively. We set the degree $p = 4$, and the admissibility class $\mu=2$. For comparison, we also include the results of refinement without ensuring admissibility, that is, refining only elements marked by the marking strategy without any addition, which we denote by $\mu = \infty$. The results presented in Figure~\ref{fig:edge_singularity_admissibility} show that the convergence order is the same in the three cases, both for the error and the estimator, as predicted by theory (for $\mu<\infty$). Moreover, in this particular case, there is small difference between the error obtained with the different admissibility types, although the use of non-admissible meshes gives slightly better results in terms of degrees of freedom.
\begin{figure}
\centering
\includegraphics[width=0.35\textwidth]{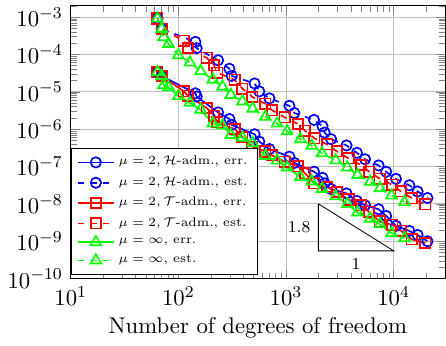}
\caption{Test with edge singularities: residual estimator and energy error $|u -U_k|_{H^1(\Omega)}$ for degree $p = 4$. Results for ${\cal H}$-admissible and ${\cal T}$-admissible meshes of class $\mu=2$, and for non-admissible meshes.} \label{fig:edge_singularity_admissibility}
\end{figure}

%%%%%%%%%%%%%%%%%%%%%%%%%%%%%%%%%%%%%%%%%%%%%%%%%%%%%%%%%%%%%%%%%%%%%%%%%%%%%%%%%%%%%%%%%%%%%%%%%%%%%%%%%%%%%% 
\smallskip
\paragraph{Corner singularity on curved L-shaped domain}
The second numerical test was presented in \cite{bgv18}. We consider the curved L-shaped domain shown in Figure~\ref{fig:curvedL-domain}, which is an affine transformation of the benchmark in \cite{BMAX}. The solution is given by
\[
u(x,y) = r^{2/3} \sin(2 \varphi / 3), 
\]
with polar coordinates $(x,y)=(r \cos(\varphi),r \sin(\varphi))$, by setting $f = 0$ and imposing non-homogeneous Dirichlet boundary conditions on $\partial \Omega$. We note that the domain is formed by three quadratic NURBS patches, and for the discretization we follow the method explained in Remark~\ref{rem:second multi patch} so that the meshes may be non-conforming on the interfaces.

We consider discrete spaces of degree $p \in \{2, 3, 4, 5\}$, with $C^{p-1}$ continuity inside each patch, and $C^0$ continuity across the interfaces. For the D\"orfler marking \eqref{eq:Doerfler}, we choose the parameter $\theta = 0.9$ and $\const{\min}$ = 1. To understand the role of the admissibility class, we consider both ${\cal H}$-admissible meshes as in Algorithm~\ref{alg:hrefine}, and ${\cal T}$-admissible meshes as in Algorithm~\ref{alg:trefine}, with the value of the admissibility class $\mu$ ranging from 2 to 4. For comparison, we also include results for non-admissible meshes, which we denote as above by $\mu = \infty$. The algorithm is run until we reach a maximum of 13 levels.
\begin{figure}
\centering
\includegraphics[width=0.4\textwidth]{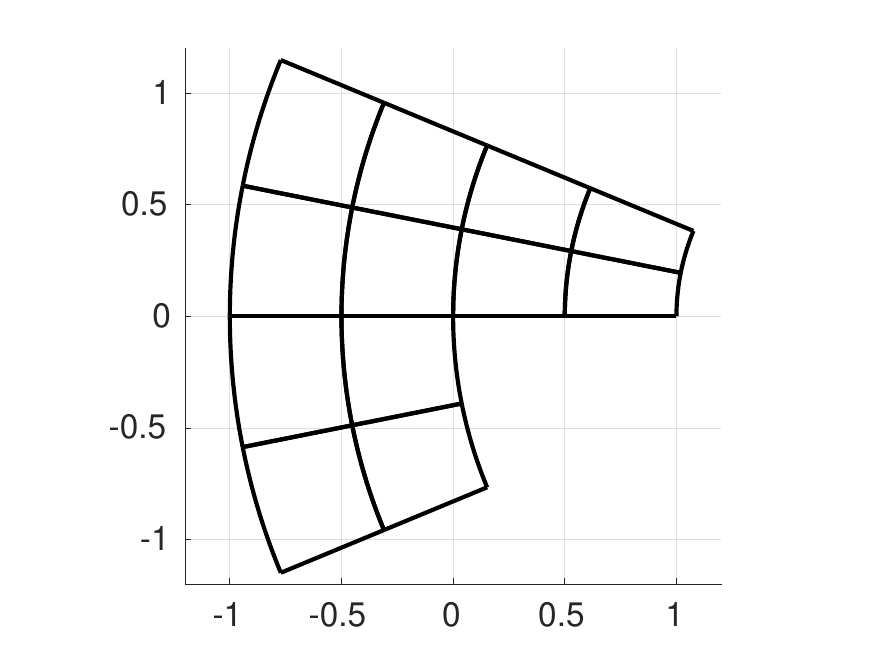}
\caption{Curved L-shaped domain: domain and initial mesh.
}
\label{fig:curvedL-domain}
\end{figure}

In Figure~\ref{fig:curvedL_adap_unif}, we show the value of the energy error and the residual estimator with respect to the number of degrees of freedom, considering degree $p=2$ with uniform refinement and with adaptive refinement for ${\cal T}$-admissible meshes of class $\mu=2$. As in the previous test, the error and the estimator converge with the same rate, which in the case of uniform refinement is equal to $1/3$, while for adaptive refinement the optimal rate of $1$ is reached.
\begin{figure}
\centering
\includegraphics[width=0.35\textwidth]{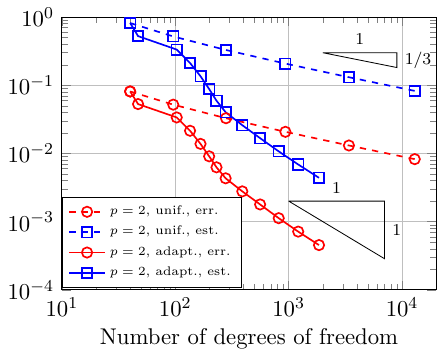}
\caption{Curved L-shaped domain: energy error and residual estimator for degree $p=2$, for uniform refinement and adaptive refinement on ${\cal T}$-admissible meshes with $\mu=2$.}\label{fig:curvedL_adap_unif}
\end{figure}

In Figure~\ref{fig:curvedL_admissibility}, we show the results of convergence of the energy error with respect to the number of degrees of freedom for the different degrees and admissibility types considered. In all the tests, non-admissible meshes show a better ratio between the error and the number of degrees of freedom than any other choice. Moreover, ${\cal T}$-admissible meshes and higher values of $\mu$ require less degrees of freedom than ${\cal H}$-admissible ones and lower values of $\mu$, respectively, to attain the same error. We note, however, that except for degree $p=2$ the asymptotic regime has not been reached. In fact, from Figures~\ref{fig:curvedL-deg2-adm} and~\ref{fig:curvedL-deg3-adm} it seems that, in the asymptotic regime, the error will be very similar for all the admissibility classes. However, ${\cal H}$-admissible meshes with low values of $\mu$ need more iterations to reach the asymptotic behavior. See also the results for the L-shaped domain in \cite{ghp17} and \cite[Section~4.6.3]{gantner17}.
\begin{figure}
\centering
\subfigure[Error for $p=2$]{
\includegraphics[width=0.35\textwidth]{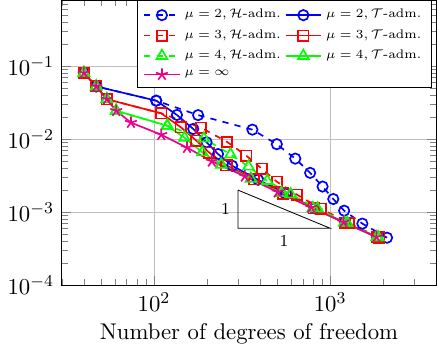}\label{fig:curvedL-deg2-adm}
}
\subfigure[Error for $p=3$]{
\includegraphics[width=0.35\textwidth]{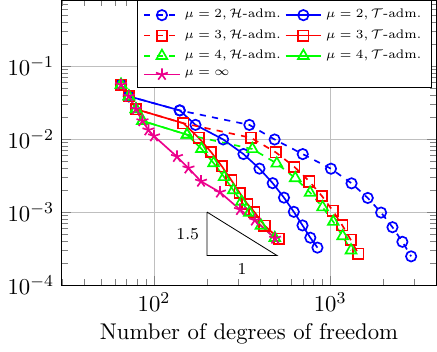} \label{fig:curvedL-deg3-adm}
}
\subfigure[Error for $p=4$]{
\includegraphics[width=0.35\textwidth]{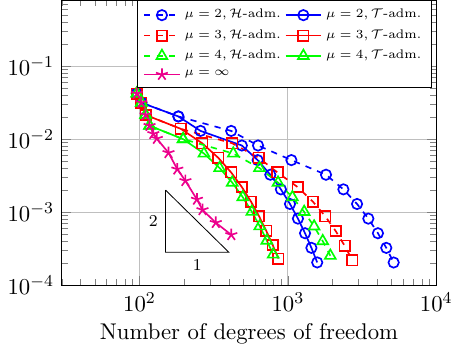}
}
\subfigure[Error for $p=5$]{
\includegraphics[width=0.35\textwidth]{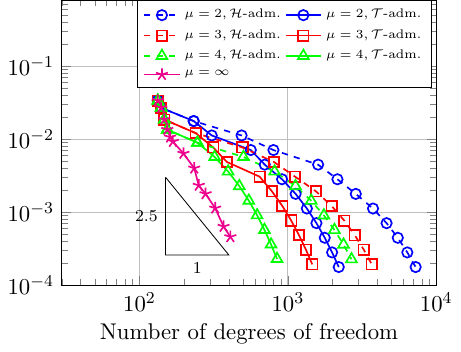}
}
\caption{Curved L-shaped domain: energy error $|u -U_k|_{H^1(\Omega)}$ for degree $p$ from 2 to 5, and for different values of the admissibility class $\mu$, both for ${\cal H}$-admissible and ${\cal T}$-admissible meshes.}
\label{fig:curvedL_admissibility}
\end{figure}

This behavior can be better understood with the help of the plots in Figure~\ref{fig:curvedL-meshes}, where we show the mesh after 8 refinement steps for degree $p=4$ and for different types of admissibility. While the estimator satisfactorily marks elements to refine the mesh towards the corner, to maintain the admissibility of the mesh, the refinement algorithm forces to refine some elements away from it. This behavior is more significant for ${\cal H}$-admissible meshes than for ${\cal T}$-admissible meshes, and also for lower values of the admissibility class $\mu$ than for higher ones.
\begin{figure}
\centering
\subfigure[$\mu=2$, ${\cal H}$-admissible]{\includegraphics[trim=35mm 10mm 35mm 10mm,clip,width=0.23\textwidth]{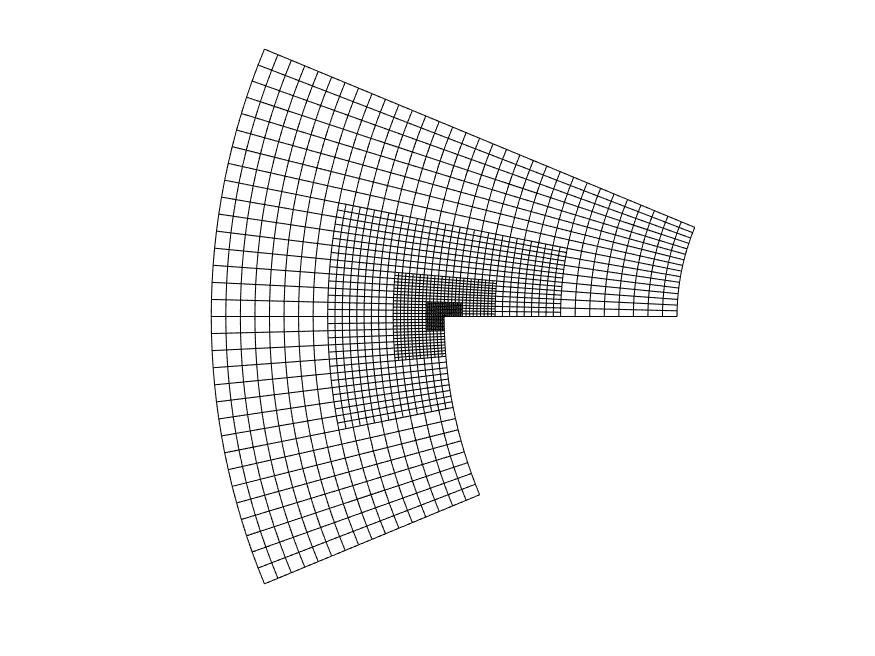}}
\subfigure[$\mu=2$, ${\cal T}$-admissible]{\includegraphics[trim=35mm 10mm 35mm 10mm,clip,width=0.23\textwidth]{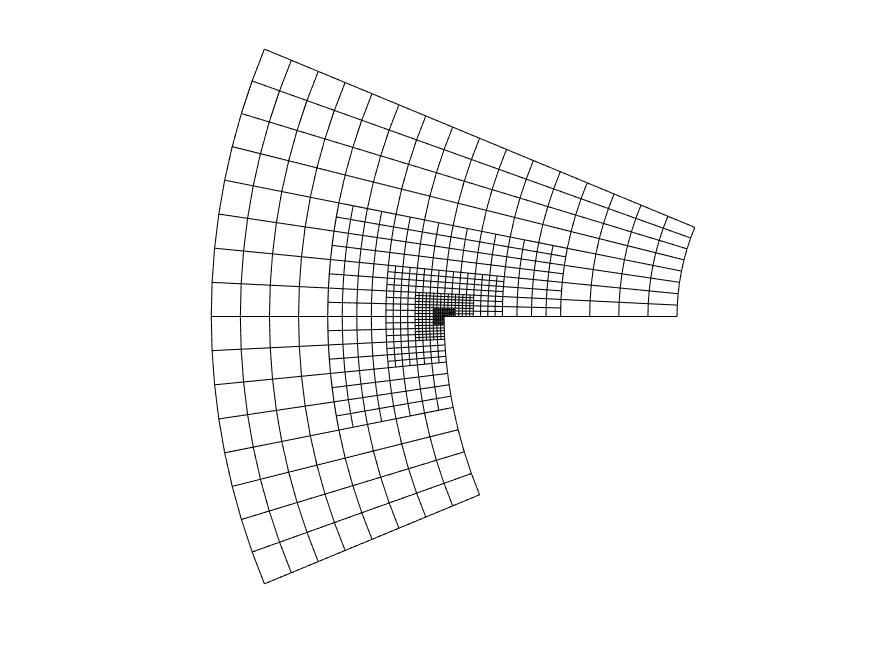}}
\subfigure[$\mu=4$, ${\cal H}$-admissible]{\includegraphics[trim=35mm 10mm 35mm 10mm,clip,width=0.23\textwidth]{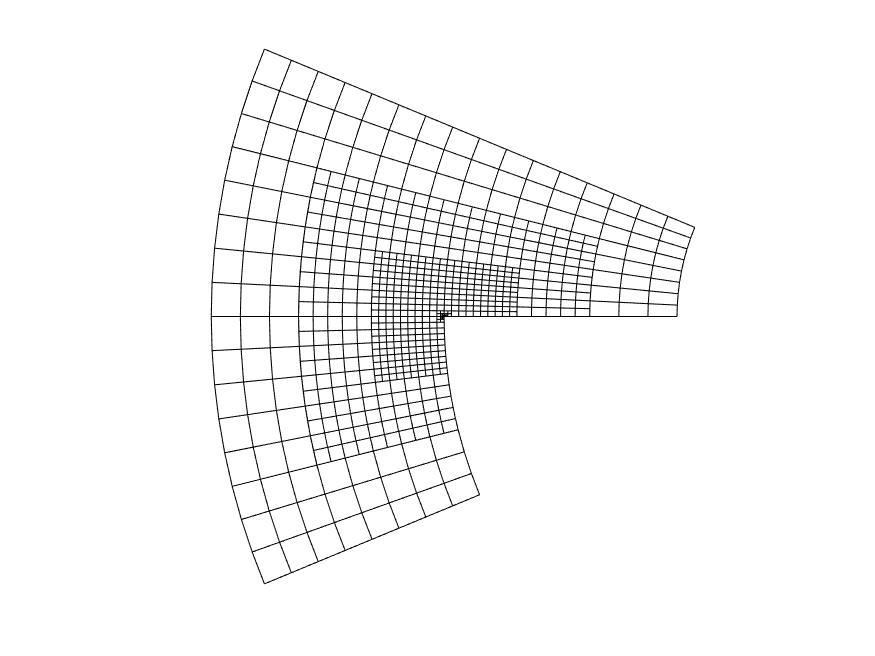}}
\subfigure[$\mu=4$, ${\cal T}$-admissible]{\includegraphics[trim=35mm 10mm 35mm 10mm,clip,width=0.23\textwidth]{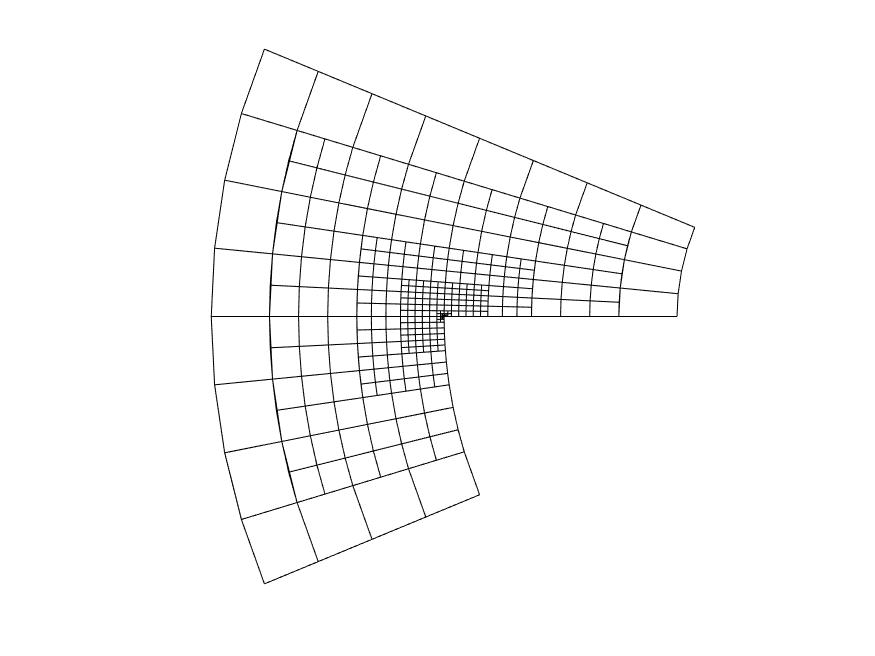}}
\caption{Curved L-shaped domain: mesh after 8 refinement steps for degree $p=4$.}
\label{fig:curvedL-meshes}
\end{figure}

%%%%%%%%%%%%%%%%%%%%%%%%%%%%%%%%%%%%%%%%%%%%%%%%%%%%%%%%%%%%%%%%%%%%%%%%%%%%%%%%%%%%%%%%%%%%%%%%%%%%%%%%%%%%%% 
\smallskip\paragraph{Test about the approximation class}
The following test shows that the approximation class depends on the continuity of the discrete spaces. 
%\textcolor{magenta}{ADD MORE TEXT. COMMENT ABOUT  Bonito-Nochetto}

The domain is the unit square $\Omega = (0,1)^2$, we set homogeneous Dirichlet boundary conditions, and the right-hand side is chosen such that the exact solution is given by
\[
u(x,y) = \left \{
\begin{array}{rl}
\displaystyle \sin^2\left(\frac{\pi(x-a)}{b-a}\right) \sin(\pi y), & \text{ if } a \le x \le b, \\
0, & \text{ elsewhere}. 
\end{array}
\right.
\]
The solution is smooth everywhere except at the vertical lines $x = a$ and $x=b$, where it is only $C^1$ and it has edge singularities. To understand how the approximation class depends on the continuity, we run the adaptive algorithm starting from a mesh $\QQ_0$ of $2 \times 2$ elements, with the D\"orfler marking \eqref{eq:Doerfler} with parameters $\theta=0.5$ and $\const{\rm min}=1$, for fixed degree $p=4$ and ${\cal T}$-admissibility with $\mu=4$, and we change the continuity using $C^1$, $C^2$, and $C^3$ hierarchical splines. We run two different tests: in the first one we choose $a=1/4$ and $b=3/4$, in such a way that the singularity lines coincide with lines of the mesh; in the second test we set $a=1/5$ and $b=4/5$, in such a way that, since we always refine by bisection, the singularity lines can never coincide with lines of the mesh. 
We note that in the second test we increased the number of quadrature points per element, to compute accurately the integrals on elements crossed by the singularity lines. 

The errors in the energy norm for the first test are shown in Figure~\ref{fig:class_approx_aligned}. 
Similar to the edge singularity case from above, we see that for high continuity splines the convergence rate is only $\OO({N_{\rm dof}^{-3/2}})$, while uniform refinement (not displayed) leads to $\OO(N_{\rm dof}^{-3/4})$. Instead, for $C^1$ splines we obtain the convergence rate $\OO(N_{\rm dof}^{-p/2})$, and the same rate is also obtained for uniform refinement (not displayed). This test shows with a simple example that the approximation classes depend on the continuity. Indeed, the results suggest that the solution belongs to the approximation class $-2$ for $C^1$ splines, while it only belongs to the approximation class $-3/2$ for $C^2$ and $C^3$ splines. This differs from the result in \cite{BN10}, which states that the approximation classes for $C^0$ piecewise polynomials and discontinuous Galerkin methods are identical. 
The corresponding proof exploits that the solution $u\in H_0^1(\Omega)$ has vanishing jumps across element boundaries. 
Generalizing the argument to smooth splines would likely require that also the jumps of certain derivatives of $u$ vanish, which is not the case for the currently considered $u$.

The reason why the function belongs to different classes is the fact that the singularity line coincides with a line of the mesh. Indeed, the results of the second test displayed in Figure~\ref{fig:class_approx_nonaligned} show the same convergence rate, equal to $\OO(N_{\rm dof}^{-3/2})$ independently of the continuity. It seems that, under the condition of refining by bisection, the same convergence rate as for the smooth solution can only be recovered if the singularity lines can be aligned with the mesh. Nevertheless, adaptive refinement at least doubles the convergence rate $\OO(N_{\rm dof}^{-3/4})$ for uniform refinement (not displayed).
\begin{figure}
\centering
\subfigure[Singularity line coinciding with the mesh, $a=1/4, b=3/4$]{\includegraphics[width=0.35\textwidth]{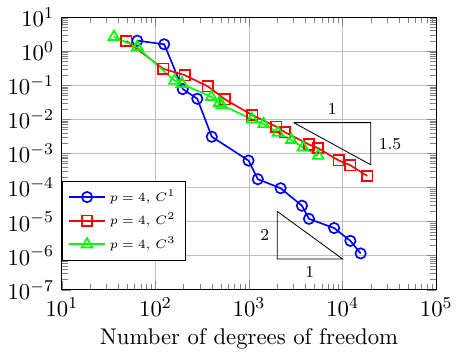} \label{fig:class_approx_aligned}}
\subfigure[Singularity line not coinciding with the mesh, $a=1/5, b=4/5$]{\includegraphics[width=0.35\textwidth]{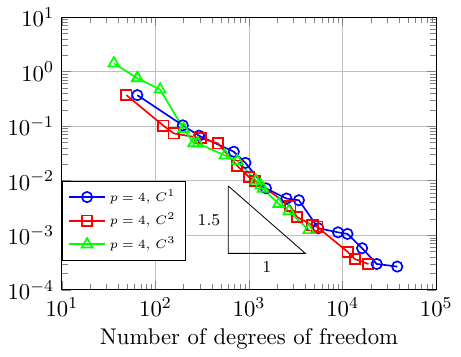} \label{fig:class_approx_nonaligned}}
\caption{Test about the approximation class: energy error $|u - U_k|_{H^1(\Omega)}$ for degree $p=4$ and ${\cal T}$-admissible meshes with $\mu=4$, with THB-splines of different continuity.}
\label{fig:approx-class}
\end{figure}

%%%%%%%%%%%%%%%%%%%%%%%%%%%%%%%%%%%%%%%%%%%%%%%%%%%%%%%%%%%%%%%%%%%%%%%%%%%%%%%%%%%%%%%%%%%%%%%%%%%%%%%%%%%%%% 
\smallskip\paragraph{Dirichlet--Neumann conditions on twisted thick ring in 3D}
The third numerical test was considered in \cite{bracco2019}. The domain $\Omega$ consists of a twisted thick ring, obtained by linear interpolation of two surfaces, where the lower one is a quarter of an annulus with inner radius equal to one and outer radius equal to two, and the upper one is the same surface rotated by 90 degrees around the $z$-axis, and translated by the vector $(0.5,0,1)$, as shown in Figure~\ref{fig:3D-domain}. We set the source term $f=0$ and impose homogeneous Dirichlet conditions everywhere, except on the upper boundary where we impose the Neumann condition $\partial u / \partial n = 1$. In this case the exact solution is not known, but we plot in Figure~\ref{fig:3D-solution} an approximate solution computed in a fine mesh and the magnitude of its gradient. It can be seen that the boundary conditions generate singularities on the edges of the upper boundary. 
\begin{figure}[ht]
\centering
\includegraphics[width=0.4\textwidth,trim=1cm 0cm 0cm 1cm, clip]{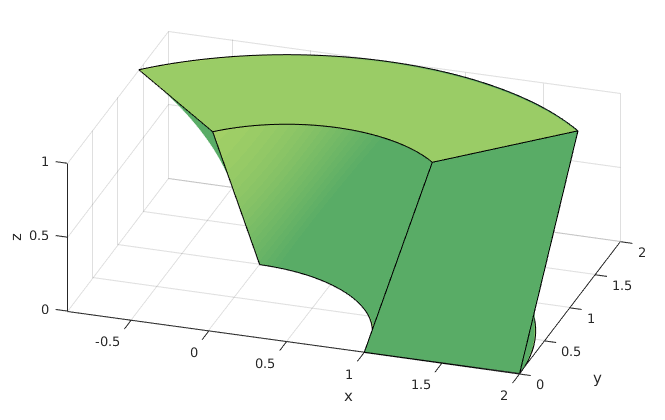}
\caption{Twisted thick ring: coordinates of the domain.} \label{fig:3D-domain}
\end{figure}

For this numerical test, and due to the presence of a Neumann condition $\partial u / \partial n = \phi_N$ on $\Gamma_N$, the weighted residual error estimator \eqref{eq:eta} is replaced by
\[
\eta_N(Q)^2 := \eta(Q)^2 + h_Q \left\| \phi_N-{\partial U}/{\partial n} \right\|^2_{L^2(\partial Q \cap \Gamma_N)},
\]
see, e.g., \cite[Section~11]{cfpp14}.
For the D\"orfler marking \eqref{eq:Doerfler}, we use the values $\theta = 0.75$ and $\const{\rm min} = 1$. Starting from an initial mesh of one single element, we run numerical tests for THB-splines of degree $p=2,3,4$, and with ${\cal T}$-admissible meshes with different admissibility classes. 

\begin{figure}[ht]
\centering
\subfigure[Value of the solution]{
\includegraphics[width=0.22\textwidth,trim=3cm 3cm 1cm 9cm, clip]{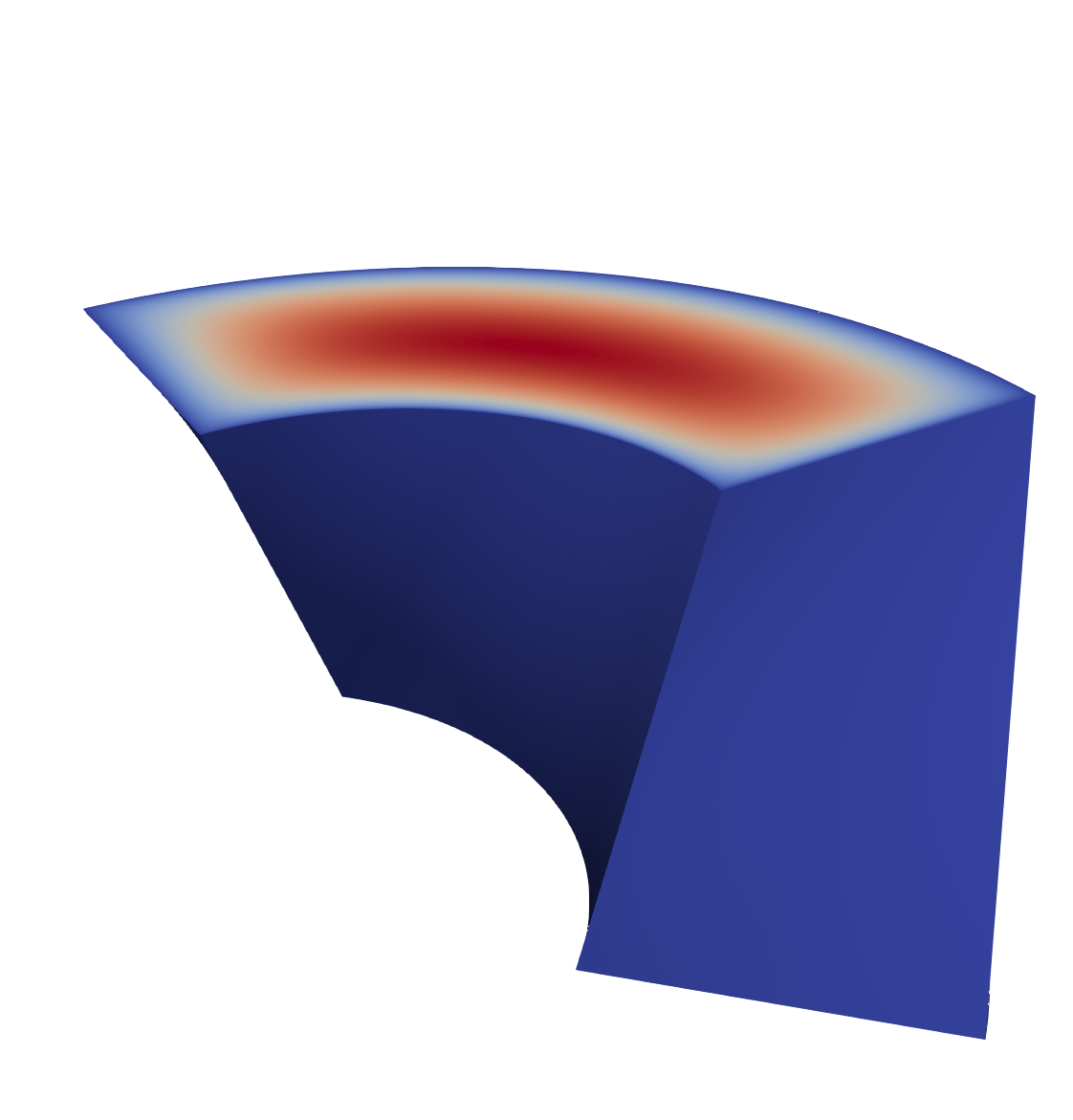}
}
\subfigure[Magnitude of the gradient]{
\includegraphics[width=0.22\textwidth,trim=3cm 3cm 1cm 9cm, clip]{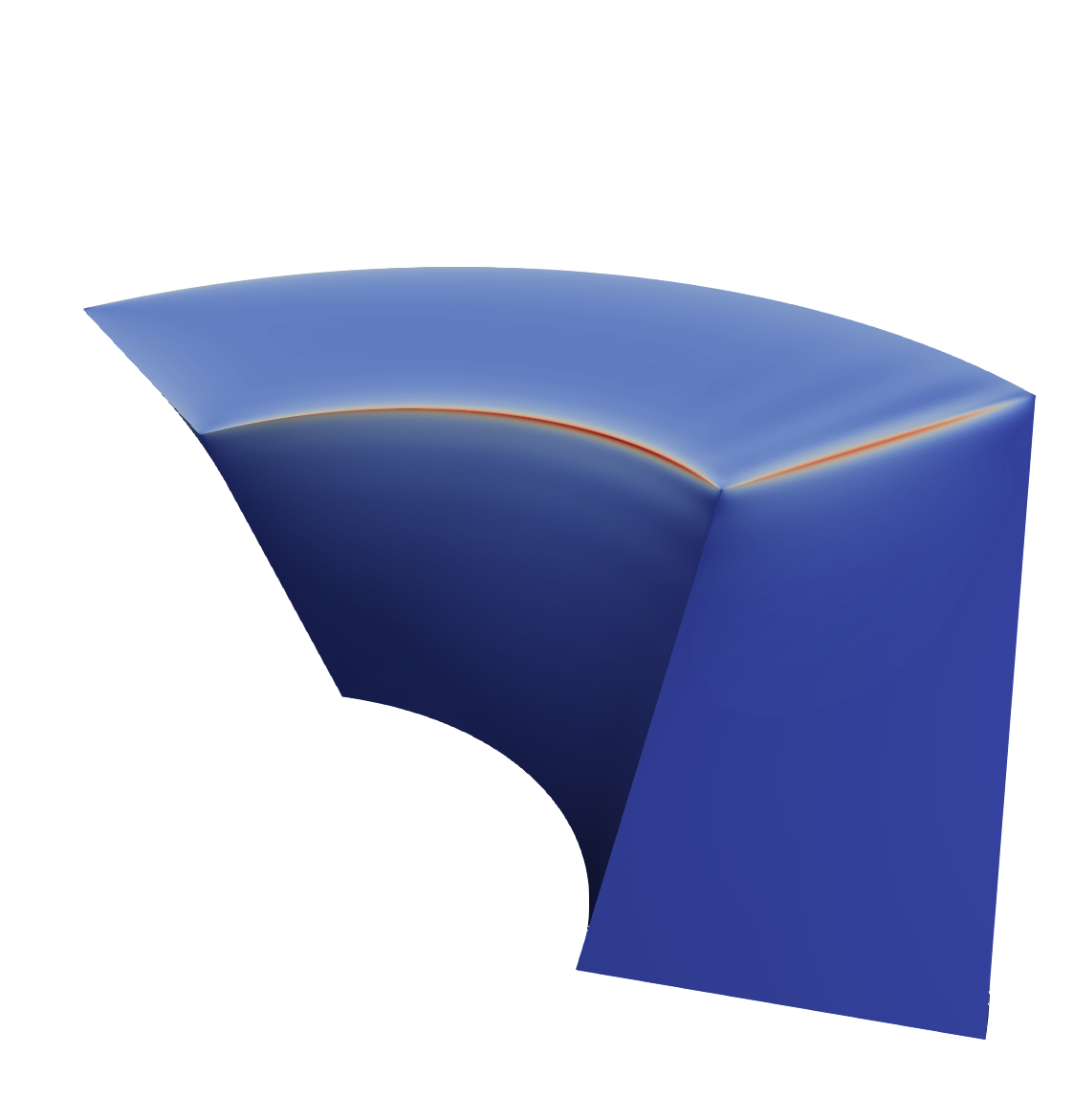}
}
\caption{Twisted thick ring: approximate solution and the magnitude of the gradient.} \label{fig:3D-solution}
\end{figure}

Since the exact solution is not known, Figure~\ref{fig:3D-estimator} shows only the values of the error estimator for different choices of the degree and the admissibility class. We also compare the results with the ones obtained for uniform refinement. As in the case of edge singularities, and since we do not allow anisotropic refinement, the optimal order of convergence, which is equal to $s_{\rm opt} = p/3$, is only reached for degree $p=2$. Heuristic arguments similar to those used in the 2D case tell us that, in general, the convergence order that we obtain for edge singularities in the 3D case is equal to
\[
s = \min(3 s_{\rm unif}, s_{\rm opt}),
\]
where again $s_{\rm unif}$ is the convergence rate for uniform refinement, which in this particular case is $s_{\rm unif} \approx 0.25$. 
Regarding the impact of the admissibility class $\mu$, as in the two-dimensional examples lower values of $\mu$ require more degrees of freedom, although the convergence rate is the same for all the admissibility classes. The plot of the meshes in Figure~\ref{fig:3D-meshes_deg3} shows that this is due to the refinement away from the singularity, which is necessary to maintain the admissibility class.
\begin{figure}
\centering
\subfigure[Error estimator for $p=2$]{
\includegraphics[trim=2mm 1mm 1mm 1mm,clip,width=0.36\textwidth]{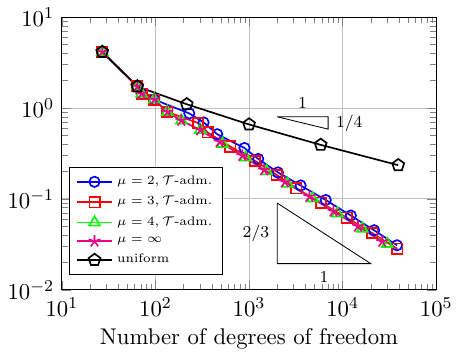}
}
\subfigure[Error estimator for $p=3$]{
\includegraphics[trim=2mm 1mm 1mm 1mm,clip,width=0.36\textwidth]{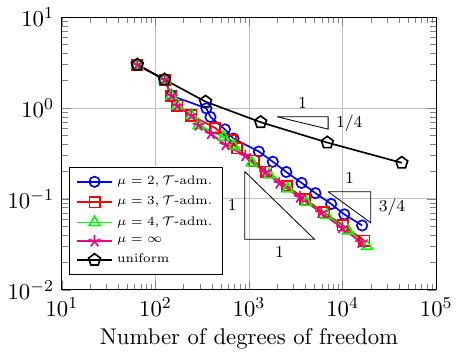}
}
\subfigure[Error estimator for $p=4$]{
\includegraphics[trim=2mm 1mm 1mm 1mm,clip,width=0.36\textwidth]{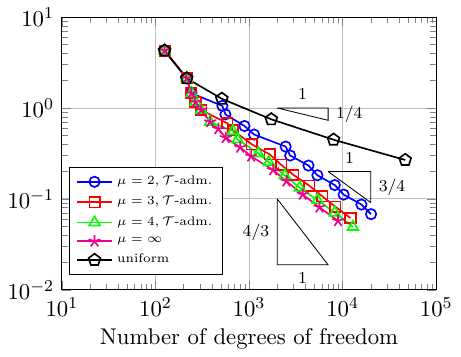}
}
\caption{Twisted thick ring: comparison of the error estimator for uniform refinement and adaptive refinement with different degree $p$ and admissibility class $\mu$.} \label{fig:3D-estimator}
\end{figure}

\begin{figure}[h!]
\centering
\subfigure[Mesh for $\mu = 2$]{\includegraphics[width=0.35\textwidth,trim=15cm 3cm 10cm 4cm, clip]{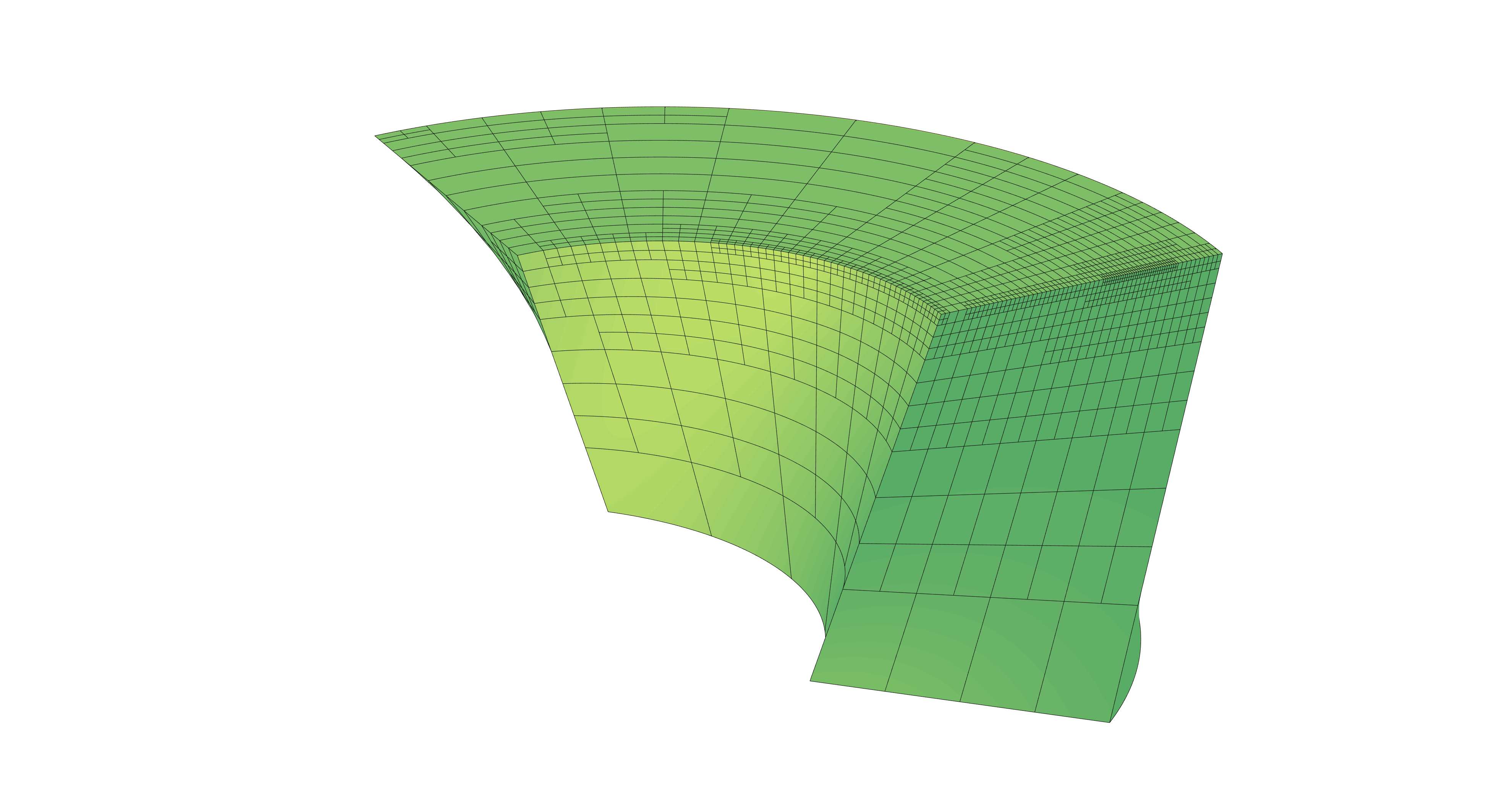}}
\subfigure[Mesh for $\mu = 3$]{\includegraphics[width=0.35\textwidth,trim=15cm 3cm 10cm 4cm, clip]{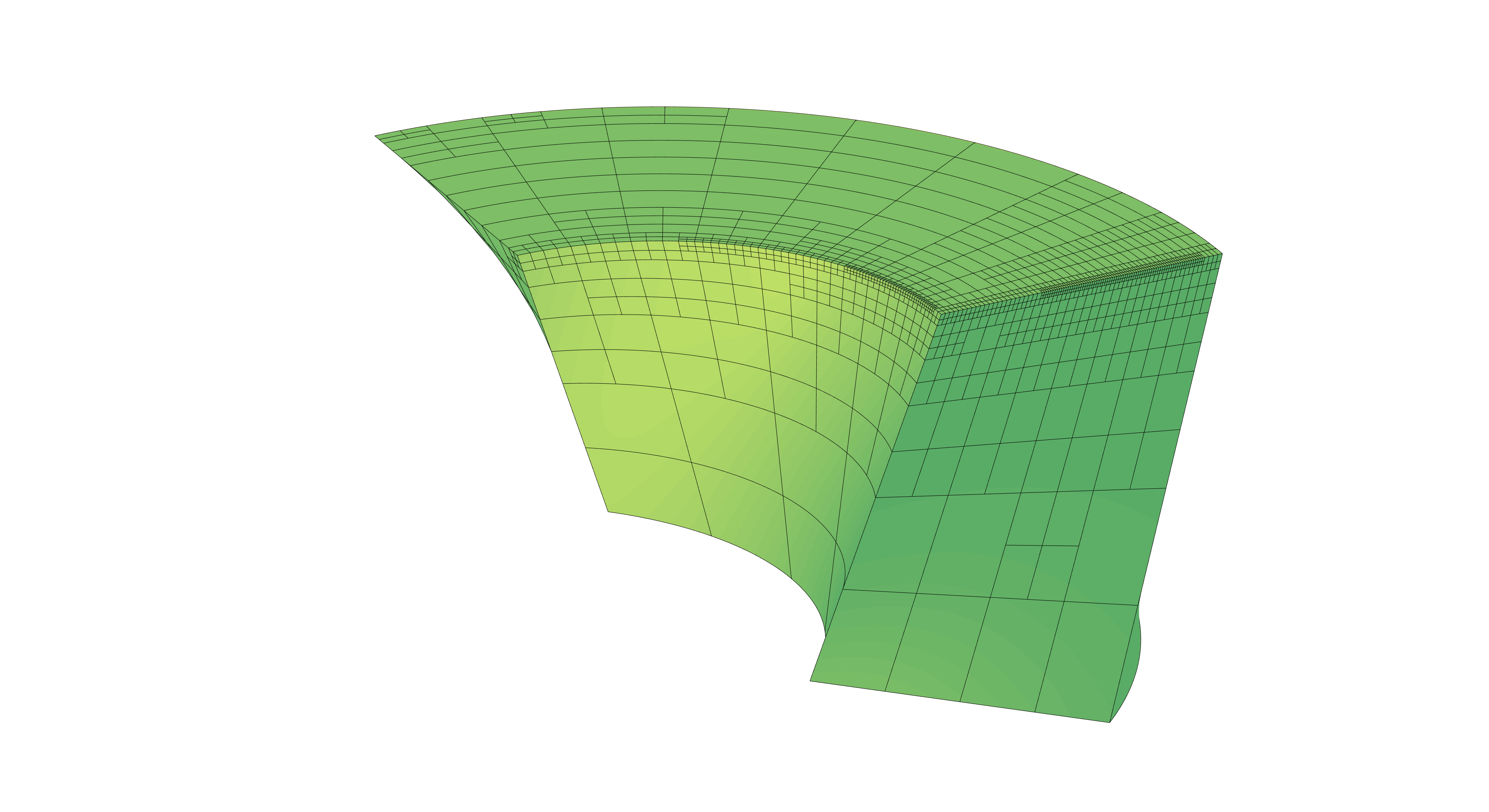}}
\subfigure[Mesh for $\mu = 4$]{\includegraphics[width=0.35\textwidth,trim=15cm 3cm 10cm 4cm, clip]{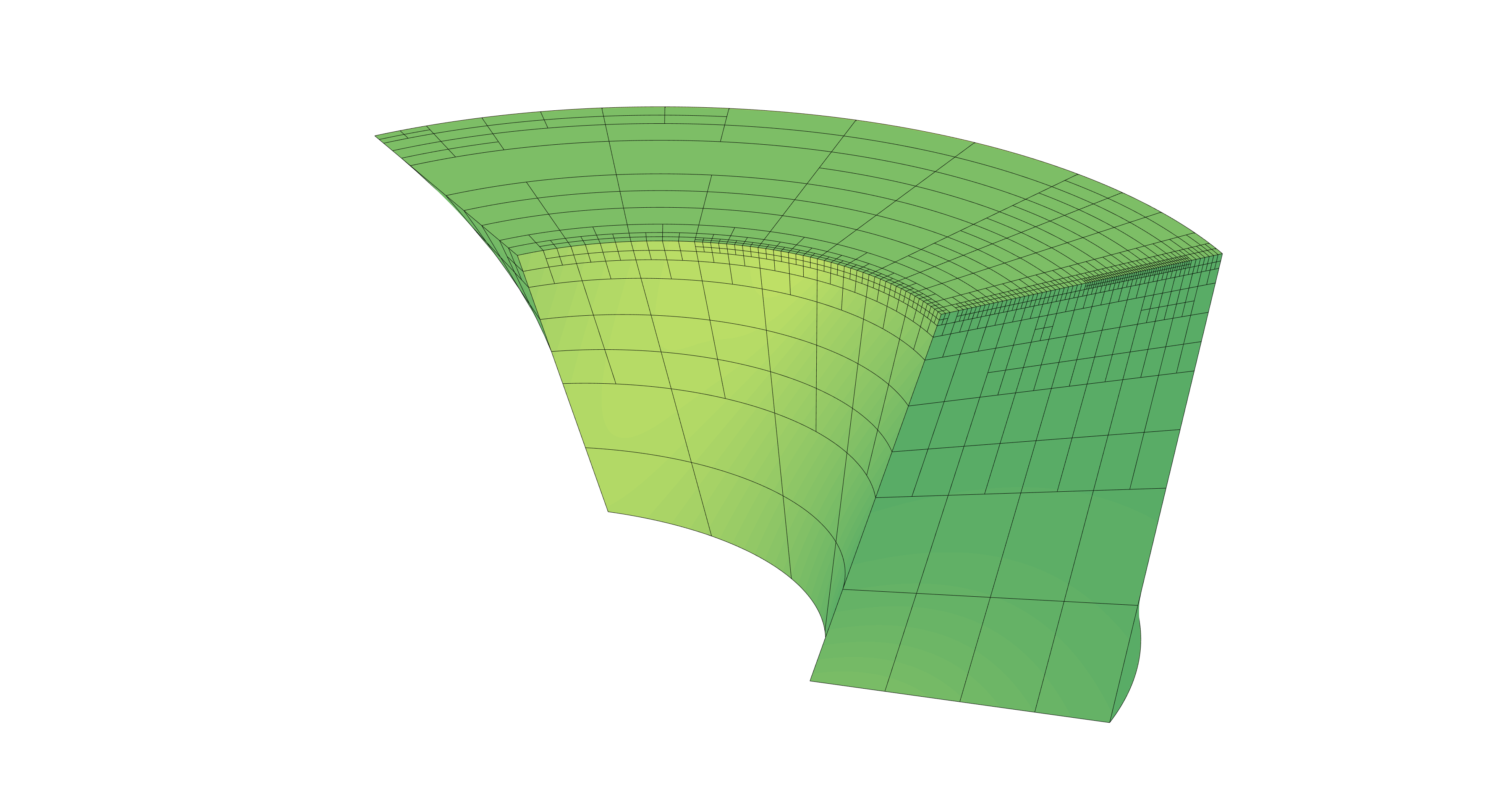}}
\subfigure[Mesh for $\mu = \infty$]{\includegraphics[width=0.35\textwidth,trim=15cm 3cm 10cm 4cm, clip]{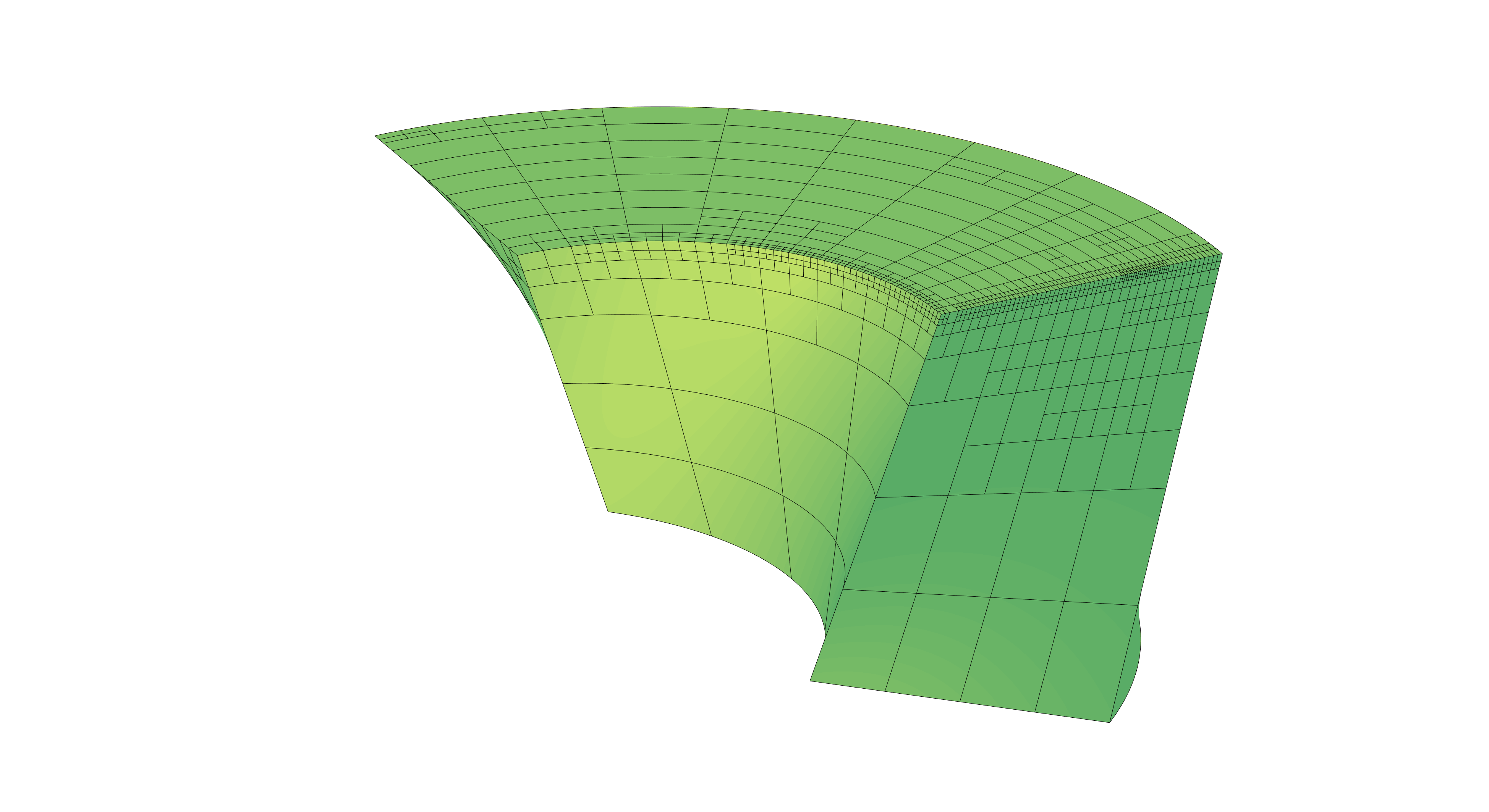}}
\caption{Twisted thick ring: meshes for degree $p=3$ and different values of the admissibility class $\mu$ after eight refinement steps.}\label{fig:3D-meshes_deg3}
\end{figure}

%%%%%%%%%%%%%%%%%%%%%%%%%%%%%%%%%%%%%%%%%%%%%%%%%%%%%%%%%%%%%%%%%%%%%%%%%%%%%%%%%%%%%%%%%%%%%%%%%%%%%%%%%%%%%% 
\subsection{Adaptive IGAFEM with T-splines}\label{sec:T-igafem}
%%%%%%%%%%%%%%%%%%%%%%%%%%%%%%%%%%%%%%%%%%%%%%%%%%%%%%%%%%%%%%%%%%%%%%%%%%%%%%%%%%%%%%%%%%%%%%%%%%%%%%%%%%%%%%
We now apply the adaptive IGAFEM setting to T-splines on a single-patch domain.
Let $\mathbf{p}:=(p_1,\dots,p_d)$ be a vector of positive polynomial degrees and $\mathbf{\kv}^0$ be a  multivariate open knot vector on $\widehat\Omega=(0,1)^\dph$ with induced initial index T-mesh $\check\QQ_0$ (see Section~\ref{sec:T-meshes defined}).
We assume that $\hat{\mathbb{S}}_{{\bf p}} (\mathbf{\kv}^0)$ and $\hat{\mathbb{S}}_{{\bf p}_{\F}} (\mathbf \kv_{\F})$ with ${\bf p}_\F$ and ${\bf T}_\F$ from the parametrization  ${\bf F}:\widehat\Omega\to\Omega$ (see Section~\ref{subsec:def hb}) are compatible to each other as in Section~\ref{sec:IGA-basics}.
Note that $\hat{\mathbb{S}}_{{\bf p}} (\mathbf{\kv}^0)=\hat{\mathbb{S}}_{{\bf p}}^{\rm T}(\check\QQ_{0},{\bf \kv}^0)$, i.e., the initial space corresponds to tensor-product B-splines.
We choose the refinement strategy $\refine(\cdot,\cdot)$ of Section~\ref{sec:T refine}, which induces the set of all admissible meshes $\widehat\Q$.
For all $\widehat\QQ_\coarse\in\widehat\Q$ with corresponding index T-mesh $\check\QQ_\coarse$ (see Remark~\ref{rem:paramtric to index}), let $\widehat{\mathbb{S}}_\coarse:=\widehat{\mathbb{S}}_{\bf p}^{\rm T}(\check\QQ_\coarse,\mathbf{\kv}^{0})\cap H_0^1(\widehat\Omega)$ be the associated ansatz space in the parametric domain, see Section~\ref{sec:T-splines defined}. 
As in Section~\ref{sec:model}, we define the corresponding quantities in the physical domain via the parametrization ${\bf F}:\widehat\Omega\to\Omega$, i.e., 
\begin{align*}
&\QQ_\coarse:=\set{{\bf F}(\widehat Q)}{\widehat Q\in\widehat\QQ_\coarse}\quad\text{for all }\widehat\QQ_\coarse\in\widehat\Q,
\\
&\Q:=\set{\QQ_\coarse}{\widehat\QQ_\coarse\in\widehat\Q},
\end{align*}
\begin{align*}
&\refine(\QQ_\coarse,\MM_\coarse):=\set{{\bf F}(\widehat Q)}{\widehat Q\in\refine(\widehat\QQ_\coarse,\widehat\MM_\coarse)}
\\
&\qquad\text{for all }\QQ_\coarse\in\Q, \MM_\coarse\subseteq\QQ_\coarse 
\\
&\qquad\text{with }\widehat\MM_\coarse:=\set{{\bf F}^{-1}(Q)}{Q\in\QQ_\coarse}, 
\end{align*}
and the discrete space on mesh $\QQ_\coarse$ is given by
\begin{align*}
&\mathbb{S}_\coarse:=\set{\widehat V\circ{\bf F}^{-1}}{\widehat V\in\widehat{\mathbb{S}}_\coarse}.%\quad\text{for all }\QQ_\coarse\in\Q.
\end{align*}
In the following lemma, we give a basis in terms of T-spline blending functions for $\widehat{\mathbb{S}}_\coarse$.
The proof is given in \cite[Lemma~3.2]{ghp17} and relies on the fact that, due to the lack of T-junctions in the frame region $\Omindex\setminus\Omip$, T-spline blending functions restricted to any $(\dph-1)$-dimensional hyperface of the unit hypercube are B-splines corresponding to the induced mesh on this hyperface. We note that the anchors corresponding to these basis functions are precisely the ones on the boundary of the region of active anchors, see Section~\ref{sec:T-splines defined}.
Clearly, this basis can be transferred to the physical domain via the parametrization $\F$.
\begin{lemma}\label{lem:homogeneous T-basis}
Given $\hat\QQ_\coarse\in\hat\Q$, the T-spline blending functions $\set{\hat B_{\coarsecomma \anchor,\mathbf{p}}}{\anchor \in \nodes_{\bf p}(\Tmesh_\coarse,{\bf T}^0)}\cap H_0^1(\widehat\Omega)$  provide a basis of $\widehat{\mathbb{S}}_\coarse$.
Here, the functions $\hat B_{\coarsecomma \anchor,\mathbf{p}}$ are defined as in \eqref{eq:T-spline}.
\end{lemma}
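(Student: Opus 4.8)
The plan is to use that on an admissible mesh the full family of blending functions is already known to be a basis of the T-spline space, and then to isolate the boundary-vanishing functions by a trace argument on each hyperface of $\widehat\Omega$. First I would note that, since $\hat\QQ_\coarse\in\hat\Q$, its associated index T-mesh $\Tmesh_\coarse$ is dual-compatible by Proposition~\ref{lemma:dc}. Consequently, Proposition~\ref{prop:Tsplines basis} (its partition-of-unity assertion being available through Proposition~\ref{prop:T-nestedness}) shows that $\set{\hat B_{\coarsecomma\anchor,\mathbf{p}}}{\anchor\in\nodes_{\bf p}(\Tmesh_\coarse,{\bf T}^0)}$ is linearly independent and spans the full space $\spTargs{\Tmesh_\coarse}{\mathbf{\kv}^0}$. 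Any subfamily of a linearly independent family is linearly independent, so the functions in $\set{\hat B_{\coarsecomma\anchor,\mathbf{p}}}{\anchor\in\nodes_{\bf p}(\Tmesh_\coarse,{\bf T}^0)}\cap H_0^1(\widehat\Omega)$ are automatically linearly independent; only the spanning property remains to be shown.

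For spanning, I would take an arbitrary $\hat V\in\widehat{\mathbb{S}}_\coarse=\spTargs{\Tmesh_\coarse}{\mathbf{\kv}^0}\cap H_0^1(\widehat\Omega)$ and expand it in the (unique) representation $\hat V=\sum_\anchor c_\anchor\hat B_{\coarsecomma\anchor,\mathbf{p}}$. It suffices to prove that $c_\anchor=0$ whenever $\hat B_{\coarsecomma\anchor,\mathbf{p}}\notin H_0^1(\widehat\Omega)$, i.e.\ whenever the blending function has nonzero trace on $\partial\widehat\Omega$. Fixing one of the $2\dpa$ boundary hyperfaces $\Sigma$ of the unit cube and restricting the identity to $\Sigma$ yields $\sum_\anchor c_\anchor\,\hat B_{\coarsecomma\anchor,\mathbf{p}}|_\Sigma=\hat V|_\Sigma=0$, the last equality because the trace of $\hat V\in H_0^1(\widehat\Omega)$ vanishes.

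The decisive structural input, which I would establish next, is that each blending function restricts on $\Sigma$ to a tensor-product B-spline of the mesh induced on $\Sigma$, and that the restrictions which do not vanish identically form precisely a B-spline basis on $\Sigma$. This rests on the absence of T-junctions in the frame region $\Omindex\setminus\Omip$: since Algorithm~\ref{alg:index-Tmesh} marks the boundary prolongation of every refined element, all T-junctions are pushed out to $\partial\Omindex$, so along the faces the skeleton is a full tensor grid and the tangential local knot vectors of the anchors adjacent to $\Sigma$ coincide with the knot vectors of the induced face B-splines. Invoking linear independence of (tensor-product) B-splines (Section~\ref{sec:univariate-properties} and Section~\ref{sec:splines-tensor}) then forces $c_\anchor=0$ for every anchor whose function does not vanish on $\Sigma$. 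Running this over all $2\dpa$ hyperfaces annihilates exactly the coefficients of the blending functions with nonzero trace on $\partial\widehat\Omega$, so that $\hat V$ is a linear combination of the boundary-vanishing blending functions alone, which establishes spanning.

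I expect the principal obstacle to be the bookkeeping hidden in this structural step: one must check that the anchors whose functions fail to vanish on $\partial\widehat\Omega$ are exactly those sitting on the boundary of the region of active anchors $\AR$, and that, thanks to the open knot vectors together with the prolongation mechanism of Algorithm~\ref{alg:index-Tmesh}, their tangential local knot vectors match the face knot vectors so that the restriction map is a bijection onto the face B-spline basis. All of this is carried out in \cite[Lemma~3.2]{ghp17}, and its argument transfers verbatim to the present notation.
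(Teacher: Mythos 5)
Your proof is correct and takes essentially the same approach as the paper, which defers to \cite[Lemma~3.2]{ghp17}: the key structural fact in both is that the absence of T-junctions in the frame region $\Omindex\setminus\Omip$ (enforced by the boundary prolongation in Algorithm~\ref{alg:index-Tmesh}) makes each blending function restrict on any boundary hyperface to a B-spline of the induced face mesh, so linear independence of those face B-splines annihilates the coefficients of all blending functions with nonzero trace. Nothing further is needed.
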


The given setting fits into the abstract framework of Section~\ref{sec:afem}, in particular it satisfies the assumptions of Theorem~\ref{thm:abstract bem}, which has been proved in \cite{gp18}.
We only sketch the proof in Section~\ref{sec:T-fem mesh verification}--\ref{sec:T-fem space verification}.
As already mentioned at the beginning of Section~\ref{sec:adaptive igafem}, the multi-patch case is essentially open.
For numerical experiments validating Theorem~\ref{thm:T-igafem}, we refer to \cite{hkmp17}.

\begin{theorem}\label{thm:T-igafem}
T-splines on admissible meshes satisfy the mesh properties \eqref{M:shape}--\eqref{M:locuni},  the refinement properties \eqref{R:childs}--\eqref{R:overlay}, and the space properties \eqref{S:nestedness}--\eqref{S:proj} and \eqref{S:inverse}--\eqref{S:grad}.
The involved constants depend only on the dimension $\dph$, the parametrization constant $C_\F$ of Section~\ref{sec:parametrization_assumptions}, the degree $\mathbf{p}$, and the initial knot vector $\mathbf{T}^0$.
In particular, Theorem~\ref{thm:abstract} is applicable.
In conjunction with Theorem~\ref{thm:abstract main}, this yields reliability~\eqref{eq:reliable}, efficiency~\eqref{eq:efficient}, and  linear convergence at optimal rate~\eqref{eq:linear convergence}--\eqref{eq:optimal convergence} of the residual error estimator~\eqref{eq:eta}, when the adaptive Algorithm~\ref{alg:abstract algorithm} is employed.
\end{theorem}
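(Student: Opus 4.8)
The plan is to verify, one by one, the mesh properties~\eqref{M:shape}--\eqref{M:locuni}, the refinement properties~\eqref{R:childs}--\eqref{R:overlay}, and the space properties~\eqref{S:nestedness}--\eqref{S:proj} together with the FEM properties~\eqref{S:inverse}--\eqref{S:grad}, for T-splines on admissible meshes $\widehat\Q=\refine(\widehat\QQ_0)$; once these hold, Theorem~\ref{thm:abstract} applies verbatim. Throughout I would work in the parametric domain and transfer each property to the physical domain via the bi-Lipschitz parametrization $\F$ using~\eqref{eq:Cgamma}, exactly as in the hierarchical case of Section~\ref{sec:H-fem space verification}. Since $\F|_{\overline{\widehat Q}}$ and its inverse have uniformly bounded first and second derivatives, all local norm equivalences survive the push-forward with constants depending only on $C_\F$.

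For the mesh properties, shape-regularity~\eqref{M:shape} and local quasi-uniformity~\eqref{M:locuni} follow from the local quasi-uniformity of admissible T-meshes stated in Proposition~\ref{prop:lqiT}: neighboring elements differ by at most one level, and because refinement proceeds by bisection in periodically alternating directions, the aspect ratio of each element stays bounded. Here Remark~\ref{rem:neighbors touch} is useful, since it guarantees that any two elements sharing boundary are generalized neighbors, so Proposition~\ref{prop:lqiT} controls their relative size. The refinement properties are largely already available: the child estimate~\eqref{R:childs} is immediate from the bisection structure (one step at most doubles the relevant elements), the closure estimate~\eqref{R:closure} is precisely Proposition~\ref{prop:T-spline lincomp}, and the overlay property~\eqref{R:overlay} can be obtained by taking the coarsest common admissible refinement of two meshes, arguing as in the hierarchical case. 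For the space properties, nestedness~\eqref{S:nestedness} is exactly Proposition~\ref{prop:T-nestedness}, and the inverse inequality~\eqref{S:inverse} follows by standard scaling since, by Lemma~\ref{lemma:2beziers}, each element consists of at most two B\'ezier elements of comparable size on which T-splines are genuine polynomials of degree $\mathbf{p}$.

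The core work lies in the Scott--Zhang-type operator and the locality properties~\eqref{S:local}, \eqref{S:proj}, and~\eqref{S:app}--\eqref{S:grad}. I would take the quasi-interpolant $\projT{\mathbf{p}}$ of~\eqref{eq:proj-Tsp}, adapt it to respect the homogeneous boundary conditions by restricting to anchors whose blending functions lie in $H_0^1(\widehat\Omega)$ (using Lemma~\ref{lem:homogeneous T-basis}), and verify its local projection property from Corollary~\ref{corol:local-projector-Tsplines}. The key quantitative input is Proposition~\ref{prop:tsp-sext}, which bounds the number of blending functions living on any element and, crucially, shows that the support extension $\sext{\elemp}$ is contained in a connected union of uniformly boundedly many elements; this yields a fixed patch order $\q{proj},\q{sz},\q{loc}$ such that $\sext{\elemp}\subseteq\pi_\coarse^{q}(\elemp)$, in analogy with~\eqref{eq:trunc in patch}. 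Combined with the $L^2$-stability of Proposition~\ref{prop:tsp-proj} and Poincar\'e/Friedrichs estimates on element-patches, this delivers~\eqref{S:app}--\eqref{S:grad}. The main obstacle I anticipate is~\eqref{S:local}: one must show that T-spline blending functions whose support stays away from the refined region are genuinely unchanged under refinement, i.e.\ that no new anchor in the far field alters the local knot vectors. This is more delicate than for hierarchical splines because the local knot vectors in~\eqref{eq:T-spline} are determined by the global index vectors $\GIV_j(\anchor,\Tmesh)$, which are sensitive to T-junctions possibly propagated by boundary prolongations; the argument must invoke the admissibility of $\widehat\Q$ (Proposition~\ref{prop:lqiT}) to bound how far the influence of a bisection can reach, thereby fixing $\q{loc}$, and reference~\cite{gp18} already carries out this verification in detail.
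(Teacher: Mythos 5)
Your proposal is correct and follows essentially the same route as the paper's proof: it verifies each property in the parametric domain and transfers via $\F$, uses Proposition~\ref{prop:lqiT} with Remark~\ref{rem:neighbors touch} for \eqref{M:shape}--\eqref{M:locuni}, Proposition~\ref{prop:T-spline lincomp} for \eqref{R:closure}, the overlay construction for \eqref{R:overlay}, Proposition~\ref{prop:T-nestedness} for \eqref{S:nestedness}, Lemma~\ref{lemma:2beziers} plus scaling for \eqref{S:inverse}, and the boundary-adapted quasi-interpolant with Propositions~\ref{prop:tsp-sext} and \ref{prop:tsp-proj}, Corollary~\ref{corol:local-projector-Tsplines}, and Poincar\'e/Friedrichs inequalities for \eqref{S:proj} and \eqref{S:app}--\eqref{S:grad}. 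The paper likewise handles \eqref{S:local} by combining Lemma~\ref{lem:homogeneous T-basis}, Proposition~\ref{prop:tsp-sext}, and the definition of the blending functions, deferring the detailed verification to \cite[Section~3.3]{gp18}, exactly as you do.
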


\begin{remark}
We also mention that \cite{cv18} has recently introduced a local multilevel preconditioner for the stiffness matrix of symmetric problems which leads to uniformly bounded condition numbers for T-splines on admissible T-meshes.
An important consequence is that the corresponding PCG solver is uniformly contractive. As for hierarchical splines, see Remark~\ref{rem:PCG-HB-FEM}, \cite{ghps20} thus allows to prove  that an adaptive algorithm which steers mesh-refinement and an inexact PCG solver leads to optimal convergence rates both with respect to the number of elements and with respect to the computational cost.
%An important consequence is that the PCG solver is uniformly contractive.
%It has recently been proved in~\cite{ghps20} that such a contraction is the key to prove that an adaptive algorithm which steers mesh-refinement and inexact PCG solver does not only lead to optimal convergence with respect to the number of elements but also with respect to the overall computational cost (i.e., computational time).
\end{remark}

%%%%%%%%%%%%%%%%%%%%%%%%%%%%%%%%%%%%%%%%%%%%%%%%%%%%%%%%%%%%%%%%%%%%%%%%%%%%%%%%%%%%%%%%%%%%%%%%%%%%%%%%%%%%%% 
\subsubsection{Mesh properties}
\label{sec:T-fem mesh verification}

Shape regularity \eqref{M:shape} is trivially satisfied in the parametric domain, since the direction in which an element is bisected periodically alternates after each refinement. 
Due to the regularity of the parametrization ${\bf F}$ of Section~\ref{sec:parametrization_assumptions}, the property transfers to the physical domain. 

Local quasi-uniformity \eqref{M:locuni} in the parametric domain  follows from Remark~\ref{rem:neighbors touch} together with Proposition~\ref{prop:lqiT}. 
Again, the regularity of the parametrization guarantees this property also in the physical domain.

%%%%%%%%%%%%%%%%%%%%%%%%%%%%%%%%%%%%%%%%%%%%%%%%%%%%%%%%%%%%%%%%%%%%%%%%%%%%%%%%%%%%%%%%%%%%%%%%%%%%%%%%%%%%%% 
\subsubsection{Refinement properties}
\label{sec:T-fem refinement verification}

The child estimate \eqref{R:childs} is trivially satisfied with $\const{child}=2$, since each refined element in the parametric domain is only bisected in one direction. 
The closure estimate \eqref{R:closure} is just the assertion of Proposition~\ref{prop:T-spline lincomp}.
%in the spirit of \cite{bdd04} and \cite{stevenson07} was proved in \cite[Section~6]{mp15}  for $\dph=2$ and in \cite[Section~7]{morgenstern16} for $\dph=3$. 
%Indeed, these references even provide explicit upper bounds for $\const{clos}$ along with numerical experiments on the quality of these bounds. 

For $\dph=2$, \cite[Section~5]{mp15} shows that the overlay 
\begin{align*}
\hat \hmesh_\fine:=&\set{\hat Q\in\hat\hmesh_\coarse}{\exists \hat Q'\in \hat\hmesh_\meshidx\text{ with }\hat Q\subseteq \hat Q'}\\
\cup &\set{\hat Q'\in\hat\hmesh_\meshidx}{\exists \hat Q\in \hat\hmesh_\coarse\text{ with }\hat Q'\subseteq \hat Q}
\end{align*}
of two admissible meshes $\hat\QQ_\coarse,\hat\QQ_\meshidx$ in the parametric domain is again admissible.
The proof also extends to the three-dimensional case $\dph=3$.
Obviously, this property immediately transfers to the physical domain.
Clearly, the resulting mesh $\QQ_\fine$ in the physical domain satisfies the properties in~\eqref{R:overlay}.

%%%%%%%%%%%%%%%%%%%%%%%%%%%%%%%%%%%%%%%%%%%%%%%%%%%%%%%%%%%%%%%%%%%%%%%%%%%%%%%%%%%%%%%%%%%%%%%%%%%%%%%%%%%%%% 
\subsubsection{Space properties}
\label{sec:T-fem space verification}

Nestedness \eqref{S:nestedness} follows from Proposition~\ref{prop:T-nestedness}. 
The inverse inequality \eqref{S:inverse} in the parametric domain follows easily from Lemma~\ref{lemma:2beziers} and standard scaling arguments, since each T-spline is a polynomial of fixed degree $\mathbf{p}$ on each element of the B\'ezier mesh. 
Due to the regularity of the parametrization ${\bf F}$ of Section~\ref{sec:parametrization_assumptions}, the property transfers to the physical domain.
The local domain of definition property \eqref{S:local} follows easily from Lemma~\ref{lem:homogeneous T-basis}, Proposition~\ref{prop:tsp-sext}, and the definition of T-spline blending functions, see \cite[Section~3.3]{gp18} for details.

%%%%%%%%%%%%%%%%%%%%%%%%%%%%%%%%%%%%%%%%%%%%%%%%%%%%%%%%%%%%%%%%%%%%%%%%%%%%%%%%%%%%%%%%%%%%%%%%%%%%%%%%%%%%%% 
\smallskip\paragraph{Scott--Zhang type operator}
Due to the regularity of the parametrization ${\bf F}$ of Section~\ref{sec:parametrization_assumptions}, it is sufficient to provide for all $\hat\QQ_\coarse\in\hat\Q$ an operator $\hat J_\coarse:H_0^1(\widehat\Omega)\to \widehat{\mathbb{S}}_\coarse$ satisfying the properties \eqref{S:proj} and \eqref{S:app}--\eqref{S:grad}  in the parametric domain. 
We define this operator similarly as $\projT{\mathbf{p}}$ of Section~\ref{sec:dual-compatible}, but now have to take into account the homogeneous boundary conditions
\begin{align*}%\label{eq:J for T-igafem}
\widehat J_\coarse:\,&H_0^1(\widehat\Omega)\to \widehat{\mathbb{S}}_\coarse,
 \hat v \mapsto \sum_{\substack{\anchor \in \nodes_{\bf p}(\Tmesh,{\bf T}^0)\\\hat B_{\coarsecomma \anchor,\mathbf{p}}\in H_0^1(\hat\Omega)}} \hat \lambda_{\coarsecomma \anchor,\mathbf{p}}(\hat v) \hat B_{\coarsecomma \anchor,\mathbf{p}},
\end{align*}
where $\hat\lambda_{\coarsecomma \anchor,\mathbf{p}}$ is defined as in \eqref{eq:T-dual-basis}.

The second property of Proposition~\ref{prop:tsp-sext} particularly implies the existence of a uniform constant $q_1$ such that for all $\hat Q\in\hat\QQ$,
\begin{align*}
\sext{\elemp} \subseteq \pi_\coarse^{q_1}(\hat Q).  
\end{align*}
With Corollary~\ref{corol:local-projector-Tsplines}, this immediately gives \eqref{S:proj}.
Moreover, the local $L^2$-stability of Proposition~\ref{prop:tsp-proj} is also valid for $\hat J_\coarse$ as the corresponding proof only relies on estimates of the dual functionals. 
Together with the local projection property  \eqref{S:proj} and  the inverse inequality \eqref{S:inverse}, the Poincar\'e (for elements away from the boundary) as well as the Friedrichs inequality (for elements close to the boundary) readily imply for all $\hat v\in H_0^1(\widehat \Omega)$ and $\hat Q\in\hat\QQ_\coarse$ that
\begin{align*}
\norm{(1-\hat J_\coarse)\,\hat v}{L^2(\hat Q)}&\lesssim |\hat Q|^{1/\dph}\,\norm{\hat v}{H^1(\sext{\elemp})}\\
\norm{\nabla \hat J_\coarse\, \hat v}{L^2(\hat Q)}&\lesssim \norm{\hat v}{H^1(\sext{\elemp})},
\end{align*}
see \cite[Section~3.3]{gp18} for details.
We conclude  \eqref{S:app}--\eqref{S:grad} with $q_{\rm sz}=q_1$.

\newpage

% !TEX encoding = MacOSRoman
% !TEX root = adaptive_iga.tex

%%%%%%%%%%%%%%%%%%%%%%%%%%%%%%%%%%%%%%%%%%%%%%%%%%%%%%%%%%%%%%%%%%%%%%%%%%%%%%%%%%%%%%%%%%%%%%%%%%%%%%%%%%%%%% 
%%%%%%%%%%%%%%%%%%%%%%%%%%%%%%%%%%%%%%%%%%%%%%%%%%%%%%%%%%%%%%%%%%%%%%%%%%%%%%%%%%%%%%%%%%%%%%%%%%%%%%%%%%%%%% 
%%%%%%%%%%%%%%%%%%%%%%%%%%%%%%%%%%%%%%%%%%%%%%%%%%%%%%%%%%%%%%%%%%%%%%%%%%%%%%%%%%%%%%%%%%%%%%%%%%%%%%%%%%%%%% 
\section{Adaptive IGABEM in arbitrary dimension}
\label{sec:igabem}
%%%%%%%%%%%%%%%%%%%%%%%%%%%%%%%%%%%%%%%%%%%%%%%%%%%%%%%%%%%%%%%%%%%%%%%%%%%%%%%%%%%%%%%%%%%%%%%%%%%%%%%%%%%%%% 
%%%%%%%%%%%%%%%%%%%%%%%%%%%%%%%%%%%%%%%%%%%%%%%%%%%%%%%%%%%%%%%%%%%%%%%%%%%%%%%%%%%%%%%%%%%%%%%%%%%%%%%%%%%%%% 
%%%%%%%%%%%%%%%%%%%%%%%%%%%%%%%%%%%%%%%%%%%%%%%%%%%%%%%%%%%%%%%%%%%%%%%%%%%%%%%%%%%%%%%%%%%%%%%%%%%%%%%%%%%%%% 

In this section, we consider two concrete realizations of the abstract adaptive Galerkin BEM framework from Section~\ref{sec:abem}. We consider hierarchical splines in Section~\ref{sec:H-igabem}, assuming that the boundary $\Gamma$ is a multi-patch domain. Convergence results in this setting are proved in \cite[Section~5.4 and 5.5]{gantner17} and \cite{gp20+} for $\HH$-admissible meshes, and leveraging on \cite{bg16,bg17}, we extend them to $\TT$-admissible meshes. The theoretical findings are underlined by numerical experiments in Section~\ref{sec:numerical igabem3d}. Then, in Section~\ref{sec:T-igabem}, we present an adaptive IGABEM  based on T-splines. 
In contrast to IGAFEM, it is easy to define a suitable refinement strategy on the multi-patch domain $\Gamma$ as we do not enforce continuity across interfaces. 
{The corresponding results are new, but mostly follow from~\cite{gp18}. 
%assuming again that $\Gamma$ is a multi-patch domain. 
%We stress that these results are new, but still preliminary, the main challenge being the extension to T-splines functions with high continuity also across patches, as they are used in the engineering literature, see for instance \cite{ScSiEvLiBoHuSe13}.
 
Finally, in Section~\ref{sec:elementary_bem}, we also consider an adaptive IGABEM in 2D which additionally controls the smoothness of the used one-dimensional spline ansatz space as in \cite{fghp16}.
Although the theoretical results of Section~\ref{sec:abem} are not directly applicable in the setting of adaptive smoothness, optimal convergence can be proved with similar techniques.

%In this section, we consider two concrete realizations of the adaptive method for BEM in Section~\ref{sec:abem}, namely hierarchical splines in Section~\ref{sec:H-igabem} and T-splines in Section~\ref{sec:T-igabem}, 
%where we suppose that the boundary $\Gamma$ is a multi-patch domain as in Section~\ref{sec:multi-patch}.
%We stress that the results of Section~\ref{sec:T-igabem} are new. 
%For hierarchical splines, the theoretical findings are underlined by numerical experiments in Section~\ref{sec:numerical igabem3d}.
%Finally, in Section~\ref{sec:elementary_bem}, we also consider an adaptive IGABEM in 2D which additionally controls the smoothness of the used one-dimensional spline ansatz space as in \cite{fghp16}.
%Although the theoretical results of Section~\ref{sec:abem} are not directly applicable in the setting of adaptive smoothness, optimal convergence can be proved with similar techniques.

%%%%%%%%%%%%%%%%%%%%%%%%%%%%%%%%%%%%%%%%%%%%%%%%%%%%%%%%%%%%%%%%%%%%%%%%%%%%%%%%%%%%%%%%%%%%%%%%%%%%%%%%%%%%%% 
%%%%%%%%%%%%%%%%%%%%%%%%%%%%%%%%%%%%%%%%%%%%%%%%%%%%%%%%%%%%%%%%%%%%%%%%%%%%%%%%%%%%%%%%%%%%%%%%%%%%%%%%%%%%%% 
\subsection{Adaptive IGABEM with hierarchical splines}\label{sec:H-igabem}
%%%%%%%%%%%%%%%%%%%%%%%%%%%%%%%%%%%%%%%%%%%%%%%%%%%%%%%%%%%%%%%%%%%%%%%%%%%%%%%%%%%%%%%%%%%%%%%%%%%%%%%%%%%%%% 
%%%%%%%%%%%%%%%%%%%%%%%%%%%%%%%%%%%%%%%%%%%%%%%%%%%%%%%%%%%%%%%%%%%%%%%%%%%%%%%%%%%%%%%%%%%%%%%%%%%%%%%%%%%%%% 
Hierarchical meshes on the boundary $\Gamma\subset\R^\dph$, $\dph\ge2$, can be defined similarly as in the IGAFEM setting in Section~\ref{sec:H-multipatches}: 
for each $m=1,\dots,M$, let $\mathbf{p}_m$ be a vector of positive polynomial degrees and $\mathbf{\kv}^0_m$ be a  multivariate open knot vector on $\widehat\Gamma=(0,1)^\dpa$, $\dpa = \dph-1$, with induced initial mesh $\widehat\QQ_{0,m}:=\hat\QQ^0_m$. 
We assume that $\hat{\mathbb{S}}_{{\bf p}_m} (\mathbf{\kv}^0_m)$ and $\hat{\mathbb{S}}_{{\bf p}_{\F_m}} (\mathbf \kv_{\F_m})$ with ${\bf p}_{\F_m}$ and ${\bf T}_{\F_m}$ from the parametrization  ${\bf F}_m:\widehat\Gamma\to\Gamma_m$ (see Section~\ref{sec:multi-patch}) are compatible to each other as in Section~\ref{sec:IGA-basics}. 
Note that the coarsest spaces are $\hat{\mathbb{S}}_{{\bf p}_m} (\mathbf{\kv}^0_m)=\hat{\mathbb{S}}^{\rm H}_{{\bf p}_m}(\hat\QQ_{0,m},{\bf \kv}_m^0)$, i.e., they correspond to tensor-product B-splines on each patch.
%Moreover, we assume that $\mathbf{p}_m$ and $\mathbf{\kv}^0_m$ satisfy the compatibility condition \eqref{P:conforming-basis-space} of Section~\ref{sec:IGA-basics}, but we note that it would be sufficient to 
Moreover, we assume for the initial mesh $\QQ_0=\bigcup_{m=1}^M \QQ_{0,m}$ with $\QQ_{0,m}:=\set{\F_m(\hat Q)}{\hat Q\in\hat\QQ_{0, m}}$ that there are no hanging nodes between patch interfaces $\Gamma_{m,m'}=\overline{ \Gamma_m} \cap \overline {\Gamma_{m'}}$ with $m\neq m'$, see also \eqref{P:conforming-mesh} of Section~\ref{sec:multi-patch}.
 %see Remark~\ref{rem:H-adaptive Neumann} below.
We fix the admissibility parameter $\mu$ as well as the kind of mesh that we want to consider,  i.e., $\HH$-admissible or $\TT$-admissible meshes, %hierarchical or truncated hierarchical B-splines, 
 and abbreviate for each $m=1,\dots,M$ the set of all corresponding admissible meshes as $\widehat\Q_m$, see Section~\ref{sec:hierarchical refine}. 
Moreover, we abbreviate $\Q_m:=\set{\QQ_{\coarsecomma m}}{\hat \QQ_{\coarsecomma m}\in\hat\Q_m}$ with $\QQ_{\coarsecomma m}:=\set{\F_m(\hat Q)}{\hat Q\in\hat\QQ_{\coarsecomma m}}$.
We define the set of all admissible meshes $\Q$ as the set of all 
\begin{align*}
\QQ_\coarse=\bigcup_{m=1}^M \QQ_{\coarsecomma m} \text{ with }\QQ_{\coarsecomma m}\in\Q_m
\end{align*} 
 such that there are no hanging nodes on any interface $\Gamma_{m,m'}=\overline{ \Gamma_m} \cap \overline {\Gamma_{m'}}$ with $m\neq m'$.

For $\QQ_\coarse\in\Q$, the associated ansatz space is defined as
\begin{align*}
\begin{split}
\mathbb{S}_\coarse:=\big\{ V \in L^2(\Gamma) : V|_{\Gamma_m} \in {\mathbb{S}}^{\rm H}_{{\bf p}_m}(\hat\QQ_{\coarsecomma m},{\bf \kv}_m^0), \quad \\
\text{ for } m = 1, \ldots, M\big\},
\end{split}
\end{align*}
where
\begin{align*}
{\mathbb{S}}^{\rm H}_{{\bf p}_m}(\hat\QQ_{\coarsecomma m},{\bf \kv}_m^0):=\set{\hat V\circ\F_m^{-1}}{\hat V\in\hat{\mathbb{S}}^{\rm H}_{{\bf p}_m}(\hat\QQ_{\coarsecomma m},{\bf \kv}_m^0)}.
\end{align*}
To obtain bases of the space $\mathbb{S}_\coarse$, we first define  
\begin{align*}
\HH_{{\bf p}_m}(\hat\QQ_{\coarsecomma m},\mathbf{\kv}^{0}_m)&:=\set{\hat\beta\circ\F_m^{-1}}{\hat\beta\in \hat\HH_{{\bf p}_m}(\hat\QQ_{\coarsecomma m},\mathbf{\kv}^{0}_m)},\\
\TT_{{\bf p}_m}(\hat\QQ_{\coarsecomma m},\mathbf{\kv}^{0}_m)&:=\set{\hat\tau\circ\F_m^{-1}}{\hat\tau\in \hat\TT_{{\bf p}_m}(\hat\QQ_{\coarsecomma m},\mathbf{\kv}^{0}_m)}.
\end{align*}
Since the ansatz functions do not have to be continuous across interfaces, a basis of $\mathbb{S}_\coarse$ is given by 
\begin{align}\label{eq:hierarchical BEM basis}
\begin{split}
\mathbb{S}_\coarse&={\rm span}\Big( \bigcup_{m=1}^M \HH_{{\bf p}_m}(\hat\QQ_{\coarsecomma m},\mathbf{\kv}^{0}_m)\Big)\\
&= {\rm span}\Big( \bigcup_{m=1}^M \TT_{{\bf p}_m}(\hat\QQ_{\coarsecomma m},\mathbf{\kv}^{0}_m)\Big),
\end{split}
\end{align}
where we extend the involved (T)HB-splines, which actually only live on $\Gamma_m$, by zero to the whole boundary $\Gamma$.
We stress that the chosen basis is theoretically irrelevant for the realization of Algorithm~\ref{alg:abstract algorithm} (in particular for the solving step), see also Section~\ref{sec:numerical igafem} for a detailed discussion in the case of IGAFEM.

\begin{remark}\label{rem:H-adaptive Neumann}
We note that it is actually not necessary to forbid  hanging nodes at interfaces, but it would be sufficient to control the size difference between intersecting elements.
However, in contrast to weakly-singular integral equations, hypersingular integral equations, which result from Neumann problems (see,  e.g. \cite[Chapter~7]{mclean00}), require continuous trial functions. 
As in Section~\ref{sec:H-multipatches}, one sees that the conformity property \eqref{P:conforming-basis-space} of Section~\ref{sec:IGA-basics} is satisfied for (T)HB-splines on admissible meshes provided that  $\mathbf{p}_m$ and $\mathbf{\kv}^0_m$ satisfy~\eqref{P:conforming-basis-space}, which is slightly stronger than assuming that there are no hanging nodes at interfaces. 
Thus, corresponding basis functions can easily be constructed, and admissible meshes are suited for both the weakly- and the hypersingular case.
Alternatively, one can also proceed as in Remark~\ref{rem:second multi patch}, applying conformity at each level and then defining directly hierarchical multi-patch functions.
%We mention that \eqref{P:conforming-basis-space} could actually be avoided in the currently considered weakly-singular case.
%However, it allows to define admissible meshes that can be used in both the weakly-singular as well as the hypersingular case.
%We note that for weakly-singular equations, it is actually not necessary to forbid  hanging nodes at interfaces, but it would be sufficient to control the size difference between intersecting elements. 
%This however would not guarantee \eqref{P:conforming-basis-space} and would complicate the construction of a basis in the hypersingular case, wherefore we chose the current definition of admissibility.
\end{remark}

To obtain admissible meshes starting from the initial one, we can essentially employ the same refinement algorithm as in Section~\ref{sec:H-multipatches}: 
For arbitrary $\QQ_\coarse\in\Q$ and $Q\in\QQ_{\coarsecomma m}$ with corresponding element $\hat Q:=\F_m^{-1}(Q)$ in the parametric domain, let $\NN_{\coarsecomma m}(\hat Q)\subseteq\hat\QQ_{\coarsecomma m}$ either denote the corresponding $\HH$-neighborhood in the case of $\HH$-admissible meshes or the $\TT$-neighborhood in the case of $\TT$-admissible meshes, see Section~\ref{sec:hierarchical refine}.
We define the \emph{neighbors} of $Q$ as
\begin{align*}
&\NN_\coarse(Q):=\set{Q'\in\QQ_{\coarsecomma m}}{\hat Q' \in \NN_{\coarsecomma m}(\hat Q)}\\
&\qquad\cup \bigcup_{m'\neq m} \set{Q'\in\QQ_{\coarsecomma m'}}{{\rm dim}(\overline Q\cap \overline Q')=d-1},
\end{align*}
i.e., as in the IGAFEM case of Section~\ref{sec:H-multipatches}, we add to the neighborhood adjacent elements from other patches.
Then, it is easy to see that Algorithm~\ref{alg:multi-patch bem} returns an admissible mesh.
Indeed, one can show that the set of all possible refinements $\refine(\QQ_0)$ even coincides with $\Q$, see \cite[Proposition~5.4.3]{gantner17} in the case of $\HH$-admissible meshes of class $\mu=2$.

\begin{algorithm}[!ht]
\caption{\texttt{refine} (Multi-patch refinement)}
\label{alg:multi-patch bem}
\begin{algorithmic}
\Require admissible mesh $\QQ_\coarse$ and marked elements $\MM\subseteq\QQ_\coarse$
\Repeat
\State set $\displaystyle \mathcal{U} = \bigcup_{Q \in \MM} \NN_\coarse(Q)\setminus \MM$
\State set $\MM = \MM\cup \mathcal{U}$
\Until {$\mathcal{U} = \emptyset$}
\State update $\QQ_\coarse$ by replacing the elements in $\MM$ by their children\\
\Ensure refined admissible mesh $\QQ_\coarse$ 
\end{algorithmic}
\end{algorithm}

The given setting fits into the abstract framework of Section~\ref{sec:abem}.
So far, this is only proved in \cite[Section~5.4 and 5.5]{gantner17} and \cite{gp20+} for $\HH$-admissible meshes of class $\mu=2$. 
However, building on \cite{bg16,bg17}, where IGAFEM on  $\TT$-admissible meshes has been considered, the generalization to arbitrary admissible meshes is indeed straightforward. 
We only sketch the proof in Section~\ref{sec:H-bem space verification}.
Note that most of the properties have already been verified in Section~\ref{sec:H-igafem} for IGAFEM-meshes.

\begin{theorem}\label{thm:H-igabem}
Hierarchical splines on admissible meshes satisfy the mesh properties  \eqref{M:shape}--\eqref{M:locuni},  the refinement properties \eqref{R:childs}--\eqref{R:overlay}, and the space properties \eqref{S:nestedness}--\eqref{S:local}, \eqref{S:proj bem}, and \eqref{S:inverse bem}--\eqref{S:stab bem}.
The involved constants depend only on the dimension $\dph$, the parametrization constants $C_{\F_m}$ of Section~\ref{sec:parametrization_assumptions}, $\mathbf{p}_m$, ${\bf T}_m^0$, and $\mu$.
In particular, Theorem~\ref{thm:abstract bem} is applicable. 
In conjunction with Theorem~\ref{thm:abstract main}, this yields reliability~\eqref{eq:reliable bem} and  linear convergence at optimal rate~\eqref{eq:linear convergence}--\eqref{eq:optimal convergence} of the residual error estimator~\eqref{eq:eta bem}, when the adaptive Algorithm~\ref{alg:abstract algorithm} is employed.
\end{theorem}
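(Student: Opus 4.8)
The plan is to verify that the hierarchical-spline BEM setting satisfies each of the abstract hypotheses of Theorem~\ref{thm:abstract bem}, namely the mesh properties \eqref{M:shape}--\eqref{M:locuni}, the refinement properties \eqref{R:childs}--\eqref{R:overlay}, and the space properties \eqref{S:nestedness}--\eqref{S:local}, \eqref{S:proj bem}, and \eqref{S:inverse bem}--\eqref{S:stab bem}. Since Theorem~\ref{thm:abstract bem} then delivers reliability and optimal convergence as black boxes, the entire task reduces to checking these structural properties. My strategy is to reuse as much as possible from the IGAFEM analysis in Section~\ref{sec:H-igafem} and Section~\ref{sec:H-multipatches}, since the meshes, the refinement algorithm (Algorithm~\ref{alg:multi-patch bem}), and the spline spaces are essentially identical; the genuine differences are that here functions need not be continuous across interfaces and the relevant norms are $H^{\pm1/2}(\Gamma)$ rather than $H^1(\Omega)$.

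First I would dispatch the mesh and refinement axioms, which are insensitive to whether we are on $\Omega$ or $\Gamma$. Shape-regularity \eqref{M:shape} and local quasi-uniformity \eqref{M:locuni} follow exactly as in Section~\ref{sec:H-fem mesh verification}, using dyadic bisection in the parametric domain together with Corollary~\ref{prop:lqiHB-adjacent} and the regularity of each parametrization $\F_m$. The child estimate \eqref{R:childs} holds with $\const{child}=2^\dpa$ by uniform bisection, the closure estimate \eqref{R:closure} is Proposition~\ref{prop:hierarchical lincomp}, and the overlay \eqref{R:overlay} is built patchwise exactly as in Section~\ref{sec:H-fem refinement verification}. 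The strengthened version of \eqref{eq:union}, demanding that refined elements be uniformly bisected in each direction, is automatic for dyadic hierarchical refinement. Nestedness \eqref{S:nestedness} is Proposition~\ref{prop:hb properties}(iii), and the local domain of definition \eqref{S:local} follows from Corollary~\ref{corol:characterization} and the level-difference bound of Proposition~\ref{prop:lqiHB-adjacent-support}, verbatim as in the FEM case; the only adaptation is that now we glue (T)HB-splines patchwise by zero extension rather than enforcing $C^0$ continuity, which only simplifies the argument.

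The BEM-specific properties \eqref{S:inverse bem}, \eqref{S:unity bem}, \eqref{S:stab bem}, and the local projection \eqref{S:proj bem} are where the new work lies. For the inverse inequality \eqref{S:inverse bem} I would invoke the general inverse estimates for spline spaces on shape-regular meshes (for instance from the theory cited around \cite{affkp13,gantner17}), which convert the elementwise $h^{1/2}$-weighted $L^2$-norm into the $H^{-1/2}(\Gamma)$-norm; the hierarchical structure enters only through shape-regularity and local quasi-uniformity, already established. The local approximation of unity \eqref{S:unity bem} is immediate because the THB-splines form a partition of unity (Proposition~\ref{prop:thb properties}(i)), so a suitable finite sum of basis functions supported in a bounded element-patch reproduces the constant $1$ up to a controlled defect. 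For \eqref{S:proj bem} and \eqref{S:stab bem} I would construct the cut-off-capable projector $\JJ_{\coarse,\SS}$ as a modification of the hierarchical quasi-interpolant $\projH{\mathbf{p}}$ of Section~\ref{sec:hierarchical interpolation}, restricting the index sum to anchors whose support lies in $\bigcup\SS$; local $L^2$-stability follows from Proposition~\ref{prop:hqi}, and the local projection property from Corollary~\ref{cor:local projection} together with the support-extension bound \eqref{eq:trunc in patch}.

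The main obstacle I anticipate is \eqref{S:proj bem}: unlike the FEM operator \eqref{eq:J for H-igafem}, here the operator must simultaneously act as a projector \emph{and} vanish outside the marked set $\SS$, and one must verify that this cut-off does not destroy the local reproduction property on elements $Q$ with $\Pi_\coarse^{\q{loc}}(Q)\subseteq\SS$. The delicate point is that the modified support extension $\sextTHB{\hat Q}$ may reach slightly beyond $Q$, so I would need the admissibility-driven connectedness and size-equivalence of $\sextTHB{\hat Q}$ (Proposition~\ref{prop:Sext-star}) to guarantee that, for $Q$ deep enough inside $\SS$, every contributing anchor is retained by the cut-off and hence the local projection identity survives. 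This is precisely the argument sketched for the abstract BEM setting in \cite[Section~4.8]{gp20}, and adapting it to the multi-patch hierarchical basis \eqref{eq:hierarchical BEM basis} — handling the zero-extension across interfaces — is the one place requiring genuine care rather than citation.
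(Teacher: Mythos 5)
Your proposal is correct and follows essentially the same route as the paper's proof: the mesh, refinement, nestedness, and local-domain-of-definition properties are recycled from the IGAFEM verification; the inverse inequality \eqref{S:inverse bem} rests on shape-regularity plus the elementwise polynomial structure of hierarchical splines (via \cite{gantner17}, building on \cite{dfghs04}); the approximation of unity \eqref{S:unity bem} follows from the non-negative, locally supported THB partition of unity; and \eqref{S:proj bem}--\eqref{S:stab bem} are obtained by a cut-off modification of the hierarchical quasi-interpolant, constructed patchwise since no continuity is required across interfaces. Your cut-off (retaining only THB-splines with support inside $\bigcup\SS$) is in fact the consistent formulation of the paper's ``discard those supported entirely outside'' phrasing, and the obstacle you single out --- that \eqref{eq:trunc in patch} guarantees every contributing basis function survives the cut-off whenever $\Pi_\coarse^{\q{loc}}(Q)\subseteq\SS$, so the local projection identity of Corollary~\ref{cor:local projection} persists --- is precisely the content of the details the paper delegates to \cite{gantner17}.
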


%%%%%%%%%%%%%%%%%%%%%%%%%%%%%%%%%%%%%%%%%%%%%%%%%%%%%%%%%%%%%%%%%%%%%%%%%%%%%%%%%%%%%%%%%%%%%%%%%%%%%%%%%%%%%% 
%\subsubsection{Mesh properties}
%\label{sec:H-bem mesh verification}
%%%%%%%%%%%%%%%%%%%%%%%%%%%%%%%%%%%%%%%%%%%%%%%%%%%%%%%%%%%%%%%%%%%%%%%%%%%%%%%%%%%%%%%%%%%%%%%%%%%%%%%%%%%%%% 

%{\color{magenta} COMMENT: I would avoid subsections with just one line.} COMMENT: Okay. I will merge the subsections.
%%%%%%%%%%%%%%%%%%%%%%%%%%%%%%%%%%%%%%%%%%%%%%%%%%%%%%%%%%%%%%%%%%%%%%%%%%%%%%%%%%%%%%%%%%%%%%%%%%%%%%%%%%%%%% 
%\subsubsection{Refinement properties}
%\label{sec:H-bem refinement verification}
%%%%%%%%%%%%%%%%%%%%%%%%%%%%%%%%%%%%%%%%%%%%%%%%%%%%%%%%%%%%%%%%%%%%%%%%%%%%%%%%%%%%%%%%%%%%%%%%%%%%%%%%%%%%%% 

%%%%%%%%%%%%%%%%%%%%%%%%%%%%%%%%%%%%%%%%%%%%%%%%%%%%%%%%%%%%%%%%%%%%%%%%%%%%%%%%%%%%%%%%%%%%%%%%%%%%%%%%%%%%%% 
\subsubsection{Mesh, refinement, and space properties}
\label{sec:H-bem space verification}
%%%%%%%%%%%%%%%%%%%%%%%%%%%%%%%%%%%%%%%%%%%%%%%%%%%%%%%%%%%%%%%%%%%%%%%%%%%%%%%%%%%%%%%%%%%%%%%%%%%%%%%%%%%%%% 
The mesh properties  \eqref{M:shape}--\eqref{M:locuni} follow as in Section~\ref{sec:H-fem mesh verification}.
The refinement properties \eqref{R:childs}--\eqref{R:overlay} follow as in Sections~\ref{sec:H-fem refinement verification} and~\ref{sec:H-multipatches}. 
The properties \eqref{S:nestedness}--\eqref{S:local} follow as in Section~\ref{sec:H-fem space verification}, see also \cite[Section~5.5.12]{gantner17} for details. 

For $\HH$-admissible meshes of class $\mu=2$, the proof of  \eqref{S:inverse bem} is given in \cite[Section~5.5.9]{gantner17}, which itself strongly builds on a similar result for simplicial meshes \cite{dfghs04}.
However, we stress that the proof only hinges on the mesh properties \eqref{M:shape}--\eqref{M:locuni} and the fact that hierarchical splines are polynomials on all elements in the parametric domain (see Proposition~\ref{prop:hb properties}~(iv)), 
and the result hence extends to arbitrary admissible hierarchical meshes. 

The reference \cite[Proposition~5.5.5]{gantner17} states that the local approximation of unity property \eqref{S:unity bem} is satisfied if there exists a finite subset $\mathcal{B}\subset \mathbb{S}_\coarse$ whose elements 
are non-negative, local in the sense that for all $\beta\in\mathcal{B}$ there exists $Q\in\QQ_\coarse$ and a uniform constant $q\in\N$ such that $\supp(\beta)\subseteq \pi_\coarse^q(Q)$, and form a partition of unity.
According to Proposition~\ref{prop:thb properties} (i) and \eqref{eq:trunc in patch}, these assumptions are fulfilled for THB-splines 
\begin{align*}
\BB:=\bigcup_{m=1}^M \TT_{{\bf p}_m}(\hat\QQ_{\coarsecomma m},\mathbf{\kv}^{0}_m).
\end{align*}

\paragraph{Scott--Zhang type operator}
Since the ansatz functions do not have to be continuous at interfaces and due to the regularity of the parametrization ${\bf F}_m$ of Section~\ref{sec:parametrization_assumptions}, it is sufficient to provide for each patch $\Gamma_m$ and $\hat\SS_m\subseteq\hat\QQ_{\coarsecomma m}$ an operator
\begin{align*}
\hat\JJ_{\coarsecomma m,\hat\SS_m}:L^2(\hat\Gamma)\to\big\{\hat\Psi_{\coarsecomma m}\in {\mathbb{S}}^{\rm H}_{{\bf p}_m}(\hat\QQ_{\coarsecomma m},{\bf \kv}_m^0): \\
\hat\Psi_{\coarsecomma m}|_{\bigcup(\hat\QQ_{\coarsecomma m}\setminus\hat\SS_m)}=0\big\}
\end{align*}
which satisfies \eqref{S:proj bem} and \eqref{S:stab bem}.
We define this operator similarly as $\projH{\mathbf{p}}$ of Section~\ref{sec:hierarchical interpolation}, but now have to take into account that the output should only live on $\bigcup \hat\SS_m$ 
by discarding all THB-splines that have support entirely outside of this set.
%{\color{magenta} COMMENT: not clear to me. What happens if $\SS$ is very small?}COMMENT: What do you mean?
Then the local projection property \eqref{S:proj bem} as well as the local $L^2$-stability \eqref{S:stab bem} for the operator $\hat\JJ_{\coarsecomma m,\hat\SS_m}$ can be shown as in Section~\ref{sec:H-fem space verification}. 
Details for hierarchical splines on $\HH$-admissible meshes of class $\mu=2$ are found in \cite[Section~5.5.14]{gantner17}.
%\begin{align}\label{eq:J for H-igafem}
%\hat\JJ_{\coarsecomma m,\hat\SS_m}( \hat v)\mapsto \sum_{\ell=0}^{N_{\coarsecomma m}-1} \sum_{{\bf i}\in\widetilde{\cal I}_{\coarsecomma m}^\ell}
%\hat\lambda_{\hat{B}_{{\bf i},{\bf p}_m}^\ell}(\hat v) \hat{T}_{\coarsecomma m,{\bf i},{\bf p}_m}^\ell,
%\end{align}
%where %where $N_\coarse-1$ is the maximal element level, 
%\begin{align*}
%\widetilde{\cal I}_{\coarsecomma m}^\ell := \Big\{ {\bf i}:
%\hat{B}_{{\bf i},{\bf p}_m}^\ell\in \hat {\cal B}_m^\ell \cap\hat{\cal H}_{{\bf p}_m}(\hat\hmesh,{\bf T}^0)\wedge \supp\,\hat{B}_{{\bf i},{\bf p}_m}^\ell\subseteq \overline{\bigcup \hat\SS_m}\Big\},
%\end{align*}
%and $\hat{B}_{{\bf i},{\bf p}_m}^\ell= \mot\hat{T}_{\coarsecomma {\bf i},{\bf p}_m}^\ell$. 

%{\color{magenta} COMMENT: The division in subsections does not work. It does not make sense that we put at the same level ``Space properties'' and ``Numerical experiments''. The same is valid for FEM, but I was not writing comments in Section 6.}COMMENT: Let's discuss options.

%%%%%%%%%%%%%%%%%%%%%%%%%%%%%%%%%%%%%%%%%%%%%%%%%%%%%%%%%%%%%%%%%%%%%%%%%%%%%%%%%%%%%%%%%%%%%%%%%%%%%%%%%%%%%% 
\subsubsection{Numerical experiments}\label{sec:numerical igabem3d}
%%%%%%%%%%%%%%%%%%%%%%%%%%%%%%%%%%%%%%%%%%%%%%%%%%%%%%%%%%%%%%%%%%%%%%%%%%%%%%%%%%%%%%%%%%%%%%%%%%%%%%%%%%%%%% 
We now apply the adaptive IGABEM with HB-splines analyzed in the previous sections. 
We consider the 3D Laplace operator $\mathscr{P}:=-\Delta$ as partial differential operator, and we present two numerical experiments that were already considered in~\cite[Section~5.6]{gantner17}: a quasi-singular solution on a thick ring and an exterior problem on a cube.
%Both considered experiments have been considered in~\cite[Section~5.6]{gantner17}. 
For numerical experiments with (one-dimensional) hierarchical splines in 2D, we refer to \cite{fgkss19}.
The  fundamental solution of $-\Delta$ in 3D is given by
\begin{align*}
G({\bf z}):=\frac{1}{4\pi}\frac{1}{|{\bf z}|}\quad\text{for all }{\bf z}\in \R^3\setminus\{0\},
\end{align*}
and the resulting single-layer operator $\mathscr{V}:H^{-1/2}(\Gamma)\to H^{1/2}(\Gamma)$ is elliptic, see Section~\ref{sec:model problem bem}.
%Both numerical examples have already been considered in \cite[Section~5.6]{gantner17}.
Throughout, we use $\HH$-admissible hierarchical meshes of class $\mu=2$ 
and the basis of (non-truncated) HB-splines given in~\eqref{eq:hierarchical BEM basis}  for the considered  ansatz spaces.
An explanation on how the involved singular integrals are computed via suitable Duffy transformations and subsequent standard tensor Gaussian quadrature is given in \cite[Section~5.6]{gantner17}, see also \cite[Chapter~5]{ss11} and \cite[Section~7.1]{karkulik12}. 
%{\color{magenta} COMMENT: add some sentences about how you compute them (Duffy transformation and whatever you use), and cite the references for more details.}
We mention that no compression techniques have been used for the dense Galerkin matrices. 
Moreover, to ease computation, the term $h_Q=|Q|^{1/2}$ in the estimator~\eqref{eq:eta bem} is replaced by the equivalent term $\diam(\Gamma)|\widehat Q|^{1/2}$ with the corresponding element $\widehat Q$ in the parametric domain.

%%%%%%%%%%%%%%%%%%%%%%%%%%%%%%%%%%%%%%%%%%%%%%%%%%%%%%%%%%%%%%%%%%%%%%%%%%%%%%%%%%%%%%%%%%%%%%%%%%%%%%%%%%%%%% 
\paragraph{Quasi-singularity on thick ring}
For given Dirichlet data $g\in {H}^{1/2}(\Gamma_{})$, we consider the interior Laplace--Dirichlet problem
\begin{align}\label{eq:Laplace interior bem}
\begin{split}
-\Delta u&=0\quad\text{in }{\Omega},\\ u&=g\quad\text{on } \Gamma,\end{split}
\end{align}
on the (quarter of a) thick ring
\begin{align*}
\Omega: = \big\{&10^{-1}(r\cos(\varphi),r\sin(\varphi),z):
\\
& r \in(1/2, 1), \varphi\in(0,{\pi}/{2}),  z\in(0,1)\big\}; 
\end{align*}
see Figure~\ref{fig:bendcube_mesh} for an illustration. The boundary of $\Omega$ is described by six patches of rational splines of degrees 1 and 2, without any internal knots, see \cite[Section~5.6.2]{gantner17} for a precise parametrization of the boundary.

Then, \eqref{eq:Laplace interior bem} can be equivalently rewritten as an integral equation in the form of \eqref{eq:Symmy interior}. 
%, see, e.g., \cite[Theorem~7.6]{mclean00} or \cite[Section~3.4.2.1]{ss11}. 
In particular, the normal derivative $\phi:=\partial_{\boldsymbol{\nu}} u$ of the  weak solution $u$ of~\eqref{eq:Laplace interior bem} satisfies  % \eqref{eq:strong} with $f:=(\mathscr{K}+1/2)g$, i.e.,
%\begin{equation}\label{eq:Symmy interior}
$\mathscr{V}\phi =(\mathscr{K}+1/2) g$.
%\end{equation}
%where 
%\begin{align}\label{eq:double layer mapping}
%\mathscr{K}: H^{1/2}(\Gamma)\to H^{1/2}(\Gamma)
%\end{align}
%denotes the \textit{double-layer operator}. 
%According to \cite[Corollary~3.3.12 and Theorem~3.3.13]{ss11}, if $\Gamma$ is piecewise smooth and if $g\in L^\infty(\Gamma)$, there holds for all ${\bf x}\in\Gamma$ where $\Gamma$ is smooth and $g$ is continuous the representation 
%\begin{align}\label{eq:double-layer}
%\begin{split}
%\mathscr{K}g({\bf x})= \int_{\Gamma_{}} g({\bf y})\partial_{{\boldsymbol{\nu}}({\bf y})} G({\bf x},{\bf y}) \,{\rm d}{\bf y}.
%\end{split}
%\end{align}
We prescribe the exact solution of \eqref{eq:Laplace interior bem} as the shifted fundamental solution
\begin{equation*}
u({\bf x}):=G({\bf x}-{\bf y}_0)=\frac{1}{4\pi}\frac{1}{|{\bf x}-{\bf y}_0|}, 
\end{equation*}
with ${\bf y}_0:=10^{-1}(0.95\cdot 2^{-3/2},0.95\cdot 2^{-3/2},1/2)\in\R^3\setminus\overline\Omega$.
Although $u$ is smooth in $\overline\Omega$, it is nearly singular at the midpoint  $\widetilde {\bf y}_0:=10^{-1}( 2^{-3/2},2^{-3/2},1/2)$ of the front surface.
The normal derivative $\phi=\partial_{\boldsymbol{\nu}} u$ of $u$ is given by 
\begin{equation*}
\phi({\bf x})=-\frac{1}{4\pi}\frac{{\bf x}-{\bf y}_0}{|{\bf x}-{\bf y}_0|^3}\cdot{\boldsymbol{\nu}}({\bf x}).
\end{equation*}

%We consider the thick ring 
%\begin{align*}
%\Omega: = \big\{&10^{-1}(\rho\cos(\varphi),\rho\sin(\varphi),z):
%\\
%& \rho \in(1/2, 1), \varphi\in(0,{\pi}/{2}),  z\in(0,1)\big\};
%\end{align*}
%see Figure~\ref{fig:bendcube_mesh}.
%We split the boundary $\Gamma$ into the six surfaces
%%\begin{align*}
%%\Gamma_1&:=\set{10^{-1}(\cos(\beta)/2,\sin(\beta)/2,z)}{\beta\in(0,{\pi}/{2})\wedge z\in(0,1)}\\
%%\Gamma_2&:=\set{10^{-1}(r,0,z)}{r\in(1/2,1)\wedge z\in(0,1)}\\
%%\Gamma_3&:=\set{10^{-1}(\cos(\beta),\sin(\beta),z)}{\beta\in(0,{\pi}/{2})\wedge z\in(0,1)}\\
%%\Gamma_4&:=\set{10^{-1}(0,r,z)}{r\in(1/2,1)\wedge z\in(0,1)}\\
%%\Gamma_5&:=\set{10^{-1}(r\cos(\beta),r\sin(\beta),0)}{r\in(1/2,1)\wedge\beta\in(0,{\pi}/{2})}\\
%%\Gamma_6&:=\set{10^{-1}(r\cos(\beta),r\sin(\beta),1)}{r\in(1/2,1)\wedge\beta\in(0,{\pi}/{2})}
%%\end{align*}
%$\Gamma_1,\Gamma_3,\Gamma_5$, and $\Gamma_6$ can be parametrized by rational splines of degree $p_{1(\gamma,m)}:=2, p_{2(\gamma,m)}:=1$ corresponding to the knot vectors  $\widehat\KK_{1(\gamma,m)}:=(0,0,0,1,1,1), \widehat\KK_{2(\gamma,m)} := (0,0,1,1)$, see \cite[Chapter~8]{NURBSbook}.
%The affine surfaces $\Gamma_2$ and $\Gamma_4$ can be parametrized by non-rational splines of degree $p_{1(\gamma,m)}:= p_{2(\gamma,m)}:=1$ corresponding to the knot vectors  $\widehat\KK_{1(\gamma,m)}:= \widehat\KK_{2(\gamma,m)} := (0,0,1,1)$, see \cite[Section~6.1]{igafem}.

We consider polynomial degrees $p\in\{0,1,2\}$.
For the initial ansatz space with spline degree ${\bf p}_m:=(p,p)$ for all $m\in\{1,\dots 6\}$, we choose one single element on each patch as initial mesh, and when refining we consider the maximum continuity $C^{p-1}$ within each patch.
%the initial $p$-open  knot vectors ${\kv}_{m,1}^0:=(0,\dots, 0,1,\dots,1)=:{\kv}_{m,2}^0$ for all $m\in\{1,\dots,6\}$.
We choose the parameters of Algorithm~\ref{alg:abstract algorithm} as $\theta=0.5$ and $\const{min}=1$.
In the lowest-order case $p=0$, we modify the refinement strategy of Algorithm~\ref{alg:multi-patch bem} by setting for all $Q\in\QQ_{\coarsecomma m}$,
\begin{align}\label{eq:lowest BEM refinement}
&\NN_\coarse(Q):=\set{Q'\in\QQ_{\coarsecomma m}}{\overline Q\cap \overline Q'\neq\emptyset \wedge \levelT{Q'} < \levelT{Q}}\notag
\\
&\quad\cup \bigcup_{m'\neq m} \set{Q'\in\QQ_{\coarsecomma m'}}{{\rm dim}(\overline Q\cap \overline Q')=d-1},
\end{align}
i.e., within the patch we mark any coarser element which intersects $Q$, and we add adjacent elements from other patches to avoid hanging nodes.
%{\color{magenta} COMMENT: In any case, this modification of the algorithm for degree 0 should not appear in the numerical test, but close to Algorithm~\ref{alg:multi-patch bem}}COMMENT: I would rather keep it here as it is for comparison only and not really isogeometric.
For comparison, we also consider uniform refinement, where we mark all elements in each step, i.e., $\MM_k=\QQ_k$ for all $k\in\N_0$. 
This leads to uniform bisection of all elements.
 In Figure~\ref{fig:bendcube_mesh}, some adaptively generated hierarchical meshes are depicted.
 
To (approximately) compute the energy error, we use extrapolation:
Let $\Phi_{k}\in \mathbb{S}_k$ be the Galerkin approximation of the $k$-th step with  the corresponding coefficient vector $\boldsymbol{c}_k$,
and let $\boldsymbol{V}_k$ be the Galerkin matrix. 
With  Galerkin orthogonality~\eqref{eq:galerkin bem}, which yields that $\dual{\mathscr{V}(\phi-\Phi_k)}{\Phi_k}=0$,  and the energy norm $\norm{\phi}{\mathscr{V}}^2=\dual{\mathscr{V}\phi}{\phi}$ obtained (as, e.g., in~\cite{cp06}) by Aitken's $\Delta^2$-extrapolation, we can compute the energy error as
\begin{align}\label{eq:error calc gal bem1}
\begin{split}
\norm{\phi-\Phi_k}{\mathscr{V}}^2&=\norm{\phi}{\mathscr{V}}^2-\norm{\Phi_k}{\mathscr{V}}^2=\norm{\phi}{\mathscr{V}}^2-\boldsymbol{c}_k^\top\boldsymbol{V}_k\boldsymbol{c}_k.%\cdot \boldsymbol{c_k}.
\end{split}
\end{align}
In  Figure~\ref{fig:bendcube_p} and Figure~\ref{fig:bendcube_pcomp}, 
we plot   the approximated energy error $\norm{\phi-\Phi_k}{\mathscr{V}}$ and the error estimator $\eta_k$ against the  number of elements $\#\QQ_k$.  
 Although we only proved reliability \eqref{eq:reliable bem} of the employed estimator, the curves (in a double-logarithmic plot) for the error and the estimator are parallel in each case, which numerically indicates reliability and efficiency, see also Remark~\ref{rem:weak efficiency} which states efficiency in a slightly weaker sense.
Since the solution $\phi$ is smooth, the uniform and the adaptive approach both lead to the optimal asymptotic convergence rate $\mathcal{O}((\#\QQ_k)^{-3/4-p/2})$, see \cite[Corollary~4.1.34]{ss11}.
However, $\phi$ is nearly singular at $\widetilde {\bf y}_0$, which is why adaptivity yields a much better multiplicative constant.

\begin{figure}[h!]%Quarter ring
\psfrag{x1}[c][c]{\tiny $x_1$}
\psfrag{x2}[c][c]{\tiny $x_2$}
\psfrag{x3}[c][c]{\tiny $x_3$}

\begin{center}
\subfigure[Mesh $\QQ_4$.]{
\includegraphics[width=0.23\textwidth,clip=true]{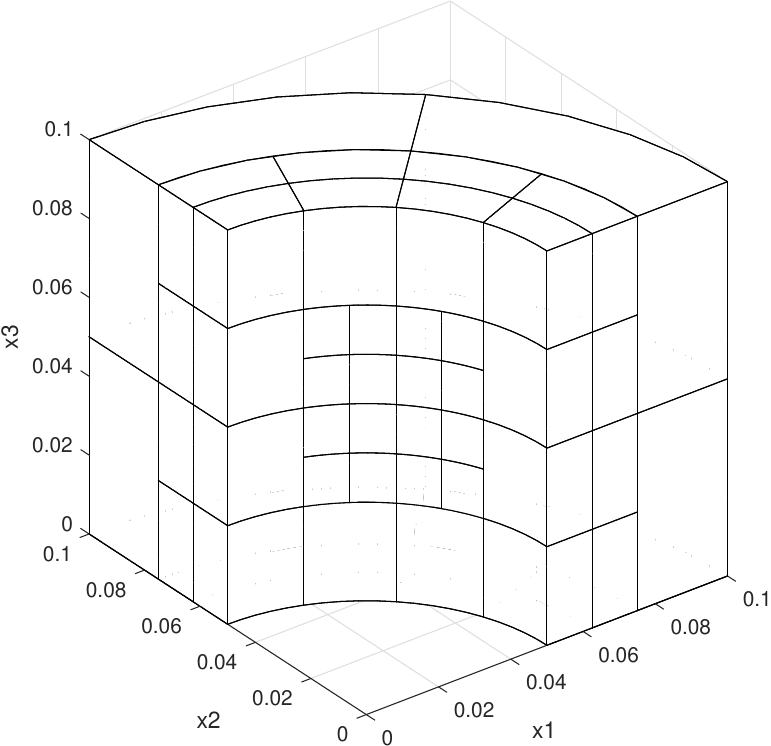}}
\subfigure[Mesh $\QQ_7$.]{
\includegraphics[width=0.23\textwidth,clip=true]{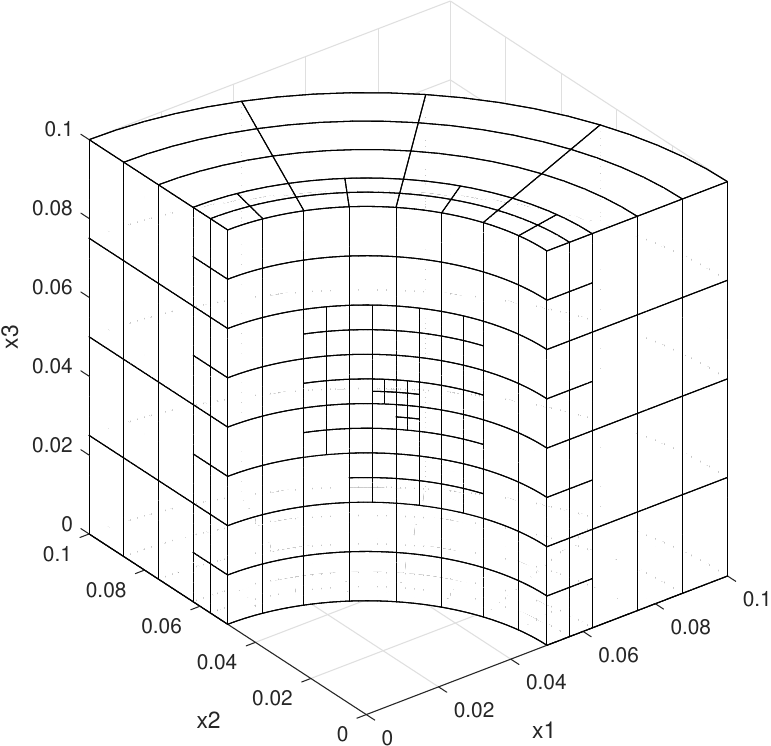}}
\subfigure[Mesh $\QQ_9$.]{
\includegraphics[width=0.23\textwidth,clip=true]{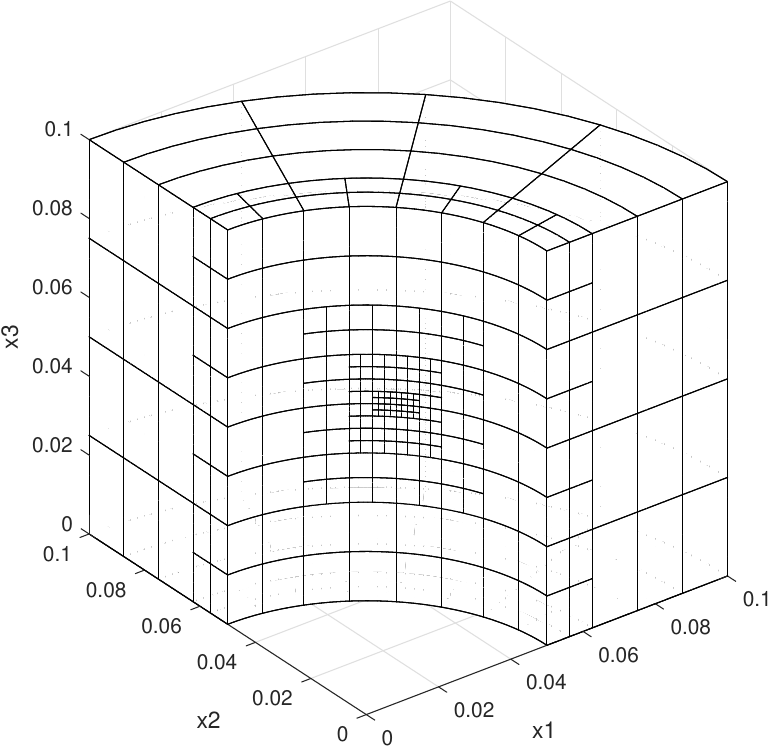}}
\subfigure[Mesh $\QQ_{10}$.]{
\includegraphics[width=0.23\textwidth,clip=true]{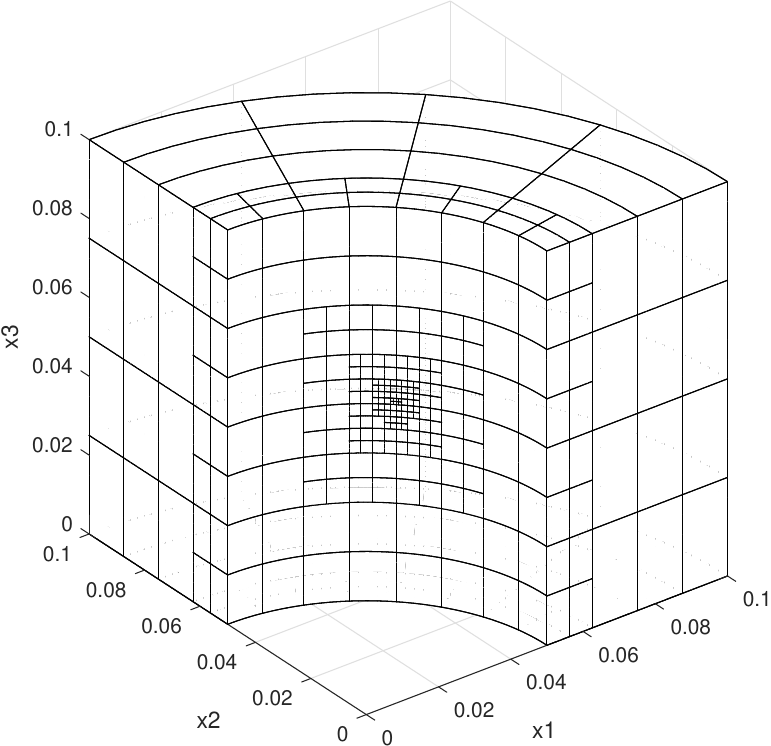}}
\end{center}
\caption{Quasi-singularity on thick ring:  
Hierarchical meshes generated by  Algorithm~\ref{alg:abstract algorithm} (with $\theta=0.5$) for hierarchical splines of degree $p=1$. }

\label{fig:bendcube_mesh}
\end{figure}

\begin{figure}%Quarter ring

\begin{center}

\subfigure[Error and estimator for $p=0$.]{ 
\includegraphics[width=0.35\textwidth,clip=true]{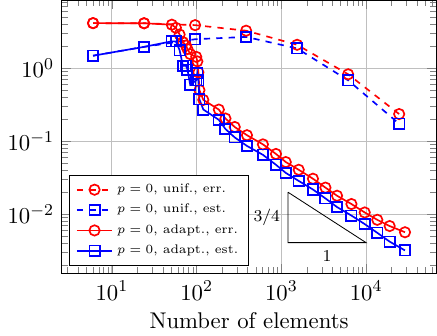}}
\subfigure[Error and estimator for $p=1$.]{ 
\includegraphics[width=0.35\textwidth,clip=true]{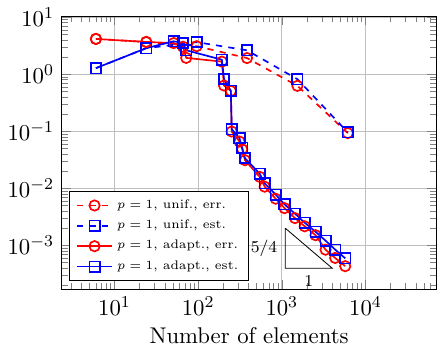}}
\subfigure[Error and estimator for $p=2$.]{ 
\includegraphics[width=0.35\textwidth,clip=true]{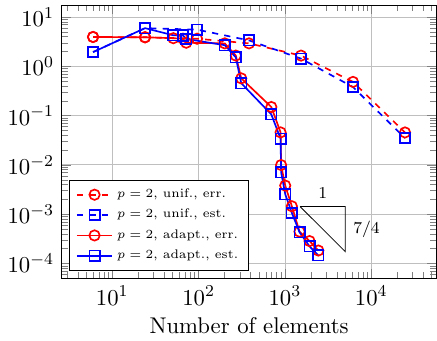}}
\end{center}

\caption{Quasi-singularity on thick ring:  
Energy error $\norm{\phi-\Phi_k}{\mathscr{V}}$ and estimator $\eta_k$ of Algorithm~\ref{alg:abstract algorithm} for hierarchical splines of degree $p$ are plotted versus the number of elements $\#\QQ_k$.
Uniform and adaptive ($\theta=0.5$) refinement is considered.}
\label{fig:bendcube_p} 
\end{figure}

\begin{figure}%Quarter ring

\centering 
 
\includegraphics[width=0.35\textwidth]{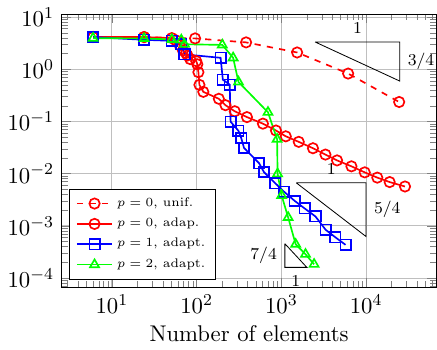}

\caption{Quasi-singularity on thick ring:  
The energy errors $\norm{\phi- \Phi_k}{\mathscr{V}}$ of Algorithm~\ref{alg:abstract algorithm} for hierarchical splines of degree $p\in\{0,1,2\}$ are plotted versus the number of elements $\#\QQ_k$.
Uniform (for $p=2$) and adaptive ($\theta=0.5$ for $p\in\{0,1,2\}$) refinement is considered.}
\label{fig:bendcube_pcomp} 
\end{figure}

%%%%%%%%%%%%%%%%%%%%%%%%%%%%%%%%%%%%%%%%%%%%%%%%%%%%%%%%%%%%%%%%%%%%%%%%%%%%%%%%%%%%%%%%%%%%%%%%%%%%%%%%%%%%%% 
\paragraph{Exterior problem on cube}
We consider the exterior Laplace--Dirichlet problem
\begin{subequations}\label{eq:Laplace exterior bem}
\begin{align}
\begin{split}
-\Delta u&=0\quad\text{in }{\R^3\setminus\overline{\Omega}},\\ u&=g\quad\text{on } \Gamma,\end{split}
\end{align}
for given Dirichlet data $g\in {H}^{1/2}(\Gamma_{})$, together with the far field radiation condition
\begin{align}
u({\bf x})=\mathcal{O}\Big(\frac{1}{|{\bf x}|}\Big)\quad\text{as }|{\bf x}|\to\infty
\end{align}
\end{subequations}
in the cube $\Omega := (0,1/10)^3$.
Then, \eqref{eq:Laplace exterior bem} is equivalent to an integral equation \eqref{eq:strong}, see, e.g., \cite[Theorem~7.15 and Theorem~8.9]{mclean00} or \cite[Section~3.4.2.2]{ss11}.
The (exterior) normal derivative $\phi:=\partial_{{\boldsymbol{\nu}}} u$ of the  weak solution $u$ of  \eqref{eq:Laplace exterior bem} satisfies~\eqref{eq:strong} with $f:=(\mathscr{K}-1/2)g$, i.e., 
\begin{equation*}%\label{eq:Symmy}
\mathscr{V}\phi =(\mathscr{K}-1/2) g,
\end{equation*}
where $\mathscr{K}$ denotes again the double-layer operator~\eqref{eq:double layer mapping}.

We choose $g:=-1$. 
Since the constant function $1$ satisfies the Laplace problem, \eqref{eq:Symmy interior} implies that   $\mathscr{K}1=-1/2$, 
and thus $f=(\mathscr{K}-1/2) g$ simplifies to $f=1$.
We expect singularities at the non-convex edges of $\R^3\setminus\overline{\Omega}$, i.e., at all edges of the cube $\Omega$.

The boundary of the cube is trivially represented by six bilinear patches. 
Again, we consider $p\in\{0,1,2\}$ and discrete spaces of splines of degree ${\bf p}_m:=(p,p)$ for all $m\in\{1,\dots 6\}$ with one single element per patch as initial mesh, and when refining we consider the maximum continuity $C^{p-1}$ across the elements within the patch. %the initial $p$-open  knot vectors ${\kv}_{m,1}^0$ $:=$ $(0,\dots 0,1,\dots,1)$ $=:$ ${\kv}_{m,2}^0$ for all $m\in\{1,\dots,6\}$. 
%for the initial ansatz space with spline degree ${\bf p}_m:=(p,p)$ for all $m\in\{1,\dots 6\}$.
We choose the parameters of Algorithm~\ref{alg:abstract algorithm} as $\theta=0.5$ and $\const{min}=1$, where we use again \eqref{eq:lowest BEM refinement} in the lowest-order case $p=0$.
For comparison, we also consider uniform refinement, where we mark all elements at each step, i.e., $\MM_k=\QQ_k$ for all $k\in\N_0$. 
This leads to uniform bisection of all elements.
 
 In Figure~\ref{fig:cube_mesh}, some adaptively generated hierarchical meshes are depicted.
 In  Figure~\ref{fig:cube_p} and Figure~\ref{fig:cube_pcomp}, 
we plot   the approximated energy error $\norm{\phi-\Phi_k}{\mathscr{V}}$ (see~\eqref{eq:error calc gal bem1}) and the error estimator $\eta_k$ against the  number of elements $\#\QQ_k$.  
 In all cases, the lines of the error and the error estimator are parallel, which numerically indicates reliability 
 and efficiency.
The uniform approach always leads to the suboptimal  convergence rate $\mathcal{O}((\#\QQ_k)^{-1/3})$  due to the edge singularities.
Independently on the chosen polynomial degree $p$, the adaptive approach leads approximately to the rate $\mathcal{O}((\#\QQ_k)^{-1/2})$.
For smooth solutions $\phi$, one would expect the rate  $\mathcal{O}((\#\QQ_k)^{-3/4-p/2})$, see \cite[Corollary~4.1.34]{ss11}.
However, according to Theorem~\ref{thm:H-igabem}, the achieved rate is optimal if one uses the proposed refinement strategy and the resulting hierarchical splines.
The reduced optimal convergence rate is likely due to the edge singularites.
A similar reduced convergence behavior has also been observed in \cite{fp08}
%\cite[Section~5.2]{ferraz07} 
for the lowest-order case $p=0$ and in Section~\ref{sec:numerical igafem} in case of IGAFEM.
\cite{ferraz07} additionally considers anisotropic refinement, which recovers the optimal rate $\mathcal{O}((\#\QQ_k)^{-3/4})$.

\begin{figure}[h!] %Cube
\psfrag{x1}[c][c]{\tiny $x_1$}
\psfrag{x2}[c][c]{\tiny $x_2$}
\psfrag{x3}[c][c]{\tiny $x_3$}

\begin{center}
\subfigure[Mesh $\QQ_8$.]{ 
\includegraphics[width=0.23\textwidth,clip=true]{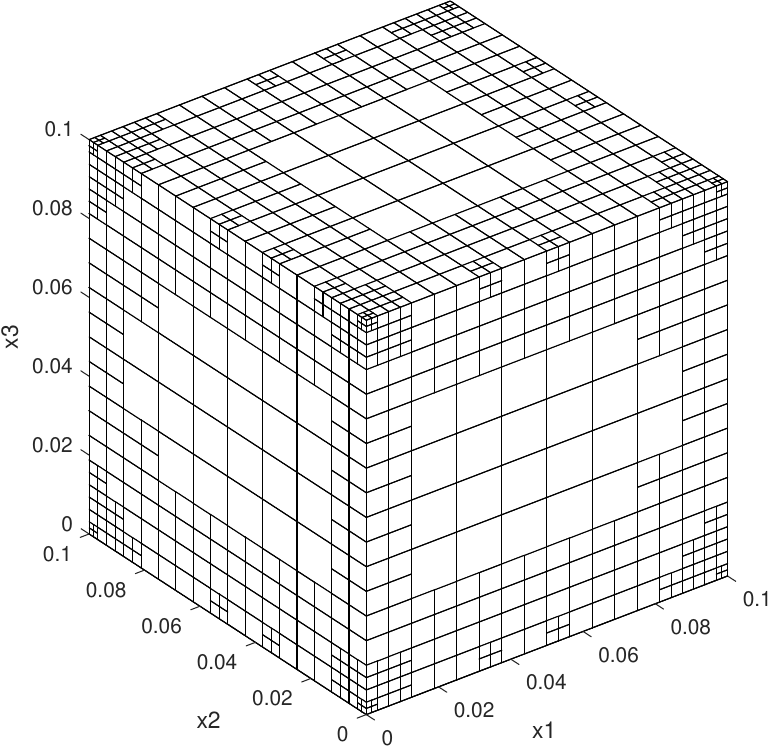}}
\subfigure[Mesh $\QQ_{10}$.]{ 
\includegraphics[width=0.23\textwidth,clip=true]{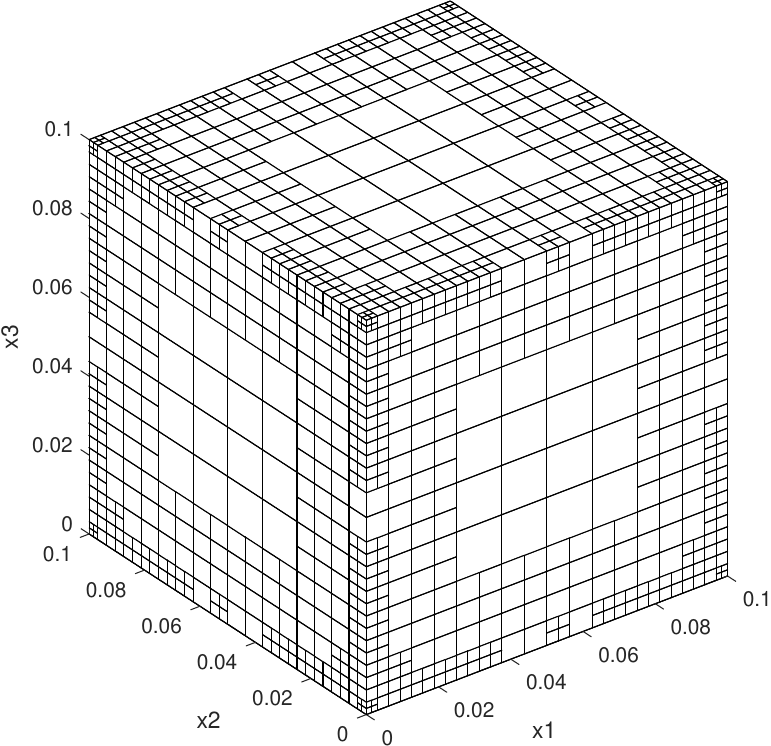}}
\subfigure[Mesh $\QQ_{11}$.]{ 
\includegraphics[width=0.23\textwidth,clip=true]{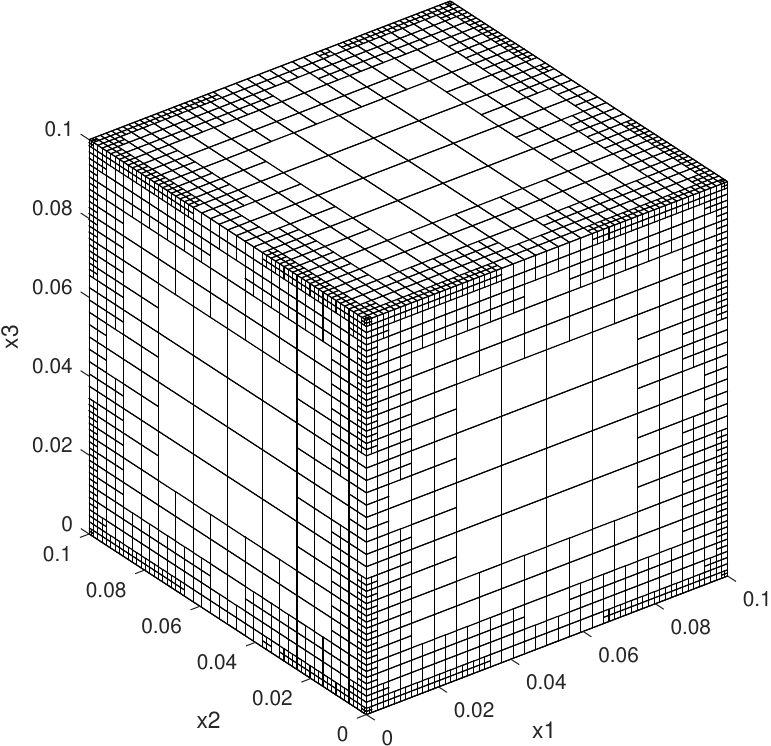}}
\subfigure[Mesh $\QQ_{13}$]{ 
\includegraphics[width=0.23\textwidth,clip=true]{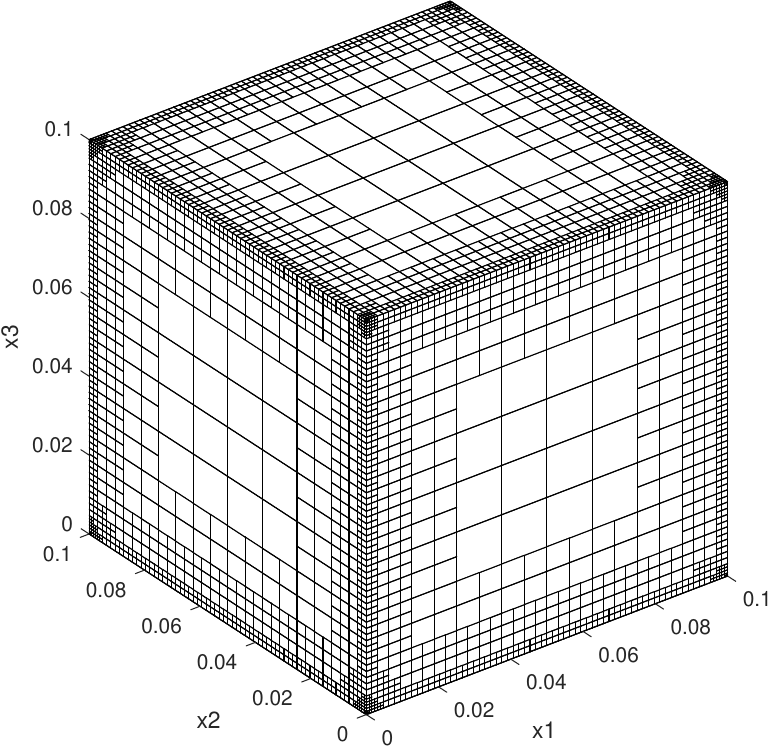}}
\end{center}
\caption{Exterior problem on cube:
Hierarchical meshes generated by  Algorithm~\ref{alg:abstract algorithm} (with $\theta=0.5$) for hierarchical splines of degree $p=1$. }

\label{fig:cube_mesh}
\end{figure}

\begin{figure}%Cube

\begin{center}
 
\subfigure[Error and estimator for $p=0$.]{ 
\includegraphics[width=0.35\textwidth,clip=true]{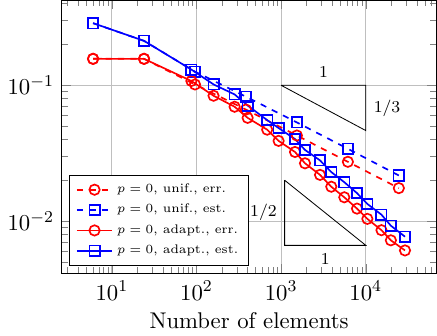}}
\subfigure[Error and estimator for $p=1$.]{ 
\includegraphics[width=0.35\textwidth,clip=true]{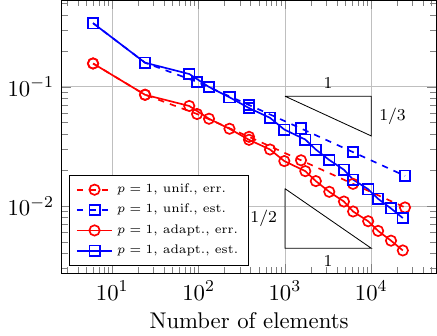}}
\subfigure[Error and estimator for $p=2$.]{ 
\includegraphics[width=0.35\textwidth,clip=true]{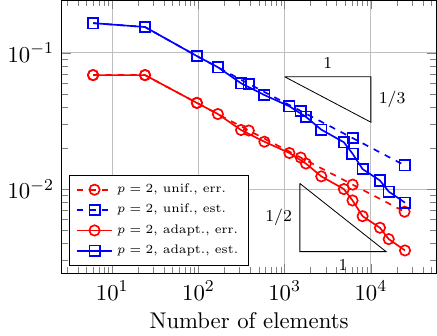}}
\end{center}

\caption{Exterior problem on cube:
Energy error $\norm{ \phi- \Phi_k}{\mathscr{V}}$ and estimator $\eta_k$ of Algorithm~\ref{alg:abstract algorithm} for hierarchical splines of degree $p$ are plotted versus the number of elements $\#\QQ_k$.
Uniform  and adaptive ($\theta=0.5$) refinement is considered.}
\label{fig:cube_p} 
\end{figure}

\begin{figure}%Cube

\centering 
 
\includegraphics[width=0.35\textwidth]{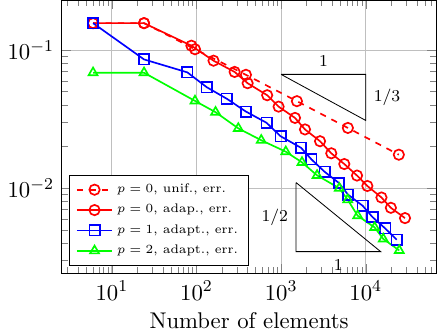}

\caption{Exterior problem on cube:
The energy errors $\norm{\phi- \Phi_k}{\mathscr{V}}$ of Algorithm~\ref{alg:abstract algorithm} for hierarchical splines of degree $p\in\{0,1,2\}$ are plotted versus the number of elements $\#\QQ_k$.
Uniform (for $p=0$) and adaptive ($\theta=0.5$ for $p\in\{0,1,2\}$) refinement is considered.}
\label{fig:cube_pcomp} 
\end{figure}

%%%%%%%%%%%%%%%%%%%%%%%%%%%%%%%%%%%%%%%%%%%%%%%%%%%%%%%%%%%%%%%%%%%%%%%%%%%%%%%%%%%%%%%%%%%%%%%%%%%%%%%%%%%%%% 
%%%%%%%%%%%%%%%%%%%%%%%%%%%%%%%%%%%%%%%%%%%%%%%%%%%%%%%%%%%%%%%%%%%%%%%%%%%%%%%%%%%%%%%%%%%%%%%%%%%%%%%%%%%%%% 
\subsection{Adaptive IGABEM with T-splines}\label{sec:T-igabem}
%{\color{magenta} COMMENT: same comment as before for the compatibility condition between patches.}
%{\color{magenta} FIX THE WHOLE SECTION. Many times you refer to H-admissible, or results for HB-splines.}Actually, this is mostly on purpose because the proofs are very similar.{\color{magenta} As discussed the other day: in those parts say more clearly that the result is for HB-splines (not only ${\cal H}$-admissible meshes), and that it holds analogously for T-splines. Check also if there are more detailed results for T-splines (the FEM results in \cite{gp18}), and try to cite them.}
%%%%%%%%%%%%%%%%%%%%%%%%%%%%%%%%%%%%%%%%%%%%%%%%%%%%%%%%%%%%%%%%%%%%%%%%%%%%%%%%%%%%%%%%%%%%%%%%%%%%%%%%%%%%%% 
%%%%%%%%%%%%%%%%%%%%%%%%%%%%%%%%%%%%%%%%%%%%%%%%%%%%%%%%%%%%%%%%%%%%%%%%%%%%%%%%%%%%%%%%%%%%%%%%%%%%%%%%%%%%%% 
%T-meshes on the boundary $\Gamma$ can be defined similarly as in Section~\ref{sec:T-multipatches}: 
%We now apply the IGABEM setting for a multi-patch domain, assuming that the refinement on each patch is as in Section~\ref{sec:T refine}, and without enforcing any continuity of the basis functions across patches.
%COMMENT: Rcheck.
We start defining T-splines on the multi-patch boundary $\Gamma$. 
Note that these T-splines do not need to be continuous across interfaces as we consider the weakly-singular integral equation. 
In contrast to the common approach in the engineering literature, where T-spline functions may be smooth across patches, see, e.g., \cite{ScSiEvLiBoHuSe13,KoGiPoKa15,SiScTaThLi14}, we define them separately on each patch, see also Remark~\ref{rem:T-adaptive Neumann}.  
For each $m=1,\dots,M$, let $\mathbf{p}_m$ be a vector of positive polynomial degrees and $\mathbf{\kv}^0_m$ be a  multivariate open knot vector on $\widehat\Gamma=(0,1)^{\dpa}$, $\dpa=d-1\ge 2$, with induced initial index T-mesh $\check\QQ_{0,m}$. 
We assume that $\hat{\mathbb{S}}_{{\bf p}_m} (\mathbf{\kv}^0_m)$ and $\hat{\mathbb{S}}_{{\bf p}_{\F_m}} (\mathbf \kv_{\F_m})$ with ${\bf p}_{\F_m}$ and ${\bf T}_{\F_m}$ from the parametrization  ${\bf F}_m:\widehat\Gamma\to\Gamma_m$ (see Section~\ref{sec:multi-patch}) are compatible to each other as in Section~\ref{sec:IGA-basics}.
Note that $\hat{\mathbb{S}}_{{\bf p}_m} (\mathbf{\kv}^0_m)=\hat{\mathbb{S}}^{\rm T}_{{\bf p}_m}(\check\QQ_{0,m},{\bf \kv}_m^0)$.
Moreover, we assume for the initial mesh $\QQ_0=\bigcup_{m=1}^M \QQ_{0,m}$ with $\QQ_{0,m}:=\set{\F_m(\hat Q)}{\hat Q\in\hat\QQ_{0, m}}$ that there are no hanging nodes between patch interfaces $\Gamma_{m,m'}=\overline{ \Gamma_m} \cap \overline {\Gamma_{m'}}$ with $m\neq m'$, see also \eqref{P:conforming-mesh} of Section~\ref{sec:multi-patch}.
%Moreover, we assume that $\mathbf{p}_m$ and $\mathbf{\kv}^0_m$ satisfy the compatibility condition~\eqref{P:conforming-basis-space} of Section~\ref{sec:IGA-basics}.
%As in Section~\ref{sec:H-igabem}, we mention that it would be sufficient to assume that there are no initial hanging nodes between patch interfaces.
For each $m=1,\dots,M$, we abbreviate the set of all corresponding admissible meshes as $\widehat\Q_m$, see Section~\ref{sec:T refine}.
Moreover, we abbreviate $\Q_m:=\set{\QQ_{\coarsecomma m}}{\hat \QQ_{\coarsecomma m}\in\hat\Q_m}$ with $\QQ_{\coarsecomma m}:=\set{\F_m(\hat Q)}{\hat Q\in\hat\QQ_{\coarsecomma m}}$.
The index T-mesh corresponding to $\QQ_{\coarsecomma m}$ is denoted by $\check\QQ_{\coarsecomma m}$, see Remark~\ref{rem:paramtric to index}.
We define the set of all admissible meshes $\Q$ as the set of all 
\begin{align*}
\QQ_\coarse=\bigcup_{m=1}^M \QQ_{\coarsecomma m} \text{ with }\QQ_{\coarsecomma m}\in\Q_m
\end{align*} 
 such that $|{\rm lev}(Q)-{\rm lev}(Q')|\le1$ whenever $Q\in\QQ_{\coarsecomma m}, Q'\in\QQ_{\coarsecomma m'}$ with $m\neq m'$ and $\overline Q\cap\overline Q'\neq\emptyset$.
 %there are no hanging nodes on any interface $\Gamma_{m,m'}=\overline{ \Gamma_m} \cap \overline {\Gamma_{m'}}$ with $m\neq m'$;  see also \eqref{P:conforming-mesh} of Section~\ref{sec:multi-patch}.

For $\QQ_\coarse\in\Q$, the associated ansatz space is defined as
\begin{align*}
\begin{split}
\mathbb{S}_\coarse:=\big\{ V \in L^2(\Gamma) : V|_{\Gamma_m} \in {\mathbb{S}}^{\rm T}_{{\bf p}_m}(\check\QQ_{\coarsecomma m},{\bf \kv}_m^0), \quad \\
\text{ for } m = 1, \ldots, M\big\},
\end{split}
\end{align*}
where
\begin{align*}
{\mathbb{S}}^{\rm T}_{{\bf p}_m}(\check\QQ_{\coarsecomma m},{\bf \kv}_m^0):=\set{\hat V\circ\F_m^{-1}}{\hat V\in\hat{\mathbb{S}}^{\rm T}_{{\bf p}_m}(\check\QQ_{\coarsecomma m},{\bf \kv}_m^0)}.
\end{align*}
To obtain a basis of the space $\mathbb{S}_\coarse$, we first define  
\begin{align*}
B_{\coarsecomma m,\anchor,\mathbf{p}_m}:=\hat B_{\coarsecomma m,\anchor,\mathbf{p}_m}\circ\F_m^{-1}\\
%\HH_{{\bf p}_m}(\hat\QQ_{\coarsecomma m},\mathbf{\kv}^{0}_m)&:=\set{\hat\beta\circ\F_m^{-1}}{\hat\beta\in \hat\HH_{{\bf p}_m}(\hat\QQ_{\coarsecomma m},\mathbf{\kv}^{0}_m)},\\
%\QQ_{{\bf p}_m}(\hat\QQ_{\coarsecomma m},\mathbf{\kv}^{0}_m)&:=\set{\hat\tau\circ\F_m^{-1}}{\hat\tau\in \hat\QQ_{{\bf p}_m}(\hat\QQ_{\coarsecomma m},\mathbf{\kv}^{0}_m)}.
\end{align*}
for all anchors $\anchor \in \nodes_{{\bf p}_m}(\Tmesh_{\coarsecomma m},{\bf T}^0_m)$,
where $\hat B_{\coarsecomma m,\anchor,\mathbf{p}_m}$ is defined as in \eqref{eq:T-spline}.
Since the ansatz functions do not have to be continuous across interfaces, a basis of $\mathbb{S}_\coarse$ is given via
\begin{align*}
\mathbb{S}_\coarse&={\rm span}\Big( \bigcup_{m=1}^M \set{B_{\coarsecomma m,\anchor,\mathbf{p}_m}}{\anchor \in \nodes_{\mathbf{p}_m}(\Tmesh_{\coarsecomma m},{\bf T}^0_m)}\Big),
\end{align*}
where we extend the involved T-spline blending functions, which actually only live on $\Gamma_m$, by zero to the whole boundary $\Gamma$. 
%{\color{magenta} COMMENT: I would first define the basis ${\cal B}$, see Section~\ref{sec:T-bem space verification} below. And I would add the extension by zero to the definition. I removed the hat in $\hat B$.}
%
\begin{remark}\label{rem:T-adaptive Neumann}
In contrast to weakly-singular integral equations, hypersingular integral equations, which result from Neumann problems (see e.g. \cite[Chapter~7]{mclean00}), require continuous trial functions. 
While the construction of continuous T-splines across patches has been already used in other works, see for instance \cite{ScSiEvLiBoHuSe13,KoGiPoKa15}, the extension of the refinement algorithm with admissible meshes in Section~\ref{sec:T refine} to the multi-patch case is not evident, because the alternate directions of bisection may differ from patch to patch.
%We stress that  the construction of corresponding basis functions on the given admissible meshes is unfortunately not evident.
%
%As in Section~\ref{sec:T-multipatches}, one sees that the conformity property \eqref{P:conforming-basis-space} of Section~\ref{sec:IGA-basics} is satisfied for T-spline blending functions on admissible meshes.
%Thus, corresponding basis functions can be easily constructed.
%We note that for weakly-singular equations, it is actually not necessary to forbid  hanging nodes at interfaces, but it would be sufficient to control the size difference between intersecting elements. 
%This however would not guarantee \eqref{P:conforming-basis-space} and would complicate the construction of a basis in the hypersingular case, wherefore we chose the current definition of admissibility.
\end{remark}

To obtain admissible meshes starting from the initial one, we adapt the single-patch refinement strategy from Section~\ref{sec:T refine}:
For arbitrary $\QQ_\coarse\in\Q$ and $Q\in\QQ_{\coarsecomma m}$ let us denote by $\hat Q:=\F_m^{-1}(Q)$ and $\check Q$ the corresponding elements in the parametric domain and in the index domain, respectively, and let $\NN_{\coarsecomma m}(\check Q)\subseteq\check\QQ_{\coarsecomma m}$ denote the corresponding neighborhood, see Section~\ref{sec:T refine}.
Recall that each element in $\NN_{\coarsecomma m}(\check Q)$ lies in the index/parametric domain.
We define the \emph{neighbors} of $Q$ as
\begin{align*}
 &\NN_\coarse(Q):=\set{Q'\in\QQ_{\coarsecomma m}}{\check Q' \in \NN_{\coarsecomma m}(\check Q)}\\
 &\cup \bigcup_{m'\neq m} \set{Q'\in\QQ_{\coarsecomma m'}}{\overline Q\cap \overline Q'\neq\emptyset\wedge{\rm lev}(Q)>{\rm lev}(Q')},
\end{align*}
i.e., apart from the standard neighbors within the patch, we add (as already suggested in Remark~\ref{rem:H-adaptive Neumann} for HB-splines) neighbor elements from other patches of a coarser level.

With this notation, we can employ Algorithm~\ref{alg:multi-patch bem} of Section~\ref{sec:H-igabem} for refinement.
Then, one can show that the set of all possible refinements $\refine(\QQ_0)$ coincides with $\Q$. 
Such a result is proved in \cite[Proposition~5.4.3]{gantner17} for the analogous case of HB-splines on $\HH$-admissible meshes of class $\mu=2$.
The proof easily extends to T-splines on admissible T-meshes.
%\textcolor{magenta}{COMMENT: here, and in the next paragraphs, you exaggerate citing ``the analogous case of $\HH$-admissible meshes of class $\mu=2$''. For me, it was confusing and hard to follow. You should better explain what is the analogy and the differences, and do it in every paragraph if necessary.}
%ANSWER: I don't see an immediate solution. Rcheck

The given setting fits into the abstract framework of Section~\ref{sec:abem}.
We stress that this result is new, but follows quite easily from \cite{gp18}, where IGAFEM with T-splines has been considered.
We only sketch the proof in Section~\ref{sec:T-bem space verification}.
Note that most of the properties have already been verified in Section~\ref{sec:T-igafem} for IGAFEM-meshes.

\begin{theorem}\label{thm:T-igabem}
T-splines on admissible meshes satisfy the mesh properties  \eqref{M:shape}--\eqref{M:locuni},  the refinement properties \eqref{R:childs}--\eqref{R:overlay}, and the space properties \eqref{S:nestedness}--\eqref{S:local}, \eqref{S:proj bem}, and \eqref{S:inverse bem}--\eqref{S:stab bem}.
The involved constants depend only on the dimension $\dph$, the parametrization constants $C_{\F_m}$ of Section~\ref{sec:parametrization_assumptions}, the degree $\mathbf{p}_m$, and the initial knot vector ${\bf T}_m^0$.
In particular, Theorem~\ref{thm:abstract bem} is applicable. 
In conjunction with Theorem~\ref{thm:abstract main}, this yields reliability~\eqref{eq:reliable bem} and  linear convergence at optimal rate~\eqref{eq:linear convergence}--\eqref{eq:optimal convergence} of the residual error estimator~\eqref{eq:eta bem}, when the adaptive Algorithm~\ref{alg:abstract algorithm} is employed.
\end{theorem}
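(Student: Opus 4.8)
The plan is to reduce everything to the single-patch T-spline analysis of Section~\ref{sec:T-igafem} combined with the multi-patch gluing argument already used for hierarchical splines in Section~\ref{sec:H-igabem}, and then to verify the genuinely BEM-specific axioms \eqref{S:proj bem} and \eqref{S:inverse bem}--\eqref{S:stab bem} in place of their FEM counterparts. First I would observe that the mesh properties \eqref{M:shape}--\eqref{M:locuni} are purely geometric: within each patch they follow verbatim from Section~\ref{sec:T-fem mesh verification}, with shape-regularity stemming from the periodic alternation of bisection directions and local quasi-uniformity from Remark~\ref{rem:neighbors touch} together with Proposition~\ref{prop:lqiT}, while across interfaces the defining constraint $|{\rm lev}(Q)-{\rm lev}(Q')|\le1$ for touching elements of adjacent patches yields \eqref{M:locuni} directly. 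The refinement axioms are equally transparent: \eqref{R:childs} holds with $\const{child}=2$ since a refined element is bisected in a single direction, \eqref{R:overlay} is constructed patch-wise exactly as in Section~\ref{sec:T-fem refinement verification}, and the closure estimate \eqref{R:closure} combines the single-patch bound of Proposition~\ref{prop:T-spline lincomp} with the cross-patch propagation controlled by the enlarged neighborhood $\NN_\coarse$; here I would follow \cite[Proposition~5.4.3]{gantner17}, which for the analogous case of HB-splines establishes both $\refine(\QQ_0)=\Q$ and that the added cross-patch refinements do not break linear complexity.

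For the space properties I would first note that nestedness \eqref{S:nestedness} is immediate from Proposition~\ref{prop:T-nestedness} applied on each patch, and that the local domain of definition \eqref{S:local} follows as in the FEM case (Section~\ref{sec:T-fem space verification}) from Proposition~\ref{prop:tsp-sext} and the support structure of the blending functions — indeed it is simpler here, since the interface continuity that complicates the FEM argument is absent. The key new ingredient is the cut-off projector required by \eqref{S:proj bem}: for $\SS\subseteq\QQ_\coarse$ I would define $\JJ_{\coarsecomma \SS}$ patch-wise by pushing forward the dual-functional operator $\projT{\mathbf{p}}$ of Section~\ref{sec:dual-compatible} and discarding every T-spline whose support is not entirely contained in $\bigcup\SS$. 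Because $\projT{\mathbf{p}}$ reproduces functions that are locally in the T-spline space and its dual functionals are local (Corollary~\ref{corol:local-projector-Tsplines}), the local projection property \eqref{S:proj bem} then holds with the same $\q{loc},\q{proj}$ as in the FEM case, the cut-off causing no loss on any element $Q$ with $\Pi_\coarse^{\q{loc}}(Q)\subseteq\SS$ since the discarded functions are supported away from such $Q$.

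It remains to verify \eqref{S:inverse bem}--\eqref{S:stab bem}. The local $L^2$-stability \eqref{S:stab bem} is inherited from the stability of the dual functionals exactly as Proposition~\ref{prop:tsp-proj} was used in Section~\ref{sec:T-fem space verification}. For the local approximation of unity \eqref{S:unity bem} I would invoke the criterion of \cite[Proposition~5.5.5]{gantner17}: the push-forwards of the blending functions are non-negative, locally supported in the sense of \eqref{S:unity bem} (again via Proposition~\ref{prop:tsp-sext}), and form a partition of unity by Proposition~\ref{prop:Tsplines basis} in conjunction with Proposition~\ref{prop:T-nestedness}, which guarantees that the constants lie in the space. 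Finally, the inverse inequality \eqref{S:inverse bem} in the $H^{-1/2}$-norm I would obtain by transporting the proof of \cite[Section~5.5.9]{gantner17}, which builds on \cite{dfghs04} and uses only \eqref{M:shape}--\eqref{M:locuni} together with the elementwise polynomial structure of the ansatz functions; by Lemma~\ref{lemma:2beziers} each element of the parametric T-mesh consists of at most two B\'ezier elements of equal size, on which the T-splines are genuine polynomials of degree $\mathbf{p}_m$, so the required structure is available.

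The places where real care is needed are precisely this last analytic point and the multi-patch closure estimate. The $H^{-1/2}$ inverse estimate is the only genuinely negative-order Sobolev ingredient and does not reduce to a scaling argument, so one must confirm that the B\'ezier-element reformulation via Lemma~\ref{lemma:2beziers} feeds correctly into the \cite{dfghs04}-type proof rather than merely quoting it. Verifying \eqref{R:closure} across patches is likewise delicate, because the alternating bisection directions generally differ from patch to patch; the cross-patch marking encoded in $\NN_\coarse$ must therefore be shown not to cascade beyond a number of elements controlled, up to a uniform constant, by the marked set, which is the crux that makes the single-patch estimate of Proposition~\ref{prop:T-spline lincomp} globally usable.
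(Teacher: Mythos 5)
Your proposal is correct and follows essentially the same route as the paper's proof: reduction of \eqref{M:shape}--\eqref{M:locuni}, \eqref{R:childs}--\eqref{R:overlay}, and \eqref{S:nestedness}--\eqref{S:local} to the single-patch FEM analysis of Section~\ref{sec:T-igafem} plus the multi-patch gluing from the hierarchical BEM case, the cut-off dual-functional operator for \eqref{S:proj bem} and \eqref{S:stab bem}, the partition-of-unity criterion of \cite[Proposition~5.5.5]{gantner17} for \eqref{S:unity bem}, and the transport of the $H^{-1/2}$ inverse inequality of \cite[Section~5.5.9]{gantner17}/\cite{dfghs04} via Lemma~\ref{lemma:2beziers}. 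Your phrasing of the cut-off operator (keeping only blending functions whose support lies entirely in $\bigcup\SS$) is in fact the more precise statement of what the paper intends, and your flagged points of care (the negative-order inverse estimate and the cross-patch closure estimate) coincide with the two steps the paper itself treats as non-trivial.
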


%%%%%%%%%%%%%%%%%%%%%%%%%%%%%%%%%%%%%%%%%%%%%%%%%%%%%%%%%%%%%%%%%%%%%%%%%%%%%%%%%%%%%%%%%%%%%%%%%%%%%%%%%%%%%% 
%\subsubsection{Mesh properties}
%\label{sec:T-bem mesh verification}
%%%%%%%%%%%%%%%%%%%%%%%%%%%%%%%%%%%%%%%%%%%%%%%%%%%%%%%%%%%%%%%%%%%%%%%%%%%%%%%%%%%%%%%%%%%%%%%%%%%%%%%%%%%%%% 

%%%%%%%%%%%%%%%%%%%%%%%%%%%%%%%%%%%%%%%%%%%%%%%%%%%%%%%%%%%%%%%%%%%%%%%%%%%%%%%%%%%%%%%%%%%%%%%%%%%%%%%%%%%%%% 
%\subsubsection{Refinement properties}
%\label{sec:T-bem refinement verification}
%%%%%%%%%%%%%%%%%%%%%%%%%%%%%%%%%%%%%%%%%%%%%%%%%%%%%%%%%%%%%%%%%%%%%%%%%%%%%%%%%%%%%%%%%%%%%%%%%%%%%%%%%%%%%% 
%The properties \eqref{R:childs}--\eqref{R:overlay} follow as in Section~\ref{sec:T-multipatches}. 

%%%%%%%%%%%%%%%%%%%%%%%%%%%%%%%%%%%%%%%%%%%%%%%%%%%%%%%%%%%%%%%%%%%%%%%%%%%%%%%%%%%%%%%%%%%%%%%%%%%%%%%%%%%%%% 
\subsubsection{Mesh, refinement, and space properties}
\label{sec:T-bem space verification}
%%%%%%%%%%%%%%%%%%%%%%%%%%%%%%%%%%%%%%%%%%%%%%%%%%%%%%%%%%%%%%%%%%%%%%%%%%%%%%%%%%%%%%%%%%%%%%%%%%%%%%%%%%%%%% 
The mesh properties  \eqref{M:shape}--\eqref{M:locuni} follow as for IGAFEM in Section~\ref{sec:T-fem mesh verification}.
The child estimate \eqref{R:childs} is trivially satisfied.
The closure estimate \eqref{R:closure} can be proved similarly as in the single-patch case~\cite[Section~6]{mp15}.%\cite[Section~ 5.5.7]{gantner17} for the analogous case of hierarchical splines on $\HH$-admissible meshes   of class $\mu=2$.
The overlay in \eqref{R:overlay} can be built patch-wise as in Section~\ref{sec:T-fem refinement verification}. 
The properties \eqref{S:nestedness}--\eqref{S:local} follow as in Section~\ref{sec:T-fem space verification}. 

For the analogous case of hierarchical B-splines on $\HH$-admissible meshes of class $\mu=2$, the proof of  \eqref{S:inverse bem} is given in \cite[Section~5.5.9]{gantner17}, which itself strongly builds on a similar result on triangular meshes \cite{dfghs04}.
However, we stress that the proof only hinges on the mesh properties \eqref{M:shape}--\eqref{M:locuni} and the fact that hierarchical splines are polynomials on all elements in the parametric domain.  
Indeed, it only requires the considered functions to be polynomials on a rectangular subset of the same size as the element.
Since there are at most two B\'ezier elements on each element (see Lemma~\ref{lemma:2beziers}),  the result thus easily extends to T-splines on admissible T-meshes.

The reference \cite[Proposition~5.5.5]{gantner17} states that the local approximation of unity property \eqref{S:unity bem} is satisfied if there exists a finite subset $\mathcal{B}\subset \mathbb{S}_\coarse$ whose elements 
are non-negative, local in the sense that for all $\beta\in\mathcal{B}$ there exists $Q\in\QQ_\coarse$ and a uniform constant $q\in\N$ such that $\supp(\beta)\subseteq \pi_\coarse^q(Q)$, and form a partition of unity.
According to Proposition~\ref{prop:tsp-sext} and Proposition~\ref{prop:Tsplines basis} together with Proposition~\ref{prop:T-nestedness}, these assumptions are fulfilled for T-spline basis functions 
%{\color{magenta} COMMENT: define above, as mentioned in a previous comment.}COMMENT: I would rather not do this because we have not introduced any notation for the basis in the parametric domain either. We also used $\BB$ in the analogous section for hierarchical splines. It is really just an auxiliary variable here. 
\begin{align*}
\BB:= \bigcup_{m=1}^M \set{B_{\coarsecomma m,\anchor,\mathbf{p}_m}}{\anchor \in \nodes_{\mathbf{p}_m}(\Tmesh_{\coarsecomma m},{\bf T}^0_m)},
\end{align*}
on admissible meshes.

\paragraph{Scott--Zhang type operator}
Since the ansatz functions do not have to be continuous at interfaces and due to the regularity of the parametrization ${\bf F}_m$ of Section~\ref{sec:parametrization_assumptions}, it is sufficient to provide for each patch $\Gamma_m$ and $\hat\SS_m\subseteq\hat\QQ_{\coarsecomma m}$ an operator
\begin{align*}
\hat\JJ_{\coarsecomma m,\hat\SS_m}:L^2(\hat\Gamma)\to\big\{\hat\Psi_{\coarsecomma m}\in \hat{\mathbb{S}}^{\rm T}_{{\bf p}_m}(\check\QQ_{\coarsecomma m},{\bf \kv}_m^0): \\
\hat\Psi_{\coarsecomma m}|_{\bigcup(\hat\QQ_{\coarsecomma m}\setminus\hat\SS_m)}=0\big\}
\end{align*}
satisfying \eqref{S:proj bem} and \eqref{S:stab bem}.
We define this operator similarly as $\projT{\mathbf{p}}$ in Section~\ref{sec:dual-compatible}, but now have to take into account that the output should only live on $\bigcup \hat\SS_m$ 
by discarding all T-spline blending functions that have support entirely outside of this set. 
Then the local projection property \eqref{S:proj bem} as well as the local $L^2$-stability \eqref{S:stab bem} for the operator $\hat\JJ_{\coarsecomma m,\hat\SS_m}$ can be shown as in Section~\ref{sec:T-fem space verification}. 
A detailed analogous proof is given for hierarchical splines on $\HH$-admissible meshes of class $\mu=2$ in \cite[Section~5.5.14]{gantner17}. 
Indeed, the proof could essentially be copied, replacing THB-splines and their corresponding dual functionals by T-spline basis functions and their dual functionals.
% and the operators on each patch by the ones corresponding to T-splines.}

%%%%%%%%%%%%%%%%%%%%%%%%%%%%%%%%%%%%%%%%%%%%%%%%%%%%%%%%%%%%%%%%%%%%%%%%%%%%%%%%%%%%%%%%%%%%%%%%%%%%%%%%%%%%%%% 
%\subsection{Others REMOVE?}
%%%%%%%%%%%%%%%%%%%%%%%%%%%%%%%%%%%%%%%%%%%%%%%%%%%%%%%%%%%%%%%%%%%%%%%%%%%%%%%%%%%%%%%%%%%%%%%%%%%%%%%%%%%%%%% 
%
%I suggest to completely remove this section.
%\subsubsection{LR-splines}
%We could check which axioms are satisfied and which remain open.

%\newpage

% !TEX encoding = MacOSRoman
% !TEX root = adaptive_iga.tex

%%%%%%%%%%%%%%%%%%%%%%%%%%%%%%%%%%%%%%%%%%%%%%%%%%%%%%%%%%%%%%%%%%%%%%%%%%%%%%%%%%%%%%%%%%%%%%%%%%%%%%%%%%%%%%%%%%%%
%%%%%%%%%%%%%%%%%%%%%%%%%%%%%%%%%%%%%%%%%%%%%%%%%%%%%%%%%%%%%%%%%%%%%%%%%%%%%%%%%%%%%%%%%%%%%%%%%%%%%%%%%%%%%%%%%%%%
\subsection{Adaptive IGABEM in 2D with smoothness control}
\label{sec:elementary_bem}
%%%%%%%%%%%%%%%%%%%%%%%%%%%%%%%%%%%%%%%%%%%%%%%%%%%%%%%%%%%%%%%%%%%%%%%%%%%%%%%%%%%%%%%%%%%%%%%%%%%%%%%%%%%%%%%%%%%%
%%%%%%%%%%%%%%%%%%%%%%%%%%%%%%%%%%%%%%%%%%%%%%%%%%%%%%%%%%%%%%%%%%%%%%%%%%%%%%%%%%%%%%%%%%%%%%%%%%%%%%%%%%%%%%%%%%%%

Finally, we briefly summarize results from \cite{fgp15,fghp16,fghp17,gps19}, where a slightly modified adaptive IGABEM in 2D has been studied, which additionally controls the smoothness of the one-dimensional spline ansatz functions. 
This control is achieved by using $h$-refinement together with multiplicity increase of the knots, which reduces the regularity of the basis functions.
This combination allows to automatically resolve strong singularities but also simple discontinuities, which can both lead to a reduced convergence rate for uniform refinement. 
We note that in the 2D case, the boundary is  one-dimensional, and adaptive refinement can be done using standard B-splines (or NURBS). The extension to the 3D setting using HB-splines or T-splines is far from being straightforward, and it has not been studied yet.

In this work, we restrict ourselves to the weakly-singular integral equation arising from Dirichlet problems, and we only allow reduction of the smoothness by multiplicity increase, but we remark that \cite{gps19} also allows multiplicity decrease  and analyzes both the weakly-singular integral equation, which we consider here, and the hypersingular integral equation arising from Neumann problems. 
For both cases, an optimal additive Schwarz preconditioner has been introduced in \cite{fgps19} for the Laplace problem, 
i.e., it is proved that the preconditioned Galerkin systems have a uniformly bounded condition number being independent of the local mesh-refinement and the smoothness of the B-spline ansatz functions. 
An important consequence is that the PCG solver is uniformly contractive, and analogously to the FEM case with hierarchical splines explained in Remark~\ref{rem:PCG-HB-FEM}, this allows to prove that an adaptive algorithm combining adaptive refinement with an inexact PCG solver leads to optimal convergence with respect to the number of elements and also with respect to the overall computational cost, see \cite{fhps19} for details.
%An important consequence is that the PCG solver is uniformly contractive.
%It has recently been proved in~\cite{fhps19} that such a contraction is the key to prove that an adaptive algorithm which steers mesh-refinement and inexact PCG solver does not only lead to optimal convergence with respect to the number of elements but also with respect to the overall computational cost (i.e., computational time).

%%%%%%%%%%%%%%%%%%%%%%%%%%%%%%%%%%%%%%%%%%%%%%%%%%%%%%%%%%%%%%%%%%%%%%%%%%%%%%%%%%%%%%%%%%%%%%%%%%%%%%%%%%%%%%%%%%%%
\subsubsection{Setting of the discrete problem}\label{sec:igabem1d setting}
%%%%%%%%%%%%%%%%%%%%%%%%%%%%%%%%%%%%%%%%%%%%%%%%%%%%%%%%%%%%%%%%%%%%%%%%%%%%%%%%%%%%%%%%%%%%%%%%%%%%%%%%%%%%%%%%%%%%
Let $\Omega\subset\R^2$ be a Lipschitz domain with connected  boundary $\Gamma\subset\R^2$. %as in Section~\ref{sec:multi-patch}.
We consider boundary integral equations as in Section~\ref{sec:model problem bem}. 
%Instead of assuming that $\Gamma$ is a multi-patch geometry as in Section~\ref{sec:multi-patch}, 
We assume that there exists a global NURBS parametrization ${\bf F}:[0,1]\to\Gamma$ (see Section~\ref{sec:parametrization}) such that ${\bf F}|_{[0,1)}$ is bijective with ${\bf F}(0)={\bf F}(1)$.
Moreover, we denote the knot vector associated to ${\bf F}$ by $\kv_{\bf F}$, and its induced mesh on $[0,1]$ by $\hat\QQ_{\bf F}$, and further assume that ${\bf F}|_{\hat Q}$ is bi-Lipschitz for all $\hat Q\in\hat\QQ_{\bf F}$.
%By appending all the parametrizations ${\bf F}_m$ of each patch $\Gamma_m$ to each other, one obtains a global $C^0$-parametrization 
%{\color{magenta} COMMENT: This goes too fast. Either we start with a single open knot vector, or we give more details on how to build ${\bf F}$ from ${\bf F}_m$ (scaling and gluing the knot vectors), as we have not explained it before. Mention also where you have $C^0$ continuity. Do not try to make it short, but to make it clear.}
%\begin{align*}
%{\bf F}:[0,1]\to \Gamma \quad \text{with } {\bf F}(0)={\bf F}(1)
%\end{align*}
%such that the restriction ${\bf F}|_{[0,1)}$ is bijective.
Let  $p\in\N_0$ be a fixed polynomial degree. 
We consider $p$-open knot vectors $\kv_\coarse$ on $[0,1]$ (see Section~\ref{sec:univariate-properties})
with 
$\VV_{\bf F}\subseteq\VV_\coarse$, 
where $\VV_{\bf F}$ and $\VV_\coarse$ respectively denote the set of vertices corresponding to $\kv_{\bf F}$ and $\kv_\coarse$, which are defined as the images of all breakpoints (see~\ref{sec:univariate-properties}) under ${\bf F}$. 
%denotes the set of vertices corresponding to ${\bf F}$, see~\eqref{eq:vertices of F} 
%{\color{magenta} COMMENT: related to the previous one. The vertices in \eqref{eq:vertices of F} are defined for ${\bf F}_m$, and not for ${\bf F}$, we have to clarify it. Moreover, what happens with internal vertices to ${\bf F}_m$? Are they allowed? Note that they are not included in the definition of $\VV_{\bf F}$, so the condition $\VV_{\bf F} \subseteq \VV_0$ below does not consider them.} 
%and $\VV_\coarse$  denotes the set of vertices corresponding to $\kv_\coarse$ being defined as the images of all breakpoints (see~\ref{sec:univariate-properties}) under ${\bf F}$.
We define the space of all splines  on $[0,1]$ and $\Gamma$ as 
\begin{align*}
\widehat{\mathbb{S}}_\coarse&:=\spu{p}{\kv_\coarse},
\\ 
{\mathbb{S}}_{\coarse}&:=\set{ \Psi_{\coarse}\circ{\bf F}^{-1}}{ \Psi_{\coarse}\in{\mathbb{S}}_{\coarse}}
\subset L^2(\Gamma)\subset H^{-1/2}(\Gamma).
\end{align*}
Note that the functions in $\mathbb{S}$ are allowed to be discontinuous at the initial vertex ${\bf F}(0)={\bf F}(1)$.
We consider B-splines, transformed via ${\bf F}$, as the basis of the space $\mathbb{S}_\coarse$.

%%%%%%%%%%%%%%%%%%%%%%%%%%%%%%%%%%%%%%%%%%%%%%%%%%%%%%%%%%%%%%%%%%%%%%%%%%%%%%%%%%%%%%%%%%%%%%%%%%%%%%%%%%%%%%%%%%%%
\subsubsection{Refinement  of knot vectors}\label{subsec:concrete refinement bem2}
%%%%%%%%%%%%%%%%%%%%%%%%%%%%%%%%%%%%%%%%%%%%%%%%%%%%%%%%%%%%%%%%%%%%%%%%%%%%%%%%%%%%%%%%%%%%%%%%%%%%%%%%%%%%%%%%%%%%

Let $\kv_0$ be a fixed initial $p$-open knot vector with $\VV_{\bf F}\subseteq\VV_0$.
With the corresponding mesh $\hat\QQ_0$ in $[0,1]$, define the initial shape-regularity constant
\begin{align*}
\widehat\gamma_0:=\max \Big\{\frac{|\widehat Q|}{|\widehat Q'|}:\widehat Q,\widehat Q'\in \widehat\QQ_0\text{ with }\overline Q\cap \overline {Q'}\neq\emptyset\Big\},
\end{align*}
where $Q={\bf F}(\widehat Q)$ and $Q'={\bf F}(\widehat Q')$. 
We recall that, for meshes $\QQ_\coarse$ on $\Gamma$ corresponding to $\kv_\coarse$ and for an element $Q \in \QQ_\coarse$, the element-patch $\Pi_\coarse(Q)$ of \eqref{eq:patch2} is given by the element itself and its adjacent neighbors. 
First, we formulate the auxiliary refinement Algorithm~\ref{alg:refinement bem21} taken from \cite{affkp13}, 
which focusses on plain $h$-refinement, but ensures shape-regularity for the refined meshes.

\begin{algorithm}[!h]
\caption{\texttt{refine\_h} ($h$-refinement of knot vector)}  
\label{alg:refinement bem21}
\begin{algorithmic}
\Require $p$-open knot vector  $\kv_\coarse$ , marked elements $\MM\subseteq\QQ_\coarse$.
\Repeat
\State set $\displaystyle {\mathcal{U}} = \bigcup_{{Q} \in \MM} \set{Q'\in\Pi_\coarse(Q)}{|\widehat Q'|>\widehat\gamma_0|\widehat Q|} \setminus\MM$
\State set $\MM = \MM\cup {\mathcal{U}}$
\Until {${\mathcal{U}} = \emptyset$}
\State update $\kv_\coarse$ by bisecting (i.e., adding a new knot of multiplicity $1$ to) all elements in ${\cal{M}}$
\Ensure refined $p$-open knot vector $\kv_\coarse$ 
\end{algorithmic}
\end{algorithm}

The refinement strategy Algorithm~\ref{alg:refinement bem22} will be used to steer a modified version of the adaptive Algorithm~\ref{alg:abstract algorithm}. 
In contrast to all refinement strategies in previous sections, it receives marked vertices instead of marked elements as input and also uses knot multiplicity increase for refinement.

For any vertex ${\bf z} \in \VV_\coarse$, we denote by ${\bf z}_r$ the vertex right to ${\bf z}$ with respect to the orientation of ${\bf F}$. 
We denote by $z:={\bf F}|_{[0,1)}^{-1}({\bf z}), z_r:={\bf F}|_{(0,1]}^{-1}({\bf z}_r)$ the corresponding breakpoints, and with some abuse of notation, by $({\bf z},{\bf z}_r):={\bf F}\big((z,z_r)\big)$ the unique element delimited by the two vertices. %We also denote by $\#_\coarse$ the multiplicity of a breakpoint in the knot vector $\kv_\coarse$. 
The refinement strategy in Algorithm~\ref{alg:refinement bem22} does the following: If both vertices of an element are marked, the element is marked for refinement via Algorithm~\ref{alg:refinement bem21}. For all other vertices (i.e., those that are not stored in ${\cal R}$) the multiplicity is increased if it is less than $p+1$, otherwise the neighboring elements are marked. 
%{\color{magenta} COMMENT: This is not what the algorithm does, there must be a ``bug''. If you mark the two vertices of an element, when the for loop passes through the first vertex it marks the element, when it passes through the second vertex it increases its multiplicity. Note also that the third ``if'' condition is never met, as it is overruled by the first one. And the ${\bf z} \in \MM$ condition in the first and second ``if'' is redundant.}
Clearly, $\kv_\fine = \refine(\kv_\coarse,\MM)$ is finer than $\kv_\coarse$, in the sense that $\kv_\coarse$ is a subsequence of $\kv_\fine$ and thus $\mathbb{S}_\coarse \subseteq \mathbb{S}_\fine$. 
For any $p$-open knot vector $\kv_\coarse$, we define $\refine(\kv_\coarse)$ as the set of all $p$-open knot vectors $\kv_\fine$ that can be obtained by iterative application of $\refine$.
We define the set of all \textit{admissible} $p$-open knot vectors 
\begin{align*}
\mathbb{T}:=\refine(\kv_0).
\end{align*}
It is easy to see that $\mathbb{T}$ coincides with the set of all $p$-open knot vectors $\kv_\coarse$ which are obtained via iterative bisections in the parametric domain and arbitrary knot multiplicity increases such that  
\begin{align*}
|\widehat Q|/|\widehat Q'|\le 2\widehat\gamma_0 \quad\text{for all }  Q,Q'\in\QQ_\coarse \text{ with }Q\cap Q'\neq \emptyset.
\end{align*}
Indeed, by marking in each step both vertices of an element,  Algorithm~\ref{alg:refinement bem22} can realize Algorithm~\ref{alg:refinement bem21}, which can generate according to~\cite[Theorem~2.3]{affkp13} arbitrary bisected meshes satisfying the latter local quasi-uniformity. By marking iteratively only one vertex, it is possible to arbitrarily increase the resulting knot multiplicities.

\begin{algorithm}[!h]
\caption{ \refine \ (Refinement of knot vector) }
\label{alg:refinement bem22}
\begin{algorithmic}
\Require $p$-open knot vector  $\kv_\coarse$, marked vertices $\MM\subseteq\VV_\coarse$
\LineComment{initialize marked elements $\MM'$}
\State set $\MM' = \emptyset$, $\RR=\emptyset$ 
\For{${\bf z} \in\MM$}
\If{${\bf z}_r\in\MM$}  set $\MM' = \MM'\cup\{({\bf z},{\bf z}_r)\}$, $\RR=\RR\cup\{{\bf z}, {\bf z}_r\}$ 
\EndIf
\EndFor
\For{${\bf z} \in\MM\setminus \RR$}
\If{$\#_\coarse z< p+1$} set $\#_\coarse z = \#_\coarse z +1$
\Else{ set $\MM'=\MM'\cup\Pi_\coarse(\{{\bf z}\})$}
\EndIf
\EndFor
\State update $\kv_\coarse={\tt refine\_h}(\kv_\coarse,\MM')$ %\Comment{Algorithm~\ref{alg:refinement bem21}}
\Ensure refined $p$-open knot vector $\kv_\coarse$
\end{algorithmic}
\end{algorithm}

%%%%%%%%%%%%%%%%%%%%%%%%%%%%%%%%%%%%%%%%%%%%%%%%%%%%%%%%%%%%%%%%%%%%%%%%%%%%%%%%%%%%%%%%%%%%%%%%%%%%%%%%%%%%%%%%%%%%
\subsubsection{Adaptive algorithm}
\label{sec:1D algorithm}
%%%%%%%%%%%%%%%%%%%%%%%%%%%%%%%%%%%%%%%%%%%%%%%%%%%%%%%%%%%%%%%%%%%%%%%%%%%%%%%%%%%%%%%%%%%%%%%%%%%%%%%%%%%%%%%%%%%%
Let $\kv_\coarse\in\mathbb{T}$ with vertices $\VV_\coarse$.
We consider a vertex-based version of the weighted-residual {\sl a~posteriori} error estimator~\eqref{eq:eta bem}
\begin{subequations}\label{eq:eta bem2}
\begin{align}
\begin{split}
 &\eta_\coarse := \eta_\coarse(\VV_\coarse)
 \\
 &\quad\text{with}\quad 
 \eta_\coarse(\SS)^2:=\sum_{{\bf z}\in\SS} \eta_\coarse({\bf z})^2
 \text{ for all }\SS\subseteq\VV_\coarse,
\end{split}
\end{align}
where, for all ${\bf z}\in\VV_\coarse$, the local refinement indicators read, with $\pi_\coarse(\{{\bf z}\})=\bigcup\set{Q\in\QQ_\coarse}{{\bf z}\in \overline{Q}}$,
\begin{align}
\eta_\coarse({\bf z})^2:=|\pi_\coarse({\bf z})| \seminorm{f-\mathscr{V}\Phi_\coarse}{H^1(\pi_\coarse({\bf z}))}^2.
\end{align}
\end{subequations}
The refinement strategy in Algorithm~\ref{alg:refinement bem22} and the given vertex-based error estimator give rise to a modified version of Algorithm~\ref{alg:abstract algorithm},
namely Algorithm~\ref{alg:1D algorithm}, which uses the same solving step, but computes indicators associated to vertices instead of elements,  marks vertices via the D\"orfler criterion~\eqref{eq:Doerfler}, and refines via Algorithm~\ref{alg:refinement bem22} based on these marked vertices. 

\begin{algorithm}
\caption{(Adaptive algorithm)} \label{alg:1D algorithm}
\begin{algorithmic}
\Require  initial knot vector $T_0$, marking parameter $\theta\in(0,1]$, marking constant $\const{min}\in [1,\infty]$
\For{$k=0,1,2,\dots$}
 \LineComment{compute Galerkin approximation $\Phi_k$}
\State set $\Phi_k=$ \texttt{solve}(${\cal Q}_k$) 
 \LineComment{compute refinement indicators $\eta_\k({{\bf z}})$ for all ${\bf z} \in {\cal V}_k$}
\State set $\eta_k =$ \texttt{estimate}(${\cal V}_k$, $\Phi_k$)
\LineComment{determine $\const{min}$-minimal set of vertices with \eqref{eq:Doerfler}}  
\State set $\MM_k =$ \texttt{mark}($\eta_k,{\cal V}_k$)
\LineComment{generate refined mesh with Algorithm~\ref{alg:refinement bem22}}
\State set $T_{\k+1} =$ \texttt{refine}$(T_k,{\cal M}_k)$ 
\EndFor
\Ensure refined meshes $T_\k$,  
quantities $\Phi_k$, 
 estimators $\eta_\k$ for all $\k \in\N_0$
\end{algorithmic}
\end{algorithm}

%{\color{magenta} COMMENT: I think it is worth to write this new Algorithm. Even if it is clear for you, it will be useful to see the differences with Algorithm~\ref{alg:abstract algorithm}. It will also give a meaning to the notation $\kv_k$ that you use in Theorem~\ref{thm:abstract bem2}.}

%{\color{magenta} COMMENT: Add also a sentence saying that you will use the $k$ subindex for anything referring to a particular step of the algorithm. That kind of notation should be introduced here, and not in the numerical experiment.}

%%%%%%%%%%%%%%%%%%%%%%%%%%%%%%%%%%%%%%%%%%%%%%%%%%%%%%%%%%%%%%%%%%%%%%%%%%%%%%%%%%%%%%%%%%%%%%%%%%%%%%%%%%%%%%%%%%%%
\subsubsection{Optimal convergence for one-dimensional splines}
%%%%%%%%%%%%%%%%%%%%%%%%%%%%%%%%%%%%%%%%%%%%%%%%%%%%%%%%%%%%%%%%%%%%%%%%%%%%%%%%%%%%%%%%%%%%%%%%%%%%%%%%%%%%%%%%%%%%
As in Section~\ref{sec:BEM axioms}, we say that the solution $\phi \in H^{-1/2}(\Gamma)$ lies in the \textit{approximation class $s$ with respect to the estimator}~\eqref{eq:eta bem2} if
\begin{align*}
\widetilde C_{\rm apx}(s):= \sup_{N\ge\#T_0} \min_{T_\coarse\in\mathbb{T}(N)}(N^s\eta_\coarse)<\infty,
\end{align*}
with $\mathbb{T}(N):=\set{T_{\coarse}\in\mathbb{T}}{\# T_{\coarse}\le N}$ and $\# T_{\coarse}$ is the sum of all knot multiplicities in $T_\coarse$. 
In the notation of Section~\ref{sec:splines-univariate}, it holds that $\# T_{\coarse}= n+p+1$.
By definition, $\widetilde C_{\rm apx}(s)<\infty$  implies that the error estimator $\eta_\coarse$
decays at least with rate $\OO\big((\# T_\coarse)^{-s}\big)$ on the optimal knot vectors $T_\coarse$. 
The following theorem, which mainly stems from \cite[Theorem~3.2]{fghp17}, states that each possible rate $s>0$ is in fact realized by Algorithm~\ref{alg:1D algorithm}.
Theorem~\ref{thm:abstract bem2} \eqref{item:qabstract reliable bem2}  states reliability, which was  verified for the current setting in  \cite[Theorem~4.4]{fghp16}.

%{\color{magenta} COMMENT: the notation $\kv_k$ has not been introduced, see above. Why do you use both $\kv_k$ and $\kv_\coarse$ in the same Theorem? The same is valid for Section~5 (so we probably discussed it at some point).}
%COMMENT: $\kv_k$ is generated by the adaptive Algorithm, $\kv$ is arbitrary. I think this is okay. On the one hand, we say something about convergence, which requires a sequence of meshes/knots, on the other hand we state reliability, which is an assertion for meshes/knots rather than sequences thereof. 

\begin{theorem}\label{thm:abstract bem2}
Let $(\kv_k)_{k\in\N_0}$ be the sequence of knots generated in Algorithm~\ref{alg:1D algorithm}.
Then, there hold:
\begin{enumerate}[\rm (i)]
\item\label{item:qabstract reliable bem2}
The residual error estimator satisfies reliability, i.e., there exists a constant $\const{rel}>0$ such that
\begin{align*}
 \norm{\phi-\Phi_\coarse}{H^{-1/2}(\Gamma)}\le \const{rel}\eta_\coarse\quad\text{for all }\kv_\coarse\in\mathbb{T}.
\end{align*}
\item\label{item:qabstract linear convergence bem2}
For arbitrary $0<\theta\le1$ and $\const{min}\in[1,\infty]$, the residual error estimator converges linearly, i.e., there exist constants $0<\ro{lin}<1$ and $\const{lin}\ge1$ such that
\begin{align*}
\eta_{k+j}^2\le \const{lin}\ro{lin}^j\eta_k^2\quad\text{for all }j,k\in\N_0.
\end{align*}
\item\label{item:qabstract optimal convergence bem2}
There exists a constant $0<\theta_{\rm opt}\le1$ such that for all $0<\theta<\theta_{\rm opt}$ and $\const{min}\in[1,\infty)$, the estimator converges at optimal rate, i.e., for all $s>0$ there exist constants $c_{\rm opt},\const{opt}>0$ such that
\begin{align*} 
 c_{\rm opt} \widetilde C_{\rm apx}(s)
 \le \sup_{k\in\N_0}{(\# T_k)^{s}}\,{\eta_k}
 \le \const{opt} \widetilde C_{\rm apx}(s).
\end{align*}
\end{enumerate}
\noindent All involved constants $\const{rel},\const{lin},\ro{lin},\theta_{\rm opt}$, and $\const{opt}$ depend only on the coefficients of the differential operator $\mathscr{P}$,  the parametrization ${\bf F}$, the polynomial order $p$, and the initial knot vector $T_0$,  while $\const{lin},\ro{lin}$ depend additionally on $\theta$ and the sequence $(\Phi_k)_{k\in\N_0}$, and $\const{opt}$ depends furthermore on $\const{min}$ and $s>0$.
%{\color{magenta} The constant $c_{\rm opt}$ depends only on $p, \# T_0$, $s$, and if there exists $k_0$ with $\eta_{k_0}=0$, also on $k_0$ and $\eta_0$. COMMENT: I had not realized about this possibility before. Should we comment about this in a remark in Section~5?}
%COMMENT: I am not sure. I would maybe not attract to much attention to this. This is a very pathological case if the exact solution is already discrete, which will for sure not appear in practice.
\end{theorem}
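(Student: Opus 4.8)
The plan is to cast the vertex-based Algorithm~\ref{alg:1D algorithm} into the abstract framework of Section~\ref{sec:the axioms} and invoke Theorem~\ref{thm:abstract main}, by verifying the estimator axioms \eqref{item:stability}--\eqref{item:orthogonality} and the refinement axioms \eqref{R:childs}--\eqref{R:overlay} for the perturbation $\dist(\kv_\coarse,\kv_\fine):=\norm{\Phi_\fine-\Phi_\coarse}{H^{-1/2}(\Gamma)}$. The crucial bookkeeping change is that the role of the element set is now played by the vertex set $\VV_\coarse$ and that the cardinality $\#\QQ_\coarse$ is replaced by the total knot count $\#\kv_\coarse=n+p+1$, so that multiplicity increases contribute to the degrees of freedom and to the approximation class $\widetilde C_{\rm apx}(s)$. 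Since this is precisely the program of \cite{fghp16,fghp17}, most ingredients can be cited; the one genuinely new feature is the knot-multiplicity increase in Algorithm~\ref{alg:refinement bem22}, which enriches $\mathbb{S}_\coarse$ without bisecting the underlying mesh.

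First, for reliability \eqref{item:qabstract reliable bem2}, I would exploit that $\mathscr{V}:H^{-1/2}(\Gamma)\to H^{1/2}(\Gamma)$ is an isomorphism (Section~\ref{sec:model problem bem}), whence $\norm{\phi-\Phi_\coarse}{H^{-1/2}(\Gamma)}\simeq\norm{f-\mathscr{V}\Phi_\coarse}{H^{1/2}(\Gamma)}$, and then localize the $H^{1/2}$-norm of the residual over the vertex patches $\pi_\coarse(\{{\bf z}\})$ via a Faermann-type estimate combined with the Galerkin orthogonality \eqref{eq:galerkin bem}. This produces exactly the vertex-based indicators of \eqref{eq:eta bem2} and is the content of \cite[Theorem~4.4]{fghp16}; it only uses that the transformed B-spline basis of $\mathbb{S}_\coarse$ admits the required localization.

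Next, linear convergence \eqref{item:qabstract linear convergence bem2} follows once \eqref{item:stability}, \eqref{item:reduction}, and \eqref{item:orthogonality} are established. Stability and reduction rest on the inverse estimate for the single-layer potential, controlling the weighted $H^1$-seminorm of $\mathscr{V}(\Phi_\fine-\Phi_\coarse)$ by $\norm{\Phi_\fine-\Phi_\coarse}{H^{-1/2}(\Gamma)}$, together with the nestedness $\mathbb{S}_\coarse\subseteq\mathbb{S}_\fine$, which holds because $\kv_\coarse$ is a subsequence of $\kv_\fine$; here the contraction on bisected vertices stems from the shrinking weight $|\pi_\coarse(\{{\bf z}\})|$ in \eqref{eq:eta bem2}, whereas vertices affected only by a multiplicity increase are absorbed into the perturbation term, as detailed in \cite{fghp16,fghp17}. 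Since the bilinear form $\dual{\mathscr{V}\,\cdot}{\cdot}$ is symmetric and elliptic, general quasi-orthogonality \eqref{item:orthogonality} holds even with $\varepsilon_{\rm qo}=0$ through the Pythagoras identity in the energy norm combined with reliability, exactly as in Remark~\ref{rem:symmetric orthogonality bem}.

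Finally, optimal convergence \eqref{item:qabstract optimal convergence bem2} additionally requires discrete reliability \eqref{item:discrete reliability} and the refinement axioms. The child estimate \eqref{R:childs} is immediate; the closure estimate \eqref{R:closure} follows from the one-dimensional binary-tree argument of \cite{affkp13} for the $h$-part realized by Algorithm~\ref{alg:refinement bem21}, augmented by the observation that each vertex multiplicity is capped at $p+1$, so that multiplicity increases add at most a bounded factor per vertex to $\#\kv_k$; the overlay \eqref{R:overlay} is the coarsest common refinement, obtained by taking the maximum of breakpoints and of multiplicities. The hard part will be discrete reliability: I would construct a Scott--Zhang-type quasi-interpolant $\JJ_{\coarsecomma \SS}:L^2(\Gamma)\to\mathbb{S}_\coarse$ that acts as a projection away from the refined vertices and is locally $L^2$-stable, and then bound $\dist(\kv_\coarse,\kv_\fine)$ by the estimator on a patch of refined vertices whose number is controlled by $\#\kv_\fine-\#\kv_\coarse$. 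The subtlety is that a ``refined'' vertex may correspond either to a bisection or to a smoothness reduction, so the quasi-interpolant must reproduce $\Phi_\fine$ exactly on every element whose surrounding knots are touched by neither operation; carrying this out is the core of \cite[Theorem~3.2]{fghp17}, whose argument transfers to the present setting. Combining the verified axioms with Theorem~\ref{thm:abstract main} then delivers \eqref{item:qabstract linear convergence bem2}--\eqref{item:qabstract optimal convergence bem2}, with the approximation class measured in $\#\kv_\coarse$.
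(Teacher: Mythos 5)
Your overall architecture --- reliability via the Faermann-type localization of \cite[Theorem~4.4]{fghp16}, quasi-orthogonality from the Pythagoras identity for the symmetric form $\dual{\mathscr{V}\,\cdot}{\cdot}$, the closure estimate from \cite{affkp13} combined with the multiplicity cap $p+1$, and the overlay built from the union of breakpoints with maximal multiplicities --- matches the ingredients of \cite[Theorem~3.2]{fghp17}, which is exactly what the paper cites in lieu of a proof (the paper itself does not reprove this theorem).

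The genuine gap is your central claim that, after re-labelling elements as vertices and $\#\QQ_\coarse$ as $\#\kv_\coarse$, one can simply ``invoke Theorem~\ref{thm:abstract main}''. The paper warns explicitly that this setting does not fit exactly into the framework of Section~\ref{sec:abem}, and the obstruction is concrete. First, refinement never removes a vertex: bisection only adds midpoints and multiplicity increase leaves the vertex set untouched, so $\VV_\coarse\subseteq\VV_\fine$, hence $\VV_\coarse\setminus\VV_\fine=\emptyset$; the framework's requirement $\MM_\coarse\subseteq\QQ_\coarse\setminus\QQ_\fine$ then fails for every marked vertex, and the reduction axiom~\eqref{item:reduction} would degenerate to $\eta_\fine(\VV_\fine\setminus\VV_\coarse)^2\le\const{red}\,\dist(\kv_\coarse,\kv_\fine)^2$, which is false. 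Second, your proposed repair --- that vertices affected only by a multiplicity increase are ``absorbed into the perturbation term'' --- does not hold: for such a vertex ${\bf z}$ the patch $\pi_\coarse(\{{\bf z}\})$, and therefore the weight in \eqref{eq:eta bem2}, is unchanged, and the indicator contains $\seminorm{f-\mathscr{V}\Phi_\coarse}{H^1(\pi_\coarse(\{{\bf z}\}))}$, a quantity that is in no way dominated by $\norm{\Phi_\fine-\Phi_\coarse}{H^{-1/2}(\Gamma)}$. Indeed, if the newly admitted B-spline happens to be (nearly) orthogonal to the residual, then $\Phi_\fine\approx\Phi_\coarse$ while $\eta_\fine({\bf z})\approx\eta_\coarse({\bf z})$, so no uniform contraction factor $\rho<1$ can be extracted for that indicator. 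Since D\"orfler marking may place all of its mass on exactly such vertices, the estimator-reduction estimate~\eqref{eq:estimator reduction} --- the engine behind parts (i)--(ii) of Theorem~\ref{thm:abstract main} --- is unavailable, and with it your proofs of parts~\eqref{item:qabstract linear convergence bem2} and~\eqref{item:qabstract optimal convergence bem2}. This is precisely why \cite{fghp17} cannot apply \cite{cfpp14} verbatim and instead gives a tailored proof of linear convergence, in which the multiplicity increases are handled by a genuinely different mechanism (exploiting, among other things, that a knot can absorb at most $p$ increases before Algorithm~\ref{alg:refinement bem22} forces a bisection of its patch). A further mismatch you would also need to resolve is that marking and indicators live on vertices while the rate in part~\eqref{item:qabstract optimal convergence bem2} is measured in $\#\kv_k$, i.e., knots counted with multiplicity, whereas Theorem~\ref{thm:abstract main} uses one and the same set for marking, indicators, and counting.
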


\begin{remark}\label{rem:standard 1D BEM}
If one uses the original Algorithm~\ref{alg:abstract algorithm} with the refinement strategy Algorithm~\ref{alg:refinement bem21} (which does not  use knot multiplicity increase) and the element-based residual error estimator $\eta_k$ of~\eqref{eq:eta bem}, the abstract framework of Section~\ref{sec:abem} is directly applicable, see \cite{fghp16} for details.  
In particular, Theorem~\ref{thm:abstract bem} applies and guarantees linear convergence of the estimator at optimal algebraic rate.
Recently, \cite[Section~A.5]{gps19} has even proved the important result that $\const{apx}(s)\simeq \tilde C_{\rm apx}(s)$ for all $s>0$, where $\const{apx}(s)$ is the approximation class of the adaptive method without smoothness control defined analogously to \eqref{eq:const apx}.
This particularly yields that the asymptotic approximation behavior of smooth splines and piecewise polynomials coincides at least in the simple case of 2D IGABEM.
The numerical example of Section~\ref{sec:numerical igafem} for 2D IGAFEM suggests that this is in general not the case for 3D IGABEM due to the possible presence of edge singularities.
\end{remark}

\begin{remark}
The adaptive algorithm introduced in \cite{gps19} allows for both multiplicity increase and decrease.
The latter converges as well at optimal algebraic rate and practically yields an even more accurate insight of the smoothness of the exact solution. 
As the algorithm is quite technical and again restricted to the 2D case, we refer to \cite{gps19} for  details.
\end{remark}

%%%%%%%%%%%%%%%%%%%%%%%%%%%%%%%%%%%%%%%%%%%%%%%%%%%%%%%%%%%%%%%%%%%%%%%%%%%%%%%%%%%%%%%%%%%%%%%%%%%%%%%%%%%%%%%%%%%%
\subsubsection{Numerical experiment}
\label{sec:1D numerics}
%%%%%%%%%%%%%%%%%%%%%%%%%%%%%%%%%%%%%%%%%%%%%%%%%%%%%%%%%%%%%%%%%%%%%%%%%%%%%%%%%%%%%%%%%%%%%%%%%%%%%%%%%%%%%%%%%%%%

In this section, we empirically investigate the performance of  the Algorithm~\ref{alg:1D algorithm} for 
a Laplace--Dirichlet problem
\begin{align}\label{eq:Laplace bem2}
\begin{split}
-\Delta u&=0\quad\text{in }{\Omega},\\ u&=g\quad\text{on } \Gamma,
\end{split}
\end{align}
for given Dirichlet data $g\in {H}^{1}(\Gamma_{})$, where the additional regularity $H^1(\Gamma)$ instead of $H^{1/2}(\Gamma)$ is only needed for the weighted-residual error estimator.
The following example has also been considered in~\cite{fgp15,fghp16,gantner17,gps19}.
In the latter works,  several further examples are found, where \cite{gps19} also studies the hypersingular integral equation arising from Neumann problems.
We choose 
\begin{align*}
\Omega:=\big\{&(r \cos(\varphi),r \sin(\varphi)):
\\
& r\in(0,{1}/{4})\wedge \varphi \in \left(-{\pi}/{2\alpha},{\pi}/{2\alpha}\right)\big\}
\end{align*}
with $\alpha:=4/7$, see Figure \ref{fig:pacman}. 
A parametrization ${\bf F}$ of its boundary $\Gamma$ in terms of rational splines of degree $p=2$ is given, e.g., in \cite[Section~5.3]{fgp15}. 
We prescribe the exact solution of \eqref{eq:Laplace bem2} in polar coordinates $(r,\varphi)$ by
\begin{equation*}
u(x,y):=r^{\alpha}\cos\left(\alpha\varphi\right)\; \text{with }(x,y)=(r\cos(\varphi),r\sin(\varphi)).
\end{equation*}
The fundamental solution of $-\Delta$ is given by 
\begin{align*}
G({\bf z}):=-\frac{1}{2\pi}\log|{\bf z}|\quad\text{for all } {\bf z}\in \R^2 \setminus\{0\}.
\end{align*}
Since $\diam(\Omega)<1$, the corresponding single-layer operator $\mathscr{V}$ is elliptic, see Section~\ref{sec:model problem bem}. 
As in Section~\ref{sec:model problem bem}, \eqref{eq:Laplace bem2} can be equivalently rewritten as integral equation~\eqref{eq:Symmy interior}, i.e., $\mathscr{V}\phi =(\mathscr{K}+1/2) g$, 
where the unique solution is the normal derivative $\phi:=\partial_{\mathbf{{\boldsymbol{\nu}}}} u$ of the  weak solution $u$ of  \eqref{eq:Laplace bem2}. 
For our problem, $\phi$ has a singularity at ${\bf F}(1/2)$ and jumps  at ${\bf F}(1/3)$ and ${\bf F}(2/3)$.

\begin{figure}[h!]%Pacman
\begin{center}
\includegraphics[width=0.35\textwidth]{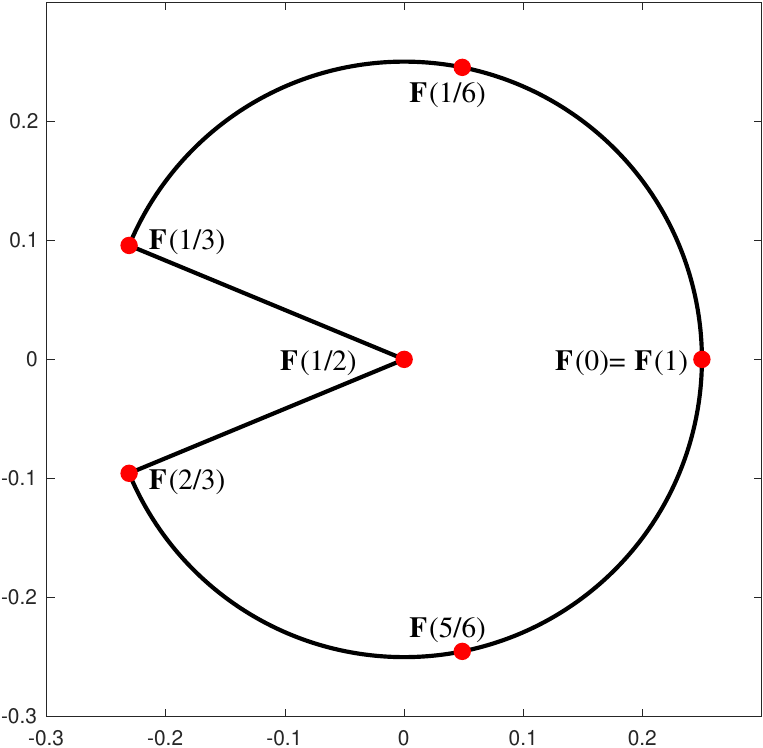}
\end{center}
\caption{Geometry and initial vertices for the experiment of Section~\ref{sec:1D numerics}.} 
\label{fig:pacman}
\end{figure}

To (approximately) calculate the Galerkin matrix, the right-hand side vector, and the weighted-residual error  estimator~\eqref{eq:eta bem2}, we transform the singular integrands into a sum of a smooth part and a logarithmically singular part.
Then, we use adapted Gaussian quadrature to compute the resulting integrals with appropriate accuracy, 
 see \cite[Section 5]{gantner14} for details.
 Moreover, to ease computation, we replace at each step of the adaptive algorithm the term $|\pi_k({\bf z})|$ in the error indicators $\eta_k({\bf z})=\norm{|\pi_k({\bf z})|^{1/2} \nabla_\Gamma(f-\mathscr{V}\Phi_k)}{L^2(\pi_k({\bf z}))}$  by the equivalent term $\diam(\Gamma)\,\widehat h_k$, 
where, $\widehat h_k\in L^\infty(\Gamma)$ denotes the mesh-width function with $\widehat h_k|_{Q}=|{\bf F}^{-1}(Q)|$ for all $Q\in\QQ_k$. 
The error in the energy norm is computed again via Aitken's $\Delta^2$-extrapolation and~\eqref{eq:error calc gal bem1}.

We choose the parameters of the modified Algorithm~\ref{alg:1D algorithm} as $\theta=0.75$ and $\const{min}=1$.
For comparison, we also consider uniform refinement, where we mark all vertices at each step, i.e., $\MM_k=\VV_k$ for all $k\in\N_0$. 
Note that this leads to uniform bisection (without knot multiplicity increase) of all elements.
Given the knot vector defining the parametrization ${\bf F}$, 
\begin{align*}
\kv_{\bf F}=\Big(0,0,0,\frac16,\frac16,\frac26,\frac26,\frac36,\frac36,\frac46,\frac46,\frac56,\frac56,1,1,1\Big),
\end{align*}
we consider splines of degree $p\in\{0,1,2,3\}$ such that at the breakpoints the initial space $\widehat{\mathbb{S}}_0$ is (if possible) as smooth as the space used to construct the parametrization. 
That is, the space is $C^0$ for $p \ge 1$, with the knots repeated exactly $p$ times, and $C^{-1}$ for $p=0$. Note that ${\bf F}$ is continuous but not necessarily differentiable at the breakpoints.

In  Figure~\ref{fig:pacman_p} and Figure~\ref{fig:pacman_pcomp}, we plot   the approximated energy error $\norm{\phi-\Phi_k}{\mathscr{V}}$ and the error estimator $\eta_k$ against the number of degrees of freedom.
Since the solution lacks regularity, uniform refinement leads to the suboptimal rate $\mathcal{O}(N^{-4/7})$ for the energy error, whereas adaptive refinement leads to the optimal rate $\mathcal{O}(N^{-3/2-p})$, see \cite[Corollary~4.1.34]{ss11}.
For adaptive refinement, Figure \ref{fig:pacman knots} provides  a histogram of the knots in the parametric domain $[0,1]$ of the last refinement step.
We observe that at $1/2$, where the singularity occurs, mainly $h$-refinement is used.
Instead, at the two jump points $1/3$ and $2/3$, the adaptive algorithm just increases the multiplicity of the corresponding knots to its maximum allowing for discontinuous ansatz functions.
%We mention that considerably more refinement at these points would be needed if a standard adaptive BEM as in Remark~\ref{rem:standard 1D BEM} was used, see~\cite{gps19}. {\color{magenta} COMMENT: I quickly checked the figures of that reference, and couldn't find a result about this. Is it true if you follow the assumption about continuity with respect to ${\bf F}$ in Section 3?}
%COMMENT: I am indeed a bit sloppy here, but I am referring to Figure~4 of that reference. The algorithm there uses multiplicity increase and decrease and in this example, it didn't increase the multiplicity to its maximum. Thus, it had to use a lot of $h$-refinement there. I previously did some simulations with $h$-refinements, where one sees that a lot of refinement is required, but unfortunately, I have not put this in any paper yet.
%It's still true if you follow the condition of Section~3 because the normal derivative is usually one degree less smooth than the parametrization. Of course, one could simply assume that the used splines are one degree less smooth than the parametrization. But the pacman is just a simple example and the discontinuities in the solution might also come from the right-hand side or in case of FEM from discontinuities in the coefficient matrix.

\begin{figure}%Pacman
\begin{center}
 \subfigure[Error and estimator for $p=0$]{ 
\includegraphics[width=0.35\textwidth,clip=true]{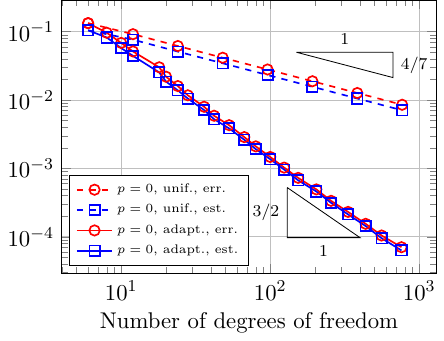}}
\subfigure[Error and estimator for $p=1$]{ 
\includegraphics[width=0.35\textwidth,clip=true]{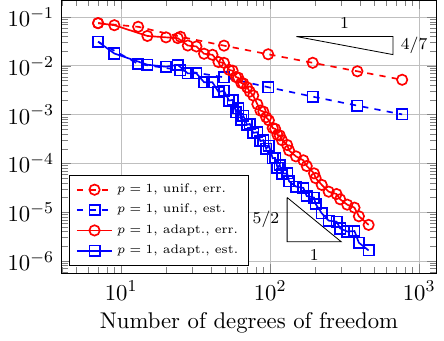}}
\subfigure[Error and estimator for $p=2$]{ 
\includegraphics[width=0.35\textwidth,clip=true]{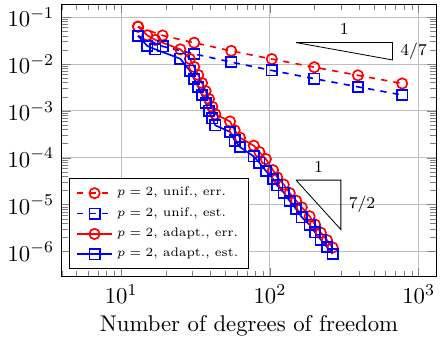}}
\subfigure[Error and estimator for $p=3$]{ 
\includegraphics[width=0.35\textwidth,clip=true]{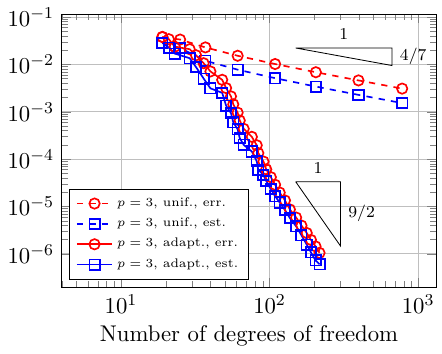}}
\end{center}

\caption{Singularity on pacman:
Energy error $\norm{ \phi- \Phi_k}{\mathscr{V}}$ and estimator $\eta_k$ of Algorithm~\ref{alg:1D algorithm} for splines of degree $p$ are plotted versus the number of degrees of freedom.
Uniform  and adaptive ($\theta=0.75$) refinement is considered.}
\label{fig:pacman_p} 
\end{figure}

\begin{figure}%Pacman

\centering 
 
\includegraphics[width=0.35\textwidth]{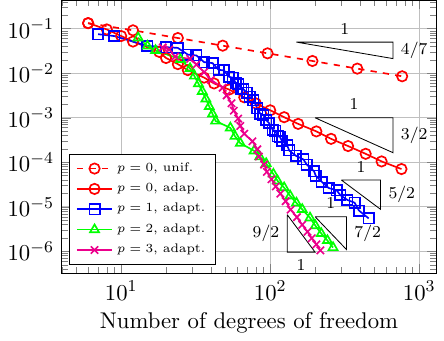}

\caption{Singularity on pacman:
The energy errors $\norm{\phi- \Phi_k}{\mathscr{V}}$  of Algorithm~\ref{alg:1D algorithm} for splines of degree $p\in\{0,1,2,3\}$ are plotted versus the number of degrees of freedom.
Uniform (for $p=0$) and adaptive ($\theta=0.75$ for $p\in\{0,1,2,3\}$) refinement is considered.}
\label{fig:pacman_pcomp} 
\end{figure}

\begin{figure}[h!]%Pacman
\psfrag{parametric domain}[c][c]{\small parametric domain}
\psfrag{Cars}[l][l]{\tiny $\eta$}
\psfrag{Faer}[l][l]{\tiny $\mu$}
\psfrag{error}[l][l]{\tiny  error}
\psfrag{Gal1est2}[c][c]{\tiny Galerkin with $\rho=\mu$}
\psfrag{Gal0est2}[c][c]{}
\psfrag{Gal1est1}[c][c]{\tiny Galerkin with $\rho=\eta$}
\psfrag{Gal0est1}[c][c]{\tiny collocation with $\rho=\eta$}
\begin{center}
\includegraphics[width=0.4\textwidth]{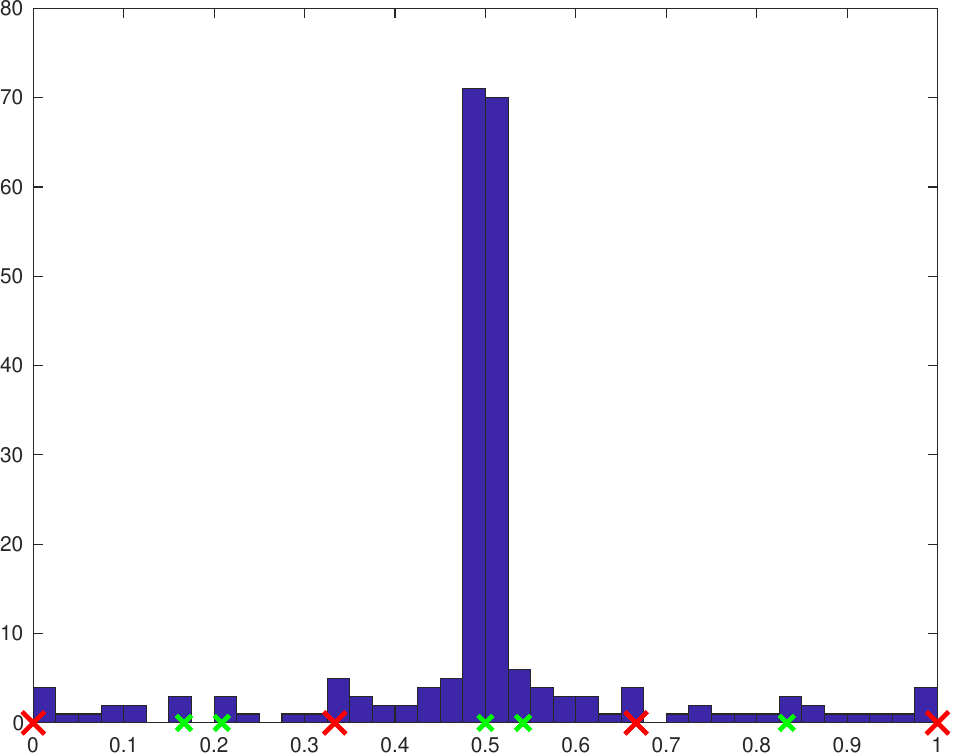}
\end{center}
\caption{Singularity on pacman: 
Histogram of number of knots over the parametric domain for the knot vector $\kv_{29}$ generated in Algorithm~\ref{alg:1D algorithm} (with $\theta=0.75$) for  splines of degree $p=3$. Knots with maximal multiplicity $p+1=4$ are marked with a red cross and knots with multiplicity $3$ are marked with a green smaller cross.}
\label{fig:pacman knots}
\end{figure}

\newpage

% !TEX encoding = MacOSRoman
% !TEX root = adaptive_iga.tex

%%%%%%%%%%%%%%%%%%%%%%%%%%%%%%%%%%%%%%%%%%%%%%%%%%%%%%%%%%%%%%%%%%%%%%%%%%%%%%%%%%%%%%%%%%%%%%%%%%%%%%%%%%%%%% 
%%%%%%%%%%%%%%%%%%%%%%%%%%%%%%%%%%%%%%%%%%%%%%%%%%%%%%%%%%%%%%%%%%%%%%%%%%%%%%%%%%%%%%%%%%%%%%%%%%%%%%%%%%%%%% 
\section{Conclusion and open questions}
\label{sec:conclusion}
%%%%%%%%%%%%%%%%%%%%%%%%%%%%%%%%%%%%%%%%%%%%%%%%%%%%%%%%%%%%%%%%%%%%%%%%%%%%%%%%%%%%%%%%%%%%%%%%%%%%%%%%%%%%%% 
%%%%%%%%%%%%%%%%%%%%%%%%%%%%%%%%%%%%%%%%%%%%%%%%%%%%%%%%%%%%%%%%%%%%%%%%%%%%%%%%%%%%%%%%%%%%%%%%%%%%%%%%%%%%%% 

%\begin{itemize}
%%\item feel free to add and change things
%%\item add literature on second point: which adaptive splines are actually suited for anisotropic refinement; hierarchical splines are for example not, but there is generalization from Engleitner--J\"uttler
%\item add comments about the open questions from the test of the approximations classes
%\end{itemize}

This work aims to give a state-of-the-art introduction to the numerical analysis of adaptive FEM and BEM in the framework of IGA. The first sections (Section~\ref{sec:splines} and~\ref{sec:model}) introduce the concepts and notation of IGAFEM and IGABEM without adaptivity. Then, Section~\ref{sec:adaptive-splines} gives the description and mathematical properties of two of the most popular \emph{adaptive spline constructions} considered in the recent years, namely (T)HB-splines and T-splines.  

%Section~\ref{sec:abstract} provides a brief introduction into the so-called \emph{axioms of adaptivity}~\cite{cfpp14} and the concept of rate-optimal adaptive algorithms. Building on~\cite{ghp17} and~\cite{gp20}, we then present a framework for finite element and boundary element discretizations which guarantee the validity of the axioms of adaptivity. These properties are proved in Section~\ref{sec:adaptive igafem} for IGAFEM and in Section~\ref{sec:igabem} for IGABEM. 
%%One particular focus, which goes beyond the available literature, is the thorough mathematical treatment of multi-patch geometries. 
%\textcolor{magenta}{COMMENT: I didn't like this paragraph that much. You cite \cite{ghp17,gp20}, but all the other works (Buffa-Giannelli, Philipp, your paper on T-splines) are ignored. And I would not say that we have done a ``thorough'' treatment of multi-patch: it is quite preliminary. I have tried a new version.}
%ANSWER: I am unsure. The works \cite{ghp17,gp20} are cited here by Dirk because they contain exactly the same abstract framework as we consider. In particular, \cite{gp20} does not even have any results on IGA yet. This will be addressed in \cite{gp20+}.I will re-read the conclusions, and prepare another version of this paragraph.

Section~\ref{sec:abstract} provides a brief introduction into the so-called \emph{axioms of adaptivity}~\cite{cfpp14} and the concept of rate-optimal adaptive algorithms. It further provides a framework for finite element and boundary element discretizations, respectively, that guarantees the validity of the axioms of adaptivity. Leveraging on the properties for splines on adaptive meshes summarized in Section~\ref{sec:adaptive-splines}, we prove that  (T)HB-splines on certain admissible hierarchical meshes and T-splines on suitable admissible meshes with alternating directions of refinement fit into this framework. This is verified in Section~\ref{sec:adaptive igafem} for IGAFEM and in Section~\ref{sec:igabem} for IGABEM. It should be noted that the local tensor-product structure of hierarchical splines not only enables the possibility of easily constructing (analysis-suitable) bases but it also simplifies the theoretical analysis of adaptive isogeometric methods. On the other hand, T-splines and T-meshes are more flexible and suited for applications, but restricted mesh configurations are needed for the development of their theoretical analysis and the involved results are more complicated in nature. As a consequence, adaptive isogeometric methods based on (T)HB-splines appear to be the IGA framework most advanced in terms of numerical analysis, while T-splines still appears to be most used in the engineering literature).
%Of particular interest is the mathematical treatment of convergence on multi-patch geometries, specially in the case of closed surfaces for 3D BEM, and which despite being preliminary goes beyond the available mathematical literature.

Even though adaptive IGA is a ra\-pid\-ly developing research field, many important questions remain open: 

First, we have verified that the abstract properties in Section~\ref{sec:abstract} are satisfied for isogeometric discretizations with (T)HB-splines and T-splines. For instance, it remains open whether these mesh and space properties are also satisfied for other adaptive spline constructions, as the ones briefly mentioned in Section~\ref{subsec:others}, including for example LR-splines, or the different definitions of linearly independent T-splines from Section~\ref{subsec:extend tsplines}, which reduce the strong constraints posed by the dual-compatibility condition. We note that the mathematical study of adaptive methods based on these constructions is at different stages, being probably most advanced for LR-splines. As long as suitable refinement algorithms and interpolation estimates for a given adaptive spline construction are available, the abstract framework presented in this work can be properly exploited to study the resulting adaptive isogeometric method.

Second, it has not been mathematically studied yet how the approximation classes of the PDE solution and thus the resulting convergence rates of the adaptive algorithm depend on the employed adaptive splines. 
In particular, their relation to the classes and rates of standard (only continuous) finite element spaces is theoretically open. 
Our numerical experiments of Section~\ref{sec:numerical igafem} suggest that they might especially depend on the smoothness of the splines. 
A verifiable characterization in terms of the given data and the corresponding PDE solution would be desirable.
%For standard (only continuous) finite element functions, such a characterization has been realized in~\cite{gm09} in 2D.	

Third, the current analysis is implicitly tailored to isotropic meshes through the closure estimate~\eqref{R:closure} and the overlay estimate~\eqref{R:overlay} in Section~\ref{sec:abstract main}. Available proofs of~\eqref{R:closure} (even for standard FEM and BEM) use the relation $\diam(Q)^{\widehat d} \simeq |Q|$ of element diameter and element area and hence mathematically exclude long and thin aniso\-tropic elements, see, e.g.,~\cite{MR2353951} for the seminal work which is transferred to IGA in~\cite{mp15,bgmp16,morgenstern17,ghp17}. However, it is known that only point singularities can optimally be resolved by isotropic elements, while anisotropic elements are mandatory to resolve edge singularities, both in 2D and 3D computations. 
%However, it is known that anisotropic elements are mandatory to resolve (generic) edge singularities in 3D computations, while only (generic) point singularities in 2D computations can be optimally resolved by isotropic elements. 
%From this particular point of view, the mathematical analysis of rate-optimal adaptive algorithms is only satisfactorily solved for 2D problems, 
%COMMENT: what about edge singularities in 2D? It is not optimal.
%COMMENT: If the right-hand side is sufficiently smooth, one only gets point-singularities. So the main issue is indeed 3D.
%COMMENT: and why should the right-hand side be smooth? I don't think this is true in general. And you are also excluding the cases of a material interface along a curve., 
Optimal adaptivity with anisotropic elements is not only theoretically completely open, but also the stable implementation (in particular for BEM) is highly non-trivial.
%We stress that, as the stable implementation of anisotropic elements (in particular for BEM) is highly non-trivial, this is not only a theoretical issue.

%COMMENT: Annalisa mentioned that there was the hope that the structure of IGA could make the implementation of anisotropic elements feasible, or at least simpler than in FEM. Do we want to add something on that? Do we also need to modify the comments on anisotropic elements in Section 4.3?
%COMMENT: I cannot judge this at all. I think Annalisa wasn't too sure either. So I would leave it as it is.

%Finally, from a practical point of view, any numerical schemes aims to compute an approximation with a prescribed accuracy within (quasi-) minimal computational time. Besides adaptive mesh-refinement, this requires optimal preconditioners and contractive solvers with linear costs per step as well as an optimal balancing of solver steps against mesh-refinement steps. 

Finally, the analysis presented for multi-patch domains has to be extended to more general configurations. In particular, for HB-splines we are assuming that there are no hanging nodes on the interface between patches, and the continuity is set to $C^0$. While the first assumption can probably be removed without major issues, as we explained in Remark~\ref{rem:second multi patch}, the construction of hierarchical splines with $C^1$ continuity in general multi-patch geometries remains an open question. For T-splines instead, we have only presented results for BEM by assuming discontinuous functions across patches, which is very restrictive with respect to the standard setting used in the CAD and engineering literature, based on bicubic T-spline surfaces of $C^2$ continuity everywhere except in the vicinity of extraordinary points, i.e., points at the intersection of a number of patches different from four.

\newpage

\begin{acknowledgements}
The authors would like to thank Cesare Bracco and Durkbin Cho for several fruitful discussions on the subject of the paper.
\end{acknowledgements}

% BibTeX users please use one of
%\bibliographystyle{style/spbasic}      % basic style, author-year citations
\bibliographystyle{spmpsci}      % mathematics and physical sciences
\bibliography{literature/literature}   % name your BibTeX data base
\end{document}